\newcommand{\er}{{\Diamond}}
\newcommand{\uU}{{\underline{U}}}
\newcommand{\Aa}{{\mathcal A}}
\newcommand{\Mor}{{\rm Mor}}
\newcommand{\Obj}{{\rm Obj}}
\newcommand{\und}{\underline}
\newcommand{\ul}{\underline}
\renewcommand{\Hat}{\widehat}
\newcommand{\HBb}{\Hat{\Bb}}
\newcommand{\PSL}{{\rm PSL}}
\newcommand{\pbar}{{\ov {\p}_J}}
\newcommand{\Cc}{{\mathcal C}}
\newcommand{\Kk}{{\mathcal K}}
\newcommand{\Jj}{{\mathcal J}}
\newcommand{\im}{{\rm im\,}}
\newcommand{\less}{{\smallsetminus}}
\newcommand{\bla}{{\bigl\langle}}
\newcommand{\bra}{{\bigl\rangle}}
\newcommand{\supp}{{\rm supp\,}}
\newcommand{\TU}{{\Tilde U}}
\newcommand{\p}{{\partial}}
\newcommand{\al}{{\alpha}}
\newcommand{\be}{{\beta}}
\newcommand{\om}{{\omega}}
\newcommand{\eps}{{\varepsilon}}
\newcommand{\de}{{\delta}}
\newcommand{\ga}{{\gamma}}
\newcommand{\Ga}{{\Gamma}}
\newcommand{\io}{{\iota}}
\newcommand{\la}{{\lambda}}
\newcommand{\La}{{\Lambda}}
\newcommand{\si}{{\sigma}}
\newcommand{\Si}{{\Sigma}}
\newcommand{\Uu}{{\mathcal U}}
\newcommand{\Bb}{{\mathcal B}}
\newcommand{\Ww}{{\mathcal W}}
\newcommand{\Mm}{{\mathcal M}}
\newcommand{\Tt}{{\mathcal T}}
\newcommand{\oMm}{{\overline {\Mm}}}
\newcommand{\ov}{\overline}
\newcommand{\wh}{\widehat}
\newcommand{\id}{{\rm id}}
\newcommand{\rd}{{\rm d}}
\newcommand{\rT}{{\rm T}}
\newcommand{\Exp}{{\rm Exp}}
\renewcommand{\Tilde}{\widetilde}
\newcommand{\TV}{{\Tilde V}}
\newcommand{\coker}{{\rm coker\,}}
\newcommand{\Ee}{{\mathcal E}}
\newcommand{\Ii}{{\mathcal I}}
\newcommand{\Vv}{{\mathcal V}}
\newcommand{\N}{{\mathbb N}}
\newcommand{\Q}{{\mathbb Q}}
\newcommand{\R}{{\mathbb R}}
\newcommand{\C}{{\mathbb C}}
\newcommand{\E}{{\mathbb E}}
\newcommand{\Z}{{\mathbb Z}}
\newcommand{\Hom}{{\rm Hom}}
\newcommand{\Nn}{{\mathcal N}}
\newcommand{\Pp}{{\mathcal P}}
\newcommand{\ev}{{\rm ev}}
\newcommand{\bB}{{\bf B}}
\newcommand{\bC}{{\bf C}}
\newcommand{\bG}{{\bf G}}
\newcommand{\bE}{{\bf E}}
\newcommand{\bK}{{\bf K}}
\newcommand{\bX}{{\bf X}}
\newcommand{\bz}{{\bf z}}
\newcommand{\bZ}{{\bf Z}}
\newcommand{\s}{{\mathfrak s}}
  \newcommand{\uVv}{{\und \Vv}}
  \newcommand{\uCc}{{\und \Cc}}
  \newcommand{\ud}{{\und d}}
  \newcommand{\us}{{\und{\mathfrak s}}}
    \newcommand{\uC}{{\underline{C}}}
      \newcommand{\uN}{{\und N}}
   \newcommand{\uV}{{\underline{V}}}
   \newcommand{\uphi}{{\underline{\phi}}}
\newcommand{\uKk}{{\underline{\Kk}}}
\newtheorem{theorem}{Theorem}[subsection]
\newtheorem{thm}[theorem]{Theorem}
\newtheorem{lemma}[theorem]{Lemma}
\newtheorem{proposition}[theorem]{Proposition}
\newtheorem{prop}[theorem]{Proposition}
\newtheorem{definition}[theorem]{Definition}
\newtheorem{defn}[theorem]{Definition}
\newtheorem{example}[theorem]{Example}
\newtheorem{remark}[theorem]{Remark}
\newtheorem{rmk}[theorem]{Remark}
\numberwithin{figure}{subsection}
\numberwithin{equation}{subsection}
\newcommand{\MS}{{\medskip}}
\newcommand{\NI}{{\noindent}}
\newcommand{\ti}{\tilde}
\newcommand{\Ti}{\widetilde}
\newcommand{\gr}{\operatorname{graph}}
\newcommand{\pr}{{\rm pr}}
\newcommand{\lm}{\Lambda^{\rm max}\,}
\newenvironment{enumlist}
   { \begin{list} {}
         {  \setlength{\itemsep}{.5ex} \setlength{\leftmargin}{0ex} } }
   { \end{list} }
   \newcounter{qcounter}
\newenvironment{itemlist}
   { \begin{list} {$\bullet$}
         {  \setlength{\itemsep}{.5ex} \setlength{\leftmargin}{2.5ex} } }
   { \end{list} }
\newcommand*{\longhookleftarrow}{\ensuremath{\leftarrow\joinrel\relbar\joinrel\rhook}}
\newcommand*{\longhookrightarrow}{\ensuremath{\lhook\joinrel\relbar\joinrel\rightarrow}}
\newcommand{\leftsub}[2]{{\vphantom{#2}}_{#1}{#2}}
\newcommand\quotient[2]{
        \mathchoice
            {
                \text{\raise1ex\hbox{$#1$}\Big/\lower1ex\hbox{$#2$}}
            }
            {
                #1\,/\,#2
            }
            {
                #1\,/\,#2
            }
            {
                #1\,/\,#2
            }
    }
\newcommand\quot[2]{
                \text{\raise1ex\hbox{$#1$}/\lower1ex\hbox{$\scriptstyle#2$}}
  }
\newcommand\quo[2]{
                \text{\raise1ex\hbox{$#1\!\!$}/\lower1ex\hbox{$\!\scriptstyle#2$}}
  }
\newcommand\qu[2]{
                \text{\raise.8ex\hbox{$\scriptstyle#1\!$}/\lower.8ex\hbox{$\!\scriptstyle#2$}}
  }
\newcommand\qq[2]{
                \text{\raise.8ex\hbox{$#1\!$}/\lower.8ex\hbox{$#2$}}
}
\title[Smooth Kuranishi atlases with trivial isotropy]{The fundamental class of smooth Kuranishi atlases with trivial isotropy}
 \author{Dusa McDuff}
\address{Department of Mathematics,
 Barnard College, Columbia University}
\email{dusa@math.columbia.edu}
\author{Katrin Wehrheim}
\address{Department of Mathematics, UC Berkeley}
\email{katrin@math.berkeley.edu}
\thanks{partially supported by NSF grants  
DMS 0905191, DMS 1308669 and DMS 0844188}
\keywords{virtual fundamental cycle, virtual fundamental class, pseudoholomorphic curve, Kuranishi atlas, Kuranishi structure, Gromov--Witten invariant, transversality, finite dimensional reduction}
\subjclass[2010]{53D35,53D45,54B15,57R17,57R95}
\begin{document}
\maketitle

\begin{abstract}
Kuranishi structures were introduced in the 1990s by Fukaya and Ono for the purpose of assigning a virtual cycle to moduli spaces of pseudoholomorphic curves that cannot be regularized by geometric methods. 
Their core idea was to build such a cycle by patching local finite dimensional reductions.
The first sections of this paper discuss topological, algebraic and analytic challenges that arise 
in this program.

We then develop a theory of Kuranishi atlases and cobordisms that transparently resolves these challenges, for simplicity concentrating on the case of trivial isotropy. 
In this case, we assign to a cobordism class of additive weak Kuranishi atlases both a virtual moduli cycle 
(VMC -- a cobordism class of smooth manifolds) and a virtual fundamental class (VFC -- a Cech homology class). 
We moreover show that such Kuranishi atlases exist on simple Gromov-Witten moduli spaces and develop the technical results in a manner that easily transfers to more general settings. 
\end{abstract}

\tableofcontents
\section{Introduction}

Kuranishi structures were introduced to symplectic topology by Fukaya and Ono \cite{FO},
and refined by Joyce \cite{J1}, in order to extract homological data from compactified moduli spaces of holomorphic maps in cases where geometric regularization approaches such as
perturbations of the almost complex structure do not yield a smooth structure on the moduli space.
These geometric methods generally cannot handle curves that are nowhere injective.  The first instance in which it was important to overcome these limitations
was the case of nowhere injective spheres, which are then multiply covered and
have nontrivial isotropy.\footnote
{This is not the case for discs. For example a disc with boundary on the equator can wrap two and a half times around the sphere. This holomorphic curve, called the lantern, has trivial isotropy. }  
Because of this, the development of virtual transversality techniques in \cite{FO}, and the related work by Li and Tian \cite{LT}, was focussed on dealing with finite isotropy groups, while some algebraic, topological, and analytic issues were not resolved. 

The goal of this paper is to explain these issues, provide a framework for resolving them, and demonstrate this in the first nontrivial case. To that end we focus on the most fundamental issues, which are already present in applying virtual transversality techniques to moduli spaces of holomorphic spheres without nodes or nontrivial isotropy.
We give a survey of regularization techniques in symplectic topology in Section~\ref{s:fluff}, pointing to some general analytic issues in Sections~\ref{ss:geom} --\ref{ss:kur}, and discussing the specific 
algebraic and topological issues of the Kuranishi approach in Sections~\ref{ss:alg} and \ref{ss:top}.
The main analytic issue in each regularization approach is in the construction of transition maps for a given moduli space, where one has to deal with the lack of differentiability of the reparametrization action on infinite dimensional function spaces discussed in Section~\ref{s:diff}.
When building a Kuranishi atlas on a moduli space, this issue also appears in a sum construction for basic charts on overlaps, and has to be dealt with separately for each specific moduli space.
We explain the construction of basic Kuranishi charts, their sums, and transition maps in the case of spherical Gromov--Witten moduli spaces in Section~\ref{s:construct}, outlining the proof of a more precise version of the following in Theorem~\ref{thm:A2}.
\MS

\NI {\bf Theorem A.}\,\,{\it  Let $(M,\om,J)$ be a symplectic manifold with tame almost complex structure, and let $\Mm(A,J)$ be the space of simple $J$-holomorphic maps $S^2\to M$ in class $A$ with one marked point, modulo reparametrization. 
If $\Mm(A,J)$ is compact (e.g.\ if $A$ is \lq\lq $\omega$-minimal"), then we construct an open cover
$\Mm(A,J)= \bigcup_{i=1,\ldots,N} F_i$ by ``footprints'' of basic Kuranishi charts $(\bK_i)_{i=1,\ldots,N}$,
that are compatible in the following sense:

For any tuple $(\bK_i)_{i\in I}$ of basic charts with overlapping footprints, whose obstruction spaces $E_i$ satisfy a ``transversality condition'', there is transition data as follows:
We construct a ``sum chart'' $\bK_I$ with obstruction space  $\prod_{i\in I}E_i$ and footprint  $F_{I}=\bigcap_{i\in I} F_i \subset \Mm(A,J)$, such that a restriction of each basic chart $\bK_i|_{F_I}$ includes into $\bK_I$ by a coordinate change.
Moreover, we construct coordinate changes $\Hat\Phi_{IJ}$ from $\bK_I$ to $\bK_J$ for each $I\subset J$, so that the collection of basic Kuranishi charts and tranisition data ${(\bK_I, \Hat\Phi_{IJ})}$ forms an additive weak Kuranishi atlas in the sense of Definitions~\ref{def:K}, \ref{def:Ku2}; 
}
\MS

The abstract notions of Kuranishi chart, restriction, and coordinate change are introduced in detail in Section~\ref{s:chart}. Throughout, we simplify the discussion by assuming that all isotropy groups are trivial.
In that special case our basic definitions largely follow \cite{FO,J1}, though avoiding notions of germs.
We then introduce in Section~\ref{s:Ks} a new notion of Kuranishi atlas as a covering family of basic charts together with transition data satisfying a cocycle condition involving an inclusion requirement on the domains of the coordinate changes.
At this point one could already try to construct compatible transverse perturbations of the sections in each Kuranishi chart.
However, there is no guarantee that the perturbed zero set modulo transition maps is a closed manifold, in particular Hausdorff -- which is an essential requirement in the construction of a {\it virtual moduli cycle}, which should be a cycle in an appropriate homology theory representing the {\it virtual fundamental class} of $X$.
This first topological issue, along with many others, is remedied by our theory of topological Kuranishi atlases \cite{MW:top}, which is universally applicable to regularization approaches of Kuranishi type (involving e.g.\ isotropy, boundary and corners, or lack of differentiability). 
Thus Sections~\ref{s:chart}, \ref{s:Ks}, \ref{ss:red} are mostly an expository demonstration of the ease of adapting \cite{MW:top} to a specific differentiable setting -- in our case yielding a self-contained development of the theory of smooth Kuranishi atlases and cobordisms with trivial isotropy, in which most proofs are quoted directly from \cite{MW:top}. 

In particular, we construct a {\it virtual neighbourhood} of the moduli space, with Hausdorff topology, in which the perturbed zero set modulo transition maps is a compact subset.
This construction in \cite{MW:top} requires {\it tameness} of the atlas, in particular a {\it strong cocycle condition} in which the domain of a composition of coordinate changes equals the domain of a direct coordinate change. However, the coordinate changes arising from sum constructions as in Theorem~A generally only satisfy a {\it weak cocycle condition} on the overlap of domains, thus yielding a {\it weak Kuranishi atlas}.
On the other hand, sum constructions naturally provide an {\it additivity} property for the obstruction spaces, and \cite{MW:top} develops a shrinking process that refines filtered weak Kuranishi atlases to tame Kuranishi atlases. 
While the implementation of these results in our setting in Section~\ref{ss:tame} is lengthy due to the development of the language of tameness, shrinkings, etc., it only requires one nontrivial proof: additivity implies filtration. 
Similarly, Section~\ref{ss:red} transfers the notion of {\it reductions} from \cite{MW:top} to our context, which drastically reduces the complexity of compatibility conditions in the construction of perturbations.
Moreover, both shrinkings and reductions are shown to be unique up to a notion of {\it concordance} that is developed in Section~\ref{ss:Kcobord} as special case of {\it Kuranishi cobordism}.

Based on these algebraic and topological preparations, Section~\ref{ss:sect} develops the notion of {\it precompact transverse (cobordism) perturbations} and shows that the perturbed zero sets are closed manifolds resp.\ cobordisms. Then the main technical effort in this paper is to prove existence and uniqueness of these perturbations in Section~\ref{ss:const}. 
Next, the main conceptual effort is the development of a theory of orientations in Section~\ref{ss:vorient}.
Both of these main efforts are presented in a way that should allow for direct extensions to many other differentiable settings, as we demonstrate in the case of nontrivial isotropy in \cite{MW:iso}.
Finally, Section~\ref{ss:VFC} proves the following Kuranishi regularization theorem.

\MS
\NI {\bf Theorem B.}\,\,{\it
Let $\Kk$ be an oriented, $d$-dimensional, additive weak Kuranishi atlas with trivial isotropy on a compact metrizable space $X$. Then $\Kk$ determines 
\begin{itemize}
\item
a {\bf virtual moduli cycle (VMC)}, that is a cobordism class of smooth, oriented, compact $d$-dimensional manifolds;
\item
a {\bf virtual fundamental class (VFC)}, that is an element $[X]^{vir}_\Kk$ in the \v{C}ech homology group $\check{H}_d(X;\Q)$.
\end{itemize}
Both depend only on the oriented, additive weak cobordism class of $\Kk$.}
\MS

Precise statements are given in Theorems~\ref{thm:VMC1} and \ref{thm:VMC2}.
We use rational, rather than integer, \v{C}ech homology, since we need a continuity property explained in Remark~\ref{rmk:Cech}.  
A novel point here is that the virtual fundamental class can be realized as an actual homology class on 
the moduli space $X$, which can then be pushed forward by e.g.\ evaluation maps.
Previous constructions of the VFC were also ``virtual'' in the sense that they only constructed pushforwards of the VFC.
Moreover, they require compatible and smooth extensions of the evaluation maps to the full domains of the Kuranishi charts, rather than just a continuous map defined on $X$, which induces maps from the zero sets.

\MS
\NI {\bf Extensions:}
We prove Theorem~B in a narrative that should be applicable to any differentiable refinement of a notion of topological Kuranishi atlases with the above two features (existence and uniqueness of precompact transverse perturbations, and a coherent theory of orientations). For example, in the case of nontrivial isotropy in \cite{MW:iso}, the proofs only differ in the use of multivalued perturbations, which are obtained by pullback of precompact transverse perturbations constructed in Section~\ref{ss:const}. Their effect is to replace closed manifolds as perturbed solution sets with weighted branched manifolds, which have a rational fundamental class. So the resulting VMC is a cobordism class of weighted branched manifolds, whereas the VFC remains a rational \v{C}ech homology class.

One limiting factor to the applicability of Kuranishi regularization (in fact any abstract regularization approach) is that one must start off from a compactified moduli space, and that all singular curves in the compactification, however ``nongeneric'', need to be covered with Kuranishi charts. 
At the moment, this excludes applications to both the ASD-with-Lagrangian-boundary-condition and Quilt-with-strip-shrinking moduli spaces studied by the second author, since their compactifications have not (yet) been constructed, let alone given local Fredholm descriptions near the putative exotic bubbling phenomena.
\MS

\noindent
{\bf Organization:} 
The following remarks in \S\ref{ss:back} on the context of this project, together with Sections~\ref{s:fluff} and \ref{s:diff}, provide a survey of regularization techniques in symplectic topology and their pitfalls. Section~\ref{s:construct} continues this discussion for the specific example of Kuranishi atlases for genus zero Gromov--Witten moduli spaces, and also outlines an approach to proving Theorem~A.
All of these sections are essentially self-contained and can be read in any order.
The main technical parts of the paper, Sections~\ref{s:chart}, \ref{s:Ks}, \ref{s:red}, and  \ref{s:VMC} , are independent of the previous sections, but strongly build on each other as well as \cite{MW:top} towards a proof of Theorem~B. 
For readers not familiar with this subject, we recommend Section~\ref{s:fluff} (with \S\ref{ss:poly}, \S\ref{ss:alg} skippable) as introduction to these technical parts. 
In order to make our exposition as self-contained as possible, we import the definitions and results of \cite{MW:top} by restating them in the present context. References to the corresponding content of \cite{MW:top} is given at the beginning of definitions resp.\ in the proof or results.
Readers familiar with \cite{MW:top} should be able to skim Sections~\ref{s:chart}, \ref{s:Ks}, \ref{ss:red} fairly quickly, taking note of only a few new concepts and results, of which only the first two are needed for the VMC/VFC construction in this paper:

\begin{itemlist}
\item
Definition~\ref{def:change} introduces an index condition, which Lemma~\ref{le:change} shows to be equivalent to the tangent bundle condition of \cite{J1,FOOO}, and which is compatible with composition by Lemma~\ref{le:cccomp}.
\item
Definition~\ref{def:Ku2} introduces a notion of additivity for (weak) Kuranishi atlases, which by Lemma~\ref{le:Ku3} induces a canonical filtration on the underlying (weak) topological Kuranishi atlas.
\item
Definition~\ref{def:Kcomm} introduces a notion of commensurability between additive weak Kuranishi atlases, which by Lemma~\ref{lem:cobord1}~(iii) implies additive weak concordance.
\item
Example~\ref{ex:nonlin} shows that the obstruction bundle $\pr_\Kk:|\bE_\Kk|\to |\Kk|$ of a Kuranishi atlas may fail to have well defined linear structures on the fibers, though tameness guarantees compatible linear structures by Proposition~\ref{prop:Khomeo}.
\item
Lemma~\ref{le:phitrans} shows that transition maps in tame Kuranishi atlases have tightly controlled images that intersect transversely.
\item
Proposition~\ref{prop:red} associates to any reduction of a tame Kuranishi atlas a Kuranishi atlas that -- while neither additive nor tame -- satisfies the strong cocycle condition.
\end{itemlist}

\medskip
\noindent
{\bf Acknowledgements:}
We would like to thank
Mohammed Abouzaid,
Kenji Fukaya,
Tom Mrowka,
Kaoru Ono,
Yongbin Ruan,
Dietmar Salamon,
Bernd Siebert,
Cliff Taubes,
Gang Tian,
and
Aleksey Zinger
for encouragement and enlightening discussions about this project,
and Jingchen Niu for pointing out some gaps in an earlier version.
We moreover thank MSRI, IAS, BIRS and SCGP for hospitality.

\subsection{Background, outlook, and relation to other regularization approaches} \label{ss:back} \hspace{1mm}\\ \vspace{-3mm}

Since our project revisits almost twenty years old, much used theories, this section describes its background, motivations, and outlook beyond our work, as well as 
relations to old and new work since the original version of this paper appeared as \cite{MW0}.\footnote{
To address misinterpretations of \cite{MW0} as merely regularizing simple Gromov-Witten moduli spaces and hence of little general interest, we developed the universally applicable parts into a general theory of topological Kuranishi atlases \cite{MW:top} whose usefulness should be evident.
The present paper consists of the expository parts of \cite{MW0} -- which still seem timely and are meant to provide context for the ongoing discussions of regularization approaches -- and technical parts that construct a virtual moduli cycle for a smooth Kuranishi atlas with trivial isotropy, using techniques that 
-- as we will see in \cite{MW:iso} -- 
generalize easily to more interesting cases
such as nontrivial isotropy.
}

\smallskip\NI
{\bf Background:}
Following Gromov's seminal work \cite{GRO}, the construction of Gromov-Witten invariants in the symplectic setting was first developed in the 1980s in settings where the moduli spaces, for appropriate choice of almost complex structure, carry a natural fundamental class or pseudocycle.
Comparisons with the algebro-geometric setting, in which the Gromov-Witten spaces carry a ``virtual fundamental class'', soon indicated that one should also be able to define such a class in general symplectic settings (and for the large variety of moduli spaces of pseudoholomorphic curves), using more abstract regularization approaches based on the local description of moduli spaces as zero sets of Fredholm sections.
Various approaches were proposed in the 1990s by Fukaya--Ono \cite{FO}, Li--Tian \cite{LT}, Liu--Tian \cite{LiuT}, Ruan~\cite{Ruan}, Siebert \cite{Sieb}. In the 2000s, the geometric methods were refined by Cieliebak--Mohnke \cite{CM} (with further developments in \cite{Gerst,Io,IoP,TZ}), while the abstract approaches were extended by e.g.\ Chen--Tian \cite{CT}, Chen--Li \cite{CL}, Fukaya--Oh--Ohta-Ono \cite{FOOO}, Joyce \cite{J1}, Lu \cite{Lu}, Lu--Tian \cite{LuT}
to include a growing variety of moduli spaces and localization tools. However, these are all variations of either an obstruction bundle approach or a Kuranishi approach, as explained in \S\ref{ss:approach}.
A third type of abstract regularization approach is still being developed by Hofer--Wysocki--Zehnder [HWZ1--5].

\smallskip\NI
{\bf Motivations:}
In a 2009 talk at MSRI \cite{w:msritalk}, the second author posed foundational questions on all these abstract regularization approaches.
The first author, who had been uneasily aware of analytic problems with the approach of \cite{LiuT}, the basis of her expository article \cite{Mcv}, decided that now was the time to clarify the constructions once and for all.
We found that within the obstruction bundle framework used in \cite{Mcv} (which is most closely related to the obstruction theory of algebraic geometry, as explained in \S\ref{ss:approach}, \S\ref{ss:LTBS}) we could not overcome the issue of lack of differentiability of the reparametrization action. This enters both via local slices of the action or Deligne--Mumford type spaces of domains and maps, and is discussed in detail in \S\ref{s:diff}.
When attempting to resolve these issues via a Kuranishi approach (which focusses on finite dimensional reductions as explained in \S\ref{ss:kur}), we soon found the differentiability issue in the compatibility of charts, but realized that this issue could be resolved by geometric construction of obstruction spaces, as we explain in \S\ref{s:construct}.
However, in making the abstract framework explicit, we needed to resolve ambiguities in the notion of a Kuranishi structure, concerning the precise meaning of germ of coordinate changes and the cocycle condition, discussed in \S\ref{ss:alg}.
More generally, we found it difficult to find a notion of Kuranishi structure that on the one hand clearly has a virtual fundamental class (some version of Theorem B), and on the other hand arises from fairly simple analytic techniques for holomorphic curves (some version of Theorem A).
One issue that we will mention only briefly in \S\ref{ss:approach} is the lack of smoothness of the standard gluing constructions, which affects the smoothness of the Kuranishi charts near nodal or broken curves.
The topological issues mentioned above and discussed in detail in \S\ref{ss:top} are more fundamental and surprising since there had been little doubt even in our minds that the perturbative construction of an Euler class for orbibundles should have a straightforward generalization to ``patching local Euler classes arising from a cover by local finite dimensional reductions''.

Most of these oversights seem to happen when only an oversimplified model case -- such as the Euler class of an orbifold bundle or an equivariant Fredholm section with compact zero set -- is worked out in detail, and the extension to an actually relevant setting is merely sketched or left to intuition.
This motivated our decision to give a completely explicit VFC construction in the simplest relevant and nontrivial case.
We have moreover found the topic of regularization of moduli spaces to lack the ``structural stability'' of other topics in symplectic geometry, in which a reasonable set of ideas almost always has a rigorous proof in its span, and imprecisions can be corrected locally. This is likely due to the otherwise safe intuitions from physics and algebraic geometry failing to have traction on this topic. As a result, our theory had to undergo constant global changes until the last technical piece was in place.
This is our reason for insisting on excruciating precision in each definition and step of proof.

\smallskip\NI
{\bf Relations:}
As the core of our work \cite{MW0} was nearing completion, we alerted Fukaya et al and Joyce to some of the issues we had uncovered. The ensuing discussion resulted in new versions of their approaches \cite{FOOO12,Jd,J2} and also motivated a new version of the Kuranishi approach by Pardon~\cite{pardon}, while additional work on the obstruction bundle approach appeared in \cite{CLW1,CLW2,Liu}.
We will comment on all these approaches in \S\ref{s:fluff} though we have not verified any of these papers in sufficient detail to endorse their correctness.
Here we compare the basic Kuranishi notions.

While the previous definitions of Kuranishi structures in \cite{FO,J1} are algebraically inconsistent as explained in \S\ref{ss:alg}, our approach is compatible with the notions of \cite{FOOO,FOOO12} in the case of trivial isotropy.  Indeed we show in Remark~\ref{rmk:otherK} how to obtain a Kuranishi structure in the latter sense from a weak Kuranishi atlas.  
However, the two approaches differ significantly when isotropy is nontrivial; see \cite{MW:iso,McL}.
One can make an analogy with the development of the theory of orbifolds:  The approach of \cite{FOOO12} is akin to Satake's definition of a $V$-manifold, while our definitions are much closer to the idea of describing an orbifold as the realization of an \'etale proper groupoid.
In our view, weak Kuranishi atlases in the sense of Definition~\ref{def:K} are the natural outcomes of constructions of compatible finite dimensional reductions, and we see a clear abstract path from an atlas to a VMC.  Constructing a weak atlas involves checking only a finite number of consistency conditions for the coordinate changes, while uncountably many such conditions must be checked if one tries to construct a Kuranishi structure directly.  

The notion of implicit atlas in \cite{pardon} is essentially our notion of tame Kuranishi atlas, even in the case of nontrivial isotropy. While we obtain tameness by an abstract refinement process from a much weaker structure, \cite{pardon} aims to obtain this directly from canonical analytic descriptions of the moduli space -- at the expense of a differentiable structure in the Kuranishi charts. 
We will further compare the different variations of the Kuranishi approach in Remarks~\ref{rmk:JBS} and \ref{rmk:otherK}.

\smallskip\NI
{\bf Outlook:}
The present Kuranishi regularization Theorem B applies only to Gromov--Witten moduli spaces that contain neither nodal nor multiply covered curves (as shown in Theorem A).
However, a generalization of our approach to other moduli spaces of closed pseudoholomorphic curves with Gromov compactification only requires two distinct additions to both the abstract theory and the constructions on a moduli space:

\begin{itemlist}
\item
Multiply covered curves yield local finite dimensional reductions in which a nontrivial isotropy group acts. 
In \cite{MW:iso}, we capture this abstractly in a notion of Kuranishi atlases 
with nontrivial isotropy and extend our VMC/VFC construction to this case.
This notion captures significantly more information than the notions of Kuranishi structures in \cite{FOOO,J1}, but we show in \cite{MW:GW,Mcn} how it naturally arises from genus zero Gromov--Witten moduli spaces. 
For an outline see the August 2013 lecture~\cite{McL}. 
(This requires recasting the constructions of \S\ref{s:construct} in terms of stabilizations rather than local obstruction bundles, and a systematic addition of marked points on which the isotropy groups act by permutation.)

\item
Nodal curves have neighbourhoods described by gluing constructions.
These again yield Kuranishi charts with trivial or nontrivial isotropy, but the smooth structures in different gluing charts are usually not preserved by coordinate changes. This issue needs to be addressed either by constructing more compatible smooth structures, or by proving a regularization theorem for Kuranishi atlases with less compatible smooth structures.
The classical gluing theory in e.g.\ \cite{MS} yields Kuranishi charts with stratified smooth structures, and we checked that these are preserved by coordinate changes. 
We also believe that it should be feasible to extend our VMC/VFC constructions to this case, though it will require refined notions of stratified smoothness with chain rules. We are not planning to work on this extension but will be happy to assist others if the need arises.
Instead, the gluing theorems from polyfold theory yield smooth Kuranishi atlases with nontrivial isotropy, to which our VMC/VFC constructions in \cite{MW:iso} apply directly. 
In fact, \cite{Yang} announced a general construction of smooth Kuranishi structures from a proper Fredholm section in a polyfold bundle. However, the price to pay by using polyfold theory (apart from the temptation of using its own regularization theorem directly) is that it uses a smooth structure on the Deligne-Mumford spaces (constructed in \cite{HWZ:DM} with globally rescaled gluing parameters) which is not compatible with their complex structure.
\end{itemlist}

\NI
An intermediate approach to the gluing issue is being taken by Castellano \cite{Cast1,Cast2}, who proves a gluing theorem that -- after appropriate rescaling of the gluing parameters -- yields stratified smooth Kuranishi atlases with $\Cc^1$-differentiability across strata, to which the VMC/VFC constructions given here and in \cite{MW:iso} apply with minor modifications. He moreover shows that the resulting genus zero Gromov--Witten invariants satisfy the standard axioms.
We believe that $\Cc^1$-Kuranishi atlases (which then automatically carry a VMC/VFC) for other moduli spaces of closed holomorphic curves can be constructed analogously, though each case requires a geometric construction of local slices as well as obstruction bundles specific to the setup, and careful gluing analysis.

An extension of our approach to moduli spaces which involve boundary nodes or breaking/buildings, as in Floer theories, SFT, or the construction of $A_\infty$-structures, would require -- beyond the construction of compatible Kuranishi charts on any given moduli space -- two more additions to the abstract theory:

\begin{itemlist}
\item
The gluing constructions near boundary nodes and breaking yield boundary and corners when moduli spaces are regular. 
For the regularization of general moduli spaces, our notion of Kuranishi cobordism should have a 
straightforward generalization that allows for corners and yields (branched weighted) manifolds with boundary and corners as VMC.
However, this would require a generalization of the notion of collared boundary in Definition~\ref{def:Ycob}, where two distinct boundary components (the incoming and outgoing end of a cobordism) have disjoint collars. When allowing for corners, the main boundary strata will still be disjoint but have overlapping collars. Since our notion of Kuranishi cobordism requires the charts and coordinate changes to have product form on collars, the corner version will require each corner stratum to have a collar homeomorphic to a product with $[0,\eps)^k$ -- corresponding to gluing parameters, and arising from the overlap of boundary collars.
Moreover, the various corner collars will need to be compatible in the sense that e.g.\ the $[0,\eps)^3$ collars induced from two different ``orders of gluing'' (each arising from an overlap of a boundary stratum and a corner stratum with $[0,\eps)^2$ collar) are the same.
On the one hand, this is necessary to generalize our construction of relative perturbations in Proposition~\ref{prop:ext2}. 
On the other hand, this requires a construction of Kuranishi charts from associative gluing maps in the sense of \cite{w:Morse}. However, to the best of our knowledge the present gluing constructions in the literature (including the gluing maps arising from the polyfold approach) do not naturally satisfy associativity.
(The construction in \cite{w:Morse} crucially uses the Morse flow and Euclidean normal form near critical points.)

\item
More globally, the gluing constructions identify the boundary strata of each moduli space with (fibered) products of other moduli spaces of similar type, and the VMC/VFC construction is required to be compatible with these ``gluing operations'' in order to obtain the intended algebraic structures such as $d^2=0$ in Floer theory, or the $A_\infty$-relations.
Thus the regularization has to solve the additional task of respecting the fiber product structure on the boundary. In perturbative approaches, this issue is also known as constructing coherent perturbations and has to be addressed separately in each specific geometric setting since it requires a hierarchy of moduli spaces which permits one to construct the perturbations iteratively.
In the construction of the Floer differential on a finitely generated complex, such an iteration can be performed using an energy filtration thanks to the algebraically simple gluing operation.
However, once one deals with homotopies of data or wants to prove independence from the choice of perturbations, compatibility with the gluing operation usually excludes transversality relative to the boundary strata -- in particular, $1$-dimensional moduli spaces arising from homotopies can intersect corner strata of arbitrarily high degeneracy.\footnote{This ``diagonal relator problem'' occurs whenever curves can be glued to themselves, starting with circle-valued Morse theory as in \cite{Hu}. In geometric regularization approaches it is often avoided by constructing direct continuation maps instead, but this option does not exist for abstract perturbations.}
This has been resolved in some special cases by a refined gluing analysis \cite{Hu} or artificial deformation of the gluing operation \cite{SEID}, but there does not seem to be a general understanding, let alone solution method, for this issue.
\end{itemlist}

\NI
It seems to us that polyfold theory is better suited to regularize moduli spaces which involve boundary nodes or breaking. Since it avoids finite dimensional reductions, it only requires constructions of pregluing maps -- which are naturally associative. It also offers weaker notions of transversality relative to the boundary stratification and provides an analytic framework in which the obstruction bundle gluing analysis \cite{Hu} can be generalized to settings in which there is at most one way of gluing a curve to itself \cite{jiayong}.

Finally, many applications of pseudoholomorphic curve invariants require equivariant regularization. 
For example, Floer's proof of the Arnold conjecture \cite{floer} argues with an $S^1$--action by reparametrizations on the Floer trajectory space $\Mm$ for an autonomous Hamiltonian, whose fixed points (and hence only solutions in dimension $0$) are the Morse trajectories.
When geometric (automatically $S^1$-equivariant) regularization fails, the argument was translated into abstract regularization terms by \cite{FO,LiuT} roughly as follows: The compactified Floer trajectory space $\oMm$ is equipped with a Kuranishi atlas of index $0$ and an $S^1$--action whose fixed point set $F\subset\oMm$ are the Morse trajectories, at which the Kuranishi section is transverse. This induces a Kuranishi atlas on $(\oMm\less F)/S^1$ that has index $-1$ and thus allows for a perturbation with empty solution set. Pulling this back to $\oMm$ yields a perturbation whose only solutions are $F$.
In order to make such a proof rigorous in our perturbative framework, one has to deal with the following challenges:

\begin{itemlist}
\item
A notion of $S^1$--action on a Kuranishi atlas should involve compatible $S^1$--actions on the Kuranishi domains and obstruction spaces with respect to which the sections are equivariant.
When -- as in the Arnold case -- the sections are transverse at the $S^1$--fixed points in the zero set, and the action on the zero set is otherwise free, then one can expect the existence of an equivariant transverse perturbation. Its construction via a quotient of the Kuranishi atlas would have to shrink domains appropriately to avoid all $S^1$--fixed points, not just those in the zero set.
\item
Since the compactified Floer trajectory space $\oMm$ may contain irregular solutions of all kinds, such as broken trajectories, or trajectories with sphere bubbles, the $S^1$--equivariant Kuranishi charts have to be constructed near all kinds of solutions. 
This requires choices of obstruction spaces that are equivariant under the non-differentiable $S^1$--action as well as gluing constructions that are compatible with the diagonal $S^1$-action on broken Floer trajectories.
\end{itemlist}

Theories addressing these points are now announced in \cite{FOOO12,pardon}.
Again, it seems to us that polyfold theory is better suited to achieve equivariant regularization since the first challenge only requires another generalization of a classical theorem in finite dimensional differential geometry -- something that has already been achieved in many instances for the polyfold framework -- and the second challenge is absent since the natural $S^1$--action on the ambient polyfold is already scale-smooth and compatible with pregluing.

In summary, the Kuranishi approach seems to be less technologically sophisticated and thus mostly has value in the Gromov--Witten setting, especially in very geometric situations such as \cite{MT}, or in situations very close to algebraic geometry such as \cite{Mcu, Z2}.
Our project aims to develop the needed theory in the simplest way possible, using basic tools from general and differential topology rather than sheaf theory or sophisticated category theory as in \cite{pardon,J2}.

\section{Regularizations of holomorphic curve moduli spaces}  \label{s:fluff}

One of the central technical problems in the theory of holomorphic curves, which provides many of the modern tools in symplectic topology, is to construct algebraic structures by extracting homological information from moduli spaces of holomorphic curves in general compact symplectic manifolds $(M,\om)$.
We will refer to this technique as {\it regularization} and note that it requires 
several
distinct components:
Some perturbation technique is used to achieve {\it transversality}, which gives the moduli space a smooth structure. In order for this to induce a count or chain, the perturbation also has to preserve {\it compactness} and a suitable version of {\it Hausdorffness} of the moduli space.
Moreover, some type of cobordism technique is used to achieve {\it invariance}, i.e.\ independence of the resulting homological information from the choices involved.

The aim of this section is to give an overview of the different regularization approaches in the case of genus zero Gromov--Witten invariants $\bla \al_1,\ldots,\al_k\bra_{A} \in \Q$.
These are defined as a generalized count of $J$-holomorphic genus $0$ curves in class $A\in H_2(M;\Z)$ that meet $k$ representing cycles of the homology classes $\al_i\in H_*(M)$.
This number should be independent of the choice of $J$ in the contractible space of $\om$-compatible almost complex structures, and of the cycles representing $\alpha_i$.
For complex structures $J$ one can work in the algebraic setting, in which the curves are cut out by holomorphic functions on $M$, but general symplectic manifolds do not support an integrable $J$.  For non-integrable $J$, the approach introduced by Gromov \cite{GRO} is to view the (pseudo-)holomorphic curves as maps to $M$ satisfying the Cauchy--Riemann PDE, modulo reparametrizations by automorphisms of the complex domain.

To construct the Gromov--Witten moduli spaces of holomorphic curves, one starts out with the typically noncompact quotient space
$$
\Mm_k(A,J) := \bigl\{ \bigl( f: S^2 \to M, \bz\in (S^2)^k
\less \Delta \bigr) \,\big|\, f_*[S^2]=A , \pbar f = 0 \bigr\} / \PSL(2,\C)
$$
of equivalence classes of tuples $(f,\bz)$, where $f$ is a $J$-holomorphic map, the marked points
$\bz= (z_1,\ldots z_k)$ are pairwise disjoint, and the equivalence relation is given by the reparametrization action $\ga\cdot(f,\bz)=(f\circ\ga,\ga^{-1}(\bz))$
of the M\"obius group $\PSL(2,\C)$.
This space is contained (but not necessarily dense) in the compact moduli space $\oMm_{k}(A,J)$ formed by the equivalence classes of $J$-holomorphic genus $0$ stable maps $f:\Si \to M$ in class $A$ with $k$ pairwise disjoint marked points.  There is a natural evaluation map
\begin{equation} \label{eq:ev}
\ev: \oMm_{k}(A,J)\to M^k, \quad
[\Si,f,(z_1,\ldots,z_k)]\mapsto \bigl(f(z_1),\ldots,f(z_k)\bigr),
\end{equation}
and one expects the Gromov--Witten invariant
$$
\bla \al_1,\ldots,\al_k\bra_{A}:=\ev_*[\oMm_{k}(A,J)]\cap (\al_1\times\ldots\times \al_k)
$$
to be defined as intersection number of a homology class $\ev_*[\oMm_{k}(A,J)]\in H_*(M;\Q)$ with the class $\al_1\times\ldots\times \al_k$.
The construction of this homology class requires a {\it regularization} of $\oMm_{k}(A,J)$.
In Sections~\ref{ss:geom} - \ref{ss:kur} we give a brief overview of the approaches using geometric means or an abstract polyfold setup, and review the fundamental ideas behind Kuranishi 
structures.
Sections~\ref{ss:alg} and~\ref{ss:top} then discuss the algebraic and topological issues in constructing a virtual fundamental class from a Kuranishi 
structure or atlas.

\subsection{Geometric regularization} \label{ss:geom}\hspace{1mm}\\ \vspace{-3mm}

For some special classes of symplectic manifolds, the regularization of holomorphic curve moduli spaces can be achieved by a choice of the almost complex structure $J$, or more generally a perturbation of the Cauchy--Riemann equation $\pbar f =0$ that preserves the symmetry under reparametrizations, and whose Hausdorff compactification is given by nodal solutions.
Note that these properties are generally not preserved by perturbations of a nonlinear Fredholm operator such as $\pbar$, so this approach requires a class of perturbations
that preserves the geometric properties of $\pbar$.

The construction of Gromov--Witten invariants from a regularization of $\oMm_{k}(A,J)$ most easily fits into this approach if $A$ is a  homology  class on which $\om(A)>0$ is minimal, since then $A$
cannot be represented by a multiply covered or nodal holomorphic sphere.
For short, we call such $A$ $\om$-{\it minimal.}
In this case $\Mm_{k}(A,J')$ is smooth for generic $J'$, and compact if $k\le 3$.
More generally, this approach applies to all spherical Gromov--Witten invariants in semipositive symplectic manifolds, since in this case it is possible to compactify the image $\ev(\Mm_{k}(A,J'))$ by adding codimension-$2$ strata.
Full details for this construction can be found in \cite{MS}.
The most general form of this {\bf geometric regularization approach} proceeds in the following steps.

\begin{enumlist}
\item{\bf Fredholm setup:}
Write the (not necessarily compact) moduli space $\Mm = \si^{-1}(0)/{\rm Aut}$ as the
quotient, by an appropriate reparametrization group ${\rm Aut}$, of an equivariant smooth Fredholm section $\si:\Hat\Bb\to\Hat\Ee$ of a Banach vector bundle $\Hat\Ee\to\Hat\Bb$.
For example, $\Mm=\Mm_k(A,J)$ is cut out from $\Hat\Bb=W^{m,p}(S^2,M)$ by the Cauchy--Riemann operator $\si=\pbar$, which is equivariant with respect to ${\rm Aut}=\PSL(2,\C)$.

\item {\bf Geometric perturbations:}
Find a Banach manifold $\Pp\subset\Ga^{\rm Aut}(\Hat\Ee)$ of {\it equivariant }
sections for which the perturbed sections $\si+p$ have the same 
Fredholm and 
compactness properties as $\si$.
For example, the contractible set $\Jj^\ell$ of compatible $\Cc^\ell$-smooth almost complex structures for $\ell\geq m$ provides equivariant sections $p=\overline{\partial}_{J'}-\pbar$ for all $J'\in\Jj^\ell$. Moreover, $J'$-holomorphic curves also have a Gromov compactification $\oMm_k(A,J')$.

\item{\bf Sard--Smale:}
Check transversality of the section $(p,f)\to (\si + p)(f)$ to deduce that the universal moduli space
$$
\Mm(\Pp):=
{\textstyle \bigcup_{p\in\Pp}} \; \{p\}\times (\si + p)^{-1}(0) \;\subset\; \Pp\times\Hat\Bb
$$
is a differentiable Banach manifold. (In the example it is $\Cc^\ell$-differentiable.)
Then the Sard--Smale theorem applies to the projection $\Mm(\Pp)\to\Pp$ when the differentiability $\ell$ is sufficiently high -- larger than the Fredholm index of $\si$.
It provides a comeagre subset $\Pp^{\rm reg}\subset\Pp$ of regular values, for which the perturbed sections $\si_p:=\si+p$ are transverse to the zero section.
For holomorphic curves and perturbations given by $\Jj^\ell$, this transversality holds if all holomorphic maps are {\it somewhere injective}.
For $\om$-minimal  $A$ this weak form of injectivity
is a consequence of unique continuation (cf.\ \cite[Chapter~2]{MS}),
but for general Gromov--Witten moduli spaces this step only
applies to the subset $\Mm_k^*(A,J)$ of simple (i.e.\ not multiply covered) curves.
\item {\bf Quotient:}
For $p\in \Pp^{\rm reg}$, the perturbed zero set $\si_p^{-1}(0)\subset \Hat\Bb$  is
a smooth manifold by the implicit function theorem.
If, moreover, the action of ${\rm Aut}$ on $\si_p^{-1}(0)$ is smooth, free, and properly discontinuous, then the moduli space $\Mm^p := \si_p^{-1}(0) / {\rm Aut}$ is a smooth manifold.
For holomorphic curves, the smoothness of the action can be achieved if all solutions of $\overline{\partial}_{J'}f=0$ are smooth. For that purpose one can use e.g.\ the Taubes' trick to find regular perturbations given by smooth $J'$.

\item {\bf Compactification:}
For Gromov--Witten moduli spaces with $A$ $\om$-minimal and $k\le 3$, the previous steps already give $\Mm_k(A,J')$ the structure of a compact smooth manifold.
Thus the Gromov--Witten invariants can be defined using its fundamental class $[\Mm_{k}(A,J')]$.
In the semipositive case, the previous steps give $\Mm_k^*(A,J')$ a smooth structure such that
$\ev:\Mm_k^*(A,J')\to M^k$ defines a pseudocycle.
Indeed, its image is compact up to $\ev\bigl(\oMm_{k}(A,J') \less  \Mm_{k}^*(A,J')\bigr)$ which is given by the images of nodal and multiply covered maps.
Since the underlying simple curves are regular and of lower Fredholm index, these additional sets are smooth and of codimension at least $2$, so that they do not contribute to the homological boundary of  the image $\ev(\Mm_k^*(A,J'))$.

A more general approach for showing this pseudocycle property is to use {\bf gluing techiques}, which also apply to moduli spaces whose regularization is expected to have boundary.
Generally, one obtains a compactification $\oMm\,\!^p$ of the perturbed moduli space by constructing gluing maps into $\Mm^p$, whose images cover the complement of a compact set, and which are compatible on their overlaps.
For example, the gluing construction in the Gromov--Witten case, roughly speaking, provides local homeomorphisms
$$
((1,\infty)\times S^1)\,\!^{N} \times \Nn^N_k(A,J') \;\hookrightarrow\; \Mm_{k}(A,J')
$$
for each regular moduli space $\Nn^{N}_k(A,J')$ of stable curves with $N$ nodes.
A Gromov compactification $\oMm_k(A,J')$ is then constructed by completing each cylinder to a disc $\bigl((1,\infty)\times S^1\bigr) \cup \{\infty\}$, where we identify the added set $ \{\infty\}\times  \Nn^{N}_k(A,J') $ with a stratum of nodal curves in the compactification $\Nn^N_k(A,J') \subset \oMm_{k}(A,J')$.
Then $\oMm_{k}(A,J')$ is compact in the Gromov topology.
In the semipositive case, this gluing construction can be applied to moduli spaces of simple nodal curves (i.e.\ without multiply covered or repeated components) to obtain a partial compactification that carries a fundamental class. 
However, a general construction of a fundamental class $[\oMm_{k}(A,J')]$ from this gluing procedure would require transversality for moduli spaces $\Nn^{N}_k(A,J')$ of non-simple stable curves, which cannot always be achieved by choice of $J'$.

\item {\bf Invariance:}
To prove that invariants extracted from the perturbed moduli space $\oMm\,\!^p$ are well defined, one chooses $\Pp$ to be a connected neighbourhood of the zero section and constructs a cobordism between $\oMm\,\!^{p_0}$ and $\oMm\,\!^{p_1}$ for any regular pair $p_0,p_1\in\Pp^{\rm reg}$ by repeating the last five steps for the section $[0,1]\times\Hat\Bb\to\Hat\Ee$, $(t,b)\mapsto (\si+p_t)(b)$ for any smooth path $(p_t)_{t\in[0,1]}\in\Pp$.
In the semipositive Gromov--Witten example, the same argument is applied to find a pseudochain with boundary $\ev(\Mm_{k}^*(A,J_0)) \sqcup \ev(\Mm_{k}^*(A,J_1))$.
\end{enumlist}

\begin{remark}\label{rmk:GWmult} \rm
For Gromov-Witten theory, the evaluation map \eqref{eq:ev} generally does not represent a well defined rational homology class in $M^k$.  Although $\oMm_{k}(A,J)$ is compact and has a well understood formal dimension $d$ (given by the Fredholm index of $\pbar$ minus the dimension of the automorphism group), it need not be a manifold or orbifold of dimension $d$ for any~$J$.  Indeed it may contain subsets of dimension larger than $d$ consisting of stable maps with a component that is a
multiple cover on which $c_1(f^*\rT S^2)$ is negative.
In the case of spherical Gromov--Witten theory on manifolds with $[\om]\in H^2(M;\Q)$, it is possible to avoid this problem by first finding a consistent way to ``stabilize the domain'' to obtain a global description of the moduli space that involves no reparametrizations, and then allowing a richer class of perturbations; cf.\ \cite{CM}.
Though it is quite possible that this method can be extended to the higher genus case as claimed in 
\cite{Gerst,Io,IoP}, some potential pitfalls with this approach are pointed out in \cite{TZ}.
$\hfill\er$
\end{remark}

The main nontrivial steps in the geometric approach, which need to be performed in
careful detail for any given moduli space, are the following.
\begin{itemlist}
\item Each setting requires a different, precise definition of a Banach space of perturbations. Note in particular that spaces of maps with compact support in a given open set are not compact. The proof of transversality of the universal section is very sensitive to the specific geometric setting, and in the case of varying $J$ requires each holomorphic map to have suitable injectivity properties.
\item The gluing analysis is a highly nontrivial Newton iteration scheme and
should have an abstract framework that does not seem to be available at present. In particular, it requires surjective linearized operators, and so only applies after perturbation. Moreover, gluing of noncompact spaces requires uniform quadratic estimates, which do not hold in general.
Finally, injectivity of the gluing map does not follow from the Newton iteration and needs to be checked in each geometric setting.
\end{itemlist}

\subsection{Approaches to abstract regularization} \hspace{1mm}\\ \vspace{-3mm}
\label{ss:approach}

In order to obtain a regularization scheme that is generally applicable to holomorphic curve moduli spaces, it seems to be necessary to work with abstract perturbations $f\mapsto p(f)$ that need not be differential operators. Thus we recast the question more abstractly into one of regularizing a
compactification of the quotient of the zero set of a Fredholm operator.\footnote{
As pointed out by Aleksey Zinger, the ``Gromov compactification'' of a moduli space of holomorphic curves in fact need not even be a compactification in the sense of containing the holomorphic curves with smooth domains as dense subset. For example, it could contain an isolated nodal curve.
}
From this abstract differential geometric perspective, the geometric regularization scheme provides a highly nontrivial generalization of the well known finite dimensional regularization based on Sard's theorem, see e.g.\ \cite[ch.2]{GuillP}.

\label{finite reg}
\medskip
\noindent
{\bf Finite Dimensional Regularization Theorem:}  {\it
Let $E\to B$ be a finite dimensional vector bundle, and let $s:B\to E$ be a smooth section such that $s^{-1}(0)\subset B$ is compact.
Then there exists a compactly supported, smooth perturbation section $p:B\to E$ such that $s+p$ is transverse to the zero section, and hence $(s+p)^{-1}(0)$ is a smooth manifold.
Moreover, $[(s+p)^{-1}(0)]\in H_*(B,\Z)$ is independent of the choice of such perturbations.
}

\begin{remark}\rm   \label{equivariant BS}
(i)
Using multisections, this theorem generalizes to equivariant sections under a finite group action, yielding  
branched manifolds 
as regularized spaces and thus a well defined rational homology class $[(s+p)^{-1}(0)]\in H_*(B,\Q)$.
\MS

\NI (ii)
For nontrivial Lie groups $G$ acting by bundle maps on $E$, equivariance and transversality are in general contradictory requirements on a section.
Only if $G$ acts smoothly, freely, and properly on $B$ and $E$,  can
one obtain $G$-equivariant transverse sections by pulling back transverse sections of $E/G\to B/G$.
%
%
%
\MS

\NI (iii)
Finite dimensional regularization also holds for noncompact zero sets $s^{-1}(0)$,
but the homological invariance of the zero set fails in the simplest examples.
\MS

\NI (iv) There have been several attempts to extend this theorem to the case of a Fredholm section $s:\Hat\Bb\to \Hat \Ee$ of a Banach (orbi)bundle 
\cite{Lu,LuT,CT}.
However, in their global form these do not apply to most Gromov--Witten moduli spaces, and when localized they run into serious problems concerning the smoothness of coordinate changes and lack of suitable cut off functions that we discuss in \S\ref{ss:LTBS}.
$\hfill\er$
\end{remark}

Note here that no typical moduli spaces of holomorphic curves, nor even the moduli spaces in gauge theory or Morse theory, have a currently available description as the zero set of a Fredholm section in a Banach groupoid bundle.
In the case of holomorphic curves or Morse trajectories, the first obstacle to such a description is the differentiability failure of the 
reparametrization action 
on any Sobolev space of maps 
explained in \S\ref{ss:nodiff}.
In gauge theory, the action of the gauge group typically is smooth, but in all 
theories the typical moduli spaces are compactified by gluing constructions, for which there is not even a natural description as a zero set in a topological vector bundle.

In comparison, the geometric regularization approach works with a smooth section $\si:\Hat\Bb\to\Hat\Ee$ of a Banach bundle, which has a noncompact solution set $\si^{-1}(0)$ and is equivariant under the action of a noncompact Lie group.
From an abstract topological perspective, the nontrivial achievement of this approach is that it produces equivariant transverse perturbations and a well defined homology class by compactifying quotients of perturbed spaces, rather than by directly perturbing the compactified moduli space.

\begin{remark}\rm
Another notable analytic feature of the perturbations obtained by altering $J$ is that they preserve the compactness and Fredholm properties of the nonlinear differential operator, despite changing it nonlinearly in highest order. Indeed, in local coordinates, $\si= \partial_s + J \partial_t$ is a first order operator, and changing $J$ to $J'$ amounts to adding another first order operator $p=(J'-J)\partial_t$.
This preserves the Fredholm operator since it preserves ellipticity of the symbol. In general, one retains Fredholm properties only with lower order perturbations, i.e.\ by adding a compact operator to the linearization.
For the Cauchy--Riemann operator, that would mean an operator involving no derivatives, e.g.\ $p(f)=X\circ f$ given by the pullback of a vector field $X:M\to{\rm T}M$.
Note also that the compactness properties of solution sets of nonlinear operators are generally not even preserved under lower order perturbations that are supported in a neighbourhood of a compact solution set, since in the infinite dimensional setting such neighbourhoods are never compact.
$\hfill\er$
\end{remark}

This discussion shows that a regularization scheme for general holomorphic curve moduli spaces needs to work with more abstract perturbations and directly on the compactified moduli space -- i.e.\ after quotienting and taking the Gromov compactification. The following approaches,
which are further discussed in \S\ref{ss:LTBS}, \S\ref{ss:kur}, \S\ref{ss:poly} respectively, 
 are currently used in symplectic topology.

\begin{enumlist}
\item{The {\bf global obstruction bundle approach}} as 
introduced by Liu-Tian and Siebert \cite{LiuT, Sieb, Mcv} aims to extend 
techniques from algebraic geometry and gauge theory to
holomorphic curve settings, by means of a weak orbifold structure on a suitably stratified Banach space completion of the space of equivalence classes of smooth stable maps.

\item{The {\bf Kuranishi approach}}, introduced by Fukaya-Ono \cite{FO} and implicitly Li-Tian \cite{LT} in the 1990s, aims to construct a virtual fundamental class from finite dimensional reductions of the equivariant Fredholm problem and gluing maps near nodal curves.

\item{The {\bf polyfold approach}}, developed by Hofer-Wysocki-Zehnder in [HWZ1--5], aims to generalize the finite dimensional regularization theorem so that it applies directly to the compactified moduli space, by expressing it as the zero set of a smooth section.
\end{enumlist}

\MS

The first two approaches are also referred to as virtual transversality.
They have been used for concrete calculations of Gromov--Witten invariants, e.g.\ \cite{MT,Mq} by building a VMC using geometrically meaningful perturbations.
The third approach is more functorial and produces a VMC with significantly more structure, e.g.\ as a cobordism class of smooth weighted branched manifolds in the case of Gromov--Witten invariants \cite{HWZ:GW}. 
This allows one to define the invariants of, for example, symplectic field theory (SFT) on the chain level. 
The book \cite{FOOO} uses the Kuranishi approach to a similar end in the construction of chain level Lagrangian Floer theory.
We will make no further comments on chain level theories.
Instead, let us compare how the different approaches handle the fundamental analytic issues.

\medskip\noindent
{\bf Dividing by the automorphism group:}
Unlike the smooth action of the (infinite dimensional) gauge group on Sobolev spaces of connections, the reparametrization groups (though finite dimensional) do not act differentiably on any known Banach space completion of spaces of smooth maps (or pairs of domains and maps from them); see \S\ref{s:diff}.
In the global obstruction bundle approach this causes a significant differentiability failure in the relation between local charts in 
\cite{LiuT}, and hence in the survey article 
\cite{Mcv} and subsequent papers such as 
\cite{CL,CT,Lu,LuT}.
For more details of the problems here and some proposed solutions, see \S\ref{ss:LTBS}.
This differentiability issue was not mentioned in \cite{FO,LT}.
However, as we explain in detail in \S\ref{ss:gw}, it needs to be addressed when defining charts that combine two or more basic charts since this must be done in the Fredholm setting {\it before} passing to a finite dimensional reduction.
We make this explicit in our notion of ``sum chart", but the same construction is used implicitly in \cite{FO,FOOO}, and now more explicitly in \cite{FOOO12}.
In this setting, it can be overcome by working with special obstruction bundles, as we outline in \S\ref{ss:gw}.
In the polyfold approach, this issue is resolved by replacing the notion of smoothness in 
Banach spaces by a notion of scale-smoothness which applies to the reparametrization action.
To implement this, one must redevelop linear as well as nonlinear functional analysis in the scale-smooth category.

\medskip
\noindent
{\bf Gromov compactification:}
Sequences of holomorphic maps can develop various kinds of singularities: bubbling (energy concentration near a point), breaking (energy diverging into a noncompact end of the domain -- sometimes also induced by stretching in the domain), buildings (parts of the image diverging into a noncompact end of the target -- sometimes also induced by stretching the domain at a hypersurface).
These limits are described as tuples of maps from various domains, capturing the Hausdorff limit of the images. In quotienting by reparametrizations, note that for the limit object this group is a substantially larger product of various reparametrization groups.

In the geometric and virtual regularization approaches, charts near the singular limit objects are constructed by gluing analysis, which involves a pregluing construction and a Newton iteration. The pregluing creates from a tuple of holomorphic maps a single map from a nonsingular domain, which solves the Cauchy--Riemann equation up to a small error. The Newton iteration then requires quadratic estimates for the linearized Cauchy--Riemann operator to find a unique exact solution nearby.
In principle, the construction of a continuous gluing map should always be possible along the lines of \cite{MS}, though establishing the quadratic estimates is nontrivial in each setting. However, additional arguments specific to each setting are needed to prove surjectivity, injectivity, and openness of the gluing map.
Moreover, while homeomorphisms to their image suffice for the geometric regularization approach, 
many of the virtual regularization approaches 
require stronger differentiability of the gluing map; e.g.\ smoothness in \cite{FO,FOOO,J1}.  
Exceptions are e.g.\ \cite{Sieb,pardon}, which only use gluing theorems along the lines of \cite{MS}. 

None of \cite{LT,LiuT,FO,FOOO} give all details for the construction of a gluing map.
For closed nodal curves, \cite[Chapter~10]{MS} constructs continuous gluing maps in full detail, but 
does not claim that the glued curves depend differentiably on the gluing parameter $a\in\C$ as $a\to 0$.
However, our study of the transition maps indicates that the analysis of \cite{MS} in fact establishes smoothness within each stratum and global continuity of these stratum differentials.
Alternatively, one might rescale $|a|$ to obtain more differentiability across strata such as $\Cc^1$ gluing maps used in \cite{Ruan}.
However, \cite{CL} pointed out that this $\Cc^1$ structure is not intrinsic, so extra care is needed to ensure that they are preserved under coordinate changes. 
In the genus zero case, \cite{Cast1} uses a uniform rescaling of cross ratios to obtain a $\Cc^1$ structure, 
while \cite{FOOO12} aim to achieve a smooth structure by rescaling of gluing parameters.
Recent work in \cite{CLW2} also explores under what conditions one might be able to abstractly construct a (global but not necessarily unique) smooth structure on a stratified space.

The polyfold approach reinterprets the pregluing construction as the chart map for an ambient space $\Tilde \Bb$ which contains the compactified moduli space, essentially making the quadratic estimates part of the definition of a Fredholm operator on this space. The Newton iteration is replaced by an abstract implicit function theorem for transverse Fredholm operators in this setting. The injectivity and surjectivity issues then only need to be dealt with at the level of pregluing. Here injectivity fails dramatically but in a way that can be reinterpreted in terms of a generalization of a Banach manifold chart, where the usual model domain of an open subset in a Banach space is replaced by a relatively open subset in the image of a scale-smooth retraction of a scale-Banach space.
This makes it necessary to redevelop differential geometry in the context of retractions and scale-smoothness.

\subsection{
The polyfold regularization approach
}  \hspace{1mm}\\ \vspace{-3mm}
\label{ss:poly}

In the setting of holomorphic maps with trivial isotropy (but allowing for general compactifications by e.g.\ nodal curves), the result of the entirely abstract development of scale-smooth nonlinear functional analysis and retraction-based differential geometry is the following direct generalization
of the finite dimensional regularization theorem, see \cite{HWZ3}.
The following is the relevant version for trivial isotropy, in which ambient spaces have the structure of an M-polyfold --- a generalization of the notion of Banach manifold, essentially given by charts in open subsets of images of retraction maps, and scale-smooth transition maps between the ambient spaces of the retractions.

\medskip
\noindent
{\bf M-polyfold Regularization Theorem:} {\it
Let $\Tilde{\Ee}\to\Tilde{\Bb}$ be a strong M-polyfold bundle, and let $s:\Tilde{\Bb}\to\Tilde{\Ee}$ be a scale-smooth Fredholm section such that $s^{-1}(0)\subset\Tilde{\Bb}$ is compact.
Then there exists a class of perturbation sections $p:\Tilde{\Bb}\to\Tilde{\Ee}$ supported near $s^{-1}(0)$ such that $s+p$ is transverse to the zero section, and hence $(s+p)^{-1}(0)$ carries the structure of a smooth finite dimensional manifold.
Moreover, $[(s+p)^{-1}(0)]\in H_*(\Tilde\Bb,\Z)$ is independent of the choice of such perturbations.
}

\medskip
For dealing with nontrivial, finite isotropies, \cite{HWZ3} transfers this theory to a groupoid setting to obtain a direct generalization of the orbifold version of the finite dimensional regularization theorem.
It is these groupoid-type ambient spaces $\Tilde\Bb$, whose object and morphism spaces are M-polyfolds, that are called polyfolds.
These abstract regularization theorems should be compared with the definition of Kuranishi atlas and the abstract construction of a virtual fundamental class for any Kuranishi atlas that will be outlined in the following sections.
While the language of polyfolds and the proof of the regularization theorems in \cite{HWZ1,HWZ2,HWZ3} is highly involved, it seems to be developed in full detail and is readily quotable.
A survey of the basic philosophy and language is now available in \cite{gffw}.

Just as in the construction of a Kuranishi atlas for a given holomorphic curve moduli space discussed in \S\ref{ss:gw}, the application of the polyfold regularization approach still requires a description of the compactified moduli space as the zero set of a Fredholm section in a polyfold bundle.
It is here that the polyfold approach promises the most revolutionary advance in regularization techniques. Firstly, fiber products of moduli spaces with polyfold descriptions are naturally described as zero sets of a Fredholm section over a product of polyfolds. For example, one can obtain a polyfold setup for the PSS morphism by combining the polyfold setup for SFT with a smooth structure on Morse trajectory spaces, see \cite{afw:arnold}.
Secondly, Hofer--Wysocki--Zehnder are currently working on formalizing a ``modular''
approach to the polyfold axioms in such a way that the analytic setup can be given locally in domain and target for every singularity type. With that, the polyfold setup for a new moduli space that combines previously treated singularities in a different way would merely require a Deligne--Mumford type theory for the underlying spaces of domains and targets.

\begin{remark} \rm \label{polyfold BS checklist}
While the polyfold framework is a very powerful method for constructing algebraic invariants from holomorphic curve moduli spaces, it also has some pitfalls in geometric applications.
\begin{itemlist}
\item
Some caution is required with arguments involving the geometric properties of solutions after regularization.
The reason for this is that
the perturbed solutions do not solve a PDE but an abstract compact perturbation of the Cauchy--Riemann equation. Essentially, one can only work with the fact that the perturbed solutions can be made to lie arbitrarily close to the unperturbed solutions in any metric that is compatible with the scale-topology (e.g.\ any $\Cc^k$-metric in the case of closed curves).
\item
Despite reparametrizations acting scale-smoothly on spaces of maps, the question of equivariant regularization for smooth, free, proper actions remains nontrivial due to the interaction with retractions, i.e.\ gluing constructions.
In the example of the $S^1$-action on spaces of Floer trajectories for an autonomous Hamiltonian, the unregularized compactified Floer trajectory spaces of virtual dimension $0$ may contain broken trajectories. The corresponding stratum of the quotient space,
$$
\oMm(p_-,p_+)/S^1 \;\supset\; {\textstyle \bigcup_q} \bigl(\oMm(p_-,q) \times \oMm(q,p_+)\bigr)/S^1 ,
$$
is an $S^1$-bundle over the fiber product $\oMm(p_-,q)/S^1\times \oMm(q,p_+)/S^1$ of quotient spaces, rather than the fiber product itself.
Due to these difficulties, as yet, there is no quotient theorem for polyfolds, and hence no understanding of when a description of $\oMm$ as zero set of an $S^1$-equivariant Fredholm section would induce a description of $\oMm/S^1$ as zero set of a Fredholm section with smaller Fredholm index. Moreover, such a quotient would not even immediately induce an equivariant regularization of the Floer trajectory spaces compatible with gluing. 
$\hfill\er$
\end{itemlist}
\end{remark}

\subsection{
The Kuranishi regularization approach
}  \hspace{1mm}\\ \vspace{-3mm} \label{ss:kur}

Continuing the notation of \S\ref{ss:geom}, the basic idea of 
the Kuranishi approach to regularization 
is to describe the compactified moduli space $\oMm$ by local finite dimensional reductions of the ${\rm Aut}$-equivariant section $\si:\Hat\Bb \to \Hat\Ee$, and by gluing maps near the nodal curves. 
There are different ways to formalize the compatibility of finite dimensional reductions, yielding notions of ``Kuranishi structure'' in \cite{FO}, ``smooth resolution'' in \cite{LT}, or ``Kuranishi atlas'' in our work, but all 
proceed along the lines of the following steps.

\MS

\begin{enumlist}
\item {\bf Compactness:}
Equip the compactified moduli space $\oMm$ with a compact, metrizable topology; namely as the Gromov compactification of $\Mm=\si^{-1}(0)/{\rm Aut}$.

\item {\bf Equivariant Fredholm setup:}
As in the geometric approach, a significant subset $\Mm\subset\oMm$ of the compactified moduli space is given as the zero set of a Fredholm section modulo a finite dimensional Lie group,
\vspace{-10mm}
\[
\begin{aligned}
&\phantom{\Mm} \\
&\phantom{\Mm} \\
& \Mm = \frac{\si^{-1}(0)}{{\rm Aut}} ,
 \end{aligned}
 \qquad\qquad
\xymatrix{
 \Hat\Ee   \ar@(ul,dl)_{\textstyle \rm Aut}\ar@{->}[d]     \\
 \Hat\Bb \ar@(ul,dl)_{\textstyle \rm Aut} \ar@/_1pc/[u]_{\textstyle \si}
}
\]
One can now relax the assumption of $\rm Aut$ acting freely to the requirement that
 the isotropy subgroup $\Ga_{f}:=\{ \ga \in {\rm Aut} \,|\, \ga\cdot f = f \}$ be finite for every solution
 $f\in\si^{-1}(0)$.\MS

\item {\bf Finite dimensional reduction:}
Construct {\bf basic Kuranishi charts}
for every ${[f]\in\Mm}$,
\vspace{-5mm}
\[
\begin{aligned}
&\phantom{\Mm} \\
&\phantom{\Mm} \\
 \Mm \; \overset{\psi_f}{\longhookleftarrow} \;\frac{\ti s_f^{-1}(0)}{\Ga_f} ,
 \end{aligned}
 \qquad\qquad
\xymatrix{
*+[r]{\Tilde E_f }  \ar@(ul,dl)_{\textstyle \Ga_f}
  \ar@{->}[d]    \\
 *+[r]{U_f} \ar@(ul,dl)_{\textstyle \Ga_f} \ar@/_1pc/[u]_{\textstyle \ti s_f}
}
\]
which depend on a choice\footnote{
In practice the Kuranishi data will be constructed from many choices, including that of a representative. So we try to avoid false impressions by using the subscript $f$ rather than $[f]$.
}
of representative $f$, and consist of the following data:
\begin{itemize}
\item
the {\bf domain} $U_f$ is a finite dimensional 
manifold (constructed from a local slice of the ${\rm Aut}$-action on a thickened solution space
$\{g \,|\, \pbar g \in \Hat E_{f}\}$);
\item
the {\bf obstruction bundle} $\Ti E_f =\Hat E_f|_{U_f}\to U_f$, a finite rank vector bundle (constructed from the cokernel of the linearized Cauchy--Riemann operator at $f$), which is isomorphic $\Ti E_f\cong U_f\times E_f$ to a trivial bundle, whose fiber $E_f$ we call the {\bf obstruction space};
\item
the {\bf section} $\ti s_{f}: U_{f} \to \Tilde E_{f}$ (constructed from $g \mapsto \pbar g$),
which induces a (usually smooth, but at least continuous) map $s_f : U_f\to E_f$ in the trivialization;
\item
the {\bf isotropy group} $\Ga_{f}$ acting on $U_{f}$ and $E_{f}$ such that $\ti s_{f}$ is equivariant; \item
the {\bf footprint map} $\psi_{f}: \ti s_{f}^{-1}(0)/\Ga_{f} \to \oMm$, a homeomorphism to a neighbourhood of $[f]\in\oMm$ (constructed from $\{g \,|\, \pbar g=0 \}
\ni g \mapsto [g]$).
\end{itemize}
A detailed outline of this construction for $\oMm_1(A,J)$ with $\Ga_f=\{\rm id\}$ is given in \S\ref{ss:Kchart}.

\item {\bf Gluing:}
Construct basic Kuranishi charts covering $\oMm\less \Mm$ by combining finite dimensional reductions with gluing analysis similar to the geometric approach.

\item {\bf Compatibility:}
Given a finite cover of $\oMm$ by the footprints of basic Kuranishi charts $\bigl(\bK_i = (U_i,E_i,\Ga_i,s_i,\psi_i)\bigr)_{i=1,\ldots,N}$, construct transition data satisfying suitable compatibility conditions.
In the case of trivial isotropies $\Ga_i=\{{\rm id}\}$, any notion of compatibility will have to induce the following minimal transition data for any element $[g]\in \im\psi_i\cap \im\psi_j\subset \oMm$ in an overlap of two footprints:
\begin{itemize}
\item
a
{\bf transition Kuranishi chart} $\bK^{ij}_g = (U^{ij}_g,E^{ij}_g,s^{ij}_g,\psi^{ij}_g)$ whose footprint $\im\psi^{ij}_g \subset  \im\psi_i\cap \im\psi_j$ is a neighbourhood of $[g]\in\oMm$;
\item
{\bf coordinate changes} $\Phi^{i,ij}_g : \bK_i \to \bK^{ij}_g$ and $\Phi^{j,ij}_g :\bK_j \to \bK^{ij}_g$ consisting of embeddings and linear injections
$$
\phi^{\bullet,ij}_g :\;  U_\bullet \supset V^{\bullet,ij}_g\; \longhookrightarrow\; U^{ij}_g , \qquad
\Hat\phi^{\bullet,ij}_g :\;  E_\bullet \; \longhookrightarrow\; E^{ij}_g \qquad
\text{for}\; \bullet = i,j
$$
which extend $\phi^{\bullet,ij}_g|_{\psi_\bullet^{-1}(\im\psi^{ij}_g)} = (\psi^{ij}_g)^{-1}\circ \psi_\bullet$ to open subsets $V^{\bullet,ij}_g \subset U_\bullet$ such that
$$
s^{ij}_g \circ \phi^{\bullet,ij}_g   \; =\;  \Hat\phi^{\bullet,ij}_g\circ s_\bullet  \qquad
\text{for}\; \bullet = i,j .
$$
\end{itemize}
At this point the approaches differ in making further requirements on the charts such as differentiability, an index or ``tangent bundle" condition, coordinate changes between multiple overlaps, and cocycle conditions; see \S\ref{ss:alg}, \S\ref{ss:top} for a discussion. Our notion of {\bf Kuranishi atlas} in \S\ref{ss:Ksdef} involves a collection of smooth basic charts, transition charts, and coordinate changes satisfying an index condition and cocycle conditions.

The analytic challenges in constructing compatible Kuranishi charts for a given holomorphic curve moduli space are discussed in \S\ref{s:diff}, \S\ref{s:construct}.

\item {\bf Abstract Regularization:}
For a suitable notion of Kuranishi data (involving a covering of $\oMm$ by finite dimensional reductions, suitable transition data, and compatibility conditions), there should be a
{\bf Kuranishi Regularization Theorem} along the lines of 

\smallskip
\noindent
{\it Kuranishi data $\Kk$ on a compact space $\oMm$ induce a virtual fundamental class $[\oMm]_\Kk^{\rm vir}$.}\smallskip

This is an abstract result, proven without reference to pseudoholomorphic curves, by ``patching local Euler classes". This patching is either done on the level of (multivalued) transverse perturbations $s_f': U_f\to E_f$ of the sections in the Kuranishi charts, or their zero sets, or in a sheaf theoretic way as in \cite{pardon}.

\item {\bf Invariance:}
Prove that $[\oMm]_\Kk^{\rm vir}$ is independent of the different choices in the previous steps, in particular the choice of local slices and obstruction bundles. This involves the construction of  
Kuranishi data 
for $[0,1]\times\oMm $ that restricts to two given choices $\Kk^0$ on $\{0\}\times \oMm$ and $\Kk^1$ on $\{1\}\times \oMm$. Then an abstract 
Kuranishi 
cobordism theory  
should imply $[\oMm]_{\Kk^0}^{\rm vir}=[\oMm]_{\Kk^1}^{\rm vir}$.
\end{enumlist}

The construction of a Kuranishi atlas for a given holomorphic curve moduli space is explained in more detail in \S\ref{s:construct}.
The rest of the paper, \S\ref{s:chart}--\S\ref{s:VMC}, then 
develops our specific Kuranishi atlas approach.
For that purpose we restrict to the case of trivial isotropy groups $\Ga_{f}=\{{\rm id}\}$ in all
Kuranishi charts. This simplifies constructions in two ways.
First, it significantly simplifies the form of the coordinate changes between different charts, and hence simplifies the structure of the atlas.
Second, for trivial isotropy one can construct the virtual fundamental class from the zero sets of perturbed sections $s_f + \nu_f \approx s_f$ that are transverse, $s_f+\nu_f\pitchfork 0$,
rather than replacing each $\Ga_f$-equivariant section $s_f$ with a transverse multisection.

We devote a great deal of attention to this aspect of the theory since this part was 
particularly shortchanged in the early work \cite{FO,LT} on the subject
while, as we explain in \S\ref{ss:top}, there are
many interesting topological questions to deal with.

\subsection{Algebraic issues in the use of germs for Kuranishi structures}  \hspace{1mm}\\ \vspace{-3mm}  \label{ss:alg}

A natural approach, adopted in \cite{FO} and more recently \cite{J1}, for formalizing the compatibility of Kuranishi charts is to work with germs of charts and coordinate changes.
Recent discussions have led to an agreement that this approach has serious algebraic issues in making sense of a cocycle condition for germs of coordinate changes, which we explain here. This issue is rooted in the fact that only the footprints of Kuranishi charts have invariant meaning, so that
the coordinate changes between Kuranishi charts are fixed by the charts only on the zero sets. Thus in the definition of a germ of charts the traditional equivalence of maps with common restriction to a smaller domain is extended by equivalence of maps that are intertwined by a diffeomorphism of the domains. This leads to an ambiguity in the definition of germs of coordinate changes between germs of charts.
As a result, germs of coordinate changes are defined as conjugacy classes of coordinate changes with respect to diffeomorphisms of the domains that fix the zero sets. However, in this setting the composition of germs is ill defined, so there is no meaningful cocycle condition.
Alternatively, one might want to view (charts, coordinate changes, equivalences of coordinate changes) as a $2$-category with ill-defined $2$-composition. Either way, there is no general procedure for extracting the data necessary for a construction of a VFC:
 a finite set of charts and coordinate changes that satisfy the cocycle condition.
In the following, we spell out in complete detail the usual definitions of germs and point out the algebraic issues that arise from the equivalence under conjugation.

\MS

To simplify notation let us (incorrectly) pretend that all obstruction spaces are finite rank subspaces $E_f\subset\Ee$ of the same space and the linear maps $\Hat\phi$ in the coordinate changes are restrictions of the identity. We moreover assume that all isotropy groups are trivial $\Ga_f=\{{\rm id}\}$ and only consider germs of charts and coordinate changes at a fixed point $p\in\oMm$.
In the following all neighbourhoods are required to be open.

\MS\NI
To the best of our understanding, \cite{FO,J1} define a germ of Kuranishi chart as follows.

\begin{itemlist}
\item
A Kuranishi chart consists of a neighbourhood $U\subset\R^k$ of $0$ for some $k\in\N$, a map $s:U\to E \subset\Ee$ with $s(0)=0$, and an embedding $\psi:s^{-1}(0)\to\oMm$ with $\psi(0)=p$,
$$
\oMm \;\overset{\psi}{\longhookleftarrow}\; s^{-1}(0)
\;\subset\;
U \;\overset{s}{\longrightarrow}\; E.
$$
\item
Two Kuranishi charts $(U_1,s_1,\psi_1)$, $(U_2,s_2,\psi_2)$ are equivalent if the transition map
$$
\psi_2^{-1}\circ\psi_1 :\; s_1^{-1}(0) \;\supset\; \psi_1^{-1}(\im\psi_2)
\;\longrightarrow \; \psi_2^{-1}(\im\psi_1) \;\subset\; s_2^{-1}(0)
$$
extends to a diffeomorphism $\theta: U'_1\to U'_2$ between neighbourhoods $U_i' \subset U_i$ of $0$ that intertwines the sections $s_1|_{U'_1}= s_2|_{U'_2}\circ \theta$.
\item
A germ of Kuranishi chart at $p$ is an equivalence class of Kuranishi charts.
\item[$\mathbf{\bigtriangleup} \hspace{-2.08mm} \raisebox{.3mm}{$\scriptscriptstyle !$}\,$]
Note that $s_1= s_2\circ \theta$ does not necessarily determine the diffeomorphism $\theta$ except on the (usually singular and not dense) zero set.
Hence there may exist auto-equivalences, i.e.\ a nontrivial diffeomorphism $\theta: U'_1\to U'_2$ between restrictions $U'_1,U'_2\subset U$ of the same Kuranishi chart $(U,\ldots)$, satisfying $\theta|_{s^{-1}(0)}={\rm id}$ and $s= s\circ \theta$.
\end{itemlist}

Next, one needs to define the notion of a coordinate change between two germs of Kuranishi charts $[U_I,s_I,\psi_I]$ and $[U_J,s_J,\psi_J]$.\footnote{
In the notation of the previous section,
an example of a required coordinate change is one for index sets $I=\{i\}$, $J=\{i,j\}$, where
$[U_I, \ldots]$ denotes the germ at $[f]$ induced by $(U_i,\ldots)$, and $[U_J, \ldots]$ denotes the germ at $[f]$ induced by $(U^{ij}_f, \ldots)$.
}
It is here that ambiguities in the compatibility conditions appear, so we give what seems like the most natural definition, which is at least closely related to \cite{FO,J1}.

\begin{itemlist}
\item
A coordinate change $(U_{IJ},\phi_{IJ}) : (U_I,s_I,\psi_I)\to (U_J,s_J,\psi_J)$ between Kuranishi charts consists of a neighbourhood $U_{IJ}\subset U_I$ of $0$ and an embedding $\phi_{IJ}:U_{IJ} \hookrightarrow U_J$ that extends the natural transition map and intertwines the sections,
$$
\phi_{IJ}|_{s_J^{-1}(0)\cap U_{IJ}} \;=\; \psi_J^{-1}\circ\psi_I , \qquad
s_J|_{U_{IJ}}  \; =\; s_I \circ \phi_{IJ} .
$$
\item
Let $(U_{I,1},s_{I,1},\psi_{I,1})\sim(U_{I,2},s_{I,2},\psi_{I,2})$ and $(U_{J,1},s_{J,1},\psi_{J,1})\sim (U_{J,2},s_{J,2},\psi_{J,2})$ be two pairs of equivalent Kuranishi charts. Then two coordinate changes
\begin{align*}
& (U_{IJ,1},\phi_{IJ,1}) : (U_{I,1},s_{I,1},\psi_{I,1})\to (U_{J,1},s_{J,1},\psi_{J,1}) \\
\text{and}\quad &
(U_{IJ,2},\phi_{IJ,2}) : (U_{I,2},s_{I,2},\psi_{I,2})\to (U_{J,2},s_{J,2},\psi_{J,2})
\end{align*}
are equivalent if there exist diffeomorphisms $\theta_I: U'_{I,1}\to U'_{I,2}$ and $\theta_J: U'_{J,1}\to U'_{J,2}$ between smaller neighbourhoods of $0$ as in the definition of equivalence of Kuranishi charts (i.e.\
$s_{I,1}|_{U'_{I,1}}= s_{I,2}|_{U'_{I,2}}\circ \theta_I$ and $s_{J,1}|_{U'_{J,1}}= s_{J,2}|_{U'_{J,2}}\circ \theta_J$) that intertwine the coordinate changes on a neighbourhood of $0$,
$$
\theta_J \circ \phi_{IJ,1} =  \phi_{IJ,2} \circ \theta_I  .
$$
\item
A germ of coordinate changes between germs of Kuranishi structures at $p$ is an equivalence class of coordinate changes.
\item[$\mathbf{\bigtriangleup} \hspace{-2.08mm} \raisebox{.3mm}{$\scriptscriptstyle !$}\,$]
As a special case, two coordinate changes $\phi_{IJ}, \phi'_{IJ} : (U_{I},\ldots )\to (U_{J},\ldots)$
 between the same Kuranishi charts are equivalent
if there exist auto-equivalences $\theta_I: U'_{I,1}\to U'_{I,2}$ and $\theta_J: U'_{J,1}\to U'_{J,2}$ such that
$\theta_J \circ \phi_{IJ} =  \phi'_{IJ} \circ \theta_I  $.
\item[$\mathbf{\bigtriangleup} \hspace{-2.08mm} \raisebox{.3mm}{$\scriptscriptstyle !$}\,$]
Given a germ of coordinate change $[U_{IJ},\phi_{IJ}] : [U_I,\ldots]\to [U_J,\ldots]$ and choices of representatives $(U'_I,\ldots), (U'_J,\ldots)$ of the germs of charts, a representative of the coordinate change now only exists between suitable restrictions $(U''_I\subset U'_I,\ldots), (U''_J\subset U'_J,\ldots)$, and even with fixed choice of restrictions may not be uniquely determined.
\end{itemlist}

\MS\NI
Finally, it remains to make sense of the cocycle condition for germs of coordinate changes.
At this point \cite{FO,J1} simply write equations such as $[\Phi_{JK}]\circ [\Phi_{IJ}] = [\Phi_{IK}]$ on the level
of conjugacy classes of maps, which do not make strict sense.
The following is an attempt to phrase the cocycle condition on the level of germs, but we will see that it falls short of implying the existence of compatible choices of representatives that is required for the construction of a VMC.

\begin{itemlist}
\item
Let $[U_{I},s_{I},\psi_{I}]$, $[U_{J},s_{J},\psi_{J}]$, and $[U_{K},s_{K},\psi_{K}]$ be germs of Kuranishi charts. Then we say that a triple of germs of coordinate changes $[U_{IJ},\phi_{IJ}], [U_{JK},\phi_{JK}],[U_{IK},\phi_{IK}]$ satisfies the cocycle condition if there exist representatives of the coordinate changes
between representatives $(U_{I},\ldots)$, $(U_{J},\ldots)$, $(U_{K},\ldots)$ of the charts,
\begin{align*}
 (U_{IJ},\phi_{IJ}) : \;(U_{I},s_{I},\psi_{I})&\to (U_{J},s_{J},\psi_{J}) , \\
 (U_{JK},\phi_{JK}) : (U_{J},s_{J},\psi_{J})&\to (U_{K},s_{K},\psi_{K}) , \\
 (U_{IK},\phi_{IK}) : \;(U_{I},s_{I},\psi_{I})&\to (U_{K},s_{K},\psi_{K}) ,
\end{align*}
such that on a neighbourhood of $0$ we have
\begin{align} \label{algcc}
\phi_{JK} \circ \phi_{IJ} = \phi_{IK} .
\end{align}
\item[$\mathbf{\bigtriangleup} \hspace{-2.08mm} \raisebox{.3mm}{$\scriptscriptstyle !$}\,$]
Note that the above cocycle condition for some choice of representatives does not imply a cocycle condition for different choices of representatives. For example, suppose that $\phi_{IJ},\phi_{JK},\phi_{IK}$ satisfy \eqref{algcc}, and consider other representatives
$$
\phi_{IJ}' = \theta_J \circ \phi_{IJ} \circ \theta_I^{-1} ,  \quad
\phi_{JK}' = \Theta_K \circ \phi_{JK} \circ \Theta_J^{-1}
$$
given by auto-equivalences $\theta_I,\theta_J,\Theta_J, \Theta_K$.
Then these fit into a cocycle condition
$$
\phi_{JK}' \circ \phi_{IJ}' \;=\;
\bigl( \Theta_K \circ \phi_{JK} \circ \Theta_J^{-1} \bigr)
\circ
\bigl( \theta_J \circ \phi_{IJ} \circ \theta_I^{-1} \bigr)  \;=\; \phi'_{IK} \;\in\; [\phi_{IK}]
$$
only if $\Theta_J=\theta_J$ and $\phi'_{IK} = \Theta_K \circ \phi_{IK} \circ \theta_I^{-1}$.
That is, the choice of one representative in the cocycle condition between three germs of coordinate changes essentially fixes the choice of the other two representatives.
This causes problems as soon as one considers the
compatibility of four or more coordinate changes.
\end{itemlist}
\MS

Now suppose that a Kuranishi structure on $\oMm$ is given by 
germs of charts at each point and 
germs of coordinate changes between each suitably close pair of points, satisfying a cocycle condition. Then the fundamentally important first step towards the construction of a VMC is the claim of \cite[Lemma~6.3]{FO} that any such Kuranishi structure has a ``good coordinate system".
The latter, though the definitions in \cite{FO,FOOO} are slightly ambiguous, is a finite cover of $\oMm$ by partially ordered charts (where two charts should be comparable iff the footprints intersect) with coordinate changes according to the partial order, and satisfying a weak cocycle condition.
In order to extract such a finite cover from a tuple of germs of charts and germs of coordinate changes, one makes a choice of representative in each equivalence class of charts and picks a finite subcover.
The first nontrivial step is to make sure that these representatives were chosen sufficiently small for coordinate changes between them to exist in the given germs of coordinate changes.
The second  crucial step is to make specific choices of representatives of the coordinate changes such that the cocycle condition is satisfied.
However, \cite[(6.19.4)]{FO} does not address the need to choose specific, rather than just sufficiently small, representatives.
 In order to reduce the number of constraints,  this would require a rather special structure of the overlaps of charts.
In general, the choice of a representative for $[\phi_{AB}]$ would affect the choice of
representatives for $[\phi_{CA}]$ or $[\phi_{AC}]$ for all $C$ with $\dim U_C\leq \dim U_B$, and for $[\phi_{BC}]$ or $[\phi_{CB}]$ when $\dim U_C \geq \dim U_A$.
These are algebraic issues, governed by the intersection pattern of the charts.

One approach to solving these algebraic issues could be to replace the definition of Kuranishi structure by that of a good coordinate system. However, we know of no direct way to construct such ordered covers and explicit cocycle conditions for a given moduli space $\oMm$.
The problem of composing conjugacy classes were also realized by Fukaya et al.\, leading to new basic definitions in \cite{FOOO} that avoid germs. However, Joyce tried to formalize germs again in \cite{J1}, and both \cite{FOOO,J1} refer to \cite{FO} for existence of a good coordinate system. 
After this paper was publicized, both author teams developed new definitions and proofs in~\cite{FOOO12,Jd,J2}.

Our approach solves both problems by defining the notion of a Kuranishi atlas as a weaker version of a good coordinate system --- without a partial ordering on the charts, but satisfying an explicit cocycle condition --- that can in practice be constructed.
We then construct an analog of a ``good coordinate system" in 
Theorem~\ref{thm:red} by an abstract refinement of the Kuranishi atlas that we call a reduction.

\begin{rmk}\rm \label{rmk:JBS}
(i)
One potential attraction of the notion of germs of Kuranishi charts is that for moduli spaces $\oMm$ arising from a Fredholm problem, there could be the notion of a ``natural germ" of charts at a point
$[f]\in\oMm$ given by the finite dimensional reductions at any representative $f$.
However, the present definition of germ does not provide a notion of equivalence between finite dimensional reductions with obstruction spaces of different dimension.
So the only natural choice would be to require obstruction spaces to have minimal rank at $f$.
But with such a choice it is not clear how to make compatible choices of the 
coordinate changes.   As we will see, given two different charts at $[f]$ there is usually no natural  choice of a coordinate change from one to another; the natural maps arise by including each of them into a bigger chart (here called their sum).
Such a construction 
takes one quickly out of the class of minimal germs.

\MS\NI
(ii)
Another approach to resolving the algebraic challenges of capturing the ambiguity of local finite dimensional reductions is proposed in \cite{Jd} using so-called ``d-orbifolds'', which have more algebraic properties than Kuranishi structures. Roughly speaking, this notion replaces conjugacy by a more restrictive relation between coordinate changes that does have a 2-categorical interpretation unlike (spaces, maps, conjugacy).
Given such a categorical structure, we expect the VFC construction to be similar to ours. However, it does not offer a direct approach to regularizing moduli spaces, but instead seems to require special types of Kuranishi structures or polyfold Fredholm sections as starting point:
\cite[Thm.15.6]{Jd} claims ``virtual class maps'' for d-orbifolds under an additional ``(semi)effectiveness'' assumption, which in our understanding could only be obtained from the constructions of \cite[Thm.16.1]{Jd} under the assumption of obstruction spaces on which the isotropy action is trivial. 
But note that this is almost a case of equivariant transversality in which e.g.\ the polyfold setup should allow for a global finite dimensional reduction as smooth section of an orbifold bundle along the lines of \cite{Yang}.

The notion of ``Kuranishi space'' in \cite{J2} is closely related to this setup, as of now does not assume ``(semi)effectiveness'', but also seems to make no claims of inducing virtual fundamental classes, except via an equivalence to ``d-orbifolds''.
$\hfill\er$
\end{rmk}

\subsection{Topological issues in the construction of a virtual fundamental class}  \hspace{1mm}\\ \vspace{-3mm}  \label{ss:top}

After one has solved the analytic issues involved in constructing compatible basic Kuranishi
charts as defined in \S\ref{ss:kur} for a given moduli space $\oMm$, the further difficulties in constructing the virtual fundamental class $[\oMm]^{\rm vir}$ 
are all essentially topological, though their solution will impose further requirements on the construction of a Kuranishi atlas.
The basic idea
for constructing a VMC is to 
``patch local Euler classes''.
This is usually\footnote{
Although the guiding principle of Kuranishi regularization has always been ``patching local homology classes'', a sheaf-theoretic formalization of this approach has only recently been proposed by \cite{pardon}.
While it requires less differentiability (since homology classes can be transferred via continuous maps, whereas the implicit function theorem for transverse perturbations requires continuous differentiability), it has to deal with the same topological issues described in the following -- just in a more abstract, sheaf-theoretic guise. Ultimately, it requires essentially the same refinements of Kuranishi data as our approach.
}
achieved by 
transverse perturbations  
$s_i\approx s_i+\nu_i \pitchfork 0$
of the section in each basic chart, such that the smooth zero sets modulo a relation given by the transition data provide a regularization of the moduli space
$$
\oMm \,\!^\nu := \; \quotient{{\underset{{i=1,\ldots,N}}{\textstyle \bigsqcup}
}\; (s_i+\nu_i)^{-1}(0)} { \sim}
$$
There are various notions of regularization; the common features (in the case of trivial isotropy and empty boundary) are that $\oMm\,\!^\nu$ should be a CW complex with a distinguished homology class $[\oMm\,\!^\nu]$ (e.g.\ arising from an orientation and triangulation), and that in some sense this class should be independent of the choice of perturbation~$\nu$.
For example, \cite{FO,FOOO} require that for any CW complex $Y$ and continuous map $f:\oMm \to Y$ that extends compatibly to the Kuranishi charts, the induced map $f:\oMm\,\!^\nu \to Y$ is a cycle, whose homology class is independent of the choice of $\nu$ and extension of $f$.
The basic issues in any regularization are that we need to make sense of the equivalence relation 
induced by transition data 
and ensure that the zero set of a transverse perturbation is not just locally smooth (and hence can be triangulated locally), but also that the transition data glues these local charts to a compact Hausdorff space without boundary. These properties are crucial for obtaining a global triangulation and thus well defined cycles.
For simplicity we aim here for the strongest version of regularization, giving $\oMm \,\!^\nu$ the structure of an oriented, compact, smooth manifold, which is unique up to cobordism.
That is, we wish to realize $\oMm \,\!^\nu$ as an abstract compact 
manifold as follows.
(We simplify here by deferring the discussion of orientations to the end of this section.)

\begin{definition} \label{def:mfd}
An abstract compact smooth manifold of dimension $d$ consists of
\begin{itemlist}
\item[{\bf (charts)}]
a finite disjoint union $\underset {{i=1,\ldots,N}}{\bigsqcup} V_i$ of open subsets $V_i\subset \R^d$,
\item[{\bf (transition data)}] 
for every pair $i,j\in\{1,\ldots,N\}$ an open subset $V_{ij} \subset V_i$ 
and a smooth embedding $\phi_{ij} : V_{ij} \hookrightarrow V_j$
such that $V_{ji} = \phi_{ij}(V_{ij})$ and $V_{ii} = V_i$,
\end{itemlist}
satisfying the {\bf cocycle condition}
$$
\phi_{jk} \circ \phi_{ij} = \phi_{ik} \qquad\text{on}\;\; \phi_{ij}^{-1}(V_{jk}) \subset V_{ik}
\qquad\quad
\forall i,j,k\in\{1,\ldots,N\},
$$
and such that the induced topological space
\begin{equation} \label{quotient}
\quotient{{\textstyle \underset{{i=1,\ldots,N}}{\bigsqcup}} V_i}{\sim}
\qquad\text{with}\quad
  x \sim y \;:\Leftrightarrow\; \exists \; i,j :  y=\phi_{ij}(x)
\end{equation}
is Hausdorff and compact.
\end{definition}

Note here that it is easy to construct examples of charts and transition data that satisfy the cocycle condition but fail to induce a Hausdorff  space, e.g.\ $V_1=V_2=(0,2)$ with $V_{12}=V_{21}=(0,1)$ and $\phi_{12}(x)=\phi_{21}(x)=x$ does not separate the points $1\in V_1$ and $1\in V_2$.
However, if we rephrase the data of charts and transition maps in terms of groupoids, then, as we now show, the Hausdorff property of the quotient is simply equivalent to a properness condition.
In this paper we take a groupoid to be a topological category whose morphisms are invertible, whose spaces of objects and morphisms are smooth manifolds, and whose structure maps (encoding source, target, composition, identity, and inverse) are local diffeomorphisms.
Such groupoids are often called {\it \'etale}. For further details see e.g.\ \cite{ALR}.

\begin{rmk}\label{rmk:grp}\rm
A collection of charts and transition data satisfying the cocycle condition as in Definition~\ref{def:mfd} induces a topological groupoid $\bG$, that is a category with
\begin{itemlist}
\item
 the topological space of objects $\Obj=\Obj_\bG = {\textstyle{\bigsqcup}_{i=1,\ldots,N}} V_i$
induced by the charts,
\item
 the topological space of morphisms $\Mor=\Mor_\bG= {\textstyle{\bigsqcup}_{i,j=1,\ldots,N}} V_{ij}$
induced by the transition domains, with
\begin{itemize}
\item[-] source map $s: \Mor \to \Obj$, $(x\in V_{ij}) \mapsto (x\in V_i)$,
and target map $ t: \Mor \to \Obj$, $(x\in V_{ij}) \mapsto (\phi_{ij}(x)\in V_j)$ induced
by the transition maps,
\item[-] composition $\Mor \leftsub{t}{\times}_s \Mor\to \Mor$,
$\bigl( (x\in V_{ij}), (y\in V_{jk})\bigr) \mapsto (x\in V_{ik})$ if $\phi_{ij}(x)=y$,
which is well defined by the cocycle condition,
\item[-]
identities $\Obj \to \Mor, x \mapsto x\in V_{ii} \cong V_i$, and inverses $\Mor \to \Mor, (x\in V_{ij}) \mapsto (\phi_{ij}(x)\in V_{ji})$, again well defined by the cocycle condition.
\end{itemize}
\end{itemlist}
Moreover, $\bG$ has the following properties.
\begin{itemlist}
\item[{\bf (nonsingular)}]
For every $x\in\Obj$ the isotropy group $\Mor(x,x)=\{\rm id_x\}$ is trivial.
\item[{\bf (smooth)}]
The object and morphism spaces $\Obj$ and $\Mor$ are smooth manifolds.
\item[{\bf (\'etale)}]
All structure maps are local diffeomorphisms.
\end{itemlist}

\NI
The quotient space \eqref{quotient} is now given as the
{\it realization}
of the groupoid $\bG$, that is
$$
|\bG| := \Obj_\bG/\sim \qquad\text{with}\qquad
x \sim y \; \Leftrightarrow\; \Mor(x,y) \neq \emptyset .
$$
This realization is a compact manifold iff $\bG$ has the following additional properties.
\begin{itemlist}
\item[{\bf (proper)}]  The product of the 
source and target map $s\times t : \Mor \to \Obj \times \Obj$ is proper,  i.e.\ preimages of compact sets are compact.
%
%
\item[{\bf (compact)}]
$|\bG|$ is compact.
%
%
$\hfill\er$
\end{itemlist}
\end{rmk}

Now let $\oMm$ be a compact moduli space, equipped with basic Kuranishi charts $(\bK_i = (U_i,E_i,s_i,\psi_i))_{i=1,\ldots,N}$ with trivial isotropy $\Gamma_i=\{\id\}$, whose footprints $F_i: = \psi_i(s_i^{-1}(0))\subset \oMm$ cover $\oMm$.
The regularization goal is to associate to this data an abstract compact manifold in the sense of Remark~\ref{rmk:grp}.
To understand the challenges, it is useful to formulate these charts in terms of two categories, 
the base category called $\bB_\Kk$, formed from the domains $U_i$, and the bundle category $\bE_\Kk$ formed from the obstruction bundles.
The morphism spaces in both will arise from some type of transition maps between the basic charts.
The projections $U_i\times E_i \to U_i$ and sections $s_i$ should then induce a projection functor $\pi_\Kk:\bE_\Kk\to \bB_\Kk$ and a section functor $\s_\Kk: \bB_\Kk \to \bE_\Kk$.
Further, the footprint maps $\psi_i$ induce a surjection $\psi_\Kk: \s_\Kk^{-1}(0)\to \oMm$ from the zero set onto the moduli space. This induces natural morphisms in the subcategory $\s_\Kk^{-1}(0)$, given by
$$
\psi_j^{-1} \circ \psi_i \, : \; s_i^{-1}(0)\cap \psi_i^{-1}(F_i\cap F_j) \;\longrightarrow\; s_j^{-1}(0) .
$$
If we use only these morphisms and their lifts to $s_i^{-1}(0)\times\{0\}\subset U_i\times E_i$, then composition in the categories $\bB_\Kk, \bE_\Kk$ is well defined, $\pi_\Kk, \s_\Kk, \psi_\Kk$ are functors, and $|\psi_\Kk|:|s_\Kk^{-1}(0)|\to \oMm$ is a homeomorphism, which identifies the unperturbed moduli space $\oMm$ with a subset of the realization $|\bB_\Kk|$. However, these morphism spaces may be highly singular, so that the structure maps are merely local homeomorphisms between 
singular subsets of the object spaces and hence merely topological morphism spaces.
This structure is insufficient for any regularization approach: Patching of homology classes at least requires continuous transition maps defined on open subsets, patching of perturbed zero sets requires morphisms between objects outside of the zero sets, and a well defined notion of transverse perturbation of the section $\s_\Kk$ requires differentiable transition maps.
Hence any notion of Kuranishi data requires an extension of the transition maps $\psi_j^{-1} \circ \psi_i$ to diffeomorphisms between submanifolds of the domains. 

Recall here that the domains of the charts $U_i$ may not have the same dimension, since one can only expect the Kuranishi charts $\bK_i $ to have constant index $d= \dim U_i-\dim E_i$. 
Hence transition data is generally given by ``transition charts'' $\bK^{ij}$ and coordinate changes  $\bK_i \to \bK^{ij} \leftarrow \bK_j$ which in particular involve embeddings from open subsets $U_i^{ij}\subset U_i, U_j^{ij}\subset U_j$ into $U^{ij}$. (Here we simplify the notion from \S\ref{ss:kur} by assuming that a single transition chart covers the overlap $F_i\cap F_j$.)
Now one could appeal to Sard's theorem to find a transverse perturbation in each basic chart,
$$
\nu=(\nu_i:U_i\to E_i)_{i=1,\ldots,N} \qquad\text{with}\quad s_i+\nu_i \pitchfork 0  \quad\forall i=1,\ldots,N ,
$$
and use this to regularize $\oMm \cong |\s_\Kk^{-1}(0)|=\qu{\s_\Kk^{-1}(0)}{\sim}$. Here the relation $\sim$ is given by morphisms, so the regularization ought to be the realization $|(\s_\Kk+\nu)^{-1}(0)| = \qu{(\s_\Kk + \nu)^{-1}(0)}{\sim}$ of a subcategory. 
Hence the perturbations $\nu_i$ need to be compatible with the morphisms, i.e.\ transition maps.
Given such compatible transverse perturbations, one obtains the charts and transition data for an abstract manifold as in Definition~\ref{def:mfd}, but still needs to verify the cocycle condition, Hausdorffness, compactness, and an invariance property to obtain a generalization of the finite dimensional regularization theorem on page \pageref{finite reg} to Kuranishi  
data 
along the following lines.

\medskip
\noindent
{\bf Kuranishi Regularization:} {\it
Let $\s_\Kk: \bB_\Kk\to\bE_\Kk$ be a Kuranishi section of index~$d$ such that $|\s_\Kk^{-1}(0)|$ is compact. Then there exists a class of smooth perturbation functors $\nu:\bB_\Kk\to\bE_\Kk$ such that the subcategory $\bZ^\nu:=(\s_\Kk+\nu)^{-1}(0)$ carries the structure of an abstract compact smooth manifold of dimension $d$ in the sense of Definition~\ref{def:mfd}.
Moreover, $[\, |\bZ^\nu | \,]\in H_d(\bB_\Kk,\Z)$ is independent of the choice of such perturbations.
}

\medskip

However, in general there is no theorem of this precise form, since the topological issues discussed below require various refinements of the 
setup and construction. 
In order to expose these issues, we need to make the notion of Kuranishi data or Kuranishi section more precise, which involves a number of choices.\footnote{
In each step -- e.g.\ when formalizing the compatibility conditions, aiming to achieve Hausdorffness, or trying to control compactness -- there are many options of mathematically rigorous structure (or property or construction) that one may choose to formalize the Kuranishi regularization intuition. In our experience, any one issue is naturally resolved by one or another simple structure, but the multitude of issues requires a multitude of structures which are impossible to achieve simultaneously in practice.
Moreover, leaving any room for interpretation in the formalization of a structure will most certainly tempt one to use contradictory interpretations later on in the construction (see for example the multitude of topologies that -- after many false identifications -- we are forced to deal with separately in \S\ref{ss:Ksdef} and \S\ref{ss:tame}). Thus the challenge here is not to resolve a single issue but to simultaneously resolve all issues in the regularization construction within a single mathematically coherent framework that moreover applies to moduli spaces in practice.
}
But while there are many possible choices of formal Kuranishi notions, the topological issues (or at least the resulting requirements on the structure) remain universal. The following chooses our approach of perturbative regularization in a Kuranishi atlas to explain the issues, but can also be read as explaining the motivations for our choices of formal structure.
It does in fact develop somewhat naturally once one embraces the categorical formulation -- which was first introduced in our work \cite{MW0} but at this point is very widely adopted.\footnote{A failure to fit into categorical language in fact usually points to algebraic issues as explained in \S\ref{ss:alg}.}

\MS\NI
{\bf Compatibility:}
In order to obtain well defined transition maps, i.e.\ a space of morphisms in $\bZ^\nu$ with well defined composition, the perturbations $\nu_i$ clearly need to be compatible.
Since $(s_i + \nu_i)^{-1}(0)$ and $(s_j + \nu_j)^{-1}(0)$ are not naturally identified via $\psi_j^{-1} \circ \psi_i$ for $\nu\not\equiv 0$, this requires that one include in $\bB_\Kk$  the choice of specific transition data between the basic charts. 
Next, since the intersection of these embeddings $\phi_i^{ij}:U_i^{ij} \to U^{ij}$ and $\phi_j^{ij}:U_j^{ij} \to U^{ij}$ in the ``transition chart'' $\bK^{ij}$ is not controlled, the direct transition map
$(\phi_j^{ij})^{-1} \circ \phi_i^{ij}$ may not have a smooth extension that is defined on an open set.  
Therefore, we do not want to consider such maps to be morphisms in $\bB_\Kk$ since that would violate the \'etale property.
Instead, we include $\bK^{ij}$ into the set of charts, and ask that the pushforward of each perturbation $\nu_i,\nu_j$ extend to to a perturbation $\nu^{ij}$.  But now one must consider triple 
overlaps,
and so on.

The upshot is that, as well as the system of basic charts $(\bK_i)_{i=1,\ldots,N}$ with footprints $F_i$,  one is led to consider a full collection of transition charts $(\bK_I)_{I\subset\{1,\ldots,N\}}$ with footprints $F_I: = \cap _{i\in I} F_i$.
To make a category, each of these ``sum charts" should have a chosen domain $U_I$, which is a smooth manifold, and the objects in $\bB_\Kk$ should be $\sqcup_I U_I$.  Further
the morphisms in the category should come from coordinate changes 
between these charts, which in particular involve embeddings $\phi_{IJ}:U_{IJ} \to U_J$ of open subsets $U_{IJ}\subset U_I$. 
Thus the space  $\Mor_{\bB_\Kk}$ will be the disjoint union of the domains $U_{IJ}$ of these coordinate changes over all relevant pairs $I,J$.\footnote{
Detailed definitions of the categories $\bB_\Kk$ and $\bE_\Kk$ along these lines can be found in Definition~\ref{def:catKu}.
It is worth noting here that we do not require the domains of charts to be open subsets of Euclidean space.  We could achieve this for the basic charts, since these can be arbitrarily small. However, for the transition charts one may need to make a choice between having a single sum chart for each overlap and having sum charts whose domains are topologically trivial.
We construct the former in Theorem~\ref{thm:A2}.
}

For this to form a category, all composites must exist, which is equivalent to the    
cocycle condition $\phi_{JK}\circ\phi_{IJ}=\phi_{IK}$ including the condition that domains be chosen such that $\phi_{IJ}^{-1}(U_{JK}) \subset U_{IK}$.
However, natural constructions as in \S\ref{ss:gw} only satisfy the cocycle condition on the overlap $\phi_{IJ}^{-1}(U_{JK}) \cap U_{IK}$.
Thus already the construction of an equivalence relation from the transition data requires a refinement of the choice of domains.
We resolve this issue by a shrinking procedure that iteratively chooses subsets of each $U_I$ and $U_{IJ}$. It is developed in \cite{MW:top} to be universally applicable to Kuranishi-type settings, and in our specific setting is summarized in Theorem~\ref{thm:K}.
If we assume the cocycle condition, then $\bB_\Kk$ satisfies all properties of a nonsingular groupoid except 
\begin{itemize}
\item[-]  we do not assume that inverses exist;
\item[-] the \'etale condition is relaxed to require that the structure maps are smooth embeddings rather than diffeomorphisms.
\end{itemize}
We write $|\Kk|$ for the realization $\Obj_{\bB_\Kk}/\!\!\sim$ of $\bB_\Kk$, where $\sim$ is the equivalence relation generated by the morphisms, and denote by $\pi_\Kk:\Obj_{\bB_\Kk}\to|\Kk|$ 
the projection.
We show in Lemma~\ref{le:realization}~(iv) that the natural inclusion $\io_\Kk: \oMm\to |\Kk|$ is a homeomorphism to its image. 
Therefore we may think of $|\Kk|$ as a {\it virtual neighbourhood} of $\oMm$.

\MS\NI
{\bf Hausdorff property:}  For a category such as  $\bB_\Kk$, it is no longer true that
the properness of $s\times t$ implies that its realization $|\Kk|$ is Hausdorff; cf. \cite[Example~2.4.4]{MW:top}. Therefore, the easiest way to ensure that  the realization of the perturbed zero set $|\bZ^\nu|$ 
is Hausdorff is to make $|\Kk|$ Hausdorff and check that the inclusion $|\bZ^\nu|\to |\Kk|$ is continuous.
This Hausdorff property (or more general properness conditions in the case of nontrivial isotropy) 
was not addressed in the literature \cite{FO,FOOO,J1}.
The resolution by building a Hausdorff ambient space was first introduced in our work \cite{MW0} but has also been universally adopted since, e.g.\ in \cite{FOOO12}. 
We have thus formulated our resolution of this issue in a universally applicable Kuranishi setting in \cite{MW:top}, which shows that our shrinking procedure can achieve a notion of {\it tameness} (see Definition~\ref{def:tame}), which is a very strong form of the cocycle condition that controls the morphisms in $\bB_\Kk$. 
In the present context, Theorem~\ref{thm:K} shows that these tame shrinkings exist if the original Kuranishi atlas is additive, that is if the obstruction spaces $E_i$ of the basic charts are suitably transverse. 
Moreover, the realization of a tame Kuranishi atlas $\Kk$ is not only Hausdorff but also has the property that the natural maps $U_I\to |\Kk|$ are homeomorphisms to their image. This means that we can construct a perturbation over $|\Kk|$ by working with its pullbacks to each chart.
A further shrinking construction moreover guarantees the existence of a suitable metric on $|\Kk|$, which is also crucial in the construction of perturbations.

\MS\NI
{\bf Compactness:}  Unfortunately, even when we can make $|\Kk|$ Hausdorff, it is almost never locally compact or metrizable.  In fact, a typical local model is the subset $S$ of $\R^2$ formed by the union of the line $y=0$ with the half plane  $x>0$, with $\io_\Kk(\oMm) = \{y=0\}$, but given the  topology as a quotient of the disjoint union $\{y=0\}\sqcup \{x>0\}$. As we show in Example~\ref{ex:Khomeo}, the quotient topology on $S$ is not metrizable, and even in the weaker subspace topology from $\R^2$ the zero set $\io_\Kk(\oMm)$ does not have a locally compact  neighbourhood in $|\Kk|$.
Therefore ``sufficiently small'' perturbations $\nu$ cannot guarantee compactness of the perturbed zero set $|\bZ^\nu|$.
Instead, the challenge is to find subsets of $|\Kk|$ containing $\io_\Kk(\oMm)$ that are compact 
and -- while not open -- are still large enough to contain the zero sets of appropriately perturbed sections $\s_\Kk+\nu$.

Similar to the Hausdorff property, this compactness was asserted in the classical literature \cite{FO,FOOO,J1} by quoting an analogy to the construction of an Euler class of orbibundles -- in which both issues are resolved by a locally compact ambient space.
Again, our work \cite{MW0} was first to point out the lack of local compactness and offer a resolution by means of containing the perturbed zero set in a precompact subset of the virtual neighbourhood.
This approach is also adopted across the more recent publications and formulated in a universally applicable Kuranishi setting in \cite{MW:top}.
In the present context, Theorem~\ref{prop:zeroS0} shows that sequential compactness can be guaranteed for  perturbed zero sets that are contained $\pi_\Kk\bigl((s+\nu)^{-1}(0)\bigr)\subset \pi_\Kk(\Cc)$ in the image of a precompactly nested pair of ``reductions'' $\Cc\sqsubset \Vv\sqsubset \Obj(\bB_\Kk)$ (which are introduced below).
One can think of $\pi_\Kk(\Cc)$ as a kind of neighbourhood of $\io_\Kk(\oMm)$, but, even though $\Cc$ is an open subset of $\Obj_{\bB_\Kk}$, the image $\pi_\Kk(\Cc)$  is not open in $|\Kk|$ because the different components of $\Obj_{\bB_\Kk}$ have different dimensions. For example, if $|\Kk|=S$ as above then $\pi_\Kk(\Cc)$ could be the union of $\{y=0, x<2\}$ with $\{x>1, |y|<1\}$. 
(In this example, since $\oMm$ is not compact, we cannot expect $\pi_\Kk(\Cc)$ to be precompact, but its closure is locally compact.)
 
\MS\NI
{\bf Construction of 
perturbations:} We aim to construct the perturbation $\nu$ by finding
a compatible family of local perturbations $\nu_I$ in each chart $\bK_I$. 
Thus, if basic charts $\bK_i$ and $\bK_j$ have nontrivial overlap (i.e. their footprints intersect) and we start by defining $\nu_i$, the most naive approach is to try to  extend the partially defined perturbation
$\nu_i\circ (\phi_i^{ij})^{-1} \circ \phi_j^{ij}$  over $U_j$.
But, as seen above, the image of $(\phi_j^{ij})^{-1} \circ \phi_i^{ij}$ might be too singular to allow for an extension, and since $\phi_i^{ij}$ and $\phi_j^{ij}$ have overlapping images in $U^{ij}$ it does not help to rephrase this in terms of finding an extension of the pushforwards of these sections to $U^{ij}$.
Thus one needs some notion of a ``good coordinate system" on $\oMm$, 
in which all compatibility conditions between the perturbations are given by pushforwards with embeddings. That is, two charts $\bK_I$ and $\bK_J$ either have no overlap or
there is a direct coordinate change $\phi_{IJ}$ or $\phi_{JI}$ from one to the other.
Thus two overlapping charts must have order comparable indices: either $I\subset J$ or $J\subset I$.
A first notion of ``good coordinate system'' was introduced in \cite{FO,FOOO}, with refinements in \cite{FOOO12}; see \S\ref{ss:alg} and Remark~\ref{rmk:otherK}~(v).
We achieve the required ordering -- again in \cite{MW:top} for a universally applicable Kuranishi setting -- by constructing a {\it reduction} $\Vv\subset \Obj_{\bB_\Kk}$ as stated in Theorem~\ref{thm:red}.
This does not provide another Kuranishi atlas or collection of compatible charts but merely is a subset of the domain spaces that covers the unperturbed moduli space $\pi_\Kk\bigl(\Vv\cap s_\Kk^{-1}(0)\bigr) = \io_\Kk( \oMm)$
and whose parts project to disjoint subsets $\pi_\Kk(\Vv\cap U_I)\cap\pi_\Kk(\Vv\cap U_J)=\emptyset$ in the virtual neighbourhood $|\Kk|$ unless there is a direct coordinate change between $\bK_I$ and $\bK_J$. 
Since the unidirectional coordinate changes induce an ordering, this allows for an iterative approach to constructing compatible perturbations $\nu_I$.
However, this construction in Proposition~\ref{prop:ext} is still very delicate and requires great control over the perturbation $\nu$ since, to ensure compactness of the zero set, 
we must construct it so that 
the perturbed zero set $\pi_\Kk\bigl((s+\nu)^{-1}(0)\bigr)$ is contained in a precompact but generally not open set $\pi_\Kk(\Cc)$ that arises from a precompact subset $\Cc\sqsubset \Vv$.
In particular, this construction requires a suitable metric on $\pi_\Kk(\Vv)$, cf.\ Definition~\ref{def:metric}, 
which raises the additional difficulty of working with different topologies since -- as explained above -- 
the natural quotient topologies are almost never metrizable.

\MS\NI{\bf Regularity of  
perturbations:}
In order to deduce the existence of transverse perturbations in a single chart from Sard's theorem, the section must be $\Cc^k$, where $k\ge 1$ is larger than the index of the Kuranishi atlas.
(This was also overlooked in \cite{FO} and \cite{J1}.)
For applications to pseudoholomorphic curve moduli spaces this means that either a refined gluing theorem with controls of the derivatives must be proven, 
or a perturbation theory in stratified smooth Kuranishi data is needed.

Moreover, when extending a transverse section $(\phi_{IJ})_*(s_I+\nu_I)$ from the image of the embedding $\phi_{IJ}$ to the rest of $U_J$, we must control its behavior in directions normal to the submanifold $\im \phi_{IJ}$ so that zeros of $s_I+\nu_I:U_I\to E_I$ in $U_{IJ}$ correspond to transverse zeros of $s_J+\nu_J:U_J\to E_J$.
That is, the derivative $\rd (s_J+\nu_J)$ must induce a surjective map from the normal bundle of $\phi_{IJ}(U_{IJ})\subset U_J$ to $E_J/\Hat\phi_{IJ}(E_I)$.
If this is to be satisfied at the intersection of several embeddings to $U_J$, then the construction of transverse sections necessitates a tangent bundle condition, which was introduced in \cite{J1} and then adopted in \cite{FOOO}.
We reformulate it as an {\it index condition} relating the kernel and cokernels of $\rd s_I$ and $\rd s_J$ and can then extend transverse perturbations by requiring 
vanishing derivatives
in the normal directions to all embeddings $\phi_{IJ}$
(see Definition~\ref{def:sect}).

\MS\NI{\bf Uniqueness  up to cobordism:}  
Another crucial requirement on the perturbation constructions is that the resulting manifold (in the case of trivial isotropy) is unique modulo cobordism. This requires considerable effort since it does not just pertain to nearby sections of one bundle, but to sections constructed with respect to different metrics in different shrinkings and reductions of the 
Kuranishi atlas.
Finally, in applications to pseudoholomorphic curve moduli spaces, a notion of equivalence between different Kuranishi atlases is needed. Contrary to the finite dimensional charts for 
regular 
moduli spaces, or the Banach manifold charts for ambient spaces of maps, two Kuranishi charts for the same moduli space may not be directly compatible. 
Instead, we introduce in \S\ref{ss:Kcobord} a notion of {\it commensurability} by a common extension. 
In the application to Gromov-Witten moduli spaces \cite{MW:GW}, we expect to obtain this equivalence from an infinite dimensional index condition relating the linearized Kuranishi section to the linearized Cauchy-Riemann operator.
Any construction method for Kuranishi atlases on a Gromov-Witten moduli space 
(as outlined in \S\ref{s:construct}) should yield atlases that are commensurate in this sense, though 
this does not seem to be discussed in the literature yet.

In order to prove invariance of the abstract VFC construction, however, it suffices to work with a weaker notion of concordance for Kuranishi atlases, which is defined by considering atlases over the product $[0,1]\times \oMm$.
We develop the requisite general cobordism theory in \S\ref{ss:Kcobord}, and then need to generalize the shrinking, reduction, and perturbation constructions to a relative setting, interpolating between fixed data for $\{0,1\}\times \oMm$. 
Again the general categorical setting -- rather than a base manifold with boundary -- 
and the complexity of interrelated choices
introduce unanticipated subtleties into these constructions,
despite previous literature deeming a theory with boundary (and in fact corners, which we do not address) a straightforward extension.

\MS
\NI {\bf Orientability:}
The dimension condition $\dim U_I-\dim E_I = \dim U_J-\dim E_J=:d$ together with the fact that each $s_I+\nu_I$ is transverse to $0$ implies that
the zero sets  of $s_I+\nu_I$ and $s_J+\nu_J$ both have dimension $d$, so that the embedding
$\phi_{IJ}$ does restrict to a local diffeomorphism between these local zero sets.
Thus, if all the above conditions hold, then the zero sets $(s_I+\nu_I)^{-1}(0)$ and morphisms induced by the coordinate changes do form an \'etale proper groupoid $\bZ^\nu$.
To give its realization $|\bZ^\nu|$ a well defined fundamental cycle, it remains to orient the local zero sets compatibly, i.e.\ to pick compatible nonvanishing sections of the determinant line bundles $\La^d\bigl(\ker \rd(s_I+\nu_I)\bigr)$.
These should be induced from a notion of orientation of the Kuranishi atlas, i.e.\ of sections of the unperturbed determinant line bundles $\La^{\rm max} \ker \rd s_I \otimes \bigl(\La^{\rm max} \coker \rd s_I\bigr)^*$, which are compatible with fiberwise isomorphisms induced by the embeddings $\phi_{IJ}: U_{IJ}\to U_J$.

To construct this {\it determinant line bundle} $\det(\s_\Kk)$ of the Kuranishi atlas in Proposition~\ref{prop:det0}, we have to compare trivializations of determinant line bundles that arise from stabilizations by trivial bundles of different dimension.  
As recently pointed out in \cite{Z3}, there are several ways to choose local trivializations that are compatible with all necessary structure maps. We use one that is different from both the original and the revised construction in \cite[Theorem~A.2.2]{MS}, since these lead to sign incompatibilities.
Our construction, though, does coincide with ordering conventions in the construction of a canonical K-theory class on the space of linear operators between fixed finite dimensional spaces.\footnote{Thanks to Thomas Kragh for illuminating discussions on the topic of determinant bundles.
}
Finally, we use intermediate determinant bundles $\La^{\rm max} \rT U_I \otimes \bigl(\La^{\rm max} E_I\bigr)^*$ in Proposition~\ref{prop:orient1} to transfer an orientation of $\det(\s_\Kk)$ to~$\bZ^\nu$.

We develop this theory from scratch since previous treatments were at best sketches based on an assumption of structure maps being compatible with any kind of canonical conventions, and which initially even overlooked the crucial tangent bundle condition.

\MS
Putting everything together, we finally conclude in Theorem~\ref{thm:VMC1} that every
oriented weak additive $d$-dimensional Kuranishi atlas $\Kk$ with trivial isotropy determines a unique cobordism class of  oriented $d$-dimensional compact manifolds, that is represented by the zero sets of a suitable class of  admissible sections.
Theorem~\ref{thm:VMC2} interprets this result in more intrinsic terms, defining
a \v{C}ech homology class on $\oMm$, which we call the {\it virtual fundamental class} (VFC).
This is a stronger notion than in \cite{FO,FOOO}, where a virtual fundamental cycle is supposed to associate to any ``strongly continuous map'' $f:\oMm\to Y$ a cycle in $Y$. On the other hand, our notion of VFC does not yet provide a ``pull-push'' construction as needed for e.g.\ the construction of a chain level $A_\infty$ algebra in \cite{FOOO} by pullback of cycles via evaluation maps $\ev_1,\ldots,\ev_k:\oMm\to L$ and pushforward by another evaluation $\ev_0:\oMm\to L$;
for recent work on this see \cite{FOOO15}.

Summarizing, our definition of a Kuranishi atlas is designed to make it possible both to construct them in applications, such as Gromov--Witten moduli spaces, and to prove that they have natural virtual fundamental cycles.  
Since we were forced to make essential changes to almost all global notions and constructions, we compare our notion of Kuranishi atlas to the various notions of Kuranishi structures in Remark~\ref{rmk:otherK}.
We cannot directly compare these notions with the approach in \cite{LT} for lack of a concrete comparable structure, as explained in the final remark below.

\begin{rmk}\rm  
Our paper makes rather few references to 
\cite{LT} because that deals mostly with gluing and isotropy; in other respects it is very sketchy.  For example, it does not mention any of the analytic details in \S\ref{s:construct}. Its Theorem~1.1  constructs the oriented Euler class of a ``generalized Fredholm bundle" $[s:\Tilde\Bb\to \Tilde\Ee]$, avoiding the Hausdorff question
 by assuming that there is a global finite dimensional bundle $\Tilde\Bb\times F$ that maps onto the local approximations.  However, in the Gromov--Witten situation this is essentially never the case (even if there is no gluing) since $\Tilde\Bb$ is a quotient of the form $\Hat\Bb/G$.  Therefore, we must work in the situation described in \cite[Remark~3]{LT}, and here they just say that the extension to this case is easy, without further comment.
Also, the proof that the structure described in Remark 3 actually exists even in the simple Gromov--Witten case that we consider in \S\ref{s:construct} lacks almost all detail, c.f.\ the proof of Proposition 2.2. 
Their idea is to build a global object from a covering family of basic charts using sum charts (see condition (iv) at the beginning of \S1) and partitions of unity to extend sections. 
The paper \cite{LiuT} explains this idea with much more clarity, but unfortunately, because it does not pass to  finite dimensional reductions, it makes serious analytic errors which we explain in \S\ref{ss:LTBS}.
There are also serious difficulties with using partitions of unity in this context that cannot be easily circumvented by passing to finite dimensional reductions. Therefore at present it is unclear to us whether this construction can be correctly carried out.
$\hfill\er$
\end{rmk}

%
%
%

\section{Differentiability issues in abstract regularization approaches}
\label{s:diff}

Any abstract regularization procedure for holomorphic curve moduli spaces needs to deal with the fundamental analytic difficulty of the reparametrization action, which has been often overlooked in symplectic topology. We thus explain in \S\ref{ss:nodiff} the relevant differentiability issues in the example of spherical curves with unstable domain.
In a nutshell, the reparametrization $f\mapsto f\circ\ga$ with a fixed diffeomorphism $\ga$ is smooth on infinite dimensional function spaces, but the action $(\ga,f)\mapsto f\circ\ga$ of any nondiscrete family of diffeomorphisms fails even to be differentiable in any standard Banach space topology.
In geometric regularization techniques, this difficulty is overcome by regularizing the space of parametrized holomorphic maps in such a way that it remains invariant under reparametrizations. Then the reparametrization action only needs to be considered on a finite dimensional manifold, where it is smooth. It has been the common understanding that by stabilizing the domain or working in finite dimensional reductions one can overcome this differentiability failure in more general situations.
We will explain in \S\ref{ss:DMdiff} that reparametrizations nevertheless need to be dealt with in establishing compatibility of constructions in local slices, in particular between charts near nodal curves and local slices of regular curves.
In particular, we will show the difficulties in the
global obstruction bundle approach in \S\ref{ss:nodiff}, and for the Kuranishi atlas approach will see explicitly in \S\ref{ss:Kcomp} that the action on infinite dimensional function spaces needs to be dealt with when establishing compatibility of local finite dimensional reductions.
Finally, \S\ref{ss:eval} explains additional smoothness issues in dealing with evaluation maps.

\subsection{Differentiability issues arising from reparametrizations}  \hspace{1mm}\\ \vspace{-3mm}
\label{ss:nodiff}

The purpose of this section is to explain the implications of the fact that the action of a nondiscrete automorphism group ${\rm Aut}(\Si)$ on a space of maps $\{ f: \Si \to M \}$ by reparametrization is not continuously differentiable in any known Banach metric.
In particular,
the space
$$
\{ f: \Si \to M \,|\, f_*[\Si]\neq 0 \}/{\rm Aut}(\Si),
$$
of equivalence classes of (nonconstant) smooth maps from a fixed domain modulo repara\-metrization of the domain, has no known completion with differentiable Banach orbifold structure.
We discuss the issue in the concrete case of the moduli space $\oMm_{1}(A,J)$ of $J$-holomorphic spheres with one marked point.\footnote{
In order to understand how any given abstract regularization technique deals with the differentiability issues caused by reparametrizations, one can test it on the example of spheres with one marked point. This is a realistic test case since since sphere bubbles will generally appear in any compactified moduli space (before regularization).
}
For the sake of simplicity let us assume that the nonzero class $A\in H_2(M)$ is such that it excludes bubbling and multiply covered curves a priori, so that no nodal solutions are involved and all isotropy groups are trivial. In that case one can describe the moduli space
\begin{align*}
\oMm_{1}(A,J)
&:= \bigl\{ (z_1,f) \in S^2 \times \Cc^\infty(S^2,M) \,\big|\, f_*[S^2]=A, \pbar f = 0 \bigr\} / \PSL(2,\C) \\
& \cong
\bigl\{ f \in \Cc^\infty(S^2,M) \,\big|\, f_*[S^2]=A, \pbar f = 0 \bigr\} / G_\infty
\end{align*}
as the zero set of the section $f\mapsto \pbar f$ in an appropriate bundle over the quotient
$$
\HBb/G_\infty \quad\text{of}\quad
\Hat\Bb:= \bigl\{ f \in \Cc^\infty(S^2,M) \,\big|\, f_*[S^2]=A \bigr\}$$
by the reparametrization action $f\mapsto f\circ\ga$ of
$$G_\infty : = \{\ga\in \PSL(2,\C) \,|\, \ga(\infty)=\infty\}.
$$
The quotient space $\HBb/G_\infty$ inherits the structure of a Fr\'echet manifold, but note that the action on any Sobolev completion
\begin{align} \label{action}
\Theta: G_\infty \times W^{k,p}(S^2,M) \to W^{k,p}(S^2,M), \quad
(\gamma,f) \mapsto f\circ\gamma
\end{align}
does not even have directional derivatives at maps $f_0\in W^{k,p}(S^2,M) \less W^{k+1,p}(S^2,M)$ since the differential\footnote{
Here the tangent space to the automorphism group $\rT_{\rm Id}G_\infty \subset\Ga(\rT S^2)$ is the finite dimensional space of holomorphic (and hence smooth) vector fields $X:S^2 \to \rT S^2$ that vanish at $\infty\in S^2$.
}
\begin{align}\label{eq:actiond}
{\rm D}\Theta ({\rm Id},f_0) : \;
\rT_{\rm Id}G_\infty
\times W^{k,p}(S^2, f_0^*\rT M) &\;\longrightarrow\; W^{k,p}(S^2, f_0^*\rT M) \\
 {(X,\xi)} \qquad\qquad\qquad\;\;\; &\;\longrightarrow\; \;\;   \xi + \rd f_0 \circ X \nonumber
\end{align}
is well defined only if $\rd f_0$ is of class $W^{k,p}$.
In fact, even at smooth points $f_0\in{\mathcal C}^\infty(S^2,M)$, this ``differential'' only provides directional derivatives of \eqref{action}, for which the rate of linear approximation depends noncontinuously on the direction. Hence \eqref{action} is not classically differentiable at any point.

\begin{remark} \rm  \label{BSdiff}
To the best of our knowledge, the differentiability failure of \eqref{action} persists in all other completions of $\Cc^\infty(S^2,M)$ to a Banach manifold -- e.g.\ using H\"older spaces.
The restriction of \eqref{action} to $\Cc^\infty(S^2,M)$ does have directional derivatives, and the differential is continuous in the $\Cc^\infty$ topology. Hence one could try to deal with \eqref{action} as a smooth action on a Fr\'echet manifold.
Alternatively, one could equip $\Cc^\infty(S^2,M)$ with a (noncomplete) Banach metric. Then
$$
\Theta:G_\infty \times \Cc^\infty(S^2,M) \to \Cc^\infty(S^2,M)
$$  has a bounded differential operator
$$
{\rm D}\Theta (\ga,f) : \rT_\ga G_\infty \times \Cc^\infty(S^2, f^*\rT M) \to \Cc^\infty(S^2, \ga^*f^*\rT M).
$$
 However, the differential fails to be continuous with respect to $f\in  \Cc^\infty(S^2,M)$ in the Banach metric.
Now continuous differentiability could be achieved by restricting to a submanifold $\Cc\subset\Cc^\infty(S^2,M)$ on which the map $f\mapsto \rd f$ is continuous. However, in e.g.\ a Sobolev or
H\"older metric, the identity operator  $\rT\Cc\to\rT\Cc$ would then be compact,
 so that $\Cc$ would have to be finite dimensional.

Finally, one could observe that the action \eqref{action} is in fact $\Cc^\ell$ when considered as
a map $G_\infty \times W^{k+\ell,p}(S^2,M) \to W^{k,p}(S^2,M)$. This might be useful for fixing the differentiability issues in the virtual regularization approaches with additional analytic arguments. In fact, this is essentially the definition of scale-smoothness developed in \cite{HWZ1} to deal with reparametrizations directly in the infinite dimensional setting.
$\hfill\er$
\end{remark}

It has been the common understanding that virtual regularization techniques deal with the differentiability failure of the reparametrization action by working in finite dimensional reductions, in which the action is smooth.
We will explain below for the global obstruction bundle approach, and in \S\ref{ss:Kcomp} for the Kuranishi atlas approach, that the action on infinite dimensional spaces nevertheless needs to be dealt with in establishing compatibility of the local finite dimensional reductions.
In fact, as we show in \S\ref{s:construct}, the existence of a consistent set of such finite dimensional reductions with finite isotropy groups for a Fredholm section that is equivariant under a nondifferentiable group action is highly nontrivial.
For most holomorphic curve moduli spaces, even the existence of not necessarily compatible reductions relies heavily on the fact that,
despite the differentiability failure, the action of the reparametrization groups generally do have local slices.
However, these do not result from a general slice construction for Lie group actions on a Banach manifold, but from an explicit geometric construction using transverse slicing conditions.

We now explain this construction, and subsequently show that it only defers the differentiability failure to the transition maps \eqref{transition} between different local slices.

\medskip

In order to construct local slices for the action of $G_\infty$ on a Sobolev completion of $\Hat\Bb$,
$$
\Hat\Bb^{k,p}:= \bigl\{ f \in W^{k,p}(S^2,M) \,\big|\, f_*[S^2]=A \bigr\},
$$
we will assume $(k-1)p>2$ so that $W^{k,p}(S^2)\subset \Cc^1(S^2)$.
Then any element of $\Hat\Bb^{k,p}/G_\infty$ can be represented as $[f_0]$, where the parametrization $f_0\in W^{k,p}(S^2,M)$ is chosen so that $\rd f_0(t)$ is injective for $t=0, 1 \in S^2=\C\cup\{\infty\}$.
With such a choice, a neighbourhood of $[f_0]\in \Hat\Bb^{k,p}/G_\infty$ can be parametrized by $[\exp_{f_0}(\xi)]$, for $\xi$ in a small ball in the subspace
$$
\bigl \{  \xi\in W^{k,p}(S^2,f_0^*\rT M) \ | \  \xi(t)\in \im \rd f_0(t)^\perp \; \text{for}\; t=0,1\bigr\}.
$$
Moreover, the map $\xi \mapsto [\exp_{f_0}(\xi)]$ is injective up to an action of the finite {\bf isotropy group}
$$
G_{f_0} = \{ \ga \in G_\infty \,|\, f_0\circ\ga = f_0 \} .
$$
In other words, for sufficiently small $\eps>0$, a $G_{f_0}$-quotient of
\begin{equation}\label{eq:slice}
\Bb_{f_0}:= \bigl\{ f\in \Hat\Bb^{k,p} \,\big|\, d_{W^{k,p}}(f,f_0)<\eps , f(0)\in Q_{f_0}^0 , f(1) \in Q_{f_0}^1 \bigr\}
\end{equation}
is a local slice for the action of $G_\infty$, where for some $\delta>0$
\begin{equation} \label{eq:hypsurf}
Q_{f_0}^t=\bigl\{\exp_{f_0(t)} (\xi) \,\big|\,  \xi \in \im \rd f_0(t)^\perp , |\xi|<\delta \bigr\}
\subset M
\end{equation}
 are codimension $2$ submanifolds transverse to the image of $f_0$ in two extra marked points $t=0,1$.
 For simplicity we will in the following assume that the isotropy group $G_{f_0} =\{{\rm id}\}$ is trivial, and that the submanifolds $Q_{f_0}^{t}$ can be chosen so that $f_0^{-1}(Q_{f_0}^{t})$ is unique for $t=0,1$.
Then, for sufficiently small $\eps>0$, the intersections $\im f\pitchfork Q_{f_0}^t$ are unique and transverse for all elements of $\Bb_{f_0}$. This proves that $\Bb_{f_0}$ is a {\it local slice} to the action of $G_\infty$ in the following sense.

\begin{lemma} \label{lem:slice}
For every $f_0\in\Hat\Bb^{k,p}$ such that $\rd f_0(t)$ is injective for $t=0,1$ and $G_{f_0} =\{{\rm id}\}$, there exist $\eps,\delta>0$ such that $\Bb_{f_0} \to \Hat\Bb^{k,p}/G_\infty$, $f\mapsto [f]$ is a homeomorphism to its image.
\end{lemma}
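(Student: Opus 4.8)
The plan is to show directly that the continuous map $\iota\colon\Bb_{f_0}\to\Hat\Bb^{k,p}/G_\infty$, $f\mapsto[f]$, is injective with continuous inverse. The only geometric input needed is the transverse slicing property already discussed before the statement; everything else is point-set topology built on the fact that $G_\infty$ acts on $\Hat\Bb^{k,p}$ by homeomorphisms, \emph{not} differentiably.

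I would begin with three preliminary observations. \textbf{(a)} The set $\Bb_{f_0}$ is \emph{open} in $\Hat\Bb^{k,p}$: the condition $d_{W^{k,p}}(f,f_0)<\eps$ is open, and since $(k-1)p>2$ gives a continuous inclusion $W^{k,p}(S^2)\hookrightarrow\Cc^0(S^2)$, the evaluation maps $\ev_t\colon\Hat\Bb^{k,p}\to M$ are continuous, so $\ev_0^{-1}(Q^0_{f_0})$ and $\ev_1^{-1}(Q^1_{f_0})$ are open (the $Q^t_{f_0}$ being open discs in $M$). \textbf{(b)} For each fixed $\ga\in G_\infty$ the map $\phi_\ga\colon\Hat\Bb^{k,p}\to\Hat\Bb^{k,p}$, $g\mapsto g\circ\ga$, is a homeomorphism (reparametrization by a fixed smooth M\"obius map is bounded on $W^{k,p}$, with inverse $\phi_{\ga^{-1}}$); hence $\pi\colon\Hat\Bb^{k,p}\to\Hat\Bb^{k,p}/G_\infty$ is an open map, the saturation $G_\infty\cdot\Bb_{f_0}$ is open, and $\pi$ restricts to an open quotient map $q\colon G_\infty\cdot\Bb_{f_0}\to\iota(\Bb_{f_0})$ onto an open subset. \textbf{(c)} After shrinking $\eps$ (and $\de$), every $f\in\Bb_{f_0}$ is $\Cc^1$-close to $f_0$, and by the stability of transverse intersections of the fixed submanifolds $Q^t_{f_0}$ under $\Cc^1$-small perturbations, together with compactness of $S^2$ and the hypotheses that $\rd f_0(t)$ is injective and that $f_0^{-1}(Q^t_{f_0})$ is a single point, one gets $f^{-1}(Q^0_{f_0})=\{0\}$ and $f^{-1}(Q^1_{f_0})=\{1\}$. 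This is precisely the "unique transverse intersection for all $f\in\Bb_{f_0}$" asserted before the lemma.

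For injectivity, suppose $f,f'\in\Bb_{f_0}$ with $[f]=[f']$, so $f'=f\circ\ga$ for some $\ga\in G_\infty$. Then $\ga(0)\in f^{-1}(Q^0_{f_0})=\{0\}$ and $\ga(1)\in f^{-1}(Q^1_{f_0})=\{1\}$ by (c), while $\ga(\infty)=\infty$ by definition of $G_\infty$; an element of $\PSL(2,\C)$ fixing $0,1,\infty$ is the identity, so $f=f'$. For the continuous inverse I would introduce the retraction $R\colon G_\infty\cdot\Bb_{f_0}\to\Bb_{f_0}$ sending $g$ to the unique point of the orbit $G_\infty\cdot g$ lying in $\Bb_{f_0}$, which is well defined by existence (definition of $G_\infty\cdot\Bb_{f_0}$) and by the injectivity just proved. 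By construction $R$ is constant on the orbits, hence on the fibres of $q$, and $R$ restricts to the identity on the open set $\Bb_{f_0}$; moreover $R\circ\phi_\ga=R$ for every $\ga\in G_\infty$. Given $g_0\in G_\infty\cdot\Bb_{f_0}$, write $g_0=h_0\circ\ga_0$ with $h_0:=R(g_0)\in\Bb_{f_0}$; then $R=R\circ\phi_{\ga_0^{-1}}$, and $\phi_{\ga_0^{-1}}$ is continuous and carries $g_0$ to $h_0$ in the open set $\Bb_{f_0}$ on which $R$ equals the identity, so $R$ is continuous at $g_0$. As $g_0$ was arbitrary, $R$ is continuous on $G_\infty\cdot\Bb_{f_0}$; being constant on the fibres of the quotient map $q$, it descends to a continuous map $\iota(\Bb_{f_0})\to\Bb_{f_0}$, which because $R|_{\Bb_{f_0}}=\id$ and $\iota$ is injective is a two-sided inverse of $\iota$. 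Thus $\iota$ is a homeomorphism onto the open subset $\iota(\Bb_{f_0})\subset\Hat\Bb^{k,p}/G_\infty$.

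The step I expect to be the real content is observation (c) --- forcing $f^{-1}(Q^t_{f_0})=\{t\}$ for all $f\in\Bb_{f_0}$ by taking $\eps$ small --- since this is where the geometry of the slicing hypersurfaces $Q^t_{f_0}$ and the injectivity of $\rd f_0(0),\rd f_0(1)$ genuinely enter; the remaining arguments are soft. Note that the non-differentiability of the $G_\infty$-action plays no role here: for a homeomorphism statement one only uses continuity of each individual reparametrization $\phi_\ga$ together with the openness of $\Bb_{f_0}$, whereas the differentiability failure becomes relevant only when one tries to put a differentiable structure on the slice or to compare two different slices via transition maps.
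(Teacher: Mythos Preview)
Your injectivity argument is correct and is indeed the geometric heart of the lemma. However, your observation (a) is wrong: the slicing submanifolds $Q^t_{f_0}$ are \emph{codimension-$2$} discs in $M$ (see \eqref{eq:hypsurf} and the sentence following it), not open subsets of $M$. The conditions $f(t)\in Q^t_{f_0}$ therefore cut out a codimension-$4$ Banach submanifold of $\Hat\Bb^{k,p}$, not an open set. This breaks your continuity argument for $R$: near a point $h_0\in\Bb_{f_0}$, a generic element $g\in G_\infty\cdot\Bb_{f_0}$ does \emph{not} lie in $\Bb_{f_0}$ (it typically fails the slicing conditions $g(t)\in Q^t_{f_0}$), so $R(g)\ne g$ there, and you cannot conclude that $R=\phi_{\ga_0^{-1}}$ on a neighbourhood of $g_0$ as you do.

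The repair uses the transversality you identified in (c), but in a more substantial way than you intended. First, $G_\infty\cdot\Bb_{f_0}$ is open not because $\Bb_{f_0}$ is, but because any $g$ that is $W^{k,p}$-close (hence $\Cc^1$-close) to some $h\in\Bb_{f_0}$ still meets each $Q^t_{f_0}$ transversely in a unique point $w^t(g)$ near $t$, and reparametrizing by the $\ga_g\in G_\infty$ with $\ga_g(t)=w^t(g)$ lands $g\circ\ga_g$ back in $\Bb_{f_0}$. Second, this gives the explicit formula $R(g)=g\circ\ga_g$, and continuity of $R$ follows from continuity of $g\mapsto\ga_g$ (the intersection points $w^t(g)=g^{-1}(Q^t_{f_0})$ depend continuously on $g$ by the implicit function theorem applied in $\Cc^1$; compare \eqref{gf} and the surrounding discussion) together with \emph{joint} continuity of the reparametrization action $G_\infty\times W^{k,p}\to W^{k,p}$, which does hold even though differentiability fails (cf.\ the continuity part of the proof of Lemma~\ref{le:Gsmooth}). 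The paper indicates precisely this route and defers the details to \cite{MW:GW}; your attempt to short-circuit the analysis via openness of $\Bb_{f_0}$ does not go through.
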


\begin{remark} \rm \label{rmk:unique}
If $f_0$ is pseudoholomorphic with closed domain, then trivial isotropy implies somewhere injectivity, see \cite[Chapter~2.5]{MS}; however this is not true for general smooth maps or other domains.
Thus to prove Lemma~\ref{lem:slice} for general $f_0$ with trivial isotropy, we must deal with the case of non-unique intersections. In that case one obtains unique transverse intersections for $f\approx f_0$ in a neighbourhood of the chosen points in $f_0^{-1}(Q_{f_0}^{t})$ and can prove the same result. 
We defer the details to \cite{MW:GW}, where we also prove an orbifold version of Lemma~\ref{lem:slice} in the case of  nontrivial isotropy.
In that case, one must define the local action of the isotropy group with some care.  However, it is always defined by a formula such as \eqref{transition}, and so in general is no more differentiable than the transition maps \eqref{transition} below.
$\hfill\er$
\end{remark}

The topological embeddings $\Bb_{f}\to \Hat\Bb^{k,p}/G_\infty$ of the local slices provide a cover of $\Hat\Bb^{k,p}/G_\infty$ by Banach manifold charts.
The transition map between two such Banach manifold
charts centered at $f_0$ and $f_1$ is given in terms of the local slices by
\begin{equation}\label{transition}
\Gamma_{f_0,f_1} :\;
\Bb_{f_0,f_1}:=
 \Bb_{f_0}\cap G_\infty\Bb_{f_1}
\; \longrightarrow \;
\Bb_{f_1}, \qquad f\longmapsto f\circ \gamma_f ,
\end{equation}
where $\gamma_f\in G_\infty$ is uniquely determined by $\gamma_f(t)\in f^{-1}(Q_{f_1}^{t})$ for $t=0,1$ by our choice of $\Bb_{f_1}$.
Here the differentiability of the map
\begin{equation} \label{gf}
W^{k,p}(S^2,M)\to G_\infty, \quad f\mapsto \gamma_f
\end{equation}
is determined by that of the intersection points  with the slicing conditions for $t=0,1$,
$$
 W^{k,p}(S^2,M)\to S^2, \quad f \mapsto f^{-1}(Q_{f_1}^{t}) .
$$
By the implicit function theorem, these maps are $\Cc^\ell$-differentiable if $k>\ell + 2/p$ such that $W^{k,p}(S^2)\subset \Cc^\ell(S^2)$.
However, the transition map also involves the action \eqref{action}, and thus is non-differentiable at some simple examples of $f\in W^{k,p}\less W^{k+1,p}$, no matter how we pick $k,p$.

\begin{lemma} \label{le:Gsmooth}
Let $B\subset \Bb_{f_0}$ be a finite dimensional submanifold of $\Bb_{f_0}$ with the $W^{k,p}$-topology, and assume that it lies in the subset of smooth maps, $B\subset\Cc^\infty(S^2,M)\cap  \Bb_{f_0}$.
Then the transition map \eqref{transition} restricts to a smooth map
$$
B \cap G_\infty\Bb_{f_1}
\; \longrightarrow \; \Bb_{f_1}, 
\qquad f \;\longmapsto\; \Gamma_{f_0,f_1}(f) = f\circ \gamma_f .
$$
\end{lemma}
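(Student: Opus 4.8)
The plan is to show that the two sources of non-differentiability in the transition map $\Gamma_{f_0,f_1}(f) = f\circ\gamma_f$ are each harmless when the domain is restricted to a finite-dimensional submanifold $B$ consisting of smooth maps. Recall that the transition map factors as $f\mapsto (f,\gamma_f)\mapsto f\circ\gamma_f$, so it suffices to treat (a) the map $f\mapsto\gamma_f$, and (b) the composition map $(f,\gamma)\mapsto f\circ\gamma$, separately, and then compose.

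First I would handle the Möbius parameter $f\mapsto\gamma_f\in G_\infty$. By the defining property of $\Bb_{f_1}$, the element $\gamma_f$ is the unique element of $G_\infty$ with $\gamma_f(t)\in f^{-1}(Q_{f_1}^t)$ for $t=0,1$; since $G_\infty$ is parametrized near $\mathrm{id}$ by where it sends the two points $0,1\in S^2$, this reduces to studying the two intersection maps $f\mapsto f^{-1}(Q_{f_1}^t)\in S^2$. As already noted in the excerpt, these are $\Cc^\ell$-differentiable on $W^{k,p}(S^2,M)$ whenever $k>\ell+2/p$, by the implicit function theorem applied to the transversal intersection condition (the transversality of $\im f$ with $Q_{f_1}^t$, guaranteed for $f$ close to $f_1$ on the shrunk slice). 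Restricting to the finite-dimensional $B$ only makes things easier: the inclusion $B\hookrightarrow W^{k,p}(S^2,M)$ is smooth for every $k$, and on a finite-dimensional manifold $\Cc^\ell$ for all $\ell$ means $\Cc^\infty$. Hence $f\mapsto\gamma_f$ is smooth as a map $B\cap G_\infty\Bb_{f_1}\to G_\infty$, valued in a finite-dimensional Lie group.

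Second, I would handle the composition/evaluation map. The issue flagged in Remark~\ref{BSdiff} and equation~\eqref{eq:actiond} is that $(\gamma,f)\mapsto f\circ\gamma$ loses a derivative: its linearization involves $\rd f\circ X$, so it is only $\Cc^\ell$ as a map $G_\infty\times W^{k+\ell,p}\to W^{k,p}$. But on $B\subset\Cc^\infty(S^2,M)$ all maps are smooth, so $\rd f$ is again smooth, and more to the point, $B$ sits inside $W^{m,p}$ for \emph{every} $m$; the inclusion $B\hookrightarrow W^{m,p}$ is a smooth map of (the finite-dimensional) $B$ for each $m$. Thus for any prescribed finite $\ell$ we may regard $f\in B$ as an element of $W^{k+\ell,p}$ at no cost, apply the $\Cc^\ell$ statement for $G_\infty\times W^{k+\ell,p}\to W^{k,p}$, and conclude that $(\gamma,f)\mapsto f\circ\gamma$ restricts to a $\Cc^\ell$ map $G_\infty\times B\to W^{k,p}$. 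Since $\ell$ was arbitrary and the source group $G_\infty$ together with $B$ is finite-dimensional, this restriction is in fact smooth. Composing with the smooth map $f\mapsto(f,\gamma_f)$ from the first step gives that $\Gamma_{f_0,f_1}|_{B\cap G_\infty\Bb_{f_1}}$ is smooth; one final remark is that its image again lies in $\Cc^\infty(S^2,M)\cap\Bb_{f_1}$, since $f\in\Cc^\infty$ and $\gamma_f\in G_\infty$ acts by a smooth (indeed Möbius) reparametrization, so the codomain restriction to $\Bb_{f_1}$ is legitimate.

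The main obstacle is conceptual rather than technical: one must resist the temptation to prove smoothness on the infinite-dimensional slice $\Bb_{f_0}$ — which is genuinely false by the discussion around \eqref{action} — and instead exploit precisely the two features that rescue the statement, namely that $B$ is finite-dimensional (so "$\Cc^\ell$ for all $\ell$" upgrades to "$\Cc^\infty$", and so the loss-of-one-derivative in the reparametrization action costs nothing because $B\subset W^{m,p}$ for all $m$) and that $B$ consists of smooth maps (so the slicing-condition intersection points depend smoothly on $f\in B$). The bookkeeping to be done carefully is the chain rule combining the two steps while tracking which Sobolev space each factor lives in, but no hard estimate is required beyond the standard implicit-function-theorem and Sobolev-multiplication facts already invoked in the surrounding text.
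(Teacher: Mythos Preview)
Your proposal is correct and follows the same two-step decomposition as the paper: first show $f\mapsto\gamma_f$ is smooth on $B$ via the implicit function theorem and the continuous inclusions $B\hookrightarrow\Cc^\ell$, then show the composition $(\gamma,f)\mapsto f\circ\gamma$ is smooth on $G_\infty\times B$. The only packaging difference is in the second step: the paper carries out a self-contained iteration, computing the differential $(X,\xi)\mapsto\xi\circ\gamma_0+\rd f_0\circ X$ and verifying that each summand is continuous (using finite-dimensionality of $B$ to bound $f_0\mapsto\rd f_0$), then bootstrapping; you instead invoke the scale-smoothness observation from Remark~\ref{BSdiff} that $\Theta:G_\infty\times W^{k+\ell,p}\to W^{k,p}$ is $\Cc^\ell$ and compose with the smooth inclusion $B\hookrightarrow W^{k+\ell,p}$. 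Both routes use exactly the same hypotheses in the same way; yours is a bit slicker but leans on a fact the paper states without proof, while the paper's version is self-contained.
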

\begin{proof}
Since $B$ is finite dimensional, all norms on $\rT B$ are equivalent. In particular, we equip $B$ with the $W^{k,p}$-topology in which it is a submanifold of $\Bb_{f_0}$. Then the embeddings $B\to \Cc^\ell(S^2,M)$ for all $\ell\in\N$ are continuous and hence the above discussion shows that the map $B\to G_\infty$, $f\mapsto\ga_f$ given by restriction of \eqref{gf} is smooth.
To prove smoothness of $\Gamma_{f_0,f_1}|_{B \cap G_\infty\Bb_{f_1}}$ it remains to establish smoothness of the restriction of the action $\Theta$ in \eqref{action} to 
$$
\Theta_B \,: \; G_\infty \times B \;\longrightarrow \; W^{k,p}(S^2,M), \qquad (\ga,f) \;\longmapsto\;  f\circ \ga .
$$
For that purpose first note that continuity in $f\in B$ is elementary since, after embedding $M\hookrightarrow \R^N$, this is a linear map in $f$. Continuity in $\ga$ for fixed $f\in\Cc^\infty(S^2,M)$ follows from uniform bounds on the derivatives of $f$ (and could also be extended to infinite dimensional subspaces of $W^{k,p}(S^2,M)$ by density of the smooth maps). This proves continuity of $\Theta_B$.

Generalizing \eqref{eq:actiond}, with $\rT_{\ga_0} G_\infty\subset\Ga(\ga_0^*\rT S^2)$ the space of holomorphic (and hence smooth) sections $X:S^2 \to \ga_0^*\rT S^2$ that vanish at $\infty\in S^2$, the differential of $\Theta_B$ is 
\begin{align*}
{\rm D}\Theta_B (\ga_0,f_0) : \;
\rT_{\ga_0} G_\infty \times \rT_{f_0} B &\;\longrightarrow\; W^{k,p}(S^2, f_0^*\rT M) \\
 {(X,\xi)} \qquad&\;\longrightarrow\; \;\;   \xi\circ\ga_0 + \rd f_0 \circ X .
\end{align*}
It exists and is a bounded operator at all $(\ga_0,f_0)\in G_\infty \times B$ since by assumption $f_0$ is smooth, so it remains to analyze the regularity of this operator family under variations in $G_\infty \times B$.
Denoting by $L(E,F)$ the space of bounded linear operators $E\to F$, the second term,
$$
B \;\to\; L\bigl(\rT_{\ga_0} G_\infty , W^{k,p}(S^2, f_0^*\rT M)\bigr), \quad
f_0 \;\mapsto\; (\rd f_0)_* 
\qquad\text{given by}\;   (\rd f_0)_* X = \rd f_0 \circ X ,
$$
is smooth on the finite dimensional submanifold $B$ because 
$\Cc^\ell(S^2,\R^N) \to W^{k,p}(\rT S^2,\R^N)$, $f_0\mapsto \rd f_0$ is a bounded linear map for sufficiently large $\ell$, and the $\Cc^\ell$-norm on $B\subset \Cc^\infty(S^2,\R^N)$ is equivalent to the $W^{k,p}$-norm.
The first term,
\begin{equation}\label{thetag}
G_\infty \;\to\; L\bigl(\rT_{f_0} B , W^{k,p}(S^2, f_0^*\rT M)\bigr) , \quad
\ga_0 \;\mapsto\; \theta_{\ga_0} \qquad\text{given by}\;\theta_{\ga_0}(\xi) = \xi\circ\ga_0 ,
\end{equation}
is of the same type as $\Theta_B$, hence continuous by the above arguments. 

This proves continuous differentiability of $\Theta_B$. Then continuous differentiability of the first term \eqref{thetag} follows from the same general statement about differentiability of reparametrization by $G_\infty$, and thus implies continuous differentiability of ${\rm D}\Theta_B$. Iterating this argument, we see that all derivatives of $\Theta_B$ are continuous, and hence $\Theta_B$ is smooth, as claimed.
Note however that this argument crucially depends on the finite dimensionality of $B$ to obtain continuity for the second term of ${\rm D}\Theta_B$.
\end{proof}

An important observation here is that the Cauchy--Riemann operator
$$
\pbar : \; \Hat\Bb^{k,p} \;\longrightarrow\; \Hat\Ee:= {\textstyle \bigcup_{f\in\Hat\Bb^{k,p}}} W^{k-1,p}(S^2,\Lambda^{0,1}f^*\rT M)
$$
restricts to a smooth section $\pbar:\Bb_{f_i}\to\Hat\Ee|_{\Bb_{f_i}}$ in each local slice.
The bundle map
$$
\Hat\Ga_{f_0,f_1} : \; \Hat\Ee|_{\Bb_{f_0,f_1}}  \;\longrightarrow\;  \Hat\Ee|_{\Bb_{f_1}}, \qquad
 \Hat\Ee_f  \;\ni\; \eta \;\longmapsto\; \eta \circ \rd \ga_f^{-1}  \;\in\; \Hat\Ee_{f\circ\ga_f} ,
$$
intertwines the Cauchy--Riemann operators in different local slices,
$$
\Hat\Ga_{f_0,f_1} \circ \pbar = \pbar \circ \Ga_{f_0,f_1} .
 $$
However, general perturbations of the form $\pbar + \nu : \Bb_{f_1} \to \Hat\Ee|_{\Bb_{f_1}}$, where $\nu$ is a  $\Cc^1$ section of the bundle $\Hat\Ee|_{\Bb_{f_1}}$, are {\it not} pulled back to $\Cc^1$ sections of $\Hat\Ee|_{\Bb_{f_0,f_1}}$ by $\Hat\Ga_{f_0,f_1}$ since
\begin{equation}\label{trans2}
\Hat\Ga_{f_0,f_1}^{-1}\circ \nu \circ \Ga_{f_0,f_1} : \;  f \;\longmapsto\; \nu(f\circ\ga_f) \circ \rd\ga_f^{-1}
\end{equation}
does not depend differentiably on the points $f$ in the base $\Bb_{f_0}$.
In equations \eqref{graphsp} and \eqref{graph} we  give a geometric construction of a special class of sections $\nu$ that do behave well under this pullback.

\subsection{Differentiability issues in obstruction bundle regularization approaches}   \hspace{1mm}\\ \vspace{-3mm}
 \label{ss:LTBS}

The lack of differentiability in \eqref{transition} and \eqref{trans2} poses a significant problem in the obstruction bundle approach to regularizing holomorphic curve moduli spaces.
This approach views the Cauchy--Riemann operator $\pbar:\Ti\Bb\to\Ti\Ee$ as a section of a topological vector bundle over an ambient space $\widetilde\Bb$ of stable $W^{k,p}$-maps modulo reparametrization, with a $W^{k,p}$-version of Gromov's topology. It requires a ``partially smooth structure'' on this space, in particular a smooth structure on each stratum. For example, the open stratum in the present Gromov--Witten example is $\Hat\Bb^{k,p}/G_\infty\subset\Ti\Bb$, for which smooth orbifold charts, isotropy actions, and transition maps are explicitly claimed in \cite[Proposition~2.15]{Lu} and implicitly claimed in \cite{LiuT}.
The latter paper does not even prove continuity of isotropy and transition maps, though an 
argument was supplied by Liu for the 2003 revision of \cite[\S6]{Mcv}.  However, continuity does not suffice to preserve the differentiability of perturbation sections in local trivializations of $\Ti\Ee\to\Ti\Bb$ under pullback by isotropy or transition maps.

Another serious problem with this approach is its use of cutoff functions to extend  sections
defined on infinite dimensional local slices such as $\Bb_{f_0}$ to other local slices.
Since these cutoff sections are still intended to give Fredholm perturbations of $\pbar$, the cutoff functions must be $\Cc^1$ and remain so under coordinate changes.  The paper \cite{LiuT} gives no details here.
A construction is given in \cite[Appendix~D]{LuT}, but this paper implicitly assumes an invariant notion of smoothness on the strata of $\Ti\Bb$, thus generally does not apply to Gromov--Witten moduli spaces.
Similarly, \cite[\S5]{CT} introduces a notion of ``Fredholm system"  that in its global form is irrelevant to most Gromov--Witten moduli spaces due to the lack of smooth structure on $\Ti\Bb$, and when localized runs into the same problems to do with smoothness of coordinate changes.
(Note also that \cite{CT} should replace the properness assumption, which implicitly assumes finite dimensions, by directly assuming compactness of the zero set.) 
These issues are now being explored in a number of recent preprints, e.g.\ \cite{Liu,CLW1},  
which also propose partial solutions in special cases (excluding nodes or isotropy), though details are hard to follow. They do seem similar to the approach in \cite{Sieb}, which may well be rigorous in this special case, but has known issues in the cases of nodes or isotropy.

Siebert also aims for a Banach orbifold structure on a space of equivalence classes of maps in \cite[Theorem 5.1]{Sieb}.
However, his notion is that of topological orbifold, i.e.\ with continuous transition maps. Indeed, his construction of local slices uses a averaged\footnote{In the case of isotropy, this averaging procedure is problematic since it may not preserve continuity.}
version of the slicing condition in \eqref{eq:slice}; thus the transition maps have the same form as \eqref{transition}, and hence fail differentiability.
However, the local smooth structures do allow for a notion of Fredholm section, which can be suitably stabilized to yield a finite dimensional reduction, which in turn yields a well defined Euler class.
Next, in order to deal with the lack of local smooth structure near a nodal curve, \cite{Sieb} notes that classical differentiability does hold in all but finitely many directions.\footnote{
The classical differentiability in all directions other than the gluing parameters seems to be a crucial fact that has recently also been used to prove the Fredholm property in applications of the polyfold approach  \cite{HWZ:GW,W:fred}.} 
The construction of a ``localized Euler class'' in \cite[Thm.1.21]{Sieb} then only requires a section whose differential varies continuously in the operator norm, even in the nondifferentiable directions. 
Unfortunately, at least in the fairly standard analytic setup of \cite{HWZ:GW,W:fred}, this is not the case.

\begin{remark} \rm
In summary, the global obstruction bundle approach requires the description of the compactified moduli space as the zero set of a single global section. This section needs to have at least local differentiability properties on a global ambient space, which is stratified by infinite dimensional quotient spaces. This is a meaningful requirement on the main stratum $\Hat\Bb^{k,p}/G_\infty$, where local smooth structures exist (just are generally not compatible). Near nodal strata, the smooth structure and differentiability requirements across strata are less clear.

One way to resolve the compatibility issue would be to use the scale calculus of polyfold theory, in which the action \eqref{action} and the coordinate changes \eqref{transition} are scale-smooth, hence $\Hat\Bb^{k,p}/G_\infty$ has the structure of a scale-Banach manifold -- the simplest nontrivial example of a polyfold. It is conceivable that the constructions of \cite{LiuT,Sieb,Mcv} can be made rigorous by replacing Banach spaces with scale-Banach spaces, smoothness with scale-smoothness, and all standard calculus results (e.g.\ chain rule and implicit function theorem) with their correlates in scale calculus.
However, the regularization constructions near nodal curves will likely also require a compatibility of strata-wise smooth structures, i.e.\ appropriate gluing analysis.
$\hfill\er$
\end{remark}

\subsection{Differentiability issues in general holomorphic curve moduli spaces}  \hspace{1mm}\\ \vspace{-3mm}
\label{ss:DMdiff}

The purpose of this section is to explain that the differentiability issues discussed in the previous sections pertain to any holomorphic curve moduli space for which regularization is a nontrivial question.
The only exception to the differentiability issues are compactified moduli spaces that can be expressed as subspace of tuples of maps and complex structures on a {\it fixed domain},
\begin{equation}\label{safe}
\oMm = \bigl\{ (f,j) \in \Cc^\infty(\Si,M)\times {\mathfrak C}_\Si   \,\big|\, \overline{\partial}_{j,J} f = 0 \bigr\} ,
\end{equation}
where ${\mathfrak C}_\Si$ is a compact manifold of complex structures on a fixed smooth surface $\Si$.
In particular, this does not allow one to divide out by any equivalence relation of the type
\begin{equation} \label{rep}
(f,j) \sim (f\circ\phi, \phi^* j),\qquad \forall  \phi\in{\rm Diff}(\Si).
\end{equation}
For moduli spaces of this form, regularization can be achieved by the simplest geometric approach; namely choosing a generic domain-dependent almost complex structure $J:\Si \to \Jj(M,\om)$,
 with no further quotient or compactification needed.
One rare example of this setting is the $3$-pointed spherical Gromov--Witten moduli space $\oMm_3(A,J)$  for a class $A$ which excludes bubbling by energy arguments, since the parametrization can be fixed by putting the marked points at $0,1,\infty\in \C\cup\{\infty\}\cong S^2$, thus setting $\Si=S^2$ and ${\mathfrak C}_\Si=\{i\}$ in \eqref{safe}.
A similar setup exists for tori with $1$ or disks with $3$ marked points in the absence of bubbling, but we are not aware of further meaningful examples.
Generally, the compactified holomorphic curve moduli spaces are of the form
$$
\bigl\{ (\Si,\bz,f) \,\big|\, (\Si,\bz) \in {\mathfrak R}, f: \Si \to M, \overline{\partial}_{J} f = 0 \bigr\}  / \sim
$$
with
$$
 (\Si,\bz, f) \sim (\Si' , \phi^{-1}(\bz), f\circ\phi) \qquad \forall \phi:\Si'\to\Si .
$$
Here ${\mathfrak R}$ is some space of Riemann surfaces $\Si$ with a fixed number $k\in\N_0$ of pairwise distinct marked points $\bz\in\Si^k$, which contains regular as well as broken or nodal surfaces.
In important examples (Floer differentials and one point Gromov--Witten invariants arising from disks or spheres) all domains $(\Si,\bz)$ are unstable, i.e.\ have infinite automorphism groups.
If the regular domains are stable, unless bubbling is a priori excluded, ${\mathfrak R}/\!\!\sim$ is still not a 
Deligne--Mumford space since one has to allow nodal domains $(\Si,\bz)$ with unstable components to describe sphere or disk bubbles.

On the complement of nodal surfaces, these moduli spaces have local slices of the form \eqref{safe}
with additional marked points $\bz\in \Si^k \less \Delta$.
In the case of unstable domains, the slices are constructed by stabilizing the domain with additional marked points given by intersections of the map with auxiliary hypersurfaces.
In the case of stable domains, the slices are constructed by pullback of the complex structures to a fixed domain $\Si$, or fixing some of the marked points.
In fact, stable spheres, tori, and disks have a single slice covering the interior of the Deligne--Mumford space $\{(\Si,\bz) \;\text{regular, stable}\}/\!\!\sim$ given by fixing the surface and letting all but $3$ resp.\ $1$ marked point vary.
Using such slices, the differentiability issue of reparametrizations still appears in many guises:
\begin{enumerate}
\item
The transition maps between different local slices
-- arising from different choices of fixed marked points or auxiliary hypersurfaces --
are reparametrizations by biholomorphisms that vary with the marked points or the maps.
The same holds for local slices arising from different reference surfaces, unless the two families of diffeomorphisms to the reference surface are related by a fixed diffeomorphism, and thus fit into a single slice.
\item
A local chart for ${\mathfrak R}$ near a nodal domain is constructed by gluing the components of the nodal domain to obtain regular domains. Transferring maps from the nodal domain to the nearby regular domains involves reparametrizations of the maps that vary with the gluing parameters.
\item
The transition map between a local chart near a nodal domain and a local slice of regular domains is given by varying reparametrizations. This happens because the local chart produces a family of Riemann surfaces that varies with gluing parameters, whereas the local slice has a fixed reference surface.
\item
Infinite automorphism groups act on unstable components of nodal domains.
\end{enumerate}

The geometric regularization approach deals with issues (i), (iii), and (iv) by dealing with the biholomorphisms between domains only after equivariant transversality is achieved. This is possible only in restricted geometric settings; in particular it fails unless multiply covered spheres can be excluded in (iv).
Similarly, the geometric approach deals with issue (ii) by making gluing constructions only on finite dimensional spaces of smooth solutions that are cut out transversely.
We show in 
\S\ref{s:diff} and \S\ref{s:construct}
that these issues are highly nontrivial to deal with in abstract regularization approaches.
In the polyfold approach described in \S\ref{ss:poly}, it is solved by introducing the notion of scale-smoothness for maps between scale-Banach spaces, in which the reparametrization action is smooth.
The other approaches have no systematic way of dealing with a symmetry group that acts nondifferentiably.

\begin{remark}\label{rmkLCM}\rm
One notable partial solution of the differentiability issues is the construction of Cieliebak-Mohnke \cite{CM} for genus $0$ Gromov--Witten moduli spaces in integral symplectic manifolds. 
(A related approach is proposed and discussed in \cite{Io,IoP,TZ}.)
They use a fixed set of symplectic hypersurfaces to construct a global slice to the equivalence relation \eqref{rep} on the compactified moduli space.
Almost complex structures that depend on the hypersurface intersection points then provide a rich enough set of perturbations to construct a pseudocycle as in the semipositive case, which does not require gluing analysis.
This method fits into the geometric approach as described in \S\ref{ss:geom} by working with a larger set of perturbations. It avoids gluing analysis by contenting itself with the construction of a pseudocycle that is sufficient for defining Gromov--Witten invariants.
The existence of suitable hypersurfaces is a special geometric property of the symplectic manifold and the type of curves considered. 
It has been used in other restricted geometric situations, for example \cite{CM2}.
However, as mentioned in Remark~\ref{rmk:GWmult}, it is not yet clear whether it can be extended to higher genus curves as claimed in \cite{Gerst}. 
$\hfill\er$
\end{remark}

\subsection{Smoothness issues arising from evaluation maps} \hspace{1mm}\\ \vspace{-3mm} \label{ss:eval}

Another less dramatic differentiability issue in the regularization of holomorphic curve moduli spaces arises from evaluating maps at varying marked points.
This concerns evaluation maps of the form
$$
{\rm ev_i}: \;
\bigl\{ \bigl(\Si, \bz=(z_1,\ldots,z_k) ,f \bigr) \,\big|\,  \ldots \bigr\}/\!\!\sim \;\;\longrightarrow\;  M ,
\qquad
[\Si,\bz,f] \;\longmapsto\;  f(z_i)
$$
in situations when they need to be regularized while the moduli space is being constructed,
e.g.\ if they need to be transverse to submanifolds of $M$ or are involved in its definition via fiber products.
In those cases, the evaluation map needs to be included in the setup of a Fredholm section. However, on infinite dimensional function spaces its regularity depends on the Banach norm on the function space.
As a representative example, the map
\begin{equation} \label{evmap}
{\rm ev}: \; S^2 \times \Cc^\infty(S^2,M) \;\longrightarrow\; M , \qquad
(z,f) \;\longmapsto\; f(z)
\end{equation}
is $\Cc^\ell$ with respect to a Banach norm on $\Cc^\infty(S^2,M)$ only if the corresponding Banach space of functions, e.g.\ $\Cc^k(S^2)$ or $W^{k,p}(S^2)$, embeds continuously to $\Cc^\ell(S^2)$, e.g.\ if $k\geq\ell$ resp.\ $(k-\ell)p>2$. This can be seen from the explicit form of the differential
$$
{\rm D}_{(z_0,f_0)}{\rm ev}: \; T_{z_0}S^2 \times \Cc^\infty(S^2,f_0^*\rT M) \;\longrightarrow\; \rT_{f_0(z_0)}M , \qquad
(Z,\xi) \;\longmapsto\;  \rd f_0 (Z) + \xi(z_0)  ,
$$
whose regularity is ruled by the regularity of $\rd f_0$.
We will encounter this issue in the construction of a smooth domain for a Kuranishi chart in \S\ref{ss:gw}, where the evaluation maps are used to express the slicing conditions that provide local slices to the reparametrization group. There we are able to deal with the lack of smoothness of \eqref{evmap} by first constructing a ``thickened solution space'', which is a finite dimensional manifold consisting of smooth maps and marked points that do not satisfy the slicing condition yet. Then the slicing conditions can be phrased in terms of the evaluation restricted to a finite dimensional submanifold of $\Cc^\infty(S^2,M)$.
This operator is smooth, but now it is nontrivial to establish its  transversality.

\begin{lemma} \label{le:evsmooth}
Let $B\subset W^{k,p}(S^2,M)$ be a finite dimensional submanifold, and assume that it lies in the subset of smooth maps, $B\subset\Cc^\infty(S^2,M)$.
Then the evaluation map \eqref{evmap} restricts to a smooth map
$$
\ev_B \; : \; S^2 \times B
\; \longrightarrow \; M , 
\qquad (z,f) \;\longmapsto\; f(z) .
$$
\end{lemma}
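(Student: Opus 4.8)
The plan is to mimic the structure of the proof of Lemma~\ref{le:Gsmooth}, exploiting the key fact that on a finite dimensional submanifold $B$ consisting of smooth maps, all Banach norms on $\rT B$ are equivalent, so that the inclusion $B\hookrightarrow\Cc^\ell(S^2,M)$ is continuous (indeed smooth) for every $\ell$. First I would reduce to the local situation: embed $M\hookrightarrow\R^N$ isometrically, so that $\ev_B$ becomes the restriction of $S^2\times W^{k,p}(S^2,\R^N)\to\R^N$, $(z,f)\mapsto f(z)$, which is linear in $f$. Working in local coordinates on $S^2$ near a point $z_0$ and with $B$ parametrized by finitely many coordinates, the claim becomes smoothness of $(z,a)\mapsto f_a(z)$ where $a\mapsto f_a$ is a smooth (in the $W^{k,p}$, hence in every $\Cc^\ell$, sense) finite dimensional family of smooth maps.

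The key step is the identification of the differential. Exactly as in \eqref{evmap} one computes
$$
{\rm D}_{(z_0,f_0)}\ev_B : \rT_{z_0}S^2 \times \rT_{f_0}B \;\longrightarrow\; \rT_{f_0(z_0)}M, \qquad
(Z,\xi) \;\longmapsto\; \rd f_0(Z) + \xi(z_0),
$$
and one must check that this is a bounded operator depending continuously on $(z_0,f_0)$. The first term is handled by noting that $B\to\Cc^\ell(S^2,\R^N)$, $f_0\mapsto \rd f_0$ is a bounded linear map for $\ell$ large (since differentiation $\Cc^{\ell+1}\to\Cc^\ell$ is bounded and the $\Cc^{\ell+1}$-norm on $B$ is equivalent to the $W^{k,p}$-norm), and evaluation $\Cc^\ell(S^2,\R^N)\to\R^N$ at a point depending smoothly on $z_0$ is smooth. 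The second term, $\xi\mapsto\xi(z_0)$, is a bounded linear functional on the finite dimensional space $\rT_{f_0}B$ (again using norm equivalence and $B\subset\Cc^\infty$), depending smoothly on $z_0$ and on $f_0$. Thus ${\rm D}\ev_B$ exists and is continuous, giving $\Cc^1$. One then iterates: differentiating once more produces terms of the same type — evaluations of higher derivatives of $f_0$ against tangent vectors, which are again bounded and continuous by the same norm-equivalence argument — so all derivatives of $\ev_B$ are continuous and $\ev_B$ is smooth.

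The main obstacle, as in Lemma~\ref{le:Gsmooth}, is purely that pointwise evaluation $\xi\mapsto\xi(z_0)$ is \emph{not} a bounded operation on the infinite dimensional space $W^{k,p}(S^2,\R^N)$ at the derivative level once one differentiates in $z$ (this is precisely the content of the remark after Lemma~\ref{le:evsmooth}'s analog in \S\ref{ss:eval} that \eqref{evmap} is only $\Cc^\ell$ when the source embeds in $\Cc^\ell$). The whole point is therefore to verify carefully that restricting to the finite dimensional submanifold $B\subset\Cc^\infty(S^2,M)$ removes this difficulty: on $B$ every norm is equivalent, so $B$ embeds continuously into $\Cc^\infty(S^2,M)$ with the $\Cc^\ell$-topologies, and evaluation of any derivative becomes a bounded, smoothly varying operation. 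I would emphasize this dependence on finite dimensionality explicitly, exactly mirroring the closing sentence of the proof of Lemma~\ref{le:Gsmooth}.
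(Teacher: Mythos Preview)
Your proposal is correct and follows essentially the same approach as the paper: both arguments compute the differential explicitly, separate it into the $\rd f_0(Z)$ and $\xi(z_0)$ terms, and iterate to obtain higher regularity, with the crucial input being that finite dimensionality of $B$ makes the inclusions $B\hookrightarrow\Cc^\ell(S^2,M)$ bounded for every $\ell$. The paper packages the iteration slightly differently---it first proves an abstract statement that $\mathrm{Ev}:\C\times\Cc^\ell(\C,\R^n)\to\R^n$ is $\Cc^\ell$ and then composes with the smooth inclusion $B\hookrightarrow\Cc^\ell$---but the content is the same as what you outline.
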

\begin{proof}
We will prove this by an iteration similar to the proof of Lemma~\ref{le:Gsmooth}, with Step~$\ell$ asserting that maps of the type
\begin{equation}\label{type}
{\rm Ev} \;:\;
 \C \times \Cc^\ell(\C, \R^n )  \; \longrightarrow \; \R^n , \qquad (z,f) \;\longmapsto\; f(z) 
\end{equation}
are $\Cc^\ell$. 
In Step $0$ this proves continuity of the evaluation \eqref{evmap} on Sobolev spaces $W^{k,p}(S^2,M)$ that continuously embed to $\Cc^0(S^2,M)$.
In Step $\ell $ this proves that $\ev_B$ is $\Cc^\ell $  if 
we can check that the inclusion $B\hookrightarrow \Cc^\ell (S^2, M)$ is smooth when $B$ is equipped with the $W^{k,p}$-topology. Indeed, embedding $M\hookrightarrow\R^N$, this is the restriction of a linear map, which is bounded (and hence smooth) since $B$ is finite dimensional.
Hence to prove smoothness of $\ev_B$ it remains to perform the iteration.

Continuity in Step $0$ holds since we can estimate, given $\eps>0$,
\begin{align*}
\bigl| f(z) - f'(z') \bigr|
&\;\le\; 2 \| f - g\|_{\Cc^0} + \bigl| g(z) - g(z') \bigr| + \bigl|f(z') - f'(z')\bigr| \\
&\;\le\; \tfrac 12 \eps + \|\rd g\|_\infty |z-z'| + \|f - f'\|_{\Cc^0}  \;\le\;  \eps ,
\end{align*}
where we pick $g\in\Cc^1(\C,\R^n)$ sufficiently close to $f$, and then obtain the $\eps$-estimate for $(f',z')$ sufficiently close to $(f,z)$.

To see that Step $\ell $ implies Step $\ell +1$ we express the differential ${\rm D}_{(z_0,f_0)}\,{\rm Ev}:(Z,\xi)\mapsto \xi(z_0) + \rd_{z_0} f_0(Z)$ as sum of two operator families. The first family, 
$$
\C  \;\longrightarrow\; L\bigl( \Cc^{\ell +1}(\C, \R^n )  , \R^n \bigr) , \qquad
z_0 \;\longmapsto\;  {\rm Ev}(\cdot, z_0)  ,
$$
can be written as composition of $\C  \to L\bigl( \Cc^{\ell }(\C, \R^n )  , \R^n \bigr)$, $z_0 \mapsto {\rm Ev}(\cdot, z_0)$, which is $\Cc^\ell $ by Step $\ell $, 
with the bounded linear operator $L\bigl( \Cc^\ell (\C, \R^n )  , \R^n \bigr) \to L\bigl( \Cc^{\ell +1}(\C, \R^n )  , \R^n \bigr)$ given by restriction.
The second family,
$$
\C \times \Cc^{\ell +1}(\C, \R^n )  \;\longrightarrow\;  L\bigl( \C , \R^N \bigr) , \qquad
(z_0, f_0) \;\longmapsto\;  \rd_{z_0} f_0,
$$
can be written as composition of the linear map 
\begin{equation}\label{bugger}
\C \times \Cc^{\ell +1}(\C, \R^n )  \;\longrightarrow\; \C\times \Cc^\ell (\C,\bigl(\C, L(\C,\R^n) \bigr), \qquad 
(z_0, f_0) \;\longmapsto\;  (z_0, \rd f_0) ,
\end{equation}
which is a bounded linear operator hence smooth, and the evaluation map
$$
\C \times  \Cc^\ell (\C,\bigl(\C, L(\C,\R^n) \bigr) \;\longrightarrow\; L(\C,\R^n), \qquad
(z_0, \eta) \;\longmapsto\; \eta(z_0) ,
$$
which is of the type \eqref{type} dealt with in Step $\ell $, hence also $\Cc^\ell $ by iteration assumption.
This proves that the differential of evaluation maps of type ${\rm Ev} : \C \times \Cc^{\ell +1}(\C,\R^n) \to \R^n$  is $\Cc^\ell $, i.e.\ the maps are $\Cc^{\ell +1}$, which finishes the iteration step and hence proof of smoothness of $\ev_B$.

Again note that this argument makes crucial use of the finite dimensionality of $B$ to obtain continuity of the embeddings $B\hookrightarrow \Cc^k(S^2, M)$ for all $k\in\N$. This embedding for $k=\ell$ is used to conclude that $\ev_B$ is $\Cc^\ell$. Moreover, the iteration step from $\Cc^{\ell}$ to $\Cc^{\ell+1}$ requires an increased differentiability index $k=\ell+1$ for the embedding in order to obtain boundedness of \eqref{bugger}.
\end{proof}

\section{On the construction of compatible finite dimensional reductions}  \label{s:construct}

This section gives a general outline of the construction of a Kuranishi atlas on a given moduli space of holomorphic curves, concentrating on the issues of  dividing by the reparametrization action and making charts compatible. We thus use the example of
the Gromov--Witten moduli space $\oMm_{1}(A,J)$ of $J$-holomorphic curves of genus $0$ with one marked point, and assume that the nonzero class $A\in H_2(M)$ is such that it excludes bubbling and multiply covered curves a priori.
(For example, $A$ could be $\om$-minimal as assumed in \S\ref{ss:geom}.)
This allows us to use the framework of smooth Kuranishi atlases with trivial isotropy, that is developed in
\S\ref{s:chart}--\S\ref{s:VMC} of this paper.
We do not claim that this is a general procedure for regularizing other moduli
spaces of holomorphic curves, but it does provide a guideline for similar constructions.
In particular, the analysis explained here requires rather few changes in order to deal with nontrivial isotropy (see \cite{MW:GW,Mcn}); however, dealing with nodal curves of course requires a gluing theorem.

Recall that in this simplified setting the compactified Gomov--Witten moduli space
\begin{align*}
\oMm_{1}(A,J)
&:=  \bigl\{ (z_1=\infty, f) \in S^2\times \Cc^\infty(S^2,M) \,\big|\, f_*[S^2]=A, \pbar f = 0 \bigr\} / G_\infty
\end{align*}
is the solution space of the Cauchy--Riemann equation modulo reparametrization by
$$
G_\infty : = \{\ga\in \PSL(2,\C) \,|\, \ga(\infty)=\infty\}.
$$
We begin by discussing the construction of basic Kuranishi charts for $\oMm_{1}(A,J)$ in \S\ref{ss:Kchart}, where we find that an abstract approach runs into differentiability issues in reducing to a local slice for  the action of $G_\infty$. 
Proposition~\ref{prop:A1} shows that this problem can be overcome by using the infinite dimensional local slices that are constructed geometrically in \S\ref{ss:nodiff}.
In \S\ref{ss:Kcomp} we discuss the compatibility of a pair of basic Kuranishi charts, 
showing again that a  sum chart and coordinate changes cannot be constructed abstractly 
(e.g.\ from the given basic charts), but require specifically constructed obstruction bundles, which transfer well under the action of $G_\infty$.
Finally in \S\ref{ss:gw} we explain how to construct a Kuranishi atlas for $\oMm_1(A,J)$, going into considerable detail because previous work (such as \cite{FO,FOOO,LT}) does not clearly 
address the compatibility of Kuranishi charts.
This will prove Theorem~A in the introduction.

\subsection{Construction of basic Kuranishi charts} \label{ss:Kchart}  \hspace{1mm}\\ \vspace{-3mm}

The construction of basic Kuranishi charts for the Gromov--Witten moduli space $\oMm_{1}(A,J)$ requires local finite dimensional reductions of the Cauchy--Riemann operator
\begin{equation} \label{eq:dbar}
\pbar : \; \Hat\Bb^{k,p}= W^{k,p}(S^2,M) \;\longrightarrow\; \Hat\Ee:= {\textstyle \bigcup_{f\in\Hat\Bb^{k,p}}} W^{k-1,p}(S^2,\Lambda^{0,1}f^*\rT M),
\end{equation}
and simultaneously a reduction of the noncompact Lie group $G_\infty$ to a finite isotropy group; namely the trivial group in the case considered here.
We begin by giving an abstract construction of a finite dimensional reduction for an abstract equivariant Fredholm section. Note that by the previous discussion, holomorphic curve moduli spaces do not exactly fall into this abstract setting.
However, our purpose is to demonstrate 
the necessity of dealing with the reparametrization group in infinite dimensional settings.

\begin{remark}\rm
To simplify the reading of the following sections, let us explain our notational philosophy.
We use curly letters for locally noncompact spaces and roman letters for finite dimensional spaces.
We also use the hat superscript to denote spaces on which an automorphism group acts, or the slicing conditions are not (yet) applied. For example, $\Bb_{f_0} \subset \Hat \Bb^{k,p}$ is an infinite dimensional local slice in a Banach manifold $\Hat \Bb^{k,p}$ of maps,
the {\it local thickened solution space} $\Hat U$ is a finite dimensional submanifold of $\Hat \Bb$,
 and $U\subset \Hat U$ is the subset satisfying a slicing condition.

For bundles we again use curly letters if the fibers are infinite dimensional and roman letters if they are finite dimensional, with hats indicating that the base is infinite dimensional and tildes indicating that it is finite dimensional.
For example, $\Hat\Ee\to\Hat\Bb^{k,p}$ is a bundle with infinite dimensional fibers over a Banach manifold, while $\Hat E \subset \Hat\Ee|_{\Hat\Bb}$ has finite dimensional fibers $\Hat E |_f$ over points $f\in\Hat\Bb$ in an open subset $\Hat\Bb \subset \Hat\Bb^{k,p}$.
We will always write the fiber at a point as a restriction $\Ti E|_f$, since we require subscripts for other purposes.
Namely, when constructing a finite dimensional reduction near a point $f_0$, we use $f_0$ as subscript for the domains $U_{f_0}$ and restrictions of the bundles $\Ti E_{f_0}=\Hat E|_{U_{f_0}}$.
Moreover, we denote by $E_{f_0}$ a finite dimensional vector space isomorphic to the fibers
$(\Ti E_{f_0})|_f$ of $\Ti E_{f_0}$.

Finally, the symbol $\approx$  is used to mean ``sufficiently close to". Thus for $\ga\in G_\infty$, the set
$\{\ga\approx id\}$ is a neighbourhood of the identity.
$\hfill\er$
\end{remark}

\begin{lemma} \label{le:fobs}
Suppose that $\si:\Hat\Bb\to\Hat\Ee$ is a smooth Fredholm section that is equivariant under the smooth, free, proper action of a finite dimensional Lie group $G$.
For any $f\in\si^{-1}(0)$ let $E_f\subset \Hat\Ee|_f$
be a finite rank complement of $\im {\rm D}_f\si\subset \Hat\Ee|_f$,
and let $\rT_f (Gf)^\perp \subset \ker{\rm D}_f\si$ be a complement of the tangent space of the $G$-orbit inside the kernel.
There exists a smooth map $s_f: W_f \to E_f$ on a neighbourhood $W_f\subset \rT_f (Gf)^\perp$ of~$0$ and a homeomorphism $\psi_f: s_f^{-1}(0)\to \si^{-1}(0)/G$ to a neighbourhood of $[f]$.
\end{lemma}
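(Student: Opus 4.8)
The statement is the standard Kuranishi finite-dimensional reduction (Lyapunov--Schmidt), adapted to the presence of a free proper Lie group action; the only work is to organize the reduction so that the $G$-symmetry is quotiented out cleanly. First I would set up local coordinates near $f$. Since $G$ acts smoothly, freely and properly on $\Hat\Bb$, a standard slice theorem (or an explicit geometric slice, as the paper discusses) gives a $G$-invariant neighbourhood of the orbit $Gf$ of the form $G\times \Ss$, where $\Ss\subset\Hat\Bb$ is a slice through $f$ whose tangent space at $f$ is a closed complement of $\rT_f(Gf)$. By equivariance of $\si$, the zero set $\si^{-1}(0)$ near $Gf$ is $G$-invariant, so $\si^{-1}(0)/G$ near $[f]$ is homeomorphic to $(\si|_\Ss)^{-1}(0)$, the zero set of the restricted section $\si|_\Ss:\Ss\to\Hat\Ee|_\Ss$. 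Note $\rT_f\Ss\cap\ker{\rm D}_f\si = \rT_f(Gf)^\perp$ and the image of ${\rm D}_f(\si|_\Ss)$ is still a closed finite-codimension subspace with complement $E_f$ (the restriction does not shrink the image, since $\rT_f(Gf)\subset\ker{\rm D}_f\si$). Thus it suffices to perform the usual reduction for the Fredholm section $\si|_\Ss$ with no group present.

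Second, the Lyapunov--Schmidt step. Pick a closed complement $K_f$ of $\rT_f(Gf)^\perp$ inside $\rT_f\Ss$, so $\rT_f\Ss = \rT_f(Gf)^\perp \oplus K_f$ and ${\rm D}_f(\si|_\Ss)|_{K_f}:K_f\to \im{\rm D}_f(\si|_\Ss)$ is an isomorphism. Write the section in these coordinates and compose with the projection $\pi_E:\Hat\Ee|_f\to\im{\rm D}_f(\si|_\Ss)$ along $E_f$. The equation $\pi_E\circ\si|_\Ss = 0$ has surjective differential in the $K_f$-direction at $f$, so by the implicit function theorem (here one uses that $\si$ is $\Cc^1$ — in the holomorphic-curve application this is where the finite-dimensional slices of \S\ref{ss:nodiff} are essential, since the reparametrization action is only continuous) there is a $\Cc^\infty$ (resp.\ $\Cc^1$) map $h:W_f\to K_f$ on a neighbourhood $W_f\subset\rT_f(Gf)^\perp$ of $0$, with $h(0)=0$, whose graph parametrizes $(\pi_E\circ\si|_\Ss)^{-1}(0)$. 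Define
$$
s_f:W_f\longrightarrow E_f,\qquad s_f(v):= (\id-\pi_E)\bigl(\si|_\Ss(\exp_f(v+h(v)))\bigr),
$$
a smooth (resp.\ continuous if $\si$ is merely continuous away from the slice) map with $s_f(0)=0$. Then $x\in s_f^{-1}(0)$ iff the full section $\si|_\Ss$ vanishes at $\exp_f(x+h(x))$, so the assignment $x\mapsto [\,\exp_f(x+h(x))\,]$ gives a bijection $s_f^{-1}(0)\to (\si|_\Ss)^{-1}(0)\cong \si^{-1}(0)/G$ onto a neighbourhood of $[f]$. Continuity of this map and of its inverse follows from continuity of $h$, of the exponential chart, and of the slice homeomorphism $(\si|_\Ss)^{-1}(0)\to\si^{-1}(0)/G$; shrinking $W_f$ if necessary makes it a homeomorphism onto an open neighbourhood. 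Set $\psi_f$ to be this map.

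**Main obstacle.** The only genuinely delicate point — and the reason the lemma is stated abstractly and then immediately declared inapplicable to holomorphic curves — is the construction and regularity of the slice $\Ss$ together with the differentiability of $\si|_\Ss$. For a smooth free proper action on a Banach manifold the slice theorem is classical, but for the reparametrization action $G_\infty$ the action is \emph{not} differentiable, so one cannot invoke it; instead one must use the geometrically constructed infinite-dimensional slices $\Bb_{f_0}$ of \S\ref{ss:nodiff}, check (via Lemma~\ref{lem:slice}) that they genuinely are local slices, and verify that $\pbar$ restricts to a smooth Fredholm section on them — which it does, because $\pbar$ is a differential operator and differentiating in the slice directions introduces no reparametrization. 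Once the slice is in hand with $\si|_\Ss$ smooth Fredholm, the reduction above is entirely routine. So in the abstract lemma as stated, the hard step is simply producing $\Ss$; everything after is the standard implicit function theorem argument.
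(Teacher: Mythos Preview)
Your proof is correct but organizes the two reductions in the opposite order from the paper. You first invoke the slice theorem to pass to an infinite-dimensional slice $\Ss$ transverse to the orbit, then apply Lyapunov--Schmidt to the Fredholm section $\si|_\Ss$. The paper instead first performs the finite-dimensional reduction on all of $\Hat\Bb$ --- extending $E_f$ to a trivial subbundle $\Hat E$ and cutting out the finite-dimensional thickened solution space $\Hat U_f = \{g : \si(g)\in\Hat E\}$ via the implicit function theorem for $\Pi\circ\si$ --- and only afterwards quotients by $G$, by restricting the exponential chart of $\Hat U_f$ to the complement $W_f = \Hat W_f \cap \rT_f(Gf)^\perp$. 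The homeomorphism property of $\psi_f$ is then established by showing that $(\ga,\xi)\mapsto \ga\cdot\Exp_f(\xi)$ is a local diffeomorphism near $(\id,0)$.

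Both approaches are valid under the stated hypotheses. Your route is the more textbook-modular one (slice theorem as a black box, then standard Kuranishi reduction with no group present). The paper's ordering has a deliberate expository purpose: the thickened-solution-space step uses only that $\si$ is Fredholm, while the differentiability of the $G$-action is invoked \emph{only} at the very end, in the local diffeomorphism \eqref{GBS}. This isolates precisely where the argument breaks for the reparametrization action --- the point the paper wants to dramatize. In your version that same dependence is hidden inside the slice theorem, which you correctly flag as the main obstacle, but the reader does not see exactly which estimate fails.
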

\begin{proof}
Let $\Hat E \subset\Hat\Ee|_{\Hat \Vv}$ be the trivial extension of $E_f\subset \Hat\Ee|_f$
given by a local trivialization $\Hat\Ee|_{\Hat \Vv} \cong \Hat\Vv \times \Hat\Ee|_f$
over an open neighbourhood $\Hat\Vv\subset\Hat\Bb$ of $f$.
Then $\Pi\circ\si : \Hat\Vv \to \Hat\Ee_{\Hat\Vv}/\Hat E$
is a smooth Fredholm operator that is transverse to the zero section. Thus by the implicit function theorem the thickened solution space
$$
\Hat U_f := \{ \, g\in \Hat\Vv \,|\, \si(g)\in \Hat E \, \} \; \subset \Hat\Bb
$$
is a submanifold of finite dimension ${\rm ind}\,{\rm D}_f\si + {\rm rk}\,E_f$. In particular, for small $\Hat\Vv$, there is an exponential map $\rT_f \Hat U_f \supset \Hat W_f \to \Hat U_f$. More precisely, this is a diffeomorphism
$$
\exp_f: \; \ker {\rm D}_f \si \;\supset\; \Hat W_f \;\overset{\cong}{\longrightarrow} \; \{\, g\in \Hat\Vv \,|\, \si(g)\in \Hat E \,\} \;=\; \Hat U_f
$$
from a neighbourhood $\Hat W_f\subset \ker {\rm D}_f \si$
of $0$ with $\exp_f(0)=f$ and $\rd_0\exp_f : \ker{\rm D}_f\si \to \rT_f \Hat\Bb$ the inclusion.
Note here that we chose the minimal obstruction space $E_f$ so that
$$
\rT_f \Hat U_f \;=\;
({\rm D}_f\si)^{-1}(E_f) \;=\; \ker {\rm D}_f(\Pi\circ\si) \;=\;  \ker {\rm D}_f\si.
$$
Via this exponential map we then obtain maps
\begin{align*}
\Hat s : \;\Hat W_f &\to \exp_f^*\Hat E,
\qquad\qquad\;\, \xi\mapsto \si(\exp_f(\xi))  , \\
\Hat\psi : \;\Hat s^{-1}(0) &\to \si^{-1}(0)/G, \quad\quad \xi\mapsto [\exp_f(\xi)]
\end{align*}
such that the section $\Hat s$ is smooth and  $\Hat\psi$ is continuous with image $[\Hat\Vv\cap\si^{-1}(0)]$.
Restricting to the complement of the infinitesimal action, $W_f:= \Hat W_f \cap \rT_f (Gf)^\perp$, and
trivializing
$\exp_f^*\Hat E \cong \Hat W_f \times E_f$ we obtain a smooth map $s_f$ and a continuous map $\psi_f$,
\begin{align*}
s_f:= \Hat s|_{\rT_f (Gf)^\perp} \;\; &: \; \quad W_f \to E_f,  \\
\psi_f:= \Hat\psi_f|_{\rT_f (Gf)^\perp} &: \; s_f^{-1}(0) \to \si^{-1}(0)/G .
\end{align*}
We need to check that $\psi_f$ is injective i.e.\  that every orbit of $G$ in $\Hat W_f$ intersects $\exp_f(s_f^{-1}(0))$ at most once.
We claim that this holds for $\Hat \Vv$ sufficiently small. By contradiction suppose $s_f^{-1}(0) \ni \xi_i, \xi'_i\to 0$, $\ga_i\in G\less\{{\rm id}\}$ satisfy $\ga_i\cdot \exp_f(\xi_i)= \exp_f( \xi'_i)$.
By continuity of $\exp_f$ this implies $(\ga_i\cdot\exp_f(\xi_i),\exp_f(\xi'_i))\to (f,f)$, and properness of the action implies $\ga_i\to\ga_\infty\in G$ for a subsequence. Since the action is also free, we have $\ga_\infty={\rm id}$.
This will constitute a contradiction once we have proven that the ``local action'' $\{\ga\approx{\rm id}\} \times W_f \to \si^{-1}(0)/G$ is injective on a sufficiently small neighbourhood of $({\rm id},0)$.
So far we have only used the differentiability of the $G$-action at a fixed point $f\in\Hat\Bb$ to define
$\rT_f(Gf)$. However, the proof of injectivity of the local action as well as local surjectivity of $\psi_f$ will rely heavily on the continuous differentiability of the $G$-action $G\times\Hat\Bb \to \Hat\Bb$.
(Intuitively, the problem is that our slice is given by a condition involving
a derivative of the $G$ action at $f$, and so is well behaved only if this derivative varies continuously with $f$.)

To finish the proof of the homeomorphism property of $\psi_f$ we pick $\Hat\Vv$ sufficiently small that $\Hat U_f$ is covered by a single submanifold chart (i.e.\  a chart for $\Hat\Bb$ in a Banach space, within which $\Hat U_f$ is mapped to a finite dimensional subspace).
Then we can extend $\exp_f$ to an exponential map on the ambient space, i.e.\ a diffeomorphism
from a neighbourhood $\Hat\Ww_f\subset \rT_f\Hat\Bb$ of $\Hat W_f$,
$$
\Exp_f: \;   \Hat\Ww_f \;\overset{\cong}{\longrightarrow} \; \Hat\Vv   \qquad \text{with} \quad \Exp_f|_{\Hat W_f} = \exp_f , \quad \rd_0\Exp_f ={\rm id}_{\rT_f \Hat\Bb}.
$$
Note that the existence of such an extension at least requires continuous differentiability of the submanifold   $\Hat U_f$ and the map $\exp_f$.
Next, we also crucially use the continuous differentiability of the action $G\times\Hat\Bb\to\Hat\Bb$ to deduce that, for $\Hat \Vv$ sufficiently small, by the implicit function theorem
\begin{equation} \label{GBS}
\{ \ga \in G \,|\, \ga \approx {\rm id} \} \;\times\; \bigr(\Hat\Ww_f \cap  \rT_f (Gf)^\perp\bigl) \;\longrightarrow\; \Hat\Bb , \qquad
(\ga, \xi ) \;\longmapsto\; \ga\cdot \Exp_f(\xi)
\end{equation}
is a diffeomorphism to a neighbourhood of $f\in\Hat\Bb$.
The injectivity of \eqref{GBS} then implies that $\ga_i\cdot \exp_f(\xi_i) \neq \exp_f( \xi'_i)$ for  $\ga_i\neq{\rm id}$, which finishes the proof of injectivity of $\psi_f$.
More generally, the local diffeomorphism \eqref{GBS} implies that
$$
\Psi : \,\Hat\Ww_f \cap  \rT_f (Gf)^\perp \;\to\; \Hat\Bb/G, \qquad \xi\mapsto [\Exp_f(\xi)]
$$
is a homeomorphism to a neighbourhood $\Uu \subset  \Hat\Bb/G$ of $[f]$ (which in general is a proper subset of $[\Hat\Vv]$).
In particular, its image contains $\Hat\psi_f(\Hat s_f^{-1}(0))\cap\Uu=[\si^{-1}(0)]\cap\Uu$, and by construction
$$
\Psi \bigl( \Hat\Ww_f \cap  \rT_f (Gf)^\perp \bigr)
\;\cap\; \Hat\psi_f(\Hat s_f^{-1}(0))
\;=\; \Hat\psi_f\Bigl( \Hat\Ww_f \cap  \rT_f (Gf)^\perp \cap \Hat s_f^{-1}(0) \Bigr)
\;=\; \psi_f( s_f^{-1}(0) ) .
$$
This finally implies that the restriction $\psi_f =  \Psi|_{s_f^{-1}(0)}$ is a homeomorphism from  $s_f^{-1}(0)$ to the neighbourhood $\Uu\cap[\si^{-1}(0)] \subset \si^{-1}(0)/G$ of $[f]$, which completes the proof.
\end{proof}

\begin{remark} \rm \label{FOglue}
If the reparametrization action were smooth, then the above proof would be the construction of basic Kuranishi charts in \cite{FO} for the case of unstable domains.
However, their arguments focus on dealing with nodes in stable domains and do not address the issues arising from differentiability failure of reparametrizations, see \cite{kw:web}.

These issues can likely be overcome in several ways: We use in \S\ref{s:construct} geometrically explicit local slices and obstruction spaces, and thus explicit understanding of the transition maps. The same approach is used in \cite{FOOO12}.
Alternatively, \cite{CLW1} propose the use of more abstract slices and obstruction spaces together with explicit properties of the reparametrization action.
$\hfill\er$
\end{remark}

In contrast to the differentiability failure of the reparametrization action discussed above, note that the gauge action on spaces of connections is generally smooth.
Hence Lemma~\ref{le:fobs} applies in gauge theoretic settings, with an infinite dimensional group $G$, and abstractly provides finite dimensional reductions or Kuranishi charts for the moduli spaces.
On the other hand, the differentiability issues in the construction of Kuranishi charts (and in particular coordinate changes between them) can likely only be resolved by using a geometrically explicit local slice $\Bb_{f}\subset \Hat\Bb^{k,p}$ such as \eqref{eq:slice}.
At the time of our original publication of this work \cite{MW0}, this was mentioned in various places throughout the literature, e.g.\ \cite[Appendix]{FO}, but the following analytic details had not been presented.
\MS

In our situation, 
the arguments of Lemma~\ref{le:fobs}
do not apply because the reparametrization group does not act differentiably.  
Instead, our construction of a Kuranishi chart near $[f_0]\in\oMm_1(A,J)$ is based on the existence of the geometric slice defined in Lemma~\ref{lem:slice}, and will depend on the choice of
\begin{itemlist}
\item
a representative $f_0\in[f_0]$;
\item
hypersurfaces $Q^0:=Q_{f_0}^0,Q^1:=Q_{f_0}^1 \subset M$ as in \eqref{eq:hypsurf}, and $\eps_{f_0}>0$ inducing a local slice
$$
\Bb_{f_0}:= \bigl\{ f\in \Hat\Bb^{k,p} \,\big|\, d_{W^{k,p}}(f,f_0)<\eps_{f_0} , f(0)\in Q_{f_0}^0 , f(1) \in Q_{f_0}^1 \bigr\} \;\subset \; \Hat\Bb^{k,p};
$$
\item
an obstruction space $E_{f_0}\subset\Hat\Ee|_{f_0}$ 
that covers the cokernel of the linearization at $f_0$ of the Cauchy-Riemann section \eqref{eq:dbar}, that is 
$\im {\rm D}_{f_0}\pbar  + E_{f_0} = \Hat\Ee|_{f_0}$;
\item
an extension of $E_{f_0}$ to a trivialized finite rank obstruction bundle $\Hat\Vv_{f_0}\times E_{f_0} \cong \Hat E_{f_0}  \subset\Hat\Ee|_{\Hat\Vv_{f_0}}$ over a neighbourhood $\Hat\Vv_{f_0}\subset\Hat \Bb^{k,p}$ of the slice $\Bb_{f_0}$.
\end{itemlist}

With that we can construct the Kuranishi chart as a local finite dimensional reduction of the Cauchy--Riemann operator
$\pbar : \Bb_{f_0} \to \Hat\Ee|_{\Bb_{f_0}}$ in the slice to the action of $G_\infty$.
Note in the following that this construction requires the extension of the obstruction bundle $\Hat E_{f_0}$ to an open set of $\Hat\Bb^{k,p}$.

\begin{prop} \label{prop:A1}
For a sufficiently small slice $\Bb_{f_0}$, the subspace of generalized holomorphic maps with respect to the obstruction bundle $\Hat E_{f_0}$ is a finite dimensional manifold
\begin{equation}\label{Uf0}
U_{f_0}:=\bigl\{ f\in\Bb_{f_0} \,\big|\, \pbar f \in \Hat E_{f_0} \bigr\} .
\end{equation}
Moreover, $\Ti E_{f_0}:=\Hat E_{f_0}|_{U_{f_0}}\cong U_{f_0}\times E_{f_0}$ forms the bundle of a Kuranishi chart, whose smooth section and footprint map (a homeomorphism to a neighbourhood of $[f_0]$) are
$$
\begin{array}{rll}
\ti s_{f_0} \,:\; U_{f_0} &\to \; \Hat E_{f_0}|_{U_{f_0}}, & \quad f\mapsto \pbar f , \\
\psi_{f_0} \,: \; \ti s_{f_0}^{-1}(0) = \bigl\{ f\in \Bb_{f_0} \,\big|\, \pbar f =0\bigr\}&\to\; \oMm_{1}(A,J),& \quad
f\mapsto [f] .
\end{array}
$$
\end{prop}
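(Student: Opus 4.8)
The plan is to run a local finite dimensional reduction in the infinite dimensional slice $\Bb_{f_0}$ exactly as in the proof of Lemma~\ref{le:fobs}, except that now the reparametrization group has already been quotiented out geometrically, so no differentiability of the $G_\infty$-action is needed. Concretely, I would first observe that $\pbar:\Bb_{f_0}\to\Hat\Ee|_{\Bb_{f_0}}$ is a smooth Fredholm section (smoothness on the slice follows from Lemma~\ref{le:Gsmooth}-type considerations, or directly since the slice is cut out by finitely many smooth evaluation conditions as in \eqref{eq:slice}), and that by construction $E_{f_0}$ covers the cokernel of its linearization at $f_0$. Hence the composition $\Pi_{f_0}\circ\pbar:\Bb_{f_0}\to\Hat\Ee|_{\Bb_{f_0}}/\Hat E_{f_0}$, where $\Pi_{f_0}$ is the projection killing the trivialized subbundle $\Hat E_{f_0}$, is transverse to the zero section at $f_0$, and therefore, after shrinking $\Bb_{f_0}$ (i.e.\ shrinking $\eps_{f_0}$), transverse on all of $\Bb_{f_0}$. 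The implicit function theorem in Banach spaces then gives that $U_{f_0}=(\Pi_{f_0}\circ\pbar)^{-1}(0)=\{f\in\Bb_{f_0}\mid\pbar f\in\Hat E_{f_0}\}$ is a smooth submanifold of $\Bb_{f_0}$ of dimension $\ind{\rm D}_{f_0}\pbar+\rk E_{f_0}$, which is finite since $\pbar$ is Fredholm on the slice.

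Next I would set up the bundle and section: restrict the trivialized obstruction bundle to get $\Ti E_{f_0}=\Hat E_{f_0}|_{U_{f_0}}\cong U_{f_0}\times E_{f_0}$ (the trivialization being inherited from the chosen trivialization over $\Hat\Vv_{f_0}$), and define $\ti s_{f_0}:U_{f_0}\to\Ti E_{f_0}$, $f\mapsto\pbar f$, which lands in $\Hat E_{f_0}$ by definition of $U_{f_0}$ and is smooth as the restriction of the smooth section $\pbar$ to the submanifold $U_{f_0}$ composed with the smooth projection onto $\Hat E_{f_0}$. Its zero set is $\ti s_{f_0}^{-1}(0)=\{f\in\Bb_{f_0}\mid\pbar f=0\}$, since $\pbar f\in\Hat E_{f_0}$ and $\pbar f$ projecting to $0$ in $\Hat E_{f_0}$ together force $\pbar f=0$. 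The footprint map $\psi_{f_0}:\ti s_{f_0}^{-1}(0)\to\oMm_1(A,J)$, $f\mapsto[f]$, is then defined; it is clearly continuous.

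The homeomorphism property of $\psi_{f_0}$ is where the argument reduces cleanly to Lemma~\ref{lem:slice}: the set $\ti s_{f_0}^{-1}(0)=\{f\in\Bb_{f_0}\mid\pbar f=0\}$ is precisely the intersection of the slice $\Bb_{f_0}$ with the (unperturbed) $J$-holomorphic locus, and Lemma~\ref{lem:slice} asserts exactly that $\Bb_{f_0}\to\Hat\Bb^{k,p}/G_\infty$, $f\mapsto[f]$ is a homeomorphism onto its image. Restricting this homeomorphism to the closed subset of $J$-holomorphic maps gives that $\psi_{f_0}$ is a homeomorphism from $\ti s_{f_0}^{-1}(0)$ onto the set of $[f]\in\oMm_1(A,J)$ that admit a representative in $\Bb_{f_0}$, which is a neighbourhood of $[f_0]$ in $\oMm_1(A,J)$ by the slice property. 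I would close by noting that one must check $U_{f_0}$ consists of smooth maps (elliptic regularity: $\pbar f\in\Hat E_{f_0}$ with $\Hat E_{f_0}$ a bundle of smooth $(0,1)$-forms forces $f$ smooth, via the standard bootstrapping for $\pbar$), so that $U_{f_0}$ genuinely lies in the setting where Lemma~\ref{le:Gsmooth} and Lemma~\ref{le:evsmooth} apply and the chart is a bona fide smooth Kuranishi chart.

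The main obstacle is the shrinking step establishing transversality of $\Pi_{f_0}\circ\pbar$ on a whole neighbourhood rather than just at $f_0$: one knows $\im{\rm D}_{f_0}\pbar+E_{f_0}=\Hat\Ee|_{f_0}$ at the central point, and must argue this persists, via continuity of the linearization and the trivialization of $\Hat E_{f_0}$, for $f$ near $f_0$ in the $W^{k,p}$-topology — uniformly enough to invoke the implicit function theorem. This is routine Fredholm perturbation theory but does genuinely use that $\Hat E_{f_0}$ was extended to an \emph{open} neighbourhood $\Hat\Vv_{f_0}\supset\Bb_{f_0}$ with a fixed trivialization, which is why that extension is built into the hypotheses; the subtlety is merely that the slice $\Bb_{f_0}$ must be taken small enough (decreasing $\eps_{f_0}$) after $Q^0,Q^1,E_{f_0}$ have been fixed.
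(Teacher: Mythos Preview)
Your approach is correct and lands on the same $U_{f_0}$ as the paper, but factors the construction differently. The paper works in the open $W^{k,p}$-ball around $f_0$ in $\Hat\Bb^{k,p}$ rather than in the slice, and cuts out $U_{f_0}$ by a single transverse Fredholm map
\[
f\;\longmapsto\;\Bigl([\pbar f],\,\Pi^\perp_{Q^0}(f(0)),\,\Pi^\perp_{Q^1}(f(1))\Bigr)
\;\in\;\bigl(\Hat\Ee/\Hat E_{f_0}\bigr)\times(\rT_{f_0(0)}Q^0)^\perp\times(\rT_{f_0(1)}Q^1)^\perp,
\]
checking transversality of the slicing components by restricting the differential of the slicing map to the $G_\infty$-orbit directions inside $\ker{\rm D}_{f_0}\pbar$. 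You instead first form the slice $\Bb_{f_0}$ as a Banach submanifold and then apply the implicit function theorem to $\Pi_{f_0}\circ\pbar$ on it. Your use of Lemma~\ref{lem:slice} for the footprint homeomorphism matches the paper exactly. The paper's route has the advantage that it generalizes directly to the sum-chart setting of \S\ref{ss:gw}, where the slicing conditions involve \emph{variable} marked points and must be imposed only after passing to a finite-dimensional thickened solution space (cf.\ the evaluation-map smoothness issue in \S\ref{ss:eval}); your route is cleaner when the slice is fixed.

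There is one step you should make explicit. When you write ``by construction $E_{f_0}$ covers the cokernel of its linearization at $f_0$'', the hypothesis only gives $\im{\rm D}_{f_0}\pbar + E_{f_0} = \Hat\Ee|_{f_0}$ for the \emph{full} linearization on $\rT_{f_0}\Hat\Bb^{k,p}$, not for its restriction to the codimension-$4$ subspace $\rT_{f_0}\Bb_{f_0}$. The missing observation is that a complement to $\rT_{f_0}\Bb_{f_0}$ is provided by $\rT_{f_0}(G_\infty\cdot f_0)$ (surjectivity of the infinitesimal action on the two marked points, together with $\im\rd f_0(t)=(\rT_{f_0(t)}Q^t)^\perp$), and this complement lies in $\ker{\rm D}_{f_0}\pbar$ by equivariance of $\pbar$; hence $\im\bigl({\rm D}_{f_0}\pbar|_{\rT_{f_0}\Bb_{f_0}}\bigr)=\im{\rm D}_{f_0}\pbar$ and your transversality follows. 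This is precisely the computation the paper carries out; once you supply it, the argument is complete. (The elliptic regularity remark is unnecessary for this proposition --- it enters later in the sum construction via Sum Condition~III.)
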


\begin{proof}
%
%
%
We combine the local slice conditions and the perturbed Cauchy--Riemann equation to express $U_{f_0}$ as the zero set of
\begin{align*}
\Hat\Bb^{k,p} \;\supset\;
 \bigl\{ f \,\big|\, d_{W^{k,p}}(f,f_0)<\eps_{f_0} \bigr\}
&\;\longrightarrow\;
 \bigl( \Hat\Ee / \Hat E_{f_0}\bigr) \times
(\rT_{f_0(0)} Q^{0})^\perp\times (\rT_{f_0(1)} Q^{1})^\perp ,\\
f \quad &\longmapsto
\Bigl([\pbar f], \Pi^\perp_{Q^{0}}(f(0)), \Pi^\perp_{Q^{1}}(f(1))\Bigr),
\end{align*}
with projections $\Pi^\perp_{Q^t}$ near $f_0(t)$ along $Q^t$ to $T_{f_0(t)}(Q^t)^\perp$. 
Since the choice of $\Hat E_{f_0}$ guarantees that the linearized Cauchy-Riemann operator ${\rm D}_f\pbar$ maps onto $\Hat\Ee_f/\Hat E_{f_0}$ for $f=f_0$, and thus for nearby $f\approx f_0$, we obtain transversality of the full operator for sufficiently small $\eps_{f_0}>0$ if the linearized operator at $f_0$ maps the kernel of ${\rm D}_{f_0}\pbar$ onto the second and third factor. That is, we claim surjectivity of the map
$$
R_{f_0} : \;
\ker{\rm D}_{f_0}\pbar \;\ni\; \delta f \mapsto \bigl(\rd\Pi^\perp_{Q^{0}}(\delta f(0)), \rd\Pi^\perp_{Q^{1}}(\delta f(1))\bigr) .
$$
To check this, we can use the inclusion $\rT_{f_0} (G_\infty\cdot f_0)\subset\ker{\rm D}_{f_0}\pbar$ of a tangent space to the orbit together with the surjectivity of the infinitesimal action on two
points,
$$
\rT_{\rm id} G_\infty \; \to \; \rT_0 S^2 \times \rT_1 S^2
,\qquad \xi \; \mapsto \; \bigl(\xi(0), \xi(1) \bigr) .
$$
Combining these facts with $(\rT_{f_0(t)} Q^{t})^\perp =\im \rd f_0(t)$ we obtain transversality from
$$
R_{f_0} \bigl( \rT_{f_0} (G_\infty\cdot f_0) \bigr) \;=\;
\bigl(\rd\Pi^\perp_{Q^{0}} \times \rd\Pi^\perp_{Q^{1}}\bigr)
\bigl(\im \rd f_0(0) \times \im \rd f_0(1)\bigr) .
$$
This approach circumvents the differentiability failure of the $G_\infty$-action by working with the explicit local slice $\Bb_{f_0}$, which is analytically better behaved.
Moreover, the homeomorphism $\psi_{f_0}$ is given by restriction of the local homeomorphism $\Bb_{f_0}\to\Hat\Bb^{k,p}/G_\infty$ from Lemma~\ref{lem:slice}.
Finally, we need to find a trivialization of the obstruction bundle $\Ti E_{f_0}:=\Hat E_{f_0}|_{U_{f_0}}\cong U_{f_0}\times E_{f_0}$. For that purpose we choose $\eps_{f_0}>0$ even smaller. The effect of this on the bundle $\Ti E_{f_0}$ is a restriction to smaller neighbourhoods of $f_0$. Thus for sufficiently small $\eps_{f_0}>0$ the bundle over a smaller domain $U_{f_0}$ can be trivialized.
\end{proof}

A Kuranishi chart in the exact sense of  Definition~\ref{def:chart} can be obtained from the trivialization
$\Ti E_{f_0}\cong U_{f_0} \times E_{f_0}$.
However, to emphasize the geometric meaning of our constructions we
continue to use the notation for Kuranishi charts given in \S\ref{ss:kur} in terms of a
bundle $\Tilde E_f\to U_f$ with section $\tilde s$.

\subsection{Compatibility of Kuranishi charts} \label{ss:Kcomp}  \hspace{1mm}\\ \vspace{-3mm}

As in \S\ref{ss:kur} we oversimplify the formalism by saying that basic Kuranishi charts
$$
\bigl( \; \ti s_{f_i} : U_{f_i}\to \Ti E_{f_i}  \;,\;  \psi_{f_i} : \ti s_{f_i}^{-1}(0)\hookrightarrow \oMm_{1}(A,J) \;\bigr) \qquad \text{for}\; i=0,1 ,
$$
as constructed in the previous section from obstruction bundles  $\Hat E^i:=\Hat E_{f_i}$ over neighbourhoods of local slices $\Bb_{f_i}$,
are {\bf compatible} if the following transition data exists for every element in the overlap $[g_{01}]\in \im\psi_{f_0}\cap \im\psi_{f_1}\subset \oMm_{1}(A,J)$:
\begin{enumerate}
\item
a Kuranishi chart
$\quad\displaystyle
\bigl( \; \ti s_{g_{01}} : U_{g_{01}}\to \Ti E_{g_{01}}  \;,\;  \psi_{g_{01}} : \ti s_{g_{01}}^{-1}(0)\hookrightarrow \oMm_{1}(A,J) \;\bigr)
$\\
whose footprint $\im\psi_{g_{01}} \subset \im\psi_{f_0}\cap \im\psi_{f_1}$ is a neighbourhood of $[g_{01}]\in\oMm_1(A,J)$;
\item
for $i=0,1$ the transition map
arising from the footprints,
$$
\phi|_{\psi_{f_i}^{-1}(\im\psi_{g_{01}})} := \;
\psi_{g_{01}}^{-1}\circ \psi_{f_i} : \; \ti s_{f_i}^{-1}(0) \;\supset\; \psi_{f_i}^{-1}(\im\psi_{g_{01}}) \; \overset{\cong}{\longrightarrow}\; \ti s_{g_{01}}^{-1}(0)
$$
extends to a coordinate change consisting of an open neighbourhood $V_i\subset U_{f_i}$
of $\psi_{f_i}^{-1}(\im\psi_{g_{01}})$ and an embedding and linear injection in the trivialization $\Ti E_{f_i}\cong U_{f_i}\times E_{f_i}$ that intertwine the sections $\ti s_\bullet$,
$$
\phi :\; U_{f_i} \supset V_i \; \longhookrightarrow\; U_{g_{01}} , \qquad
\Hat\phi :\; E_{f_i} \; \longrightarrow\; E_{g_{01}} .
$$
\end{enumerate}

For notational convenience we will continue to construct the Kuranishi charts so that the domains have a canonical embedding $U \hookrightarrow \Bb^{k,p}/G_\infty$ (given by $f\mapsto [f]$ from a local slice $\Bb\subset \Bb^{k,p}$) which identifies the homeomorphism $\psi : s^{-1}(0) \hookrightarrow \oMm_1(A,J)$ with the identity on $\oMm_1(A,J)\subset \Bb^{k,p}/G_\infty$.
However, we will not use this ambient space for other purposes, since it has no direct generalization in the case of nodal curves.
In particular, the new domain $U_{g_{01}}$ cannot be constructed as an overlap of the domains $U_{f_i}$ since only the intersection of the possibly highly singular footprints $\im\psi_{f_0}\cap \im\psi_{f_1}\subset\oMm_1(A,J)$ has invariant meaning. Indeed, because the bundles $\Hat E^0, \Hat E^1$ may be quite different, the intersection $[U_{f_0}]\cap[U_{f_1}]\subset \Bb^{k,p}/G_\infty$ may only contain the intersection of footprints.
Moreover, the domains $U_{f_0},U_{f_1}\subset \Bb^{k,p}$ have no relation to each other beyond the fact that they are both spaces of perturbed solutions of the Cauchy--Riemann equation in a local slice.
Hence the Kuranishi chart (i) cannot be abstractly induced from the basic Kuranishi charts but needs to be constructed as another finite dimensional reduction of the Cauchy--Riemann operator.
With such a chart given, the transition map $\psi_{g_{01}}^{-1}\circ \psi_{f_i}$ between the zero sets is well defined, but its extension to a neighbourhood of $\psi_{f_i}^{-1}(\im\psi_{g_{01}})\subset \ti s_{f_i}^{-1}(0)$ in the domain $U_{f_i}$ also needs to be constructed. In fact, the need for this extension guides the construction of the chart.

For the rest of this section we will assume that the Kuranishi chart required in (i) can be constructed in the same way as the basic charts in \S\ref{ss:Kchart}, and explain which extra requirements are necessary to guarantee the existence of a coordinate change (ii).
The chart (i) will be determined by the following data:

\begin{itemize}
\item
a representative $g_{01}\in[g_{01}]$;
\item
hypersurfaces $Q_{g_{01}}^0,Q_{g_{01}}^1 \subset M$ and $\eps_{g_{01}}>0$ inducing a local slice $\Bb_{g_{01}}\subset\Hat\Bb^{k,p}$;
\item
a finite rank subspace $E_{g_{01}}\subset\Hat\Ee |_{g_{01}}$ such that $\im {\rm D}_{g_{01}}\pbar  + E_{g_{01}} = \Hat\Ee|_{g_{01}}$;
\item
an extension  to a trivialized finite rank subbundle $\Hat\Vv_{g_{01}}\times E_{g_{01}} \cong \Hat E^{01}: =  \Hat E_{g_{01}} \subset\Hat\Ee|_{\Hat\Vv_{g_{01}}}$ over a neighbourhood $\Hat\Vv_{g_{01}}\subset\Hat\Bb^{k,p}$ of $\Bb_{g_{01}}$.
\end{itemize}

\NI
The coordinate change (ii) requires the construction of the following for $i=0,1$
\begin{itemize}
\item
open neighbourhoods $V_i\subset U_{f_i}$ of $\psi_{f_i}^{-1}(\im\psi_{g_{01}})$;
\item
embeddings $\phi_i : V_i \hookrightarrow U_{g_{01}}$ and a bundle map
$\Hat\phi_i : \Ti E_{f_i}|_{V_i} \to \Ti E_{g_{01}}$
covering $\phi_i$ and constant on the fibers in a trivialization, such that
$$
\Hat\phi_i \circ \ti s_{f_i} = \ti s_{g_{01}} \circ \phi_i , \qquad  \psi_{f_i} = \psi_{g_{01}} \circ \phi_i .
$$
\end{itemize}

In the explicit construction, we have
$V_i\subset U_{f_i} \subset \Bb_{f_i}$ and $U_{g_{01}} \subset\Bb_{g_{01}}$
both identified
with subsets of $\Bb^{k,p}/G_\infty$, and in this identification the embedding $\phi_i:V_i \hookrightarrow U_{g_{01}}$ is required to restrict to the identity on $\im\psi_{g_{01}}\subset\im\psi_{f_i}$.
So the natural extension of $\phi_i$ to a neighbourhood of $\psi_{f_i}^{-1}(\im\psi_{g_{01}})\subset U_{f_i}$ should lift the identity on $\Bb^{k,p}/G_\infty$. That is, with the domains $V_i\subset U_{f_i}$ still to be determined, we fix $\phi_i$ to be the transition map \eqref{transition}
between the local slices,
$$
\phi_i:= \Ga_{f_i,g_{01}}|_{V_i} : \; V_i \to \Bb_{g_{01}} , \quad f \mapsto f\circ\ga^{01}_f  ,
f\circ\ga^{i}_f  ,
$$
where $\ga^{01}_f\in G_\infty$ is determined by $f\circ\ga^{01}_f\in\Bb_{g_{01}}$ .
Now in order for $\phi_i(V_i)$ to take values in $U_{g_{01}}$ we must have
\begin{equation}\label{givestrans}
\pbar f \in \Hat E^i|_{f} \;\Longrightarrow\; \pbar f\circ\rd\ga^{01}_f \in \Hat E^{01}|_{f\circ\ga^{01}_f} \qquad\forall \; f\in V_i.
\end{equation}
In particular for all $g\in \ti s_{g_{01}}^{-1}(0)$ we must have
\begin{equation} \label{E01 req}
\Hat E^0|_{g\circ \ga^0_g} \circ (\rd\ga^0_g)^{-1}
\;+\; \Hat E^1|_{g\circ\ga^1_g} \circ (\rd\ga^1_g)^{-1}  \;\subset\; \Hat E^{01}|_{g} ,
\end{equation}
where $\ga^i_g\in G_\infty$ is determined by
$
g\circ\ga^i_g\in\Bb_{f_i}.
$
 and $\Hat E^i\subset\Hat \Ee|_{\Hat\Vv_{f_i}}$ is the obstruction bundle extending $E_{f_i}$.
Note here that we at least have to construct $\Hat E^{01}\to\Bb_{g_{01}}$ as a smooth obstruction bundle over an infinite dimensional slice, since this induces the smooth structure on the domain
$U_{g_{01}} =\{g\in\Bb_{g_{01}} \,|\, \pbar g \in \Hat E^{01} \}$.
(In fact, the proof of Lemma~\ref{Uf0} uses the obstruction bundle over an open set in $\Hat\Bb^{k,p}$.)
However, we encounter several obstacles in constructing $\Hat E^{01}$ such that \eqref{E01 req} is satisfied near ${g_{01}\in\Bb_{g_{01}}}$.
\MS

\begin{itemlist}
\item[{\bf \qquad\, 1.)}]
The left hand side of \eqref{E01 req} involves the pullbacks of $(0,1)$-forms by the transition map 
$$
\Ga_{g_{01}, f_i} : \Bb_{g_{01}} \to \Bb_{f_i},\quad g\mapsto g\circ \ga_g^i
$$ 
between local slices.
In fact, it is no surprise that the reparametrizations enter crucially, since $\Hat E^0$ and $\Hat E^1$ are bundles over neighbourhoods of the local slices $\Bb_{f_0}$ and $\Bb_{f_1}$ respectively, which may have no intersection in $\Hat\Bb^{k,p}$ at all, although they do have an open intersection in the quotient $\Hat\Bb^{k,p}/G_\infty$.
Since the transition maps are not continuously differentiable, the pullback bundles
$$
\Ga_{g_{01}, f_i}^*\Hat E^i
\,:= \;{\textstyle \bigcup_{g\in\Bb_{g_{01}}}} \Hat E^i|_{g\circ\ga^i_g} \circ (\rd\ga^i_g)^{-1}
$$
will not be differentiable in general. Thus we must find a special class of obstruction bundles, on which the pullback by reparametrizations acts smoothly.
\item[{\bf \qquad\, 2.)}]
Even if the pullback bundles $\Ga_{g_{01}, f_0}^*\Hat E^0$ and $\Ga_{g_{01}, f_1}^*\Hat E^1$ are differentiable, their fibers can have wildly varying intersections over $\Bb_{g_{01}}$. Here the diameter of the local slice can be chosen arbitrarily small, but it will always be locally noncompact. So it is unclear whether there even exists a finite rank subbundle of $\Hat\Ee|_{\Bb_{g_{01}}}$ that contains both pullback bundles.
To ensure this we must assume transversality at $g_{01}$,
$$
\bigl(\Hat E^0|_{g_{01}\circ(\ga^0_{g_{01}})^{-1}} \circ\rd\ga^0_{g_{01}} \bigr)
\cap
\bigl( \Hat E^1|_{g_{01}\circ(\ga^1_{g_{01}})^{-1}} \circ\rd\ga^1_{g_{01}} \bigr)  \;=\; \{ 0 \}  .
$$
\end{itemlist}

If the requirements in 1.) and 2.) are satisfied, then the sum of obstruction bundles
\begin{align*}
\Hat E^{01} &\,:=\;
\Ga_{g_{01}, f_0}^*\Hat E^0 \oplus \Ga_{g_{01}, f_1}^*\Hat E^1\\
&\;=\;
{\textstyle \bigcup_{g\in\Bb_{g_{01}}} } \bigl\{ \nu^0\circ (\rd\ga^0_g)^{-1} + \nu^1\circ (\rd\ga^1_g)^{-1}  \,\big|\, \nu^i\in \Hat E^i|_{g\circ\ga^{01}_g} \bigr\}
\end{align*}
is a smooth, finite rank subbundle of $\Hat\Ee$ over a local slice $\Bb_{g_{01}}$ of sufficiently small diameter $\eps_{g_{01}}>0$.
Under these assumptions, the constructions of \S\ref{ss:Kchart} provide a Kuranishi chart for a neighbourhood of $[g_{01}]\in \oMm_1(A,J)$, which we also call {\bf sum chart} since it is given by a sum of obstruction bundles. Its domain and section are
$$
\ti s_{g_{01}} : \; U_{g_{01}} :=\{g\in\Bb_{g_{01}} \,|\, \pbar g \in \Hat E^{01} \}  \;\to\; \Hat E^{01} , \qquad g \mapsto \pbar g ,
$$
and the embedding $\Bb_{g_{01}}\to\Hat\Bb^{k,p}/G_\infty$ of the local slice restricts to a homeomorphism into the moduli space,
$$
\psi_{g_{01}}: \ti s_{g_{01}}^{-1}(0) \to  \oMm_1(A,J) , \qquad g\mapsto [g].
$$
Moreover, we already fixed the embeddings $\phi_i = \Ga_{f_i,g_{01}}$ and can read off from \eqref{givestrans} the corresponding embedding of obstruction bundles
$$
\Hat\phi_i:
\Hat E^i|_{f} \to \Hat E^{01}|_{f\circ\ga_f},
\qquad
\nu \mapsto  \nu\circ \rd\ga_f.
$$
Since this should be a constant linear map $E_{f_i}\to E_{g_{01}}$ in some trivialization
$\Hat E^{01}\cong U_{g_{01}}\times E^{01}_{g_{01}}$, the  trivialization map
$T^{01}(g) :\Hat E^{01}|_g \to E_{g_{01}}$ must be given by
$$
T^{01}(g) \,:\;
\sum_{i=0,1} \nu^i\circ (\rd\ga^i_g)^{-1}
\;\mapsto\;
\sum_{i=0,1} \Bigl( T^i(g_{01}\circ\ga^i_{g_{01}} ) ^{-1} \,T^i(g\circ\ga^i_g ) \; \nu^i \Bigr)
\circ (\rd\ga^i_{g_{01}})^{-1}
$$
in terms of the trivializations $T^i(f) :\Hat E^i|_f\overset{\cong}\to E_{f_i}$ of its factors.
In fact, this shows exactly what it means for the sum bundle $\Hat E^{01} = \Ga_{g_{01}, f_0}^*\Hat E^0 \oplus \Ga_{g_{01}, f_1}^*\Hat E^1$ to be smooth.

\MS

We now summarize the preceding discussion in the context of a tuple of $N$ charts  $\bigl(\bK_i = (U_{f_i},E_{f_i},s_{f_i},\psi_{f_i})\bigr)_{i=1,\ldots,N}$.
Generalizing conditions (i) and (ii) at the beginning of this section, we find that if these arise from obstruction bundles $\Hat E^i\to \Hat\Vv_{f_i}$ over neighbourhoods of local slices $\Bb_{f_i}$, the minimally necessary compatibility conditions require us to construct for every index subset $I\subset\{1,\ldots,N\}$ and every element $[g_0]\in\bigcap_{i\in I}\im\psi_i\subset\oMm_1(A,J)$ in the overlap of footprints
\begin{enumerate}
\item
 a {\bf sum chart} $\bK_{I,g_0}$ with obstruction space $E_{I,g_0} \cong
 \prod_{i\in I} E_{f_i}$, whose footprint $\im\psi_{I,g_0} \subset \bigcap_{i\in I}\im\psi_{f_i}$ is a neighbourhood of $[g_0]$;
\item
coordinate changes $\bigl(\bK_i \to \bK_{I,g_0}\bigr)_{i\in I}$ that extend the
transition maps
$\psi_{I,g_0}^{-1}\circ \psi_{f_i}$.
\end{enumerate}

\NI
The construction of a virtual fundamental class $[\oMm_1(A,J)]^{\rm vir}$ from a cover by compatible basic Kuranishi charts $\bigl(\bK_i \bigr)_{i=1,\ldots,N}$ in addition requires fixed choices of the above transition data, and further coordinate changes $\bK_{I,g_0}\to\bK_{J,h_0}$ satisfying a cocycle condition; see \S\ref{ss:top}.
The main difficulty is to ensure that the sum charts are well defined.
The details of their construction are dictated by the existence of coordinate changes from the basic charts.
This construction is so canonical that coordinate changes between different sum charts exist essentially automatically, and satisfy the weak cocycle condition.
By the discussion in the case of two charts,
the following conditions on the choice of basic Kuranishi charts $\bigl(\bK_i = (U_{f_i},E_{f_i},s_{f_i},\psi_{f_i})\bigr)_{i=1,\ldots,N}$ ensure the existence of the sum charts (i) and transition maps (ii).

\begin{itemlist}
\item[{\bf \qquad\, Sum Condition I:}]
{\it For every $i\in \{1,\ldots,N\}$ let
$T^i(f) :\Hat E^i|_f\overset{\cong}\to E_{f_i}$ be induced by  the trivialization of the obstruction bundle.
Then for every $[g_0]\in\im\psi_i \cap\bigcap_{j\neq i}\im\psi_j$ and representative $g_0$ with sufficiently small local slice $\Bb_{g_0}$, the map}
\begin{align*}
\Bb_{g_{0}} \times E_{f_i} &\;\longrightarrow\; \quad \Hat\Ee \\
 (g, \nu_i )\quad & \;\longmapsto \; \bigl( T^i(g\circ\ga^i_g ) \, \nu_i \bigr) \circ (\rd\ga^i_g)^{-1}
\end{align*}
{\it is required to be smooth, despite the differentiability failure of $g\mapsto g\circ\ga^i_g$.}

An approach for satisfying this condition will be given in the next section.

\item[{\bf \qquad\, Sum Condition II:}]
{\it For every $I\subset\{1,\ldots,N\}$ and $[g]\in\bigcap_{i\in I}\im\psi_i$ we must ensure transversality
of the vector spaces
$\Hat E^i|_{g\circ(\ga^i_{g})^{-1}} \circ\rd\ga^i_{g}  = \bigl( T^i(g\circ(\ga^i_{g})^{-1})^{-1} E_{f_i}  \bigr) \circ\rd\ga^i_{g}$ for $i\in I$. That is, their sum needs to be a direct sum,
$$
\sum_{i\in I} \Hat E^i|_{g\circ(\ga^i_{g})^{-1}} \circ\rd\ga^i_{g}  \;=\;
\bigoplus_{i\in I} \Hat E^i|_{g\circ(\ga^i_{g})^{-1}} \circ\rd\ga^i_{g}    \quad\subset\;\Hat \Ee|_g .
$$
}

This means that, no matter how the obstruction bundles are constructed for each chart, the choices for a tuple need to be made ``transverse to each other''  along the entire intersection of the footprints
before transition data can be constructed.
\end{itemlist}

\subsection{Sum construction for genus zero Gromov--Witten moduli spaces}  \hspace{1mm}\\ \vspace{-3mm}  \label{ss:gw}

The purpose of this section is to explain  
how to construct a Kuranishi atlas for the simplest genus zero Gromov--Witten moduli space, namely one with no nodal curves or only trivial isotropy, and thus prove Theorem A in the introduction.
Our approach combines the geometric perturbations of \cite{LT} with the gluing analysis of \cite{MS} and a natural idea (suggested to us by Cliff Taubes) for dealing with the failure of differentiability in the pullback construction for obstruction bundles:
We introduce varying marked points so that the pullback by $\Gamma_{g_{01},f_i}$ no longer depends on the infinite dimensional space of maps, instead depending on a finite number of parameters.
For the sum construction of two Kuranishi charts $(U_{f_i},\ldots)_{i=0,1}$ arising from finite rank bundles $\Hat E^i\to \Hat\Vv_{f_i}$ over neighbourhoods of local slices $\Bb_{f_i}$, let us for simplicity of notation work in a slice $\Bb_{g_{01}}\subset\Bb_{f_0}$ so that $\ga^0_g\equiv {\rm id}$.
Thus we construct the domain of the sum chart as
$$
U_{g_{01}} = \bigl\{  \bigl( g , \ul{w} \bigr) \in \Bb_{g_{01}}\times (S^2)^2 \,\big|\, \pbar g \in \Hat E^0 + \Gamma_{\ul w}^* \Hat E^1 ,
 \ul{w}=(w^{0} , w^{1}) \in D_{01},
g(w^{t})\in Q_{f_1}^{t} \bigr\} .
$$
Here $D_{01}\subset (S^2)^2$ is a neighbourhood of $\ul{w}_{01}:=(w_{01}^0, w_{01}^1)$ with $w_{01}^t=g_{01}^{-1}(Q_{f_1}^t)$, and $\Gamma_{\ul w} : g\mapsto g\circ \gamma_{\ul w}$ is the reparametrization with
\begin{equation}\label{gaw}
\gamma_{\ul w}\in G_\infty  \quad \text{given by} \quad
\gamma_{\ul w}(t)=w^{t} \quad \text{for}\; t=0,1.
\end{equation}
Observe that, with varying marked points, the map $(g,w) \mapsto g(w)$ still only has the regularity of $g$, see \S\ref{ss:eval}.
So the above $U_{g_{01}}$ is not cut out by a single smooth Fredholm section.
However, we may now consider the intermediate {\it thickened solution space}
$$
\Hat U_{g_{01}} = \bigl\{  ( g , \ul{w} ) \in \Bb_{g_{01}}\times D_{01} \,\big|\, \pbar g \in \Hat E^0 + \Gamma_{\ul w}^* \Hat E^1  \bigr\} \;\subset\; \Bb_{f_0}\times (S^2)^2,
$$
where we have not yet imposed the slicing conditions at the points $\ul{w}$.
Then the domain $U_{g_{01}}={\rm ev}^{-1}(Q_{f_1}^0\times Q_{f_1}^1)$ is cut out by the slicing conditions, which use the evaluation map on the finite dimensional thickened solution space:
$$
{\rm ev} : \; \Hat U_{g_{01}} \to (S^2)^2, \qquad  ( g , w^0, w^1 ) \mapsto  ( g(w^0) , g(w^1) ) .
$$
To check that this map is transverse to $Q_{f_1}^0\times Q_{f_1}^1$ at $(g_{01}, w_{01}^0, w_{01}^1)$, note that $\{0\}\times (\rT S^2)^2$ is tangent to the thickened solution space at this point
(crucially using the fact that the solution space
$\{g \,|\, \pbar g=0\}$
is $G_\infty$-invariant so that there is an infinitesimal action at $g_{01}$).
Moreover, at every point in the local slice $g\in\Bb_{f_1}$ we have $\im \rd_t g \pitchfork \rT_{g(t)} Q_{f_1}^t$, in particular at $g_{01}\circ\ga_{\ul w_{01}}$ with $\im \rd_t (g_{01}\circ\ga_{\ul w_{01}})= \im \rd_{w_{01}^t}g_{01}$.
Moreover, the evaluation map is smooth if we can ensure that the thickened solution space $\Hat U_{g_{01}}\subset \Cc^\infty(S^2,M)\times (S^2)^2$ contains only smooth functions.
Continuing the list of conditions on the choice of summable obstruction bundles from the previous section, this adds the following regularity requirement.

\begin{itemlist}
\item[{\bf \qquad\, Sum Condition III:}]
{\it The obstruction bundles $\Hat E^i\subset\Hat\Ee|_{\Hat\Vv_{f_i}}$ need to satisfy regularity,}
$$
\pbar g \in {\textstyle \sum_i } \, \Ga_{\ul w_i}^* \Hat E^i
\;\Longrightarrow \; g \in\Cc^\infty(S^2,M) .
$$
By elliptic regularity for $\pbar$, this holds if
$\Hat E^i|_{W^{\ell,p}\cap \Hat\Vv_{f_i}}\in W^{\ell,p}\cap\Hat\Ee$ for all $\ell\in\N$, or in terms of the trivializations $T^i(f):\Hat E^i_f \to E_{f_i}$ if the elements of $E_{f_i}$ are smooth
$1$-forms in $\Hat\Ee|_{f_i}$  and
$$
f\in W^{\ell,p} \; \Longrightarrow \;\im T^i(f)\subset W^{\ell,p} .
$$
This means that sections of $\Hat E^i$ are lower order, compact perturbations for $\pbar$, i.e.\ they are $sc^+$ in the language of scale calculus \cite{HWZ1}.
\end{itemlist}

\MS\NI
Finally, we need to ensure smoothness of the thickened solution space $\Hat U_{g_{01}}$, which can be viewed as the zero set of the section
$$
\Bb_{g_{01}} \times D_{01} \;\longrightarrow \; \Hat\Ee / ( \Hat E^0 + \Gamma^* \Hat E^1 ) , \qquad
(g,\ul w) \;\longmapsto \; \pbar g .
$$
Here the form of the summed obstruction bundle,
\begin{align}\label{eq:Gastar}
\Ga^* \Hat E^1 &\;=\; { \underset{\ul w \in (S^2)^2}{\textstyle{\bigcup}}}\Gamma_{\ul w}^* \Hat E^1 \; \longrightarrow \; \Bb_{g_{01}}\times D_{01}, \\ \notag
\bigl(\Ga^* \Hat E^1 \bigr)|_{(g,\ul w)} &\;=\; \bigl\{ \nu \circ \rd \ga_{\ul w}^{-1} \,\big|\, \ga_{\ul w} (t)= w^{t} , \nu \in \Hat E^1|_{g\circ\ga_{\ul w}} \bigr\},
\end{align}
is dictated by fixing the natural embedding
$\phi_1 : U_{f_1}\cap G_\infty U_{g_{01}} \to U_{g_{01}}$
given by  $f\mapsto (f\circ\ga_f^{-1}, \ga_f(0), \ga_f(1))$, where $f\circ\ga_f^{-1}\in\Bb_{g_{01}}$.
Its inverse map is $(g,\ul w)\mapsto g\circ\ga_{\ul w}$, which maps to a neighbourhood of $\Bb_{f_1}$.
While the extension of $\Hat E^1$ to a neighbourhood of $\Bb_{f_1}\subset\Hat\Bb^{k,p}$ so far was mostly for convenience in the proof of Lemma~\ref{Uf0}, it now becomes crucial for the construction of this ``decoupled sum bundle''.
In fact, as in that lemma, we will also extend $\Hat E_{g_{01}}= \Hat E^0 + \Gamma^* \Hat E^1$ to a neighbourhood $\Hat\Vv_{g_{01}}$ of $\Bb_{g_{01}}$ to induce the smooth structure on $\Hat U_{g_{01}}$.
With this setup, Sum Condition I becomes smoothness of the map involving the trivialization
$T^1(f):\Hat E^1(f)\to E_{f_1}$,
\begin{equation}\label{wantsmooth}
\Hat\Vv_{g_{01}} \times D_{01} \times E_{f_1} \;\longrightarrow\; \Hat\Ee , \qquad
 (g, \ul w ,\nu ) \;\longmapsto \; \bigl( T^1(g\circ\ga_{\ul w} ) \, \nu \bigr) \circ \rd \ga_{\ul w}^{-1} .
\end{equation}
This still involves reparametrizations $(g,\ga_{\ul w})\mapsto g\circ \ga_{\ul w}$, which are not differentiable in any Sobolev topology on $\Hat\Vv_{g_{01}}$, since $\ul w\in D_{01}$ and thus $\ga_{\ul w}$ is allowed to vary. Thus the compatibility of Kuranishi atlases requires a very special form of the trivialization $T^1$, i.e.\ very special obstruction bundles $\Hat E^i$.

\MS\NI
{\bf Geometric construction of obstruction bundles:}
To solve the remaining differentiability issue, we now follow the more geometric approach of \cite{LT} and construct obstruction bundles by pulling back finite rank subspaces
\begin{equation}\label{graphsp}
E^i\subset \Cc^\infty(\Hom^{0,1}_J(S^2,M))
\end{equation}
of the space of smooth sections of the bundle over $S^2\times M$ of $(j,J)$-antilinear maps $\rT S^2 \to \rT M$.
Given such a subspace and a neighbourhood $\Hat\Vv_{f_i}$ of a local slice, we hope to obtain an obstruction bundle
\begin{equation}\label{graph}
\Hat E^i := \;{\textstyle \bigcup_{f\in\Hat\Vv_{f_i}} } \bigl\{ \nu|_{\gr f} \;\big|\; \nu \in E^i\bigr\} \;\subset\; \Hat\Ee|_{\Hat\Vv_{f_i}}
\end{equation}
by restriction to the graphs $\nu|_{\gr f} \in \Hat\Ee|_f = W^{k-1,p}(S^2, \Lambda^{0,1}f^* \rT M )$
given by
$$
\nu|_{\gr f} (z) = \nu(z,f(z)) \in \Hom^{0,1}_J(\rT_zS^2,\rT_{f(z)}M) .
$$
The disadvantage of this construction is that we need to assume injectivity of the map
$$
E^i\ni \nu\mapsto \nu|_{\gr f}\in \Hat\Ee|_f
$$
for each $f\in\Hat\Vv_{f_i}$ to obtain fibers of constant rank.
On the other hand, the inverse trivialization of the obstruction bundle
$$
(T^i)^{-1}:
\Hat\Vv_{f_i} \times E^i   \to \Hat E^i|_f  , \qquad (f,\nu) \mapsto \nu|_{\gr f}
$$
is now a smooth map, satisfying the regularity requirement in Sum Condition III, since
on the finite dimensional space $E^i$ consisting of smooth sections
the composition on the domain with $f\in\Hat\Vv_{f_i}\subset W^{k,p}(S^2,M)$ is smooth.
In fact, the pullback $\Ga^*\Hat E^1$ in \eqref{eq:Gastar}
now takes the special form, with $\ga_{\ul w}$ from \eqref{gaw},
\begin{equation} \label{eq:Gamaw}
(g, \ul w ,\nu ) \mapsto \ga_{\ul w}^*\nu |_{\gr g}, \qquad
\ga_{\ul w}^*\nu (z,x)
 =  \nu ( \ga_{\ul w}^{-1}(z) , x ) \circ \rd_z \ga_{\ul w}^{-1} .
\end{equation}
This eliminates composition on the domain of infinite dimensional function spaces.
Indeed, we now have
$$
\ga_{\ul w}^*\nu |_{\gr g} (z)
 =  \nu ( \ga_{\ul w}^{-1}(z) , g(z) ) \circ \rd_z \ga_{\ul w}^{-1} ,
$$
whose derivatives in the directions of $g$ and $\ul{w}$ take forms that, unlike \eqref{eq:actiond}, do not involve derivatives of $g$.
Moreover, we will later make use of the special transformation of these obstruction bundles under the action of $\ga\in G_\infty$,
\begin{equation} \label{Eequivariant}
\ga_{\ul w}^*\nu |_{\gr g} \circ \rd\ga
\;=\;  \nu \bigl( \ga_{\ul w}^{-1}\circ\ga (\cdot) , g\circ \ga (\cdot) \bigr) \circ \rd \ga_{\ul w}^{-1} \circ \rd \ga
\;=\;  (\ga^{-1}\circ\ga_{\ul w})^*\nu |_{\gr g\circ \ga} .
\end{equation}
Thus we have replaced Sum Conditions I--III, including the highly nontrivial smoothness requirement in the previous section, by the following requirement for the compatibility of the geometrically constructed obstruction bundles.\footnote
{It is possible to circumvent these conditions by ``Fredholm stabilization" as described in  \cite{MW:GW,McL,Mcn}. 
 However, if we define the elements of the charts in the current naive way, they are essential.}

\begin{itemlist}
\item[{\bf \qquad Sum Condition I$'$:}]
{\it For every $i\in \{1,\ldots,N\}$ the obstruction bundle $\Hat E^i \subset \Hat\Ee|_{\Hat\Vv_{f_i}}$ is given by \eqref{graph} from a subspace $E^i\subset\Cc^\infty(\Hom^{0,1}_J(S^2,M))$ such that}
$$
E^i \to \Hat\Ee|_f , \;\;  \nu \mapsto \nu|_{\gr f} \quad \text{is injective} \quad \forall \; f\in\Hat\Vv_{f_i} .
$$
\item[{\bf \qquad Sum Condition II$'$:}]
{\it For every $I\subset\{1,\ldots,N\}$, $[g]\in\bigcap_{i\in I}\im\psi_i$ with representative $g\in\Bb_{f_{i_0}}$ for some $i_0\in I$, and marked points $\ul w_i \in D_{i_0 i}\subset (S^2)^2$ in neighbourhoods of $(g^{-1}(Q_{f_i}^t))_{t=0,1}$ resp.\  $D_{i_0 i_0}=\{(0,1)\}$,
we must ensure linear independence of $\bigl\{ \ga_{\ul w_i}^* \nu^i |_{\gr g}  \;\big|\;  \nu^i\in E^i \bigr\}$ for $i\in I$. That is, their sum must be a direct sum}
$$
\sum_{i\in I} \bigl\{ \ga_{\ul w_i}^* \nu^i |_{\gr g}  \;\big|\;  \nu^i\in E^i \bigr\}
 \;=\; 
\bigoplus_{i\in I} \bigl\{ \ga_{\ul w_i}^* \nu^i |_{\gr g}  \;\big|\;  \nu^i\in E^i \bigr\}
 \quad\subset\;\Hat \Ee|_g.
$$

\end{itemlist}

Satisfying these two conditions always requires making the choices of the obstruction spaces $E^i$ ``suitably generic''. If they are satisfied, then they provide a construction of sum charts and coordinate changes as we will state next.
At this point, we can also incorporate a further requirement from \S\ref{ss:top} into the compatibility condition (i) for a tuple of charts $(\bK_i)_{i=1,\ldots,N}$ by
constructing a single sum chart $\bK_{I,g_0}=\bK_I$ for each $I\subset\{1,\ldots, N\}$, whose footprint is the entire overlap of footprints $F_I:=\im\psi_I =  \bigcap_{i\in I} \im\psi_i$.
Moreover, we construct coordinate changes between any pair of tuples $I,J\subset\{1,\ldots,N\}$ with nonempty overlap $F_I\cap F_J\neq\emptyset$ that are, up to a choice of domains, directly induced from the basic charts. Thus our construction naturally satisfies the weak cocycle condition,
i.e.\ equality on overlap of domains as in \S\ref{ss:top}.\footnote
{Note here that $J\subset\{1,\ldots,N\}$ has a very different meaning from the almost complex structure that  determines the Gromov--Witten moduli space $\oMm_1(A,J)$.}

For the construction of sum charts, we will moreover make the following simplifying assumption that all intersections with the slicing hypersurfaces are unique. This can be achieved in sufficiently small neighbourhoods of any holomorphic sphere with trivial isotropy, see Remark~\ref{rmk:unique}.

 \begin{itemlist}
\item[{\bf \qquad Sum Condition IV$'$:}]
{\it For every $i\in \{1,\ldots,N\}$ we assume that the representative $[f_i]$, slicing conditions $Q^t_{f_i}$, size $\eps>0$ of local slice $\Bb_{f_i}$, and its neighbourhood $\Hat\Vv_{f_i}\subset\Hat\Bb^{k,p}$ are chosen such that
for all $g\in \Hat\Vv_{f_i}$ and $t=0,1$ the intersection $g^{-1}(Q^t_{f_i}) =: \{w_i^t(g)\}$ is a unique point and transverse, i.e.\ $\im\rd_{w_i^t(g)}g\pitchfork \rT_{w_i^t(g)} Q^t_{f_i}$.
}

Then the same holds for $g\in G_\infty \Hat\Vv_{f_i}$. Hence for any $i_0\in I \subset \{1,\ldots,N\}$
the local slice $\Bb_{f_{i_0}}$ embeds topologically (as a homeomorphism to its image, with inverse given by  the projection $\Bb_{f_{i_0}} \times (S^2)^{2|I|} \to \Bb_{f_{i_0}}$) into a space of maps and marked points by
\begin{align}
\label{embed}
\iota_{i_0, I} : \; \Bb_{f_{i_0}} &\;\longhookrightarrow\; \Hat\Bb^{k,p} \times (S^2)^{2|I|}   \\
g &\;\longmapsto\; \bigl( g ,
\ul w(g) \bigr) ,
\qquad\qquad\quad
\ul w(g):= \bigl( g^{-1}(Q_{f_i}^t) \bigr)_{i\in I,t=0,1}.
 \nonumber
\end{align}
\end{itemlist}

Note that the elements of $\im \io_{i_0,I}$ have the form $\bigl(g,\ul w(g)=(\ul w_i)_{i\in I}\bigr)$ with $\ul w_{i_0} = (0,1)$.
In the following we denote by $\ul w = (\ul w_i)_{i\in I}\in (S^2)^{2|I|}$ any tuple of $\ul w_i = (w_i^0, w_i^1)\in S^2\times S^2$, even if it is not determined by a map $g$.
Then ``$\forall i, t$'' will be shorthand for ``$\forall i\in I, t\in \{0,1\}$''.
After these preparations we finally prove Theorem A in the introduction.

\begin{thm} \label{thm:A2}
Suppose that the tuple of basic Kuranishi charts
$$
\bigl(\bK_i = (U_{f_i},E_{f_i},s_{f_i},\psi_{f_i})\bigr)_{i=1,\ldots,N}
$$
is constructed as in Proposition~\ref{prop:A1} from local slices $\Bb_{f_i}$ and subspaces
$$
E^i\subset \Cc^\infty(\Hom^{0,1}_J(S^2,M)),
$$
that  induce obstruction bundles $\Hat E^i$ over neighbourhoods
$\Hat\Vv_{f_i}\subset\Hat\Bb^{k,p}$ of $\Bb_{f_i}$.
Assume moreover that this data satisfies Sum Conditions {\rm I}$'$, {\rm II}$'$, and {\rm IV}$'$.
Then for every index subset $I\subset\{1,\ldots,N\}$ with nonempty overlap of footprints
$$
F_I := \;{\textstyle \bigcap_{i\in I}}\im \psi_i  \;\neq\; \emptyset
$$
we obtain the following transition data.
\begin{enumerate}
\item
Corresponding to each choice of $i_0\in I$ and sufficiently small open set
 $$
 \Hat\Ww_{I,i_0} \subset \Bigl(\Hat\Bb^{k,p} \times (S^2)^{2|I|}\Bigr)\cap \bigl\{(g,\ul w)
 \,\big|\, \ul w_{i_0} = (0,1)\bigr\}
 $$
that covers a neighbourhood of the footprint $F_I$ in the sense that
\begin{align*}
\quad
 \bigl\{ ( g , \ul w ) \in \Hat\Ww_{I,i_0} \,\big|\, \pbar g = 0 , \;
 g(w_i^t)\in Q_{f_i}^t,\; \forall i, t \,\bigr\}
= \iota_{i_0,I}( \psi_{i_0}^{-1}(F_I) )
\end{align*}
there is a {\bf sum chart} $\bK_{I}: = \bK_{I,i_0}$ with
\begin{itemize}
\item
domain
\[
\qquad\qquad
U_{I} := \bigl\{ \bigl( g , \ul w \bigr) \in \Hat\Ww_{I,i_0}\,\big|\,
\pbar g \in {\textstyle \sum_{i\in I}} \Gamma_{\ul w_i}^* \Hat E^i,   \,
 g(w_i^t)\in Q_{f_i}^t  \ \forall i,t \,
\bigr\} ,
\]
\item
obstruction space $\displaystyle \; E_I:= {\textstyle \prod_{i\in I}} E^i$,
\item
section
$\displaystyle \;
s_{I} : U_{I} \to E_I , \;  ( g , \ul{w})   \mapsto  (\nu^i)_{i\in I}$
given by
$$
\pbar g = {\textstyle \sum_{i\in I}} \, \ga_{\ul w_i}^* \nu^i |_{\gr g} ,
$$
\item
footprint map
$\displaystyle\; \psi_{I} : s_{I}^{-1}(0) \overset{\cong}\to F_I, \;
(g,\ul w) \mapsto [g]$.
\end{itemize}\MS
\item
For every $I\subset J$ and choice of $i_0\in I, j_0\in J$ as above,
a coordinate change
$\Hat\Phi_{IJ}:
\bK_{I} \to \bK_{J}$ is given by
\begin{itemize}
\item
a choice of domain
$\displaystyle \; V_{IJ} \subset U_{I}$ such that
\begin{align}
\label{choice VIJ}
\qquad\qquad\qquad  V_{IJ}\cap s_{I}^{-1}(0) = \psi_{I}^{-1}(F_J), \qquad V_{IJ} \subset
\iota_{i_0,I} \bigl( \Ga_{f_{j_0},f_{i_0}} \bigl( \iota_{j_0,J}^{-1} ( U_{J} ) \bigr)\bigr)
\end{align}
with the embeddings \eqref{embed} and the reparametrization ${\Ga_{f_{j_0},f_{i_0}}:\Bb_{f_{j_0}}\to \Bb_{f_{i_0}}}$ as in \eqref{transition},
\item
embedding  $\phi_{IJ} := \iota_{j_0,J} \circ \Ga_{f_{i_0},f_{j_0}} \circ \iota_{i_0,I}^{-1}$, that is
\footnote{
This map also equals
$\; \phi_{IJ}\bigl( g , (w_i^t)_{i\in I,t=0,1} \bigr) \;=\;
\bigl(\, g\circ \ga \,,\,  (\ga^{-1}(w_i^t))_{i\in I,t=0,1} \cup (g^{-1}(Q^t_{f_j}))_{j\in J\less I, t=0,1} \,\bigr)$,
where $\ga=\ga_{\ul w_{j_0}}=\ga_g\in G_\infty$ is determined by
$\Ga_{f_{i_0},f_{j_0}}(g) = g\circ \ga_g$ or equivalently $\ga_{\ul w_{j_0}}(t)=w_{j_0}^t \;\forall t$.
}
$$
\qquad\qquad
\phi_{IJ} : V_{IJ} \to U_{J}, \quad
\bigl( g ,
\ul w \bigr)
\mapsto
\bigl(\, \Ga_{f_{i_0},f_{j_0}}(g) \,,\, (g^{-1}(Q^t_{f_j}))_{j\in J, t=0,1} \,\bigr) ,
$$
\item
linear embedding $\Hat\phi_{IJ}:E_I \hookrightarrow E_J$ given by the natural inclusion.
\end{itemize}
\end{enumerate}
Moreover, any choice of $i_0\in I$ and open sets $\Hat\Ww_{I,i_0}$ for each $F_I\neq\emptyset$, and domains $V_{IJ}$ for each $F_I\cap F_J\neq\emptyset$ forms 
an additive weak Kuranishi atlas ${(\bK_I, \Hat\Phi_{IJ})}$ in the sense of 
Definitions~\ref{def:K}, \ref{def:Ku2}; 
in particular satisfying the weak cocycle condition
$$
\phi_{JK} \circ \phi_{IJ} = \phi_{IK} \qquad\text{on}\;\; V_{IK} \cap \phi_{IJ}^{-1}(V_{JK}) .
$$
\end{thm}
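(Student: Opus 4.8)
The plan is to prove Theorem~\ref{thm:A2} in three stages: first construct each sum chart $\bK_I$ and verify it is a genuine Kuranishi chart; then construct the coordinate changes $\Hat\Phi_{IJ}$ and verify the axioms of Definition~\ref{def:change} (in particular the index condition); and finally check the weak cocycle condition and additivity. For the first stage, I would reduce to the situation analyzed in \S\ref{ss:gw}: having fixed $i_0\in I$ and the open set $\Hat\Ww_{I,i_0}$ inside $\{(g,\ul w) : \ul w_{i_0}=(0,1)\}$, one realizes $U_I$ as the zero set of the section $(g,\ul w)\mapsto \pbar g \mod \sum_{i\in I}\Ga_{\ul w_i}^*\Hat E^i$ on the thickened space $\Hat U_I$, cut down by the evaluation conditions $g(w_i^t)\in Q_{f_i}^t$. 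Sum Condition~I$'$ guarantees the summed bundle has locally constant rank $\dim E_I=\sum_i\dim E^i$ (using Sum Condition~II$'$ to ensure the sum is direct along the whole footprint, so the rank doesn't jump), and the geometric form \eqref{graph}, \eqref{eq:Gamaw} makes $\Ga^*\Hat E^i$ smooth as in \S\ref{ss:gw}; Sum Condition~III (which follows from Sum Condition~I$'$ via the smooth-section hypothesis on $E^i$) gives elliptic regularity so $\Hat U_I\subset\Cc^\infty(S^2,M)\times(S^2)^{2|I|}$, hence the evaluation map on $\Hat U_I$ is smooth by Lemma~\ref{le:evsmooth}. Transversality of $\ev$ to $\prod Q_{f_i}^t$ at points of $\psi_I^{-1}(F_I)$ is exactly the argument already run for two charts in \S\ref{ss:gw}: the tangent space to $\Hat U_I$ contains $\{0\}\times(\rT S^2)^{2|I|}$ coming from the infinitesimal $G_\infty$-action on the $G_\infty$-invariant zero set, and $\im\rd_t g \pitchfork \rT Q_{f_i}^t$ by Sum Condition~IV$'$. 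Shrinking $\eps$ then also trivializes the bundle over the smaller $U_I$, yielding the asserted $\bK_I$ with footprint map $\psi_I:(g,\ul w)\mapsto[g]$ a homeomorphism onto $F_I$ (the slicing conditions pin down the reparametrization uniquely, again by Sum Condition~IV$'$).

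\textbf{Coordinate changes.} For $I\subset J$ and fixed $i_0\in I$, $j_0\in J$, the map $\phi_{IJ}$ is dictated, as in \S\ref{ss:Kcomp}: it must lift the identity on $\Bb^{k,p}/G_\infty$, so $\phi_{IJ}(g,\ul w) = (\Ga_{f_{i_0},f_{j_0}}(g), (g^{-1}(Q_{f_j}^t))_{j\in J,t})$, and the linear map $\Hat\phi_{IJ}:E_I=\prod_{i\in I}E^i\hookrightarrow\prod_{j\in J}E^j=E_J$ is the obvious inclusion. The condition \eqref{choice VIJ} on the domain $V_{IJ}$ is precisely what makes $\phi_{IJ}$ well-defined with image in $U_J$: the second inclusion ensures the reparametrized data lands in $\iota_{j_0,J}^{-1}(U_J)$, and the first says $V_{IJ}$ meets the zero set in exactly $\psi_I^{-1}(F_J)$. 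That $\phi_{IJ}$ is a smooth embedding follows because $\Ga_{f_{i_0},f_{j_0}}$ is smooth on the finite-dimensional slices (Lemma~\ref{le:Gsmooth}) and the maps $g\mapsto g^{-1}(Q_{f_j}^t)$ are smooth on smooth maps (the implicit function theorem applied to the transverse slicing conditions, Sum Condition~IV$'$). One checks $s_J\circ\phi_{IJ} = \Hat\phi_{IJ}\circ s_I$ directly from the equivariance identity \eqref{Eequivariant}: reparametrizing $g$ by $\ga_g$ turns $\ga_{\ul w_i}^*\nu^i|_{\gr g}$ into $\ga_{\ga_g^{-1}(\ul w_i)}^*\nu^i|_{\gr g\circ\ga_g}$, and the extra components $j\in J\setminus I$ contribute zero sections in the $\ga_g$-pulled-back data since $(g,\ul w)\in V_{IJ}\cap U_I$ already satisfies $\pbar g\in\sum_{i\in I}\Ga_{\ul w_i}^*\Hat E^i$, i.e.\ the $E^j$-components vanish, consistent with $\Hat\phi_{IJ}$ being the inclusion. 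Compatibility with footprints, $\psi_J\circ\phi_{IJ}=\psi_I$, is immediate since both are $(g,\ul w)\mapsto[g]$ up to reparametrization. The remaining axiom of Definition~\ref{def:change} is the \textbf{index condition}, which I expect to be the main obstacle: one must show $\rd\phi_{IJ}$ identifies $\ker\rd s_I$ with a complement-respecting subspace so that $\rd(s_J)$ restricted to a normal bundle of $\im\phi_{IJ}$ maps isomorphically onto $E_J/\Hat\phi_{IJ}(E_I)=\prod_{j\in J\setminus I}E^j$. This should fall out of the construction: the normal directions to $\iota_{j_0,J}(\Ga(\iota_{i_0,I}^{-1}(\cdot)))$ inside $U_J$ are precisely the deformations turning on the extra obstruction components $\Ga_{\ul w_j}^*\Hat E^j$ for $j\in J\setminus I$, and transversality of the Cauchy--Riemann operator modulo $\sum_{j\in J}\Ga_{\ul w_j}^*\Hat E^j$ (already used to build $U_J$) combined with Sum Condition~II$'$ (directness of the sum) makes this map an isomorphism. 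By Lemma~\ref{le:change} this is equivalent to the tangent bundle condition of \cite{J1,FOOO}.

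\textbf{Weak cocycle condition and additivity.} For $I\subset J\subset K$ the identity $\phi_{JK}\circ\phi_{IJ}=\phi_{IK}$ on $V_{IK}\cap\phi_{IJ}^{-1}(V_{JK})$ is essentially automatic from the formula: all three maps are "reparametrize to the appropriate slice, then record all the slicing intersection points"; composing the first two does this in two steps, and the reparametrization group law $\ga_{\ul w_{k_0}} = \ga_{\ul w_{j_0}}\circ(\text{correction})$ together with uniqueness of slicing intersections (Sum Condition~IV$'$) shows the two-step composite equals the direct map wherever both are defined — which is exactly the stated overlap, since in general we do \emph{not} have $\phi_{IJ}^{-1}(V_{JK})\subset V_{IK}$, only equality on the intersection. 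This is the weak, not strong, cocycle condition, consistent with Definition~\ref{def:K}; the linear parts $\Hat\phi_{IJ}$ compose strictly as inclusions. Finally, additivity in the sense of Definition~\ref{def:Ku2} holds because $E_I=\prod_{i\in I}E^i$ by construction, with $\Hat\phi_{IJ}$ the coordinate inclusions, so the obstruction spaces satisfy $E_J\cong E_I\oplus\prod_{j\in J\setminus I}E^j$ functorially — the defining property of an additive atlas — and Sum Conditions~I$'$, II$'$ are precisely the transversality of the $E^i$ that additivity requires. By Lemma~\ref{le:Ku3} this additivity induces the canonical filtration, so ${(\bK_I,\Hat\Phi_{IJ})}$ is an additive weak Kuranishi atlas, completing the proof of Theorem~A.
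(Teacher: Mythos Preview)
Your outline is correct and matches the paper's strategy for the construction of sum charts (Stage~1) and for the weak cocycle condition and additivity (Stage~3). The transversality argument for cutting $\Hat U_I$ down to $U_I$ needs a small sharpening: since $\ul w_{i_0}=(0,1)$ is fixed, the $i_0$-component of the slicing map cannot be made transverse by varying $\delta w_{i_0}$; the paper handles the $i_0$-component by using $\rT_g(G_\infty g)\times\{0\}\subset\rT_{(g,\ul w)}\Hat U_I$ (the infinitesimal reparametrization in the $g$-direction), and then the remaining $j\neq i_0$ components by varying $\delta w_j$ alone. Your sentence conflates these two mechanisms.

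The substantive difference is in Stage~2, where your verification of the index condition is the acknowledged soft spot (``should fall out of the construction''). The paper does not attempt a direct check. Instead it factors $\phi_{IJ}$ as a composite
\[
\bK_I \;\xrightarrow{\ \Hat\Pi_{I,i_0}\ }\; \bK^B_{I,i_0} \;\xrightarrow{\ \Hat\Phi^I_{i_0 j_0}\ }\; \bK^B_{I,j_0} \;\xrightarrow{\ \Hat\Phi^{j_0}_{IJ}\ }\; \bK^B_{J,j_0} \;\xrightarrow{\ \Hat\Pi_{J,j_0}^{-1}\ }\; \bK_J,
\]
where the $B$-charts $\bK^B_{I,\bullet}$ have domain $B_{I,\bullet}=\{g\in\Bb_{I,\bullet}:\pbar g\in\Hat E_I|_{(g,\ul w(g))}\}$ obtained by \emph{forgetting the marked points} via the smooth embedding $\Pi_I|_{U_I}$ (whose injectivity on tangent spaces uses Sum Condition~IV$'$). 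The first, second, and fourth arrows are diffeomorphisms on domains (the second using Lemma~\ref{le:Gsmooth} and the equivariance \eqref{Eequivariant}), so by Lemma~\ref{le:cccomp} the index condition for the composite reduces to the third arrow, which is just the identity inclusion $B_{I,j_0}\cap\Bb_{J,j_0}\hookrightarrow B_{J,j_0}$ inside a single slice $\Bb_{f_{j_0}}$. There the tangent spaces are both preimages under $D_g\pbar$ of nested obstruction subspaces, so $\rT_g B_{J,j_0}/\rT_g B_{I,j_0}\cong (D_g\pbar)^{-1}(\sum_{j\in J\setminus I}\Ga_{\ul w_j(g)}^*\Hat E^j)/\ker D_g\pbar$, and $D_g\pbar$ gives the required isomorphism onto $E_J/E_I$ directly. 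This factorization is what buys you a clean index-condition check; your direct approach would force you to simultaneously track the extra marked-point directions and the extra obstruction directions in $\rT U_J/\rd\phi_{IJ}(\rT U_I)$, which is doable but not as written.
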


\begin{proof}
The sum charts $\bK_{I}$ will be constructed as in Proposition~\ref{prop:A1}. In fact, let us begin by showing that the necessary choices of neighbourhoods in (i) always exist.
Since $F_I\subset\oMm_1(A,J)$ is open and $\psi_{i_0}$ is a homeomorphism to $s_{i_0}^{-1}(0)\subset U_{i_0}\subset\Bb_{f_{i_0}}$, there exists an open set $\Bb_{I,i_0}\subset\Bb_{f_{i_0}}$
such that $\Bb_{I,i_0}\cap s_{i_0}^{-1}(0) = \bigl\{  g \in \Bb_{I,i_0} \,\big|\, \pbar g = 0 \bigr\} = \psi_{i_0}^{-1}(F_I)$. Next, since $\iota_{I,i_0}$ is an embedding to
$\Hat\Bb^{k,p} \times \bigl\{ (\ul w_{i}) \in (S^2)^{2|I|}   \,\big|\, \ul w_{i_0} = (0,1) \bigr\}$,
it contains an open set $\Hat\Ww_{I,i_0}$ such that $\Hat\Ww_{I,i_0} \cap\im\iota_{I,i_0} = \iota_{i_0,I}( \Bb_{I,i_0})$. Together, this implies the requirement in (i).
Note moreover that elements $(g,\ul w)\in \im \iota_{i_0,I}$ satisfy $g(w_i^t)\in 
Q^t_{f_i}$ and hence $\Hat\Ww_{I,i_0}$ can be chosen such that $g(w_i^t)$ lies in a given neighbourhood of the hypersurface  
$Q^t_{f_i}$ near $f_i(t)$ for any $(g,\ul w)\in \Hat\Ww_{I,i_0}$.

Next, note that Sum Condition II$'$ is assumed to be satisfied for $g\in\psi_{i_0}^{-1}(F_I)\subset\Bb_{f_{i_0}}$, and hence continues to hold for $\bigl(g,(\ul w_i) \bigr) \in\Hat\Ww_{I,i_0}$ in a sufficiently small neighbourhood of $\iota_{i_0,I}(\psi_{i_0}^{-1}(F_I))$. Thus we obtain a well defined bundle
\begin{equation}\label{HEI}
\Hat E_I \; \to \; \Hat\Ww_{I,i_0}   , \qquad
\Hat E_I|_{(g,\ul w)}:=
{\textstyle \sum_{i\in I}} \bigl( \Gamma_{\ul w_i}^* \Hat E^i \bigr)|_{g}  \;\subset\; \Hat\Ee|_g,
\end{equation}
where $\Gamma_{\ul w_i}^*\Hat E^i$ is as defined in \eqref{eq:Gamaw}.
In order to construct a Kuranishi chart $\bK_{I}$ with footprint $F_I$ from $\Hat E_I$ along the lines of Proposition~\ref{prop:A1}, we need to express the domain $U_I$ as the zero set of a smooth transverse Fredholm operator.
Recall here from \S\ref{ss:eval} that $\Cc^\infty(S^2,M) \times S^2 \ni (g, w_i^t) \mapsto g(w_i^t) \in M$ is not smooth in any standard Banach norm.
Hence we first
construct
the thickened solution space
\[
\Hat U_{I} := \bigl\{ ( g , \ul{w} ) \in \Hat\Ww_{I,i_0} \,\big|\, \; \pbar g \in \Hat E_I|_{(g,\ul w)}
\bigr\},
\]
which is the zero set of the smooth Fredholm operator
$$
\Hat\Ww_{I,i_0} \;\longrightarrow\;
\bigcup_{(g, \ul w)}\quotient{\Hat\Ee|_g }{ \Hat E_I |_{(g,\ul w)}}  ,
\qquad
( g , \ul{w})  \; \longmapsto\; [\pbar g] .
$$
We can achieve transversality of this operator by choosing $\Hat\Ww_{I,i_0}$ to be a sufficiently small neighbourhood of $\iota_{i_0,I}(\psi_{i_0}^{-1}(F_I))$, since $\pbar$ is transverse to $\Hat\Ee/\Hat E^{i_0}$ over $\psi_{i_0}^{-1}(F_I) \subset \Hat\Vv_{f_{i_0}}$, and for
$( g , \ul{w})\in  \iota_{i_0,I}(\psi_{i_0}^{-1}(F_I))$
we have
$\Hat E^{i_0}|_g \subset \Hat E_I |_{(g,\ul w)}$.

Finally, the domain $U_{I}\subset\Hat U_{I}$ is the zero set of the map
\begin{align} \label{BQW}
\Hat U_{I}  \quad &\;\longrightarrow\;  \underset{i\in I}{ \textstyle {\prod}}\bigl( (\rT_{f_i(0)} Q_{f_i}^{0})^\perp\times (\rT_{f_i(1)} Q_{f_i}^{1})^\perp \bigr) , \\
( g , \ul{w})
&\; \longmapsto\;
\underset{i\in I}{ \textstyle {\prod}} \bigl( \Pi^\perp_{Q_{f_i}^{0}}(g(w^0_i)), \Pi^\perp_{Q_{f_i}^{1}}(g(w^1_i))\bigr),
\nonumber
\end{align}
which is well defined for sufficiently small choice of $\Hat\Ww_{I,i_0}$, such that the $g(w^t_i)$ lie in the domain of definition of the projections $\Pi^\perp_{Q_{f_i}^{t}}$.
Moreover, this map is smooth, since by the regularity in Sum Condition III (which is satisfied by construction) we have $\Hat U_I\subset\Cc^\infty(S^2,M)$.
To see that it is transverse, it suffices to consider any given point $(g ,\ul{w}) \in \iota_{i_0,I}(\psi_{i_0}^{-1}(F_I))$, since transversality at these points persists in an open neighbourhood, and then $\Hat\Ww_{I,i_0}$ can be chosen sufficiently small to achieve transversality on all of $\Hat U_I$.
At these points we understand some parts of the tangent space $\rT_{(g ,\ul{w})}\Hat U_I$ because $\{ (f, \ul v ) \in \Hat\Ww_{I,i_0} \,|\,  \pbar f=0 \}$ is a subset of $\Hat U_I$ which contains $\iota_{i_0,I}(\psi_{i_0}^{-1}(F_I))$.
Hence we have $\bigl(\delta g, (\delta w_i)_{i\in I}\bigr) \in \rT_{(g, \ul w)} \Hat U_I$ for any $\delta g \in \ker{\rm D}_g\pbar$ and $\delta w_i \in \rT_{w_i}(S^2)^2$ with $\delta w_{i_0}=0$.
In particular, we have $\rT_{g}(G_\infty g) \times \{0\} \subset \rT_{(g, \ul w)} \Hat U_I$ since $\{ (f, \ul v ) \in \Hat\Ww_{I,i_0} \,|\,  \pbar f=0 \}$ is invariant under the action $\ga : (f,\ul v)\mapsto (f\circ \ga , \ul v)$ of $\{\ga\approx{\rm id}\}\subset G_\infty$, unlike the thickened solution space $\Hat U_I$ itself.
(Neither space is invariant under the more natural action $(f,\ul v)\mapsto (f\circ \ga , \ga^{-1}(\ul v))$ that will be important below, since at the moment $\ul w_{i_0}$ is fixed.)

Now the $i_0$ component of the linearized operator
of \eqref{BQW} at any point 
simplifies, since the marked points $w^t_{i_0}=t$ are fixed, to
\begin{equation}\label{linop0}
\rT_{(g, \ul w)} \Hat U_I \;\ni\; \bigl(\delta g, (\delta w_i)_{i\in I}\bigr)
 \;\mapsto\;  \bigl( \rd\Pi^\perp_{Q^{0}_{f_{i_0}}} \delta g (0) , \rd\Pi^\perp_{Q^{1}_{f_{i_0}}} \delta g (1) \bigr) .
\end{equation}
At points with $\pbar g=0$, its restriction to $\rT_{g}(G_\infty g) \times \{0\} \subset \rT_{(g, \ul w)} \Hat U_I$ is surjective by the same argument as in Proposition~\ref{prop:A1}, which uses the fact that $\im\rd_t g$ projects onto $(\rT_{f_{i_0}(t)} Q_{f_{i_0}}^{t})^\perp$ by the construction of the local slice $\Bb_{f_{i_0}}$ at $g\approx f_{i_0}$.
Next, 
the  
$j\in I\less\{i_0\}$ component of the linearized operator for fixed $\delta g$ is
\begin{equation}\label{linopi}
\rT_{(g, \ul w)} \Hat U_I \;\ni\; 
\bigl(\delta g, (\delta w_i)_{i\in I}\bigr)
\;\mapsto\;
\Bigl(  \rd\Pi^\perp_{Q^{t}_{f_j}}   \bigl(\delta g (w^t_j )  + \rd_{w^t_j} g ( \delta w^t_j)  \bigr)\Bigr)_{t=0,1}  .
\end{equation}
We claim that this is surjective for any given $\delta g \subset \rT_{g}(G_\infty g)$ (given by the surjectivity requirements for $i_0$), just by variation of $\delta w_j$.
Indeed, for $(g, \ul w)\in \iota_{i_0,I}(\psi_{i_0}^{-1}(F_I))$ we 
have $\bigl(\delta g, (\delta w_i)_{i\in I}\bigr) \in \rT_{(g, \ul w)} \Hat U_I$ for any $\delta g \subset \rT_{g}(G_\infty g)$ and $\delta w_i \in \rT_{w_i}(S^2)^2$.
Moreover, we have $\im\rd_{w^t_j} g = 
\im 
\rd_{t} (g\circ\ga_{\ul w_j})$, which projects onto $(\rT_{f_j(t)} Q_{f_j}^{t})^\perp$ by the construction of the local slice $\Bb_{f_j}$ at $g\circ\ga_{\ul w_j}\approx f_j$.
This proves surjectivity of \eqref{linopi} for $j\neq i_0$  
by variation of $\delta w_j$, 
and together with the surjectivity of\eqref{linop0} 
by variation of $\delta g$
proves transversality of \eqref{BQW} for sufficiently small $\Hat\Ww_{I, i_0}$.

Now that the domain $U_I$ is equipped with a smooth structure, we can construct a Kuranishi atlas $\bK_I$ as in Proposition~\ref{prop:A1} by pulling back the smooth section
$$
\ti s_I : U_I \;\to\; \Hat E_I|_{U_I} , \qquad  (g, \ul w) \;\mapsto\; \pbar g
$$
to the trivialization $\Hat E_I|_{U_I}\cong U_I \times E_I$ given by construction of the sum bundle.
The induced homeomorphism
$$
\psi_I :  \; \ti s_I^{-1}(0) \; \overset{\cong}{\longrightarrow} \;  F_I \;\subset\; \oMm_1(A,J) ,
\qquad  (g, \ul w)\;\mapsto\; [g]
$$
maps
$\ti s_I^{-1}(0) \subset \im\iota_{i_0,I}$
to the desired footprint since we chose the neighbourhoods $\Hat\Ww_{I, i_0}$ and $\Bb_{I,i_0}:=\iota_{i_0,I}^{-1}(\Hat\Ww_{I, i_0})\subset \Bb_{f_{i_0}}$ such that
$$
\psi_I( \ti s_I^{-1}(0) )
\;=\; \pr \bigl( \iota_{i_0,I}^{-1}(\ti s_I^{-1}(0)) \bigr)
\;=\;  \pr \bigl( \iota_{i_0,I}^{-1}(\Hat\Ww_{I,i_0}) \cap \pbar^{-1}(0) \bigr)
\;=\;  \pr \bigl( \psi_{i_0}^{-1}(F_I)  \bigr)
\;=\; F_I ,
$$
where $\pr :\Hat\Bb^{k,p}\to \Hat\Bb^{k,p}/G_\infty$ denotes the quotient.
This finishes the construction for (i).

To construct the coordinate changes, we can now forget the marked points, which were only a technical means to obtaining smooth sum charts.
For that purpose fix a pair $i_0\in I$ and note that the forgetful map $\Pi_I:\Hat\Bb^{k,p}\times (S^2)^{2|I|}\to\Hat\Bb^{k,p}$ is a left inverse to the embedding $\io_{i_0,I}:\Bb_{f_{i_0}}\hookrightarrow \Hat\Bb^{k,p}\times (S^2)^{2|I|}$ from \eqref{embed}, whose image contains the smooth finite dimensional domain $U_I\subset\Cc^\infty(S^2,M)\times (S^2)^{2|I|}$.
Hence it restricts to a topological embedding to a space of perturbed holomorphic maps in the slice,
\begin{align}\label{UB}
\Pi_I|_{U_I} : \;   U_I   \;\longrightarrow\;
B_{I,i_0} :=&\; \bigl\{ g\in \Bb_{f_{i_0}} \,\big|\, \exists \ul w \in (S^2)^{2|I|} : (g,\ul w) \in U_I \bigr\} \\
=&\; \bigl\{ g\in \Bb_{I,i_0} \,\big|\, \pbar g \in \Hat E_I |_{(g,\ul w(g))} \bigr\}   .  \nonumber
\end{align}
In fact, this is a smooth embedding since the forgetful map is smooth and we can check that the differential of the forgetful map $\Pi_I|_{U_I}$ is injective. Indeed, its kernel at $(g,\ul w)$  is the vertical part of the tangent space
$\rT_{(g,\ul w)} U_{I} \cap \bigl( \{0\} \times \rT_{\ul w} (S^2)^{2|I|}\bigr)$, which in terms of the linearized operators \eqref{linopi} is given by the kernel of
$$
\rT_{\ul w} (S^2)^{2|I|} \; \ni \;
(\delta w^t_i )_{i\in I, t=0,1} \;\longmapsto\;
\Bigl(  \rd_{g(w^t_i)}\Pi^\perp_{Q^{t}_{f_i}}   \bigl( \rd_{w^t_i} g ( \delta w^t_i)  \bigr)\Bigr)_{i\in I, t=0,1}
\;\in \; \underset{i\in I, t=0,1}{\textstyle\prod} \im \rd_t f_i .
$$
This operator is injective (and hence surjective) since by Sum Condition IV$'$
$$
\bigl(\im\rd_{w^t_i} g\bigr)\; \pitchfork\; \bigl(\rT_{w^t_i}Q^{t}_{f_i}\bigr)=\ker\rd_{g(w^t_i)}\Pi^\perp_{Q^{t}_{f_i}}.
$$
Thus $\io_{i_0,I}: B_{I,i_0}\to U_I$ is a diffeomorphism, and since it also intertwines the Cauchy--Riemann operator on the domains and the projection to $\oMm_1(A,J)$, this forms a map
$$
\Hat\Pi_{I,i_0}:=\bigl(\Pi_I|_{U_I} , \id_{E_I} \bigr): \; \bK_I \longrightarrow \bK^B_I,
$$
from the sum chart
$\bK_I=\bigl(\, U_I \,,\, \bigcup_{(g,\ul w)\in U_I} \Hat E_I|_{(g,\ul w)} \,,\, \ti s_I(g,\ul w)=\pbar g \,,\, \psi_I(g,\ul w)=[g] \,\bigr)$ to the Kuranishi chart
$$
\bK^B_I: = \bigl(\, B_{I,i_0} \,,\,
{\textstyle \bigcup_{g\in B_{I,i_0}}} \Hat E_I|_{(g,\ul w(g))} \,,\, \ti s(g)=\pbar g \,,\, \psi(g)=[g] \,\bigr).
$$
(Here we indicated the obstruction bundles before trivialization to $E_I$.)
The inverse map $\Hat\Pi_{I,i_0}^{-1}:=\bigl(\io_{i_0,I} , \id_{E_I} \bigr)$ is also a map between Kuranishi charts, and both are coordinate changes since the index condition is automatically satisfied when $\phi_{IJ}$ and $\Hat\phi_{IJ}$ are both diffeomorphisms.
Indeed, in this case, both target and domain in the tangent bundle condition \eqref{tbc} are trivial.

Next, we will obtain further coordinate changes $\Hat\Phi^I_{i_0 j_0} : (B_{I,i_0},\ldots) \to (B_{I,j_0},\ldots)$ for different choices of index $i_0,j_0\in I$. Here the choices of neighbourhoods $\Hat\Ww_{I,\bullet}$ induce neighbourhoods in the local slices $\Bb_{I,\bullet}\subset\Bb_{f_\bullet}$ such that
$B_{I,\bullet} :=\; \bigl\{ g\in \Bb_{f_\bullet} \,\big|\, \pbar g \in \Hat E_I |_{(g,\ul w(g))} \bigr\}$.
These domains are intertwined by the transition map between local slices $\Ga_{f_{i_0},f_{j_0}}$. Indeed, using the $G_\infty$-equivariance of the obstruction bundles \eqref{Eequivariant}, we have
$$
\pbar g = {\textstyle \sum_{i\in I}} \, \ga_{\ul w_i(g)}^*\nu^i |_{\gr g}
\quad\Longrightarrow\quad
\pbar(g\circ\ga) = {\textstyle \sum_{i\in I}}\,  \ga_{\ul w_i (g\circ \ga)}^*\nu^i |_{\gr g\circ\ga} ,
$$
where $\ga^{-1}\circ\ga_{\ul w_i} = \ga_{\ul w_i(g\circ\ga)}$ since $\ga^{-1}(\ga_{\ul w_i}(t))= \ga^{-1}( w_i^t)= w_i^t(g\circ\ga)$.
Thus we obtain a well defined map
$\Ga_{f_{i_0},f_{j_0}} : B_{I,i_0} \cap G_\infty\Bb_{I,j_0} \to  B_{I,j_0}$.
It is a topological embedding with open image, since its inverse is $\Ga_{f_{j_0},f_{i_0}} |_{B_{I,j_0} \cap G_\infty\Bb_{I,i_0}}$. In fact, it is a local diffeomorphism since both maps are smooth by Lemma~\ref{le:Gsmooth}.
The above also shows that this diffeomorphism intertwines the sections, given by the Cauchy--Riemann operator, and the footprint maps, given by the projection $g\mapsto [g]\in\oMm_1(A,J)$.
Since the index condition is automatic as above, we obtain the required coordinate change by
$\Hat\Phi^I_{i_0 j_0}:=\bigl(\, \Ga_{f_{i_0},f_{j_0}} \,,\, \id_{E_I} \,\bigr)$
with domain $B_{I,i_0} \cap G_\infty\Bb_{I,j_0} \subset B_{I,i_0}$.

With these preparations, a natural coordinate change  for $I\subsetneq J$ and any choice of $i_0\in I$, $j_0\in J$ arises from the composition of the above coordinate changes (all of which are local diffeomorphisms on the domains) with another natural coordinate change
$\Hat\Phi^{i_0}_{IJ} : (B_{I,j_0},\ldots) \to (B_{J,j_0},\ldots)$ given by the inclusion
$$
\phi^{j_0}_{IJ} := {\rm id}_{\Bb_{j_0}} : \; B_{I,j_0} \cap \Bb_{J,j_0} \;\hookrightarrow\; B_{J,j_0} .
$$
Again, this naturally intertwines the sections and footprint maps with
$$
s_I^{-1}(0)\cap B_{I,j_0} \cap \Bb_{J,j_0}= \psi_I^{-1}(F_J).
$$
To check the index condition for this embedding together with the linear embedding $\Hat\phi^{j_0}_{IJ} := {\rm id}_{E_I} : E_I \hookrightarrow E_J$ we express the tangent spaces to both domains in terms of the linearization of the Cauchy--Riemann operator on the local slice $\overline\partial: \Bb_{j_0} \to \Hat\Ee|_{\Bb_{j_0}}$. Comparing
$$
 \rT_g B_{I,j_0} =  ({\rm D}_g \overline{\partial}) ^{-1} \Bigl( \textstyle{\sum_{i\in I}} (\Ga_{\ul w_i(g)}^*\Hat E^i)|_g \Bigr) , \qquad
\rT_g B_{J,j_0} = ({\rm D}_g \overline\partial) ^{-1} \Bigl(  {\textstyle \sum_{j\in J}} (\Ga_{\ul w_j(g)}^*\Hat E^j)|_g \Bigr)
$$
as subsets of $\rT_g\Bb_{f_{j_0}}$, we can identify
$$
\quotient{\rT_g B_{J,j_0}}{\rd_g\phi^{j_0}_{IJ} \bigl( \rT_g B_{I,j_0} \bigr)}
\;=\;
 \quotient{ ({\rm D}_g \overline\partial) ^{-1}  \left( \textstyle{\sum_{j\in J\less I}} (\Ga_{\ul w_j(g)}^*\Hat E^j)|_g\right) }{\ker {\rm D}_g\overline\partial
}
$$
to see that the linearized section (given by the linearized Cauchy Riemann operator together with the trivialization of obstruction bundles) satisfies the tangent bundle condition \eqref{tbc}
\begin{eqnarray*}
{\rm D}_{g} \overline\partial \;:\;
\quotient{\rT_g B_{J,j_0}}{\rd_g\phi^{j_0}_{IJ} \bigl( \rT_g B_{I,j_0} \bigr)}
\;&\stackrel{\cong} \longrightarrow\; &
 {\textstyle\sum_{j\in J\less I}} (\Ga_{\ul w_j(g)}^*\Hat E^j)|_g \\
&&\qquad\quad \;\cong\;
\quotient{\Hat E_J|_{(g,(\ul w_j(g))_{j\in J})}}
{\Hat E_I|_{(g,(\ul w_i(g))_{i\in I})}}.
\end{eqnarray*}
Finally, we can compose the coordinate changes to
$$
\Hat\Phi_{IJ} \,:= \; \Hat\Pi_{J,j_0}^{-1} \circ \Hat\Phi^J_{i_0 j_0} \circ \Hat\Phi^{i_0}_{IJ} \circ \Hat\Pi_{I,i_0} \; : \;\;
\bK_I \; \to \; \bK_J .
$$
By Lemma~\ref{le:cccomp} this defines a coordinate change with the maximal domain
$$
\iota_{i_0,I}(B_{I,i_0}  \cap G_\infty \Bb_{J,j_0}) \;=\; \iota_{i_0,I} \bigl( \Ga_{f_{j_0},f_{i_0}} \bigl( \iota_{j_0,J}^{-1} ( U_{J} ) \bigr)\bigr),
$$
which we can restrict to any smaller choice of $V_{IJ}$ containing $\psi_I^{-1}(F_J)$.
The linear embedding, after the fixed trivialization of the bundle, is the trivial embedding $\Hat\phi_{IJ}: E_I\hookrightarrow E_J$, whereas the nonlinear embedding $\phi_{IJ}:=\iota_{i_0,I}^{-1}\circ \Ga_{f_{i_0},f_{j_0}}\circ \iota_{j_0,J}: V_{IJ} \to U_J$ of domains is given by the restriction to $V_{IJ}$ of the composition
$$
U_I \;\overset{\iota_{i_0,I}}{\longhookleftarrow}\; B_{I,i_0}  \cap G_\infty \Bb_{J,j_0} \; \xrightarrow[\cong]{\Ga_{f_{i_0},f_{j_0}}}\; B_{I,j_0} \cap \Bb_{J,j_0}
 \;\overset{\id_{\Bb_{j_0}}}{\longhookrightarrow}\; B_{J,j_0}
 \;\overset{\iota_{j_0,J}}{\longhookrightarrow}\; U_J .
$$
This completes the proof of (ii).
Finally, the  
additivity and cocycle conditions on the level of the linear embeddings $\Hat\phi_{IJ}$ 
hold by construction,
whereas the weak cocycle condition for the embeddings between the domains follows, since $\iota_{j_0,J}^{-1}\circ\iota_{j_0,J}=\id_{\Bb_{J,j_0}}$ from the cocycle property of the local slices,
$$
\Ga_{f_{j_0},f_{k_0}}\circ \Ga_{f_{i_0},f_{j_0}}  = \Ga_{f_{i_0},f_{k_0}}
\qquad\text{on}\; \Bb_{f_{i_0}}\cap \bigl(G_\infty\cdot\Bb_{f_{j_0}}\bigr) \cap \bigl(G_\infty \cdot \Bb_{f_{k_0}}\bigr) .
$$
This completes the proof of Theorem~\ref{thm:A2}.
\end{proof}

Note that we crucially use the triviality of the isotropy groups, in particular in the proof of the cocycle condition. Nontrivial isotropy groups cause additional indeterminacy, which has to be dealt with in the abstract notion of Kuranishi atlases. The construction of Kuranishi atlases with nontrivial isotropy groups for Gromov--Witten moduli spaces will in fact require a sum construction already for the basic Kuranishi charts.
We will give a more detailed proof of Theorem~\ref{thm:A2} in \cite{MW:GW}, where we will also treat nodal curves and deal with the case of isotropy or, more generally, nonunique intersections with the hypersurfaces $Q^t_{f_i}$.
There the chart domains will be defined using a notion of Fredholm stabilization which ensures that conditions equivalent to Sum Conditions I$'$ and II$'$ are automatically satisfied.

\begin{remark} \rm  \label{rmk:smart}
(i) We could choose the domains $U_{f_i}$ of the charts $\bK_i$ sufficiently small to be precompact open subsets of Euclidean spaces (of possibly different dimensions) for each $i\in I$. 
However, the domain $U_I$ of a sum chart cannot necessarily be constructed as open subset of a Euclidean space, since we require it to contain the full zero set $\psi_I^{-1}(F_I)$.

\MS\NI
(ii)
The actual idea behind the choice of local slice conditions and the introduction of further marked points is of course a stabilization of the domain in order to obtain a theory over the Deligne--Mumford moduli space of stable genus zero Riemann surfaces with marked points. So one might want to rewrite this approach invariantly and, when e.g.\ summing two charts, work over the Deligne--Mumford moduli space with five marked points instead of taking the points $(\infty,0,1,w_i^0,w_i^1)$.
Note however that one would need to make sure that the marked points $w^0,w^1\in S^2$ that we read off from intersection with the hypersurfaces $Q_{f_1}^{0},Q_{f_1}^{1}$ are disjoint from each other and from $\infty, 0,1$.
Thus, in order to be summable in this framework, the basic Kuranishi charts would have to be constructed from local slices with pairwise disjoint slicing conditions $Q_{f_i}^t$.

When properly handled, this approach does give a good framework for discussing coordinate changes. However one does need to take care not to obscure the analytic problems by introducing these further abstractions and notations.
Moreover, this abstraction does not yield another approach to constructing the coordinate changes.
If there is a rigorous approach using the Deligne--Mumford formalism, then in a local model near $(\infty,0,1, w^0_{01},w^1_{01})$,  it would take exactly the form discussed above.
\MS

\NI (iii)
The abstraction to equivalence classes of maps and marked points modulo automorphisms becomes crucial when one wants to extend the above approach to construct finite dimensional reductions near nodal curves, because the Gromov compactification exactly mirrors the construction of Deligne--Mumford space. 
While we will defer the details of this construction to \cite{MW:GW}, let us note that one can avoid the need to work with disjoint slicing conditions by working with several copies of Deligne--Mumford space in the sum charts. (This is the direct generalization of our approach above, where we do not require $(\infty,0,1,w_i^0,w_i^1)$ to be disjoint.)
\MS

\NI (iv)
In view of Sum Condition II$'$, one cannot expect any two given basic Kuranishi charts to have summable obstruction bundles and hence be compatible.
Thus even a simple moduli space such as $\oMm_{1}(A,J)$ does not have a canonical Kuranishi atlas.
Hence the construction of invariants from this space also involves constructing a Kuranishi atlas on the product cobordism $[0,1]\times \oMm_{1}(A,J)$ intertwining any two Kuranishi atlases for $\oMm_{1}(A,J)$ arising from different choices of basic charts and transition data. We will call the corresponding relation concordance.
Note here that one could construct basic charts of the ``wrong dimension'' simply by adding 
trivial finite dimensional factors to the abstract domains or obstruction spaces.
A natural and necessary condition for constructing a well defined concordance class of Kuranishi atlases with the ``expected dimension'' for $\oMm_{1}(A,J)$ is the following {\bf Fredholm index condition for charts} pointed out to us by Dietmar Salamon:\MS

{\it Each Kuranishi chart must in some sense identify the kernel $\ker\rd s_{f_i}$ and cokernel $(\im\rd s_{f_i})^\perp$ of the finite dimensional reduction with the kernel modulo the infinitesimal action $\ker\rd_{f_i}\pbar/{\scriptstyle \rT_{f_i}(G_\infty f_i)}$ and cokernel $(\im\rd_{f_i}\pbar)^\perp$ of the Cauchy--Riemann operator.}\MS

In fact, one might argue that this identification should be part of a Kuranishi atlas on a moduli space. However, this would require giving the abstract footprint of a Kuranishi atlas more structure than that of a compact metrizable topological space,
in order to keep track of the kernel and cokernel of the Fredholm operator that arises by linearization from the PDE that defines the moduli space.
Whether the index condition picks out a unique concordance class of Kuranishi atlases on $X$ is an interesting open question.
\MS

\NI 
(v)
The Fredholm index condition for Kuranishi charts, once rigorously formulated, should imply that any map between charts which satisfy the index condition should also satisfy the index condition for coordinate changes in Definition~\ref{def:change} (a reformulation of the tangent bundle condition introduced by \cite{J1}).
Conversely, a map between charts that satisfies the index condition for coordinate changes should also preserve the Fredholm index condition for charts.
More precisely, if $\Hat\Phi_{IJ} :\bK_I \to \bK_J$ is a map satisfying the index condition, and one of the charts $\bK_I$ or $\bK_J$ satisfies the Fredholm index condition, then both charts satisfy the Fredholm index condition.
$\hfill\er$
\end{remark}

\section{Kuranishi charts and coordinate changes with trivial isotropy
}
\label{s:chart}

Throughout this chapter, $X$ is assumed to be a compact and metrizable space.
This section defines Kuranishi charts with trivial isotropy for $X$ and coordinate changes between them.
The case of nontrivial isotropy is a fairly straightforward generalization using the language of groupoids, but the purpose of this paper is to clarify fundamental topological issues in the simplest example.
Hence we assume throughout that the charts have trivial isotropy and drop this qualifier from the wording.
Similarly, we only consider ``smooth charts'' whose domain are smooth manifolds, yet also will not add this qualifier.\footnote{
General holomorphic curve moduli spaces usually have boundary and corners (arising from breaking, buildings, or boundary nodes) and may have a stratified smooth structure (arising from lack of a natural smooth structure near interior nodes).
}
Our definitions are motivated by \cite{FO,FOOO,J1}. However, our insistence on specifying the domains of coordinate changes is new, as are our notion of Kuranishi atlas and interpretation in terms of categories in \S\ref{s:Ks} and 
all subsequent constructions from the virtual neighbourhood in 
\S\ref{ss:coord} to the virtual fundamental class in \S\ref{s:VMC}.

\subsection{Charts, maps, and restrictions}\label{ss:chart}\hspace{1mm}\\ \vspace{-3mm}

The core idea of Kuranishi regularization is to use local finite dimensional reductions, which are formalized in the following notion of local chart, which specializes the notion of topological Kuranishi chart (in \cite[Definition~2.1.1]{MW:top} and Definition~\ref{def:tchart} below) to include a smooth structure and trivial isotropy.

\begin{defn}\label{def:chart}
Let $F\subset X$ be a nonempty open subset.
A {\bf Kuranishi chart} for $X$ with {\bf footprint} $F$ is a tuple $\bK = (U,E,s,\psi)$ consisting of
\begin{itemize}
\item
the {\bf domain} $U$, which is a finite dimensional differentiable manifold;
\item
the 
{\bf obstruction bundle} $\E=U\times E$, which is a trivial vector bundle given by a finite dimensional real vector space $E$, called the
{\bf obstruction space};
\item
the {\bf section} $\s: U\to U\times E, x\mapsto (x,s(x))$, which is given by a smooth map $s: U\to E$;
 \item
the {\bf footprint map} $\psi : s^{-1}(0) \to X$, which is a homeomorphism to the footprint ${\psi(s^{-1}(0))=F}$.
\end{itemize}
The {\bf dimension} of $\bK$ is $\dim \bK: = \dim U-\dim E$.
\end{defn}

\begin{rmk}\rm 
(i)  One could generalize the notion of Kuranishi chart by working with a nontrivial obstruction bundle over the domain 
(see e.g.\ \cite[\S5.2]{Mcn}),
but this complicates the notation and, by not fixing the trivialization, makes coordinate changes less unique. 
In the application to holomorphic curve moduli spaces, there are natural choices of trivialized obstruction bundles.
The section $s$ is then given by the generalized Cauchy--Riemann operator, and elements in the footprint are $J$-holomorphic maps modulo reparametrization.
\MS

\NI (ii) 
The constructions in \S\ref{ss:gw} and 
\cite{MW:GW,Mcn} 
are such that we cannot assume that $U$ is always an open subset of some Euclidean space. Instead, we assume it to be a differentiable manifold, i.e.\ a   second countable Hausdorff space that is locally homeomorphic to a fixed Euclidean space and has smooth transition maps. 
Note in particular that we do not allow manifolds with boundary or the more general notion of paracompact manifolds.
$\hfill\er$
\end{rmk}

As seen in the previous remark, the very first notion of Kuranishi chart already has many variations in the literature. 
However, our topological results for Kuranishi atlases in \cite{MW:top} apply to all these situations by working with the following notion of chart.

\begin{defn}\label{def:tchart} 
{\rm $\!\!$ \cite[Definition~2.1.1]{MW:top}}
A {\bf topological Kuranishi chart} for $X$ with open footprint $F\subset X$ is a tuple $\bK = (U,\E,
\s,\psi)$ consisting of
\begin{itemize}
\item
the {\bf domain} $U$, which is a separable, locally compact metric space;
\item
the {\bf obstruction ``bundle''} 
which is a continuous map 
$\pr :\E \to U$
from a separable, locally compact metric space $\E$,
together with a 
{\bf zero section} ${0: U \to \E}$, which is a continuous map with $\pr \circ 0 = \id_{U}$;
\item
the {\bf section} $\s: U\to \E$, which is a continuous map with $\pr \circ \s = \id_{U}$;
\item
the {\bf footprint map} $\psi : \s^{-1}(0) \to X$, which is a homeomorphism 
between the {\bf zero set} $\s^{-1}(0):=
\s^{-1}(\im 0)=\{x\in U \,|\, \s(x)=0(x)\}$ and 
the {\bf footprint} ${\psi(\s^{-1}(0))=F}$.
\end{itemize}
\end{defn}
 
\begin{rmk}\rm    \label{rmk:topchart}
(i) 
Every Kuranishi chart $\bK=(U,E,s,\psi)$ for $X$ in the sense of Definition~\ref{def:chart} induces a topological Kuranishi chart (which we denote with the same label)
$$
\bK = \bigl(\, U \,,\, \E=U\times E \,,\, \s(x)=(x,s(x)) \,,\, \psi \,\bigr)
$$
with projection $\pr :\E=U\times E\to U$ to the first factor and zero section $0: x \mapsto (x,0)$.
Indeed, note that finite dimensional manifolds are automatically separable (i.e.\ contain a countable dense subset), locally compact, and metrizable. 
\MS

\NI (ii)
A Kuranishi chart with nontrivial isotropy group $\Gamma$ as introduced in \cite{MW:iso} yields a topological Kuranishi chart with domain $\qu{U}{\Ga}$ and obstruction bundle $\E=\qu{U\times E}{\Ga}$.
$\hfill\er$
\end{rmk}

Most of the definitions below are direct specializations of notions for topological Kuranishi charts from \cite{MW:top}, 
so that throughout we will be able to draw from our topological refinement results.

Since we aim to define a regularization of $X$, the most important datum of a Kuranishi chart is its footprint.
So, as long as the footprint is unchanged, we can vary the domain $U$ and section $s$ without changing the chart in any important way. Nevertheless, we will always work with charts that have a fixed domain and section. In fact, our definition of a coordinate change between Kuranishi charts in the next section will crucially involve these domains.
Moreover, it will require the following notion of restriction of a Kuranishi chart to a smaller subset of its footprint.

\begin{defn} \label{def:restr} {\rm $\!\!$ \cite[Definition~2.1.3]{MW:top}}
Let $\bK$ be a Kuranishi chart and $F'\subset F$ an open subset of the footprint.
A {\bf restriction of $\bK$ to $\mathbf{\emph F\,'}$} is a Kuranishi chart of the form
$$
\bK' = \bK|_{U'} := \bigl(\, U' \,,\, E'=E  \,,\, s'= s|_{U'} \,,\, \psi'=\psi|_{U'\cap s^{-1}(0)}\, \bigr)
$$
given by a choice of open subset $U'\subset U$ of the domain such that $U'\cap s^{-1}(0)=\psi^{-1}(F')$.
In particular, $\bK'$ has footprint $\psi'(s'^{-1}(0'))=F'$.

%
%
%
\end{defn}

The following lemma shows that we may easily restrict to any open subset of the footprint.
In fact, we can often restrict to precompact domains, which provides a key tool for the construction of shrinkings and reductions of Kuranishi atlases in \cite{MW:top}.
Here we use the notation $V'\sqsubset V$ to mean that the inclusion $V'\hookrightarrow V$ is {\it precompact}. That is, ${\rm cl}_V(V')$ is compact, where ${\rm cl}_V(V')$ denotes the closure of $V'$ in the relative topology of $V$.  If both $V'$ and $V$ are contained in a compact space $X$, then $V'\sqsubset V$ is equivalent to the inclusion $\ov{V'}:={\rm cl}_X(V')\subset V$ of the closure of $V'$ w.r.t.\ the ambient topology.

\begin{lemma}\label{le:restr0}
Let $\bK$ be a Kuranishi chart. Then for any open subset $F'\subset F$ there exists a restriction $\bK'$ to $F'$ whose domain $U'$ is such that $\ov{U'}\cap s^{-1}(0) = \psi^{-1}(\ov{F'})$. If moreover $F'\sqsubset F$ is precompact, then $U'$ can be chosen to be precompact.
\end{lemma}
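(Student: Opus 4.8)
\textbf{Proof strategy for Lemma~\ref{le:restr0}.}
The plan is to produce the domain $U'$ by removing from $U$ a suitable open neighbourhood of the ``bad'' part of the zero set, namely the part of $s^{-1}(0)$ that maps outside $\ov{F'}$. First I would set $Z:=s^{-1}(0)$ and recall that $\psi\colon Z\to F$ is a homeomorphism, so $\psi^{-1}(\ov{F'})$ is a closed subset of $Z$ and $\psi^{-1}(F\setminus \ov{F'})$ is open in $Z$. The key point is that $Z$ is closed in $U$ (it is the preimage of the closed section under the continuous map $\s$), and $U$ is metrizable, hence normal, and in particular $U$ is a metric space; I would fix a metric $d$ on $U$. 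Then $A:=\psi^{-1}(\ov{F'})$ and $B:=Z\setminus \psi^{-1}(F')=\psi^{-1}(F\setminus F')$ are disjoint closed subsets of $Z$, hence of $U$.

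The construction of $U'$ then proceeds as follows. Using normality of $U$ (or directly the metric), choose an open set $W\subset U$ with $B\subset W$ and $\ov{W}\cap A=\emptyset$; concretely one may take $W=\{x\in U : d(x,B)<d(x,A)\}$ when $A,B$ are both nonempty, with the obvious modifications when one of them is empty (if $B=\emptyset$ take $W=\emptyset$; if $A=\emptyset$ one can take $W$ to be a small neighbourhood of $B$ disjoint from... — in that degenerate case $F'=\emptyset$ contradicts $F'\subset F$ being the footprint of a Kuranishi chart, so in fact $A\neq\emptyset$, but in any case $W$ disjoint from $A$ suffices). Now set $U':=U\setminus \ov{W}$, an open subset of $U$. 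I would then verify that $U'\cap Z=A=\psi^{-1}(\ov{F'})$... wait — I want $U'\cap Z=\psi^{-1}(F')$, so this requires a bit more care: I should arrange $W$ so that $W\cap Z = B$ exactly. Since $B$ is open in $Z$ and $Z$ is a subspace, I can choose $W$ small enough that $W\cap Z\subset B$, e.g.\ by intersecting the above $W$ with an open set $W_0$ with $W_0\cap Z=B$ (which exists since $B$ is open in $Z$), giving $W\cap Z=B$ and $\ov{W}\cap A=\emptyset$. Then $U'\cap Z=Z\setminus \ov{W}\supset Z\setminus W_0'=A$ for an appropriate shrinking, and one checks the reverse inclusion: a point of $Z\cap U'$ is in $Z\setminus \ov W\subset Z\setminus W = Z\setminus(W\cap Z)=Z\setminus B=A$. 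Hence $U'\cap s^{-1}(0)=A=\psi^{-1}(\ov{F'})$. This is slightly \emph{more} than the requirement $U'\cap s^{-1}(0)=\psi^{-1}(F')$ in Definition~\ref{def:restr}, so I should double-check the statement: the lemma asks for a restriction to $F'$ with $\ov{U'}\cap s^{-1}(0)=\psi^{-1}(\ov{F'})$; a restriction to $F'$ by definition has $U'\cap s^{-1}(0)=\psi^{-1}(F')$. So I actually want the zero set inside $U'$ itself to be $\psi^{-1}(F')$ and only its closure to pick up $\psi^{-1}(\ov{F'})$. Thus I would instead separate $A':=\psi^{-1}(\ov{F'})$ from... no: I separate the two closed-in-$Z$ sets $\psi^{-1}(\ov{F'})$ and $\psi^{-1}(F\setminus F')$ — but these are not disjoint in general. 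The correct pair is: $C_1:=\psi^{-1}(F')$ is open in $Z$, $C_2:=Z\setminus C_1$ is closed. I want $U'$ open with $U'\cap Z=C_1$ and $\ov{U'}\cap Z=\psi^{-1}(\ov{F'})$. So I delete a closed neighbourhood of $C_2$: pick open $W\supset C_2$ with $W\cap Z$ a small neighbourhood of $C_2$ in $Z$, shrinking so that $\ov W\cap Z$ is still contained in a prescribed neighbourhood, in particular in $Z\setminus \psi^{-1}(F'')$ for... This is the delicate combinatorial point and I address it below.

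The main obstacle, then, is exactly this matching of closures: arranging simultaneously $U'\cap Z=\psi^{-1}(F')$ and $\ov{U'}\cap Z=\psi^{-1}(\ov{F'})$. I would handle it by a two-step choice. First, using that $Z$ is metrizable and $\psi$ a homeomorphism, pick an open neighbourhood $N$ of $\psi^{-1}(\ov F')$ in $Z$ with $\ov{N}^Z\subset \psi^{-1}(\ov{F'}\cup(\text{small}))$ — more precisely, observe $\psi^{-1}(\ov{F'})=\bigcap_n \psi^{-1}(F'_n)$ where $F'_n\supset\ov{F'}$ is a decreasing sequence of open subsets of $F$ with $\bigcap \ov{F'_n}=\ov{F'}$ (available since $F$ is metrizable). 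Then extend each open-in-$Z$ set $\psi^{-1}(F'_n)$ to an open $W_n\subset U$ with $W_n\cap Z=\psi^{-1}(F'_n)$ (possible as $Z$ is a subspace), shrink so $W_{n+1}\subset W_n$ and $\ov{W_{n+1}}\subset W_n$, and take $U'$ to be an appropriate ``diagonal'' open set — e.g.\ $U':=\bigcup_n (W_n\setminus \ov{K_n})$ where $K_n$ is a closed neighbourhood of $Z\setminus\psi^{-1}(F'_n)$ in $U$ — chosen so that $U'\cap Z=\psi^{-1}(F')$ and every point of $\ov{U'}\cap Z$ lies in $\psi^{-1}(F'_n)$ for all $n$, hence in $\psi^{-1}(\ov{F'})$, while conversely $\psi^{-1}(\ov{F'})\subset\ov{U'}$ since $\psi^{-1}(F')$ is dense in $\psi^{-1}(\ov{F'})$ (as $F'$ is dense in $\ov{F'}$ and $\psi$ a homeomorphism) and $\psi^{-1}(F')=U'\cap Z\subset \ov{U'}$. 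Finally, for the precompact case, if $F'\sqsubset F$ then $\ov{F'}$ is compact, so $\psi^{-1}(\ov{F'})\subset Z$ is compact; using local compactness of $U$ I would additionally intersect $U'$ with a precompact open neighbourhood of the compact set $\psi^{-1}(\ov{F'})$ in $U$, obtained by covering $\psi^{-1}(\ov{F'})$ by finitely many precompact open balls — this shrinks $U'$ to a precompact open set while preserving $U'\cap Z=\psi^{-1}(F')$ (and hence still $\ov{U'}\cap Z=\psi^{-1}(\ov{F'})$, using that $\ov{U'}$ is now compact and the earlier identity is preserved under intersecting with a neighbourhood of the zero set).
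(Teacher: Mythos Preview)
The paper's own proof is a one-line citation: it invokes \cite[Lemma~2.1.4]{MW:top} applied to the underlying topological Kuranishi chart, so there is no construction in the paper to compare against. You are attempting to supply the point-set argument that the paper outsources.

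Your approach has the right ingredients --- the key facts are that $Z:=s^{-1}(0)$ is closed in the metrizable space $U$ and that $\psi$ is a homeomorphism onto $F$ --- but the execution has a genuine gap. After two false starts you land on a ``diagonal'' construction $U':=\bigcup_n(W_n\setminus\ov{K_n})$, but you never specify $K_n$ precisely enough to verify either $U'\cap Z=\psi^{-1}(F')$ or, more seriously, the closure condition $\ov{U'}\cap Z\subset\psi^{-1}(\ov{F'})$. Your assertion that ``every point of $\ov{U'}\cap Z$ lies in $\psi^{-1}(F'_n)$ for all $n$'' is exactly what needs proof, and it does not follow from the nesting $\ov{W_{n+1}}\subset W_n$ alone: a point in $\ov{U'}$ could be a limit of points from $W_m\setminus K_m$ for a \emph{single} large $m$, landing in $\ov{W_m}$ but not in any $W_n$ with $n>m$.

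There is a much cleaner one-step construction that avoids nested sequences entirely. Fix a metric $d$ on $U$, set $A:=\psi^{-1}(F')$, $B:=Z\setminus A$, and define
\[
U' \;:=\; \bigcup_{z\in A} B_d\bigl(z,\tfrac12 d(z,B)\bigr).
\]
Then $U'$ is open with $U'\cap Z=A$ (the balls avoid $B$ by the triangle inequality), and if $y\in\ov{U'}\cap Z$ then a sequence $x_n\to y$ with $x_n\in B_d(z_n,\tfrac12 d(z_n,B))$ forces $z_n\to y$ with $z_n\in A$, so $y\in\ov{A}^Z=\psi^{-1}(\ov{F'})$; the reverse inclusion is automatic from $A\subset U'$ and $Z$ closed. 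For the precompact statement, intersect this $U'$ with a precompact open neighbourhood of the compact set $\psi^{-1}(\ov{F'})$, which exists by local compactness of $U$; both conditions are preserved since that neighbourhood contains $A$.
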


\begin{proof}
Applying \cite[Lemma~2.1.4]{MW:top} to the topological Kuranishi chart induced by $\bK$ yields the appropriate domain $U'$.
\end{proof}

\subsection{Coordinate changes}\hspace{1mm}\\ \vspace{-3mm}
\label{ss:coord}  

The following notion of coordinate change is key to the definition of Kuranishi atlases.
Here we 
start using notation that will also appear in our definition of Kuranishi atlases. For now, $\bK_I=(U_I,E_I,s_I,\psi_I)$ and $\bK_J=(U_J,E_J,s_J,\psi_J)$ just denote different Kuranishi charts for the same space $X$.
We begin with the notion of topological coordinate change from \cite{MW:top}, then specify to the smooth setting.

\begin{figure}[htbp] 
   \centering
   \includegraphics[width=4in]{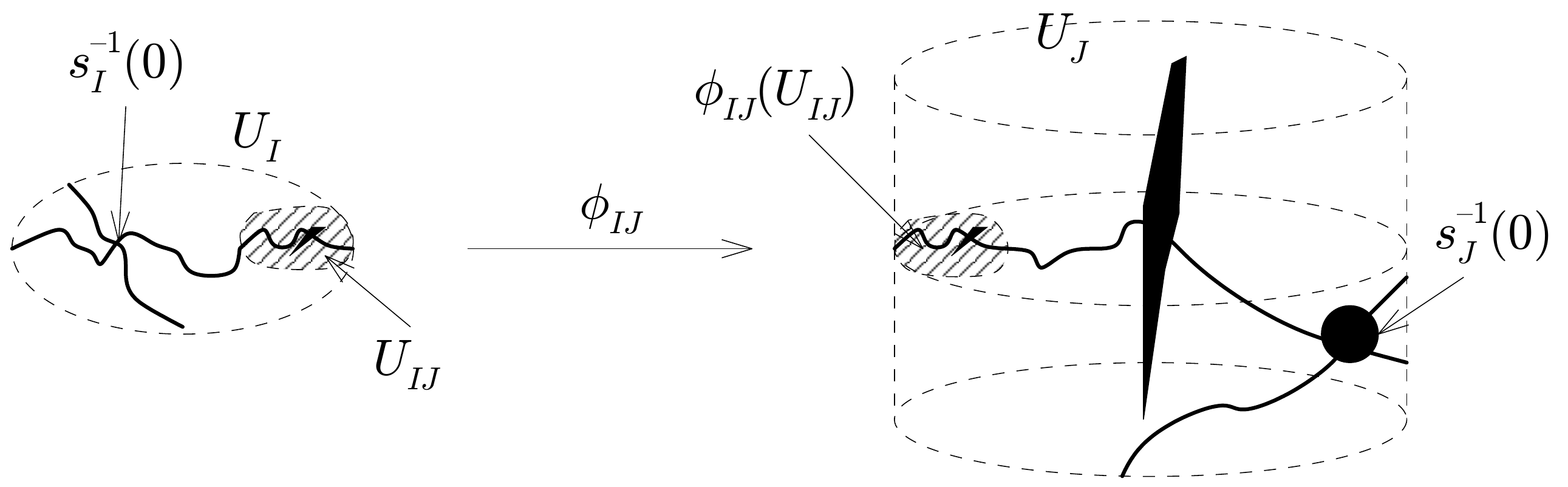}
   \caption{A coordinate change in which $\dim U_{J} =\dim U_I+ 1$.
   Both $U_{IJ}$ and its image $\phi(U_{IJ})$ are shaded.}
   \label{fig:2}
\end{figure}

\begin{defn}\label{def:tchange} {\rm $\!\!$ \cite[Definition~2.2.1]{MW:top}}
Let $\bK_I$ and $\bK_J$ be topological Kuranishi charts such that $F_I\cap F_J$ is nonempty.
A {\bf topological coordinate change} from $\bK_I$ to $\bK_J$ is a map $\Hat\Phi: \bK_I|_{U_{IJ}}\to \bK_J$ defined on a restriction of $\bK_I$ to $F_I\cap F_J$. More precisely:  
\begin{itemize}
\item
The {\bf domain} of the coordinate change is an open subset $U_{IJ}\subset U_I$ such that
$\s_I^{-1}(0_I)\cap U_{IJ} = \psi_I^{-1}(F_I\cap F_J)$.
\item 
The {\bf map} of the coordinate change is a topological embedding (i.e.\ homeomorphism to its image) 
$\Hat\Phi: \E_I|_{U_{IJ}}:=\pr_I^{-1}(U_{IJ}) \to \E_J$ that satisfies the following.
\begin{enumerate}
\item
It is a bundle map, i.e.\ we have $\pr_J \circ \Hat\Phi = \phi \circ\pr_I |_{\pr_I^{-1}(U_{IJ})}$ for a topological embedding $\phi: U_{IJ}\to U_J$, and it is linear in the sense that $0_J \circ \phi  = \Hat\Phi \circ 0_I|_{U_{IJ}}$.
\item
It intertwines the sections in the sense that $\s_J \circ \phi  = \Hat\Phi \circ \s_I|_{U_{IJ}}$.
\item
It restricts to the transition map induced from the footprints in $X$ in the sense that
$\phi|_{\psi_I^{-1}(F_I\cap F_J)}=\psi_J^{-1} \circ\psi_I : U_{IJ}\cap \s_I^{-1}(0_I) \to \s_J^{-1}(0_J)$.
\end{enumerate}
\end{itemize}
\end{defn}

The map $\Hat\Phi$ is not required to be locally surjective.
Indeed, the rank of the obstruction bundles $\E$ will typically be different for different charts.
However, the maps allowed as coordinate changes between smooth Kuranishi charts are carefully controlled in the normal direction by the index condition in the following.

\begin{defn}\label{def:change}
A {\bf coordinate change} between Kuranishi charts is a topological coordinate change
$\Hat\Phi: \bK_I|_{U_{IJ}}\to \bK_J$ that splits $\Hat\Phi=\phi\times \Hat\phi$ into a smooth embedding
$\phi: 
U_{IJ}\to U_J$ and a linear embedding $\Hat\phi: E_I\to E_J$
that satisfy the {\bf index condition} in (i),(ii) below.
\begin{enumerate}
\item
The embedding $\phi:U_{IJ}\to U_J$ identifies the kernels,
$$
\rd_u\phi \bigl(\ker\rd_u s_I \bigr) =  \ker\rd_{\phi(u)} s_J    \qquad \forall u\in U_{IJ}.
$$
\item
The linear embedding $\Hat\phi:E_I\to E_J$ identifies the cokernels,
$$
\forall u\in U_{IJ} : \qquad
E_I = \im\rd_u s_I \oplus C_{u,I}  \quad \Longrightarrow \quad E_J = \im \rd_{\phi(u)} s_J \oplus \Hat\phi(C_{u,I}).
$$
\end{enumerate}
\end{defn}

\begin{remark}\label{rmk:tchange} \rm
(i)
Combining the above definitions, we find that a coordinate change $\Hat\Phi:\bK_I|_{U_{IJ}}\to\bK_J$ is a restriction of $\bK_I$ to $F_I\cap F_J\neq\emptyset$ together with embeddings
$\phi: U_{IJ}\to U_J$, $\Hat\phi: E_I\to E_J$ as follows:
\begin{itemize}
\item
The {\bf domain} of the coordinate change is an open subset $U_{IJ}\subset U_I$ such that
$s_I^{-1}(0)\cap U_{IJ} = \psi_I^{-1}(F_I\cap F_J)$.
\item 
The {\bf map} of the coordinate change is a pair $\Phi=(\phi,\Hat\phi)$ of a smooth embedding
$\phi: U_{IJ}\to U_J$ and a linear embedding $\Hat\phi: E_I\to E_J$ that satisfy the following.
\begin{enumerate}
\item 
They satisfy the index condition in Definition~\ref{def:change} above.
\item
They intertwine the sections in the sense that $s_J \circ \phi  = \Hat\phi \circ s_I|_{U_{IJ}}$.
\item
The embedding $\phi$ restricts to the transition map induced from the footprints in $X$ in the sense that
$\phi|_{\psi_I^{-1}(F_I\cap F_J)}=\psi_J^{-1} \circ\psi_I : U_{IJ}\cap s_I^{-1}(0) \to s_J^{-1}(0)$.
\end{enumerate}
\end{itemize}
In particular, the following diagrams commute:
\begin{align} 
\label{eq:map-square}
& \qquad\qquad
 \begin{array} {ccc}
{E_I|_{U_{IJ}}}& \stackrel{\Hat\phi} \longrightarrow &
{E_J} 
\phantom{\int_Quark}  \\
\phantom{sp} \uparrow {s_I}&&\uparrow {s_J} \phantom{spac}\\
\phantom{s}{U_{IJ}} & \stackrel{\phi} \longrightarrow &{U_J} \phantom{spacei}
\end{array} 
\qquad
 \begin{array} {ccc}
{U_{IJ}\cap s_I^{-1}(0)} & \stackrel{\phi} \longrightarrow &{s_J^{-1}(0)} \phantom{\int_Quark} \\
\phantom{spa} \downarrow{\psi_I}&&\downarrow{\psi_J} \phantom{space} \\
\phantom{s}{X} & \stackrel{{\rm Id}} \longrightarrow &{X}. \phantom{spaceiiii}
\end{array}
\end{align}

\MS\NI
(ii)
Every coordinate change $\Hat\Phi:\bK_I|_{U_{IJ}}\to\bK_J$ in the sense of Definition~\ref{def:change} or (i) above induces a topological coordinate change (which we again denote $\Hat\Phi:\bK_I|_{U_{IJ}}\to\bK_J$)
between the induced topological charts given by the same domain $U_{IJ}$ and map 
$$
\Hat\Phi = \phi \times \Hat\phi \,:\;  \E_I|_{U_{IJ}}= U_{IJ}\times E_I \; \to \; U_J\times E_J = \E_J .
$$

\vspace{-6mm}
$\hfill\er$
\end{remark}

\begin{example}\label{ex:change}  \rm
Here is the prototypical example of a coordinate change with $I=\{1\}\subset J=\{1,2\}$ 
between charts on the finite set $X = \{-1,0,1\} $.
Consider the two Kuranishi charts with $U_1=(-2,2)$, $E_1=\R$, $s_1(x)=(x^2-1)x^2$ and 
$U_{12}=(-1,2)\times (-1,1)$, $E_{12}=\R^2$, $s_{12} (x,y) = \bigl( (x^2-1)x^2 , y \bigr)$, with footprint maps given by the obvious identification.
Thus their footprints are $\{0,1\}= F_{12}\subset F_1= \{-1,0,1\} = X$, and both charts
have dimension $0$ although their domains and obstruction spaces are not locally diffeomorphic.
A natural coordinate change that extends the identification $s_1^{-1}(0)\supset \{0,1\}\cong \{(0,0),(1,0)\} =s_{12}^{-1}(0)$ is the inclusion $\phi: x\mapsto (x,0)$ of $U_{1,12}:=(-1,2)$  onto $(-1,2)\times \{0\}\subset U_{12}$ together with $\Hat\phi: x\mapsto (x,0)$.
Then all required diagrams commute and the index condition holds as follows:
The kernel $\ker \rd_0 s_1 = \rT_0 U_1 = \R$,  resp.\ $\ker \rd_x s_1 =\{0\}$ for $x\neq 0$,
is identified by $\rd \phi$ with $\ker \rd_{(0,0)} s_{12} = \R\times \{0\}\subset \rT_{(0,0)} U_{12}$, resp.\
$\ker \rd_{(x,0)} s_{12} = \{(0,0)\}$.
Nontrivial cokernel only appears at $0\in U_{1,12}$ and $(0,0)\in U_2$, where $\Hat\phi(E_1) = \R\oplus \{0\}\subset E_{12}$ is a complement to the image $\{(0,v)\in E_{12} \,|\, v\in \R\}$ of $\rd_{(0,0)} s_{12}$.
$\hfill\er$
\end{example}

Note that coordinate changes are in general {\it unidirectional} since the map $U_{IJ}\to U_J$ is not assumed to have open image.
Note also that the footprint of the intermediate chart $\bK_I|_{U_{IJ}}$ is always the full intersection $F_I\cap F_J$. 
Moreover, in Kuranishi atlases we will only have coordinate changes when $F_J\subset F_I$, so that this intersection is $F_I\cap F_J=F_J$.
By abuse of notation, we often denote a coordinate change by $\Hat\Phi: \bK_I\to \bK_J$, thereby indicating the choice of a domain $U_{IJ}\subset U_I$ and 
maps $\phi:U_{IJ}\to U_J$, $\Hat\phi:E_I\to E_J$.
Further, for clarity we usually add subscripts, writing $\Hat\Phi_{IJ} = (\phi_{IJ},\Hat\phi_{IJ}): \bK_I\to \bK_J$.

The following lemma shows that the index condition is in fact equivalent to a tangent bundle condition which was first introduced, in a weaker version, by \cite{FO}, and formalized in the present version by \cite{J1}.
We have chosen to present it as an index condition, since that is closer to the basic motivating question of how to associate canonical (equivalence classes of) Kuranishi atlases to moduli spaces described in terms of nonlinear Fredholm operators, see 
Remark~\ref{rmk:smart}~(ii).

\begin{lemma} \label{le:change}
The index condition in Definition~\ref{def:change} is equivalent to the {\bf tangent bundle condition}, which requires isomorphisms for all $v=\phi_{IJ}(u)\in\phi_{IJ}(U_{IJ})$, 
\begin{equation}\label{tbc}
\rd_v s_J : \;\quotient{\rT_v U_J}{\rd_u\phi_{IJ}(\rT_u U_I)} \;\stackrel{\cong}\longrightarrow \; \quotient{E_J}{\Hat\phi_{IJ}(E_I)},
\end{equation}
or equivalently at all (suppressed) base points as above
\begin{equation}\label{eq-tbc}
E_J=\im\rd s_J + \im\Hat\phi_{IJ} \qquad\text{and}\qquad
\im\rd s_J \cap \im\Hat\phi_{IJ} = \Hat\phi_{IJ}(\im\rd s_I).
\end{equation}
Moreover, the tangent bundle condition implies that the charts $\bK_I, \bK_J$ have the same dimension, and that $\phi_{IJ}(U_{IJ})$ is an open subset of $s_J^{-1}(\Hat\phi_{IJ}(E_I))$ with
 \begin{equation}\label{inftame}
 \im\rd_u\phi_{IJ}=(\rd_v s_J)^{-1}\bigr(\Hat\phi_{IJ}(E_I)\bigl)\qquad \forall 
 v=\phi_{IJ}(u)\in\phi_{IJ}(U_{IJ}).
 \end{equation}
\end{lemma}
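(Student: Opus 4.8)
The statement is a purely linear-algebra fact applied fiberwise, together with an elementary smooth-manifold observation about when a smooth embedding has open image in a level set. I would organize the proof in three stages: (1) establish the fiberwise equivalence of the index condition and the tangent bundle condition; (2) read off from the tangent bundle condition that $\dim\bK_I=\dim\bK_J$; (3) prove the local-model statement that $\phi_{IJ}(U_{IJ})$ is open in $s_J^{-1}(\Hat\phi_{IJ}(E_I))$ with the prescribed tangent space \eqref{inftame}. Throughout I will fix $u\in U_{IJ}$ and $v=\phi_{IJ}(u)$, suppress base points, and write $\phi=\phi_{IJ}$, $\Hat\phi=\Hat\phi_{IJ}$, $A=\rd_u s_I:\rT_uU_I\to E_I$, $B=\rd_v s_J:\rT_vU_J\to E_J$, $P=\rd_u\phi:\rT_uU_I\hookrightarrow\rT_vU_J$. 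The intertwining relation $s_J\circ\phi=\Hat\phi\circ s_I$ differentiates to $B\circ P=\Hat\phi\circ A$, which is the single algebraic identity driving everything.

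\textbf{Stage (1): index condition $\Longleftrightarrow$ tangent bundle condition.} Assuming $B\circ P=\Hat\phi\circ A$ with $P,\Hat\phi$ injective, I claim the index condition ``$P(\ker A)=\ker B$ and $\Hat\phi$ maps complements of $\im A$ to complements of $\im B$'' is equivalent to ``$B$ descends to an isomorphism $\rT_vU_J/P(\rT_uU_I)\to E_J/\Hat\phi(E_I)$''. For the forward direction: first note $B$ does descend to a well-defined map $\ov B$ on the quotients, since $B(P(\rT_uU_I))=\Hat\phi(A(\rT_uU_I))\subset\Hat\phi(E_I)$. Injectivity of $\ov B$: if $B\xi\in\Hat\phi(E_I)$, write $B\xi=\Hat\phi(e)$; pick a complement $C$ with $E_I=\im A\oplus C$, so $e=A\eta+c$; then $B(\xi-P\eta)=\Hat\phi(e)-\Hat\phi(A\eta)=\Hat\phi(c)$, and $\Hat\phi(c)\in\im B\cap\Hat\phi(C)$, which by the complement part of the index condition ($E_J=\im B\oplus\Hat\phi(C)$) forces $\Hat\phi(c)=0$, hence $c=0$ and $B(\xi-P\eta)=0$; then $\xi-P\eta\in\ker B=P(\ker A)\subset P(\rT_uU_I)$ by the kernel part, so $\xi\in P(\rT_uU_I)$. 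Surjectivity of $\ov B$: given $e_J\in E_J$, use $E_J=\im B\oplus\Hat\phi(C)$ to write $e_J=B\xi+\Hat\phi(c)$, whence $e_J\equiv B\xi$ mod $\Hat\phi(E_I)$. Conversely, assuming $\ov B$ is an isomorphism: surjectivity of $\ov B$ gives $E_J=\im B+\Hat\phi(E_I)$; injectivity gives $\im B\cap\Hat\phi(E_I)\subset\Hat\phi(\im A)$, and the reverse inclusion is immediate from $B\circ P=\Hat\phi\circ A$; this is exactly \eqref{eq-tbc}, and a short dimension count (or direct argument) upgrades it to the direct-sum statements $E_J=\im B\oplus\Hat\phi(C)$ whenever $E_I=\im A\oplus C$, giving the cokernel part of the index condition. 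For the kernel part: $P(\ker A)\subset\ker B$ is clear from $B\circ P=\Hat\phi\circ A$ and injectivity of $\Hat\phi$; for the reverse, if $B\xi=0$ then $\ov B([\xi])=0$, so $\xi\in P(\rT_uU_I)$, say $\xi=P\eta$; then $0=B P\eta=\Hat\phi(A\eta)$, so $A\eta=0$, i.e. $\xi\in P(\ker A)$. This also gives the equivalence of \eqref{tbc} and \eqref{eq-tbc} as a byproduct.

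\textbf{Stage (2): equal dimensions.} From \eqref{eq-tbc}, $E_J/\Hat\phi(E_I)\cong\rT_vU_J/P(\rT_uU_I)$, so $\dim U_J-\dim U_I=\dim E_J-\dim E_I$, i.e. $\dim U_I-\dim E_I=\dim U_J-\dim E_J$, which is $\dim\bK_I=\dim\bK_J$.

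\textbf{Stage (3): the local model.} To see $\phi(U_{IJ})$ is open in $Z:=s_J^{-1}(\Hat\phi(E_I))$ with tangent space as in \eqref{inftame}, I would argue as follows. The index/tangent bundle condition says $\rd_v s_J$ restricted to a complement of $P(\rT_uU_I)$ in $\rT_vU_J$ maps isomorphically onto $E_J/\Hat\phi(E_I)$; equivalently the composite $\rT_vU_J\xrightarrow{\rd_v s_J}E_J\to E_J/\Hat\phi(E_I)$ is surjective with kernel exactly $P(\rT_uU_I)$. By the submersion/implicit function theorem applied to $v'\mapsto[\,s_J(v')\,]\in E_J/\Hat\phi(E_I)$ near $v$, the set $Z=s_J^{-1}(\Hat\phi(E_I))$ is a smooth submanifold near $v$ of dimension $\dim\rT_vU_J-\dim(E_J/\Hat\phi(E_I))=\dim U_I$, with $\rT_v Z=\ker\bigl(\rT_vU_J\to E_J/\Hat\phi(E_I)\bigr)=(\rd_v s_J)^{-1}(\Hat\phi(E_I))$. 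Now $\phi:U_{IJ}\to U_J$ is a smooth embedding taking values in $Z$ (since $s_J\circ\phi=\Hat\phi\circ s_I\in\Hat\phi(E_I)$), between manifolds of equal dimension $\dim U_I=\dim Z$; since $\rd_u\phi=P$ is injective between spaces of equal dimension it is an isomorphism onto $\rT_vZ$, so $\phi$ is a local diffeomorphism onto $Z$ near each point, hence (being injective) an open map onto $Z$. This gives openness, and comparing $\rd_u\phi(\rT_uU_I)=\rT_v Z=(\rd_v s_J)^{-1}(\Hat\phi(E_I))$ gives \eqref{inftame}.

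\textbf{Expected main obstacle.} None of the steps is deep; the only place requiring care is making the linear-algebra bookkeeping in Stage (1) genuinely tight — in particular verifying that the complement version of the cokernel condition (stated with respect to \emph{any} complement $C$ of $\im A$) is correctly equivalent to the single identity \eqref{eq-tbc}, rather than just to ``$E_J=\im B+\Hat\phi(E_I)$ and $\im B\cap\Hat\phi(E_I)=\Hat\phi(\im A)$'' — this is where a sloppy dimension count could hide an error, so I would do that equivalence by explicit construction of complements rather than by counting. The smooth part in Stage (3) is routine once the linear statement is in hand, since it is exactly the standard ``embedding into a level set of the right dimension is open'' argument.
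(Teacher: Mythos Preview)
Your proposal is correct and follows essentially the same approach as the paper: both arguments reduce to elementary linear algebra on the fiberwise identity $B\circ P=\Hat\phi\circ A$, followed by the implicit function theorem for the openness statement. The organization differs slightly---the paper proves \eqref{inftame} first (from well-definedness and injectivity of \eqref{tbc}) and uses dimension-counting for the direction \eqref{tbc} $\Rightarrow$ index condition, while you use explicit element-chasing throughout and derive \eqref{inftame} as a byproduct of the submanifold argument in Stage~(3)---but these are equivalent routes through the same content.
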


\begin{proof}
We will suppress most base points in the notation,
and for simplicity write $\phi,\Hat\phi$ instead of $\phi_{IJ}, \Hat \phi_{IJ}$.
The tangent bundle condition, in particular the assumption that \eqref{tbc} is well defined resp.\ injective, implies that $\rd s_J(\im\rd \phi)\subset \Hat\phi(E_I)$
and $\rd s_J^{-1}\bigl(\Hat\phi(E_I)\bigr)\subset \im\rd \phi$, which proves \eqref{inftame}.

To see that the tangent bundle condition \eqref{tbc} implies the index condition, first note that the compatibility with sections, $\Hat\phi\circ\rd s_I = \rd s_J \circ \rd\phi$ implies
$$
\Hat\phi(\im\rd s_I)\subset\im\rd s_J , \qquad \rd\phi \bigl(\ker\rd s_I \bigr) \subset  \ker\rd s_J  .
$$
Since $\phi$ and $\Hat\phi$ are embeddings, this implies dimension differences $d,d'\geq 0$ in
$$
\dim\im\rd s_I  + d = \dim\im\rd s_J, \qquad
\dim\ker\rd s_I + d' = \dim\ker\rd s_J .
$$
The fact that \eqref{tbc} is an isomorphism implies the equality of dimensions
\begin{align*}
\dim E_J  - \dim E_I
&= \dim U_J  - \dim U_I \\
&= \dim \ker\rd s_J + \dim\im\rd s_J - \dim \ker\rd s_I - \dim\im\rd s_I
\\ &= d+d' .
\end{align*}
Here the first line already implies $\dim E_J - \dim U_J = \dim E_I - \dim U_I$, so that the Kuranishi charts have equal dimensions.
Moreover, if we pick a representative space $C_I$ for the cokernel, i.e.\ $E_I = \im\rd s_I \oplus C_{I}$, 
then the surjectivity of $\rd s_J$ in \eqref{tbc}  gives
\begin{equation}\label{EJ}
E_J = \im \rd s_J + \Hat\phi(E_I)
= \im \rd s_J + \bigl( \Hat\phi(C_I) \oplus \Hat\phi(\im\rd s_I) \bigr) = \im \rd s_J + \Hat\phi(C_I) ,
\end{equation}
where
$$
\dim \Hat\phi(C_I) = \dim E_I - \dim\im\rd s_I
= \dim E_J - \dim\im\rd s_J - d' .
$$
Thus the sum \eqref{EJ} must be direct and $d'=0$, which implies the identification of cokernels and kernels.

Conversely, to see that the index condition implies the tangent bundle condition let again
$C_I\subset E_I$ be a complement of $\im\rd s_I$. Then compatibility of the sections $s_J \circ\phi = \Hat\phi \circ s_I$ implies
$$
\Hat\phi(E_I) = \Hat\phi (\im\rd s_I) \oplus \Hat\phi(C_I)
= \rd s_J(\im\rd\phi) \oplus \Hat\phi(C_I)  .
$$
Moreover, let $N_u\subset \rT U_J$ be a complement of $\im \rd_u \phi$,
then the identification of cokernels takes the form
$$
E_J = \rd s_J(N_u) \oplus \rd s_J(\im\rd\phi) \oplus \Hat\phi(C_I) = \rd s_J(N_u) \oplus \Hat\phi(E_I).
$$
This shows that \eqref{tbc} is surjective, and for injectivity it remains to check injectivity of $\rd s_J |_{N_u}$. The latter holds since the identification of kernels implies $\ker\rd s_J \subset \im\rd\phi$.

To check the equivalence of \eqref{eq-tbc} and \eqref{tbc} note that the first condition in \eqref{eq-tbc} is the surjectivity of \eqref{tbc}, while the injectivity is equivalent to $\im\rd s_J \cap \im\Hat\phi \subset \rd s_J(\im\rd\phi)$. 
The latter equals $\Hat\phi(\im\rd s_I)$ by the compatibility $s_J\circ\phi=\Hat\phi\circ s_I$ of sections. So \eqref{eq-tbc} implies \eqref{tbc}, and for the converse it remains to check that \eqref{tbc} implies equality of the above inclusion. This follows from a dimension count as in \eqref{EJ}.

Finally, to see that $\phi(U_{IJ})$ is an open subset of $s_J^{-1}(\Hat\phi(E_I))$, note that they are both submanifolds of $U_J$ (since the first is the image of an embedding and \eqref{tbc} says that the latter is cut out transversely), and \eqref{inftame} identifies their tangent spaces. 
\end{proof}

The next lemmas provide restrictions and compositions of coordinate changes.

\begin{lemma} \label{le:restrchange}
Let $\Hat\Phi:\bK_I|_{U_{IJ}}\to \bK_J$ be a coordinate change from $\bK_I$ to $\bK_J$, and let $\bK'_I=\bK_I|_{U'_I}$, $\bK'_J=\bK_J|_{U'_J}$ be restrictions of the Kuranishi charts to open subsets $F'_I\subset F_I, F'_J\subset F_J$ with $F_I'\cap F_J'\ne \emptyset$.
Then a {\bf restricted coordinate change} $\Hat\Phi|_{U'_{IJ}}: \bK'_I \to \bK'_J$
is given by any choice of open subset $U'_{IJ}\subset U_{IJ}$ of the domain such that
$$
U'_{IJ} \subset U'_I\cap \phi^{-1}(U'_J) , \qquad \psi_I(s_I^{-1}(0)\cap U'_{IJ}) = F'_I \cap F'_J ,
$$
and the map
$\Hat\Phi|_{U'_{IJ}} = \phi|_{U'_{IJ}}\times \Hat\phi : U'_{IJ}\times E_I \to U_J\times E_J$.
In particular, the index condition is preserved under restriction.
\end{lemma}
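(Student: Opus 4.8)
The statement to prove is Lemma~\ref{le:restrchange}, asserting that the restricted data $\Hat\Phi|_{U'_{IJ}}$ is again a coordinate change in the sense of Definition~\ref{def:change}. The plan is to verify each of the three bullet-point requirements of a (topological, then smooth-with-index) coordinate change directly, checking first that an admissible choice of $U'_{IJ}$ even exists, and then that the restriction of $\phi$ and $\Hat\phi$ to this smaller domain still satisfies the footprint-compatibility condition, the section-intertwining condition, and the index condition.

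First I would establish existence of a valid domain $U'_{IJ}$. We are given $F'_I\cap F'_J\neq\emptyset$. Note that $\psi_I^{-1}(F'_I\cap F'_J)\subset s_I^{-1}(0)\cap U_I'$ since $F'_I\cap F'_J\subset F'_I$; moreover $\psi_I^{-1}(F'_I\cap F'_J)\subset U_{IJ}$ because $s_I^{-1}(0)\cap U_{IJ}=\psi_I^{-1}(F_I\cap F_J)\supset \psi_I^{-1}(F'_I\cap F'_J)$; and $\phi$ maps these zero-set points into $\psi_J^{-1}(F'_I\cap F'_J)\subset s_J^{-1}(0)\cap U'_J$ by the footprint compatibility of $\Hat\Phi$. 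Hence the open set $U_I'\cap\phi^{-1}(U_J')\cap U_{IJ}$ is an open neighbourhood (in $U_{IJ}$) of the set $\psi_I^{-1}(F'_I\cap F'_J)$. Since $\psi_I$ is a homeomorphism onto $F_I$ and $F'_I\cap F'_J$ is open in $F_I$, I can choose an open $U'_{IJ}\subset U_I'\cap\phi^{-1}(U_J')\cap U_{IJ}$ with $s_I^{-1}(0)\cap U'_{IJ}=\psi_I^{-1}(F'_I\cap F'_J)$ (this is exactly the content of Lemma~\ref{le:restr0} applied to the chart $\bK_I|_{U_I'\cap\phi^{-1}(U_J')\cap U_{IJ}}$, or can be done by hand using that $s_I^{-1}(0)$ is closed in $U_I$). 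So the set of admissible $U'_{IJ}$ is nonempty; the lemma's hypotheses on $U'_{IJ}$ are precisely what one then checks ensures the conclusion.

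Next I would verify the three conditions of Remark~\ref{rmk:tchange}(i). The domain condition $s_I^{-1}(0)\cap U'_{IJ}=\psi_I^{-1}(F'_I\cap F'_J)$ is imposed by hypothesis; note $\bK'_I$ has footprint $F'_I$, $\bK'_J$ has footprint $F'_J$, so the relevant intersection is $F'_I\cap F'_J$. The section-intertwining identity $s_J\circ\phi=\Hat\phi\circ s_I$ holds on $U'_{IJ}$ because it holds on all of $U_{IJ}\supset U'_{IJ}$ (and $s_I'=s_I|_{U_I'}$, $s_J'=s_J|_{U_J'}$ agree with $s_I,s_J$ on the relevant sets since $U'_{IJ}\subset U_I'$ and $\phi(U'_{IJ})\subset U_J'$). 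The footprint-transition condition $\phi|_{\psi_I^{-1}(F'_I\cap F'_J)}=\psi_J^{-1}\circ\psi_I$ likewise restricts from the corresponding identity for $\Hat\Phi$, using that $\psi_I'=\psi_I|_{U_I'\cap s_I^{-1}(0)}$ and $\psi_J'=\psi_J|_{U_J'\cap s_J^{-1}(0)}$ agree with $\psi_I,\psi_J$ on these points. That $\phi|_{U'_{IJ}}$ and $\Hat\phi$ remain a smooth embedding and a linear embedding is immediate, and the index condition (i),(ii) of Definition~\ref{def:change} is a pointwise condition at each $u\in U_{IJ}$ involving only $\rd_u s_I,\rd_{\phi(u)} s_J,\rd_u\phi,\Hat\phi$ — since restricting a chart to an open subset does not change the section germ or its differential at any retained point, these equalities of kernels and the cokernel-complement statement persist verbatim for all $u\in U'_{IJ}\subset U_{IJ}$. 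This gives the final sentence, that the index condition is preserved under restriction.

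\textbf{Main obstacle.} There is no deep obstacle here; the only point requiring a little care is the bookkeeping in the existence step — checking that $\psi_I^{-1}(F'_I\cap F'_J)$ actually lies inside all three sets $U_I'$, $\phi^{-1}(U_J')$, and $U_{IJ}$ simultaneously, so that an admissible $U'_{IJ}$ exists — and keeping straight which footprint ($F'_I$, $F'_J$, or $F'_I\cap F'_J$) is relevant at each stage. Everything else is a direct restriction of identities already known for $\Hat\Phi$, and the index condition transfers trivially because it is pointwise.
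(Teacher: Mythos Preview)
Your proposal is correct and takes essentially the same approach as the paper: verify that restriction preserves the topological coordinate change conditions, note that the splitting $\Hat\Phi=\phi\times\Hat\phi$ is preserved, and observe that the index condition is pointwise hence trivially preserved. The paper's proof simply cites the topological version from its companion paper \cite{MW:top} for the first part and then remarks that the index condition ``is evidently preserved under restriction of the maps to open subdomains,'' whereas you spell out the verification directly; your additional existence argument for an admissible $U'_{IJ}$ is correct but not strictly part of what the lemma asserts.
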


\begin{proof}
Applying \cite[Lemma~2.2.4]{MW:top} to the induced topological coordinate change $\Hat\Phi:\bK_I|_{U_{IJ}}\to \bK_J$ shows that the restriction is a topological coordinate change. 
It still splits in the way required by Definition~\ref{def:chart}, and the index condition is evidently preserved under restriction of the maps to open subdomains.
\end{proof}

\begin{lemma} \label{le:cccomp}
Let $\bK_I,\bK_J,\bK_K$ be Kuranishi charts such that $F_I\cap F_K \subset F_J$, and let $\Hat\Phi_{IJ}: \bK_I\to \bK_J$ and $\Hat\Phi_{JK}: \bK_J\to \bK_K$ be coordinate changes.
(That is, we are given restrictions
$\bK_I|_{U_{IJ}}$ to $F_I\cap F_J$ and $\bK_J|_{U_{JK}}$ to $F_J\cap F_K$ and 
embeddings
$\phi_{IJ}: U_{IJ}\to U_J$, $\Hat\phi_{IJ}: E_I \to E_J$, 
$\phi_{JK}: U_{JK}\to U_K$, $\Hat\phi_{JK}: E_J\to E_K$.)
Then the following holds.
\begin{enumerate}
\item
The domain $U_{IJK}:=\phi_{IJ}^{-1}(U_{JK}) \subset U_I$ defines a restriction $\bK_I|_{U_{IJK}}$
to $F_I \cap F_K$.
\item
The compositions ${\phi_{IJK}:=\phi_{JK}\circ\phi_{IJ}: U_{IJK}\to U_K}$ and
$\Hat\phi_{IJK}:=\Hat\phi_{JK}\circ\Hat\phi_{IJ}: E_I \to E_K$ 
define a coordinate change ${\Hat\Phi_{IJK}:\bK_I|_{U_{IJK}}\to\bK_K}$.
\end{enumerate}
We denote the induced {\bf composite coordinate change} $\Hat\Phi_{IJK}$ by
$$
\Hat\Phi_{JK}\circ \Hat\Phi_{IJ}
:=\Hat\Phi_{IJK} : \; \bK_I|_{U_{IJK}} \; \to\; \bK_K.
$$
\end{lemma}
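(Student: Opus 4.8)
\textbf{Proof plan for Lemma~\ref{le:cccomp}.}
The plan is to verify the two claims directly from the definitions, using the already-established Lemma~\ref{le:restrchange} (restriction of coordinate changes), Lemma~\ref{le:change} (equivalence of the index condition with the tangent bundle condition), and the fact that coordinate changes induce topological coordinate changes, so that \cite[Lemma~2.2.5 or its analogue]{MW:top} on composites of topological coordinate changes can be invoked for the purely topological bookkeeping.

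For part~(i), I would first check that $U_{IJK}:=\phi_{IJ}^{-1}(U_{JK})$ is an open subset of $U_I$: this is immediate since $\phi_{IJ}$ is continuous (in fact a smooth embedding) and $U_{JK}\subset U_J$ is open. Then I must verify the footprint condition $s_I^{-1}(0)\cap U_{IJK}=\psi_I^{-1}(F_I\cap F_K)$ required of a restriction to $F_I\cap F_K$. Here I would trace through: a point $u\in s_I^{-1}(0)\cap U_{IJ}$ satisfies $\phi_{IJ}(u)=\psi_J^{-1}\circ\psi_I(u)\in s_J^{-1}(0)$, so $\phi_{IJ}(u)\in U_{JK}$ iff $\psi_J^{-1}\psi_I(u)\in\psi_J^{-1}(F_J\cap F_K)$, i.e.\ iff $\psi_I(u)\in F_J\cap F_K$. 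Combined with $u\in s_I^{-1}(0)\cap U_{IJ}\Leftrightarrow \psi_I(u)\in F_I\cap F_J$, and using the hypothesis $F_I\cap F_K\subset F_J$ (so $F_I\cap F_J\cap F_K=F_I\cap F_K$), this gives exactly $s_I^{-1}(0)\cap U_{IJK}=\psi_I^{-1}(F_I\cap F_K)$. Thus $\bK_I|_{U_{IJK}}$ is a bona fide restriction of $\bK_I$ to $F_I\cap F_K$.

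For part~(ii), the composite map $\phi_{IJK}=\phi_{JK}\circ\phi_{IJ}:U_{IJK}\to U_K$ is a smooth embedding as a composition of smooth embeddings (with image of $\phi_{IJ}|_{U_{IJK}}$ landing in $U_{JK}$ by construction), and $\Hat\phi_{IJK}=\Hat\phi_{JK}\circ\Hat\phi_{IJ}$ is a linear embedding. The compatibility with sections follows by composing the two commuting squares \eqref{eq:map-square}: $s_K\circ\phi_{IJK}=s_K\circ\phi_{JK}\circ\phi_{IJ}=\Hat\phi_{JK}\circ s_J\circ\phi_{IJ}=\Hat\phi_{JK}\circ\Hat\phi_{IJ}\circ s_I=\Hat\phi_{IJK}\circ s_I$ on $U_{IJK}$. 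The footprint-transition identity on the zero set is likewise a composition: on $s_I^{-1}(0)\cap U_{IJK}$ we get $\phi_{IJK}=\phi_{JK}\circ\phi_{IJ}=(\psi_K^{-1}\psi_J)\circ(\psi_J^{-1}\psi_I)=\psi_K^{-1}\psi_I$. It remains to check the index condition for the pair $(\phi_{IJK},\Hat\phi_{IJK})$; here I would use Lemma~\ref{le:change} to switch to the tangent bundle condition, which is the statement that \eqref{tbc} is an isomorphism. Since $\rd_u\phi_{IJK}=\rd_{\phi_{IJ}(u)}\phi_{JK}\circ\rd_u\phi_{IJ}$ and $\Hat\phi_{IJK}=\Hat\phi_{JK}\circ\Hat\phi_{IJ}$, the isomorphism \eqref{tbc} for the composite sits in a commuting diagram built from the isomorphisms \eqref{tbc} for $\Hat\Phi_{IJ}$ and $\Hat\Phi_{JK}$ via a short-exact-sequence / snake-type argument: $\rT_v U_K / \rd\phi_{IJK}(\rT U_I)$ is an extension of $\rT_v U_K/\rd\phi_{JK}(\rT U_J)$ by $\rd\phi_{JK}(\rT U_J/\rd\phi_{IJ}(\rT U_I))$, and correspondingly $E_K/\Hat\phi_{IJK}(E_I)$ is an extension of $E_K/\Hat\phi_{JK}(E_J)$ by $\Hat\phi_{JK}(E_J/\Hat\phi_{IJ}(E_I))$, with $\rd_v s_K$ inducing an isomorphism on the sub and quotient (the sub-case uses that $\rd s_K\circ\rd\phi_{JK}=\Hat\phi_{JK}\circ\rd s_J$ together with injectivity of $\Hat\phi_{JK}$ and the tangent bundle isomorphism for $\Hat\Phi_{IJ}$; the quotient-case is the tangent bundle isomorphism for $\Hat\Phi_{JK}$). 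The five lemma then gives the isomorphism \eqref{tbc} for $\Hat\Phi_{IJK}$.

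\textbf{Main obstacle.} The only genuinely non-formal step is the last one: verifying that the index/tangent-bundle condition is stable under composition, i.e.\ the diagram-chase combining the two exact sequences of vector spaces and the induced maps by $\rd s_K$. Everything else is routine unwinding of the definitions of restriction and of topological coordinate change, which can largely be imported from \cite{MW:top}. I expect the cleanest writeup is to phrase the index-condition part as Lemma~\ref{le:change} reduction followed by the short exact sequence argument; one must be slightly careful that all the maps respect the splittings $E=\im\rd s\oplus C$ used in the definition of the index condition, but Lemma~\ref{le:change} already packages this so that only the tangent bundle condition needs checking.
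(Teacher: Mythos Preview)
Your proposal is correct. The topological bookkeeping in parts~(i) and~(ii) is exactly what the paper does, delegating to \cite[Lemma~2.2.5]{MW:top} for the underlying topological coordinate change (your spelled-out verification of the footprint condition is precisely the content of that reference).

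The one genuine difference is in the index condition. You pass through Lemma~\ref{le:change} to the tangent bundle condition~\eqref{tbc} and then run a five-lemma argument on the filtration $\rd\phi_{IJK}(\rT U_I)\subset\rd\phi_{JK}(\rT U_J)\subset\rT U_K$ and the matching filtration on the $E$ side. The paper instead stays with the original formulation in Definition~\ref{def:change} and simply composes: the kernel identification $\rd\phi_{IJK}(\ker\rd s_I)=\rd\phi_{JK}(\rd\phi_{IJ}(\ker\rd s_I))=\rd\phi_{JK}(\ker\rd s_J)=\ker\rd s_K$, and for the cokernel, if $E_I=\im\rd s_I\oplus C_I$ then the $IJ$ identification gives $E_J=\im\rd s_J\oplus\Hat\phi_{IJ}(C_I)$, whence the $JK$ identification gives $E_K=\im\rd s_K\oplus\Hat\phi_{JK}(\Hat\phi_{IJ}(C_I))$. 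This is a two-line check, whereas your route requires setting up two short exact sequences and invoking the five lemma. Both arguments are sound; the paper's is simply more economical because the index condition, as stated in terms of kernels and cokernels, is manifestly functorial under composition.
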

\begin{proof}
Applying \cite[Lemma~2.2.5]{MW:top} to the induced topological coordinate changes shows that $\Hat\Phi_{IJK}$ is a topological coordinate change. 
By construction it splits in the way required by Definition~\ref{def:chart},
so it remains to show that the index condition is preserved under composition.
The kernel identifications
$\rd\phi_{IJ} \bigl(\ker\rd s_I \bigr) =  \ker\rd s_J $ and $\rd\phi_{JK} \bigl(\ker\rd s_J \bigr) =  \ker\rd s_K$ imply
$$
\rd\bigl(\phi_{JK} \circ \phi_{IJ} \bigr) \bigl(\ker\rd s_I \bigr) =
\bigl(\rd\phi_{JK} \circ \rd\phi_{IJ} \bigr) \bigl(\ker\rd s_I \bigr) =  \ker\rd s_K .
$$
Assuming $E_I = \im\rd s_I \oplus C_{I}$, the cokernel identification of $\Phi_{IJ}$ implies
$ E_J = \im \rd s_J \oplus C_J$ with  $C_J=\Hat\phi(C_I)$, so that  the cokernel identification of $\Phi_{JK}$ implies
$$
E_K = \im \rd s_K \oplus \Hat\phi_K(C_J) = \im \rd s_K \oplus (\Hat\phi_K\circ\Hat\phi_J)(C_I).
$$
Thus $\Hat\Phi_{IJK}$ identifies kernels and cokernels, and hence is a smooth coordinate change.
 \end{proof}

Finally, we introduce two notions of equivalence between coordinate changes that may not have the same domain, and show that they are compatible with composition.

\begin{defn} \label{def:overlap} {\rm $\!\!$ \cite[Definition~2.2.6]{MW:top}}
Let $\Hat\Phi^\al :\bK_I|_{U^\al_{IJ}}\to \bK_J$ and  $\Hat\Phi^\be:\bK_I|_{U^\be_{IJ}}\to \bK_J$ be coordinate changes.
\begin{itemlist}
\item
We say the coordinate changes are {\bf equal on the overlap} and write $\Hat\Phi^\al\approx\Hat\Phi^\be$, if the restrictions of Lemma~\ref{le:restrchange} to $U'_{IJ}:=U^\al_{IJ}\cap U^\be_{IJ}$ yield equal maps
$$
\Hat\Phi^\al|_{U'_{IJ}} = \Hat\Phi^\be|_{U'_{IJ}}  .
$$
\item
We say that $\Hat\Phi^\be$ {\bf extends} $\Hat\Phi^\al$ and write $\Hat\Phi^\al\subset\Hat\Phi^\be$,
if $U_{IJ}^\al\subset U_{IJ}^\be$ and the restriction of Lemma~\ref{le:restrchange} yields equal maps
$$
\Hat\Phi^\be|_{U_{IJ}^\al} = \Hat\Phi^\al .
$$
\end{itemlist}
\end{defn}

%
%

\section{Kuranishi atlases and cobordisms with trivial isotropy}\label{s:Ks}

With the preliminaries of \S\ref{s:chart} in hand, we can now define a notion of Kuranishi atlas with trivial isotropy on a compact metrizable space $X$, which will be fixed throughout this section.
As before, we work exclusively in the case of trivial isotropy and hence drop this qualifier from the wording.
We will however need to make distinctions between smooth Kuranishi atlases and the more general topological Kuranishi atlases
introduced in \cite{MW:top}. 
Here and in the following we will at times add the qualifier `smooth' to stress the fact that we are in the context with smooth structure on domains, rather than the purely topological context.
We first define the notion of Kuranishi atlas $\Kk$ and from it construct a virtual neighbourhood $|\Kk|$ for $X$. 
However, we will note that $|\Kk|$ need not be Hausdorff, that the maps from the domains $U_I$ of the charts into $|\Kk|$ need not be injective, and that $|\Kk|$ is neither metrizable or locally compact, except in very simple cases.
Moreover, in practice one can construct only weak Kuranishi atlases in the sense of Definition~\ref{def:K}, although they do often have the additivity property of Definition~\ref{def:Ku2}.
The main result of this section is then Theorem~\ref{thm:K}, which in particular states that given an additive weak Kuranishi atlas one can construct a Kuranishi atlas $\Kk$, whose neighbourhood $|\Kk|$ is Hausdorff,
has the injectivity property, and can be equipped with a second metric topology that is compatible with the given topology on the domains $U_I$.
Moreover, this refinement is well defined up to a notion of concordance that we develop as part of a general theory of Kuranishi cobordisms in \S\ref{ss:Kcobord}.

\subsection{Covering families, transition data,
and the virtual neighbourhood}\label{ss:Ksdef} \hspace{1mm}\\ \vspace{-3mm}

There are various ways that one might try to define a ``Kuranishi structure", but in practice every such structure on a compact moduli space of holomorphic curves is constructed from a covering family of basic charts with certain compatibility conditions akin to our notion of Kuranishi atlas.
We express the compatibility in terms of a further collection of charts for overlaps, and will discuss three different versions of a cocycle condition.
We compare our definition with others in Remark~\ref{rmk:otherK}.
The basic building blocks of our notion of Kuranishi atlases are the following.

\begin{defn}\label{def:Kfamily} {\rm $\!\!$ \cite[Definition~2.3.1]{MW:top}}
Let $X$ be a compact metrizable space.
\begin{itemlist}
\item
A {\bf covering family of basic charts} for $X$ is a finite collection $(\bK_i)_{i=1,\ldots,N}$ of Kuranishi charts for $X$ whose footprints cover $X=\bigcup_{i=1}^N F_i$.
\item
{\bf Transition data} for a covering family $(\bK_i)_{i=1,\ldots,N}$ is a collection of Kuranishi charts $(\bK_J)_{J\in\Ii_\Kk,|J|\ge 2}$ and coordinate changes $(\Hat\Phi_{I J})_{I,J\in\Ii_\Kk, I\subsetneq J}$ as follows:
\begin{enumerate}
\item
$\Ii_\Kk$ denotes the set of subsets $I\subset\{1,\ldots,N\}$ for which the intersection of footprints is nonempty,
$$
F_I:= \; {\textstyle \bigcap_{i\in I}} F_i  \;\neq \; \emptyset \;;
$$
\item
$\bK_J$ is a Kuranishi chart for $X$ with footprint $F_J=\bigcap_{i\in J}F_i$ for each $J\in\Ii_\Kk$ with $|J|\ge 2$, and for one element sets $J=\{i\}$ we denote $\bK_{\{i\}}:=\bK_i$;
\item
$\Hat\Phi_{I J}$ is a coordinate change $\bK_{I} \to \bK_{J}$ for every $I,J\in\Ii_\Kk$ with $I\subsetneq J$.
\end{enumerate}
\end{itemlist}
 \end{defn}

The transition data for a covering family automatically satisfies a cocycle condition on the zero sets, where due to the footprint maps to $X$ we have for $I\subset J \subset K$
$$
\phi_{J K}\circ \phi_{I J}
= \psi_K^{-1}\circ\psi_J\circ\psi_J^{-1}\circ\psi_I
= \psi_K^{-1}\circ\psi_I
= \phi_{I K}
\qquad \text{on}\; s_I^{-1}(0)\cap U_{IK} .
$$
Since there is no natural ambient topological space into which the entire domains of the Kuranishi charts map, the cocycle condition on the complement of the zero sets has to be added as an axiom. 
We will always impose $\Hat\phi_{J K}\circ \Hat\phi_{I J} = \Hat\phi_{I K}$
for the linear embeddings between obstruction spaces.
However for the embeddings between the domains of the charts there are three natural notions of cocycle condition with varying requirements on the domains of the coordinate changes.

\begin{defn}  \label{def:cocycle} {\rm $\!\!$ \cite[Definition~2.3.2]{MW:top}}
Let $\Kk=(\bK_I,\Hat\Phi_{I J})_{I,J\in\Ii_\Kk, I\subsetneq J}$ be a tuple of basic charts and transition data. Then for any $I,J,K\in\Ii_K$ with  $I\subsetneq J \subsetneq K$ we define the composed coordinate change $\Hat\Phi_{J K}\circ \Hat\Phi_{I J} : \bK_{I}  \to \bK_{K}$ as in Lemma~\ref{le:cccomp} with domain $\phi_{IJ}^{-1}(U_{JK})\subset U_I$.
We then use the notions of Definition~\ref{def:overlap} to say that the triple of coordinate changes
$\Hat\Phi_{I J}, \Hat\Phi_{J K}, \Hat\Phi_{I K}$ satisfies the
\begin{itemlist}
\item {\bf weak cocycle condition}
if $\Hat\Phi_{J K}\circ \Hat\Phi_{I J} \approx \Hat\Phi_{I K}$, i.e.\ the coordinate changes are equal on the overlap;
\item {\bf cocycle condition}
if $\Hat\Phi_{J K}\circ \Hat\Phi_{I J} \subset \Hat\Phi_{I K}$, i.e.\  $\Hat\Phi_{I K}$ extends the composed coordinate change;
\item {\bf strong cocycle condition}
if $\Hat\Phi_{J K}\circ \Hat\Phi_{I J} = \Hat\Phi_{I K}$ are equal as coordinate changes.
\end{itemlist}
More explicitly, 
each condition requires $\Hat\phi_{J K}\circ \Hat\phi_{I J} = \Hat\phi_{I K}$ and in addition
\begin{itemlist}
\item 
 the weak cocycle condition requires
 \begin{equation} \label{eq:wc}
\qquad
\phi_{J K}\circ \phi_{I J} = \phi_{I K}
\qquad \text{on}\;\;
\phi_{IJ}^{-1}(U_{JK}) \cap U_{IK} ;
\end{equation}
\item 
 the cocycle condition requires \eqref{eq:wc} and $U_{IJK}:=\phi_{IJ}^{-1}(U_{JK}) \subset U_{IK}$; 
\item
the strong cocycle condition requires \eqref{eq:wc} and $U_{IJK}:=\phi_{IJ}^{-1}(U_{JK}) = U_{IK}$.
\end{itemlist}
\end{defn}

The relevance of these versions is that the weak cocycle condition can be achieved in practice by constructions of finite dimensional reductions for holomorphic curve moduli spaces, whereas the strong cocycle condition is needed for our construction of a virtual moduli cycle in \S\ref{s:VMC} from perturbations of the sections in the Kuranishi charts.
The cocycle condition is an intermediate notion which is too strong to be constructed in practice and too weak to induce a VMC, but it is the minimal assumption under which we can below formulate Kuranishi atlases categorically and from this obtain a virtual neighbourhood of $X$ into which all Kuranishi domains map.

\begin{defn}\label{def:K} {\rm $\!\!$ \cite[Definitions~2.3.4, 3.1.1]{MW:top}}
A {\bf weak Kuranishi atlas of dimension $\mathbf d$} on a compact metrizable space $X$ is a tuple
$$
\Kk=\bigl(\bK_I,\Hat\Phi_{I J}\bigr)_{I, J\in\Ii_\Kk, I\subsetneq J}
$$
of a covering family of basic charts with transition data $\Kk=\bigl(\bK_I,\Hat\Phi_{I J}\bigr)_{I, J\in\Ii_\Kk, I\subsetneq J}$ as in Definition \ref{def:Kfamily}, 
consisting of $d$-dimensional Kuranishi charts and coordinate changes that satisfy the {\it weak cocycle condition} 
$\Hat\Phi_{J K}\circ \Hat\Phi_{I J} \approx \Hat\Phi_{I K}$ for every triple $I,J,K\in\Ii_K$ with $I\subsetneq J \subsetneq K$.

A {\bf Kuranishi atlas of dimension $\mathbf d$} on $X$ is a weak Kuranishi atlas of dimension $d$ whose coordinate changes satisfy the {\it cocycle condition} $\Hat\Phi_{J K}\circ \Hat\Phi_{I J} \subset \Hat\Phi_{I K}$ for every triple $I,J,K\in\Ii_K$ with $I\subsetneq J \subsetneq K$.
\end{defn}

While constructions of transition data in practice, e.g.\ \S\ref{ss:gw} and \cite{MW:GW,Mcn},
 only satisfy the weak cocycle condition, they use a sum construction which has the effect of adding the obstruction spaces and thus yields the following additivity property.
Here we simplify notation by writing $\Hat\Phi_{i I}:= \Hat\Phi_{\{i\} I}$ for the coordinate change $\bK_i =\bK_{\{i\}} \to \bK_I$ where $i\in I$.

\begin{defn}\label{def:Ku2}  
A (weak) Kuranishi atlas $\Kk$ is {\bf additive} if for each $I\in \Ii_\Kk$ the linear embeddings $\Hat\phi_{i I}:E_i \to E_I$ induce an isomorphism
$$
{\textstyle \prod_{i\in I}} \;\Hat\phi_{iI}: \; {\textstyle \prod_{i\in I}} \; E_i \;\stackrel{\cong}\longrightarrow \; E_I  ,
\qquad\text{or equivalently} \qquad
E_I = {\textstyle \bigoplus_{i\in I}} \; \Hat\phi_{iI}(E_i) .
$$
In this case we abbreviate notation by $s_J^{-1}(E_I): = s_J^{-1}\bigl(\Hat\phi_{IJ}(E_I)\bigr)$, and 
we set $E_\emptyset:=\{0\}$.
\end{defn}

The topological consequences of additivity will be explored in \S\ref{ss:tame}.

\begin{rmk}\label{rmk:Ku}\rm  
(i)
We have assumed from the beginning that $X$ is compact and metrizable.
Some version of compactness is essential in order for $X$ to define a VFC, but one might hope to weaken the metrizability assumption.  
However, any compact space $X$ that is covered by Kuranishi charts is automatically metrizable, see \cite[Remark~2.3.5]{MW:top}. 

\MS
\NI
(ii)
In \cite[Definitions~2.3.4, 3.1.1]{MW:top} we introduce the notion of {\bf (weak) topological Kuranishi atlas} as a tuple of basic charts and transition data as in Definition~\ref{def:Kfamily}, consisting of topological Kuranishi charts and topological coordinate changes that satisfy the (weak) cocycle condition.

\MS
\NI
(iii)
Every (weak) Kuranishi atlas $\Kk$ induces a (weak) topological Kuranishi atlas (which we again denote by $\Kk$), given by the induced topological Kuranishi charts and coordinate changes of Remarks~\ref{rmk:topchart}~(i)  and \ref{rmk:tchange}~(ii).
In particular, the (weak) cocycle condition transfers from smooth to topological case.
$\hfill\er$
\end{rmk}

It is useful to think of the domains and obstruction spaces of a Kuranishi atlas as forming the following categories.
These were introduced in \cite{MW:top} for the topological context, and now are defined as the categories associated to the induced topological Kuranishi atlases, which now have a more special form and additional smooth structure.

\begin{defn}\label{def:catKu} {\rm $\!\!$ \cite[Definition~2.3.6]{MW:top}}
Given a Kuranishi atlas $\Kk$ we define its {\bf domain category} $\bB_\Kk$ to consist of
the space of objects\footnote{
When forming categories such as $\bB_\Kk$, we  always take the space of objects 
to be the disjoint union of the domains $U_I$, even if we happen to have defined the sets $U_I$ 
as subsets of some larger space such as $\R^2$ or a space of maps as in the Gromov--Witten case.
Similarly, the morphism space is a disjoint union of the $U_{IJ}$ even though $U_{IJ}\subset U_I$ for all $J\supset I$.}
$$
\Obj_{\bB_\Kk}:= \bigsqcup_{I\in \Ii_\Kk} U_I \ = \ \bigl\{ (I,x) \,\big|\, I\in\Ii_\Kk, x\in U_I \bigr\}
$$
and the space of morphisms
$$
\Mor_{\bB_\Kk}:= \bigsqcup_{I,J\in \Ii_\Kk, I\subset J} U_{IJ} \ = \ \bigl\{ (I,J,x) \,\big|\, I,J\in\Ii_\Kk, I\subset J, x\in U_{IJ} \bigr\}.
$$
Here we denote $U_{II}:= U_I$ for $I=J$, and for $I\subsetneq J$ use
the domain $U_{IJ}\subset U_I$ of the restriction $\bK_I|_{U_{IJ}}$ to $F_J$
that is part of the coordinate change $\Hat\Phi_{IJ} : \bK_I|_{U_{IJ}}\to \bK_J$.

Source and target of these morphisms are given by
$$
(I,J,x)\in\Mor_{\bB_\Kk}\bigl((I,x),(J,\phi_{IJ}(x))\bigr),
$$
where $\phi_{IJ}: U_{IJ}\to U_J$ is the  embedding given by $\Hat\Phi_{I J}$, and we denote $\phi_{II}:={\rm id}_{U_I}$.
Composition\footnote
{
Note that we write compositions in the categorical ordering here.} is defined by
$$
\bigl(I,J,x\bigr)\circ \bigl(J,K,y\bigr)
:= \bigl(I,K,x\bigr)
$$
for any $I\subset J \subset K$ and $x\in U_{IJ}, y\in  U_{JK}$ such that $\phi_{IJ}(x)=y$.

The {\bf obstruction category} $\bE_\Kk$ is defined in complete analogy to $\bB_\Kk$ to consist of
the spaces of objects $\Obj_{\bE_\Kk}:=\bigsqcup_{I\in\Ii_\Kk} U_I \times E_I$ and morphisms
$$
\Mor_{\bE_\Kk}: = \bigl\{ (I,J,x,e) \,\big|\, I,J\in\Ii_\Kk, I\subset J,  x\in U_{IJ}, e\in E_I \bigr\},
$$
with source and target maps
$$
 (I,J,x,e) \mapsto (I,x,e) , \qquad  (I,J,x,e) \mapsto (J,\phi_{IJ}(x),\Hat\phi_{IJ}(e)).
$$
\end{defn}

Note that if $\Kk$ is only a weak Kuranishi atlas then we cannot define its domain category $\bB_{\Kk}$ as in Definition~\ref{def:catKu} since the given set of morphisms is not closed under composition.  We will deal with this by simply not considering this category unless $\Kk$ is a Kuranishi atlas, i.e.\ satisfies the standard cocycle condition in Definition~\ref{def:cocycle}.
In that case, the categories $\bB_\Kk, \bE_\Kk$ are well defined by 
Lemma~\ref{le:realization} below, and we can also express the further parts of a Kuranishi atlas in categorical terms:

\begin{itemlist}
\item
The obstruction category $\bE_\Kk$ is a trivial bundle over $\bB_\Kk$ in the sense that there is a functor
$\pr_\Kk:\bE_\Kk\to\bB_\Kk$ that is given on objects and morphisms by projections
$(I,x,e)\mapsto (I,x)$ and $(I,J,x,e)\mapsto(I,J,x)$ that are locally trivial with fibers $E_I$.

\item
The zero sections and sections $s_I$ induce two smooth sections of this bundle, i.e.\ functors $0_\Kk:\bB_\Kk\to \bE_\Kk$ and $\s_\Kk:\bB_\Kk\to \bE_\Kk$ which act smoothly on the spaces of objects and morphisms, and whose composite with the projection $\pr_\Kk: \bE_\Kk \to \bB_\Kk$ is the identity. More precisely, $\s_\Kk$ is given by $(I,x)\mapsto (I,x,s_I(x))$ on objects and by $(I,J,x)\mapsto (I,J,x,s_I(x))$ on morphisms, and analogously $0_\Kk$ is given by $(I,x)\mapsto (I,x,0)$ on objects and by $(I,J,x)\mapsto (I,J,x,0)$ on morphisms.

\item
The zero sets of the sections $\bigsqcup_{I\in\Ii_\Kk} \{I\}\times s_I^{-1}(0)\subset\Obj_{\bB_\Kk}$ form a very special strictly full subcategory $\s_\Kk^{-1}(0_\Kk)$ of $\bB_\Kk$. Namely, $\bB_\Kk$ splits into the subcategory $\s_\Kk^{-1}(0_\Kk)$ and its complement (given by the full subcategory with objects  $\{ (I,x) \,|\, s_I(x)\neq 0 \}$) in the sense that there are no morphisms of $\bB_\Kk$ between the underlying sets of objects.

\item
The footprint maps $\psi_I$ give rise to a surjective functor $\psi_\Kk: \s_\Kk^{-1}(0_\Kk) \to \bX$ 
to the category $\bX$ with object space $X$ and trivial morphism spaces.
It is given by $(I,x)\mapsto \psi_I(x)$ on objects and by $(I,J,x)\mapsto {\rm id}_{\psi_I(x)}$ on morphisms.
\end{itemlist}

\begin{remark}\label{rmk:Kgroupoid}\rm
All object and morphism spaces of the categories $\bB_\Kk$ and $\bE_\Kk$ are disjoint unions of smooth manifolds, and all structural maps are smooth.
Moreover, since $\Kk$ has trivial isotropy, all sets of morphisms in $\bB_\Kk$ or $\bE_\Kk$ between fixed objects consist of at most one element. 
However, because there could be coordinate changes  $\phi_{IJ}:U_{IJ}\to U_J$ with $\dim U_{IJ} < \dim U_J$, the target map  $t:\Mor_{\bB_\Kk}\to \Obj_{\bB_\Kk}$ is not in general a local diffeomorphism, although it is locally injective.
In other words, the category $\bB_\Kk$ is not  \'etale,\footnote
{
The notion of \'etale groupoid is reviewed in Remark~\ref{rmk:grp}.} 
though it has some similar features.  Moreover, one cannot in general complete $\bB_\Kk$ to a groupoid by adding inverses and compositions, while keeping the property that the morphism space is a union of smooth manifolds. 
The problem here is that the inclusion of inverses of the coordinate changes, and their compositions, may yield singular spaces of morphisms. Indeed, coordinate changes $\bK_I\to\bK_K$ and $\bK_J\to\bK_K$ with the same target chart are given by embeddings $\phi_{IK}:U_{IK}\to U_K$ and $\phi_{JK}:U_{JK}\to U_K$, whose images may not intersect transversely (for example, often their intersection is contained only in the zero set $\s_K^{-1}(0)$); yet this intersection would be a component of the space of morphisms from $U_{I}$ to $U_{J}$.
In the special case when all obstruction spaces $E_I$ are trivial, $E_I=\{0\}$, one can adjoin these compositions and inverses to $\bB_\Kk$, obtaining an  \'etale  proper groupoid whose realization is a manifold.
(This is one way of interpreting the proof in Proposition~\ref{prop:zeroS0} below that the zero set of a precompact transverse perturbation is a compact manifold.) 
$\hfill\er$
\end{remark}

The categorical formulation of a Kuranishi atlas $\Kk$ allows us to construct a topological space $|\Kk|$ 
from the topological realization of the category $\bB_\Kk$.\footnote{
As is usual in the theory of \'etale groupoids we take the realization of
the category $\bB_\Kk$ to be a quotient of its space of objects rather than the classifying space
of the category $\bB_\Kk$ (which is also sometimes called the topological realization).} 
We will see below that it contains a homeomorphic copy $\io_{\Kk}(X)\subset |\Kk|$ of $X$ and hence may be viewed as a virtual neighbourhood of $X$.

\begin{defn}  \label{def:Knbhd} {\rm $\!\!$ \cite[Definition~2.4.1]{MW:top}}
Let $\Kk$ be a  Kuranishi atlas on the compact space $X$.
Then the {\bf virtual neighbourhood} of $X$,
$$
|\Kk| := \Obj_{\bB_\Kk}/{\scriptstyle\sim}
$$
is the topological realization of the category $\bB_\Kk$, that is the quotient of the object space $\Obj_{\bB_\Kk}$ by the equivalence relation generated by
$$
\Mor_{\bB_\Kk}\bigl((I,x),(J,y)\bigr) \ne \emptyset \quad \Longrightarrow \quad
(I,x) \sim (J,y) .
$$
We denote by  $\pi_\Kk:\Obj_{\bB_\Kk}\to |\Kk|$ the natural projection $(I,x)\mapsto [I,x]$, where $[I,x]\in|\Kk|$ denotes the equivalence class containing $(I,x)$.
We moreover equip $|\Kk|$ with the quotient topology, in which $\pi_\Kk$ is continuous.
Similarly, we define
$$
|\bE_\Kk|:=\Obj_{\bE_\Kk} /{\scriptstyle\sim}
$$
to be the topological realization of the obstruction category $\bE_\Kk$.  The natural projection $\Obj_{\bE_\Kk}\to |\bE_\Kk|$ is denoted $\pi_{\bE_\Kk}$.
\end{defn}

Since this is the same construction as in 
\cite[Definition~2.4.1]{MW:top},the realizations of a smooth Kuranishi atlas and its induced topological Kuranishi atlas are naturally identified -- and both denoted by $|\Kk|$.
This will allow us to use the topological results for topological Kuranishi atlases from \cite{MW:top}.
For example, we now have two notions of zero set -- the realization 
of the subcategory $\s_\Kk^{-1}(0_\Kk)$ with its quotient topology, 
$$
\bigr| \s_\Kk^{-1}(0_\Kk)\bigr| \,:=\; 
\quotient{\s_\Kk^{-1}(0_\Kk)}{\sim_{\scriptscriptstyle \s_\Kk^{-1}(0_\Kk)}} ,
$$
and the zero set of the section $|\s_\Kk|$ with the relative topology induced from $|\Kk|$,
$$
|\s_\Kk|^{-1}(|0_\Kk|)  \,:=\;  \bigl\{p \in |\Kk| \,\big|\, |\s_\Kk|(p) = |0_\Kk|(p)  \bigr\}  \;\subset\; |\Kk| .
$$
The next lemma identifies these and also shows that the zero set $\bigl|\s_\Kk^{-1}(0_\Kk)\bigr|\cong |\s_\Kk|^{-1}(|0_\Kk|)$ is naturally homeomorphic to $X$. Hence $X$ embeds into the virtual neighbourhood $|\Kk|$.

\begin{lemma} \label{le:realization}
Let $\Kk$ be a  Kuranishi atlas on the compact space $X$.
\begin{enumerate}
\item
The categories $\bB_{\Kk}$ and $\bE_{\Kk}$  are well defined. 
\item
The functor ${\rm pr}_\Kk:\bE_\Kk\to\bB_\Kk$ induces a continuous map
$|{\rm pr}_\Kk|:|\bE_\Kk| \to |\Kk|$, 
which we call the {\bf obstruction bundle} of $\Kk$, although its fibers generally do not have vector space structure.\footnote
{
Proposition~\ref{prop:Khomeo} shows that the fibers do have a natural linear structure if $\Kk$ 
is a Kuranishi atlas that satisfies a natural additivity condition on its obstruction spaces as well as taming conditions on its domains.
}
However, the functors $0_\Kk$ and $\s_\Kk$ induce continuous maps
$|0_\Kk| : |\Kk| \to |\bE_\Kk|$, $|\s_\Kk|:|\Kk|\to |\bE_\Kk|$, which are sections in the sense that $|\pr_\Kk|\circ|\s_\Kk| =  |\pr_\Kk|\circ |0_\Kk|= {\rm id}_{|\Kk|}$.
\item 
There is a natural homeomorphism 
$\bigr| \s_\Kk^{-1}(0_\Kk)\bigr| \overset{\cong}{\longrightarrow}|\s_\Kk|^{-1}(|0_\Kk|)$
from the realization of the subcategory $\s_\Kk^{-1}(0_\Kk)$
to the zero set of the section $|\s_\Kk|$. 
\item  
The footprint functor $\psi_\Kk: \s_\Kk^{-1}(0_\Kk) \to \bX$ descends to a homeomorphism $|\psi_\Kk| :  |\s_\Kk|^{-1}(|0_\Kk|) \to X$.   Its inverse is given by
$$
\io_{\Kk}:= |\psi_\Kk|^{-1} : \; X\;\longrightarrow\;  |\s_\Kk|^{-1}(|0_\Kk|) \;\subset\; |\Kk|, \qquad
p \;\mapsto\; [(I,\psi_I^{-1}(p))] ,
$$
where $[(I,\psi_I^{-1}(p))]$ is independent of  
the choice of $I\in\Ii_\Kk$ with $p\in F_I$.
\end{enumerate}
\end{lemma}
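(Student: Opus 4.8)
\textbf{Proof proposal for Lemma~\ref{le:realization}.}

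The plan is to deduce all four statements from the corresponding facts about topological Kuranishi atlases in \cite[\S2.3--2.4]{MW:top}, using the observation in Remark~\ref{rmk:Ku}~(iii) that every Kuranishi atlas $\Kk$ induces a topological Kuranishi atlas whose realization $|\Kk|$ is literally the same quotient space, with the same projection $\pi_\Kk$ and the same induced maps $|\pr_\Kk|, |0_\Kk|, |\s_\Kk|, |\psi_\Kk|$. Concretely, for (i) I would first check that the cocycle condition (Definition~\ref{def:cocycle}), which by hypothesis $\Kk$ satisfies since it is a Kuranishi atlas and not merely a weak one, makes the composition in Definition~\ref{def:catKu} well defined: the requirement $U_{IJK} = \phi_{IJ}^{-1}(U_{JK})\subset U_{IK}$ is exactly what is needed for $(I,J,x)\circ(J,K,y) = (I,K,x)$ to land in $\Mor_{\bB_\Kk}$, and the equation $\phi_{JK}\circ\phi_{IJ} = \phi_{IK}$ on $\phi_{IJ}^{-1}(U_{JK})$ (together with $\Hat\phi_{JK}\circ\Hat\phi_{IJ} = \Hat\phi_{IK}$) guarantees that source and target maps are respected and that composition is associative. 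The same verification applies verbatim to $\bE_\Kk$, with the extra bookkeeping of the $E_I$-component handled by the linear cocycle identity. This reduces (i) to \cite[Lemma~2.4.2]{MW:top} or its analogue, applied to the induced topological atlas.

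For (ii), (iii) and (iv) I would simply invoke the topological statements. The functors $\pr_\Kk$, $0_\Kk$, $\s_\Kk$ are continuous and compatible with the equivalence relation $\sim$ by construction, so they descend to continuous maps on realizations; that $|\pr_\Kk|\circ|\s_\Kk| = |\pr_\Kk|\circ|0_\Kk| = \id_{|\Kk|}$ follows by passing the object-level identities $\pr_I\circ s_I = \pr_I\circ 0_I = \id_{U_I}$ through the quotient. This is the content of \cite[Lemma~2.4.3]{MW:top}, and I would quote it directly. For (iii), the homeomorphism $\bigl|\s_\Kk^{-1}(0_\Kk)\bigr| \xrightarrow{\ \cong\ } |\s_\Kk|^{-1}(|0_\Kk|)$ is established in \cite{MW:top} for topological Kuranishi atlases using the fact that $\bB_\Kk$ splits into the subcategory $\s_\Kk^{-1}(0_\Kk)$ and its complement with no morphisms between them (the fourth bullet after Definition~\ref{def:catKu}); this splitting is a purely combinatorial/topological feature that is unaffected by the smooth structure, so the cited result applies. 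For (iv), the footprint functor $\psi_\Kk$ descends to a continuous surjection onto $X$; injectivity and the description of the inverse $\io_\Kk$ — in particular the independence of $[(I,\psi_I^{-1}(p))]$ from the choice of $I\in\Ii_\Kk$ with $p\in F_I$ — follow because for $p\in F_I\cap F_J$, say with $I\subset I\cup J$ and $J\subset I\cup J$ (using $I\cup J\in\Ii_\Kk$ since $F_{I\cup J}\ni p$), the coordinate changes $\Hat\Phi_{I,I\cup J}$ and $\Hat\Phi_{J,I\cup J}$ provide morphisms in $\s_\Kk^{-1}(0_\Kk)$ identifying $(I,\psi_I^{-1}(p))$ and $(J,\psi_J^{-1}(p))$ with a common object over $I\cup J$. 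That $|\psi_\Kk|$ is a homeomorphism and not merely a continuous bijection uses compactness of $X$ together with the fact (from \cite{MW:top}) that the zero set is Hausdorff; I would cite \cite[Lemma~2.4.4 or Lemma~2.4.5]{MW:top} here.

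I do not expect a serious obstacle: the lemma is essentially a transcription into the smooth setting of results already proven in \cite{MW:top}, and the only genuine content is checking that the smooth structure does not interfere — i.e.\ that the domain category $\bB_\Kk$ of a smooth Kuranishi atlas, with objects and morphisms taken as disjoint unions of smooth manifolds, induces exactly the same topological category (hence the same realization) as the underlying topological atlas. The mildly delicate point, if any, is the independence-of-$I$ claim in (iv), where one must be careful that $I\cup J$ is itself an index set in $\Ii_\Kk$ whenever $F_I\cap F_J\neq\emptyset$; this holds because $F_{I\cup J} = F_I\cap F_J$ is then nonempty, which is built into Definition~\ref{def:Kfamily}. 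All other steps are routine once the reduction to the topological case is made.
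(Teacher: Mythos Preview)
Your proposal is correct and takes essentially the same approach as the paper: reduce to the induced topological Kuranishi atlas via Remark~\ref{rmk:Ku}~(iii) and cite the corresponding results from \cite{MW:top}. The paper's proof is in fact even terser than yours---it simply invokes \cite[Lemma~2.3.7]{MW:top} for (i) and \cite[Lemma~2.4.2]{MW:top} for (ii)--(iv)---so your specific lemma numbers are slightly off, but your additional explanatory detail (the cocycle check for well-definedness, the independence-of-$I$ argument via $I\cup J\in\Ii_\Kk$) is correct and harmless.
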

\begin{proof}
Since the claims are all algebraic or topological in nature, they are proven by applying Lemma~2.3.7 from \cite{MW:top} for (i), and Lemma 2.4.2 for (ii), (iii), and (iv)  to the induced topological Kuranishi atlas, whose realization is identical with that of $\Kk$. 
\end{proof}

%
%

Note that the injectivity of $\io_{\Kk}:X\to|\Kk|$ in particular implies injectivity of the projection of the zero sets in fixed charts, $\pi_\Kk : s_I^{-1}(0) \to |\Kk|$. This injectivity however only holds on the zero set.

On $U_I\less s_I^{-1}(0)$, the projections $\pi_\Kk: U_I\to |\Kk|$ need not be injective,
as \cite[Example~2.4.3]{MW:top} shows. 
In fact, the following extends this example to show that even the fibers of the bundle $\pr_\Kk:|\bE_\Kk|\to |\Kk|$ may fail to have a linear structure. (Remark~\ref{rmk:LIN} describes another scenario where this linearity fails.)

\begin{figure}[htbp] 
   \centering
   \includegraphics[width=4in]{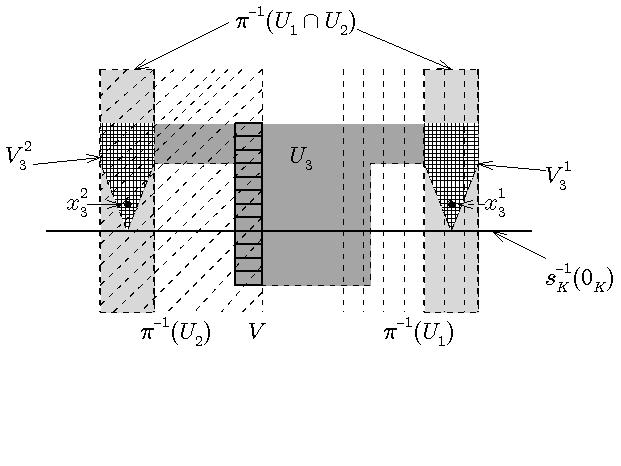}
   \vspace{-.7in}
   \caption{   
The domains $U_1,U_2\subset S^1\times\R$ lift injectively to the dashed subsets of $\R\times \R$. 
The lift $\pi^{-1}(U_1\cap U_2)\subset \R\times \R$ is shown as two light grey strips, whose intersections with the dark grey region $U_3$ are the two shaded sets $V_3^1, V_3^2$.  The points $x_3^1\neq x_3^2 \in U_3$ have the same image in $|\Kk|\subset S^1\times\R$.
An additional chart has domain $U'_4=V\cup V^2_3$, where $V$ is the barred subset of $U_3\cap \pi^{-1}(U_2)$.
}
   \label{fig:3}
\end{figure}

\begin{example}[Failure of Injectivity and Linearity]  \label{ex:nonlin}\rm 
The circle $X=S^1=\R/\Z$ has an open cover $S^1=F_1\cup F_2\cup F_3$ with $F_i=(\frac i3,\frac{i+2}3) \subset \R/\Z$ such that all pairwise intersections $F_{ij}:=F_i\cap F_j \neq \emptyset$ are nonempty, but the triple intersection $F_1\cap F_2\cap F_3$ is empty. 
Then a Kuranishi atlas with these footprints has to involve transition charts but the cocycle conditions are vacuous. We choose charts and transition data as follows:
For $i=1,2$ we use the basic charts $\bK_i$ given by
$$
U_i:=F_i\times (-1,1)\subset S^1\times \R, 
\qquad
E_i := \C,
\qquad
s_i(z,x)=x, 
\qquad
\psi_i(z,0)=z.
$$
These charts are related via the transition chart $\bK_{12}:=\bK_1|_{U_1\cap U_2}=\bK_2|_{U_1\cap U_2}$ and the coordinate changes $\Hat\Phi_{i,12}$ for $i=1,2$ given by the domain $U_{i,12}:=U_1\cap U_2\subset U_i$ and the identity maps $\phi_{12,i}:=\id_{U_1\cap U_2}$, $\Hat\phi_{i,12}:=\id_\C$.
The third basic chart $\bK_3$ is given by 
$$
U_3\subset (0,2)\times \R, 
\qquad
E_3 := \C,
\qquad
s_3(z,x)=\pi(x), 
\qquad
\psi_3(z,0)=\pi(z),
$$
where $U_3$ is chosen such that the projection $\pi:\R\times \R \to S^1\times \R$ embeds $U_3\cap (\R\times\{0\})=(1,\frac 23)\times\{0\}$ to $F_3\times\{0\}$. We can moreover choose $U_3$ so large that the lift under $\pi$ of $U_1\cap U_2$ meets $U_3$ in two components $\pi^{-1}(U_1\cap U_2)\cap U_3 = V_3^1 \sqcup V_3^2$ with $\pi(V_3^1)=\pi(V_3^2)$, but there are continuous lifts $\pi^{-1}: U_i \cap \pi(U_3) \to U_3$ with $V^i_3\subset\pi^{-1}(U_i)$; cf.\ Figure~\ref{fig:3}.
These intersections $V^i_3\subset U_3$ necessarily lie outside of the zero section $s_3^{-1}(0)=F_3\times\{0\}$, though their closure might intersect it.
We choose the transition charts $\bK_{i3}:= \bK_3|_{U_{i3}}$ as restrictions of $\bK_3$ to the domains $U_{i3}:= \pi^{-1}(U_1)\cap U_3$, with transition maps $\Hat\Phi_{3,i3}:=\id_{U_{i3}}\times \id_\C$
and $\Hat\Phi_{i,i3}:\bK_i|_{U_{i,i3}}\to\bK_3$ for $i=1,2$ given the lift
$\phi_{i,i3}:= \pi^{-1}$ on the domain $U_{i,i3}: = U_1\cap \pi(U_3)$
and the identity $\Hat\phi_{i,i3}:={\rm id}_{\C}$ on the identical obstruction spaces $E_i=E_3=\C$.

This defines a Kuranishi atlas of dimension $0$ on $S^1$ with vacuous cocycle condition, but the map $\pi_\Kk: U_3\to |\Kk|$ is not injective since any point $x_3^1\in V_3^1\subset U_3$ is identified $[x_3^1]=[x_3^2]\in|\Kk|$ with the corresponding point $x_3^2\in V_3^2$ with $\pi(x_3^1)=\pi(x_3^2)= y \in S^1\times\R$; see \cite[Example 2.4.3]{MW:top}.
To obtain a Kuranishi atlas $\Kk'$ in which the fibers of $\pr_{\Kk'}$ are not even contractible, 
we use the first three basic charts $\bK'_i:=\bK_i$ and associated transition data as above, except for setting $\Hat\phi\,'_{2,23}(\alpha +\hat\iota \beta) :=\alpha + 2 \hat\iota \beta$ (where we denote $\hat\iota: = \sqrt{-1}$ to prevent confusion with the index $i$).  As above the cocycle condition is trivially satisfied since there are no triple intersections of footprints.
Then with $x_3^i\in U'_3$ as above, the chain of equivalences between
$x_3^1\in U_{13}\subset U_3$, $y=\pi(x_3^i)\in U_{12} \subset U_i$, and 
$x_3^2\in U_{23}\subset U_3$
lifts to the obstruction space $E_3'=\C$ as
\begin{equation} \label{fiber2}
(3, x_3^1,\alpha + \hat\iota \beta) \;\sim\; (2,y, \alpha + \hat\iota \beta)
\;\sim\; (3,x_3^2,\alpha + 2\hat\iota \beta) .
\end{equation}
In order to also obtain the equivalences
\begin{equation} \label{fiber1}
(3, x_3^1,\alpha + \hat\iota \beta) \;\sim\; (3,x_3^2,\alpha + \hat\iota \beta)
\end{equation}
we add another basic chart $\bK_4'= \bK_3'|_{U'_4}$ with
domain indicated in Figure~\ref{fig:3},
$$
U'_4: = V \cup V_3^2 ,\qquad
V: = \pi^{-1} \bigl(F_{13}\times \R\bigr) \cap U'_3.
$$
This chart has footprint $F'_4=F_{13}$, so it requires no compatibility with $\bK'_2$, and for $I\subset\{1,3,4\}$ we always have $F'_I = F_{13}$. We define the transition charts as restrictions
$$
\bK'_{14}: = \bK'_1|_{\pi(U'_4)},\qquad \bK'_{34} = \bK'_3|_{U'_4} , \qquad
\bK_{134}: = \bK_3|_V .
$$
Then we obtain the coordinate changes $\Hat\Phi_{I,J}$ for $I\subsetneq J \subset\{1,3,4\}$ by setting $\Hat\phi\,'_{IJ} := \id_\C$ and $\phi_{IJ}'$ equal either to the identity or to $\pi^{-1}$, as appropriate, on the domains
\begin{align*}
U'_{1,14} := \pi(U'_4), \qquad
&
U'_{4,14}= U'_{3,34}= U'_{4,34} := U'_4, \\
U'_{1,134} = U'_{14,134} := \pi(V), \qquad
&
U'_{3,134} = U'_{4,134} = U'_{13,134} := V .
\end{align*}
To see that the cocycle condition holds, note that we only need to check it for the triples $(i,34,134), i=3,4$, $(j,14,134), j=1,4$, and $(k, 13,134), k=1,3$, and in all of these cases both $\phi_{JK}'\circ\phi_{IJ}'$ and $\phi_{IK}'$ have equal domain, given by $V$ or $\pi(V)$. This provides a chain of morphisms between the same objects as before,
$$
 (3,x_3^2)\sim (34,x_3^2)\sim (4,x_3^2) \sim (14,y) \sim (1,y)\sim
 (13,x^1_3)\sim (3,x_3^1),
$$
whose lift to the obstruction space is \eqref{fiber1} since we have $\Hat\phi\,'_{IJ}= \id_\C$ for all coordinate changes involved.
Therefore, the fiber of $|\pi_{\Kk'}|: |\bE_{\Kk'}|\to |\Kk'|$ over $[3,x_3^1]=[3,x_3^2]$ is
$$
|\pr_\Kk|^{-1}([3,x_3^1]) \;=\;\;  \quotient{\C}{\scriptstyle \bigl( \alpha + \hat\iota \beta \;\sim\; \alpha + 2\hat\iota \beta \bigr)}
\quad \cong\;  \R \times S^1 ,
$$
which does not have the structure of a vector space, and in fact is not even contractible.
$\hfill\er$
\end{example}

Another example in \cite[Example 2.4.4]{MW:top} shows that $|\Kk|$ may not be Hausdorff in any neighbourhood of $\io_\Kk(X)$ even though the map $s\times t:\Mor_{\bB_\Kk}\to \Obj_{\bB_\Kk}\times \Obj_{\bB_\Kk}$ is proper. 
In \S\ref{ss:tame} we will achieve the injectivity, Hausdorff, and linearity property by a subtle shrinking of the domains of charts and coordinate changes. However, we are still unable to make the Kuranishi neighbourhood $|\Kk|$ locally compact or even metrizable, due to the following natural example.

\begin{example}
[Failure of metrizability and local compactness]  \label{ex:Khomeo}
\rm
For simplicity we will give an example with noncompact $X = \R$. (A similar example can be constructed with $X = S^1$.)
We construct a Kuranishi atlas $\Kk$ on $X$ by two basic charts, 
$\bK_1 = (U_1=\R, E_1=\{0\}, s=0,\psi_1=\id)$ and
$$
\bK_2 = \bigl(U_2=(0,\infty)\times \R,\ E_2=\R, \ s_2(x,y)= y,\ \psi_2(x,y)= x\bigr),
$$
one transition chart $\bK_{12} = \bK_2|_{U_{12}}$ with domain $U_{12} := U_2$, and the coordinate changes $\Hat\Phi_{i,12}$ induced by the natural embeddings of the domains $U_{1,12} := (0,\infty)\hookrightarrow (0,\infty)\times\{0\}$ and $U_{2,12} := U_2\hookrightarrow U_2$.
Then as a set $|\Kk| = \bigl(U_1\sqcup U_2\sqcup U_{12}\bigr)/\sim$
can be identified with $\bigl(\R\times\{0\}\bigr) \cup \bigl( (0,\infty)\times\R\bigr) \subset \R^2$.
However, the quotient topology at $(0,0)\in|\Kk|$ is strictly stronger than the subspace topology.
That is, for any $O\subset\R^2$ open the induced subset $O\cap|\Kk|\subset|\Kk|$ is open, but some open subsets of $|\Kk|$ cannot be represented in this way.
In fact,  
for any $\eps>0$ and continuous function $f:(0,\eps)\to (0,\infty)$,
the set
$$
U_{f,\eps} \, :=\; \bigl\{ [x] \,\big|\, x\in U_1, |x|< \eps \}  \;\cup\; \bigl\{ [(x,y)] \,\big|\, (x,y)\in U_2,  |x|< \eps , |y|<f(x)\} \;\subset\; |\Kk|
$$
is open in the quotient topology. 
We use these in \cite[Example 2.4.5]{MW:top} to show that the point $[(0,0)]$ does not have a countable neighbourhood basis in the quotient topology.
Note also that the point $[(0,0)]\in|\Kk|$ has no compact neighbourhood with respect to the subspace topology from $\R^2$, and hence neither  with respect to the stronger quotient topology on $|\Kk|$. 
The same failure of local compactness and metrizability occurs for any Kuranishi atlas that involves coordinate changes between charts with domains of different dimension (more precisely the issue arises from an embedding $U_{IJ}\to U_J$ if $U_{IJ}\subset U_I$ is not just a connected component and $\dim U_I < \dim U_J$). In particular, the additive Kuranishi atlases that we will work with to achieve the Hausdorff property, will -- except in trivial cases -- always exclude local compactness or metrizability.
$\hfill\er$
\end{example}

\begin{rmk}\label{rmk:Khomeo}\rm  
For the Kuranishi atlas in Example~\ref{ex:Khomeo} there exists an exhausting sequence $\ov{\Aa^n}\subset \ov{\Aa^{n+1}}$ of closed subsets of $\bigsqcup_{I\in \Ii_\Kk} U_I$ with the properties
\begin{itemize}
\item  
each $\pi_\Kk(\ov{\Aa^n})$ contains $\iota_\Kk(X)$;
\item  
each $\pi_\Kk(\ov{\Aa^n})\subset |\Kk|$ is metrizable and locally compact in the subspace topology;
\item 
$\bigcup_{n\in\N} \ov{\Aa^n} = \bigsqcup_{I\in \Ii_\Kk} U_I$.
\end{itemize}
For example, we can take $\ov{\Aa^n}$ to be the disjoint union of the closed sets 
$$
\ov{A_1^n}= [-n,n]\subset U_1, \qquad 
\ov{A_{2}^n} : = \{(x,y)\in U_2 \,\big|\,  x \geq \tfrac 1n, |y| \leq  n\},
$$
and any closed subset $\ov{A_{12}^n} \subset \ov{A_2^n}$.
However, in the limit $[(0,0)]$ becomes a ``bad point'' because its neighbourhoods have to involve open subsets of $U_2$.  

In fact, if we altered Example~\ref{ex:Khomeo} to a Kuranishi atlas for the compact space $X=S^1$, then we could choose $\ov{\Aa^n}$ compact, so that the subspace and quotient topologies on  $\pi_\Kk(\ov{\Aa^n})$
coincide by Proposition~\ref{prop:Ktopl1}~(ii). We emphasize the subspace topology above because that is the one inherited by (open) subsets of $\ov{\Aa^n}$.  For example, the quotient topology on $\pi_\Kk(\Aa^n)$, where $\Aa^n: = \bigcup_I {\rm int}(\ov{A_I^n})$ has the same bad properties at $[(\frac 1n,0)]$ as the quotient topology on $|\Kk|$ has at $[(0,0)]$, while the subspace topology on $\pi_\Kk(\Aa_n)$ is metrizable.
 We prove in Proposition~\ref{prop:Ktopl1} that a similar statement holds for all $\Kk$,
 though there we only consider a fixed set $\ov\Aa$ since we have no need for an exhaustion of the domains.
$\hfill\er$
 \end{rmk}

Instead of a metric compatible with the quotient topology on $|\Kk|$, the construction of perturbations in \S\ref{ss:const} will use the following weaker notion of metrics on Kuranishi atlases from \cite{MW:top}.

\begin{defn}\label{def:metric}  {\rm $\!\!$ \cite[Definition~3.1.7]{MW:top}}
A Kuranishi atlas $\Kk$ is said to be {\bf metrizable} if there is a bounded metric $d$ on the set $|\Kk|$ such that for each $I\in \Ii_\Kk$ the 
pullback $d_I:=(\pi_\Kk|_{U_I})^*d$ is a metric on $U_I$  that induces the given topology.
In this situation we call $d$ an {\bf admissible metric} on $|\Kk|$. 
A {\bf metric Kuranishi atlas} is a pair $(\Kk,d)$ consisting of a metrizable Kuranishi atlas together with a choice of  admissible metric $d$.
For a metric Kuranishi atlas, we denote the $\de$-neighbourhoods of subsets $Q\subset |\Kk|$ resp.\ $A\subset U_I$ for $\de>0$ by
\begin{align*}
B_\de(Q) &\,:=\; \bigl\{w\in |\Kk|\ | \ \exists q\in Q : d(w,q)<\de \bigr\}, \\
B^I_\de(A) &\,:=\; \bigl\{x\in U_I\ | \ \exists a\in A : d_I(x,a)<\de \bigr\}.
\end{align*}
\end{defn}

Note that metrizability of a Kuranishi atlas in particular requires $\pi_\Kk|_{U_I}$ to be injective -- which will be achieved in \S\ref{ss:tame}.
It is also important to note that an admissible metric generally does not induce the quotient topology on $|\Kk|$, since this may not even be metrizable by Example~\ref{ex:Khomeo}.
The following shows that the metric topology on $|\Kk|$ is weaker (has fewer open sets) than the quotient topology.

\begin{lemma}\label{le:metric}  
Suppose that $d$ is an admissible metric on the virtual neighbourhood $|\Kk|$ of a Kuranishi atlas $\Kk$.
Then the following holds.
\begin{enumerate}
\item
The identity $\id_{|\Kk|} :|\Kk| \to (|\Kk|,d)$ is continuous as a map from the quotient topology to the metric topology on $|\Kk|$.
\item
In particular, each set $B_\de(Q)$ is open in the quotient topology on $|\Kk|$, so that the 
existence of an admissible metric implies that $|\Kk|$ is Hausdorff.
\item 
The embeddings $\phi_{IJ}$ that are part of the coordinate changes 
for $I\subsetneq J\in\Ii_\Kk$
are isometries when considered as maps $(U_{IJ},d_I)\to (U_J,d_J)$.
\end{enumerate}
\end{lemma}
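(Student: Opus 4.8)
\textbf{Proof plan for Lemma~\ref{le:metric}.}

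The plan is to prove all three assertions by transferring the corresponding statements from \cite{MW:top}, since, as with Lemmas~\ref{le:realization}, \ref{le:restrchange}, \ref{le:cccomp}, the realization of a smooth Kuranishi atlas is identical to that of its induced topological Kuranishi atlas, and the notion of admissible metric is purely topological. Thus I would first invoke \cite[Definition~3.1.7 and the associated lemma]{MW:top} to note that an admissible metric $d$ on $|\Kk|$ is the same datum for $\Kk$ and for its induced topological Kuranishi atlas, and that the pullbacks $d_I=(\pi_\Kk|_{U_I})^*d$ induce the given topologies on the $U_I$ in both cases. Then (i), (ii), and (iii) follow by applying the corresponding topological statement from \cite{MW:top}.

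For completeness I would sketch the short direct arguments as well. For (i): the quotient topology on $|\Kk|$ is by definition the finest topology making all $\pi_\Kk|_{U_I}:U_I\to|\Kk|$ continuous; since by admissibility each composite $U_I \xrightarrow{\pi_\Kk|_{U_I}} (|\Kk|,d)$ is continuous (indeed $d_I$ induces the topology of $U_I$), the identity $|\Kk|\to(|\Kk|,d)$ is continuous by the universal property of the quotient topology. For (ii): the metric balls $B_\de(q)$ are open in $(|\Kk|,d)$, hence their preimages under the continuous identity map of (i) are open in the quotient topology; since $B_\de(Q)=\bigcup_{q\in Q}B_\de(q)$, this set is open in the quotient topology. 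Hausdorffness of $|\Kk|$ in the quotient topology then follows: given distinct $p,q\in|\Kk|$, the balls $B_{\de}(p)$ and $B_{\de}(q)$ with $\de=\tfrac13 d(p,q)$ are disjoint and, by the above, open in the quotient topology. For (iii): for $x,y\in U_{IJ}$ we have $\pi_\Kk(\phi_{IJ}(x))=\pi_\Kk(x)$ and $\pi_\Kk(\phi_{IJ}(y))=\pi_\Kk(y)$ because $(I,J,x)$ is a morphism with source $(I,x)$ and target $(J,\phi_{IJ}(x))$, and similarly for $y$; hence
$$
d_J\bigl(\phi_{IJ}(x),\phi_{IJ}(y)\bigr)
= d\bigl(\pi_\Kk(\phi_{IJ}(x)),\pi_\Kk(\phi_{IJ}(y))\bigr)
= d\bigl(\pi_\Kk(x),\pi_\Kk(y)\bigr)
= d_I(x,y),
$$
so $\phi_{IJ}$ is an isometry.

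I do not anticipate any genuine obstacle here: the content is entirely formal, resting on the definition of the quotient topology, the continuity of metric balls, and the fact that coordinate-change embeddings project to the identity on $|\Kk|$. The only point requiring a word of care is that $d_I$ is required (as part of admissibility) to induce the \emph{given} topology on $U_I$, which is what makes $\pi_\Kk|_{U_I}$ continuous into $(|\Kk|,d)$ and hence drives (i); I would state this explicitly when citing Definition~\ref{def:metric}. Everything else is a direct unwinding of definitions, and the proof can be written in a few lines by quoting the topological version from \cite{MW:top}.
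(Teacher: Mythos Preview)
Your proposal is correct and follows exactly the paper's approach: the paper's proof simply says ``This follows from applying \cite[Lemma~3.1.8]{MW:top} to the induced topological Kuranishi atlas, whose virtual neighbourhood is identical to $|\Kk|$ and inherits the metric.'' Your additional direct sketches for (i)--(iii) are also correct and match the underlying elementary arguments.
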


\begin{proof} 
This follows from applying \cite[Lemma~3.1.8]{MW:top} to the induced topological Kuranishi atlas, whose virtual neighbourhood is identical to $|\Kk|$ and inherits the metric.
\end{proof}

One might hope to achieve the Hausdorff property by constructing an admissible metric, but the existence of the latter is highly nontrivial. 
Moreover, a weak Kuranishi atlas does not even have a well defined virtual neighbourhood on which one could attempt to construct a metric.
Instead, the shrinking in \S\ref{ss:tame} is a refinement process which constructs a Kuranishi atlas whose virtual neighbourhood has the Hausdorff property. Then we prove metrizability of certain subspaces, and finally obtain an admissible metric by pullback to a further refined Kuranishi atlas. 
We end this section by comparing our choice of definitions with the notions of Kuranishi structures in the current literature.

\begin{rmk}\label{rmk:otherK}\rm
(i)
We defined the notion of a Kuranishi atlas so that it is relatively easy to construct from an equivariant Fredholm section. The only condition that is difficult to satisfy is the cocycle condition since that involves making compatible choices of all the domains $U_{IJ}$.
However, we show in Theorem~\ref{thm:K} that, provided the obstruction bundles satisfy
the additivity condition, one can always construct a Kuranishi atlas from a tuple of charts and coordinate changes that satisfy the weak cocycle condition in Definition~\ref{def:cocycle}, which is much easier to satisfy in practice.
The additivity condition is also satisfied by the sum constructions for finite dimensional reductions of holomorphic curve moduli spaces in e.g.\ \cite{FO} and in fact is naturally satisfied by the Fredholm stabilizations that we systematically introduce in \cite{MW:iso,Mcn}.
\MS

\NI (ii)
A Kuranishi structure in the sense of \cite{FO,J1} is given in terms of conjugacy classes of germs of charts at every point of $X$ and some set of coordinate changes.
While this is a natural idea, we were not able to find a meaningful notion of compatible coordinate changes; see \S\ref{ss:alg}.
Recently, there seems to be a general understanding that explicit charts and coordinate changes are needed.
\MS

\NI (iii)
A Kuranishi structure in the sense of \cite[App.~A]{FOOO} consists of a Kuranishi chart $\bK_p$ at every point $p\in X$ and coordinate changes $\bK_q|_{U_{qp}}\to \bK_p$ whenever $q\in F_p$, which are required to satisfy a weak cocycle condition.
The idea from \cite{FO} for constructing such a Kuranishi structure also starts with a finite covering family of basic charts $(\bK_i)$. 
Then the chart at $p$ is obtained by a sum construction from the geometric data used to construct the charts $\bK_i$ with $p\in F_i$.
This construction is similar to the one outlined in \S\ref{ss:gw}, except that it is less explicit  
(with some more details in \cite{FOOO12}) and is only performed in a neighbourhood of a point.  
\MS

\NI (iv)
In the case of trivial isotropy, an abstract weak Kuranishi atlas in the sense of Definition~\ref{def:Ku2} induces a Kuranishi structure in the sense of \cite[App.~A]{FOOO} as follows.

Given a covering family of basic charts $(\bK_i)_{i=1,\dots,N}$ with footprints $F_i$ and transition data $(\bK_I,\Hat\Phi_{IJ})$, choose a family of compact subsets $C_i\subset F_i$ that also cover $X$.
Then for any $p\in X$ one obtains a Kuranishi chart $\bK_p$ by choosing a restriction of $\bK_{I_p}$ to $F_p:=\cap_{i\in I_p} F_i\less \cup_{i\notin I_p} C_i$, where $I_p: =  \{i \,|\, p\in C_i\}$.
This construction guarantees that for $q\in F_p$ we have $I_q\subset I_p$  and thus can restrict the coordinate change $\Hat\Phi_{I_q I_p}$ to a coordinate change from $\bK_q$ to $\bK_p$. 
The weak cocycle condition is preserved by these choices. 

Note however that neither this notion of a Kuranishi structure nor a weak Kuranishi atlas is sufficient for our approach to the construction of a VMC since that makes essential use of the additivity condition.
(As explained in \cite[\S6]{Mcn}, this condition can be weakened in some special circumstances.)

\MS

\NI (v)  The key step in the construction by Fukaya et al.\ of a virtual fundamental class  is the notion of a  ``good coordinate system". 
The first versions and proofs of existence (in \cite[Lemma 6.3]{FO}, which is also quoted by \cite{FOOO}) of this notion were based on composition of conjugacy classes of germs and did not address the cocycle condition. 
In its most recent version in~\cite{FOOO12}, this requires a finite cover of $X$ by a partially ordered set of (orbifold) charts $(\bK_I)_{I\in\Pp}$ and coordinate changes $\bK_I \to \bK_J$ for $I\leq J$, where the order is compatible with the overlaps of the footprints in the sense that $F_I\cap F_J\ne \emptyset$ implies $I\leq J$ or $J\leq I$.
In the language used here, this corresponds to an atlas that is both tame and reduced in the sense of \S\ref{ss:red}. 
Their construction aims to satisfy both these conditions simultaneously, while in our approach these questions are separated in order to clarify exactly what choices and constructions are  needed so that we can build an adequate cobordism theory. 

\MS
\NI (vi)
Another approach to the VFC, developed after the first versions of this paper appeared, is due to Pardon~\cite{pardon}.
His notion of implicit atlas is most similar to our notion of tame Kuranishi atlas with nontrivial isotropy in \cite{MW:iso,McL}, except that it does not require a differentiable structure.
It does however require large subsets of the chart domains to be topological manifolds, and so should not be mistaken for our notion of topological Kuranishi atlas. Instead, an implicit atlas induces a topological atlas by quotienting out the isotropy. 
One advantage of this approach is that one needs less sophisticated gluing theorems to construct charts near nodal curves. Instead of using perturbations of differentiable sections, \cite{pardon} then constructs the VFC in the \v{C}ech homology of $X$ using homological/sheaf theoretic tools.
Thus in this approach the explicit refinement, reduction, matching, and extension techniques in our construction of perturbations in \S\ref{s:red}
are replaced by delicate sheaf theoretic arguments that allow for patching local homological information into a virtual fundamental class. The result is a more abstract homological class than one represented by a virtual moduli cycle obtained from a perturbation.
$\hfill\er$  
 \end{rmk}

\subsection{Cobordisms of Kuranishi atlases}\label{ss:Kcobord}  \hspace{1mm}\\ \vspace{-3mm}

Because there are many choices involved in constructing a Kuranishi atlas, and holomorphic curve moduli spaces in addition depend on the choice of an almost complex structure, it is important to have suitable notions of equivalence.
Since we are only interested here in constructing the VMC as cobordism class, resp.\ the VFC as a homology class, a notion of uniqueness up to cobordism will suffice for our purposes.   
We will introduce in Definition~\ref{def:CKS} a general notion of {\bf Kuranishi cobordism}
to be a Kuranishi atlas on a compact metrizable space $Y$ with two collared boundary components $\p^0Y$  and $\p^1Y$.
However, if we are considering atlases over a fixed space $X$, then we will mostly work with two stronger equivalence relations: Firstly, a {\bf concordance} is given by a Kuranishi cobordism on the product $[0,1]\times X$. 
Secondly, {\bf commensurability} is the equivalence relation generated by calling two atlases $\Kk^0, \Kk^1$ directly commensurate if they are both contained in the same atlas $\Kk^{01}$ on $X$.
Lemma~\ref{lem:cobord1} will show that commensurability implies concordance.

\begin{defn}\label{def:Kcomm}
Two (weak) Kuranishi atlases $\Kk^0,\Kk^1$ on the same compact space $X$ are {\bf directly commensurate} if there exists a common extension $\Kk^{01}$. This means that $\Kk^{01}$ is a (weak) Kuranishi atlas on $X$ with basic charts $(\bK^\al_i)_{(\al,i)\in\Nn^{01}}$, where
$$
\Nn^{01} := \Nn^0 \sqcup \Nn^1 ; \qquad \Nn^\al := \bigl\{ (\al,i) \,\big|\, 0 \leq i \leq N^\al \bigr\} ,
$$
and transition data $(\bK^{01}_I,\Hat\Phi^{01}_{IJ})_{I,J\subset\Nn^{01}, I\subsetneq J}$ 
such that $\bK^{01}_I=\bK^0_I$ and $\Hat\Phi^{01}_{IJ}=\Hat\Phi^\al_{IJ}$ whenever $I,J\subset\Nn^\al$ for fixed $\al=0$ or $\al=1$.
We say they are  {\bf commensurate} if there is a sequence  $\Kk_0: = \Kk^0,\Kk_1,\dots,\Kk_\ell = \Kk^1$ of atlases on $X$ such that $\Kk_{i-1}$ is directly commensurate to $\Kk_{i}$ for $i = 1,\dots,\ell$.

Moreover, if $\Kk^0,\Kk^1$ are additive, we say they are  {\bf additively  commensurate} if there exists 
such a sequence with the additional property that all atlases involved are additive. 
\end{defn}

\begin{remark}\label{rmk:Kcomm}
\rm
While the construction of a Kuranishi atlas on a fixed Gromov--Witten moduli space $X$ depends on many choices (for example of slicing conditions and obstruction spaces for the basic charts), the resulting atlas will generally be unique up to commensurability.
Here the simplest construction of atlases (for example, that explained in \S\ref{ss:gw}) does not allow us to sum an arbitrary pair of charts, and so any two choices will at best yield Kuranishi atlases that are both directly commensurate to a third.
However, if we use Fredholm stabilization as in \cite{MW:iso,Mcn} then we can form arbitrary sum charts, which implies that any two Kuranishi atlases on $X$ that are constructed by this method are directly commensurate.

So our notion of commensurability is guided by the fact that, although all the atlases that we construct in applications will be directly commensurate, it is not clear that this relation is transitive in general. 
$\hfill\er$
\end{remark}

We next turn to the notion of cobordism between Kuranishi atlases.
In order to define this so that it is transitive, we will need a special form of charts and coordinate changes at the boundary that allows for gluing of cobordisms.
Thus we will define a Kuranishi cobordism to be a Kuranishi atlas over a space $Y$ whose designated ``boundary components" $\p^0Y, \p^1Y \subset Y$ have collared neighbourhoods as follows.

\begin{defn} \label{def:Ycob}  {\rm $\!\!$ \cite[Definition~4.1.1]{MW:top}}
A {\bf collared cobordism} $(Y, \io^0_Y,\io^1_Y)$ is a separable, locally compact, 
metrizable space $Y$ together with disjoint (possibly empty)
closed subsets $\p^0 Y,$ $ \p^1 Y\subset Y$ and maps
$$
\io_Y^0:  [0,\eps)\times  \p Y^0  \to Y, \qquad  \io_Y^1:  (1-\eps, 1]\times  \p Y^1  \to Y
$$
for some $\eps>0$ that are {\bf collared neighbourhoods} in the following sense: 
They extend the inclusions $\io_Y^0(0,\cdot) : \p^0 Y\hookrightarrow Y$, resp.\ $\io_Y^1(1,\cdot) : \p^1 Y\hookrightarrow Y$, and are homeomorphisms onto disjoint open neighbourhoods of $\p^0 Y\subset Y$, resp.\ $\p^1 Y\subset Y$. 

We call $\p^0 Y$ and $\p^1 Y$ the {\bf boundary components} of $(Y, \io^0_Y,\io^1_Y)$.
\end{defn}

For the next definition, it is useful to introduce the notation
\begin{equation}\label{eq:Naleps}
A_\delta^0: = [0,\delta)  \qquad\text{and} \qquad A_\delta^1: = (1-\delta,1] \qquad\text{ where }\  0<\delta<\tfrac 12
\end{equation}
for collar neighbourhoods of 
$0$ resp.\ $1$ in $[0,1]$.

\begin{defn}\label{def:collarset} {\rm $\!\!$ \cite[Definition~4.1.2]{MW:top}}
If $(Y, \io_Y^0, \io_Y^1)$ is a collared cobordism, we say that an open subset $F\subset Y$ is {\bf collared} 
if there is $0<\delta\le\eps$ such that for $\al\in \{0,1\}$ we have
$$
F \cap \im (\io_Y^\al)\ne \emptyset
\;\; \Longleftrightarrow\;\;
F \cap \im (\io_Y^\al)
= \io_Y^\al( A^\al_\delta\times \p^\al F) .
$$
Here we denote by
$
\partial^\al F :=  F \cap \p^\al Y 
$
the intersection with the ``boundary component'' $\p^\al Y$ and allow one or both of $\p^\al F$ to be empty.
\end{defn}

Note that a collared subset $F\subset Y$ with empty 
``boundary'' $\p^\al F=\emptyset$ is in fact disjoint from the open neighbourhood $\im\io^\al_Y$ of the corresponding ``boundary component'' $\p^\al Y$.

\begin{example}\label{ex:natcol}\rm
In general the ``boundary components" $\p^\al Y$ are by no means uniquely determined by $Y$ or 
topological boundaries of $Y$ in any sense, though the main example of a collared cobordism 
is $Y = [0,1]\times X$, which has the natural ``boundary components'' $\{0\}\times X$ and $\{1\}\times X$. 
In this case we always take $\io_Y^\al$ to be the canonical extensions of the 
inclusions $\io_Y^\al(\al,\cdot): \{\al\}\times X \to [0,1]\times X$, for some choice of $0<\eps<\frac 12$.
More generally, we might  
consider a union of moduli spaces
$$
Y = {\textstyle \bigcup_{t\in [0,1]}} \{t\}\times \oMm_{0,k}(M,A,J_t) ,
$$
where $J_t$ is a family of
almost complex structures that are constant for $t$ near $0$ and~$1$. 
This again has canonical ``boundary components'' $\p^\al Y=\{\al\}\times \oMm_{0,k}(M,A,J_\al)$ and collared neighbourhoods $\io_Y^\al\bigl(t,(\al,p)\bigr)=(t,p)$ for sufficiently small choice of $\eps>0$.
$\hfill\er$
\end{example}

When constructing Kuranishi cobordisms we require all charts and coordinate changes in a sufficiently small collar to be of a compatible product form as introduced below.

\begin{defn} \label{def:Cchart}  {\rm $\!\!$ \cite[Definition~4.1.4]{MW:top}}
Let $(Y, \io_Y^0, \io_Y^1)$ be a compact collared cobordism.
\begin{itemlist}
\item
Let $\bK^\al=(U^\al,E^\al,s^\al,\psi^\al)$ be a Kuranishi chart for $\p^\al Y$, and let $A\subset[0,1]$ be a relatively open interval. Then we define the {\bf product chart} for $[0,1] \times \p^\al Y$ with footprint $A\times F^\al$ 
as
$$
A\times \bK^\al  :=\bigl(A \times U^\al, E^\al, \, s^\al\circ{\rm pr}_{U^\al} ,\, \id_{A}\times \psi^\al \bigr) ,
$$
where $\pr^\al:A\times U^\al\to U^\al$ is the projection to the second factor.
\item
A {\bf Kuranishi chart with collared boundary} for $(Y, \io_Y^0, \io_Y^1)$
is a tuple $\bK = (U,E ,s,\psi)$ as in Definition~\ref{def:chart}, with the following collar form requirements:
\begin{enumerate}
\item
The footprint $F =\psi (s^{-1}(0))\subset  Y$ is collared and intersects at least one of the boundary components $\p^\al Y$.
\item 
The domain $U$ is a smooth manifold with boundary $\p U = \p^0 U \sqcup \p^1 U$ equipped with a structure of collared cobordism $(U,\io^0_U, \io^1_U)$ whose collar embeddings $\io^0_U, \io^1_U$ are tubular neighbourhood diffeomorphisms.
Each boundary component $\partial^\al U$  is nonempty iff $\p^\al F= F \cap \p^\al Y\ne \emptyset$. 
\item 
If $\partial^\al F \neq\emptyset$ then for some $\eps>0$ there is a Kuranishi chart $\partial^\al\bK$ for $\p^\al Y$ with footprint $\p^\al F$, domain $\p^\al U$, and obstruction space $E$, and an embedding of the product chart $A_\eps^\al\times \p^\al \bK$ into $\bK$ in the following sense:
The boundary embedding $\iota_U^\al:A_\eps^\al \times \partial^\al U \hookrightarrow  U$ intertwines the section and footprint maps of the charts $A_\eps^\al\times \p^\al \bK$ and $\bK$ as in  
Remark~\ref{rmk:tchange}
with $\phi=\iota_U^\al$, $\Hat\phi=\id_E$.
In particular, the footprint map is compatible with the boundary collars on $\bK$ and $Y$ in the sense that the following diagram commutes:
$$
  \begin{array} {ccc}
(\id_{A_\eps^\al}\times  s^\al)^{-1}(\id_{A_\eps^\al}\times 0^\al) & \stackrel{\io_U^\al} \longrightarrow &{s^{-1}(0)} \\
 \id_{A_\eps^\al}\times \psi^\al\downarrow\;\;\;\;\;\;\;\;\;&&\downarrow{\psi}  \\
\phantom{right}{A_\eps^\al\times \p^\al Y} & \stackrel{\io_Y^\al} \longrightarrow &{Y} \; .
\end{array}
$$
\end{enumerate}
\item
For any  Kuranishi chart with collared boundary for $(Y, \io_Y^0, \io_Y^1)$ we call the resulting uniquely determined  Kuranishi charts $\partial^\al\bK$ (with footprints in $\p^\al Y$) the {\bf restrictions of $\bK$ to the boundary}. 
\end{itemlist}
\end{defn}

We now define a coordinate change between charts on $Y$ that may have boundary.   Because in a Kuranishi atlas there is a coordinate change $\bK_I\to \bK_J$ only when $F_I\supset F_J$, we will  restrict to this case here.  (Definition~\ref{def:change} considered a more general scenario.)
In other words, we need not consider coordinate changes from a chart without boundary to a chart with boundary.

\begin{defn} \label{def:Ccc} {\rm $\!\!$ \cite[Definition~4.1.5]{MW:top}}
\begin{itemlist}
\item
Let $\Hat\Phi^\al_{IJ}:\bK^\al_I\to\bK^\al_J$ be a coordinate change between  Kuranishi charts for $\p^\al Y$, 
and let $A_I,A_J\subset[0,1]$ be relatively open intervals.
Then the {\bf product coordinate change}
$\id_{A_I\cap A_J} \times \Hat\Phi^\al_{IJ}  : (A_I\cap A_J) \times \bK^\al_I \to A_J \times  \bK^\al_J$ 
is given by
$$
\id_{A_I\cap A_J}\times \phi^\al_{IJ} : (A_I\cap A_J)\times U_{IJ} \to A_J\times  U_J
\qquad\text{and}\qquad 
\Hat\phi^\al_{IJ}:  E_I^\al \to  E_J^\al.
$$
\item
Let $\bK _I,\bK _J$ be Kuranishi charts on  $(Y, \io_Y^0, \io_Y^1)$ such that only $\bK _I$ or both
$\bK _I,\bK _J$ have collared boundary.
Then a {\bf coordinate change with collared boundary} $\Hat\Phi_{IJ} :\bK _I\to\bK _J$ 
with domain $U_{IJ}$ satisfies the conditions in Remark~\ref{rmk:tchange}, with the following boundary variations and collar form requirement:
\begin{enumerate}
\item
The domain is a collared subset $U _{IJ}\subset U _I$ in the sense of Definition~\ref{def:collarset},
so is a manifold with boundary, whose boundary components are $\partial^\al U _{IJ}:= U _{IJ} \cap \partial^\al U _I$.
\item
If $F_J\cap \p^\al Y \ne \emptyset$ then $F_I\cap \p^\al Y \ne \emptyset$ 
and there is a coordinate change $\partial^\al\Hat\Phi_{IJ} : \partial^\al\bK _I \to \partial^\al\bK _J$
with domain $\partial^\al U _{IJ}$ such that the restriction of $\Hat\Phi_{IJ}$ 
to $U_{IJ} \cap \io_{U_I}^\al(A^\al_\eps\times \partial^\al U _I)$
pulls back via the collar inclusions $\io^\al_{U_I}, \io^\al_{U_J}$ 
to the product $ {\rm id}_{A^\al_\eps} \times \partial^\al\Hat\Phi_{IJ}$
for some $\eps>0$.
In particular we have 
\begin{align*}
(\iota_{U_I}^\al)^{-1}(U _{IJ})
\cap \bigl(A^\al_\eps\times \partial^\al U _I \bigr)
&\;=\; A^\al_\eps\times \partial^\al U _{IJ}, \\
(\iota_{U_J}^\al)^{-1}(\im\phi _{IJ})
\cap \bigl(A^\al_\eps  \times \partial^\al U _J \bigr)
&\;=\;
 \phi _{IJ}( A^\al_\eps\times \partial^\al U _{IJ}) .
\end{align*}
\item
If $F_J\cap \p^\al Y= \emptyset$ but $F_I\cap \p^\al Y\ne \emptyset$ 
then we have $\p^\al U_{IJ}=\emptyset$ and hence also
$U _{IJ}\cap \iota_{U_I}^\al \bigl(A^\al_\eps\times \partial^\al U _I \bigr) = \emptyset$ for some $\eps>0$.
\end{enumerate}
\item
For any coordinate change with collared boundary $\Hat\Phi_{IJ} $ on $(Y, \io_Y^0, \io_Y^1)$  
we call the uniquely determined coordinate changes $\p^\al \Hat\Phi_{IJ}$ for  $\p^\al Y$ the {\bf restrictions of $\Hat\Phi_{IJ} $ to the boundary} for $\al=0,1$.
\end{itemlist}
\end{defn}

The above definitions of Kuranishi chart and coordinate change with collared boundary are special cases of corresponding definitions \cite{MW:top} in a purely topological setting.
Similarly, the following notions of (weak) Kuranishi cobordism/concordance -- given atlases in which we allow collared boundaries -- specialize the notions of {\bf (weak) topological Kuranishi cobordism/concordance} from \cite{MW:top}.

\begin{defn}\label{def:CKS} {\rm $\!\!$ \cite[Definition~4.1.6]{MW:top}}
A {\bf (weak) Kuranishi cobordism of dimension $\mathbf d$} on a compact collared cobordism  $(Y, \io_Y^0, \io_Y^1)$ is a tuple
$$
\Kk  = \bigl( \bK_{I} , \Hat\Phi_{IJ} \bigr)_{I,J\in \Ii_{\Kk}}
$$
of basic charts and transition data as in Definition~\ref{def:K} whose Kuranishi charts are of dimension $d$, 
with the following boundary variations and collar form requirements:
\begin{itemlist}
\item
The charts of $\Kk$ are either Kuranishi charts with collared boundary or standard Kuranishi charts whose footprints are precompactly contained in $Y\less (\p^0 Y\cup \p^1 Y)$.
\item
The coordinate changes $\Hat\Phi_{IJ}: \bK_{I} \to \bK_{J}$ 
are either standard coordinate changes on $Y\less (\p^0 Y\cup \p^1 Y)$  
between pairs of standard charts, or coordinate changes with collared boundary between 
pairs of charts, of which at least the first has collared boundary.
\end{itemlist}
A {\bf (weak)  Kuranishi concordance} is a (weak) Kuranishi cobordism on a collared cobordism of product type $Y = [0,1]\times X$ with canonical collars as in Example~\ref{ex:natcol}.

Moreover, a (weak) Kuranishi cobordism or concordance is said to be {\bf additive} if its obstruction spaces satisfy the additivity condition in Definition~\ref{def:Ku2}.
\end{defn}

\begin{rmk}\label{rmk:restrict}\rm  
(i)
Let $(Y, \io_Y^0, \io_Y^1)$ be a compact collared cobordism.
Then any (weak) Kuranishi cobordism $\Kk$ on $(Y, \io_Y^0, \io_Y^1)$ induces by restriction (weak) 
Kuranishi atlases $\partial^\al\Kk$ on each boundary component $\p^\al Y$
with
\begin{itemlist}
\item 
basic charts $\p^\al\bK_i$ given by restriction of  basic charts of $\Kk$ with $F_i\cap \p^\al Y\neq\emptyset$;
\item 
index set $\Ii_{\p^\al\Kk}=\{I\in\Ii_{\Kk}\,|\, F_I\cap  \p^\al Y\neq\emptyset\}$;
\item 
transition charts $\p^\al\bK_I$ given by restriction of transition charts of $\Kk$;
\item
coordinate changes $\p^\al\Hat\Phi_{IJ}$ given by restriction of coordinate changes of $\Kk$.
\end{itemlist}
In this case we say that {\bf $\Kk$ is a cobordism from $\p^0\Kk$ to $\p^1\Kk$}
and call $\p^\al\Kk$ the {\bf restrictions of $\Kk$ to the boundary}.
In the special case when $\Kk$ is in fact a Kuranishi concordance, we also say that 
{\bf $\Kk$ is a concordance from $\p^0\Kk$ to $\p^1\Kk$}.

\MS
\NI (ii) 
The restrictions $\p^\al \Kk$ of an additive Kuranishi cobordism $\Kk$ are also additive.

\MS\NI
(iii)
Any (weak) Kuranishi cobordism/concordance $\Kk$ induces a (weak) topological Kuranishi cobordism/concordance in the sense of \cite{MW:top}, which we again denote by $\Kk$. Moreover, this association is compatible with restrictions to the boundary.
$\hfill\er$
\end{rmk}

With this language in hand, we can now introduce the cobordism and concordance relations between Kuranishi atlases.

\begin{defn}\label{def:Kcobord} {\rm $\!\!$ \cite[Definition~4.1.8]{MW:top}}
Let $\Kk^0, \Kk^1$ be (weak)  Kuranishi atlases on compact metrizable spaces $X^0,X^1$.
\begin{itemlist}
\item 
$\Kk^0$ is {\bf (weakly) cobordant} to $\Kk^1$ if there exists a (weak)  Kuranishi cobordism $\Kk$ from $\Kk^0$ to $\Kk^1$. Equivalently, $\Kk$ is an atlas on a compact collared cobordism $(Y, \io_Y^0, \io_Y^1)$ with boundary components $\p^\al Y = X^\al$ and boundary restrictions $\p^\al\Kk=\Kk^\al$ for $\al = 0,1$.
To be more precise, there are injections $\iota^\al:\Ii_{\Kk^\al} \hookrightarrow \Ii_{\Kk}$ for $\al=0,1$ such that $\im\iota^\al=\Ii_{\partial^\al\Kk}$ and for all $I,J\in\Ii_{\Kk^\al}$ we have
$$
\bK^\al_I = \p^\al \bK_{\iota^\al(I)}, \qquad
\Hat\Phi^\al_{IJ} = \p^\al \Hat\Phi_{\iota^\al(I) \iota^\al (J)} .
$$
\item
$\Kk^0$ is {\bf (weakly) concordant} to $\Kk^1$ if there exists a (weak)  Kuranishi concordance $\Kk$ from $\Kk^0$ to $\Kk^1$.
Equivalently, the spaces $X^0=X^1=X$ are identical and $\Kk$ is a (weak)  Kuranishi cobordism on $Y = [0,1]\times X$ with boundary restrictions $\p^\al\Kk=\Kk^\al$ for $\al = 0,1$ as above.
\item
If $\Kk^0, \Kk^1$ are moreover additive, then they are {\bf additively (weak) cobordant (resp.\ concordant)} if there exists a (weak) Kuranishi cobordism (resp.\ concordance) as above that is additive.
\end{itemlist}
\end{defn}

In the following we will usually identify the index sets $\Ii_{\Kk^\al}$ of 
cobordant 
 Kuranishi atlases with the restricted index set $\Ii_{\partial^\al\Kk}$ in the cobordism index set $\Ii_{\Kk}$, so that $\Ii_{\Kk^0}, \Ii_{\Kk^1}\subset \Ii_{\Kk}$ are the (not necessarily disjoint) subsets of charts whose footprints intersect $\p^0 Y$ resp.\ $\p^1 Y$.

\begin{example} \label{ex:triv}\rm
Let $\Kk= \bigl( \bK_I, \Hat\Phi_{IJ}\bigr)_{I,J\in\Ii_\Kk}$ be a weak Kuranishi atlas on $X$.
Then the {\bf product Kuranishi concordance} $[0,1]\times \Kk$ from $\Kk$ to $\Kk$ is the weak 
Kuranishi  cobordism on $ [0,1]\times X$ consisting of the product charts $[0,1]\times \bK_I$ and the product coordinate changes $\id_{[0,1]}\times \Hat\Phi_{IJ}$ for $I,J\in\Ii_\Kk$.
Note that in this case all index sets are the same, 
$\Ii_{\Kk^0}=\Ii_{\Kk^1}=\Ii_{[0,1]\times \Kk}= \Ii_{\Kk}$.

Moreover, if $\Kk$ is an additive Kuranishi atlas, then the product Kuranishi concordance $[0,1]\times \Kk$ is an additive Kuranishi concordance.
$\hfill\er$
\end{example}

In the following we will extend the categorical formulation and  metrizability notion from \S\ref{ss:Ksdef} to Kuranishi cobordisms. 
For that purpose it will often be convenient to work with the following notion of uniform collar width.

\begin{remark} \rm \label{rmk:Ceps}   Let $\Kk$ be a (weak) Kuranishi cobordism.
Since the index set $\Ii_{\Kk}$ in Definition~\ref{def:CKS} is finite, there exists a uniform 
{\bf collar width} $\eps>0$ such that all collar embeddings  $\io^\al_Y$, $\io^\al_U$ are defined on a neighbourhood of $\ov{A_{\eps}^\al}$, all coordinate changes 
between charts with nonempty boundary
are of collar form on $B^\al=A_{\eps}^\al$, and all charts without 
boundary have footprint contained in 
$Y\less \bigcup_{\al=0,1} \io_Y^\al(\ov{ A_{\eps}^\al}\times \p^\al Y)$.
In particular,  the footprints of the charts with nonempty boundary cover a neighbourhood of 
$\bigcup_{\al=0,1} \io_Y^\al(\ov{ A_{\eps}^\al}\times \p^\al Y) \subset Y$.
$\hfill\er$
\end{remark}

\begin{rmk}\rm  \label{rmk:cobordreal}
Let $\Kk$ be a Kuranishi cobordism from $\Kk^0$ to $\Kk^1$.
Its associated categories $\bB_{\Kk}, \bE_{\Kk}$ with projection, section, and footprint functor, as well as their realizations $|\bB_{\Kk}|, |\bE_{\Kk}|$  are defined as for Kuranishi atlases without boundary in \S\ref{ss:Ksdef},
and form cobordisms in the following sense.

\begin{itemlist}
\item
We can think of the virtual neighbourhood $|\Kk|$ of $Y$ as a 
collared cobordism with boundary components $\p^0|\Kk|\cong|\Kk^0|$ and $\p^1|\Kk|\cong|\Kk^{1}|$ in sense of Definition~\ref{def:Ycob}, with the exception that $|\Kk|$ is usually not locally compact or metrizable.
More precisely, using Remark~\ref{rmk:Ceps} we have collared neighbourhoods for some $\eps>0$,
$$
\iota_{|\Kk|}^0: [0,\eps) \times  |\Kk^0|  \hookrightarrow |\Kk|,  
\qquad
\iota_{|\Kk|}^1: (1-\eps,1]\times  |\Kk^1|   \hookrightarrow |\Kk| .
$$
These are induced by the natural functors $\iota^\al_{\bB_\Kk} : A^\al_\eps\times \bB_{\Kk^\al} \to \bB_\Kk$
given by the inclusions  
$\iota^\al_{U_I} : A^\al_\eps\times U^\al_I \hookrightarrow U_I$ 
on objects and 
$\iota^\al_{U_{IJ}} : A^\al_\eps\times U^\al_{IJ}\hookrightarrow U_{IJ}$
on morphisms, where $A^\al_\eps$ is defined in \eqref{eq:Naleps}. The axioms on the interaction of the coordinate changes with the collar neighbourhoods imply that the functors map to full subcategories that split $\bB_\Kk$ 
in the sense that there are no morphisms between any other object and this subcategory.
Hence the functors $\iota^\al_{\bB_\Kk}$ descend to topological embeddings $\iota_{|\Kk|}^\al : \bigl| A^\al_\eps \times \bB_{\Kk^\al}\bigr| \to |\Kk|$, i.e.\ homeomorphisms onto open subsets of $|\Kk|$. 
Here the product topology on 
$A^\al_\eps \times\bigl| \bB_{\Kk^\al}\bigr| \cong \bigl| A^\al_\eps \times \bB_{\Kk^\al}\bigr|$ coincides with the quotient topology by 
\cite[Ex.~29.11]{Mun}: Applied to the topological space $X:=\bigsqcup_{I\in\Ii_\Kk} U_I$ with equivalence relation $Y\subset X\times X$ induced by the morphisms, and the locally compact Hausdorff space $T:=A^\al_\eps$, it asserts that the product topology on $T\times \qu{X}{Y}$ coincides with the quotient topology induced by the product relation $T\times Y \hookrightarrow (T\times X) \times (T\times X)$. 

To check that $\iota_{|\Kk|}^\al$ are collared neighbourhoods in the sense of Definition~\ref{def:Ycob}, note that $\io^\al_{|\Kk|}\bigl(\{\al\}\times |\Kk^\al|\bigr)$ is contained in the open image of $\iota_{|\Kk|}^\al$.
Moreover, to see that $\iota_{|\Kk|}^\al\bigl(\{\al\} \times  |\Kk^\al| \bigr)\subset |\Kk|$ is closed we verify that its complement has open preimage in $\bigsqcup_{I\in\Ii_\Kk} U_I$ by noting that each $\io^\al_{U^\al_I}\bigl(\{\al\}\times U^\al_I\bigr) \subset U_I$ is closed. 
\smallskip
 
\item
The ``obstruction bundle'' consists of an analogous collared cobordism $|\bE_{\Kk}|$ with boundary components $\p^\al |\bE_{\Kk}|\cong |\bE_{\Kk^\al}|$ and a projection $|\pr_{\Kk}|: |\bE_{\Kk}|\to |\Kk|$  that has product form on the collared boundary $|\pr_{\Kk}| \circ \iota^\al_{|\bE_\Kk|}= \iota^\al_{|\Kk|}\circ \bigl(\id_{A^\al_\eps} \times  |\pr_{\Kk^{\al}}|\bigr)$ induced by the ``obstruction bundles'' of the boundary components, $|\pr_{\Kk^\al}|: |\bE_{\Kk^\al}|\to |\Kk^\al|$.
\smallskip

\item
The embeddings $\io^\al_{|\Kk|}$ extend the natural map between footprints
$$
|\s_{\Kk}|^{-1}(|0_\Kk|)
\;\;
\xleftarrow{\io_\Kk}  \;\;Y \;\; \xleftarrow{\io_Y^\al}    \;\;
A_\eps^\al \times \p^\al Y
\;\;\xrightarrow {{\rm id}\times \iota_{\Kk^{\al}}}\;\;
 A_\eps^\al\times |\s_{\Kk^{\al}}|^{-1}(|0_{\Kk^{\al}}|)  .
$$

\item 
If $\Kk$ is a Kuranishi concordance on $Y = [0,1]\times X$ then the footprint functor to $[0,1]\times X$  induces a continuous surjection
$$
{\rm pr}_{[0,1]}\circ \psi_{\Kk}
: \; \s_{\Kk}^{-1}(0_\Kk) \;\to\; [0,1]\times X  \;\to\;  [0,1] .
$$
In general we do not assume that this extends to a functor $\bB_{\Kk}\to [0,1]$.
However, all the Kuranishi  concordances that we construct explicitly do have this property. 
 $\hfill\er$
\end{itemlist}
\end{rmk}

\begin{defn}\label{def:mCKS} {\rm $\!\!$ \cite[Definition~4.2.1]{MW:top}}
A {\bf metric Kuranishi cobordism} on $Y$ is a Kuranishi cobordism $\Kk$ equipped with a metric $d$ on $|\Kk|$ that satisfies the admissibility conditions of Definition~\ref{def:metric} and has a metric collar as follows:

There is $\eps>0$ such that for $\al=0,1$ 
the collaring maps $\io^\al_{|\Kk|}: A^\al_\eps\times |\p^\al\Kk|\to |\Kk|$ 
of Remark~\ref{rmk:cobordreal} are defined and pull back $d$  
to the product metric 
\begin{equation} \label{eq:epsprod}
(\io^\al_{|\Kk|})^* d 
\bigl((t,x),(t',x')\bigr) \;=\; |t'-t|  + d^\al(x,x') \qquad \text{on} \;\; A^\al_\eps\times |\p^\al\Kk| ,
\end{equation}
where the metric $d^\al$ on 
$|\p^\al\Kk|$ is given by pullback of the restriction of $d$ 
to $\p^\al |\Kk|  = \io^\al_{|\Kk|}\bigl(\{\al\}\times |\p^\al\Kk|\bigr)$, which we denote by
$$
d^\al \,:=\; d|_{|\p^\al \Kk|} \,:=\; \io^\al_{|\Kk|}(\al,\cdot)^* d.
$$ 
In addition, we require
for all $y\in |\Kk| \less \io^\al_{|\Kk|}  \bigl( A^\al_\eps\times  |\p^\al\Kk|\bigr)$ 
\begin{equation}\label{eq:epscoll}
d\bigl( y , \io^\al_{|\Kk|}(\al + t, x )\bigr) \;\ge\; 
\eps - |t|
\qquad \forall \;
(\al + t,x)\in  A^\al_\eps \times  |\p^\al\Kk| .
\end{equation}
We call a metric on 
$|\Kk|$ {\bf admissible} if it satisfies the conditions of Definition~\ref{def:metric}, 
{\bf $\eps$-collared} if it satisfies \eqref{eq:epsprod}, \eqref{eq:epscoll}, 
and {\bf collared} if it is $\eps$-collared for some $\eps>0$.
\end{defn}

\begin{remark}\rm
(i)
Condition \eqref{eq:epscoll} controls the distance between points $\io^\al_{|\Kk|}(\al + t,x)$ in the collar and 
points $y$ outside of the collar.  
In particular, if $\de<\eps-|t|$, then the $\de$-ball around $\io^\al_{|\Kk|}(\al + t,x)$ is contained in the $\eps$-collar, while the $\de$-ball around 
$y\in |\Kk| \less \io^\al_{|\Kk|}  \bigl( A^\al_\eps\times  |\Kk^\al|\bigr)$
does not intersect the $|t|$-collar $\io^\al_{|\Kk|}\bigl(A^\al_{|t|}\times |\Kk^{\al}|\bigr)$.

\MS\NI
(ii)
Any admissible metric $d$ on $|\Kk|$ for a Kuranishi atlas $\Kk$ induces an admissible 
collared metric $d_\R + d$ on $|[0,1]\times \Kk|\cong [0,1] \times |\Kk| $, given by 
$$
\bigr(d_\R + d \bigl)\bigl( (t,x) , (t',x') \bigr) = |t'-t| + d(x,x')  .
$$ 
For short, we call $d_\R + d$ a {\bf product metric.} 
$\hfill\er$
\end{remark}

All reasonable flavours of cobordism and concordance form equivalence relations.
The following lemma proves this in most of the cases that will be used in this paper.
There is one more important cobordism notion --  metric tame cobordism -- 
that is treated in Remark~\ref{rmk:metcob} below.

\begin{lemma}\label{lem:cobord1}
\begin{enumerate}
\item
Additive (weak) cobordism is an equivalence relation between additive (weak) Kuranishi atlases.
\item 
Additive (weak) concordance is an equivalence relation between additive (weak) Kuranishi atlases on a fixed space $X$.
\item
Commensurate additive weak Kuranishi atlases are additively weak concordant.
\end{enumerate}
\end{lemma}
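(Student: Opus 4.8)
\textbf{Proof proposal for Lemma~\ref{lem:cobord1}.}

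The plan is to reduce each statement to a corresponding topological fact from \cite{MW:top}, adding only the checks that the smooth structure and the additivity condition survive the constructions. For (i) and (ii), reflexivity follows from the product Kuranishi cobordism/concordance of Example~\ref{ex:triv}: given an additive (weak) Kuranishi atlas $\Kk$ on $X$, the concordance $[0,1]\times\Kk$ consists of product charts $[0,1]\times\bK_I$ and product coordinate changes $\id_{[0,1]}\times\Hat\Phi_{IJ}$, which are smooth charts/coordinate changes with collared boundary (the collars being the canonical ones of Example~\ref{ex:natcol}), and additivity is inherited fiberwise since the obstruction spaces are unchanged. Symmetry is obtained by reversing the interval, i.e.\ precomposing all the collar and footprint structure with $t\mapsto 1-t$; this sends an additive (weak) Kuranishi cobordism from $\Kk^0$ to $\Kk^1$ to one from $\Kk^1$ to $\Kk^0$, interchanging $\p^0$ and $\p^1$, and preserves all smoothness and additivity requirements since it is a diffeomorphism of $[0,1]$.

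The substantive point is transitivity, which I would handle by gluing two cobordisms along a common boundary. Suppose $\Kk$ is an additive (weak) Kuranishi cobordism from $\Kk^0$ to $\Kk^1$ on $(Y,\io^0_Y,\io^1_Y)$, and $\Kk'$ one from $\Kk^1$ to $\Kk^2$ on $(Y',\io^0_{Y'},\io^1_{Y'})$, with $\p^1\Kk=\Kk^1=\p^0\Kk'$. Here one uses that the notions of chart and coordinate change with collared boundary (Definitions~\ref{def:Cchart}, \ref{def:Ccc}) force a genuine product structure $A^1_\eps\times\p^1\bK_I$, resp.\ $A^0_{\eps'}\times\p^0\bK'_{I'}$, on a collar of width $\min(\eps,\eps')$ about the matched boundary component. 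Since $\p^1\Kk$ and $\p^0\Kk'$ agree as smooth (weak) Kuranishi atlases, these product collars can be identified and the underlying collared cobordisms $Y$, $Y'$ glued to a collared cobordism $Y\cup_{\Kk^1}Y'$ (after a monotone reparametrization of $[0,1]$), over which the charts and coordinate changes from $\Kk$ and $\Kk'$ patch to a single tuple $\Kk\cup_{\Kk^1}\Kk'$ of charts and transition data. I would verify that this tuple is again a (weak) Kuranishi cobordism: the index sets combine as $\Ii_{\Kk}\cup_{\Ii_{\Kk^1}}\Ii_{\Kk'}$ via the injections $\io^\al$ of Definition~\ref{def:Kcobord}, the footprints still cover, the collar form requirements hold by construction on the outer boundaries $\p^0 Y$ and $\p^1 Y'$, and the (weak) cocycle condition holds since it holds separately on the two halves and the overlap lies in the product region where it reduces to the (weak) cocycle condition for $\Kk^1$ times $\id$ on an interval. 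Additivity is clear since every obstruction space $E_I$ of the glued cobordism is one of the original $E_I$'s. This is essentially the content of the topological gluing lemma in \cite{MW:top}; the only additions are that the identifications of collars are diffeomorphisms and that the product structures are smooth, both of which are immediate from Definitions~\ref{def:Cchart} and \ref{def:Ccc}. The one place where I expect to spend real care is the reparametrization of the interval near the glued boundary: one must choose a smooth monotone map $[0,1]\sqcup[0,1]\to[0,1]$ that is the identity on a collar of $0$ in the first copy and of $1$ in the second and that matches the collar widths, and then check that composing with it preserves the ``tubular neighbourhood diffeomorphism'' property demanded of the collar embeddings $\io^\al_U$ — this is routine but is the only genuinely smooth (as opposed to topological) input, and it is exactly why the product-form axioms were imposed.

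Statement (iii) then follows by combining commensurability with the first two parts. If $\Kk^0$ and $\Kk^1$ are directly commensurate additive weak Kuranishi atlases on $X$, there is a common additive weak extension $\Kk^{01}$ with basic charts indexed by $\Nn^{01}=\Nn^0\sqcup\Nn^1$ such that $\Kk^0$ and $\Kk^1$ are full subatlases of $\Kk^{01}$ (in the sense that $\bK^{01}_I=\bK^\al_I$ and $\Hat\Phi^{01}_{IJ}=\Hat\Phi^\al_{IJ}$ whenever $I,J\subset\Nn^\al$). I would show that $\Kk^0$ is additively weakly concordant to $\Kk^{01}$, and likewise $\Kk^1$, whence $\Kk^0$ and $\Kk^1$ are additively weakly concordant by (ii). To build a concordance from $\Kk^0$ to $\Kk^{01}$ on $[0,1]\times X$, one takes the product concordance $[0,1]\times\Kk^{01}$ and, for each basic chart $\bK^\al_i$ with $(\al,i)\in\Nn^1$, replaces $[0,1]\times\bK^\al_i$ by the restriction of this product chart to the collared footprint $(0,1]\times F^\al_i\subset[0,1]\times X$ — i.e.\ one simply deletes the part of these extra charts sitting over $\{0\}\times X$, using a restriction of the domain in the sense of Definition~\ref{def:restr}; the charts with $(\al,i)\in\Nn^0$ remain full product charts. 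The result is a weak Kuranishi cobordism on $[0,1]\times X$ whose restriction to $\{0\}\times X$ is $\Kk^0$ (only the $\Nn^0$-charts survive there) and whose restriction to $\{1\}\times X$ is $\Kk^{01}$; additivity and the weak cocycle condition are inherited from $[0,1]\times\Kk^{01}$, and the collar forms are the canonical product ones, truncated where necessary — here one needs the extra footprints to be collared subsets, which holds because $(0,1]\times F^\al_i$ meets $\im\io^0_Y$ trivially and $\im\io^1_Y$ in the required product form. For a general commensuration $\Kk_0=\Kk^0,\Kk_1,\dots,\Kk_\ell=\Kk^1$ one concatenates these concordances using transitivity from (ii). The main obstacle here, as in the gluing argument, is organizational rather than deep: one must be careful that the truncated product charts still satisfy the collar-form requirements of Definition~\ref{def:CKS}, which is why I insist on truncating to $(0,1]\times F^\al_i$ (an honestly collared subset) rather than to some ad hoc neighbourhood.
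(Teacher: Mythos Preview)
Your treatment of (i) and (ii) is essentially the paper's: reflexivity via product, symmetry via time reversal, transitivity via collar gluing, with the only additions to \cite{MW:top} being the observation that the collar identifications are diffeomorphisms and that additivity survives because every obstruction space of the glued cobordism is literally one of the old ones.

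For (iii), however, your construction has a genuine error, and the paper takes a different route. You claim that $(0,1]\times F_i$ ``meets $\im\io^0_Y$ trivially''; this is false, since $\im\io^0_Y=[0,\eps)\times X$ intersects $(0,1]\times F_i$ in $(0,\eps)\times F_i$. In the sense of Definition~\ref{def:collarset}, a collared set with $\p^0 F=\emptyset$ must be \emph{disjoint} from some collar $[0,\de)\times X$, so $(0,1]\times F_i$ is not collared, and your restricted chart is neither a chart with collared boundary nor one whose footprint is precompact in the interior. You could repair this by restricting to $(\de,1]\times F_i$ for some fixed $\de>0$, but you must then do the same for every transition chart $\bK^{01}_I$ whose index set $I$ contains an element of $\Nn^1$. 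You do not address transition charts at all, and without this modification the family is not a Kuranishi cobordism: by Definition~\ref{def:Kfamily}(ii) the footprint of the transition chart indexed by $I$ must equal the intersection $\bigcap_{i\in I}$ of the basic footprints, which for mixed $I$ is $(\de,1]\times F^{01}_I$, not $[0,1]\times F^{01}_I$.

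The paper avoids both issues by building a \emph{direct} concordance from $\Kk^0$ to $\Kk^1$ rather than passing through $\Kk^{01}$. Its charts are $A_I\times\bK^{01}_I$ with $A_I=[0,\tfrac23)$ for $I\subset\Nn^0$, $A_I=(\tfrac13,1]$ for $I\subset\Nn^1$, and $A_I=(\tfrac13,\tfrac23)$ for mixed $I$; the coordinate changes are $\id_{A_I\cap A_J}\times\Hat\Phi^{01}_{IJ}$. Then only the $\Nn^0$-charts survive at $t=0$, only the $\Nn^1$-charts survive at $t=1$, the mixed charts have footprints precompactly contained in the interior, and every footprint is trivially collared. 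Additivity and the weak cocycle condition are inherited from $\Kk^{01}$ because the index set is unchanged. Your approach can be salvaged with the two fixes above, but the paper's construction is both shorter (no appeal to transitivity) and sidesteps the collar bookkeeping entirely.
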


\begin{proof} 
Filtered (weak) cobordism is reflexive by Remark~\ref{rmk:restrict2}~(iv), 
and additive concordance is reflexive by Remark~\ref{rmk:restrict2}~(iii).
Symmetry and transitivity of the relations in (i) and (ii) are proven for topological Kuranishi cobordisms (resp.\ concordances) in \cite[Lemma~4.1.16]{MW:top}. 
The proof extends to smooth additive Kuranishi cobordisms since the relabeling and reparametrization of boundary collars does not affect smoothness or additivity of obstruction spaces. 

 To prove transitivity in (i),
 \cite{MW:top} constructs a concatenation of two (weak) topological Kuranishi cobordisms -- the first $\Kk^{[0,1]}$ on $(Y^{[0,1]}, \io_{Y^{[0,1]}}^0, \io_{Y^{[0,1]}}^1)$ from an atlas $\Kk^0$ on $Y^0= \p^0 Y^{[0,1]}$  to an atlas $\Kk^1$ on $Y^1= \p^1 Y^{[0,1]}$, and the second $\Kk^{[1,2]}$ on $(Y^{[1,2]}, \io_{Y^{[1,2]}}^0, \io_{Y^{[1,2]}}^1)$ from the atlas $\Kk^1$ on $Y^1 = \p^0 Y^{[1,2]}$ to an atlas $\Kk^2$ on $Y^2 = \p^1Y^{[1,2]}$.   
The result is a (weak) topological Kuranishi cobordism $\Kk^{[0,2]}$, whose construction in the special case of smooth atlases $\Kk^{[0,1]}$, $\Kk^{[1,2]}$ can be summarized as follows:
\begin{itemlist}
\item
The underlying space is
$\displaystyle \;
 Y^{[0,2]} = Y^{[0,1]} \underset{\scriptscriptstyle Y^1}{\cup}\, Y^{[1,2]} \; : = \;\quotient{ Y^{[0,1]}{\sqcup} Y^{[1,2]}}
{ \scriptstyle
 \iota^1_{Y^{[0,1]}}(1,\cdot) \,\sim\, \iota^0_{Y^{[1,2]}}(1,\cdot) } 
$
\item
The index set is $\displaystyle \; \Ii_{\Kk^{[0,2]}} :=\;\Ii_{[0,1)}  \;\sqcup\; \Ii_{\Kk^1} \;\sqcup\; \Ii_{(1,2]} \;$ with
$\Ii_{[0,1)}:=\Ii_{\Kk^{[0,1]}}\less\iota_{\Kk^{[0,1]}}^1(\Ii_{\Kk^1})$, $\Ii_{(1,2]}:=\Ii_{\Kk^{[1,2]}}\less\iota_{\Kk^{[1,2]}}^0(\Ii_{\Kk^1})$.
\item
The charts are
$\bK^{[0,2]}_{I} := \bK^{[0,1]}_{I}$ for $I\in\Ii_{[0,1)}$, and $\bK^{[0,2]}_{I} := \bK^{[1,2]}_{I}$ for $I\in\Ii_{(1,2]}$.
For $I\in\Ii_{\Kk^1}$ denote by
$I^{01}\in\Ii_{\Kk^{[0,1]}}\less\Ii_{[0,1)}$, $I^{12}\in\Ii_{\Kk^{[1,2]}}\less\Ii_{(1,2]}$ the labels of the charts that restrict to $\bK_I$. Then $\bK^{[0,2]}_{I}$ is given
by boundary connected sum of domains
$$
U^{[0,2]}_{I} \;:=\; U^{[0,1]}_{I^{01}} \underset{\scriptscriptstyle U^1_I}{\cup}  U^{[1,2]}_{I^{12}}
\;:=\;
\quotient{ U^{[0,1]}_{I^{01}} \sqcup U^{[1,2]}_{I^{12}} }{ 
\iota^1_{U_{I^{01}}}(x) \sim \iota^0_{U_{I^{12}}}(x)\quad \forall x\in U^1_I , 
}  
$$
obstruction space $E^1_I$, and section resp.\ footprint map equal to 
$s^{[0,1]}_{I^{01}}$ resp.\ $\psi^{[0,1]}_{I^{01}}$ on $U^{[0,1]}_{I^{01}}$
and by $s^{[1,2]}_{I^{12}}$ resp.\ $\psi^{[1,2]}_{I^{12}}$ on $U^{[1,2]}_{I^{12}}$.
This is a smooth Kuranishi chart since the obstruction spaces
$E^{[0,1]}_{I^{01}} = E^1_I = E^{[1,2]}_{I^{12}}$ are identical by Definition~\ref{def:Cchart}~(iv) and the
sections $s^{[0,1]}_{I^{01}}, s^{[1,2]}_{I^{12}}$ are of identical product form on the boundary collar.
\vspace{.09in}
\item
The coordinate changes are $\Hat\Phi^{[0,2]}_{IJ} := \Hat\Phi^{[0,1]}_{IJ}$ for $I,J\in\Ii_{[0,1)}$ and $\Hat\Phi^{[0,2]}_{IJ} := \bK^{[1,2]}_{IJ}$ for $I,J\in\Ii_{(1,2]}$.
For $J\in\Ii_{[0,1)}$ and $I\in\Ii_{\Kk^1}$ corresponding to $I^{01}\in\Ii_{\Kk^{[0,1]}}$ with $I^{01}\subsetneq J$ the coordinate change is $\Hat\Phi^{[0,2]}_{IJ} := \Hat\Phi^{[0,1]}_{I^{01} J}$, and similarly for $J\in\Ii_{(1,2]}$,  $I\in\Ii_{\Kk^1}$.
For $I,J\in\Ii_{\Kk^1}$ the coordinate charts corresponding to
$I^{01}, J^{01}\in\Ii_{\Kk^{[0,1]}}$, $I^{12},J^{12}\in\Ii_{\Kk^{[1,2]}}$
fit together to give a glued coordinate change $\Hat\Phi^{[0,2]}_{IJ}$ with domain and embeddings
$$
U^{[0,2]}_{IJ} := U^{[0,1]}_{I^{01}J^{01}} \underset{\scriptstyle U^1_{IJ}}\cup U^{[1,2]}_{I^{12}J^{12}} ,
\qquad
\phi^{[0,2]}_{IJ} :=
\left\{ \begin{aligned}
\phi^{[0,1]}_{I^{01}}  \;\quad &\text{on} \; U^{[0,1]}_{I^{01}J^{01}}\\
\phi^{[1,2]}_{I^{12}}  \;\quad &\text{on} \; U^{[1,2]}_{I^{12}J^{12}}
\end{aligned}
\right\}, 
\Hat\phi^{[0,2]}_{IJ} := \Hat\phi^1_{IJ} .
$$
These are smooth by product form on the boundary collars. Moreover, the index condition is local, hence transfers from $\Kk^{[0,1]},\Kk^{[1,2]}$ to $\Kk^{[0,2]}$.
\end{itemlist}

This shows transitivity of the (weak) cobordism relation.
To check that $\Kk^{[0,2]}$ is also additive, first note that for $I\in \Ii_{\Kk^1}$ we necessarily have $\{i\}\in \Ii_{\Kk^1}$ for all $i\in I$, so that additivity follows directly from additivity of $\Kk^1$,
$$
E^{[0,2]}_I \;=\; E^1_I
\;=\; \bigoplus_{i\in I} \wh\phi^1_{iI}\bigl(E^1_i\bigr)
\;=\;\bigoplus_{i\in I} \wh\phi^{[0,2]}_{iI}\bigl(E^{[0,2]}_i\bigr) .
$$
For $I\in \Ii_{[0,1)}$ (and similarly for $I\in\Ii_{(1,2]}$) each basic chart index $i\in I$ either lies in $\Ii_{[0,1)}\subset \Ii_{\Kk^{[0,1]}}$ or in $\Ii_{\Kk^1}=\Ii_{\p^1\Kk^{[0,1]}}\subset \Ii_{\Kk^{[0,1]}}$, but in both cases our construction identifies $E^{[0,2]}_i=E^{[0,1]}_i$, so that additivity follows from additivity of $\Kk^{[0,1]}$,
$$
E^{[0,2]}_I \;=\; E^{[0,1]}_I
\;=\; \bigoplus_{i\in I} \wh\phi^{[0,1]}_{iI}\bigl(E^{[0,1]}_i\bigr)
\;=\;\bigoplus_{i\in I} \wh\phi^{[0,2]}_{iI}\bigl(E^{[0,2]}_i\bigr) .
$$
This completes the proof of~(i).

To prove transitivity in (ii) we apply the same gluing construction in the special case when both $\Kk^{[0,1]}$ and $\Kk^{[1,2]}$ are additive Kuranishi cobordisms over $[0,1]\times X$. 
The result is a Kuranishi cobordism $\Kk^{[0,2]}$ over 
$$
Y^{[0,2]}=[0,1]\times X \underset{(1,x)\sim (0,x)}{\cup} [0,1]\times X.
$$
To obtain the required  Kuranishi concordance, we compose all footprint maps with the homeomorphism $Y^{[0,2]}\overset{\sim}{\to} [0,1]\times X$ induced by the affine maps $[0,1]\overset{\sim}{\to}[0,\frac 12]$ and $[0,1]\overset{\sim}{\to}[\frac 12,1]$ 
on the two factors.

To prove (iii) consider additive weak Kuranishi atlases $\Kk^0,\Kk^1$ and a common additive weak extension $\Kk^{01}$
as in Definition~\ref{def:Kcomm}.  Then an additive weak Kuranishi concordance $\Kk^{[0,1]}$ from $\Kk^0$ to $\Kk^1$ is given by
\begin{itemize}
\item
index set
$\displaystyle \;
\Ii_{\Kk^{[0,1]}} := \Ii_{\Kk^{01}}  =  \bigl\{ I\subset\Nn^{01} \,\big|\, {\textstyle\bigcap_{i\in I}} F^{01}_i \neq \emptyset \bigr\}$;
\item
charts $\displaystyle\; \bK^{[0,1]}_I := A_I\times \bK^{01}_I $
with $A_I= [0,\tfrac 23)$ for $I\subset\Nn^0$, $A_I = (\tfrac 13,1]$ for $I \subset \Nn^1$, and $A_I=(\tfrac 13,\tfrac 23)$ otherwise;
\item
coordinate changes
$\displaystyle\; \Hat\Phi^{[0,1]}_{IJ} := \id_{A_I\cap A_J}\times \Hat\Phi^{01}_{IJ}\; $
on 
$\displaystyle\; U^{[0,1]}_{IJ} := (A_I\cap A_J)\times U^{01}_{IJ}$.
\end{itemize}
This proves (iii) in the case of direct commensurability since additivity and weak cocycle conditions follow from the corresponding properties of $\Kk^{01}$. In particular, note that additivity makes use of the fact that $\Kk^{[0,1]}$ has the same index set as the additive Kuranishi atlas $\Kk^{01}$.
General commensurability thus gives rise to a sequence of concordances, which can be composed as in (ii) to yield a single concordance that proves (iii).
\end{proof}

\subsection{Tameness, shrinkings, and the Hausdorff property}
\label{ss:tame}   \hspace{1mm}\\ \vspace{-3mm}

The weaker notion of Kuranishi atlas in Definition~\ref{def:K} is crucial for two reasons. Firstly, in the application to moduli spaces of holomorphic curves, as outlined in \S\ref{ss:gw}, it is not clear how to construct Kuranishi atlases that satisfy the cocycle condition.
Secondly, it is hard to preserve the cocycle condition while manipulating Kuranishi atlases, for example by shrinking as we do below.
On the other hand, the constructions of transition data on compactified holomorphic curve moduli spaces, e.g.\ in 
\cite{MW:GW,Mcn}
yield additive weak Kuranishi atlases in the sense of Definition~\ref{def:Ku2}.
The purpose of this section now is to provide tools for refining additive weak Kuranishi atlases to achieve a tameness condition, which in turn implies the Hausdorff property of the virtual neighbourhood.

We will be able to import most of these results from our work in \cite{MW:top} on topological Kuranishi atlases and cobordisms. However, the notion of additivity does not apply to (weak) topological Kuranishi atlases (or cobordisms) since the bundles $\E_I\to U_I$ generally do not have linear or even isomorphic fibers in any sense. Instead, \cite[Definition~3.1.3]{MW:top} generalizes
the notion of additivity to the notion of a {\bf filtration} given by closed (and in the cobordism case collared) subsets $\E_{IJ}\subset \E_J$ for all $I,J\in\Ii_\Kk$ with $I\subset J$, which satisfy
\begin{enumerate}
\item $\E_{JJ}= \E_J$ and $\E_{\emptyset J} = \im 0_J$ for all $J\in\Ii_\Kk$;
\item 
$\Hat\Phi_{JK}\bigl(
\pr_J^{-1}(U_{JK})\cap
\E_{IJ}\bigr) = \E_{IK}\cap \pr_K^{-1}(\im \phi_{JK})$ for all $I,J,K\in\Ii_\Kk$ with
${I\subset J\subsetneq K}$;
\item  $\E_{IJ}\cap \E_{HJ} = \E_{(I\cap H)J}$ for all $I,H,J\in\Ii_\Kk$ with $I, H \subset J$;
\item  $\im \phi_{IJ}$ is an open subset of $\s_J^{-1}(\E_{IJ})$
for all $I,J\in \Ii_\Kk$ with $I\subsetneq J$.
\end{enumerate}
Here and throughout, we will treat atlases and cobordisms together, and note that any definition or result on these will also apply to concordances -- as special cases of cobordisms.

The following shows that additivity implies the existence of a natural filtration.
Moreover, we show that additivity allows us to extend 
the automatic 
control of transition maps on the zero sets $s_J^{-1}(0)$ to a 
weaker control on larger parts of the Kuranishi domains $U_J$.

\begin{lemma} \label{le:Ku3} 
Let $\Kk$ be an additive weak Kuranishi atlas or cobordism.
\begin{itemlist}
\item[\rm (a)] 
The induced weak topological Kuranishi atlas has a filtration $\E_{IJ}: = U_J\times \Hat\phi_{IJ}(E_I)$, using the conventions $E_{\emptyset}:= \{0\}$ and $\Hat\phi_{JJ}:=\id_{E_J}$. 
\item[\rm (b)]
We have for any $H,I,J\in\Ii_\Kk$
\begin{align} \label{eq:CIJ}
 H,I\subset J
\quad\Longrightarrow\quad &
s_J^{-1}(E_I) \cap s_J^{-1}(E_H) = s_J^{-1}(E_{I\cap H}), \\
\label{eq:CIJ0}
H\cap I=\emptyset \quad\Longrightarrow\quad & s_J^{-1}(E_I) \cap s_J^{-1}(E_H) = s_J^{-1}(0).
\end{align}
\end{itemlist} 
\end{lemma}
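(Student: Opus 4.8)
\textbf{Proof plan for Lemma~\ref{le:Ku3}.}

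The plan is to prove (a) and (b) directly from the additivity hypothesis $E_J = \bigoplus_{i\in J}\Hat\phi_{iJ}(E_i)$, the cocycle condition on the linear embeddings $\Hat\phi_{JK}\circ\Hat\phi_{IJ}=\Hat\phi_{IK}$, and the tangent bundle condition from Lemma~\ref{le:change}. For part (a), I would verify the four filtration axioms of \cite[Definition~3.1.3]{MW:top} for the candidate $\E_{IJ}:=U_J\times\Hat\phi_{IJ}(E_I)$. Axiom~(i) is immediate: $\E_{JJ}=U_J\times E_J=\E_J$ and $\E_{\emptyset J}=U_J\times\{0\}=\im 0_J$. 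Axiom~(ii) follows from the fact that a coordinate change $\Hat\Phi_{JK}=\phi_{JK}\times\Hat\phi_{JK}$ splits as a product together with the cocycle property $\Hat\phi_{JK}\circ\Hat\phi_{IJ}=\Hat\phi_{IK}$, so that $\Hat\Phi_{JK}$ carries $\pr_J^{-1}(U_{JK})\cap(U_J\times\Hat\phi_{IJ}(E_I))$ onto $\phi_{JK}(U_{JK})\times\Hat\phi_{IK}(E_I)$, which is exactly $\E_{IK}\cap\pr_K^{-1}(\im\phi_{JK})$. Axiom~(iii) is the key algebraic point and uses additivity directly: since $\Hat\phi_{IJ}(E_I)=\bigoplus_{i\in I}\Hat\phi_{iJ}(E_i)$ and $\Hat\phi_{HJ}(E_H)=\bigoplus_{i\in H}\Hat\phi_{iJ}(E_i)$ are spanned by distinct ``coordinate'' subspaces $\Hat\phi_{iJ}(E_i)$ of the direct sum $E_J=\bigoplus_{i\in J}\Hat\phi_{iJ}(E_i)$, their intersection is $\bigoplus_{i\in I\cap H}\Hat\phi_{iJ}(E_i)=\Hat\phi_{(I\cap H)J}(E_{I\cap H})$ (using the convention $E_\emptyset=\{0\}$); hence $\E_{IJ}\cap\E_{HJ}=\E_{(I\cap H)J}$. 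Axiom~(iv), that $\im\phi_{IJ}$ is open in $s_J^{-1}(\E_{IJ})=s_J^{-1}(\Hat\phi_{IJ}(E_I))$, is precisely the last assertion of Lemma~\ref{le:change} (equation~\eqref{inftame} and the surrounding statement).

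For part (b), equation~\eqref{eq:CIJ} is the zero-set incarnation of axiom~(iii): for $H,I\subset J$ a point $x\in U_J$ lies in $s_J^{-1}(E_I)\cap s_J^{-1}(E_H)$ iff $s_J(x)\in\Hat\phi_{IJ}(E_I)\cap\Hat\phi_{HJ}(E_H)$, and by the direct-sum argument just given this intersection equals $\Hat\phi_{(I\cap H)J}(E_{I\cap H})$, so $x\in s_J^{-1}(E_{I\cap H})$. Equation~\eqref{eq:CIJ0} is the special case $I\cap H=\emptyset$, where $E_{I\cap H}=E_\emptyset=\{0\}$, so $s_J^{-1}(E_I)\cap s_J^{-1}(E_H)=s_J^{-1}(0)$. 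Both directions of each equality are straightforward once the decomposition of $\Hat\phi_{IJ}(E_I)$ into coordinate summands is in hand, since $I\cap H\subset I$ gives $\Hat\phi_{(I\cap H)J}(E_{I\cap H})\subset\Hat\phi_{IJ}(E_I)$ and similarly for $H$, yielding the reverse inclusion.

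I expect the main obstacle to be purely bookkeeping rather than conceptual: one must be careful with the conventions $E_\emptyset=\{0\}$, $\Hat\phi_{JJ}=\id$, and with the precise meaning of the abbreviation $s_J^{-1}(E_I)=s_J^{-1}(\Hat\phi_{IJ}(E_I))$ introduced in Definition~\ref{def:Ku2}, and to check that the direct-sum decomposition $E_J=\bigoplus_{i\in J}\Hat\phi_{iJ}(E_i)$ is compatible with the composed embeddings, i.e.\ that $\Hat\phi_{IJ}(E_I)$ genuinely equals $\bigoplus_{i\in I}\Hat\phi_{iJ}(E_i)$ inside $E_J$. This last identity follows by combining the additivity of $E_I=\bigoplus_{i\in I}\Hat\phi_{iI}(E_i)$ with the linear cocycle condition $\Hat\phi_{IJ}\circ\Hat\phi_{iI}=\Hat\phi_{iJ}$. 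In the cobordism case one additionally notes that the subsets $\E_{IJ}=U_J\times\Hat\phi_{IJ}(E_I)$ are automatically collared, since the product structure on collars of $U_J$ carries no extra condition on the (fixed) fibre $E_J$ and the $\Hat\phi_{IJ}$ are constant in the collar direction by Definition~\ref{def:Ccc}. No genuinely hard analysis enters; the content is entirely the linear algebra of direct sums indexed by subsets, packaged to match the filtration axioms.
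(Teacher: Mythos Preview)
Your proposal is correct and follows essentially the same approach as the paper's proof: both establish the key intersection identity $\Hat\phi_{IJ}(E_I)\cap\Hat\phi_{HJ}(E_H)=\Hat\phi_{(I\cap H)J}(E_{I\cap H})$ from additivity plus the cocycle condition (via $\Hat\phi_{IJ}(E_I)=\bigoplus_{i\in I}\Hat\phi_{iJ}(E_i)$), then read off part~(b) by applying $s_J^{-1}$ and verify the filtration axioms~(i)--(iv) exactly as you outline, invoking Lemma~\ref{le:change} for~(iv). The only cosmetic difference is that the paper proves the intersection identity first and derives~(b) before checking the axioms, and it briefly notes that $\E_{IJ}=U_J\times\Hat\phi_{IJ}(E_I)$ is closed (a linear subspace is closed), which you might add for completeness.
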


\begin{proof}    
A first implication of additivity is that for $H,I,J \in \Ii_\Kk$ with $H,I\subset J$ we have
\begin{align} \label{eq:addd}
\Hat\phi_{IJ}(E_I)  \; \cap \; \Hat\phi_{HJ}(E_H) &\;=\; 
 \Hat\phi_{(I\cap H) J}(E_{I\cap H }) .
\end{align}
In the special case $I\cap H=\emptyset$ this holds in the sense that $\Hat\phi_{IJ}(E_I)  \cap \Hat\phi_{HJ}(E_H) =\{0\}$.
Indeed, for $H,I\subset J$ we have a direct sum ${\textstyle\bigoplus_{i\in I\cup H}}\,\Hat\phi_{iI}(E_i) \subset E_J$ and hence
\begin{align*}
\Hat\phi_{IJ}(E_I)\cap \Hat\phi_{HJ}(E_H)
&=
\Hat\phi_{IJ}\Bigl({\textstyle\bigoplus_{i\in I}}\,\Hat\phi_{iI}(E_i)\Bigr)\cap \Hat\phi_{HJ}\Bigl({\textstyle\bigoplus_{i\in H}}\,\Hat\phi_{iH}(E_i)\Bigr)  \\
&=  \Bigl({\textstyle\bigoplus_{i\in I}}\,\Hat\phi_{iJ}(E_i)\Bigr)\cap \Bigl({\textstyle\bigoplus_{i\in H}}\,\Hat\phi_{iJ}(E_i)\Bigr)  \\
&={\textstyle\bigoplus_{i\in I\cap H}}\,\Hat\phi_{iJ}(E_i)
= \Hat\phi_{(I\cap H) J} \Bigl( {\textstyle\bigoplus_{i\in I\cap H}}\,\Hat\phi_{i (I\cap H)}(E_i) \Bigr) \\
& = \Hat\phi_{(I\cap H) J} (E_{I\cap H})   .
\end{align*}
This proves \eqref{eq:addd} for $I\cap H\neq\emptyset$. In case $I\cap H=\emptyset$ the first three lines hold and result in ${\textstyle\bigoplus_{i\in I\cap H}}\,\Hat\phi_{iJ}(E_i) = \{0\}$, consistent with $E_\emptyset=\{0\}$.
Now (b) follows from applying $s_J^{-1}$ to \eqref{eq:addd}.
To check (a), first note that $U_J\times\Hat\phi_{IJ}(E_I)\subset U_J\times E_J$ is closed since $U_J\subset U_J$ and $\Hat\phi_{IJ}(E_I)\subset E_J$ are closed.
Property (i) holds by definition, and property (iii) is equivalent to \eqref{eq:addd}.
Moreover, because $\Hat\Phi_{JK}=\phi_{JK}\times\Hat\phi_{JK}$, 
property (ii) follows from the weak cocycle condition, 
\begin{align*}
\Hat\Phi_{JK}\bigl(U_{JK}\times \Hat\phi_{IJ}(E_I)\bigr) \;
&= \; \im\phi_{JK} \times \Hat\phi_{JK}\bigl(\Hat\phi_{IJ}(E_I)\bigr) \;=\;  \im\phi_{JK} \times \Hat\phi_{IK}(E_I) \\
&= \; \bigl(U_K\times \Hat\phi_{IK}(E_I)\bigr) \cap \bigl( \im \phi_{JK} \times E_K \bigr) \;=\; \E_{IK}\cap (\pr_K^{-1}(\im \phi_{JK})) .
\end{align*}
Property (iv) follows from the index condition via Lemma~\ref{le:change}.
Finally, in the case of a Kuranishi cobordism, the subsets $\E_{IJ}=U_J\times \Hat\phi_{IJ}(E_I) \subset U_J\times E_J$ inherit collar form directly from $U_J$.
\end{proof}

\begin{rmk}\label{rmk:Ku30}\rm   (i)
Even if we set $\E_{IJ}:=U_J\times \Hat\phi_{IJ} (E_I)$, the filtration properties (i)-(iv) above may not imply additivity. 
For example, if $\Ii_\Kk = \bigl\{1,2,\{12\}\bigr\}$, the filtration conditions do not imply that $E_{12}$ is generated by $\Hat\phi_{1,12}(E_1)$ and $\Hat\phi_{2,12}(E_2)$.  
Here is an explicit example. Let $\Kk$ be an atlas with three charts as follows.
The domains $U_1=(0,2)$, $U_2=(1,3)$, $U_{12}=(1,2)\times (-1,1)^2$ are related by coordinate changes with $U_{1,12}=U_{2,12}=(1,2)$ and $\phi_{i(12)}$ the obvious inclusions onto $(1,2)\times \{(0,0)\}\subset U_{12}$.
The obstruction spaces are $E_1=\R, E_2=\R, E_{12}=\R^3$, and the sections $s_1(x)=0$, $s_2(x)=0$, $s_{12}(x,y,z) = (y,y,z)$ are intertwined by the inclusions $\Hat\phi_{1(12)}(e)=(e,0,0)$, $\Hat\phi_{2(12)}(e)=(0,e,0)$.
One can check as in Example~\ref{ex:change} that the index condition holds.
Moreover, the identity \eqref{eq:addd} does hold although the atlas is not additive: 
$$
\Hat\phi_{1(12)}(E_1)\oplus  \Hat\phi_{1(12)}(E_1) = \R\times\R\times\{0\} \subsetneq E_{12}.
$$
However, \eqref{eq:addd} suffices to check --- as in the proof of Lemma~\ref{le:Ku3} --- that the sets 
$\bigl(\E_{IJ}: = U_J\times \Hat\phi_{IJ}(E_I)\bigr)_{I\subset J}$ define a filtration of $\Kk$. \MS

\NI  (ii) \ The atlases in Example~\ref{ex:nonlin} that illustrate the failure of injectivity and linearity are nonadditive.
The second example illustrating nonlinearity does not adapt to the additive case since under additivity  the inclusions $\Hat\phi_{IJ}:E_I\to E_J$ are fixed. (Nevertheless, linearity need not hold as we show in Remark~\ref{rmk:LIN}.)
However, the first example does adapt as follows. We will construct an atlas $\Kk'$ on $X=S^1$ with four basic charts $(\bK_i')_{i=1,\dots,4}$, whose footprints  $F_i': = F_i$ for $ i=1,2,3$ are as before and $F_4'=S^1$.  
For $i=1,2,3$ we take the trivial charts $\bK_i'= (U_i'=F_i, E_i'= \{0\}, s_i'\equiv 0, \psi_i'=\id)$ and for $i=4$ we define
$$
\bK_4' = \bigl(U_4'=S^1\times (-\eps,\eps),\ E_4'=\C,\  s_4'(z,x) = x, \ \psi_4'(z,0) = z\bigr).
$$ 
Then we may choose the transition charts $\bK_{i4}'$ for $i=1,2,3$ to be the basic charts $\bK_i$ from Example~\ref{ex:nonlin} with obstruction space $E_{i4}'=\C = E_i' \times E_4'$, with the exception that we remove $S^1\times [-\eps,\eps]$ from the domain $U_{34}'$ of $\bK_{34}'$, since this contains $\phi_{4,34}(U_4')$.
It is now easy to complete the definition of the additive atlas $\Kk'$ much as before so that the projection $\pi_{\Kk'}: U_{34}'\to |\Kk'|$ is not injective. 
$\hfill\er$
\end{rmk}

Now, given an additive weak Kuranishi atlas, we wish to achieve desirable topological properties of the virtual neighbourhood $|\Kk|=\bigsqcup_{I\in\Ii_\Kk} U_I / \!\!\sim$. This firstly requires a strengthening of the weak cocycle condition in order for the transition data even to induce an equivalence relation between the Kuranishi domains $U_I$. Secondly, in order for the quotient topology to be Hausdorff, the relation needs to be closed.
These properties are guaranteed -- for atlases as well as cobordisms -- by the following notion of tameness.

\begin{defn}\label{def:tame} {\rm $\!\!$ \cite[Definition~3.1.10]{MW:top}}
A (weak) Kuranishi atlas/cobordism is {\bf tame} if it is additive and for all $I,J,K\in\Ii_\Kk$ we have
\begin{align}\label{eq:tame1}
U_{IJ}\cap U_{IK}&\;=\; U_{I (J\cup K)}\qquad\qquad
\quad\;
\forall I\subset J,K ;\\
\label{eq:tame2}
\phi_{IJ}(U_{IK}) &\;=\; U_{JK}\cap s_J^{-1}(E_I)
 \qquad\forall I\subset J\subset K.
\end{align}
Here we allow equalities, using the notation $U_{II}:=U_I$ and $\phi_{II}:={\rm Id}_{U_I}$.
Further, to allow for the possibility that $J\cup K\notin\Ii_\Kk$, we define
$U_{IL}:=\emptyset$ for $L\subset \{1,\ldots,N\}$ with $L\notin \Ii_\Kk$.
Therefore \eqref{eq:tame1} includes the condition
$$
U_{IJ}\cap U_{IK}\ne \emptyset
\quad \Longrightarrow \quad F_J\cap F_K \ne \emptyset  \qquad \bigl( \quad \Longleftrightarrow\quad
J\cup K\in \Ii_\Kk \quad\bigr).
$$
\end{defn}

The notion of tameness generalizes the identities $F_J\cap F_K=F_{J\cup K}$ and $\psi_J^{-1}(F_{K}) = U_{JK}\cap s_J^{-1}(0)$ between the footprints and zero sets, which we can include into \eqref{eq:tame1} and \eqref{eq:tame2} as the case $I = \emptyset$, by using the notation 
\begin{equation}\label{eq:empty}
U_{\emptyset J}: = F_J,\qquad \phi_{\emptyset J}:=\psi_J^{-1}, 
\qquad E_{\emptyset}:= \{0\} .
\end{equation}
Indeed, the first tameness condition \eqref{eq:tame1} extends the identity for intersections of footprints -- which is equivalent to $\psi_I^{-1}(F_J)\cap \psi_I^{-1}(F_K) = \psi_I^{-1}(F_{J\cup K})$ for all $I\subset J,K$ 
-- to the domains of the transition maps in $U_I$. 
In particular, with $I\subset J\subset K$ it implies nesting of the domains of the transition maps,
\begin{equation}\label{eq:tame4}
U_{IK}\subset U_{IJ} \qquad\forall I\subset J \subset K.
\end{equation}
(This in turn generalizes the $I=\emptyset$ case $F_K\subset F_J$ for $J \subset K$.)
The second tameness condition \eqref{eq:tame2} extends the relation between footprints and zero sets -- equivalent to $\phi_{IJ}(\psi_I^{-1}(F_K)) = U_{JK}\cap s_J^{-1}(0)$ for all $I\subset J\subset K$ --
to a relation between domains of transition maps and preimages under the section of corresponding subbundles.
In particular, with $J=K$ it controls the image of the transition maps,
generalizing the $I=\emptyset$ case $\psi_J^{-1}(F_J) =  s_J^{-1}(0)$ to
$\phi_{IJ}(U_{IJ}) =  s_J^{-1}(E_I)$ for all $I\subset J$.

An important topological consequence of this identity is that the images of the transition maps $\phi_{IJ}$ 
are closed subsets of the Kuranishi domain $U_J$ for all $I\subset J$,
\begin{equation}\label{eq:tame3}
\im\phi_{IJ} =  s_J^{-1}(E_I) \subset U_J ,
\end{equation}
since $E_I\subset E_J$ is a closed subset and the section $s_J$ is continuous.
Moreover, \eqref{eq:tame3} strengthens the inclusion $\im\phi_{IJ}\subset  s_J^{-1}(E_I)$ 
which follows from the section compatibility \eqref{eq:map-square} of any coordinate change.
It is however neither necessary nor sufficient for the ``infinitesimal tameness'' condition
$\im\rd\phi_{IJ}=(\rd s_J)^{-1}\bigr(\Hat\phi_{IJ}(E_I)\bigl)$,
that is a part of the index condition \eqref{inftame}.
A first 
motivation for our notion of tameness is the following.

\begin{lemma}\label{le:tame0}
Suppose that the weak Kuranishi atlas or cobordism $\Kk$ is tame. Then the strong cocycle condition in Definition~\ref{def:cocycle} is satisfied, i.e.\ we have $\Hat\Phi_{JK}\circ \Hat\Phi_{IJ}=\Hat\Phi_{IK}$ for each $I,J\subset K$ with equality of domains $U_{IJ}\cap \phi_{IJ}^{-1}(U_{JK}) = U_{IK}$.
\end{lemma}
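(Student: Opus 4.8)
\textbf{Proof plan for Lemma~\ref{le:tame0}.}
The plan is to show directly that the weak cocycle condition, which is part of the definition of any weak Kuranishi atlas, can be upgraded to the strong cocycle condition once tameness is assumed. Since the weak cocycle condition already gives us $\Hat\phi_{JK}\circ\Hat\phi_{IJ}=\Hat\phi_{IK}$ for the linear maps, and gives $\phi_{JK}\circ\phi_{IJ}=\phi_{IK}$ on the overlap $\phi_{IJ}^{-1}(U_{JK})\cap U_{IK}$ of domains, the only thing left to prove is the equality of domains
$$
U_{IJK} := U_{IJ}\cap \phi_{IJ}^{-1}(U_{JK}) \;=\; U_{IK}.
$$
Once this is established, the three coordinate changes $\Hat\Phi_{IJ}$, $\Hat\Phi_{JK}$, $\Hat\Phi_{IK}$ agree as maps (not just on an overlap), so by Definition~\ref{def:cocycle} the strong cocycle condition holds.

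The key computation uses the two tameness identities together with Lemma~\ref{le:Ku3}. First I would handle the inclusion $U_{IK}\subset U_{IJ}\cap\phi_{IJ}^{-1}(U_{JK})$. The inclusion $U_{IK}\subset U_{IJ}$ is exactly the nesting property \eqref{eq:tame4}, which follows from the first tameness condition \eqref{eq:tame1}. For $U_{IK}\subset\phi_{IJ}^{-1}(U_{JK})$, I would apply the second tameness condition \eqref{eq:tame2} in the form $\phi_{IJ}(U_{IK})=U_{JK}\cap s_J^{-1}(E_I)$, which in particular gives $\phi_{IJ}(U_{IK})\subset U_{JK}$, i.e. $U_{IK}\subset\phi_{IJ}^{-1}(U_{JK})$. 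For the reverse inclusion $U_{IJ}\cap\phi_{IJ}^{-1}(U_{JK})\subset U_{IK}$, take $x\in U_{IJ}$ with $\phi_{IJ}(x)\in U_{JK}$. Using the section intertwining relation $s_J\circ\phi_{IJ}=\Hat\phi_{IJ}\circ s_I|_{U_{IJ}}$ from Remark~\ref{rmk:tchange}, we see that $s_J(\phi_{IJ}(x))=\Hat\phi_{IJ}(s_I(x))\in\Hat\phi_{IJ}(E_I)$, so $\phi_{IJ}(x)\in U_{JK}\cap s_J^{-1}(E_I)$; applying \eqref{eq:tame2} again this set equals $\phi_{IJ}(U_{IK})$, and since $\phi_{IJ}$ is injective (being an embedding) we conclude $x\in U_{IK}$.

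I do not anticipate a serious obstacle here: the proof is essentially a set-theoretic manipulation driven by the two tameness axioms and the injectivity of $\phi_{IJ}$, together with the section-compatibility square of coordinate changes. The one point to handle with a little care is making sure that all the index sets involved actually lie in $\Ii_\Kk$ (so that the relevant charts and transition domains are defined) — but this is automatic from $I\subset J\subset K$ with $K\in\Ii_\Kk$, which forces $F_K\subset F_J\subset F_I$ nonempty, hence $I,J\in\Ii_\Kk$. Since this is precisely the smooth analogue of a statement proved for topological Kuranishi atlases in \cite{MW:top}, an alternative (and shorter) route would be simply to apply the corresponding result of \cite{MW:top} to the induced topological Kuranishi atlas, noting that tameness, the cocycle conditions, and equality of domains are all purely topological/set-theoretic properties unaffected by the smooth structure; I would likely present the proof this way for brevity, citing \cite[Lemma~3.1.11]{MW:top} (or the appropriately numbered statement) and remarking that the smooth maps coincide because they coincide as continuous maps.
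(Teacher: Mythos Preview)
Your proposal is correct and takes essentially the same approach as the paper: the paper proves the domain equality via the chain $\phi_{IJ}(U_{IK}) = U_{JK}\cap s_J^{-1}(E_I) = U_{JK}\cap \phi_{IJ}(U_{IJ})$ using \eqref{eq:tame2} and \eqref{eq:tame3}, then applies $\phi_{IJ}^{-1}$ together with \eqref{eq:tame4}, which is exactly your two-inclusion argument in compressed form (your use of section compatibility to get $\phi_{IJ}(x)\in s_J^{-1}(E_I)$ is the direction of \eqref{eq:tame3} actually needed). The paper gives this short direct proof rather than citing \cite{MW:top}, and the reference to Lemma~\ref{le:Ku3} in your plan is not actually needed.
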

\begin{proof}
From the tameness conditions \eqref{eq:tame2} and \eqref{eq:tame3} we obtain for all $I\subset J\subset K$ 
$$
\phi_{IJ}(U_{IK})
= U_{JK}\cap s_J^{-1}(E_{I}) 
= U_{JK}\cap \phi_{IJ}(U_{IJ}) .
$$
Applying $\phi_{IJ}^{-1}$ to both sides and using \eqref{eq:tame4} implies equality of the domains.
Then the weak cocycle condition $\phi_{JK}\circ \phi_{IJ}=\phi_{IK}$ on the overlap of domains is identical to the strong cocycle condition.
\end{proof}

This Lemma shows in particular that a tame weak Kuranishi atlas/cobordism is in fact a Kuranishi atlas/cobordism, so that we will in the following label it as tame Kuranishi atlas/cobordism.
The following shows that this notion is compatible with cobordism notions and implies the desired topological properties of the virtual neighbourhood -- which is well defined due to the cocycle condition being satisfied by Lemma~\ref{le:tame0}.

\begin{rmk} \label{rmk:restrict2}\rm  

(i) The weak topological Kuranishi atlas/cobordism induced by a tame Kuranishi atlas/cobordism is tame in the sense of \cite[Definition~3.1.10]{MW:top}, since additivity by Lemma~\ref{le:Ku3}~(a) yields a canonical filtration with $\s_J^{-1}(\E_{IJ})=s_J^{-1}(E_I)$.
\MS\NI
(ii)
If $\Kk$ is a tame Kuranishi atlas, then $[0,1]\times \Kk$ is a tame Kuranishi concordance.

\MS
\NI (iii) 
If $\Kk$ is a tame Kuranishi cobordism, then both restrictions $\p^\al \Kk$ are also tame. 
$\hfill\er$  
\end{rmk}

The main motivation for our notion of tameness is the fact that it implies all the desirable topological properties of the virtual neighbourhood, as follows.

\begin{proposition}\label{prop:Khomeo}
Any tame Kuranishi atlas/cobordism $\Kk$ has the following Hausdorff, homeomorphism, and linearity properties:
\begin{itemize}
\item
The realizations $|\Kk|$ and $|\bE_{\Kk}|$ are Hausdorff in the quotient topology.
\item
For each $I\in \Ii_{\Kk} = \Ii_\Kk$ the projection maps $\pi_{\Kk}: U_I\to |\Kk|$ and
$\pi_{\Kk}: U_I\times E_I \to |\bE_{\Kk}|$ are homeomorphisms onto their images and fit into a commutative diagram
\begin{equation}\label{eq:lin}
\begin{array}{ccc} 
U_I\times E_I & \stackrel{\pi_{\Kk}}\longhookrightarrow & |\bE_{\Kk}|  \quad \\
\;\; \downarrow \scriptstyle \pr_I    & & \;\; \downarrow \scriptstyle |\pr_{\Kk}| \\
U_I &
\stackrel{\pi_{\Kk}} \longhookrightarrow  &|\Kk| \quad 
\end{array}
\end{equation}
\item
The horizontal maps in the above diagram intertwine the vector space structure on $E_I$ with a unique vector space structure on the fibers of $|\pr_{\Kk}|$.
\end{itemize}
\end{proposition}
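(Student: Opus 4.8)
The plan is to import the corresponding topological statement from \cite{MW:top} and then check that the additional smooth structure does not interfere. More precisely, the weak topological Kuranishi atlas/cobordism induced by $\Kk$ is tame by Remark~\ref{rmk:restrict2}~(i), with the canonical filtration $\E_{IJ}=U_J\times\Hat\phi_{IJ}(E_I)$ of Lemma~\ref{le:Ku3}~(a) satisfying $\s_J^{-1}(\E_{IJ})=s_J^{-1}(E_I)$. Hence \cite[Proposition~3.3.1 and Lemma~3.3.3]{MW:top} (the topological versions of the Hausdorff, homeomorphism, and linearity properties) apply verbatim and yield the first two bullet points as well as the existence statement in the third. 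Since the realization of $\Kk$ is by definition identical to that of its induced topological Kuranishi atlas, and likewise for $\bE_\Kk$, the maps $\pi_\Kk:U_I\to|\Kk|$ and $\pi_\Kk:U_I\times E_I\to|\bE_\Kk|$ and the projection $|\pr_\Kk|$ are literally the same maps in both settings, so nothing further needs to be verified for the first two bullets.

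For the linearity bullet I would argue as follows. Fix $p\in|\Kk|$ and set $\Ii_p:=\{I\in\Ii_\Kk\mid p\in\pi_\Kk(U_I)\}$. For each $I\in\Ii_p$ let $x_I\in U_I$ be the unique point (by the injectivity part of the homeomorphism property) with $\pi_\Kk(x_I)=p$, so that $|\pr_\Kk|^{-1}(p)$ is covered by the images $\pi_\Kk(\{x_I\}\times E_I)$, $I\in\Ii_p$. I claim there is a unique $I_0\in\Ii_p$ with $E_{I_0}$ of minimal dimension, and that $\pi_\Kk(\{x_{I_0}\}\times E_{I_0})=|\pr_\Kk|^{-1}(p)$: indeed, whenever $I,J\in\Ii_p$ the tameness conditions force $I\cap J\in\Ii_p$ (using \eqref{eq:tame1} applied to $\phi_{(I\cap J)I}^{-1}(x_I)$, which one checks is nonempty since $x_I\in\phi_{(I\cap J)I}(U_{(I\cap J)I})=s_I^{-1}(E_{I\cap J})$ by \eqref{eq:tame2}, \eqref{eq:tame3}), so $\Ii_p$ is closed under intersections and has a least element $I_0=\bigcap_{I\in\Ii_p}I$. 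Every morphism relating $x_{I_0}$ to $x_J$ for $J\in\Ii_p$ has the form $\Hat\Phi_{I_0J}$ whose linear part $\Hat\phi_{I_0J}:E_{I_0}\to E_J$ is an injection intertwining the equivalence relation on $\Obj_{\bE_\Kk}$; hence $\pi_\Kk(\{x_{I_0}\}\times E_{I_0})=|\pr_\Kk|^{-1}(p)$ and $\pi_\Kk|_{\{x_{I_0}\}\times E_{I_0}}$ is a bijection onto the fiber (injectivity from the homeomorphism property applied to $\bE_\Kk$, surjectivity just shown). Transport the vector space structure of $E_{I_0}$ through this bijection. To see that the transported structure is independent of choices and that $\pi_\Kk|_{\{x_I\}\times E_I}$ is linear for every $I\in\Ii_p$, observe that the composite $E_{I_0}\xrightarrow{\Hat\phi_{I_0I}}E_I\xrightarrow{\pi_\Kk(x_I,\cdot)}|\pr_\Kk|^{-1}(p)$ equals $\pi_\Kk(x_{I_0},\cdot)$ by functoriality of $\pi_{\bE_\Kk}$, and $\Hat\phi_{I_0I}$ is linear; since $\pi_\Kk(x_{I_0},\cdot)$ is a linear isomorphism onto the fiber, so is $\pi_\Kk(x_I,\cdot)\circ\Hat\phi_{I_0I}$, and because $\Hat\phi_{I_0I}$ is a linear injection with image all of $\im\Hat\phi_{I_0I}$, the map $\pi_\Kk(x_I,\cdot)$ restricted to $\im\Hat\phi_{I_0I}$ is linear; finally $\im\Hat\phi_{I_0I}=E_I$ would need $I=I_0$, so in general one still must check linearity on the complement — but here one uses that $\pi_\Kk(x_I,\cdot)$ is injective and that any two points of $|\pr_\Kk|^{-1}(p)$ already lie in the image of $\pi_\Kk(x_{I_0},\cdot)$, which forces the linear structures to match. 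Uniqueness of the vector space structure making the horizontal maps linear is then immediate, since $\pi_\Kk(x_{I_0},\cdot)$ already determines it.

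The one genuinely nontrivial point — which is exactly where additivity rather than mere filtration is used — is the existence of a least element $I_0$ of $\Ii_p$, i.e.\ the fact that $\Ii_p$ is closed under intersection, together with the compatibility $\Hat\phi_{I_0I}\circ(\text{structure on }E_{I_0})$ giving a well-defined structure. This is the content I would lean on \cite[Lemma~3.3.3]{MW:top} for; in the present additive smooth setting it is a direct consequence of \eqref{eq:tame1}, \eqref{eq:tame2}, \eqref{eq:tame3} and Lemma~\ref{le:Ku3}~(b), but the verification that $\phi_{(I\cap J)I}^{-1}(x_I)\neq\emptyset$ requires carefully chasing the tameness identities, so that is where I would spend the most care. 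Everything else is a formal transport of structure along the bijections supplied by the homeomorphism property, together with Remark~\ref{rmk:cobordreal} to note that in the cobordism case the collared product form of charts and coordinate changes means the fiberwise linear structure is constant along the collars, so no separate argument is needed there.
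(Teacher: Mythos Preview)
Your reduction of the first two bullet points to \cite{MW:top} is correct and matches the paper's proof (which cites \cite[Theorem~3.1.9]{MW:top}).

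For the linearity bullet, however, your argument goes in the wrong direction. You seek a \emph{minimal} element $I_0=\bigcap_{I\in\Ii_p}I$ of $\Ii_p$ and claim that $\pi_\Kk(\{x_{I_0}\}\times E_{I_0})=|\pr_\Kk|^{-1}(p)$. Both steps fail. First, $\Ii_p$ need not be closed under intersection: if $p\in\pi_\Kk(U_I)\cap\pi_\Kk(U_J)$ with $I\cap J=\emptyset$ (which by \eqref{eq:CIJ0} forces $p$ into the zero set, but such points certainly occur), then $\emptyset\notin\Ii_\Kk$ and $\Ii_p$ has no least element. Second, even when $I_0$ exists, the map $\pi_\Kk(x_{I_0},\cdot):E_{I_0}\to|\pr_\Kk|^{-1}(p)$ is not surjective: for any $J\in\Ii_p$ with $I_0\subsetneq J$, an element $[J,x_J,e]$ with $e\in E_J\setminus\Hat\phi_{I_0J}(E_{I_0})$ lies in the fiber but is not equivalent to any $(I_0,x_{I_0},e')$, since the only morphisms from $(I_0,x_{I_0})$ to $(J,x_J)$ act by the injection $\Hat\phi_{I_0J}$. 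Your sentence ``any two points of $|\pr_\Kk|^{-1}(p)$ already lie in the image of $\pi_\Kk(x_{I_0},\cdot)$'' is precisely this false surjectivity.

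The paper's proof runs the argument in the opposite direction: it takes the \emph{maximal} index $I_p:=\bigcup_{I\in\Ii_p}I$, which lies in $\Ii_\Kk$ by repeated application of \cite[Lemma~3.2.3(a)]{MW:top} (tameness gives $\pi_\Kk(U_I)\cap\pi_\Kk(U_J)\subset\pi_\Kk(U_{I\cup J})$). Then any $[I,x,e]\in|\pr_\Kk|^{-1}(p)$ satisfies $I\subset I_p$ and $(I,x,e)\sim(I_p,x_p,\Hat\phi_{II_p}(e))$, so $\pi_{\bE_\Kk}:\{x_p\}\times E_{I_p}\to|\pr_\Kk|^{-1}(p)$ is surjective; injectivity is the homeomorphism property. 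This bijection transports the linear structure, and compatibility with each $\pi_\Kk(x_I,\cdot)$ follows from the factorization $\pi_{\bE_\Kk}|_{\{x_I\}\times E_I}=\pi_{\bE_\Kk}|_{\{x_p\}\times E_{I_p}}\circ\Hat\phi_{II_p}$.
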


\begin{proof} 
Up to the linearity property, this follows from applying \cite[Theorem~3.1.9]{MW:top}  to the induced tame topological Kuranishi atlas/cobordism.
The linearity property more precisely asserts that the requirement of compatibility of linear 
structures in \eqref{eq:lin} induces a unique linear structure on each fiber $|\pr_{\Kk}|^{-1}(p)$.
To prove this we need a couple of facts from \cite[Lemma~3.2.3]{MW:top} about the induced tame topological Kuranishi atlas/cobordism.

\begin{itemlist}
\item [(a)]  
The implication $(i)\Rightarrow(ii)$ in 
part (a) of that Lemma 
says that for any  $I,J\in\Ii_{\Kk}$ 
$$
\pi_{\Kk}(U_I)\cap\pi_{\Kk}(U_J) \neq \emptyset \qquad\Longrightarrow\qquad  I\cup J \in \Ii_{\Kk}  , \quad \pi_{\Kk}(U_I)\cap\pi_{\Kk}(U_J) \subset \pi_\Kk(U_{I\cup J }) .
$$
\item[(b)]  
Part (b) of 
that Lemma implies that
for $I,J\in\Ii_{\Kk}$ with $I\subset J$ we have the factorization
$\pi_{\bE_{\Kk}}\big|_{U_{IJ}\times E_{I}} = 
\pi_{\bE_{\Kk}}\big|_{U_J\times E_J} \circ (\phi_{IJ}\times\Hat\phi_{I J})$.
\item[(c)]
$\pi_{\Kk}:U_I \to |{\Kk}|$ and $\pi_{\Kk}: U_I\times E_I \to |\bE_{\Kk}|$ are injective for each $I\in\Ii_{\Kk}$
by part (c) of 
that Lemma.
\end{itemlist}

Now to prove linearity of the fiber over $p\in |{\Kk}|$ we denote the union of index sets for which $p\in \pi_{\Kk}(U_I)$ by
$$
I_p:= \bigcup_{I\in\Ii_{\Kk}, p\in \pi_{\Kk}(U_I)}  I
\qquad \subset \{1,\ldots,N\} .
$$
Repeated use of fact (a) above shows that $I_p\in\Ii_{\Kk}$.
Moreover, $x_p:=\pi_{\Kk}^{-1}(p)\cap U_{I_p}$ exists by (a) and is unique by (c).
Next, any element $[I,x,e]\in |\pr_{{\Kk}}|^{-1}(p)$ in the fiber is represented by some vector over $(I,x)\in \pi_{\Kk}^{-1}(p)$, so we have $I\subset I_p$ and $\phi_{I I_p}(x)=x_p$, and hence
$(I,x, e)\sim (I_p,x_p,\Hat\phi_{I I_p}(e))$. Thus $\pi_{\bE_{\Kk}}:\{x_p\}\times E_{I_p}\to |\pr_{\Kk}|^{-1}(p)$ is surjective, and by (c) also injective.
Thus the requirement of linearity for this bijection induces a unique linear structure on the fiber $|\pr_{\Kk}|^{-1}(p)$.
To see that this is compatible with the injection $\pi_{\bE_{\Kk}}:\{x\}\times E_{I}\to |\pr_{\Kk}|^{-1}(p)$ for any $(I,x)\sim(I_p,x_p)$ note again that $I\subset I_p$ since $I_p$ was defined to be maximal. 
Now by (b) the embedding factors as $\pi_{\bE_{\Kk}}\big|_{\{x\}\times E_{I}} = \pi_{\bE_{\Kk}}\big|_{\{x_p\}\times E_{I_p}} \circ \Hat\phi_{I I_p}$, where $\Hat\phi_{I I_p}$ is linear by definition of coordinate changes. Thus $\pi_{\bE_{\Kk}}\big|_{\{x\}\times E_{I}}$ is linear as well.
\end{proof}

\begin{rmk}\label{rmk:LIN}\rm
It is tempting to think that additivity alone is enough to imply that the fibers of $|\pr_\Kk|:|\bE_\Kk|\to|\Kk|$ are vector spaces. However, if the first tameness condition \eqref{eq:tame1} fails because there is $x\in (U_{IJ}\cap U_{IK}) \less U_{I(J\cup K)}$, then both $E_J$ and $E_K$ embed into the fiber $|\pr_\Kk|^{-1}([I,x]))$, but may not be summable, since such sums are well defined by additivity only in $E_{J\cup K}$.
$\hfill\er$
\end{rmk}

Finally, tameness of Kuranishi atlases will be achieved in Theorem~\ref{thm:K} below by the following notion of shrinking of the footprints along with the domains of charts and transition maps, which again is the direct translation of a notion for topological Kuranishi atlases in \cite{MW:top}.

\begin{defn}\label{def:shr} {\rm $\!\!$ \cite[Definition~3.3.2]{MW:top}}
Let $\Kk=(\bK_I,\Hat\Phi_{I J})_{I, J\in\Ii_\Kk, I\subsetneq J}$ be a weak Kuranishi atlas/cobordism.   Then a weak Kuranishi atlas/cobordism $\Kk'=(\bK_I',\Hat\Phi_{I J}')_{I, J\in\Ii_{\Kk'}, I\subsetneq J}$ is a {\bf shrinking} of $\Kk$~if the following holds.
\begin{enumerate}
\item  
The footprint cover $(F_i')_{i=1,\ldots,N'}$ is a shrinking of the cover $(F_i)_{i=1,\ldots,N}$ in the sense that the $F_i'\sqsubset F_i$ are precompact open subsets which cover $X= \bigcup_{i=1,\ldots,N} F'_i$, and are such that for all subsets $I\subset \{1,\ldots,N\}$ we have
\begin{equation} \label{same FI}
F_I: = {\textstyle\bigcap_{i\in I}} F_i \;\ne\; \emptyset
\qquad\Longrightarrow\qquad
F'_I: = {\textstyle\bigcap_{i\in I}} F'_i \;\ne\; \emptyset .
\end{equation}
In particular the numbers $N=N'$ of basic charts and index sets $\Ii_{\Kk'} = \Ii_\Kk$ agree.
\item
For each $I\in\Ii_\Kk$ the chart $\bK'_I$ is the restriction of $\bK_I$ to a precompact domain 
$U_I'\sqsubset U_I$ as in Definition \ref{def:restr}.
\item
For each $I,J\in\Ii_\Kk$ with $I\subsetneq J$ the coordinate change $\Hat\Phi_{IJ}'$ is the restriction of $\Hat\Phi_{IJ}$  to the open subset $U'_{IJ}: =  \phi_{IJ}^{-1}(U'_J)\cap U'_I$
 as in Lemma~\ref{le:restrchange}.
\end{enumerate}

A {\bf tame shrinking} of $\Kk$ is a shrinking $\Kk'$ that is tame in the sense of Definition~\ref{def:tame}.
A {\bf preshrunk tame shrinking} of $\Kk$ is a tame shrinking $\Kk_{sh}$ of $\Kk$ that is obtained as shrinking of an intermediate tame shrinking $\Kk'$ of $\Kk$. 
\end{defn}

Note that a shrinking is determined by the choice of domains $U'_I\sqsubset U_I$,
so can be considered as the restriction of $\Kk$ to the subset $\bigsqcup_{I\in\Ii_\Kk} U_I'\subset\Obj_{\bB_\Kk}$. 
Any such restriction of a weak Kuranishi atlas/cobordism preserves the weak cocycle condition (since it only requires equality on overlaps), and since restriction also preserves the index condition, the result of a shrinking is indeed a weak Kuranishi atlas/cobordism.
Similarly, any shrinking of an additive altas/cobordism is additive since the obstruction spaces and index sets are unaffected by the shrinking.
Finally, a preshrunk tame shrinking is a restriction to $U^{sh}_I\sqsubset U'_I\sqsubset U_I$, where both the induced transition domains $U^{sh}_{IJ}$ and the intermediate $U'_{IJ}$ satisfy the tameness conditions 
\eqref{eq:tame1}, \eqref{eq:tame2}.

\begin{thm}\label{thm:K}
Let $\Kk$ be an additive weak Kuranishi atlas/cobordism.
\begin{enumerate}
\item
There exists a preshrunk tame shrinking of $\Kk$.
\item  
Any preshrunk tame shrinking of $\Kk$ is a metrizable Kuranishi atlas/cobordism
and satisfies the Hausdorff, homeomorphism, and linearity properties in Proposition~\ref{prop:Khomeo}.
\item
Any two preshrunk tame shrinkings $\Kk^0_{sh}, \Kk^1_{sh}$ of an additive weak Kuranishi atlas $\Kk$ with choices of metrics $d^\al$ on $|\Kk^\al_{sh}|$ are concordant by a metric tame Kuranishi concordance $(\widetilde\Kk,d)$. That is, $\widetilde\Kk$ is a tame Kuranishi concordance with $\p^\al \widetilde\Kk = \Kk^\al_{sh}$ and $d$ is an admissible collared metric on $|\widetilde\Kk|$ with $d|_{|\p^\al \widetilde\Kk|} = d^\al$.
\item
Let $\Kk$ be an additive weak Kuranishi cobordism, let $\Kk^0_{sh}, \Kk^1_{sh}$ be preshrunk tame shrinkings of $\p^0\Kk$ and $\p^1\Kk$, and let admissible metrics $d^\al$ on $|\Kk^\al_{sh}|$ be given.
Then there is a preshrunk tame shrinking of $\Kk$ and an admissible collared metric $d$ on $|\Kk|$ that provide a metric tame Kuranishi cobordism $\widetilde\Kk$ with $\p^\al \widetilde\Kk = \Kk^\al_{sh}$ and $d|_{|\p^\al \widetilde\Kk|} = d^\al$.
\end{enumerate}
\end{thm}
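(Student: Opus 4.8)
\textbf{Proof plan for Theorem~\ref{thm:K}.}
The plan is to deduce the entire statement by translating to the topological setting and quoting the corresponding result for topological Kuranishi atlases/cobordisms from \cite{MW:top}, while checking that the additional smooth and additive structure is preserved at each step. The key observation is Lemma~\ref{le:Ku3}~(a): an additive weak Kuranishi atlas or cobordism induces a \emph{filtered} weak topological Kuranishi atlas or cobordism, with the canonical filtration $\E_{IJ}=U_J\times\Hat\phi_{IJ}(E_I)$, and moreover this filtration satisfies $\s_J^{-1}(\E_{IJ})=s_J^{-1}(E_I)$. Thus the hypothesis of the topological shrinking theorem (filtered weak topological Kuranishi atlas/cobordism) is met, and that theorem produces preshrunk tame shrinkings, establishes their metrizability and the Hausdorff/homeomorphism properties, and provides the concordances asserted in (iii), (iv).

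For part (i), I would take a preshrunk tame shrinking of the \emph{induced} filtered topological atlas/cobordism from \cite{MW:top}. Since a shrinking only restricts the domains $U_I\sqsubset U_I'\sqsubset U_I$ and leaves the obstruction spaces $E_I$, the linear maps $\Hat\phi_{IJ}$, and the index sets $\Ii_\Kk$ untouched, the resulting shrinking is automatically a smooth, additive weak Kuranishi atlas/cobordism (additivity is a statement purely about the $E_I$ and $\Hat\phi_{iI}$), and the index condition is preserved under restriction by Lemma~\ref{le:restrchange}. Tameness of the topological shrinking means the identities \eqref{eq:tame1}, \eqref{eq:tame2} hold for the (purely set-theoretic) domains $U_{IJ}$ and the sets $\s_J^{-1}(\E_{IJ})=s_J^{-1}(E_I)$; hence the smooth shrinking is tame in the sense of Definition~\ref{def:tame}, and by Lemma~\ref{le:tame0} it is in fact a Kuranishi atlas/cobordism. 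For part (ii), metrizability, the Hausdorff property, the homeomorphism property, and the linearity of the fibers of $|\pr_\Kk|$ all follow from Proposition~\ref{prop:Khomeo} together with the topological metrizability result of \cite{MW:top}, which applies to any tame filtered topological Kuranishi atlas/cobordism; the smooth structure is irrelevant for these purely topological conclusions. For parts (iii) and (iv), the concordance $\widetilde\Kk$ between two preshrunk tame shrinkings, and its admissible collared metric $d$ restricting to the given $d^\al$ on the two ends, are produced by the corresponding topological construction in \cite{MW:top}; I would then observe that this concordance is built from the charts and coordinate changes of $\Kk$ (or $[0,1]\times$ of such) by shrinking and by product constructions on collars, all of which preserve smoothness of the domains, smoothness of the embeddings $\phi_{IJ}$, the linear embeddings $\Hat\phi_{IJ}$, and additivity of the obstruction spaces. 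Hence $\widetilde\Kk$ is a metric tame \emph{smooth} Kuranishi concordance (resp.\ cobordism) with the required boundary restrictions, using Remark~\ref{rmk:restrict2} to identify $\p^\al\widetilde\Kk$.

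The one point requiring genuine care — and in that sense the main obstacle — is the \emph{existence} of the tame shrinking, i.e.\ that additivity implies the hypotheses of the topological shrinking theorem. This is exactly the ``additivity implies filtration'' content of Lemma~\ref{le:Ku3}, and its verification of property (iii) of a filtration rests on the algebraic identity \eqref{eq:addd}, $\Hat\phi_{IJ}(E_I)\cap\Hat\phi_{HJ}(E_H)=\Hat\phi_{(I\cap H)J}(E_{I\cap H})$, which in turn uses that $\bigoplus_{i\in I\cup H}\Hat\phi_{iJ}(E_i)$ is a direct subspace of $E_J$ and the compatibility $\Hat\phi_{JK}\circ\Hat\phi_{IJ}=\Hat\phi_{IK}$ of the linear embeddings. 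Once Lemma~\ref{le:Ku3} is in hand, everything else is bookkeeping: checking that ``shrinking'', ``product on collars'', and ``boundary connected sum'' (as used in the concordance construction of Lemma~\ref{lem:cobord1}) all respect smoothness and additivity, none of which involves anything beyond Lemmas~\ref{le:restrchange}, \ref{le:cccomp} and the locality of the index condition via Lemma~\ref{le:change}.

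\textbf{Remark.} It is worth recording that the smooth structure plays no role in the topological core of the argument; the only smooth inputs needed are (a) that restrictions and compositions of smooth coordinate changes are smooth coordinate changes (Lemmas~\ref{le:restrchange}, \ref{le:cccomp}), and (b) that the index condition is local and hence stable under restriction and under the gluing of collared cobordisms. Consequently the proof is essentially a verification that the topological machinery of \cite{MW:top} applies verbatim, together with the genuinely new Lemma~\ref{le:Ku3}.
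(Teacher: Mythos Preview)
Your proposal is correct and follows essentially the same approach as the paper: reduce to the filtered topological setting via Lemma~\ref{le:Ku3}, quote the topological results from \cite{MW:top} (Theorem~3.1.9, Proposition~3.3.8, Theorem~4.2.6), and observe that shrinkings preserve smoothness, additivity, and the index condition so that the topological tameness implies smooth tameness. The paper's proof is more terse but makes exactly the same moves, identifying the passage ``additivity $\Rightarrow$ canonical filtration'' as the only genuinely new ingredient.
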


\begin{proof} 
Proposition~\ref{prop:Khomeo} proves (ii) up to metrizability since a preshrunk tame shrinking is in particular a tame Kuranishi atlas/cobordism. Metrizability is shown in \cite[Proposition~3.3.8]{MW:top} for a Kuranishi atlas and in \cite[Theorem~4.2.6]{MW:top} for a Kuranishi cobordism. The latter also proves (iv), which in particular implies existence of preshrunk tame shrinkings (i) for Kuranishi cobordisms.
Existence and uniqueness of preshrunk tame shrinkings as in (i), (iii) is proven for the induced filtered weak topological Kuranishi atlas
in \cite[Theorem~3.1.9]{MW:top}. This transfers to the present smooth context since the virtual neighbourhoods are identified, so that metrizability notions are the same in the smooth and topological context. Moreover, shrinkings of the underlying topological atlas/cobordism induce shrinkings in the sense of Definition~\ref{def:shr}, and shrinkings preserve additivity, so that the topological notion of tameness (involving the canonical filtration) also implies the smooth notion (involving additivity).
\end{proof}

\begin{rmk}\label{rmk:metcob}\rm  A metric tame Kuranishi  cobordism is a metric Kuranishi  cobordism in the sense of Definition~\ref{def:mCKS} that is also tame.  
The construction of smooth cobordisms in Lemma~\ref{lem:cobord1} can be combined with constructions of metrics in \cite[Proposition~4.2.3]{MW:top}
to show that metric tame Kuranishi cobordism is an equivalence relation.  
\end{rmk}

We end this section with further topological properties of the virtual neighbourhood of a tame Kuranishi atlas that will be useful when constructing the virtual fundamental class in \S\ref{s:VMC}.  For that purpose we need to be careful in differentiating between the quotient and subspace topology on subsets of the virtual neighbourhood, as follows.

\begin{definition} \label{def:topologies} {\rm $\!\!$ \cite[Definition~3.1.14]{MW:top}}
For any subset $\Aa\subset \Obj_{\bB_\Kk}$ of the union of domains of a Kuranishi atlas/cobordism $\Kk$, we denote by 
$$
\|\Aa\|:=\pi_\Kk(\Aa)\subset|\Kk| , 
\qquad \mbox{ resp. } \quad
|\Aa|:=\pi_\Kk(\Aa)\cong \quot{\Aa}{\sim}\ ,
$$
the set $\pi_\Kk(\Aa)$ equipped with its subspace topology induced from the inclusion $\pi_\Kk(\Aa)\subset|\Kk|$, resp.\ its quotient topology induced from the inclusion $\Aa\subset \Obj_{\bB_\Kk}$ and the equivalence relation $\sim$ on $\Obj_{\bB_\Kk}$ (which is generated by all morphisms in $\bB_\Kk$, not just those between elements of $\Aa$).
\end{definition}

\begin{prop}\label{prop:Ktopl1}  
Let $\Kk$ be a tame Kuranishi atlas/cobordism.
\begin{enumerate}
\item
For any subset $\Aa\subset \Obj_{\bB_\Kk}$ the identity map $\id_{\pi_\Kk(\Aa)}: |\Aa| \to \|\Aa\|$ is continuous.
\item 
If $\Aa \sqsubset \Obj_{\bB_\Kk}$ is precompact, then both $|\ov\Aa|$ and $\|\ov\Aa\|$ are compact. In fact, the quotient and subspace topologies on $\pi_\Kk(\ov\Aa)$ coincide, that is $|\ov\Aa|=\|\ov\Aa\|$ as topological spaces.
\item
If $\Aa \sqsubset \Aa' \subset \Obj_{\bB_\Kk}$, then $\pi_\Kk(\ov{\Aa}) = \ov{\pi_\Kk(\Aa)}$ and $\pi_\Kk(\Aa) \sqsubset \pi_\Kk(\Aa')$ in the topological space $|\Kk|$.
\item
If  $\Aa \sqsubset \Obj_{\bB_\Kk}$ is precompact, then $\|\ov{\Aa}\|=|\ov\Aa|$ is metrizable; in particular this implies that $\|\Aa\|$ is metrizable.
\end{enumerate}
\end{prop}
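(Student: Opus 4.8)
The plan is to deduce Proposition~\ref{prop:Ktopl1} by reducing it to the purely topological statement already established in \cite{MW:top}, exactly in the spirit of all the other transfer lemmas in this section. Recall that a tame Kuranishi atlas/cobordism $\Kk$ induces a tame topological Kuranishi atlas/cobordism with identical virtual neighbourhood $|\Kk|$, identical object space $\Obj_{\bB_\Kk}$, and identical equivalence relation $\sim$; hence the subspace topology $\|\Aa\|$ and the quotient topology $|\Aa|$ on $\pi_\Kk(\Aa)$ are literally the same topological spaces whether computed in the smooth or the topological setting. So the first step is simply to invoke the corresponding result for topological Kuranishi atlases --- this is \cite[Lemma~3.1.15]{MW:top} (or whichever numbered statement in \cite{MW:top} collects these four facts; the proof only needs that the statements are topological in nature, so the precise reference is to the analogous proposition there). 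Applying that result to the induced topological atlas/cobordism yields (i)--(iv) verbatim.

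More concretely, I would write the proof as a short paragraph with four sentences, one per item, each quoting the matching part of the topological result. For (i), continuity of $\id:|\Aa|\to\|\Aa\|$ is the statement that the quotient topology refines the subspace topology, which holds for any subset of any quotient. For (ii), precompactness of $\Aa$ gives compactness of $\ov\Aa$, hence of its continuous images $|\ov\Aa|$ and $\|\ov\Aa\|$; since $|\ov\Aa|$ is compact, $\|\ov\Aa\|$ is Hausdorff (the ambient $|\Kk|$ is Hausdorff by Proposition~\ref{prop:Khomeo}), and a continuous bijection from a compact space to a Hausdorff space is a homeomorphism, so $|\ov\Aa|=\|\ov\Aa\|$ as topological spaces. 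For (iii), the statement $\pi_\Kk(\ov\Aa)=\ov{\pi_\Kk(\Aa)}$ and precompactness $\pi_\Kk(\Aa)\sqsubset\pi_\Kk(\Aa')$ in $|\Kk|$ follows from compactness of $\pi_\Kk(\ov\Aa)$ together with $\pi_\Kk(\Aa)\subset\pi_\Kk(\ov\Aa)\subset\pi_\Kk(\Aa')$ and the fact that $\pi_\Kk(\ov\Aa)$, being compact in the Hausdorff space $|\Kk|$, is closed. For (iv), metrizability of $|\ov\Aa|=\|\ov\Aa\|$ follows since it is a compact Hausdorff space that is a continuous image of the separable metrizable space $\ov\Aa$ --- or, more directly, it is quoted from the topological version, which itself proves metrizability by covering $\ov\Aa$ with finitely many chart domains and building an explicit metric; and metrizability of the subspace $\|\Aa\|\subset\|\ov\Aa\|$ then follows because subspaces of metrizable spaces are metrizable.

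The only subtlety --- and the step I expect to require the most care in the write-up rather than in the mathematics --- is making sure the ambient Hausdorff property is available: items (ii), (iii), (iv) all implicitly use that $|\Kk|$ is Hausdorff, which for a tame Kuranishi atlas/cobordism is exactly Proposition~\ref{prop:Khomeo}. For a cobordism one should also note that the virtual neighbourhood is again Hausdorff (Remark~\ref{rmk:cobordreal} together with tameness), so the argument goes through uniformly. I would therefore structure the proof as: ``This follows by applying the corresponding topological statement from \cite{MW:top} to the tame topological Kuranishi atlas/cobordism induced by $\Kk$, whose realization is identical to $|\Kk|$ and which is Hausdorff by Proposition~\ref{prop:Khomeo}.'' followed by a one- or two-sentence indication of how compactness plus Hausdorffness yields the homeomorphism in (ii) and the precompactness and closure statements in (iii). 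No genuinely new argument is needed; the work is bookkeeping about which topology is which, for which Definition~\ref{def:topologies} has already set up the notation.
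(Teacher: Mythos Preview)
Your proposal is correct and takes essentially the same approach as the paper: the paper's proof is a one-sentence invocation of \cite[Proposition~3.1.16]{MW:top} applied to the induced tame topological Kuranishi atlas/cobordism, noting that the quotient and subspace topologies on $\pi_\Kk(\Aa)$ are the same in the smooth and topological settings. Your additional sketch of why each part holds (compact-to-Hausdorff bijection, closedness of compact sets in Hausdorff spaces, subspaces of metrizable spaces) is accurate but goes beyond what the paper actually writes down; the only correction is the reference number (Proposition~3.1.16, not Lemma~3.1.15).
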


\begin{proof}
This follows by applying \cite[Proposition~3.1.16]{MW:top}  to the induced tame topological Kuranishi atlas/cobordism and observing that both the quotient and the relative topologies induced on subsets $\pi_\Kk(\Aa)\subset|\Kk|$ are the same in the smooth and topological context.
\end{proof}

\begin{remark} \label{rmk:hom} \rm
In many cases we will be able to identify different topologies on subsets of the virtual neighbourhood $|\Kk|$ by appealing to the following elementary {\bf nesting uniqueness of compact Hausdorff topologies}:

Let $f:X\to Y$ be a continuous bijection from a compact topological space $X$ to a Hausdorff space $Y$. Then $f$ is in fact a homeomorphism. 
Indeed, it suffices to see that $f$ is a closed map, i.e.\ maps closed sets to closed sets, since that implies continuity of $f^{-1}$. But any closed subset of $X$ is also compact, and its image in $Y$ under the continuous map $f$ is also compact, hence closed since $Y$ is Hausdorff.

In particular, if $Z$ is a set with nested compact Hausdorff topologies $\Tt_1\subset\Tt_2$, then $\id_Z: (Z,\Tt_2)\to (Z,\Tt_1)$ is a continuous bijection, hence homeomorphism, i.e.\ $\Tt_1=\Tt_2$.
$\hfill\er$
\end{remark}

We end this section by noting that the additivity condition gives more information than the mere existence of the filtration needed to define tameness.  For example, the following lemma shows that we may obtain a limited transversality for the embeddings of the domains involved in coordinate changes, a property that is crucial to guarantee the existence of coherent (i.e.\ compatible with coordinate changes) perturbations of the canonical section $\s_\Kk$ of a tame Kuranishi atlas.
However, due to further technical complications, we will not use it directly in the constructions of \S\ref{ss:const}.

\begin{lemma}\label{le:phitrans} 
If $\Kk$ is a tame Kuranishi atlas, then for any  $J\in\Ii_\Kk$ and $H, I\subset J$ with $H \cap I\ne \emptyset$ we have
\begin{equation}\label{eq:tame5}
\im \phi_{H J}\cap\im \phi_{IJ}=\im \phi_{(H\cap I) J} ,
\end{equation}
and this intersection is transverse within the manifold $\im \phi_{(H\cup I)J}$.

In case $H\cap I=\emptyset$ we have the intersection identity\footnote{This intersection identity is consistent with \eqref{eq:empty} since $F_H\cap F_I \supset F_J$ so that $\s_J^{-1}(0) = \psi_J^{-1}(F_J) = \psi_J^{-1}(F_H\cap F_I) =  \im \phi_{\emptyset J}$.} 
$\im \phi_{H J} \cap \im \phi_{IJ} = s_J^{-1}(0)$, but no automatic transversality.
\end{lemma}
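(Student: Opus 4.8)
The statement splits into three assertions for a tame Kuranishi atlas $\Kk$ and $H,I\subset J\in\Ii_\Kk$: first the set identity $\im\phi_{HJ}\cap\im\phi_{IJ}=\im\phi_{(H\cap I)J}$ when $H\cap I\neq\emptyset$; second that this intersection is transverse within $\im\phi_{(H\cup I)J}$; and third the intersection identity $\im\phi_{HJ}\cap\im\phi_{IJ}=s_J^{-1}(0)$ in the case $H\cap I=\emptyset$. The plan is to deduce all of this from the tameness identities \eqref{eq:tame2}, \eqref{eq:tame3} and the additivity consequences \eqref{eq:CIJ}, \eqref{eq:CIJ0} of Lemma~\ref{le:Ku3}, together with the index condition in its tangent-bundle form \eqref{inftame} from Lemma~\ref{le:change}.

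First I would establish the set identity. By \eqref{eq:tame3} we have $\im\phi_{HJ}=s_J^{-1}(E_H)$ and $\im\phi_{IJ}=s_J^{-1}(E_I)$, where $s_J^{-1}(E_H):=s_J^{-1}(\Hat\phi_{HJ}(E_H))$ in the additive notation of Definition~\ref{def:Ku2}. Hence $\im\phi_{HJ}\cap\im\phi_{IJ}=s_J^{-1}(E_H)\cap s_J^{-1}(E_I)$, and \eqref{eq:CIJ} identifies this with $s_J^{-1}(E_{H\cap I})$, which by \eqref{eq:tame3} applied to $H\cap I\subset J$ equals $\im\phi_{(H\cap I)J}$. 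This needs $H\cap I\in\Ii_\Kk$, which follows since $F_J\subset F_{H\cap I}$, so $F_{H\cap I}\neq\emptyset$. The case $H\cap I=\emptyset$ is the same computation using \eqref{eq:CIJ0} in place of \eqref{eq:CIJ}, which gives $s_J^{-1}(E_H)\cap s_J^{-1}(E_I)=s_J^{-1}(0)$; the footnote observation that $s_J^{-1}(0)=\psi_J^{-1}(F_J)=\im\phi_{\emptyset J}$ then records the consistency with the convention \eqref{eq:empty}.

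The transversality claim is the substantive point. Here I would work at a point $v=\phi_{(H\cap I)J}(u)\in\im\phi_{(H\cap I)J}$ and compute tangent spaces inside $T_v U_J$. By \eqref{inftame}, for each $K$ with $(H\cap I)\subset K\subset J$ (in particular for $K=H$, $K=I$, $K=H\cap I$, $K=H\cup I$) the image $\im\phi_{KJ}$ is a submanifold of $U_J$ near $v$ with tangent space $T_v(\im\phi_{KJ})=(\rd_v s_J)^{-1}(\Hat\phi_{KJ}(E_K))$. Using additivity of the obstruction spaces, i.e.\ $E_K=\bigoplus_{i\in K}\Hat\phi_{iK}(E_i)$ compatibly under the $\Hat\phi_{KJ}$, one gets $\Hat\phi_{HJ}(E_H)+\Hat\phi_{IJ}(E_I)=\Hat\phi_{(H\cup I)J}(E_{H\cup I})$ and $\Hat\phi_{HJ}(E_H)\cap\Hat\phi_{IJ}(E_I)=\Hat\phi_{(H\cap I)J}(E_{H\cap I})$ inside $E_J$ (this is exactly \eqref{eq:addd} in the proof of Lemma~\ref{le:Ku3}). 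Applying the linear preimage $(\rd_v s_J)^{-1}$ to these two linear-algebra identities, and using that preimage commutes with sums and intersections of subspaces when taken inside a fixed vector space, yields
\begin{align*}
T_v(\im\phi_{HJ})+T_v(\im\phi_{IJ}) &= (\rd_v s_J)^{-1}\bigl(\Hat\phi_{HJ}(E_H)+\Hat\phi_{IJ}(E_I)\bigr) = T_v(\im\phi_{(H\cup I)J}),\\
T_v(\im\phi_{HJ})\cap T_v(\im\phi_{IJ}) &= (\rd_v s_J)^{-1}\bigl(\Hat\phi_{HJ}(E_H)\cap\Hat\phi_{IJ}(E_I)\bigr) = T_v(\im\phi_{(H\cap I)J}).
\end{align*}
The first identity is precisely the statement that $\im\phi_{HJ}$ and $\im\phi_{IJ}$ meet transversely inside the submanifold $\im\phi_{(H\cup I)J}$, and the second confirms that the intersection of tangent spaces is the tangent space of the intersection submanifold $\im\phi_{(H\cap I)J}$, consistent with the set identity already proved. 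Finally, when $H\cap I=\emptyset$ I expect no analogous transversality because $\Hat\phi_{HJ}(E_H)+\Hat\phi_{IJ}(E_I)$ need only be a proper subspace of $E_J$ (it equals $\Hat\phi_{(H\cup I)J}(E_{H\cup I})$), so the preimages need not span; an explicit low-dimensional example along the lines of Example~\ref{ex:change} suffices to show the failure. I anticipate the main obstacle to be the bookkeeping for the linear-preimage step: one must check carefully that $(\rd_v s_J)^{-1}$ distributes over sum and intersection of the relevant subspaces of $E_J$ — this uses that the pointwise index condition makes $\rd_v s_J$ surjective onto each quotient $E_J/\Hat\phi_{KJ}(E_K)$, so that the preimages have the expected codimensions and the dimension count closes.
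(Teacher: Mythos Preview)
Your argument for the set identity \eqref{eq:tame5} (both the $H\cap I\neq\emptyset$ and $H\cap I=\emptyset$ cases) is exactly the paper's.

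For transversality you take a genuinely different and somewhat slicker route. The paper works in two stages: first it treats the special case $J=H\cup I$, where it chooses explicit complements $V_H,V_I$ to $\im\rd\phi_{(H\cap I)H}$ resp.\ $\im\rd\phi_{(H\cap I)I}$, pushes them forward into $\rT U_J$, and uses the tangent bundle condition twice (for $L:=H\cap I\subset I$ and for $L\subset J$) together with additivity of $E_J$ to assemble the direct-sum decomposition $\rT U_J=\im\rd\phi_{LJ}\oplus\rd\phi_{HJ}(V_H)\oplus\rd\phi_{IJ}(V_I)$; then it reduces general $J\supset H\cup I$ to this case via the cocycle condition $\rd\phi_{(H\cup I)J}\circ\rd\phi_{\bullet(H\cup I)}=\rd\phi_{\bullet J}$. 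Your approach instead applies \eqref{inftame} uniformly to all four subsets $H,I,H\cap I,H\cup I\subset J$ at a point $v\in\im\phi_{(H\cap I)J}$, so that each tangent space becomes a linear preimage $(\rd_v s_J)^{-1}(\Hat\phi_{KJ}(E_K))$, and then appeals to the additive identities among the $\Hat\phi_{KJ}(E_K)$. This avoids the two-stage reduction entirely and is arguably more transparent; the paper's version in return gives an explicit normal-direction splitting.

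One caution on presentation: you write that ``preimage commutes with sums and intersections of subspaces'' as if it were a general fact, but for a linear map $D$ the inclusion $D^{-1}(A)+D^{-1}(B)\subset D^{-1}(A+B)$ is in general strict (only the intersection statement is automatic). You do flag this at the end and sketch the correct fix --- the index condition gives $\dim D^{-1}(\Hat\phi_{KJ}(E_K))=d+\dim E_K$ for each $K$, whence
$\dim\bigl(D^{-1}(A)+D^{-1}(B)\bigr)=(d+\dim E_H)+(d+\dim E_I)-(d+\dim E_{H\cap I})=d+\dim E_{H\cup I}=\dim D^{-1}(A+B)$
by additivity, forcing equality. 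That dimension count is the entire content of the transversality step in your framework, and it should be stated as the argument rather than deferred to a closing caveat.
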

\begin{proof}  
Tameness \eqref{eq:tame3} identifies $\im \phi_{L J}=s_J^{-1}(E_{LJ})$ for $L=H,I,H\cap I$, so that the intersection identity holds by the filtration property \eqref{eq:CIJ}.
In case $H\cap I=\emptyset$ it gives $s_J^{-1}(E_{HJ})\cap s_J^{-1}(E_{IJ}) = s_J^{-1}(0)$ since $E_{\emptyset J}=\im 0$.

It remains to prove the transversality for $H\cap I \neq \emptyset$,
\begin{equation} \label{klaim}
\im\rd\phi_{(H\cap I)J} \; =\;  \im\rd\phi_{HJ} \;+\; \im\rd\phi_{IJ}
\end{equation}
at the appropriate base points coming from \eqref{eq:tame5}, which we suppress throughout.
For that purpose let us first consider the case $J=H\cup I$. 
Then the isomorphism $\rd s_I : \qu{\rT U_I}{\im\rd\phi_{LI}} \to \qu{E_I}{\im\Hat\phi_{LI}}$ from the tangent bundle condition \eqref{tbc} for $L:=H\cap I\in\Ii_\Kk$ implies that for any choice of complement $V_I$ in $\rT U_I = \im\rd\phi_{LI} \oplus V_I$ we have $E_I =  \im \Hat\phi_{LI}\oplus \rd s_I (V_I)$. Working analogously for $H$ in place of $I$, we obtain decompositions
\begin{equation}\label{tbcH}
\rT U_\bullet = \im\rd\phi_{L\bullet} \oplus V_\bullet 
\qquad\text{with}\qquad 
E_\bullet = \im \Hat\phi_{L\bullet} \oplus  \rd s_\bullet (V_\bullet) 
\qquad\text{for}\;\; \bullet = H,I.
\end{equation}
On the other hand, additivity gives the sum $E_J = \Hat\phi_{HJ}(E_H) + \Hat\phi_{IJ}(E_I)$ by subspaces whose intersection is
$$
\Hat\phi_{HJ}(E_H) \cap \Hat\phi_{IJ}(E_I) \;=\; \im\Hat\phi_{LJ} \;=\; \Hat\phi_{HJ}(\im\Hat\phi_{LH}) \;=\; 
\Hat\phi_{IJ}(\im\Hat\phi_{LI}),
$$ 
so that we obtain the direct decomposition
\begin{align*}
E_J 
&= \Hat\phi_{HJ}\bigl(\rd s_H(V_H)\bigr) \oplus \im\Hat\phi_{LJ} \oplus  \Hat\phi_{IJ}\bigl(\rd s_I(V_I)\bigr) \\
&= \rd s_J\bigl(\rd\phi_{HJ}(V_H)\bigr) \oplus \im\Hat\phi_{LJ} \oplus  \rd s_J(\rd \phi_{IJ}(V_I)\bigr) .
\end{align*}
Now the isomorphism  in  the tangent bundle condition \eqref{tbc} induced by $\rd s_J$  has the form
$$
\rd s_J : \qu{\rT U_J}{\im\rd\phi_{LJ}} \to \qu{E_J}{\im\Hat\phi_{LJ}} \cong \rd s_J\bigl(\rd\phi_{HJ}(V_H) 
\oplus \rd \phi_{IJ}(V_I)\bigr).
$$ 
Here the subspaces $\rd\phi_{HJ}(V_H),\rd \phi_{IJ}(V_I)$ intersect in $\{0\}$ since $\rd s_J$ maps them to complementary subspaces of $E_J$, and neither intersects $\ker \rd s_J$ nontrivially.
In fact, we have the inclusion $\ker\rd s_J = \rd\phi_{LJ}(\ker \rd s_L) \subset \im\rd\phi_{LJ}$ and $\rd\phi_{\bullet J}(V_\bullet)$ intersects $\im\rd\phi_{LJ}=\rd\phi_{\bullet J}(\im\rd\phi_{L\bullet})$ trivially for $\bullet = H,I$ by \eqref{tbcH}.
This means that $\rd\phi_{HJ}(V_H) \oplus \rd \phi_{IJ}(V_I)$ represents a subspace of $\qu{\rT U_J}{\im\rd\phi_{LJ}}$, but since the bijection $\rd s_J$ maps it onto $\qu{E_J}{\im\Hat\phi_{LJ}}$, this is in fact represents the whole space, i.e.\ is a complement of $\im\rd\phi_{LJ}$ in
$$
\rT U_J = \im\rd\phi_{LJ} \oplus \rd\phi_{HJ}(V_H) \oplus \rd\phi_{IJ}(V_I) .
$$ 
This decomposition proves \eqref{klaim} since $\im\rd\phi_{LJ}\subset\rd\phi_{\bullet J}$ for $\bullet = H,I$ as above.

For general $H,I\subset J$ we have $\rT U_{H\cup I} = \im\rd\phi_{H(H\cup I)} + \im\rd\phi_{I(H\cup I)}$ from above and can use the linearization of the cocycle condition, $\rd \phi_{(H\cup I) J} \circ\rd\phi_{\bullet (H\cup I)} = \rd \phi_{\bullet J}$ to obtain
$$
\im\rd\phi_{(H\cup I)J}
\;=\; 
\rd\phi_{(H\cup I)J}\bigl(  \im\rd\phi_{H(H\cup I)} \;+\; \im\rd\phi_{I(H\cup I)}  \bigr) 
\;=\; 
\im\rd\phi_{HJ}  \;+\; \im\rd\phi_{IJ} .
$$
This proves \eqref{klaim} in general, thus finishes the proof.
\end{proof}

\section{Reductions and perturbation sections}\label{s:red}

The next step in the construction of the VFC is to show that the canonical section $\s_\Kk$ has suitable transverse perturbations. 
As a preliminary step, \S\ref{ss:red} provides reductions of the cover of the virtual neighbourhood $|\Kk|$ by the images of the domains $\pi_\Kk(U_I)$. The goal here is to obtain a  cover by a partially ordered set of Kuranishi charts, with coordinate changes governed by the partial order, in order to permit an iterative construction of perturbations.
In \S\ref{ss:sect} we introduce the notion of transverse perturbations in a reduction, and 
-- assuming their existence -- construct the VMC as a closed manifold up to compact cobordism, so far unoriented, from the associated perturbed zero sets. 
Here one difficulty here is to ensure compactness of the perturbed zero set despite $\io_\Kk(X)\subset |\Kk|$ generally having no precompact neighbourhood (see Example~\ref{ex:Khomeo}). 
In \S\ref{ss:red} we also explain how compactness is achieved by precompact perturbations, whose zero set is controlled by a nested reduction. 
The key technical results of this paper -- existence and uniqueness of 
precompact transverse perturbations -- are then proven in \S\ref{ss:const}.

\subsection{Reductions and 
compactness}\label{ss:red}  \hspace{1mm}\\ \vspace{-3mm}

Suppose that $\Kk$ is a tame Kuranishi atlas.
The cover of $X$ by the footprints $(F_I)_{I\in \Ii_\Kk}$ of all the Kuranishi charts
(both the basic charts and those that are part of the transitional data) is closed under intersection. This makes it easy to express compatibility of the charts, since the overlap of footprints of any two charts $\bK_I$ and $\bK_J$ is covered by another chart $\bK_{I\cup J}$.
However, this yields so many compatibility conditions that compatible perturbations of all $s_I:U_I\to E_I$ generally cannot achieve transversality.
For example, suppose that $I\cap J=\emptyset$ but $K: = I\cup J\in \Ii_\Kk$. Then for the perturbations
$\nu_I:U_I\to E_I$ and $\nu_J:U_J\to E_J$  to be compatible they must induce the same perturbation 
over the intersection $\im \phi_{IK}\cap \im \phi_{JK}\supset \psi_K^{-1}(F_K)$.  
In particular, when working with an additive atlas we must have for all $x\in\im \phi_{IK}\cap \im \phi_{JK}$
$$
\nu_K(x) = \Hat\phi_{IK}\circ  \nu_I (\phi_{IK}^{-1}(x)) =  \Hat\phi_{JK}\circ \nu_J (\phi_{JK}^{-1}(x))\;\in\; \Hat\phi_{IK}(E_I) \cap \Hat\phi_{JK}(E_J) \;=\;\{0\} .
$$
This means that $s_K^{-1}(0)$ is contained in any perturbed zero set $(s_K+\nu_K)^{-1}(0)$. So unless $s_K^{-1}(0)$ was a submanifold of correct dimension to begin with, transversality is impossible in this setting.
A purely topological refinement process from \cite{MW:top} helps us to avoid these difficulties, and also makes a step towards compactness, by reducing the domains of the Kuranishi charts to precompact subsets $V_I\sqsubset U_I$ such that all compatibility conditions between $\bK_I|_{V_I}$ and $\bK_J|_{V_J}$ are given by direct coordinate changes $\Hat\Phi_{IJ}$ or $\Hat\Phi_{JI}$.
This section transfers the results from \cite{MW:top} to the smooth setting.

The following notions of reduction make sense for general Kuranishi atlases and cobordisms, but we throughout assume tameness since that is the context for our perturbation constructions.
As always, we denote the closure of a set $Z\subset X$ by~$\ov Z$ and write $V\sqsubset U$ to mean that the inclusion $V\hookrightarrow U$ is precompact, i.e.\ the closure $\ov V \subset U$ is compact.

\begin{defn}\label{def:vicin}  {\rm $\!\!$ \cite[Definition~5.1.2]{MW:top}}
Let $\Kk$ be a tame Kuranishi atlas. 
A {\bf reduction} of 
$\Kk$ is an open subset $\Vv=\bigsqcup_{I\in \Ii_\Kk} V_I \subset \Obj_{\bB_\Kk}$ i.e.\ a tuple of (possibly empty) open subsets $V_I\subset U_I$, satisfying the following conditions:
\begin{enumerate}
\item
$V_I\sqsubset U_I $ for all $I\in\Ii_\Kk$, and if $V_I\ne \emptyset$ then $V_I\cap s_I^{-1}(0)\ne \emptyset$;
\item
if $\pi_\Kk(\ov{V_I})\cap \pi_\Kk(\ov{V_J})\ne \emptyset$ then
$I\subset J$ or $J\subset I$;
\item
the zero set $\iota_\Kk(X)= |\s_\Kk|^{-1}(|0_\Kk|)$ 
is contained in 
$
\pi_\Kk(\Vv) \;=\; {\textstyle{\bigcup}_{I\in \Ii_\Kk}  }\;\pi_\Kk(V_I).
$
\end{enumerate}
Given a reduction $\Vv$, we define the {\bf reduced domain category} $\bB_\Kk|_\Vv$ and the {\bf reduced obstruction category} $\bE_\Kk|_\Vv$ to be the full subcategories of $\bB_\Kk$ and $\bE_\Kk$ with objects $\bigsqcup_{I\in \Ii_\Kk} V_I$ resp.\ $\bigsqcup_{I\in \Ii_\Kk}  V_I\times E_I$ and denote by $\s_\Kk|_\Vv: 
\bB_\Kk|_\Vv\to \bE_\Kk|_\Vv$ the section given by restriction of $\s_\Kk$. 
\end{defn}

Uniqueness of the VFC will be based on the following relative notion of reduction.

\begin{defn} \label{def:cvicin} {\rm $\!\!$ \cite[Definition~5.1.3]{MW:top}}
Let $\Kk$ be a tame Kuranishi cobordism. 
Then a {\bf cobordism reduction} of $\Kk$ is an open subset $\Vv=\bigsqcup_{I\in\Ii_{\Kk}}V_I\subset \Obj_{\bB_{\Kk}}$ that satisfies the conditions of Definition~\ref{def:vicin} 
is collared in the following sense (somewhat stricter than Definition~\ref{def:collarset}).
\begin{enumerate}
\item[(iv)]
For each $\al\in\{0,1\}$ and $I\in 
\Ii_{\p^\al\Kk}\subset\Ii_{\Kk}$ 
there exists $\eps>0$ and a subset $\partial^\al V_I\subset \partial^\al U_I$ such that $\partial^\al V_I\ne \emptyset$ iff 
$V_I \cap \psi_I^{-1}\bigl(
\partial^\al F_I \bigr)\ne \emptyset$,
and 
$$
(\iota^\al_I)^{-1} \bigl( V_I \bigr) \cap \bigl(A^\al_\eps \times  \partial^\al U_I \bigr)
 \;=\; A^\al_\eps \times \partial^\al V_I .
$$
\end{enumerate}
We call 
$\partial^\al\Vv := \bigsqcup_{I\in\Ii_{\p^0\Kk}} \partial^\al V_I \subset \Obj_{\bB_{\p^\al\Kk}}$  
the {\bf restriction} of $\Vv$ to 
$\p^\al\Kk$.
\end{defn}

\begin{remark}\rm  \label{rmk:red}
(i)
The restrictions $\partial^\al\Vv$ of a reduction $\Vv$ of a Kuranishi cobordism $\Kk$ are reductions of the restricted Kuranishi atlases $\p^\al\Kk$ for $\al=0,1$.
In particular (i) holds because part~(iv) of Definition~\ref{def:cvicin} implies that if $\p^\al V_I\ne \emptyset$ then $\p^\al V_I \cap \psi_I^{-1}\bigl( \partial^\al F_I\bigr)\ne \emptyset$.

\MS\NI
(ii)
In some ways, the closest we come 
to constructing a ``good cover" in the sense of \cite{FO,J1} is the full subcategory $\bB_\Kk|_\Vv$ of the category $\bB_\Kk$.
Though it is not a Kuranishi atlas in its own right, we prove in Proposition~\ref{prop:red} that there is a well defined Kuranishi atlas $\Kk^\Vv$ with virtual neighbourhood $|\Kk^\Vv|\cong |\bB_\Kk|_\Vv|$ together with a faithful functor $\io^\Vv:\bB_{\Kk^\Vv}\to \bB_\Kk|_\Vv$ that induces an injection $|\Kk^\Vv|\to \pi_\Kk(\Vv)\subset|\Kk|$. Since the extra structure in $\Kk^\Vv$ has no real purpose for us, we use the simpler category $\bB_\Kk|_\Vv$ instead.

\MS\NI
(iii)
Although it is not true\footnote{
Consider e.g.\ the full subcategory given by the basic charts. It has objects $\bigsqcup_i U_i$ and only identity morphisms, yet usually will have nonempty overlaps $\pi_\Kk(U_i)\cap\pi_\Kk(U_j)\supset F_{\{i,j\}} \neq\emptyset$.} 
that the realization of every full subcategory of $\bB_\Kk$ injects into $|\Kk|$,
the subcategory $\bB_\Kk|_\Vv$
induced by a reduction 
does have this property.
In fact, \cite[Lemma~5.2.3]{MW:top} shows that for any full subcategory $\bC$ of the reduced domain category $\bB_\Kk|_\Vv$, the map $|\bC|\to |\Kk|$, induced by the inclusion of object spaces, is
a continuous injection.
Moreover, the realization $|\bC|$ is homeomorphic to its image $|\Obj_{\bC}|=\pi_\Kk(\Obj_{\bC})$ with the quotient topology.
Here the injectivity is due to the fact 
\begin{align}\label{eq:Ku2}
& |I|\le |J| \mbox{ and } 
\pi_\Kk(V_J) \cap \pi_\Kk(V_I)\ne\emptyset
\;\;\;  \Longrightarrow \\ \notag
& \qquad\qquad I \subset J \mbox{ and } V_J \cap \pi_\Kk^{-1}(\pi_\Kk(V_I)) = V_J\cap \phi_{IJ}(V_I\cap U_{IJ}),
\end{align}
where the right hand equality holds because we obtain $V_J \cap \pi_\Kk^{-1}(\pi_\Kk(V_I))\subset s_J^{-1}(E_I) = \im \phi_{IJ}$ from the tameness condition \eqref{eq:tame3}.

\MS\NI
(iv)
As explained in Definition~\ref{def:topologies}, the subset $\pi_\Kk(\Cc) \subset|\Kk|$ 
has two different topologies, its quotient topology and the subspace topology.  
If $(\Kk,d)$ is metric, there might conceivably be a third topology induced by 
restriction of the metric.  
Although we do not use this explicitly, \cite[Lemma~5.2.5]{MW:top} shows that the metric topology on $\pi_\Kk(\Cc)$ agrees with the subspace topology, so that we only have two different topologies in play.
\end{remark}

Before stating the existence and uniqueness result for reductions, we introduce the notion of a nested pair of reductions, which is extensively used both for the control of compactness of perturbed zero sets below
and for the construction of perturbations in \S\ref{ss:const}.

\begin{definition}\label{def:nest} {\rm $\!\!$ \cite[Definition~5.1.5]{MW:top}}
Let $\Kk$ be a tame Kuranishi atlas (or cobordism). Then we call a pair of subsets $\Cc,\Vv\subset\Obj_{\bB_\Kk}$ a {\bf nested (cobordism) reduction} if both are (cobordism) reductions of $\Kk$ and $\Cc\sqsubset \Vv$.
\end{definition}

\begin{thm} \label{thm:red}
\begin{enumerate}
\item
Any tame Kuranishi atlas $\Kk$ has a unique concordance class of reductions as follows:
There exists a reduction of $\Kk$ in the sense of Definition~\ref{def:vicin}, 
and given any two reductions $\Vv^0,\Vv^1$ of $\Kk$, there exists a cobordism reduction $\Vv$ of $[0,1]\times \Kk$ such that $\p^\al\Vv = \Vv^\al$ for $\al = 0,1$. 
\item
Any tame Kuranishi cobordism has a cobordism reduction in the sense of Definition~\ref{def:cvicin}.

\item
For any reduction $\Vv$ of a metric Kuranishi atlas $(\Kk,d)$ there exist smaller and larger nested reductions as follows:
\begin{enumerate}
\item
Given any open subset $\Ww\subset |\Kk|$ with respect to the metric topology such that $\io_\Kk(X)\subset\Ww$, there is a nested reduction $\Cc_\Ww\sqsubset\Vv$ such that $\pi_\Kk(\Cc_\Ww)\subset\Ww$.
\item
There exists $\de>0$ such that $\Vv \sqsubset \bigsqcup_{I\in\Ii_\Kk} B^I_\de(V_I)$ is a nested reduction, and we moreover have
$B_{2\de}^I({V_I})\sqsubset U_I$ for all $I\in\Ii_\Kk$, and for any $I,J\in\Ii_\Kk$
$$
B_{2\de}(\pi_\Kk({V_I}))\cap B_{2\de}(\pi_\Kk({V_J}))
 \neq \emptyset \qquad \Longrightarrow \qquad I\subset J \;\text{or} \; J\subset I. 
$$
\end{enumerate}
\item
For any cobordism reduction $\Vv$ of a metric Kuranishi cobordism $(\Kk,d)$
there exist nested reductions with prescribed boundary as follows:
\begin{enumerate}
\item
Let $\Ww\subset |\Kk|$ be a collared subset, open with respect to metric topology, and such that $\io_\Kk(X)\subset\Ww$, and let $\Cc^\al\sqsubset \p^\al \Vv$ for $\al = 0,1$ be nested cobordism reductions of $\p^\al\Kk$ with $\pi_{\p^\al\Kk}(\Cc^\al)\subset \Ww\cap |\p^\al\Kk|$.
Then there is a nested cobordism reduction $\Cc\sqsubset\Vv$ such that $\pi_\Kk(\Cc)\subset\Ww$ and $\p^\al \Cc = \Cc^\al$ for $\al=0,1$.
\item
There exists $\de>0$ such that $\Vv \sqsubset \bigsqcup_{I\in\Ii_\Kk} B^I_\de(V_I)$ is a nested cobordism reduction, and moreover the properties of {\rm (iii)(b)} hold.
\end{enumerate}
\end{enumerate}
\end{thm}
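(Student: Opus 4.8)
\textbf{Proof proposal for Theorem~\ref{thm:red}.}
The plan is to deduce the entire statement from the corresponding topological result \cite[Theorem~5.1.6 (or its analogue)]{MW:top}, applied to the induced tame topological Kuranishi atlas/cobordism, observing throughout that reductions are a purely topological notion (conditions (i)--(iii) of Definition~\ref{def:vicin} only reference open subsets $V_I\subset U_I$, closures, the maps $\pi_\Kk$, and the section $\s_\Kk$, all of which are identical in the smooth and topological settings), so that nothing is lost or gained by passing to the underlying topological structure. More precisely, a tame Kuranishi atlas $\Kk$ induces a tame topological Kuranishi atlas with the same virtual neighbourhood $|\Kk|$, the same object space $\Obj_{\bB_\Kk}$, the same projection $\pi_\Kk$, and the same collared structure in the cobordism case; an open subset $\Vv\subset\Obj_{\bB_\Kk}$ is a reduction of the smooth atlas if and only if it is a reduction of the induced topological atlas, and similarly for cobordism reductions and nested reductions. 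Hence parts (i) and (ii) follow immediately by quoting the existence of (cobordism) reductions and the uniqueness up to cobordism reduction from \cite{MW:top}, once we note that a concordance $[0,1]\times\Kk$ of smooth atlases induces the concordance $[0,1]\times\Kk$ of the underlying topological atlases (Remark~\ref{rmk:restrict2}~(ii)).

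For part (iii), I would again invoke the metric version of the topological statement: given a metric tame Kuranishi atlas $(\Kk,d)$ and a reduction $\Vv$, \cite{MW:top} produces (a) a nested reduction $\Cc_\Ww\sqsubset\Vv$ with $\pi_\Kk(\Cc_\Ww)\subset\Ww$ for any metric-open $\Ww$ containing $\io_\Kk(X)$, and (b) a $\de>0$ with the $\de$-thickening property. Here the only point to check is that the notion of admissible metric $d$ on $|\Kk|$ (Definition~\ref{def:metric}), the pullback metrics $d_I$ on $U_I$, the metric balls $B^I_\de(V_I)$, and the metric topology on $|\Kk|$ are all identical in the smooth and topological contexts, which is clear since an admissible metric only sees the underlying topological space $|\Kk|$ and the topological spaces $U_I$. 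The statement $B^I_{2\de}(V_I)\sqsubset U_I$ and the separation property for $B_{2\de}(\pi_\Kk(V_I))$ are then purely metric/topological consequences, transferred verbatim. Part (iv) is the cobordism analogue of (iii): one applies the relative metric version from \cite[Theorem~4.2.x / 5.1.x]{MW:top}, which constructs a nested cobordism reduction $\Cc\sqsubset\Vv$ restricting to prescribed nested reductions $\Cc^\al$ on the two boundary components and contained in a prescribed collared metric-open neighbourhood $\Ww$ of $\io_\Kk(X)$; the collar compatibility (condition (iv) of Definition~\ref{def:cvicin}) is preserved because the collar structure of the smooth cobordism induces exactly the collar structure of the topological cobordism (Remark~\ref{rmk:cobordreal}).

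The main (and essentially only) obstacle is bookkeeping rather than substance: one must verify carefully that every clause in Definitions~\ref{def:vicin}, \ref{def:cvicin}, \ref{def:nest}, and \ref{def:metric} is invariant under the forgetful passage from the smooth to the induced topological structure, and that the concordance $[0,1]\times\Kk$ and the collared cobordism structures match up on the nose under this passage, so that the hypotheses of the quoted topological theorems are genuinely satisfied. Once this translation dictionary is set up --- as was already done for tameness in Remark~\ref{rmk:restrict2}~(i), for metrizability in Theorem~\ref{thm:K}, and for the topological properties of the virtual neighbourhood in Proposition~\ref{prop:Ktopl1} --- the proof consists of four one-line deductions. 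There is no new analysis or differential topology involved, since reductions never use the smooth structure on the domains; the smooth structure only re-enters later, in \S\ref{ss:const}, when one perturbs the section $\s_\Kk$ over the reduced domains.
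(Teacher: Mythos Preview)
Your proposal is correct and matches the paper's own proof essentially verbatim: the paper simply states that all results follow from \cite[Theorem~5.1.6]{MW:top} applied to the induced topological Kuranishi atlas/cobordism, noting that the virtual neighbourhood is identified with $|\Kk|$ so that admissible metrics (and hence reductions) coincide in the smooth and topological contexts. Your more detailed bookkeeping is accurate but goes beyond what the paper records.
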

\begin{proof}
All results follow from \cite[Theorem~5.1.6]{MW:top} applied to the induced topological Kuranishi atlas/cobordism $\Kk$. Its virtual neighbourhood is identified with $|\Kk|$, so that the notion of admissible metrics is the same in the smooth and topological context.
\end{proof}

Now given a reduction $\Vv$, the functor $\s_\Kk$ restricts to a section $\s_\Kk|_\Vv: \bB_\Kk|_\Vv\to \bE_\Kk|_\Vv$ whose local zero sets $(\s_\Kk|_{\Vv})^{-1}(0)  := {\textstyle \bigsqcup_{I\in \Ii_\Kk}}(s_I|_{V_I})^{-1}(0) \subset\Obj_{\bB_\Kk}$ still cover $X$ by the reduction condition (iii), in the sense that 
$\pi_\Kk\bigl( (\s_\Kk|_{\Vv})^{-1}(0) \bigr)= \iota_\Kk(X) \subset |\Kk|$.
In particular that means that the zero set is contained in the image of any other reduction $\Cc$, i.e.\ $\pi_\Kk\bigl( (\s_\Kk|_{\Vv})^{-1}(0) \bigr)\subset\pi_\Kk(\Cc)$.
While $\pi_\Kk(\Cc)\subset|\Kk|$ is rarely an open neighbourhood of $\io_\Kk(X)$, it plays the role of a precompact neigbhourhood in that perturbations of the zero set 
$\pi_\Kk\bigl((\s_\Kk|_{\Vv}+\nu)^{-1}(0)\bigr)  := \pi_\Kk\bigl({\textstyle \bigsqcup_{I\in \Ii_\Kk}}(s_I|_{V_I}+\nu_I)^{-1}(0)\bigr)$ are constructed in \S\ref{ss:const} to remain contained in $\pi_\Kk(\Cc)$. Then the following sequential compactness of the perturbed zero set is a key step in the construction of the virtual moduli cycle in \S\ref{ss:sect}, \S\ref{ss:VFC}.

\begin{thm} \label{thm:zeroS0}
Let $\Kk$ be a tame Kuranishi atlas (or cobordism) with a (cobordism) reduction $\Vv$, and 
suppose that $\nu: \bB_\Kk|_\Vv \to \bE_\Kk|_\Vv$ is a precompact perturbation in the sense of Definition~\ref{def:precomp}, i.e.\ 
a continuous functor $\nu:\bB_\Kk|_\Vv\to\bE_\Kk|_\Vv$ such that $\pr_\Kk\circ\nu=\id_{\bB_\Kk|_\Vv}$ and $\pi_\Kk\bigl((\s_\Kk|_\Vv + \nu)^{-1}(0)\bigr)\subset\pi_\Kk(\Cc)$ for some nested reduction $\Cc\sqsubset \Vv$.
Then the realization $|(\s_\Kk|_\Vv+ \nu)^{-1}(0)|$ is a sequentially compact Hausdorff space. 
\end{thm}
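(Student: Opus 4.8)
The plan is to deduce sequential compactness of $|(\s_\Kk|_\Vv+\nu)^{-1}(0)|$ from the compactness results for precompact subsets of $|\Kk|$ established earlier, most importantly Proposition~\ref{prop:Ktopl1}, together with the fact that the zero set is controlled by the nested reduction $\Cc\sqsubset\Vv$. First I would set $Z:=(\s_\Kk|_\Vv+\nu)^{-1}(0) = \bigsqcup_{I\in\Ii_\Kk}(s_I|_{V_I}+\nu_I)^{-1}(0)\subset\Obj_{\bB_\Kk}$, so that $|(\s_\Kk|_\Vv+\nu)^{-1}(0)| = |Z| = \pi_\Kk(Z)$ with its quotient topology. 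The key containment is $\pi_\Kk(Z)\subset\pi_\Kk(\Cc)$ with $\Cc\sqsubset\Vv\sqsubset\Obj_{\bB_\Kk}$ precompact, so $\ov\Cc$ is compact and I can apply Proposition~\ref{prop:Ktopl1}~(ii): the quotient topology $|\ov\Cc|$ and the subspace topology $\|\ov\Cc\|$ coincide and are compact, and by (iv) this space is metrizable. Hence $\pi_\Kk(\ov\Cc)$ is a compact metrizable space, and it therefore suffices to show that $\pi_\Kk(Z)$ is a closed subset of $\pi_\Kk(\ov\Cc)$ — since closed subsets of compact metrizable spaces are compact metrizable, hence sequentially compact and Hausdorff.

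To show closedness, I would argue that $Z$ is relatively closed in $\ov\Cc\cap\Obj_{\bB_\Kk}$ (or a slightly larger precompact set), because each local zero set $(s_I|_{V_I}+\nu_I)^{-1}(0)$, while only a subset of the \emph{open} set $V_I$, has the property that its intersection with $\ov{\Cc_I}$ is closed: the perturbation $\nu$ being a precompact perturbation means precisely (Definition~\ref{def:precomp}) that $\pi_\Kk(Z)$ stays inside $\pi_\Kk(\Cc)$, so no sequence in $Z$ can escape to the boundary $\ov{V_I}\less V_I$ while staying in the zero set. Concretely, given a sequence $[I_k,x_k]\in\pi_\Kk(Z)$, I lift to representatives in $\ov\Cc$ (using that $\pi_\Kk(Z)\subset\pi_\Kk(\Cc)$, so after passing to a subsequence one may assume a fixed index set $I$ with $x_k\in\ov{\Cc_I}$), extract a convergent subsequence $x_k\to x_\infty\in\ov{\Cc_I}$ by compactness of $\ov{\Cc_I}$, and use continuity of $s_I$ and of $\nu_I$ (the latter extends continuously, or at least the equation $s_I(x_k)+\nu_I(x_k)=0$ persists in the limit on the relevant closure) to conclude $x_\infty\in(s_I|_{V_I}+\nu_I)^{-1}(0)$ — here one must check $x_\infty\in V_I$, which follows because $x_\infty$ is a zero of $\s_\Kk|_\Vv+\nu$, hence $\pi_\Kk(x_\infty)\in\pi_\Kk(\Cc)$, and by the nesting $\Cc\sqsubset\Vv$ one arranges via reduction condition (ii) and tameness that this forces $x_\infty$ into the open domain. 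Thus $[I_k,x_k]\to[I,x_\infty]\in\pi_\Kk(Z)$, giving sequential compactness; Hausdorffness is inherited from the Hausdorff space $|\Kk|$ (Proposition~\ref{prop:Khomeo}) via the subspace topology, which agrees with the quotient topology on $\pi_\Kk(\ov\Cc)$ by Proposition~\ref{prop:Ktopl1}.

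The cobordism case is handled by the same argument applied to the tame Kuranishi cobordism and its cobordism reduction, using the cobordism versions of Theorem~\ref{thm:red} and Proposition~\ref{prop:Ktopl1}; the only additional point is that the collar structure ensures no new limit points appear at the collar ends, which again follows from the precompactness $\Cc\sqsubset\Vv$ since the collars are uniform. The main obstacle I anticipate is the careful verification that a limit point $x_\infty$ of zeros, obtained in the \emph{closed} set $\ov{\Cc_I}$, actually lies in the open domain $V_I$ (equivalently, in $U_I$ with $s_I$-zero in $\Cc_I$) rather than on the boundary $\ov{V_I}\less V_I$: this is exactly where the defining property of a \emph{precompact} perturbation — that the perturbed zero set projects into $\pi_\Kk(\Cc)$ with $\Cc$ \emph{strictly} nested inside $\Vv$ — must be invoked, together with the tameness identities \eqref{eq:tame1}--\eqref{eq:tame3} that control how the images $\pi_\Kk(V_I)$ overlap, to identify the correct index set of the limit and thereby place $x_\infty$ in the interior. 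Everything else is a routine combination of compactness of $\pi_\Kk(\ov\Cc)$, metrizability, and continuity of $\s_\Kk$ and $\nu$.
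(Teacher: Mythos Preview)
The paper's own proof is a one-line citation to \cite[Theorem~5.2.2]{MW:top}, so your sketch is essentially reconstructing the argument from that companion paper. The overall strategy --- lift a sequence in $|Z|$ to representatives in $\Cc$, pass to a fixed index $J$, use compactness of $\ov{C_J}$ to extract a limit, and check the limit is again a zero --- is correct and is what one expects the cited result to do.

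There is, however, a genuine logical slip in your argument where you verify $x_\infty\in V_I$. You write that this ``follows because $x_\infty$ is a zero of $\s_\Kk|_\Vv+\nu$, hence $\pi_\Kk(x_\infty)\in\pi_\Kk(\Cc)$'', but this is circular: you cannot know $x_\infty$ is a zero of $s_I+\nu_I$ until you know $x_\infty$ lies in the domain $V_I$ where $\nu_I$ is defined. The fix is much simpler than the route you propose: the nesting $\Cc\sqsubset\Vv$ (Definition~\ref{def:nest}) means precisely that $\ov{C_I}\subset V_I$ for every $I$, so $x_\infty\in\ov{C_I}$ already gives $x_\infty\in V_I$ directly, and then continuity of $s_I+\nu_I$ on $V_I$ gives the zero condition. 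No appeal to $\pi_\Kk(x_\infty)\in\pi_\Kk(\Cc)$, tameness identities, or ``identifying the correct index set of the limit'' is needed at this step; the anticipated ``main obstacle'' dissolves.

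Conversely, a step you gloss over deserves more care: when you ``lift to representatives in $\ov\Cc$'', you silently assume these representatives are still zeros of the perturbed section. This is not automatic: the original zero lives in some $V_{I_k}$, the $\Cc$-representative lives in some $C_{J_k}\subset V_{J_k}$, and you need the reduction property~(ii) (via \eqref{eq:Ku2}) to know they are related by a direct coordinate change $\phi_{I_kJ_k}$ or $\phi_{J_kI_k}$, and then the functoriality of $\nu$ (the commuting square \eqref{eq:comp}) to transfer the zero condition. This is where tameness and the reduction structure actually enter.
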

\begin{proof} \hspace{1mm}
\cite[Theorem~5.2.2]{MW:top} proves this compactness for the induced topological Kuranishi atlas/cobordism, which induces the same quotient topology on $|(\s_\Kk|_\Vv+ \nu)^{-1}(0)|$ as the smooth Kuranishi atlas/cobordism.
\end{proof}

We end this subsection by showing that a reduction $\Vv$ of a tame topological/smooth  Kuranishi atlas $\Kk$ canonically induces a Kuranishi atlas $\Kk^\Vv$ which is typically not filtered or additive and hence not tame, but whose realization $|\Kk^\Vv|$ maps bijectively to $\pi_\Kk(\Vv)$.  
This result is not used in the construction of the VMC or VFC.

\begin{prop}\label{prop:red}
Let $\Vv$ be a reduction of a tame Kuranishi atlas $\Kk$.
Then there exists a canonical Kuranishi atlas $\Kk^\Vv$ which satisfies the strong cocycle condition
and is equipped with a canonical faithful functor $\io^\Vv: \bB_{\Kk^\Vv}\to \bB_{\Kk}$  that induces a continuous injection $|\Kk^\Vv|\to |\Kk|$ with image $\pi_\Kk(\Vv)$. 
\end{prop}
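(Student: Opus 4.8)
The plan is to build $\Kk^\Vv$ explicitly from the reduction data, taking as basic charts the restrictions $\bK_I|_{V_I}$ for the \emph{minimal} indices $I$ that are needed to cover $X$, and as transition charts the restrictions $\bK_J|_{V_J}$ for larger $J$, with coordinate changes inherited from $\Kk$ but with domains cut down using the key containment property \eqref{eq:Ku2} of a reduction. Concretely, I would index $\Kk^\Vv$ by the same set $\Ii_\Kk$ (discarding those $I$ with $V_I=\emptyset$), set $\bK^\Vv_I := \bK_I|_{V_I}$, and for $I\subsetneq J$ with $\pi_\Kk(V_I)\cap\pi_\Kk(V_J)\neq\emptyset$ take the coordinate change $\Hat\Phi^\Vv_{IJ}$ to be the restriction (in the sense of Lemma~\ref{le:restrchange}) of $\Hat\Phi_{IJ}$ to the domain
$$
V_{IJ} := V_I\cap\phi_{IJ}^{-1}(V_J).
$$
The footprints of $\Kk^\Vv$ are $F^\Vv_I := \psi_I(s_I^{-1}(0)\cap V_I)$; these cover $X$ by reduction condition (iii). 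Note that $\Ii_{\Kk^\Vv}$ will \emph{not} in general coincide with $\{I : F^\Vv_I\neq\emptyset\}$ restricted to intersections, since for a reduction the footprints $F^\Vv_I, F^\Vv_J$ may be disjoint even when $F_I\cap F_J\neq\emptyset$ — but this causes no problem: one simply only records a coordinate change $\Hat\Phi^\Vv_{IJ}$ when $I\subsetneq J$ and $\pi_\Kk(V_I)\cap\pi_\Kk(V_J)\neq\emptyset$, and by reduction condition (ii) this forces $I\subset J$ or $J\subset I$ so the resulting poset structure is consistent.

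The first step is to check that each $\Hat\Phi^\Vv_{IJ}$ is a well-defined coordinate change between $\bK^\Vv_I$ and $\bK^\Vv_J$: the index condition is preserved under restriction (Lemma~\ref{le:restrchange}), so the only real point is that $V_{IJ}$ is a legitimate restriction domain, i.e.\ $s_I^{-1}(0)\cap V_{IJ} = \psi_I^{-1}(F^\Vv_I\cap F^\Vv_J)$. This follows from the tameness identity \eqref{eq:tame3}, $\im\phi_{IJ}=s_J^{-1}(E_I)$, together with \eqref{eq:Ku2}, which gives $V_J\cap\pi_\Kk^{-1}(\pi_\Kk(V_I)) = V_J\cap\phi_{IJ}(V_I\cap U_{IJ})$; applying $\psi_J^{-1}\circ(\cdot)$ on the zero set and using that $\phi_{IJ}$ restricts to $\psi_J^{-1}\circ\psi_I$ there yields the claim. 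The second, and I expect the main, step is to verify the \emph{strong} cocycle condition $\Hat\Phi^\Vv_{JK}\circ\Hat\Phi^\Vv_{IJ}=\Hat\Phi^\Vv_{IK}$ with exact equality of domains, for $I\subsetneq J\subsetneq K$ with all three pairwise overlaps of $\pi_\Kk(V_\bullet)$ nonempty. The composite has domain $\phi_{IJ}^{-1}(V_{JK})\cap V_{IJ}$, and one must show this equals $V_{IK} = V_I\cap\phi_{IK}^{-1}(V_K)$. The inclusion $\subset$ is immediate from the strong cocycle condition $\phi_{JK}\circ\phi_{IJ}=\phi_{IK}$ for the tame atlas $\Kk$ (Lemma~\ref{le:tame0}) on $U_{IJ}\cap\phi_{IJ}^{-1}(U_{JK})=U_{IK}$. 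The reverse inclusion is where the reduction property is essential: given $x\in V_I$ with $\phi_{IK}(x)\in V_K$, one needs $\phi_{IJ}(x)\in V_J$. Here one uses that $\phi_{IJ}(x)\in U_J$ and $\phi_{JK}(\phi_{IJ}(x))=\phi_{IK}(x)\in V_K$, so $\phi_{IJ}(x)\in U_{JK}\cap\phi_{JK}^{-1}(V_K)$; combined with the fact (from \eqref{eq:Ku2} applied to the pair $I\subset J$, noting $x\in V_I$ and hence $\phi_{IJ}(x)\in\pi_\Kk^{-1}(\pi_\Kk(V_I))\cap U_J$) that $\phi_{IJ}(x)$ lies in the appropriate slice of $U_J$, plus a covering argument: the point $\pi_\Kk(x)\in\pi_\Kk(V_I)\cap\pi_\Kk(V_K)$ must also lie in $\pi_\Kk(V_L)$ for some $L\supset$ both (by repeated use of reduction condition (ii) and the maximality argument of Proposition~\ref{prop:Khomeo}), and comparing with $J$ forces $\phi_{IJ}(x)\in V_J$. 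I would isolate this last point as a short lemma about reductions, essentially: \emph{if $x\in V_I$, $I\subset J\subset K$, and $\phi_{IK}(x)\in V_K$, then $\phi_{IJ}(x)\in V_J$} — which can be proven by looking at $\pi_\Kk(x)$ and using that $\Vv$ being a reduction makes the set $\{L : \pi_\Kk(x)\in\pi_\Kk(V_L)\}$ totally ordered with a well-defined maximum.

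With the strong cocycle condition in hand, $\bB_{\Kk^\Vv}$ and $\bE_{\Kk^\Vv}$ are well-defined categories (Lemma~\ref{le:realization}(i) applies since the cocycle condition holds), and the final step is the functor $\io^\Vv$. On objects $\io^\Vv(I,x):=(I,x)\in\Obj_{\bB_\Kk}$ (recall $V_I\subset U_I$), and on morphisms $\io^\Vv(I,J,x):=(I,J,x)$, which makes sense because $V_{IJ}\subset U_{IJ}$ and because a morphism $(I,J,x)$ of $\bB_{\Kk^\Vv}$ exists exactly when $\pi_\Kk(V_I)\cap\pi_\Kk(V_J)\neq\emptyset$ and $x\in V_{IJ}$, which forces $(I,J,x)\in\Mor_{\bB_\Kk}$. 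Functoriality is the strong cocycle identity just proven (applied inside $\Kk$, where composites are governed by Lemma~\ref{le:cccomp}); faithfulness is clear since $\io^\Vv$ is injective on morphism sets (between fixed objects there is at most one morphism in either category, by trivial isotropy). Finally, $\io^\Vv$ descends to $|\io^\Vv|:|\Kk^\Vv|\to|\Kk|$, which is continuous by the universal property of quotient topologies. Its image is $\pi_\Kk(\bigsqcup_I V_I)=\pi_\Kk(\Vv)$ by construction. Injectivity of $|\io^\Vv|$ is precisely \cite[Lemma~5.2.3]{MW:top} (cited in Remark~\ref{rmk:red}(iii)) applied to the full subcategory $\bB_\Kk|_\Vv$: two objects $(I,x),(J,y)$ with $V_I,V_J\ni x,y$ have the same image in $|\Kk|$ iff they are connected by a zigzag of morphisms of $\bB_\Kk$ lying over $\pi_\Kk(\Vv)$, and by \eqref{eq:Ku2} such a zigzag can be taken to consist of the reduction's own coordinate changes, i.e.\ morphisms of $\bB_{\Kk^\Vv}$; hence $[(I,x)]=[(J,y)]$ already in $|\Kk^\Vv|$. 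Uniqueness/canonicity of $\Kk^\Vv$ is then automatic: every choice above was forced by $\Vv$ and the tame atlas structure of $\Kk$.

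\textbf{Expected main obstacle.} The delicate point is the exact-domain equality in the strong cocycle condition for $\Kk^\Vv$; unlike in a tame atlas where it follows cleanly from \eqref{eq:tame2}–\eqref{eq:tame3}, here one is intersecting with the reduction sets $V_\bullet$ which are \emph{not} of the form $s^{-1}(E_I)$, so the argument must route through the projection $\pi_\Kk$ and the total-ordering property (reduction condition (ii)) of the index sets meeting a given point. Everything else — well-definedness of the restricted charts and coordinate changes, construction and functoriality of $\io^\Vv$, continuity and injectivity of $|\io^\Vv|$ — is a routine assembly of Lemmas~\ref{le:restr0}, \ref{le:restrchange}, \ref{le:cccomp}, \ref{le:tame0}, \ref{le:realization} and the cited results from \cite{MW:top}.
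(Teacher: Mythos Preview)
Your construction is essentially the reduced subcategory $\bB_\Kk|_\Vv$, which --- as Remark~\ref{rmk:red}(ii) explicitly notes --- is \emph{not} a Kuranishi atlas in the sense of Definition~\ref{def:Kfamily}. The problem surfaces as soon as you ask what the basic charts of $\Kk^\Vv$ are. If they are the $\bK_i|_{V_i}$ for $i\in\{1,\ldots,N\}$, then reduction condition~(ii) forces $\pi_\Kk(\ov{V_i})\cap\pi_\Kk(\ov{V_j})=\emptyset$ for $i\neq j$ (since neither $\{i\}\subset\{j\}$ nor conversely), so the footprints $F^\Vv_i$ are pairwise disjoint and generally do \emph{not} cover $X$: condition~(iii) of a reduction only guarantees $\bigcup_{I\in\Ii_\Kk}F^\Vv_I=X$ with the union over all $I$. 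Hence \emph{all} the $\bK_I|_{V_I}$ must serve as basic charts, and then Definition~\ref{def:Kfamily} forces the index set of $\Kk^\Vv$ to consist of subsets of $\Ii_\Kk$ with nonempty footprint intersection --- which, by condition~(ii) again, are exactly the \emph{chains} $C=(I_1\subsetneq\ldots\subsetneq I^C_{\max})$ in $\Ii_\Kk$. This is the paper's construction: the transition chart for $C$ is $\bK_{I^C_{\max}}|_{V_C}$ with $V_C=\bigcap_{k}\phi_{I_k I^C_{\max}}(V_{I_k}\cap U_{I_k I^C_{\max}})$, and the coordinate change for $C\subset D$ is the restriction of $\Hat\Phi_{I^C_{\max}I^D_{\max}}$.

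Independently, the lemma on which you stake the strong cocycle condition is false. Take $I\subsetneq J\subsetneq K$ in $\Ii_\Kk$ and $x\in V_I$ with $\phi_{IK}(x)\in V_K$; there is no reason whatsoever that $\phi_{IJ}(x)\in V_J$. The set $\{L\in\Ii_\Kk:\pi_\Kk(x)\in\pi_\Kk(V_L)\}$ is indeed totally ordered by inclusion, but nothing forces it to contain the particular intermediate index $J$: the reduction sets $V_L$ are chosen independently, and a point of $F_{\{1,2,3\}}$ may perfectly well be covered by $V_{\{1\}}$ and $V_{\{1,2,3\}}$ but not by $V_{\{1,2\}}$. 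So in your structure $V_{IK}\not\subset V_{IJ}$ in general, and the domain equality $V_{IK}=V_{IJ}\cap\phi_{IJ}^{-1}(V_{JK})$ fails. The paper's chain indexing sidesteps this entirely: for chains $C\subset D\subset E$ one has $\pi_\Kk(V_E)\subset\pi_\Kk(V_D)$ simply because $V_E$ is an intersection over a larger collection of factors, and this containment is precisely what yields the domain equality $U^\Vv_{CE}=U^\Vv_{CD}\cap(\phi^\Vv_{CD})^{-1}(U^\Vv_{DE})$ in the paper's computation.
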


\begin{proof}
To begin the construction of $\Kk^\Vv$, note first that by condition (i) in Definition~\ref{def:vicin} the footprint $Z_I: = \psi_I\bigl(V_I\cap \s_I^{-1}(0_I)\bigr)$ is nonempty whenever $V_I\ne \emptyset$.
Further by (iii) the sets 
$(Z_I)_{I\in\Ii_\Kk}$ cover $X$.  Hence we can use the tuple of nontrivial reduced Kuranishi charts $(\bK_I|_{V_I})_{I\in \Ii_\Kk, V_I\ne \emptyset}$ as the covering family of basic charts in $\Kk^\Vv$.
Then the index set of the new Kuranishi atlas is
\begin{equation}\label{eq:IKV}
\Ii_{\Kk^\Vv}= \bigl\{C\subset \Ii_\Kk \,\big|\, Z_C: =  {\textstyle \bigcap_{I\in C}} Z_I\neq \emptyset  \bigr\}.
\end{equation}
By Definition~\ref{def:vicin}~(ii), each such subset $C\subset \Ii_\Kk$ that indexes basic charts with ${\bigcap_{I\in C} Z_I\neq \emptyset}$ can be totally ordered into a chain $I_1\subsetneq I_2\ldots\subsetneq I_n$ of elements in $\Ii_\Kk$; cf.\ \cite[Remark~5.1.1]{MW:top}.  Therefore $\Ii_{\Kk^\Vv}$ can be identified with the set $\Cc\subset 2^{\Ii_\Kk}$ of linearly ordered chains $C\subset \Ii_\Kk$ such that $Z_C\ne \emptyset$.
For $C = \bigl( I^C_1\subsetneq I^C_2\ldots\subsetneq I^C_{n^C}=:I^C_{\rm max} \bigr) \in\Ii_{\Kk^\Vv}$ we define the transition chart by restriction of the chart for
the maximal element $I^C_{\rm max}$
to the intersection of the domains of the chain:
\begin{equation}\label{eq:domKC}
\bK_C^\Vv := \bK_{I^C_{\rm max}}\big|_{V_C}
\qquad\text{with}\quad
V_C := \bigcap_{1\le k\le n_C}\; \phi_{I_k^C I^C_{\max}}\bigl(V_{I_k^C}\cap U_{I_k^C I^C_{\max}}\bigr) \;\subset\; V_{I^C_{\max}} .
\end{equation}
By [Lemma~3.2.3~(a)]{MW:top}, this domain satisfies $\pi_\Kk(V_C)= \bigcap_{I\in C} \pi_\Kk(V_I)$
and hence can be expressed as 
\begin{equation}\label{eq:redu2}
V_C \;=\; {\textstyle \bigcap_{I\in C}} \; \eps_{I^C_{\max}}(V_I)
\;=\; U_{I^C_{\max}} \cap \pi_\Kk^{-1} \Bigl( {\textstyle \bigcap_{I\in C}} \, \pi_\Kk(V_I) \Bigr).
\end{equation}
Next, coordinate changes are required only between $C,D\in\Ii_{\Kk^\Vv}$ with $C\subset D$ so that $Z_C\supset Z_D\ne \emptyset$ and, by the above, $\pi_\Kk(V_C)\subset\pi_\Kk(V_D)$.
Since the inclusion $C\subset D$ implies the inclusion of maximal elements $I^C_{\rm max} \subset I^D_{\rm max}$, we can define the coordinate change as the restriction
$$
\Hat\Phi_{CD}^\Vv:=\Hat\Phi_{I^C_{\rm max} I^D_{\rm max}}\big|_{U^\Vv_{CD}}
\;: \;
\bK_C^\Vv = \bK_{I^C_{\rm max}}\big|_{V_C}  \; \longrightarrow\;
\bK_D^\Vv =\bK_{I^D_{\rm max}}\big|_{V_D}
$$
in the sense of Lemma~\ref{le:restrchange} with maximal domain
\begin{equation}\label{eq:UCD}
U^\Vv_{CD}: = V_C\cap (\phi_{I^C_{\rm max} I^D_{\rm max}})^{-1}(V_D)  \;=\;
V_C\cap \pi_\Kk^{-1}(\pi_\Kk(V_D)) .
\end{equation}
Using \eqref{eq:redu2} and the fact that $C\subset D$ we can also rewrite this domain as
\begin{equation}\label{domUCD}
U^\Vv_{CD} \;=\;  U_{I^C_{\max}}\cap \pi_\Kk^{-1}\Bigl({\textstyle \bigcap_{I\in D}} \pi_\Kk(V_I)\Bigr)
\;=\;  U_{I^C_{\max}}\cap \pi_\Kk^{-1}\bigl(\pi_\Kk(V_D)\bigr).
\end{equation}
Hence, again using  [Lemma~3.2.3~(a)]{MW:top}, the domain of the composed coordinate change $\Hat\Phi_{DE}\circ \Hat\Phi_{CD}$ for $C\subset D\subset E$ is
\begin{align*}
U^\Vv_{CD} \cap (\phi^\Vv_{I^C_{\rm max} I^D_{\rm max}})^{-1}\bigl( U^\Vv_{DE} \bigr)
&\;=\;
U_{I^C_{\max}} \;\cap\; \pi_\Kk^{-1}\bigl(\pi_\Kk(V_D)\bigr)
\;\cap\;
(\phi^\Vv_{I^C_{\rm max} I^D_{\rm max}})^{-1}\Bigl( \pi_\Kk^{-1}\bigl(\pi_\Kk(V_E)\bigr) \Bigr) \\
&\;=\;
U_{I^C_{\max}} \;\cap\; \pi_\Kk^{-1}\bigl(\pi_\Kk(V_D) \cap \pi_\Kk(V_E)\bigr)\\
& \;=\;
U_{I^C_{\max}} \;\cap\; \pi_\Kk^{-1}\bigl( \pi_\Kk(V_E)\bigr) ,
\end{align*}
which equals the domain of $\Hat\Phi_{CE}$.
Now the strong cocycle condition for $\Kk^\Vv$ follows from the strong cocycle condition for $\Kk$, which holds by Lemma~\ref{le:tame0}. 
Moreover, smoothness and index condition of the charts and coordinate changes in $\Kk$ transfers to $\Kk^\Vv$, so that $\Kk^\Vv$ is a Kuranishi atlas.

Next, the inclusions $V_C\hookrightarrow U_{I^C_{\max}}$ induce a continuous map on the object spaces
$$
\iota^\Vv \,:\;
\Obj_{\bB_{\Kk^\Vv}} = {\textstyle \bigsqcup_{C\in\Ii_{\Kk^\Vv}}} V_C \; \longrightarrow \;  {\textstyle \bigsqcup_{I\in\Ii_\Kk}} U_I = \Obj_{\bB_\Kk}.
$$
Since $V_C\subset V_{I^C_{\max}}$ for all $C$, this map  has image $\bigsqcup_{I\in \Ii_\Kk} V_I$.  It is generally not injective. However, because the coordinate changes in
${\Kk^\Vv}$ are restrictions of those in  $\Kk$, this map on object spaces extends to a functor 
$\iota^\Vv : \bB_{\Kk^\Vv}\to \bB_{\Kk}$. 
This shows that the induced map $|\iota^\Vv|:|{\Kk^\Vv}|\to |\Kk|$ is continuous with image $\pi_\Kk(\Vv)$. Moreover, the functor $\iota^\Vv$ is faithful, i.e.\ for each $(C,x), (D,y)\in   \Obj_{\bB_{\Kk^\Vv}}$ the map
$$
\Mor_{\Kk^\Vv}\bigl((C,x),(D,y)\bigr)\;\longrightarrow\; \Mor_{\Kk}\bigl(\io^\Vv(C,x), \io^\Vv(D,y)\bigr)
$$
is injective.
To prove that $|\iota^\Vv|$ is injective 
we need to show for $x\in V_C, y\in V_D$ that
$$
(I^C_{\max}, x)\sim_\Kk (I^D_{\max},y) \;\Longrightarrow\;  (C,x)\sim_{\Kk^\Vv} (D,y).
$$
To see this, note that by assumption and \eqref{eq:redu2} we have
$\pi_\Kk(V_I)\ni \pi_\Kk(x) = \pi_\Kk(y) \in  \pi_\Kk(V_J)$ for all 
$I\in C$ and $J\in D$.
In particular, for each $I^C_k\in C, I^D_\ell\in D$ the intersection $\pi_\Kk(V_{I^C_{k}})\cap \pi_\Kk(V_{I^D_{\ell}})$ is nonempty.  Hence, by Definition~\ref{def:vicin}~(ii), the elements  $I^C_1,\ldots,I^C_{\max}, I^D_1,\ldots,I^D_{\max}$ of $\Ii_\Kk$ can be ordered into a chain $E:=C\vee D$ (after removing repeated elements) with maximal element $I^{E}_{\max}=I^{C}_{\max}$ or $I^{E}_{\max}=I^{D}_{\max}$, and such that $\pi_\Kk(x)=\pi_\Kk(y)\in \bigcap_{I\in C\vee D} \pi_\Kk(V_I)=\pi_\Kk(V_{C\vee D})$. 
In particular, $V_{C\vee D}$ is nonempty, so we have $E=C\vee D \in\Ii_{\Kk^\Vv}$ and
$x\in V_C\cap \pi_\Kk^{-1}(V_E) \subset  U_{I^C_{\max}I^E_{\max}}$ lies in the domain of $\phi^\Vv_{C (C\vee D)}=\phi_{I^C_{\max}I^E_{\max}}$, whereas $y\in V_D\cap \pi_\Kk^{-1}(V_E) \subset  U_{I^D_{\max}I^E_{\max}}$ lies in the domain of $\phi^\Vv_{D (C\vee D)}=\phi_{I^D_{\max}I^E_{\max}}$. 
Now  [Lemma~3.2.3~(a)]{MW:top} for $I^C_{\max}\subset I^E_{\max}$ and $I^D_{\max}\subset I^E_{\max}$ implies $\phi^\Vv_{C (C\vee D)}(x) = \phi^\Vv_{D (C\vee D)}(y)$.  
This proves $(C,x)\sim_{\Kk^\Vv} (D,y)$ as required, and thus completes the proof.
\end{proof}

\begin{remark}\label{rmk:KVv} \rm   
The resulting Kuranishi atlas $\Kk^\Vv$ is not filtered (or additive) because it has coordinate changes  $\bK_C^\Vv\to \bK_D^\Vv$ between charts that have the same obstruction space when $I^C_{\max} =I^D_{\max}$.  Further, the above proof shows that $\Kk^\Vv$ has the property that for any two charts $\bK_C^\Vv, \bK_D^\Vv$ with intersecting footprints $Z_C\cap Z_D\ne \emptyset$,
we must have $I^C_{\rm max} \subset I^D_{\rm max}$ or $I^D_{\rm max} \subset I^C_{\rm max}$, though possibly neither $C\subset D$ nor $D\subset C$. 
Assuming w.l.o.g.\ that $I^C_{\rm max} \subset I^D_{\rm max}$, there is a direct coordinate change 
$$
\Hat\Phi_{I^C_{\rm max} I^D_{\rm max}}|_{V_C\cap \pi_\Kk^{-1}(\pi_\Kk(V_D))}
 \;: \;  \bK_C^\Vv = \bK_{I^C_{\rm max}}\big|_{V_C}  \; \longrightarrow\; \bK_D^\Vv = \bK_{I^D_{\rm max}}\big|_{V_D} .
$$
It embeds one of the obstruction bundles  as a summand of the other; in this case
$E_C=E_{I^C_{\rm max}}\hookrightarrow \Hat\Phi_{I^C_{\rm max} I^D_{\rm max}}(E_C)\subset E_D = E_{I^D_{\rm max}}$.
Such a coordinate change is not explicitly included in the Kuranishi atlas  $\Kk^\Vv$ unless $C\subset D$. It does however appear, as in \cite[Lemma~3.2.3]{MW:top}, as the composite of a coordinate change $\bK_C^\Vv \to \bK_{C\vee D}^\Vv$ with the inverse of a coordinate change $\bK_D^\Vv \to \bK_{C\vee D}^\Vv$.
In this respect the subcategory $\bB_{\Kk}|_\Vv$ has much simpler structure, since the components  $V_I$ of its space of objects correspond to chains $C=(I)$ with just one element.
$\hfill\er$
\end{remark}

\subsection{Perturbed zero sets}\label{ss:sect} \hspace{1mm}\\ \vspace{-3mm}

Throughout this section, $\Kk$ is a fixed tame Kuranishi atlas on a compact metrizable space $X$ 
and $\Vv$ is a fixed reduction of $\Kk$.
We begin by introducing sections in the reduction and  an infinitesimal version of an admissibility condition 
for sections in \cite[A.1.21]{FOOO}.\footnote
{We work infinitesimally since the canonical section $\s_\Kk: = (s_I)$ may not satisfy an identity
$s_J = \Hat\phi_{IJ}(s_I) \oplus \id_{E_J/\Hat\phi_{IJ}(E_I)}$ in a tubular neighbourhood of $\phi_{IJ}(U_{IJ})\subset U_J$ identified with $U_{IJ}\times E_J/\Hat\phi_{IJ}(E_I)$ in the way described in  \cite{FOOO}.  The new definition used in \cite{FOOO12} is closer to ours.
}

\begin{defn}\label{def:sect} 
A  {\bf reduced section} of $\Kk$ is a smooth functor $\nu:\bB_\Kk|_\Vv\to\bE_\Kk|_\Vv$ between the reduced domain and obstruction categories of some reduction $\Vv$ of $\Kk$, such that $\pr_\Kk\circ\nu$ is the identity functor. 
That is, $\nu=(\nu_I)_{I\in\Ii_\Kk}$ is given by a family of smooth maps $\nu_I: V_I\to E_I$ such that for each $I\subsetneq J$ we have a commuting diagram
\begin{equation}\label{eq:comp}
\xymatrix{
 V_I\cap \phi_{IJ}^{-1}(V_J)   \ar@{->}[d]_{\phi_{IJ}} \ar@{->}[r]^{\qquad\nu_I}    &  E_I \ar@{->}[d]^{\Hat\phi_{IJ}}   \\
V_J \ar@{->}[r]^{\nu_J}  & E_J.
}
\end{equation}
We say that a reduced section $\nu$ is an {\bf admissible perturbation} of 
$\s_\Kk|_\Vv$ if  
\begin{equation}\label{eq:admiss}
\rd_y \nu_J(\rT_y V_J) \subset\im\Hat\phi_{IJ} \qquad \forall \; I\subsetneq J, \;y\in V_J\cap\phi_{IJ}(V_I\cap U_{IJ}) .
\end{equation}
\end{defn}

\begin{rmk}\label{rmk:sect} \rm  
(i)
Each reduced section  $\nu:\bB_\Kk|_\Vv\to\bE_\Kk|_\Vv$ induces a continuous map $|\nu|: |\Vv|\to |\bE_\Kk|$ such that $|\pr_\Kk|\circ |\nu| = \id$, where $|\pr_\Kk|$ is as in Theorem~\ref{thm:K}.
Each such map has the further property that $|\nu|\big|_{\pi_\Kk(V_I)}$ takes values in $\pi_\Kk(U_I\times E_I)$.  
\MS

\NI (ii) 
More generally, a section $\si$ of $\Kk$ is a functor $\bB_\Kk\to\bE_\Kk$ that satisfies the conditions of Definition~\ref{def:sect} with $\Vv$ replaced by $\Obj_{\bB_\Kk}$. 
But these compatibility conditions are now much more onerous.  
For example, except in the most trivial cases, the set $V_{12}\cap \bigcap_{i=1,2} \phi_{i,12}(U_{i,12}\less V_i)$ is nonempty, so that there is $x\in V_{12}$ with $\pi_\Kk(x)\in \bigl(\pi_\Kk(U_1)\cup \pi_\Kk(U_2)\bigl)\less \bigl( \pi_\Kk(V_1) \cup \pi_\Kk(V_2)\bigr)$.
A reduced section $\nu$ could take any value $\nu_{12}(x) \in E_{12}\cong \Hat\phi_{1,12}(E_1)\oplus \Hat\phi_{2,12}(E_2)$. On the other hand, a section $\si$ of $\Kk$ would have $\si(x)\in \bigcap_{i=1,2} \Hat\phi_{i,12}(E_i)=\{0\}$ since the compatibility conditions imply that  
$\si_{12}|_{\im\phi_{i,12}}$ takes values in $\Hat\phi_{i,12}(E_i)$.  
We cannot achieve transversality for $\s_\Kk+\si$ under such conditions, which explains why we consider reduced sections.
$\hfill\er$
\end{rmk}

The following simple example illustrates 
 the use of reductions.
 
 \begin{example}\rm 
We will construct a representative for the  VFC as the zero set of $\s_\Kk|_\Vv + \nu$ where $\nu$ is a reduced section such that that  $\s_\Kk|_\Vv + \nu$ is transverse to $0$.  Suppose that $X = \{p_1,p_2,p_3\}$ consists of three points, with footprint covering $F_1 = \{p_1,p_2\}, F_2 = \{p_2,p_3\}$.  Then the corresponding atlas has three charts $\bK_1,\bK_2, \bK_{12}$ where
$F_{12}  = \{p_2\}$.  Therefore we can choose the reduction $\Vv$ to extend the cover reduction $Z_1: = \{p_1\}, Z_2: =  \{p_3\}, Z_{12}: = \{p_2\}$.  Since these three sets are disjoint, we may suppose by \cite[Remark~5.3.6]{MW:top} that the three sets $\bigl(\pi_\Kk(V_I)\bigr)_{I\in \Ii_\Kk}$ are also disjoint.
Hence the perturbations $\nu_I$ can be chosen independently; there are no compatibility conditions between them.
$\hfill\er$
\end{example}

Note that the zero section $0_\Kk$ restricts to an admissible perturbation $0_\Vv:\bB_\Kk|_\Vv\to\bE_\Kk|_\Vv$ in the sense of the above definition.
Similarly, the canonical section $\s_\Kk$ of the Kuranishi atlas restricts to a section $\s_\Kk|_\Vv: \bB_\Kk|_\Vv\to\bE_\Kk|_\Vv$ of any reduction.
However, the canonical section is generally not admissible. In fact, as we saw in Lemma~\ref{le:change}, for all $y\in V_J\cap \phi_{IJ}(V_I\cap U_{IJ})$ the map 
$$
{\rm pr}_{E_I}^\perp\circ \rd_y s_J \,: \;\;  \quotient{\rT_y U_J} {\rT_y (\phi_{IJ}(U_{IJ}))} \; \longrightarrow \; \quotient{E_J}{\Hat\phi_{IJ}(E_I)} 
$$
is an isomorphism by the index condition \eqref{tbc}, while for an admissible section it is identically zero.
So for any reduction $\Vv$ and admissible perturbation $\nu$, the sum 
$$
\s_\Kk|_\Vv+\nu:=(s_I|_{V_I}+\nu_I)_{I\in\Ii_\Kk} \,:\; \bB_\Kk|_\Vv \;\to\; \bE_\Kk|_\Vv
$$
is a reduced section that satisfies the index condition
$$
{\rm pr}_{E_I}^\perp\circ\rd_y (s_J+\nu_J)\,: 
\;\; \quotient{\rT_y U_J}{\rT_y (\phi_{IJ}(U_{IJ}))} \;\stackrel{\cong}\longrightarrow \; \quotient{E_J}{\Hat\phi_{IJ}(E_I)}
\qquad\forall \; y\in V_J\cap \phi_{IJ}(V_I\cap U_{IJ}).
$$
We use this in the following lemma to show that transversality of the sections in Kuranishi charts is preserved under coordinate changes.
Admissibility is also needed for the notion of orientations; cf.\ Proposition~\ref{prop:orient}.

\begin{lemma}\label{le:transv}
Let $\nu$ be an admissible perturbation of  $\s_\Kk|_\Vv$. 
If $z\in V_I$ and $w\in V_J$ map to the same point in the virtual neighbourhood $\pi_\Kk(z)=\pi_\Kk(w)\in|\Kk|$, then $z$ is a transverse zero of $s_I|_{V_I}+\nu_I$ if and only if $w$ is a transverse zero of $s_J|_{V_J}+\nu_J$.
\end{lemma}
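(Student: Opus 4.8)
The statement is local and symmetric in $I,J$, so the plan is to reduce to the case of a single coordinate change and then use the index condition together with admissibility. Since $\pi_\Kk(z)=\pi_\Kk(w)$, the equivalence $(I,z)\sim(J,w)$ in $\bB_\Kk$ is generated by the morphisms of the tame Kuranishi atlas. By the tameness results (in particular Lemma~\ref{le:tame0}, which gives the strong cocycle condition, and the structure of morphisms in a reduced tame atlas as recorded in Remark~\ref{rmk:KVv}), after passing through an intermediate chart $\bK_{I\cup J}$ it suffices to treat the case where there is a direct coordinate change $\Hat\Phi_{IJ}=(\phi_{IJ},\Hat\phi_{IJ}):\bK_I\to\bK_J$ with $I\subset J$, $z\in U_{IJ}$, and $w=\phi_{IJ}(z)$. (One then applies the resulting equivalence twice to link $(I,z)$ and $(J,w)$ via their common image in $\bK_{I\cup J}$.) So the core claim to prove is: for a coordinate change $\Hat\Phi_{IJ}$ and an admissible perturbation $\nu$, the point $z\in V_I\cap\phi_{IJ}^{-1}(V_J)$ is a transverse zero of $s_I|_{V_I}+\nu_I$ iff $w=\phi_{IJ}(z)$ is a transverse zero of $s_J|_{V_J}+\nu_J$.

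\textbf{Key steps for the core claim.} First, the zero sets correspond: $(s_I+\nu_I)(z)=0$ iff $(s_J+\nu_J)(w)=0$, because $\nu$ is a functor compatible with the coordinate change (diagram \eqref{eq:comp}) and $s_J\circ\phi_{IJ}=\Hat\phi_{IJ}\circ s_I$ by Remark~\ref{rmk:tchange}, while $\Hat\phi_{IJ}$ is injective. Second, for the transversality comparison, differentiate the identity
$$
(s_J+\nu_J)\circ\phi_{IJ} = \Hat\phi_{IJ}\circ(s_I+\nu_I)
$$
at $z$ to get $\rd_w(s_J+\nu_J)\circ\rd_z\phi_{IJ} = \Hat\phi_{IJ}\circ\rd_z(s_I+\nu_I)$. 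Since $\Hat\phi_{IJ}$ and $\phi_{IJ}$ are injective linear maps, this already shows $\rd_z(s_I+\nu_I)$ surjective onto $E_I$ implies $\rd_w(s_J+\nu_J)$ surjective onto $\Hat\phi_{IJ}(E_I)=\im\rd_w(s_J+\nu_J)\big|_{\im\rd_z\phi_{IJ}}$. To get surjectivity onto all of $E_J$, combine this with the complementary directions: pick a complement $N_w$ of $\rd_z\phi_{IJ}(\rT_z U_I)$ in $\rT_w U_J$; the index condition for the coordinate change (Definition~\ref{def:change}(ii), equivalently \eqref{tbc}) together with admissibility \eqref{eq:admiss} — which says $\rd_w\nu_J$ maps into $\im\Hat\phi_{IJ}$ — shows that $\mathrm{pr}^\perp_{E_I}\circ\rd_w(s_J+\nu_J)=\mathrm{pr}^\perp_{E_I}\circ\rd_w s_J$ restricted to $N_w$ is an isomorphism onto $E_J/\Hat\phi_{IJ}(E_I)$ (this is exactly the displayed index identity for $s_J+\nu_J$ stated just before the lemma). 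Hence $\rd_w(s_J+\nu_J)$ is surjective iff $\rd_z(s_I+\nu_I)$ is. A parallel argument on kernels (using that $\rd\phi_{IJ}$ identifies $\ker\rd s_I$ with $\ker\rd s_J$, adapted to the perturbed sections via the same differentiated identity) confirms that transversality, i.e.\ the dimension count $\dim\ker = \dim U - \dim E$, is equivalent on the two sides; since $\dim U_I-\dim E_I=\dim U_J-\dim E_J$ by Lemma~\ref{le:change}, transversality of one implies the other.

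\textbf{Main obstacle.} The conceptual content is routine once the right linear-algebra bookkeeping is set up; the only genuine subtlety is the reduction step at the start — arguing carefully that a general equivalence $(I,z)\sim(J,w)$ in $\bB_\Kk|_\Vv$ can be factored through direct coordinate changes (via $I\cup J\in\Ii_\Kk$), using that $\Kk$ is tame and $\Vv$ a reduction so that $\pi_\Kk(V_I)\cap\pi_\Kk(V_J)\neq\emptyset$ forces $I\subset J$ or $J\subset I$ (Definition~\ref{def:vicin}(ii)). After that, I expect the verification that admissibility exactly cancels the contribution of $\rd\nu_J$ in the normal directions to be the step that needs the most care, since it is precisely where the index condition and the definition of admissible perturbation interact.
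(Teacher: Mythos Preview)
Your proposal is correct and follows essentially the same approach as the paper: reduce to a single coordinate change $w=\phi_{IJ}(z)$, then combine admissibility \eqref{eq:admiss} with the index condition \eqref{tbc} to see that $\rd_w(s_J+\nu_J)$ splits into a block conjugate to $\rd_z(s_I+\nu_I)$ on $\im\rd_z\phi_{IJ}\to\Hat\phi_{IJ}(E_I)$ and the isomorphism $\rd_w s_J$ on the normal/quotient factors, so surjectivity on one side is equivalent to surjectivity on the other. Two minor simplifications: since $\Vv$ is a reduction, Definition~\ref{def:vicin}(ii) together with injectivity of $\pi_\Kk|_{U_I}$ already forces $I\subset J$ (or $J\subset I$) and $w=\phi_{IJ}(z)$ directly, so no detour via $\bK_{I\cup J}$ is needed; and your closing remark about kernels and dimension counts is superfluous, since transversality here means exactly surjectivity of the differential.
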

\begin{proof}
Note that, since the equivalence relation $\sim$ on $\Obj_{\bB_\Kk}$ is generated  by 
$\preceq$ and its inverse $\succeq$, it suffices to suppose that $(I,z)\preceq (J,w)$, i.e.\ $w=\phi_{IJ}(z)$. Now $s_J(w)=\Hat\phi_{IJ}(s_I(z))=0$ iff $s_I(z)=0$ since $\Hat\phi_{IJ}$ is injective.
Next, $z$ is a transverse zero of $s_I|_{V_I}+\nu_I$ exactly if $\rd_z (s_I+\nu_I): \rT_z U_I\to E_I$ is surjective. On the other hand, we have splittings $\rT_w U_J \cong \im\rd_z\phi_{IJ} \oplus \tfrac{\rT_w U_J} {\im\rd_z\phi_{IJ}} $ and $E_J \cong \Hat\phi_{IJ}(E_I) \oplus \tfrac{E_J}{\Hat\phi_{IJ}(E_I)}$
with respect to which the differential at $w$ has product form
\begin{equation}\label{eq:dnutrans}
\rd_w (s_J + \nu_J) \cong \bigl(\; \Hat\phi_{IJ}\circ\rd_z (s_I+\nu_I) \circ (\rd_z\phi_{IJ})^{-1} \,,\, \rd_w s_J \, \bigr) ,
\end{equation}
by the admissibility condition on $\nu_J$. 
Here the second factor is an isomorphism by the index condition \eqref{tbc}. 
Since $\Hat\phi_{IJ}$ and $(\rd_z\phi_{IJ})^{-1}$ are isomorphisms on the relevant domains, 
this proves equivalence of the transversality statements.
\end{proof}

\begin{defn}\label{def:sect2}
A {\bf transverse perturbation} of $\s_\Kk|_{\Vv}$
is a reduced section $\nu:\bB_\Kk|_\Vv\to\bE_\Kk|_\Vv$ whose sum with 
the restriction $\s_\Kk|_{\Vv}$ of the canonical section 
is transverse to the zero section $0_\Vv$, that is $s_I|_{V_I}+\nu_I\pitchfork 0$ for all $I\in\Ii_\Kk$.

Given a transverse perturbation $\nu$, we  define the {\bf perturbed zero set} $|\bZ^\nu|$ to be the realization of the full subcategory $\bZ^\nu$ of $\bB_\Kk$ with object space 
$$
(\s_\Kk|_{\Vv} + \nu)^{-1}(0)  
 := {\textstyle \bigsqcup_{I\in \Ii_\Kk}}(s_I|_{V_I}+\nu_I)^{-1}(0) \;\subset\;\Obj_{\bB_\Kk} . 
$$
That is, we equip
$$
|\bZ^\nu| : = \bigl|( \s_\Kk|_{\Vv}  + \nu)^{-1}(0)  
\bigr| \,=\; \quotient{ {\textstyle\bigsqcup_{I\in\Ii_\Kk} (s_I|_{V_I}+\nu_I)^{-1}(0) }}{\!\sim} 
$$
with the quotient topology generated by the morphisms of $\bB_\Kk|_\Vv$.
By Remark~\ref{rmk:red}~(iii) this is equivalent to the quotient topology induced by $\pi_\Kk$, 
and the inclusion $(\s_\Kk|_{\Vv} +\nu)^{-1}(0) \subset\Vv = \Obj_{\bB_\Kk|_\Vv}$ induces a continuous injection, which we denote by
\begin{equation}\label{eq:Zinject} 
i^\nu \,:\;  |\bZ^\nu| \;\longrightarrow\; |\Kk| .
\end{equation}
\end{defn}

To see that the above is well defined, recall that the canonical section restricts to a reduced section $\s_\Kk|_\Vv: \bB_\Kk|_\Vv\to\bE_\Kk|_\Vv$, so that the sum  $\s_\Kk|_\Vv+\nu$  is a reduced section as well, with a well defined zero set $(\s_\Kk|_\Vv + \nu)^{-1}(0)$.   
Moreover, since $\bZ^\nu$ is the realization of a full subcategory of $\bB_\Kk|_\Vv$,
Remark~\ref{rmk:red}~(iii) asserts that the map $i^\nu$ is a continuous injection to $|\Kk|$, and moreover a  homeomorphism from $|\bZ^\nu|$ to  $\pi_\Kk\bigl((\s_\Kk|_\Vv+\nu)^{-1}(0)\bigr)=|(\s_\Kk|_\Vv + \nu)^{-1}(0)|$ 
with respect to the quotient topology in the sense of Definition~\ref{def:topologies}. 
In particular, the continuous injection to the Hausdorff space $|\Kk|$ implies Hausdorffness of $|\bZ^\nu|$.
However, the image of $i^\nu$ is $\pi_\Kk\bigl((\s_\Kk|_\Vv+\nu)^{-1}(0)\bigr)$ 
with the relative topology induced by $|\Kk|$,  that is
$$
i^\nu ( |\bZ^\nu| )  =  \|(\s_\Kk|_\Vv+\nu)^{-1}(0)\| .
$$
So the perturbed zero set is equipped with two Hausdorff topologies -- the quotient topology on $|\bZ^\nu|\cong|(\s_\Kk|_\Vv+\nu)^{-1}(0)|$  and the relative topology on $\|(\s_\Kk|_\Vv+\nu)^{-1}(0)\|\subset|\Kk|$.
It remains to achieve local smoothness and compactness in one of the topologies. 
We will see below that local smoothness follows from transversality of the perturbation, though only in the topology of $|\bZ^\nu|$, which may contain smaller neighbourhoods than $\|(\s_\Kk|_\Vv + \nu)^{-1}(0)\|$.
On the other hand, compactness of $\|(\s_\Kk|_\Vv + \nu)^{-1}(0)\|$ seems easier to obtain than that of $|\bZ^\nu|$, which may have more open covers.
For the first, one could use the fact that $\|(\s_\Kk|_\Vv+\nu)^{-1}(0)\|\subset\|\Vv\|$ is precompact in $|\Kk|$ by Proposition~\ref{prop:Ktopl1}~(iii), so it would suffice to deduce closedness of $\|(\s_\Kk|_\Vv+\nu)^{-1}(0)\|\subset|\Kk|$. This would follow if the continuous map $|\s_\Kk|_\Vv+\nu|:\|\Vv\| \to |\bE_\Kk|_\Vv|$ had a continuous extension to $|\Kk|$ with no further zeros. 
However, such an extension may not exist. In fact, generally $\|\Vv\|\subset |\Kk|$ fails to be open, 
$\io_\Kk(X)\subset |\Kk|$ does not have any precompact neighbourhoods (see Example~\ref{ex:Khomeo}),
 and even those assumptions would not guarantee the existence of an extension.
So compactness of either $\|(\s_\Kk|_\Vv+\nu)^{-1}(0)\|$ or $|\bZ^\nu|$ will not hold in general without further hypotheses on the perturbation that force its zero set to be ``away from the boundary"  of $\|\Vv\|$
in the following sense,
which by Theorem~\ref{thm:zeroS0} directly implies sequential compactness of $|\bZ^\nu|$.

\begin{defn}\label{def:precomp}  {\rm $\!\!$ \cite[Definition~5.2.1]{MW:top}}
A reduced section $\nu: \bB_\Kk|_\Vv\to \bE_\Kk|_\Vv$ is said to be {\bf precompact} if 
its perturbed zero set is contained inside a nested reduction $\Cc\sqsubset \Vv$ in the sense that 
$\pi_\Kk\bigl((\s_\Kk|_\Vv + \nu)^{-1}(0)\bigr)\subset \pi_\Kk(\Cc)$, or equivalently
\begin{equation}\label{eq:zeroVCC}
(s_J|_{V_J} + \nu_J)^{-1}(0)
\;\subset\;  {\textstyle \bigcup_{H\supset J} } \,\phi_{JH}^{-1}(C_H) \; \cup \; {\textstyle \bigcup_{H\subsetneq J} }\,\phi_{HJ}(C_H) 
\qquad\forall J\in \Ii_\Kk.
\end{equation}
\end{defn}

The smoothness properties follow more directly from transversality of the perturbation.
The next lemma shows that for transverse perturbations the object space $\bigsqcup_I (s_I|_{V_I}+\nu_I)^{-1}(0) \subset \bigsqcup_I V_I$ of $\bZ^\nu$ is a smooth submanifold of dimension $d: = \dim \Kk$, and that the morphisms spaces are given by local diffeomorphisms.
Hence the category $\bZ^\nu$ can be extended to a groupoid by adding the inverses 
to the space of morphisms.

\begin{lemma} \label{le:stransv}
Let $\nu: \bB_\Kk|_\Vv\to\bE_\Kk|_\Vv$ be a transverse perturbation of  $\s_\Kk|_\Vv$. 
Then the domains of the perturbed zero set $(s_I|_{V_I}+\nu_I)^{-1}(0)\subset V_I$ are submanifolds for all $I\in\Ii_\Kk$; and for $I\subset J$ the map $\phi_{IJ}$ induces a diffeomorphism from ${V_J\cap U_{IJ}\cap (s_I|_{V_I}+\nu_I)^{-1}(0)}$ to an open subset of $(s_J|_{V_J}+\nu_J)^{-1}(0)$.
\end{lemma}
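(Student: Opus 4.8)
The statement is local, so the plan is to fix $I\in\Ii_\Kk$ and first show that $(s_I|_{V_I}+\nu_I)^{-1}(0)\subset V_I$ is a smooth submanifold of dimension $d=\dim U_I-\dim E_I$. This is the standard regular value / implicit function theorem argument: transversality of the section $\s_\Kk|_\Vv+\nu$ means precisely $s_I|_{V_I}+\nu_I\pitchfork 0$, i.e.\ $\rd_z(s_I+\nu_I):\rT_z U_I\to E_I$ is surjective for every $z\in(s_I|_{V_I}+\nu_I)^{-1}(0)$, and the implicit function theorem gives that the zero set is a submanifold with $\rT_z\bigl((s_I|_{V_I}+\nu_I)^{-1}(0)\bigr)=\ker\rd_z(s_I+\nu_I)$, hence of dimension $\dim U_I-\dim E_I=d$. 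This also uses that $V_I$ is an open subset of the manifold $U_I$, so it inherits a smooth structure.

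Next I would turn to the morphisms. Fix $I\subsetneq J$ with $U_{IJ}\neq\emptyset$. The coordinate change $\Hat\Phi_{IJ}$ gives a smooth embedding $\phi_{IJ}:U_{IJ}\to U_J$ intertwining the sections, $s_J\circ\phi_{IJ}=\Hat\phi_{IJ}\circ s_I|_{U_{IJ}}$, and since $\nu$ is a reduced section the diagram \eqref{eq:comp} gives $\nu_J\circ\phi_{IJ}=\Hat\phi_{IJ}\circ\nu_I$ on $V_I\cap\phi_{IJ}^{-1}(V_J)$. Adding these, $(s_J+\nu_J)\circ\phi_{IJ}=\Hat\phi_{IJ}\circ(s_I+\nu_I)$ on this overlap, and since $\Hat\phi_{IJ}$ is injective, $\phi_{IJ}$ maps $V_J\cap U_{IJ}\cap(s_I|_{V_I}+\nu_I)^{-1}(0)$ into $(s_J|_{V_J}+\nu_J)^{-1}(0)$. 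It is injective (being the restriction of an embedding); for the diffeomorphism statement one computes tangent spaces. At a point $z$ in this set with $w=\phi_{IJ}(z)$, differentiating the intertwining relation gives $\rd_w(s_J+\nu_J)\circ\rd_z\phi_{IJ}=\Hat\phi_{IJ}\circ\rd_z(s_I+\nu_I)$, so $\rd_z\phi_{IJ}$ carries $\ker\rd_z(s_I+\nu_I)$ into $\ker\rd_w(s_J+\nu_J)$; equality of dimensions of these kernels (both equal $d$ by the submanifold statement, using Lemma~\ref{le:transv} to know $w$ is a transverse zero) and injectivity of $\rd_z\phi_{IJ}$ force this to be an isomorphism onto $\ker\rd_w(s_J+\nu_J)=\rT_w\bigl((s_J|_{V_J}+\nu_J)^{-1}(0)\bigr)$. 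Thus $\phi_{IJ}$ restricts to a smooth immersion between manifolds of the same dimension $d$ which is an injective topological embedding, hence a diffeomorphism onto its image, and the image is open in $(s_J|_{V_J}+\nu_J)^{-1}(0)$ since an injective immersion between equidimensional manifolds is an open map.

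The argument is essentially routine once the pieces are assembled; the only point requiring a little care -- and hence the main thing to get right -- is verifying that the image $\phi_{IJ}\bigl(V_J\cap U_{IJ}\cap(s_I|_{V_I}+\nu_I)^{-1}(0)\bigr)$ is genuinely an \emph{open} subset of the zero set $(s_J|_{V_J}+\nu_J)^{-1}(0)$ rather than merely an immersed submanifold, so that the map is a diffeomorphism onto an open set in the manifold topology. I would handle this by invoking invariance of domain / the open mapping property for injective immersions of manifolds of equal dimension, combined with the fact that $\phi_{IJ}$ is already known to be a topological embedding (as the restriction of the smooth embedding $\phi_{IJ}:U_{IJ}\to U_J$). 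The transversality identity \eqref{eq:dnutrans} from the proof of Lemma~\ref{le:transv}, which exhibits $\rd_w(s_J+\nu_J)$ in block-triangular form with respect to the splittings $\rT_w U_J\cong\im\rd_z\phi_{IJ}\oplus\tfrac{\rT_w U_J}{\im\rd_z\phi_{IJ}}$ and $E_J\cong\Hat\phi_{IJ}(E_I)\oplus\tfrac{E_J}{\Hat\phi_{IJ}(E_I)}$, already does the tangent-space bookkeeping for us and can be quoted directly to identify $\ker\rd_w(s_J+\nu_J)=\rd_z\phi_{IJ}\bigl(\ker\rd_z(s_I+\nu_I)\bigr)$, since the second diagonal block is an isomorphism by the index condition \eqref{tbc}.
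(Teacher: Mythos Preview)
Your proof is correct and follows essentially the same approach as the paper: implicit function theorem for the submanifold structure, functoriality of the perturbed sections to see that $\phi_{IJ}$ restricts to a map between zero sets, and then the observation that an embedding between equidimensional manifolds is a diffeomorphism onto an open subset. The paper's proof is much terser, but the content is the same.

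One small remark: your appeals to Lemma~\ref{le:transv} and to the block form \eqref{eq:dnutrans} are unnecessary and technically slightly off, since both of those require $\nu$ to be \emph{admissible}, whereas Lemma~\ref{le:stransv} only assumes $\nu$ is a transverse perturbation. Fortunately you do not actually need them: transversality at $w=\phi_{IJ}(z)$ is part of the hypothesis (since $s_J|_{V_J}+\nu_J\pitchfork 0$ is assumed for every $J$), so $\dim\ker\rd_w(s_J+\nu_J)=\dim U_J-\dim E_J=d$ follows directly, and your dimension-count argument goes through without invoking admissibility.
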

\begin{proof}
The submanifold structure of $(s_I|_{V_I}+\nu_I)^{-1}(0)\subset U_I$ follows from the transversality and the implicit function theorem, with the dimension given by the index $d:=\dim U_I-\dim E_I$.
For $I\subset J$ the embedding $\phi_{IJ}:U_{IJ}\to U_J$ then restricts to a smooth embedding  
\begin{equation}\label{eq:ZphiIJ}
\phi_{IJ}: U_{IJ}\cap(s_I|_{V_I}+\nu_I)^{-1}(0) \to (s_J|_{V_J}+\nu_J)^{-1}(0)
\end{equation}
by the functoriality of the perturbed sections.
Since this restriction of $\phi_{IJ}$ to this solution set is an embedding from an open subset of a $d$-dimensional manifold into a $d$-dimensional manifold, it has open image and is a diffeomorphism to this image.
\end{proof}

Assuming that precompact transverse perturbations exist (as we will show in Proposition~\ref{prop:ext}), we can deduce smoothness and compactness of the perturbed zero set.

\begin{prop} \label{prop:zeroS0}
Let $\Kk$ be a tame $d$-dimensional Kuranishi atlas with a reduction $ \Vv\sqsubset \Kk$, and 
suppose that $\nu: \bB_\Kk|_\Vv \to \bE_\Kk|_\Vv$ is a precompact transverse perturbation.
Then $|\bZ^\nu| = |(\s_\Kk|_\Vv+ \nu)^{-1}(0)|$ is a smooth closed $d$-dimensional manifold. 
Moreover, its quotient topology agrees with the subspace topology on ${\|(\s_\Kk|_\Vv+ \nu)^{-1}(0)\|\subset|\Kk|}$.
\end{prop}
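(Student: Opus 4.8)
The proof has three ingredients, each already essentially available: local smoothness from transversality, compactness from precompactness, and an identification of topologies via the nesting uniqueness principle. First I would establish that $|\bZ^\nu|$ is a smooth $d$-dimensional manifold. By Lemma~\ref{le:stransv} each domain $(s_I|_{V_I}+\nu_I)^{-1}(0)\subset V_I$ is a smooth $d$-dimensional submanifold, and for $I\subset J$ the embedding $\phi_{IJ}$ restricts to a diffeomorphism from $U_{IJ}\cap(s_I|_{V_I}+\nu_I)^{-1}(0)$ onto an open subset of $(s_J|_{V_J}+\nu_J)^{-1}(0)$. Since $\Kk$ has trivial isotropy, the morphism sets of the full subcategory $\bZ^\nu$ of $\bB_\Kk|_\Vv$ contain at most one element between any two objects, and these morphisms are local diffeomorphisms; adjoining the inverses (which exist precisely because the maps in \eqref{eq:ZphiIJ} are open embeddings between equidimensional manifolds) turns $\bZ^\nu$ into an \'etale groupoid whose object and morphism spaces are smooth $d$-manifolds. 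Its realization $|\bZ^\nu|$ with the quotient topology is therefore a $d$-dimensional manifold, provided it is Hausdorff and second countable: Hausdorffness follows because the continuous injection $i^\nu: |\bZ^\nu|\to|\Kk|$ of \eqref{eq:Zinject} lands in the Hausdorff space $|\Kk|$ (Proposition~\ref{prop:Khomeo}), and second countability follows since $\bZ^\nu$ has finitely many charts, each second countable.

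Next I would use precompactness to obtain compactness. Since $\nu$ is a precompact transverse perturbation, Theorem~\ref{thm:zeroS0} applies directly and shows that the realization $|(\s_\Kk|_\Vv+\nu)^{-1}(0)| = |\bZ^\nu|$ is sequentially compact and Hausdorff. A sequentially compact second countable (hence first countable) space is compact, so $|\bZ^\nu|$ is a compact Hausdorff space. Combined with the manifold structure from the first step, $|\bZ^\nu|$ is a smooth closed $d$-dimensional manifold. (One must check that the smooth structures on the various $(s_I|_{V_I}+\nu_I)^{-1}(0)$ glue: this is exactly the content of Lemma~\ref{le:stransv}, which gives the transition maps as diffeomorphisms $\phi_{IJ}$ between open subsets, and the cocycle condition for these is inherited from the strong cocycle condition of the tame atlas, Lemma~\ref{le:tame0}.)

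Finally, for the statement about topologies: the map $i^\nu$ is a continuous bijection from $|\bZ^\nu|$ onto $\|(\s_\Kk|_\Vv+\nu)^{-1}(0)\|$, the latter being $\pi_\Kk\bigl((\s_\Kk|_\Vv+\nu)^{-1}(0)\bigr)$ equipped with the subspace topology from $|\Kk|$ (Definition~\ref{def:topologies}). Since $|\bZ^\nu|$ is compact and $\|(\s_\Kk|_\Vv+\nu)^{-1}(0)\|\subset|\Kk|$ is Hausdorff, the nesting uniqueness of compact Hausdorff topologies (Remark~\ref{rmk:hom}) shows that $i^\nu$ is a homeomorphism, so the quotient topology on $|\bZ^\nu|$ agrees with the subspace topology on $\|(\s_\Kk|_\Vv+\nu)^{-1}(0)\|$. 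This is the claimed agreement of topologies.

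\textbf{Main obstacle.} The genuinely nontrivial input is the sequential compactness supplied by Theorem~\ref{thm:zeroS0}, which rests on the precompactness hypothesis \eqref{eq:zeroVCC} controlling the perturbed zero set inside a nested reduction $\Cc\sqsubset\Vv$; without such control the perturbed zero set need not be compact, since $\io_\Kk(X)$ has no precompact neighbourhood in $|\Kk|$ in general (Example~\ref{ex:Khomeo}). Everything else — the manifold structure, the gluing of smooth charts, and the identification of topologies — is a routine assembly of Lemma~\ref{le:stransv}, the trivial-isotropy structure of $\bZ^\nu$, and Remark~\ref{rmk:hom}. The only mild subtlety worth spelling out is that local smoothness holds \emph{only} in the quotient topology of $|\bZ^\nu|$ a priori, and it is the compactness-plus-Hausdorff argument that retroactively lets us transport it to the subspace topology.
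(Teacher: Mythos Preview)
Your proposal is correct and follows essentially the same approach as the paper's proof: local smoothness from Lemma~\ref{le:stransv}, Hausdorffness via the injection $i^\nu$ into $|\Kk|$, second countability from the finite cover, sequential compactness from Theorem~\ref{thm:zeroS0}, and the final identification of topologies via Remark~\ref{rmk:hom}. The only cosmetic difference is that you phrase the manifold structure in terms of an \'etale groupoid on $\bZ^\nu$, whereas the paper argues more directly that the quotient is second countable, Hausdorff, and locally Euclidean; the content is the same.
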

\begin{proof}  
By Lemma~\ref{le:stransv}, $|\bZ^\nu|$ is made from the (disjoint) union $\bigsqcup_I \bigl(s_I|_{V_I}+\nu_I)^{-1}(0)\bigr)$ of $d$-dimensional manifolds via an equivalence relation given by the smooth local diffeomorphisms \eqref{eq:ZphiIJ}.   
From this we can deduce that $|\bZ^\nu|$ is second countable (i.e.\ its topology has a countable basis of neighbourhoods).
Indeed, a basis is given by the projection of countable bases of each manifold $(s_I|_{V_I}+\nu_I)^{-1}(0)$ to the quotient.
The images are open in the quotient space since the relation between different components of $(\s_\Kk|_\Vv+\nu)^{-1}(0)$ is given by local diffeomorphisms, taking open sets to open sets. In other words: The preimage of an open set in $|\bZ^\nu|$ is a disjoint union of open subsets of $(s_I|_{V_I}+\nu_I)^{-1}(0)$.
This can be used to express any open set as a union of the basis elements. 
It also shows that $|\bZ^\nu|$ is locally smooth, since any choice of lift $x\in (s_I|_{V_I}+\nu_I)^{-1}(0)$ of a given point $[x]\in |\bZ^\nu|$ lies in some chart $\Nn \hookrightarrow \R^d$, where $\Nn\subset (s_I|_{V_I}+\nu_I)^{-1}(0)$ is open; thus as above $[\Nn]\subset|\bZ^\nu|$ is open and provides a local homeomorphism to $\R^d$ near $[x]$.

Moreover, as noted above, the continuous injection $|\bZ^\nu| \to |\Kk|$ from 
Remark~\ref{rmk:red}~(iii)
transfers the Hausdorffness of $|\Kk|$ from Proposition~\ref{prop:Khomeo} to the realization $|\bZ^\nu|$.
Thus $|\bZ^\nu|$ is a second countable Hausdorff space that is locally homeomorphic to a 
$d$-dimensional manifold.  Hence it is a $d$-dimensional manifold, where we understand all manifolds to have empty boundary, since the charts are open sets in $\R^d$.
Now the sequential compactness of $|\bZ^\nu|$ established in Theorem~\ref{thm:zeroS0} implies compactness, since every manifold is metrizable. (In fact, second countability suffices for the equivalence of compactness and sequential compactness, see \cite[Theorem~5.5]{Kel}.)  
Therefore $|\bZ^\nu|$ is a closed manifold. 

Finally, the map \eqref{eq:Zinject}  is a continuous bijection between the compact space $|\bZ^\nu|$ and the Hausdorff space 
$\|(\s_\Kk|_\Vv+\nu)^{-1}(0)\|\subset|\Kk|$ with the relative topology induced by $|\Kk|$. As such it is 
automatically a homeomorphism $|\bZ^\nu|\cong \|(\s_\Kk|_\Vv+\nu)^{-1}(0)\|$, see Remark~\ref{rmk:hom}.
\end{proof}

We now extend these results to a tame Kuranishi cobordism $\Kk$ from $\Kk^0$ to $\Kk^1$ with cobordism reduction $\Vv$.
Recall from Definition~\ref{def:cvicin} that $\Vv$ induces reductions $\partial^\al\Vv := \bigsqcup_{I\in\Ii_{\Kk^\al}} \partial^\al V_I \subset \Obj_{\bB_{\Kk^\al}}$ of $\Kk^\al$ for $\al=0,1$, where we identify the index set $\Ii_{\Kk^\al}\cong\io^\al(\Ii_{\Kk^\al})$ 
with a subset of $\Ii_{\Kk}$.

\begin{defn} \label{def:csect}  
Let $\Kk$ be a tame Kuranishi cobordism with cobordism reduction $\Vv$.
A {\bf reduced cobordism section} of $\s_{\Kk}|_{\Vv}$ is a reduced section $\nu:\bB_{\Kk}|_\Vv\to\bE_{\Kk}|_{\Vv}$ as in Definition~\ref{def:sect} that in addition has product form in a collar neighbourhood of the boundary. 
That is, for $\al=0,1$ and $I\in \Ii_{\Kk^\al}\subset\Ii_{\Kk}$ there is $\eps>0$ and a map $\nu_I^\al: \p^\al V_I\to E_I$ such that 
$$
\nu_I \bigl( \io_I^\al ( t,x ) \bigr) 
= \nu_I^\al (x)  \qquad
\forall\, x\in \p^\al V_I , \   t\in A^\al_\eps .
$$ 
A {\bf precompact, transverse cobordism perturbation} of $\s_\Kk|_{\Vv}$ is a reduced cobordism section $\nu$ that satisfies  the transversality condition $s_I|_{V_I}+\nu_I \pitchfork 0$  on the interior of the domains $V_I$, and whose domain is part of a nested cobordism reduction $\Cc\sqsubset \Vv$ such that 
$\pi_\Kk\bigl((\s_\Kk|_\Vv + \nu)^{-1}(0)\bigr)\subset \pi_\Kk(\Cc)$.
We moreover call such $\nu$ {\bf admissible} if it satisfies \eqref{eq:admiss}.
\end{defn}

The product structure of $\nu$ in the collar ensures that the transversality of the perturbation extends to the boundary of the domains, as follows.

\begin{lemma}\label{le:ctransv}
If $\nu:\bB_{\Kk}|_\Vv\to\bE_{\Kk}|_{\Vv}$ is a precompact, transverse cobordism perturbation of $\s_{\Kk}|_{\Vv}$, then the {\bf restrictions} $\nu|_{\partial^\al\Vv} := \bigl(  \nu_I^\al \bigr)_{I\in\Ii_{\Kk^\al}}$ for $\al=0,1$ are precompact, transverse perturbations of the restricted canonical sections $\s_{\Kk^\al}|_{\partial^\al\Vv}$. If in addition $\nu$ is admissible, then so are the restrictions $\nu|_{\partial^\al\Vv}$.

Moreover, each perturbed section $s_I|_{V_I}+\nu_I : V_I \to E_I$ for $I\in\Ii_{\Kk^0}\cup\Ii_{\Kk^1}\subset \Ii_{\Kk}$ is transverse to $0$ as a map on a domain with boundary. That is, the kernel of its differential is transverse to the boundary 
$\partial V_I = \bigsqcup_{\al=0,1}\iota^\al_I ( \{\al\} \times \partial^\al V_I)$.
\end{lemma}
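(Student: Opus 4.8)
\textbf{Proof plan for Lemma~\ref{le:ctransv}.}
The plan is to separate the statement into three assertions and handle each using the collar product structure together with results already established. First I would verify that each restriction $\nu|_{\partial^\al\Vv}=(\nu_I^\al)_{I\in\Ii_{\Kk^\al}}$ is a genuine reduced section of $\s_{\Kk^\al}|_{\partial^\al\Vv}$: functoriality (the commuting square \eqref{eq:comp}) transfers from $\nu$ because the collared coordinate changes in $\Kk$ restrict, by Definition~\ref{def:Ccc}, to product coordinate changes $\id_{A^\al_\eps}\times\partial^\al\Hat\Phi_{IJ}$, and $\nu$ has product form on the collar by Definition~\ref{def:csect}. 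Admissibility \eqref{eq:admiss} of $\nu_I^\al$ then follows from admissibility of $\nu_I$ since the collar inclusion is a diffeomorphism onto its image and the normal directions of $\partial^\al V_I$ inside $V_I$ are precisely the $A^\al_\eps$-factor, while the tangential directions coincide with $\rT(\partial^\al V_I)$; thus $\rd\nu_I^\al(\rT_y\partial^\al V_J)\subset\rd\nu_I(\rT_{\io^\al_J(t,y)}V_J)\subset\im\Hat\phi_{IJ}=\im\Hat\phi_{\iota^\al(I)\iota^\al(J)}$ at the relevant points.

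Next I would establish transversality of $s_I^\al+\nu_I^\al$ on $\partial^\al V_I$. This is where the collar hypothesis does the real work: on the collar $\io^\al_I(A^\al_\eps\times\partial^\al V_I)\subset V_I$ the map $s_I+\nu_I$ pulls back to $(t,x)\mapsto s_I^\al(x)+\nu_I^\al(x)$, which is independent of $t$. Hence for a point $x\in(s_I^\al+\nu_I^\al)^{-1}(0)$, writing $z=\io^\al_I(\al,x)$, we have $\rd_z(s_I+\nu_I)$ in the splitting $\rT_z V_I\cong \R_t\oplus \rT_x\partial^\al V_I$ equal to $(0,\rd_x(s_I^\al+\nu_I^\al))$; since $\nu$ is a transverse perturbation on the interior, a limiting argument (or directly: surjectivity of $\rd_{\io^\al_I(t,x)}(s_I+\nu_I)$ for $t>0$ near $\al$ and the $t$-independence) forces $\rd_x(s_I^\al+\nu_I^\al)$ to be surjective. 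This simultaneously proves the third assertion: the kernel of $\rd_z(s_I+\nu_I)$ always contains the $\R_t$-direction, which is exactly the inward normal to $\partial V_I$, so $\ker\rd(s_I+\nu_I)$ is transverse to $\partial V_I=\bigsqcup_{\al}\iota^\al_I(\{\al\}\times\partial^\al V_I)$; equivalently $(s_I|_{V_I}+\nu_I)^{-1}(0)$ is a manifold with boundary whose boundary lies in $\partial V_I$ and meets it transversely.

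Finally, precompactness of $\nu|_{\partial^\al\Vv}$ follows from precompactness of $\nu$: by Definition~\ref{def:csect} there is a nested cobordism reduction $\Cc\sqsubset\Vv$ with $\pi_\Kk((\s_\Kk|_\Vv+\nu)^{-1}(0))\subset\pi_\Kk(\Cc)$, and by Remark~\ref{rmk:red}~(i) the restriction $\partial^\al\Cc\sqsubset\partial^\al\Vv$ is again a reduction of $\Kk^\al$; intersecting the zero-set containment with the boundary component $\p^\al|\Kk|\cong|\Kk^\al|$ and using that the collar identifications are compatible with $\pi_\Kk$ (Remark~\ref{rmk:cobordreal}) yields $\pi_{\Kk^\al}((\s_{\Kk^\al}|_{\partial^\al\Vv}+\nu|_{\partial^\al\Vv})^{-1}(0))\subset\pi_{\Kk^\al}(\partial^\al\Cc)$. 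I expect the main obstacle to be the careful bookkeeping in the second step: one must check that the product form of $\nu$ and of the coordinate changes holds on a \emph{common} collar width $\eps>0$ for all $I$ (available since $\Ii_\Kk$ is finite, cf.\ Remark~\ref{rmk:Ceps}), and that the transversality on the interior indeed extends continuously to $t=\al$ rather than merely holding in the open collar — this is immediate from the explicit $t$-independence but needs to be stated cleanly so that ``transverse as a map on a domain with boundary'' is justified in the precise sense of $\ker\rd(s_I+\nu_I)\pitchfork\partial V_I$.
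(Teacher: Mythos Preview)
Your proposal is correct and follows essentially the same approach as the paper: precompactness via the boundary restrictions $\partial^\al\Cc\sqsubset\partial^\al\Vv$ of the nested cobordism reduction, admissibility by pullback along $\io^\al_I$, and transversality of $s_I^\al+\nu_I^\al$ deduced from interior transversality of $s_I+\nu_I$ together with the product form $\rd(s_I+\nu_I)\circ\io^\al_I = 0\,\rd t + \rd(s_I^\al+\nu_I^\al)$, which simultaneously gives $\R_t\subset\ker\rd(s_I+\nu_I)$ and hence transversality to $\partial V_I$. Your bookkeeping remarks about a common collar width and the direct (rather than limiting) argument are well placed; the paper handles these points implicitly in the same way.
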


\begin{proof}
Precompactness transfers to the restriction since the restrictions of the nested cobordism reduction are nested reductions 
$\p^\al\Cc \sqsubset \p^\al\Vv$ for $\al=0,1$.
Similarly, admissibility transfers immediately by pullback of \eqref{eq:admiss} to the boundaries via $\io^\al_I :  \{\al\} \times \p^\al V_I \to V_I$. 
Transversality in (the interior of) a collar neighbourhood of the boundary $\io^\al_I(A^\al_\eps\times \partial^\al\Vv )\subset V_I$ is equivalent to transversality of the restriction $s|_{\partial^\al V_I}+\nu^\al_I \pitchfork 0$ since 
$\rd \bigl( \nu_I \circ \io_I^\al \bigr) = 0\ \rd t + \rd\nu_I^\al$.
Moreover, transversality of $s_I|_{V_I}+\nu_I : V_I \to E_I$
at the boundary of $V_I$, as a map on a domain with boundary, is equivalent under pullback with the embeddings $\io^\al_I$ to transversality of
$f := \bigl( s_I|_{V_I}+\nu_I \bigr)\circ\io^\al_I : 
A^\al_\eps \times \partial^\al V_I \to E_I$. 
For the latter, the kernel $\ker\rd_{s,x} f = \R\times \ker\rd_x \nu^\al_I  $ 
is indeed transverse to the boundary $ \{\al\}\times \rT_x \partial^\al V_I$ in  $\R\times \rT_x \partial^\al V_I $.
\end{proof}

With that, we can show that precompact transverse perturbations of the Kuranishi cobordism induce smooth cobordisms (up to orientations) between the perturbed zero sets of the restricted perturbations. 
 
\begin{lemma} \label{le:czeroS0}
Let $\nu: \bB_{\Kk}|_{\Vv} \to \bE_{\Kk}|_{\Vv}$ be a precompact, transverse cobordism perturbation.
Then $|\bZ^{\nu}|$, defined as in Definition~\ref{def:sect2}, is  a compact manifold whose boundary $\p|\bZ^\nu|$ is diffeomorphic to the disjoint union of $|\bZ^{\nu^0}|$ and $|\bZ^{\nu^1}|$, where $\nu^\al :=\nu|_{\partial^\al\Vv}$ are the restricted transverse perturbations of $\s_{\Kk^\al}|_{\p^\al \Vv}$ for $\al=0,1$.
\end{lemma}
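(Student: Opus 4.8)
The strategy is to treat $|\bZ^\nu|$ exactly as in Proposition~\ref{prop:zeroS0}, but now keeping track of the boundary structure coming from the collared cobordism. Recall that $|\bZ^\nu|$ is built from the disjoint union $\bigsqcup_{I\in\Ii_\Kk}(s_I|_{V_I}+\nu_I)^{-1}(0)$ via the equivalence relation generated by the maps $\phi_{IJ}$, which by Lemma~\ref{le:stransv} restrict to smooth local diffeomorphisms between the (now boundaried) zero-set submanifolds. The first step is therefore to verify that for each $I\in\Ii_\Kk$ the set $(s_I|_{V_I}+\nu_I)^{-1}(0)\subset V_I$ is a smooth $d$-dimensional manifold with boundary. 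On the interior of $V_I$ this is the implicit function theorem, since $s_I|_{V_I}+\nu_I\pitchfork 0$; near the collar $\io^\al_I(A^\al_\eps\times\p^\al V_I)$ we invoke Lemma~\ref{le:ctransv}, which says that $s_I|_{V_I}+\nu_I$ is transverse to $0$ as a map on a manifold with boundary, i.e.\ the kernel of its differential is transverse to $\p V_I$. Hence the zero set is a manifold with boundary $\p(s_I|_{V_I}+\nu_I)^{-1}(0) = \bigsqcup_{\al=0,1}\io^\al_I\bigl(\{\al\}\times\p^\al V_I\cap (s|_{\p^\al V_I}+\nu^\al_I)^{-1}(0)\bigr)$, where we use the product form $\nu_I\circ\io^\al_I(t,x)=\nu^\al_I(x)$ in the collar.

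The second step is to assemble these into a global manifold with boundary. Since the transition maps $\phi_{IJ}$ are diffeomorphisms onto open subsets and, by the collar-compatibility axioms in Definitions~\ref{def:Cchart} and \ref{def:Ccc}, restrict in the collar to products $\id_{A^\al_\eps}\times\p^\al\phi_{IJ}$, the equivalence relation respects the decomposition of each zero set into interior and boundary. Just as in Proposition~\ref{prop:zeroS0}, one checks that $|\bZ^\nu|$ is second countable (countable bases of the pieces project to a countable basis, using that the relation is by local diffeomorphisms), and Hausdorff via the continuous injection $i^\nu:|\bZ^\nu|\to|\Kk|$ of \eqref{eq:Zinject} together with Proposition~\ref{prop:Khomeo}. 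Locally $|\bZ^\nu|$ is modeled on open subsets of $\R^d$ (interior points) or of the half-space $[0,\eps)\times\R^{d-1}$ (boundary points, via the collar charts $\io^\al_I$), so $|\bZ^\nu|$ is a $d$-dimensional manifold with boundary. Compactness follows from Theorem~\ref{thm:zeroS0}, which applies to cobordisms and gives sequential compactness of $|\bZ^\nu|$ because $\nu$ is a precompact perturbation with $\pi_\Kk\bigl((\s_\Kk|_\Vv+\nu)^{-1}(0)\bigr)\subset\pi_\Kk(\Cc)$ for a nested cobordism reduction $\Cc\sqsubset\Vv$; sequential compactness plus second countability yields compactness. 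Thus $|\bZ^\nu|$ is a compact $d$-manifold with boundary.

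The third step is to identify the boundary. By construction $\p|\bZ^\nu|$ is the realization of the full subcategory of $\bB_\Kk|_\Vv$ with object space $\bigsqcup_{\al=0,1}\bigsqcup_{I\in\Ii_{\Kk^\al}}\io^\al_I\bigl(\{\al\}\times(s|_{\p^\al V_I}+\nu^\al_I)^{-1}(0)\bigr)$. The two collars are disjoint (Definition~\ref{def:Ycob}), so this splits as a disjoint union over $\al=0,1$. For fixed $\al$, the collar inclusions $\io^\al_I$ and $\io^\al_{U_{IJ}}$ intertwine the sections, footprint maps, and coordinate changes of $\Kk$ with those of the restricted cobordism $\p^\al\Kk$ and reduction $\p^\al\Vv$; hence the $\al$-part of $\p|\bZ^\nu|$ is canonically identified, as a category and therefore on realizations, with $\bZ^{\nu^\al}$ where $\nu^\al=\nu|_{\p^\al\Vv}$ is the restricted precompact transverse perturbation of $\s_{\Kk^\al}|_{\p^\al\Vv}$ provided by Lemma~\ref{le:ctransv}. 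This gives the diffeomorphism $\p|\bZ^\nu|\cong|\bZ^{\nu^0}|\sqcup|\bZ^{\nu^1}|$.

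\textbf{Main obstacle.} The routine parts (implicit function theorem, second countability, Hausdorffness) are immediate from the atlas case. The delicate point is handling the boundary uniformly: one must check that the collar product structure of $\nu$, together with the product form of all charts and coordinate changes on their collars, is exactly what makes $(s_I|_{V_I}+\nu_I)^{-1}(0)$ a manifold with boundary \emph{and} makes the equivalence relation preserve the manifold-with-boundary structure, so that the quotient is a genuine cobordism rather than merely a space stratified by $|\bZ^{\nu^\al}|$. In other words, the bookkeeping showing that ``collared perturbation of a collared cobordism reduction'' propagates through the realization functor to give collared neighbourhoods of $\p|\bZ^\nu|$ inside $|\bZ^\nu|$ is where the real care is needed; everything else is a direct transcription of Proposition~\ref{prop:zeroS0}.
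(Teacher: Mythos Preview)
Your proposal is correct and follows essentially the same approach as the paper: local zero sets are shown to be manifolds with boundary via the implicit function theorem on the interior together with the product/collar form of $\nu$ near $\p V_I$, these are assembled via the local diffeomorphisms $\phi_{IJ}$ (which respect the collar structure) into a second countable Hausdorff manifold with boundary, compactness comes from Theorem~\ref{thm:zeroS0}, and the boundary is identified with $|\bZ^{\nu^0}|\sqcup|\bZ^{\nu^1}|$ via the collar embeddings $\io^\al_I$. The paper phrases the boundary identification slightly more categorically---constructing fully faithful functors $j^\al:\bZ^{\nu^\al}\to\bZ^\nu$ that land in the full subcategory on $\p^\al((\s_\Kk|_\Vv+\nu)^{-1}(0))$ and noting explicitly that there are no morphisms between this subcategory and its complement---but this is exactly the content of your observation that the equivalence relation respects the interior/boundary decomposition.
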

\begin{proof}
The topological properties of $|\bZ^{\nu}|$ follow from the arguments in Proposition~\ref{prop:zeroS0},
and smoothness of the zero sets follows as in Lemma~\ref{le:stransv}. However, the zero sets $(s_I|_{V_I}+\nu_I)^{-1}(0)$ for $I\in\Ii_{\Kk^\al}\subset\Ii_{\Kk}$ are now submanifolds with boundary, by the implicit function theorem on the interior of $V_I$ together with the smooth product structure on the collar neighbourhoods $\io_I^\al ( A^\al_\eps\times \p^\al V_I)$ of the boundary. The latter follows from the smoothness of 
$(s_I|_{\p^\al V_I}+\nu^\al_I)^{-1}(0)$
from Lemma~\ref{le:stransv} and the embedding
$$
(s_I|_{V_I}+\nu_I)^{-1}(0)\;\cap\; \io_I^\al (\p^\al V_I \times \{\al\}) \;=\; \io^\al_I \bigl( 
\{\al\} \times (s_I|_{\p^\al V_I}+\nu^\al_I)^{-1}(0) \bigr)   .
$$
This gives $(\s_\Kk|_\Vv+\nu)^{-1}(0)$ the structure of a compact manifold with two disjoint boundary components for $\al=0,1$ given by
$$
\partial^\al \bigl( (\s_\Kk|_\Vv+\nu)^{-1}(0) \bigr) \;=\;  \underset{{I\in\Ii_{\Kk^\al}}}{\textstyle\bigsqcup} 
\io^\al_I \bigl(\{\al\}\times  (s_I|_{\p^\al V_I}+\nu^\al_I)^{-1}(0)  \bigr) ,
$$
which are diffeomorphic via $\partial^\al \Vv \ni (I,x) \mapsto \iota^\al_I(\al,x)$ 
to the submanifolds 
$$
(\s_\Kk|_{\Vv^\al}+\nu^\al)^{-1}(0)\subset \partial^\al \Vv
$$
 given by the restricted perturbations 
$\nu^\al=\bigl(\nu^\al_I\bigr)_{I\in\Ii_{\Kk^\al}}$.
By the collar form of the coordinate changes in $\Kk$ this induces fully faithful functors $j^\al$ from $\bZ^{\nu^\al}$ to the full subcategories of $\bZ^\nu$ with objects $\partial^\al \bigl( (\s_\Kk|_{\Vv}+\nu)^{-1}(0) \bigr)$.

Moreover, as in Lemma~\ref{le:stransv}, the morphisms are given by restrictions of the embeddings $\phi_{IJ}$, which are in fact local diffeomorphisms, and hence can be inverted to give $\bZ^\nu$ the structure of a groupoid.
Again using the collar form of the coordinate changes, there are no morphisms between $\partial^\al \bigl( (\s_\Kk|_\Vv+\nu)^{-1}(0) \bigr)$ and its complement in $(\s_\Kk|_\Vv+\nu)^{-1}(0)$, so the realization $|\bZ^{\nu}|$ inherits the structure of a compact manifold with boundary
$\partial |\bZ^{\nu}| = \bigsqcup_{\al=0,1} \partial^\al |\bZ^{\nu}|$ with two disjoint boundary components  
$$
\partial^\al |\bZ^{\nu}|  
\,:=\; \partial^\al |\Kk| \cap |\bZ^{\nu}|  
\;=\; \Bigl|  {\textstyle\bigsqcup}_{I\in\Ii_{\Kk^\al}} 
\io^\al_I \bigl(\{\al\} \times  (s_I|_{V_I^\al}+\nu^\al_I)^{-1}(0) 
\bigr) \Bigr| .
$$
Since the fully faithful functors $j^\al$ are diffeomorphisms between the object spaces, they then descend to diffeomorphisms to the boundary components,
$$
|j^\al| \,:\;  |\bZ^{\nu^\al}| \;=\;  
 \Bigl|  {\textstyle\bigsqcup}_{I\in\Ii_{\Kk^\al}} 
 (s_I|_{V^\al_I}+\nu^\al_I)^{-1}(0)  \Bigr|
 \;\overset{\cong}{\longrightarrow}\; \partial^\al |\bZ^{\nu}|  .
$$
Thus $|\bZ^{\nu}|$ is a (not yet oriented) cobordism between $|\bZ^{\nu^0}|$ and $|\bZ^{\nu^1}|$, as claimed.
\end{proof}

\subsection{Construction of perturbations} \label{ss:const}\hspace{1mm}\\ \vspace{-3mm}

In this section, we let $(\Kk,d)$ be a metric tame Kuranishi atlas (or cobordism) and $\Vv$ a (cobordism) reduction, and construct precompact transverse (cobordism) perturbations of the canonical section $\s_\Kk|_\Vv$.
In fact, we will construct a transverse perturbation with perturbed zero set contained in $\pi_\Kk(\Cc)$ for any given nested (cobordism) reduction $\Cc \sqsubset \Vv$.
This will be accomplished by an intricate construction that depends on the choice of two suitable constants $\de,\si>0$ depending on $\Cc\sqsubset\Vv$, 
and norms on the obstruction spaces. Consequently, the corresponding uniqueness statement requires not only the construction of transverse cobordism perturbations $\nu$ of $\s_\Kk|_\Vv$ in a nested cobordism reduction $\Cc \sqsubset \Vv$, and with given restrictions $\nu|_{\p^\al\Vv}$, but also an understanding of the dependence on the choice of constants $\de,\si>0$.
We begin by describing the setup, which will be used to construct perturbations for both Kuranishi atlases and Kuranishi cobordisms. It is important to have this in place before describing the iterative constructions because, firstly, the iteration depends on the above choice of constants, and secondly, even the statements about uniqueness and existence of perturbations in cobordisms need to take this intricate setup into consideration.
We begin by introducing a suitable notion of compatible norms on the obstruction spaces, which crucially uses the additivity of $\Kk$.

\begin{definition}\label{def:norm}
A choice of {\bf additive norms} on an additive Kuranishi atlas/cobordism $\Kk$ is a tuple of norms $\|\cdot\|=\bigl( \|\cdot\|_I \bigr)_{I\in\Ii_\Kk}$  on each obstruction space $E_I$ that are determined from a choice of norms $\|\cdot\|_i : E_i \to [0,\infty)$ on each basic obstruction space $E_i$ for $i=1,\dots N$ as follows: 
For any $I\in\Ii_\Kk$, the norm $\|\cdot\|_I: E_I \to [0,\infty)$ given by
\begin{equation} \label{eq:iI2}
\| e \|_I \;=\;
\Bigl\| {\textstyle \sum_{i\in I}} \Hat\phi_{iI} (e_i) \Bigr\| \,:=\; \max_{i\in I} \| e_i\|
\qquad
\forall e=  {\textstyle \sum_{i\in I}} \Hat\phi_{iI} (e_i) \in E_I
\end{equation}
is well defined due to the additivity isomorphism from Definition~\ref{def:Ku2},
\begin{equation} \label{eq:iI}
{\textstyle \prod_{i\in I}} \;\Hat\phi_{iI}: \; {\textstyle \prod_{i\in I}} \; E_i \;\stackrel{\cong}\longrightarrow \; E_I \;=\;\oplus_{i\in I} \Hat\phi_{iI}(E_i) .
\end{equation}
For additive norms $\|\cdot\|=\bigl( \|\cdot\|_I \bigr)_{I\in\Ii_\Kk}$ on a Kuranishi cobordism and $\al\in\{0,1\}$, we denote by 
$\p^\al\|\cdot\|:=
\bigl( \|\cdot\|_I \bigr)_{I\in\Ii_{\p^\al\Kk}}$ the induced additive norms on the boundary restriction $\p^\al\Kk$, given by the subset of norms for the index set $\Ii_{\p^\al\Kk}\subset \Ii_\Kk$. 
\end{definition}

In the following, we will drop the subscripts from the norms, since they will be evident from the context.
In that notation, note that our use of the maximum norm on the Cartesian product guarantees estimates of the components $\|e_i\| \leq \|e\|$.
This construction also guarantees that each embedding $\Hat\phi_{IJ}:(E_I,\|\cdot\|) \to (E_J,\|\cdot\|)$ is an isometry by the cocycle condition $\Hat\phi_{IJ}\circ\Hat\phi_{iI}=\Hat\phi_{iJ}$. 
Moreover, we will throughout use the supremum norm for functions, that is for any map $f_I:{\rm dom}(f_I)\to (E_I,\|\cdot\|)$ we use the unique decomposition $f_I = \sum_{i\in I} f^i_I$ into components $\bigl(f^i_I : {\rm dom}(f_I) \to \Hat\phi_{iI}(E_i)\bigr)_{i\in I}$ to denote
$$
\bigl\| f_I \bigr\| \,:=\; \sup_{x\in {\rm dom}(f_I)} \bigl\| f_I(x) \bigr\|  \;=\; \sup_{x\in {\rm dom}(f_I)} \max_{i\in I} \, \bigl\| f^i_I(x) \bigr\|   
 \;=:\, \max_{i\in I}\, \bigl\| f^i_I \bigr\|   .
$$
Next, recall from Lemma~\ref{le:metric} (which holds in complete analogy for metric Kuranishi cobordisms) that the metric $d$ on $|\Kk|$ induces metrics $d_I$ on each domain $U_I$ such that the coordinate changes $\phi_{IJ}:(U_{IJ},d_I)\to (U_J,d_J)$ are isometries.
In the following, we will make ample use of the notation $B_\de^I$ and $B_\de$ from Definition~\ref{def:metric} for $\de$-neighbourhoods in $U_I$ and $|\Kk|$ respectively.

\begin{rmk} \rm \label{rmk:iso}
The following perturbation constructions will also be applied -- with very minor adjustments -- to construct perturbations in a suitable context of nontrivial isotropy in \cite{MW:iso}. In order to provide a verifiably rigorous proof for the corresponding result \cite[Proposition~3.3.3]{MW:iso}, we indicate the necessary adjustment in a series of footnotes [$^{\rm NN}$] in the present section. 
These should only be read after becoming familiar with the construction of the pruned domain category $\Bb_\Kk|_\Vv^{\less\Ga}$ in \cite[Lemma~3.2.3]{MW:iso} and notion of admissible perturbation in \cite[Definition~3.2.4]{MW:iso}. The adjustments will be rather few after the following initial adjustments to the above setup:

\begin{itemlist}
\item
Associated to an atlas $\Kk$ with isotropy actions $\Ga_I\times U_I\to U_I$ is an intermediate atlas $\uKk$ whose domains are the quotients $\uU_I=\qu{U_I}{\Ga_I}$. Both are atlases for the same space $X$ with canonically identified virtual neighbourhoods $|\Kk|=|\uKk|$. 

\item
The reductions $\Cc=\bigsqcup C_I \sqsubset \Vv=\bigsqcup V_I$ are lifts of reductions $\uCc=\bigsqcup \uC_I \sqsubset \uVv=\bigsqcup \uV_I$ of the intermediate atlas $\uKk$, i.e.\ $C_I=\pi_I^{-1}(\uC_I)$, $V_I=\pi_I^{-1}(\uV_I)$.

\item
Although $\pi_\Kk:\Obj_{\Bb_\Kk} \to |\Kk|$ does not restrict to a functor on $\Bb_\Kk|_\Vv^{\less\Ga}$, we can work with $\pi_\Kk:\bigsqcup_{I\in\Ii_\Kk} U_I \to |\Kk|$ as continuous map. 
As in the case of trivial isotropy, we do not have a nicely controlled cover of sets $U_J\cap \pi_\Kk^{-1}(\pi_\Kk(\Cc))$ for $\Cc\subset \bigsqcup U_I$. However, when $\Cc =\bigsqcup C_I\subset\Vv= \bigsqcup V_I\subset  \bigsqcup U_I$ are lifts of reductions of $|\uKk|$ as in the previous item, then we obtain the analogue of \eqref{eq:VCC}, 
\begin{equation}\label{eq:VCC}
V_J\cap \pi_\Kk^{-1}(\pi_\Kk(\Cc))
\;=\; V_J \cap \bigl( \;{\textstyle \bigcup_{H\supset J} } \rho_{JH}(C_H) \cup {\textstyle \bigcup_{H\subsetneq J} }\rho_{HJ}^{-1}(C_H) \;\bigr) .
\end{equation}
Indeed, the reduction property $\pi_\Kk(V_J)$ only intersects $\pi_\Kk(C_H)$ for $H\supset J$ or $H\subset J$. The morphisms between $U_H$ and $U_J$ are then given by  $\rho_{JH}$ and $\Ga_J$ resp.\ $\rho_{HJ}$ and $\Ga_H$, and the isotropy groups are absorbed by the equivariance $\Ga_J \rho_{JH}(C_H) = \rho_{JH}(\Ga_H C_H )$ and fact that
$\Ga_H C_H = C_H = \pi_H^{-1}(\und C_H)$.

\item
We use ``equivariant norms'' that arise in the same way as the additive norms from choices of $\Ga_i$-invariant norms $\|\cdot\|_i$ on the basic obstruction spaces $E_i$.
This allows us to represent the zero sets of the sections $s_I: U_I\to E_I$ in terms of continuous functions on the intermediate category $\|\und s_I\|: \uU_I \to [0,\infty)$ given by $x \mapsto \|s_I(y)\|$ for any $y\in\pi_I^{-1}(x)$. Indeed, this yields $\|s_I\|=\|\und s_I\|\circ\pi_I$ and thus $\|\und s_I\|^{-1}(0)=\pi_I(s_I^{-1}(0))$.

\item
The metric $d$ on $|\Kk|=|\uKk|$ lifts to compatible metrics $\ud_I$ on the intermediate domains $\uU_I=\qu{U_I}{\Ga_I}$.
These then lift to $\Ga_I$-invariant pseudometrics $d_I$ on $U_I$. 

\item
Denoting $\de$-neighbourhoods in $\uU_I$ by $B^I_\de$, the corresponding $\de$-neighbourhoods of sets $S\subset U_I$ are related by
$\Hat B^I_\de(S) = \pi_I^{-1}\bigl(  B^I_\de(\pi_I(S)) \bigr)$. Similarly, we have the relation $B_\de(\pi_\Kk(S))=B_\de(\pi_{\uKk}(\und S))$ for $\de$-neighbourhoods in the virtual neighbourhood.

\item
In the following, all relationships between (or definitions/constructions of) subsets of $\Obj_{\Bb_\Kk}=\bigcup_{I\in\Ii_\Kk} U_I$ should be replaced by two statements -- one for subsets of $\Obj_{\Bb_\uKk}=\bigcup_{I\in\Ii_\Kk} \uU_I$ in the intermediate atlas $\uKk$, and one for subsets in the pruned domain category $\Bb_\Kk|_{\Vv}^{\less\Ga}$ with $B_\de$ replaced by $\Hat B_\de$. These two statements will always be equivalent via the projection $\pi_I$. Statements can then be checked by working in the intermediate category, but they will be applied on the level of the pruned domain category. 
Here it is crucial to know that the projections $\pi_I:U_I\to \uU_I$ are continuous (by definition of the quotient topology) and proper by \cite[Lemma 2.1.5]{MW:iso}.

\item
Our goal -- constructing a precompact, transverse, admissible (cobordism) perturbation 
 $\nu:\bB_\Kk|_\Vv\to\bE_\Kk|_\Vv$ -- remains essentially the same, with Definitions~\ref{def:sect}, \ref{def:sect2}, \ref{def:precomp} replaced by \cite[Definition~3.2.4]{MW:iso}.
This requires a functor $\nu:\bB_\Kk|_\Vv^{\less\Ga}\to\bE_\Kk|_\Vv^{\less\Ga}$ on slightly different categories, but writing it in terms of the maps $\nu=(\nu_I:V_I\to E_I)_{I\in\Ii_\Kk}$, the only difference is that the compatibility conditions \eqref{eq:comp},
\begin{equation}\label{eq:compatible}
\nu_J\big|_{N_{JI}} \; =\; 
 \Hat\phi_{IJ}\circ\nu_I\circ \phi_{IJ}^{-1}\big|_{N_{JI}}
\quad \text{on}\quad
N_{JI}:=V_J \cap \phi_{IJ}(V_I \cap U_{IJ})
\end{equation}
for all $I \subsetneq J$ are replaced by
\begin{equation}\label{eq:compatc}
\nu_J\big|_{\TV_{IJ}}\  =\ 
 \Hat\phi_{IJ}\circ\nu_I\circ \rho_{IJ}\big|_{\TV_{IJ}}
\quad \text{on}\quad
\TV_{IJ} := V_J\cap \rho_{IJ}^{-1}(V_I) ,
\end{equation}
and the precompactness condition $\pi_\Kk\bigl( (\s|_\Vv^{\less\Ga} + \nu)^{-1}(0) \bigr) \subset\pi_\Kk(\Cc)$ can be reformulated analogously to \eqref{eq:zeroVCC} as
\begin{equation}\label{eq:isoVCC}
(s_J|_{V_J} + \nu_J)^{-1}(0)
\;\subset\;  {\textstyle \bigcup_{H\supset J} } \,\rho_{JH}(C_H) \; \cup \; {\textstyle \bigcup_{H\subsetneq J} }\,\rho_{HJ}^{-1}(C_H) 
\qquad\forall J\in \Ii_\Kk.
\end{equation}
Here the setup in \cite{MW:iso} guarantees that $\rho_{IJ}:\TV_{IJ}\to V_I \cap \rho_{IJ}(V_J)\subset U_{IJ}$ is a regular covering (i.e.\ local diffeomorphism with fibers given by the free action of a finite group $\Ga_{J\less I}\cong \qu{\Ga_J}{\Ga_I}$) analogous to $\phi_{IJ}^{-1}:N_{IJ}\to V_I \cap \phi_{IJ}^{-1}(V_J)\subset U_{IJ}$, which is a regular covering with trivial fibers.
Thus in the following one should replace $\phi_{IJ}$ with $\rho_{IJ}^{-1}$ and identify $N_{IJ}=\TV_{IJ}$.
This translates \eqref{eq:zeroVCC} into \eqref{eq:isoVCC} and also the compatibility conditions \eqref{eq:compatible} continue to make sense and yield \eqref{eq:compatc}.
Note that this will make $\nu_J\big|_{\TV_{IJ}}$ automatically invariant under $\Ga_{J\less I}$. However, the notion of admissible perturbations does not require any compatibility with the full action of $\Ga_J$ or with the projection $\pi_J:U_J\to \uU_J$. 
These additional morphisms in $\Bb_\Kk|_{\Vv}$ are eliminated in $\Bb_\Kk|_{\Vv}^{\less\Ga}$ and are re-introduced when constructing the VMC/VFC in \cite{MW:iso} by means of weighting functions rather than multi-valued perturbations.
\end{itemlist}

\vspace{-6mm}
$\hfill\er$
\end{rmk}

To prepare for the iterative construction of perturbations, we need a nested sequence of reductions.
For that purpose, 
Theorem~\ref{thm:red}~(iii)~(b) provides $\de>0$ so that $B_{2\de}^I(V_I)\sqsubset U_I$  for all $I\in\Ii_\Kk$, and $B_{2\de}(\pi_\Kk(\ov{V_I}))\cap B_{2\de}(\pi_\Kk(\ov{V_J})) = \emptyset$ unless $I\subset J$ or $J\subset I$, and hence the precompact neighbourhoods 
\begin{equation}\label{eq:VIk}
V_I^k \,:=\; B^I_{2^{-k}\de}(V_I) 
\;\sqsubset\; U_I \qquad \text{for} \; k \geq 0
\end{equation}  
form further (cobordism) reductions, all of which contain $\Vv$.
Here we chose separation distance $2\de$ so that compatibility of the metrics ensures the strengthened version of the separation condition (ii) in Definition~\ref{def:vicin} for $I\not\subset J$ and $J\not\subset I$,
\begin{equation}\label{desep}
B_\de\bigl(\pi_\Kk(V^k_I)\bigr) \cap B_\de\bigl(\pi_\Kk(V^k_J)\bigr) \subset  
B_{\de + 2^{-k}\de}\bigl(\pi_\Kk(V_I)\bigr) \cap B_{\de + 2^{-k}\de}\bigl(\pi_\Kk(V_J)\bigr) = \emptyset .
\end{equation}
In case $I\subsetneq J$, 
\eqref{eq:Ku2}
then gives the identities
\begin{align}\label{eq:N}\notag
V^k_I \cap \pi_\Kk^{-1}(\pi_\Kk(V^k_J))& 
\;=\; V^k_I  \cap \phi_{IJ}^{-1}(V^k_J)  ,\\
V^k_J \cap \pi_\Kk^{-1}(\pi_\Kk(V^k_I)) 
&\;=\; V^k_J \cap \phi_{IJ}(V^k_I \cap U_{IJ})  
\;=:\, N^k_{JI}  
\end{align}
for the sets on which we will require compatibility of the perturbations $\nu_I$ and $\nu_J$.
The analogous identities hold for any combinations of the nested precompact open sets 
$$
C_I \;\sqsubset\; V_I \;\sqsubset\; \ldots V^{k'}_I \;\sqsubset\; V^{k}_I \ldots \;\sqsubset\; V^0_I  ,
$$
where $k'>k >0$ are any positive reals.
For the sets $N^k_{JI}\subset U_J$ introduced in \eqref{eq:N} above, note that by the compatibility of metrics we have inclusions for any $H\subsetneq J$,
$$
\pi_\Kk(B^J_\de(N^k_{JH}))
\subset 
B_\de \bigl(\pi_\Kk(N^k_{JH})\bigr)
\subset 
B_\de \bigl(\pi_\Kk(\phi_{HJ}(V^k_H\cap U_{HJ}))\bigr)
\subset B_{\de}\bigl( \pi_\Kk(V^k_H) \bigr) .
$$
So \eqref{desep} together with the injectivity of $\pi_\Kk|_{U_J}$ implies for any $H,I \subsetneq J$
[\footnote{
As an example of the translation mechanism for nontrivial isotropy in Remark~\ref{rmk:iso}, the above constructions can all be made on the intermediate category to yield
$B^J_\de(\uN^k_{JH}) \cap B^J_\de(\uN^k_{JI}) \;\Rightarrow\; H\subset I \;\text{or} \; I\subset H$
for $\uN^k_{JI}=\uV^k_J \cap \uphi_{IJ}(\uV^k_I \cap \uU_{IJ})$.
Taking the preimage under $\pi_J:U_J\to\uU_J$ then implies
$\Hat B^J_\de(N^k_{JH}) \cap \Hat B^J_\de(N^k_{JI}) \;\Rightarrow\; H\subset I \;\text{or} \; I\subset H$
for $N^k_{JI}=\pi_I^{-1}\bigl(\uV^k_J \cap \uphi_{IJ}(\uV^k_I \cap \uU_{IJ})\bigr)= 
V^k_J \cap \rho_{IJ}^{-1}(V^k_I \cap U_{IJ})$.
Moreover, each $N^k_{JI}$ is an open $\Ga_{J\less I}$-invariant subset of the domain $\TU_{IJ}$ of $\rho_{IJ}$, so that $\rho_{IJ}:N^k_{JI}\to V_I^k$ is a regular covering whose fibers are $\Ga_{J\less I}$-orbits. On the other hand, the sets $\TV_{IJ}$ on which the compatibility conditions \eqref{eq:compatc} are required, are open subsets of $N^k_{JI}$. 
}]
\begin{equation}\label{Nsep}
B^J_\de(N^k_{JH}) \cap B^J_\de(N^k_{JI}) \neq \emptyset \qquad \Longrightarrow \qquad H\subset I \;\text{or} \; I\subset H.
\end{equation}
Moreover, we have precompact inclusions for any $k'>k\geq 0$
\begin{equation} \label{preinc}
N^{k'}_{JI} \;=\;V^{k'}_J \cap  \phi_{IJ}(V^{k'}_I\cap U_{IJ}) \;\sqsubset\; V^k_J \cap \phi_{IJ}(V^k_I\cap U_{IJ})  \;=\;N^k_{JI} ,
\end{equation}
since $\phi_{IJ}$ is an embedding to the relatively closed subset $s_J^{-1}(E_I)\subset U_J$ and thus $\ov{\phi_{IJ}(V^{k'}_I\cap U_{IJ})} = \phi_{IJ}\bigl(\,\ov{V^{k'}_I}\cap U_{IJ}\bigr) \subset \phi_{IJ}(V^k_I\cap U_{IJ})$.
[\footnote{For \cite{MW:iso}, $\rho_{IJ}^{-1}|_{V^0_J}$ corresponds to finitely many embeddings with disjoint images in $s_J^{-1}(E_I)$.
}]
Next, we abbreviate
$$
N^k_J \, := \;{\textstyle \bigcup_{J\supsetneq I}} N^k_{JI} \;\subset\; V^k_J ,
$$  
and will call the union $N^{|J|}_J$ the {\it core} of $V^{|J|}_J$, since it is the part of this set on which we will prescribe $\nu_J$ in an iteration by a compatibility condition with the $\nu_I$ for $I\subsetneq J$.
In this iteration we will be working with quarter integers between $0$ and 
$$
M \,:=\; M_\Kk \,:=\; \max_{I\in\Ii_\Kk} |I|  ,
$$
and need to introduce another constant $\eta_0>0$ that controls the intersection with $\im\phi_{IJ}=\phi_{IJ}(U_{IJ})$ for all $I\subsetneq J$ as in Figure~\ref{fig:4},
\begin{equation}\label{eq:useful} 
\im \phi_{IJ} \;\cap\;  B^J_{2^{-k-\frac 12}\eta_0} \bigl( N_{JI}^{k+\frac 34} \bigr) \;\subset\; N^{k+\frac 12}_{JI} 
\qquad \forall \; k\in  \{0,1,\ldots,M\}.
\end{equation}
Since $\phi_{IJ}$ is an isometric embedding, this inclusion holds whenever 
$2^{-k-\frac 12}\eta_0 + 2^{-k-\frac 34}\de \leq 2^{-k-\frac 12}\de$ 
for all $k$. To minimize the number of choices in the construction of perturbations, we may thus simply fix $\eta_0$ in terms of $\de$ by
\begin{equation}\label{eq:eta0}
\eta_0 \,:=\; (1 -  2^{-\frac 14} ) \de.
\end{equation} 
Then we also have $2^{-k}\eta_0 + 2^{-k-\frac 12}\de <  2^{-k}\de$,
which provides the inclusions
\begin{equation}\label{eq:fantastic}
B^I_{\eta_k}\Bigl(\;\ov{V_I^{k+\frac 12}}\;\Bigr) \;\subset\; V_I^k \qquad\text{for} \;\; k\geq 0, \; \eta_k:=2^{-k}\eta_0 .
\end{equation}

\begin{figure}[htbp] 
   \centering
   \includegraphics[width=3in]{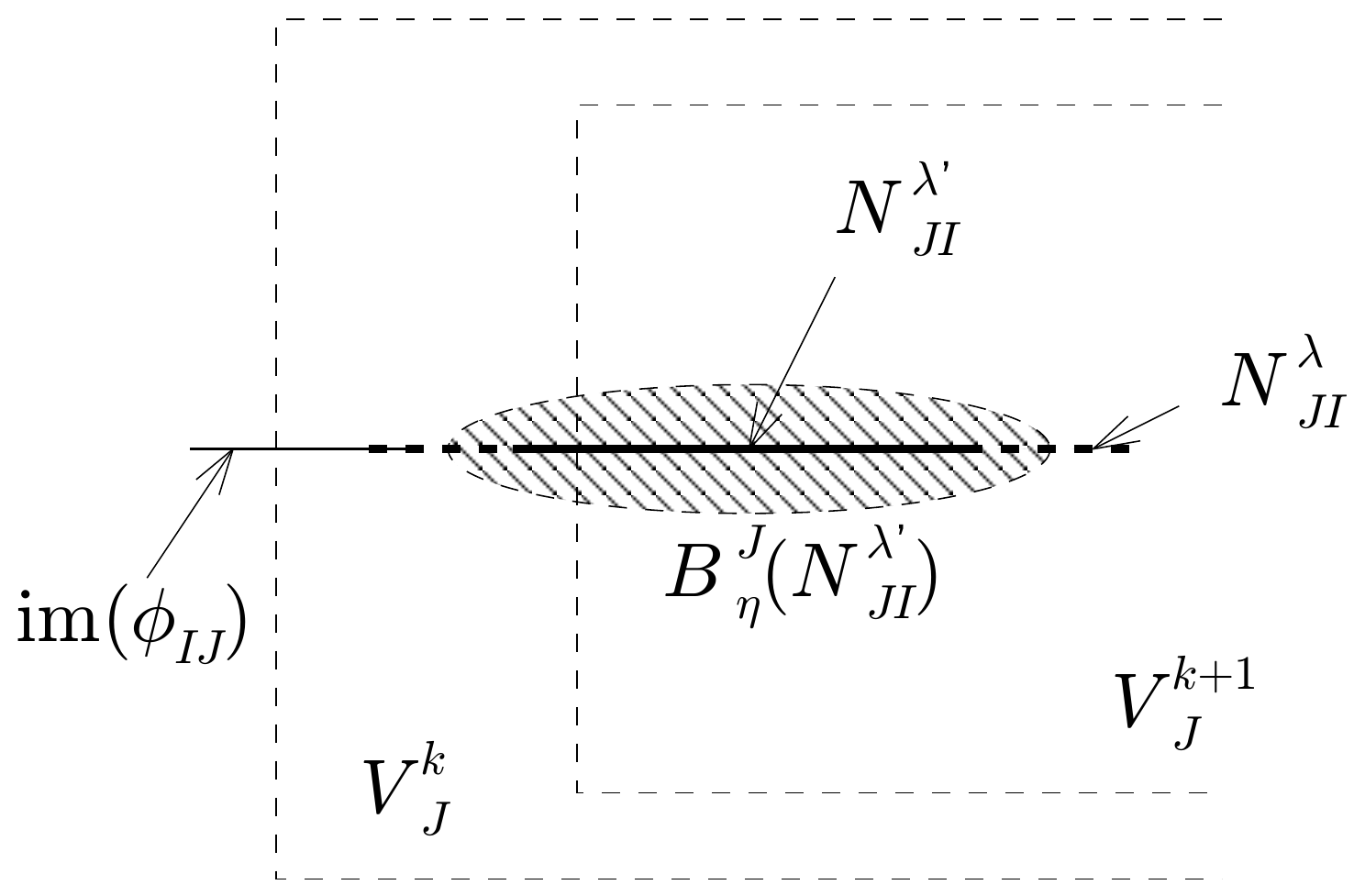} 
 \caption{
This figure illustrates the nested sets $V^{k+1}_J \sqsubset V^k_k$ and 
$ N^{\la'}_{JI}\sqsubset N^{\la}_{JI}\subset \im(\phi_{IJ})\cap V^k_J$ for $k+1>\la'=k+\frac 34 > \la =k+\frac 12>k$, the shaded neighbourhood $B^J_{\eta}(N^{\la'}_{JI})$ for $\eta = 2^{-\la}\eta_0$, and the inclusion given by \eqref{eq:useful}.
}
   \label{fig:4}
\end{figure}

%
%

Continuing the preparations, let a nested (cobordism) reduction $\Cc \sqsubset \Vv$ be given. 
Then precompactness w.r.t.\ $\Cc$ of a perturbation $\nu$ of $\s_\Kk|_\Vv$ requires as in \eqref{eq:zeroVCC}
$$
(s_J|_{V_J} + \nu_J)^{-1}(0)
\;\subset\;  {\textstyle \bigcup_{K\supset J} } \,\phi_{JK}^{-1}(C_K) \; \cup \; {\textstyle \bigcup_{I\subsetneq J} }\,\phi_{IJ}(C_I) 
\qquad\forall J\in \Ii_\Kk.
$$
To keep track of this requirement we denote the open part of $U_J\cap \pi_\Kk^{-1}(\pi_\Kk(\Cc))$ by
\begin{equation} \label{ticj}
\Ti C_J \,:=\;   {\textstyle \bigcup_{K\supset J}} \, \phi_{JK}^{-1}(C_K)   \;\subset\; U_J  .
\end{equation}
The assumption $\io_\Kk(X)\subset\pi_\Kk(\Cc)$ implies that the unperturbed solution set $s_J^{-1}(0)$ is contained in $\Ti C_J \;\cup\;  {\textstyle \bigcup_{I\subsetneq J}} \phi_{IJ}(C_I\cap U_{IJ})$ -- the union of an open set $\Ti C_J$ and a set in which the perturbed zero sets will be controlled by earlier iteration steps. [\footnote{
In the case of \cite{MW:iso}, precompactness has the analogous reformulation in \eqref{eq:isoVCC}.
}]

\begin{defn}  \label{def:admiss}
Given a reduction $\Vv$ of a metric Kuranishi atlas (or cobordism) $(\Kk,d)$, we set $\de_\Vv\in (0,\frac 14]$
to be the maximal constant such that any $\de< \de_\Vv$ satisfies the reduction properties of Theorem~\ref{thm:red}~(iii)~(b), that is 
\begin{align}
\label{eq:de1} 
B^I_{2\de}
(V_I)\sqsubset U_I\qquad &\forall I\in\Ii_\Kk , \\
\label{eq:dedisj}
B_{2\de}(\pi_\Kk({V_I}))\cap B_{2\de}(\pi_\Kk({V_J}))
 \neq \emptyset &\qquad \Longrightarrow \qquad I\subset J \;\text{or} \; J\subset I .
\end{align}
Given a nested reduction $\Cc\sqsubset\Vv$ of a metric Kuranishi atlas $(\Kk,d)$,
choice of additive norms $\|\cdot\|$, and $0<\de<\de_\Vv$, we set $\eta_{|J|-\frac 12} :=  2^{-|J|+\frac 12} \eta_0 = 2^{-|J|+\frac 12} (1 -  2^{-\frac 14} ) \de$ and {\rm [\footnote{
For \cite{MW:iso} this definition makes sense as is on the pruned domain category, and on the intermediate category should be read with $\|s_J(x)\|$ replaced by $\|\us_J\|(x)$; see Remark~\ref{rmk:iso}~(iii).
}]}
\begin{equation*}
\si(\Vv,\Cc,\|\cdot\|,\de) \,:=\; \min_{J\in\Ii_\Kk}
\inf \Bigl\{
\; \bigl\| s_J(x) \bigr\| \;\Big| \;
x\in \ov{V^{|J|}_J} \;\less\; \Bigl( \Ti C_J \cup {\textstyle \bigcup_{I\subsetneq J}} B^J_{\eta_{|J|-\frac 12}}\bigl(N^{|J|-\frac14}_{JI}\bigr) \Bigr) \Bigr\}  .
\end{equation*}
\end{defn}

In this language, the previous development of setup in this section shows that for any metric Kuranishi atlas or cobordism $(\Kk,d)$ we have $\de_\Vv>0$. We note some further properties of these constants. 
Note first that there is no general relation between $\si(\Vv,\Cc,\|\cdot\|,\de)$ and $\si(\Vv,\Cc,\|\cdot\|,\de')$ for $0<\de'<\de<\de_\Vv$ since both $V^{|J|}_J$ and $B^J_{\eta_{|J|-\frac 12}}\bigl(N^{|J|-\frac14}_{JI}\bigr)$ grow with growing $\de$, and hence the domains of the infimum are not nested in either way.
There are however simple relations if we scale the additive norms $\|\cdot\|=(\|\cdot\|_I)_{I\in\Ii_\Kk}$ by a common factor or have an inequality in the sense that $\|e\|_I\leq\|e\|'_I$ for all $I\in\Ii_\Kk, e\in E_I$,
\begin{align} \label{eq:sigmascale}
\si(\Vv,\Cc,c \|\cdot\|,\de) &= c \cdot \si(\Vv,\Cc,\|\cdot\|,\de) \qquad \forall c>0,  \\
\|\cdot\| \leq \|\cdot\|' \; &\Rightarrow\; \si(\Vv,\Cc,\|\cdot\|,\de) \leq \si(\Vv,\Cc,\|\cdot\|',\de). \notag
\end{align}

\begin{lemma}\label{le:admin}
\begin{enumerate}
\item
Let $\Cc\sqsubset\Vv$ be a nested reduction of a metric Kuranishi atlas or cobordism, and let $\de<\de_\Vv$, then we have $\si(\Vv,\Cc,\|\cdot\|,\de)>0$.
\item 
For any reduction $\Vv$ of a metric Kuranishi atlas $(\Kk,d)$ we have $\de_\Vv=\de_{[0,1]\times \Vv}$, 
and this constant is always smaller than the collar width of $[0,1]\times \Kk$ with respect to the product metric.
\item
Given a metric Kuranishi cobordism $(\Kk,d)$ we equip the Kuranishi atlases $\p^\al\Kk$ for $\al=0,1$ with the restricted metrics $d\big|_{|\p^\al\Kk|}$.
Then for any cobordism reduction $\Vv$ we have $\de_{\p^\al\Vv}\geq \de_\Vv$
for $\al=0,1$.
\item
In the setting of (iii), let $\eps>0$ be the collar width of $(\Kk,d)$.
Then the neighbourhood of radius $r<\eps$ of any $\eps$-collared set $W\subset U_I$ 
(i.e.\ with $W\cap \io^\al_I(A^\al_\eps\times \p^\al U_I )=\io^\al_I(A^\al_\eps\times \p^\al W)$) is $(\eps-r)$-collared,
\begin{equation} \label{eq:Wnbhd}
B^I_r(W) \cap \io^\al_I\bigl(A^\al_{\eps-r}\times \p^\al U_I \bigr)  = \io^\al_I\bigl( A^\al_{\eps-r} \times B^{I,\al}_r(\p^\al W)  \bigr) , 
\end{equation}
with $\p^\al B^I_r(W) = B^{I,\al}_r(\p^\al W)$, where we denote by $B^{I,\al}_r$ the neighbourhoods in $\p^\al U_I$ induced by pullback of the metric $d_I$ with $\io^\al_I:\{\al\} \times \p^\al U_I \to U_I$.
\item
If in (iv) the collared sets $W\subset U_I$ are obtained as products with $[0,1]$ in a product Kuranishi cobordism $\Kk=[0,1]\times \Kk'$, then \eqref{eq:Wnbhd} holds for any $r>0$ with $\eps-r$ replaced by $1$.
\end{enumerate}
\end{lemma}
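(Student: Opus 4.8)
\textbf{Proof plan for Lemma~\ref{le:admin}.}
The plan is to prove the five statements essentially independently, as each is a routine-but-careful verification using the constructions already set up earlier in the section. For (i), I would unwind the definition of $\si(\Vv,\Cc,\|\cdot\|,\de)$ as a minimum over the finite index set $\Ii_\Kk$ of an infimum of the continuous function $x\mapsto\|s_J(x)\|$ over a set $S_J$ which is the difference of the compact set $\ov{V^{|J|}_J}$ and an open set $O_J := \Ti C_J \cup \bigcup_{I\subsetneq J} B^J_{\eta_{|J|-\frac12}}(N^{|J|-\frac14}_{JI})$. The key point is that $S_J$ is compact, so the infimum is attained, hence it suffices to show $\|s_J(x)\|>0$ for every $x\in S_J$, i.e.\ $S_J\cap s_J^{-1}(0)=\emptyset$. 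Since $\io_\Kk(X)\subset\pi_\Kk(\Cc)$, the unperturbed zero set satisfies $s_J^{-1}(0)\cap\ov{V^{|J|}_J}\subset \Ti C_J\cup\bigcup_{I\subsetneq J}\phi_{IJ}(C_I\cap U_{IJ})$ (the remark just before Definition~\ref{def:admiss}), and $\phi_{IJ}(C_I\cap U_{IJ})\subset N^{|J|-\frac14}_{JI}\subset B^J_{\eta_{|J|-\frac12}}(N^{|J|-\frac14}_{JI})\subset O_J$ because $C_I\sqsubset V_I\sqsubset V_I^{|J|-\frac14}$. Thus $s_J^{-1}(0)\cap S_J=\emptyset$, giving $\si>0$. (For the cobordism case and the isotropy variant one invokes the corresponding reformulations in Remark~\ref{rmk:iso}.)

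For (ii), the equality $\de_\Vv=\de_{[0,1]\times\Vv}$ follows from the fact that the admissible product metric $d_\R+d$ on $|[0,1]\times\Kk|\cong[0,1]\times|\Kk|$ restricts, on each chart $[0,1]\times U_I$, to the product of the standard metric on $[0,1]$ with $d_I$; hence a radius-$r$ ball is a product of intervals with balls $B^I_r(\cdot)$, so the two reduction conditions \eqref{eq:de1}, \eqref{eq:dedisj} for $[0,1]\times\Vv$ hold for a given $\de$ exactly when they hold for $\Vv$. The bound against the collar width: by Remark~\ref{rmk:cobordreal} the product concordance $[0,1]\times\Kk$ has collars of width any $\eps<\frac12$, and the collar condition \eqref{eq:epscoll} together with \eqref{eq:de1} forces $2\de<\frac12$, hence $\de_\Vv\le\frac14$ which is at most the collar width; more precisely, if $\de$ exceeds the collar width then $B^I_{2\de}(V_I)$ cannot be precompact in $U_I=[0,1]\times U_I'$ near the boundary, contradicting \eqref{eq:de1}. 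I expect (ii) to require only bookkeeping once the product structure of the metric is spelled out.

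For (iii), I would restrict: given a cobordism reduction $\Vv$ of $\Kk$ and $\al\in\{0,1\}$, the restriction $\p^\al\Vv$ is a reduction of $\p^\al\Kk$ by Remark~\ref{rmk:red}~(i), and the restricted metric $d|_{|\p^\al\Kk|}$ pulls back on each chart $\p^\al U_I$ to $d_I|_{\p^\al U_I}$. Because $\Vv$ is collared, in a collar neighbourhood $B^I_{2\de}(V_I)$ has product form (by (iv), applied with $W=V_I$), so its restriction to the boundary is $B^{I,\al}_{2\de}(\p^\al V_I)$; if this fails to be precompact in $\p^\al U_I$ or the disjointness \eqref{eq:dedisj} fails on the boundary, then by the product structure the same failure would propagate into a collar of $\Vv$ in $\Kk$, contradicting $\de<\de_\Vv$. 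Hence any $\de<\de_\Vv$ is admissible for $\p^\al\Vv$, giving $\de_{\p^\al\Vv}\ge\de_\Vv$.

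For (iv), this is the technical heart and the step I expect to be the main obstacle, though it is still elementary. The claim \eqref{eq:Wnbhd} says that taking an $r$-neighbourhood commutes with the collar product structure, up to shrinking the collar width from $\eps$ to $\eps-r$. I would argue by the triangle inequality using the metric collar condition \eqref{eq:epsprod}: on the image of $\io^\al_I$ the metric $d_I$ is, by admissibility and the metric-collar hypothesis, the product metric $|t-t'|+d^\al(x,x')$. Thus for a point $\io^\al_I(t,x)$ with $t\in A^\al_{\eps-r}$ one has $\io^\al_I(t,x)\in B^I_r(W)$ iff there is $w\in W$ with $d_I(\io^\al_I(t,x),w)<r$; since $t$ is within $r$ of the boundary value $\al$, condition \eqref{eq:epscoll} forces any such $w$ to lie in the $\eps$-collar $\im\io^\al_I$, write $w=\io^\al_I(s,y)$ with $y\in\p^\al W$, and then $d_I(\io^\al_I(t,x),w)=|t-s|+d^\al(x,y)<r$ forces $d^\al(x,y)<r$, i.e.\ $x\in B^{I,\al}_r(\p^\al W)$ — and conversely. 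This gives the set equality \eqref{eq:Wnbhd} and, taking $t=\al$, the identity $\p^\al B^I_r(W)=B^{I,\al}_r(\p^\al W)$. The one subtlety is making sure the collar widths line up so that \eqref{eq:epscoll} applies, which is exactly why $\eps-r$ (rather than $\eps$) appears. Finally (v) is the special case where the collar width is replaced by $1$: in a product cobordism $\Kk=[0,1]\times\Kk'$ with product metric, the chart $U_I=[0,1]\times U_I'$ has $d_I=d_\R+d_{I}'$ globally (not just in a collar), so the triangle-inequality argument of (iv) applies with no restriction on $r$, and the $\eps$-collar is all of $U_I$, i.e.\ $\eps-r$ may be taken to be $1$.
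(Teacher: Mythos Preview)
Your plan is correct and matches the paper's proof essentially step for step: (i) is the same compactness-plus-disjointness-from-the-zero-set argument; (iv) is the same product-metric computation using \eqref{eq:epsprod} and \eqref{eq:epscoll}, just organized as a direct point characterization rather than the paper's decomposition of $W$ into its collar part $W\cap\im\io^\al_I$ and non-collar part $W\less\im\io^\al_I$; (iii) and (v) also match.

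One small correction in your sketch of (ii): the bound $\de_\Vv\le\tfrac14$ is not forced by \eqref{eq:epscoll} together with \eqref{eq:de1}, and your ``more precisely'' claim that precompactness of $B^I_{2\de}(V_I)$ in $[0,1]\times U_I'$ fails near the boundary when $\de$ exceeds the collar width is false --- the $[0,1]$-factor is compact, so precompactness depends only on the $U_I'$-factor. The bound $\de_\Vv\le\tfrac14$ is simply built into Definition~\ref{def:admiss}, and the paper's argument for (ii) is just: $\de_\Vv\le\tfrac14$ by definition, while the product cobordism with product metric has collar width up to $\tfrac12$. This does not affect the rest of your plan.
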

\begin{proof}
To check statement (i) it suffices to fix $J\in\Ii_\Kk$ and consider the continuous function $\|s_J\|$ over the compact set $\ov{V^{|J|}_J} \;\less\; \bigl( \Ti C_J \cup {\textstyle \bigcup_{I\subsetneq J}} B^J_{\eta_{|J|-\frac 12}}\bigl(N^{|J|-\frac14}_{JI}\bigr) \bigr)$. We claim that its infimum is positive since the domain is disjoint from $s_J^{-1}(0)$. Indeed, the reduction property $\io_\Kk(X)\subset\pi_\Kk(\Cc)$ implies $s_J^{-1}(0)\subset \Ti C_J \cup \bigcup_{I\subsetneq J}  \phi_{IJ}(C_I\cap U_{IJ})$, the intersections $\ov{V^{|J|}_J}\cap \phi_{IJ}(C_I\cap U_{IJ})$ are contained in $N^{k}_{JI}$ for any $k<|J|$ since $V^{|J|}_J \sqsubset V^k_J$ and $C_I\sqsubset V_I \subset V^k_I$, and we have $N^{k}_{JI}\subset  B^J_{\eta_{|J|-\frac 12}}\bigl(N^{|J|-\frac14}_{JI}\bigr)$ for $k\geq -|J|+\frac 12$.
[\footnote{
In the case of \cite{MW:iso}, we use \eqref{eq:VCC} to control $s_J^{-1}(0) \cap  \pi_\Kk^{-1}(\pi_\Kk(\Cc))$.
}]

Statement (ii) holds since all sets and metrics involved are of product form, and we chose $\de_\Vv\le \frac 14$.

Statement (iii) follows by pullback with $\io^\al_I|_{\{\al\} \times \p^\al U_I}$ since the $2^{-k}\de$-neighbourhood $(\p^\al V_I)^k$ of the boundary $\p^\al V_I$ within $\p^\al U_I$ is always contained in the boundary $\p^\al V^k_I$ of the $2^{-k}\de$-neighbourhood of the domain $V_I$.

To check (iv) note in particular the product forms $(\io^\al_I)^{-1}(V_I)=A^\al_\eps\times \p^\al V_I $ and
$(\io^\al_I)^*d_I = d_\R +d^\al_I$ on the $\eps$-collars, where $d^\al_I$ denotes the metric on $\p^\al U_I$ induced from the restriction of the metric on $|\Kk|$ to $|\p^\al\Kk|$.
Then for any $\p^\al W \subset \p^\al U_I$ the product form of the metric implies product form of the $r$-neighbourhoods 
$$
B^I_r\bigl(\io^\al_I(A^\al_\eps\times \p^\al W)\bigr)\cap \im\io^\al_I \;=\; \io^\al_I\bigl(A^\al_\eps\times  B^{I,\al}_r(\p^\al W)\bigr) .
$$
Moreover, for any $W' \subset U_I\less\im\io^\al_I$ and $r < \eps$, 
the collaring condition \eqref{eq:epscoll} on the metric implies that 
\begin{equation}\label{eq:nob}
B^I_r(W') \cap \io^\al_I(A^\al_{\eps-r}\times \p^\al U_I) = \emptyset .
\end{equation}
The identity \eqref{eq:Wnbhd} now follows from applying the above identities with $W'=W \less \im\io^\al_I$.

In the product case (v), the complement of the (closed) collars is empty, so there is no need for the second identity and hence for the restriction $r<\eps$.
\end{proof}

In the case of a metric tame Kuranishi atlas we will construct transverse perturbations $\nu = \bigl(\nu_I : V_I \to E_I \bigr)_{I\in\Ii_\Kk}$ by an iteration which constructs and controls each $\nu_I$ over the larger set ${V_I^{|I|}}$.  In order to prove uniqueness of the VMC, we will moreover need to interpolate between any two such perturbations by a similar iteration. We will use the following definition to keep track of the refined properties of the perturbations 
in this iteration.
\MS

\begin{defn}  \label{a-e}
Given a nested reduction $\Cc\sqsubset\Vv$ of a metric tame Kuranishi atlas 
$(\Kk,d)$, additive norms $\|\cdot\|$, and a choice of constants $0<\de<\de_\Vv$ and $0<\si\le\si(\Vv,\Cc,\|\cdot\|,\de)$, we say that a perturbation $\nu$ of $\s_\Kk|_\Vv$ is {\bf ${(\Vv,\Cc,\|\cdot\|,\de,\si)}$-adapted} if the sections $\nu_I:V_I\to E_I$ extend to sections over ${V^{|I|}_I}$ (also denoted $\nu_I$) so that the following conditions hold for every $k=1,\ldots, M_\Kk$ with 
$$
M_\Kk:= \max_{I\in\Ii_\Kk} |I|, \qquad
\eta_k:=2^{-k}\eta_0=2^{-k} (1-2^{-\frac 14})\de .
$$
\begin{itemize}
\item[a)]
The perturbations are compatible in the sense that the commuting diagrams in Definition~\ref{def:sect} hold on 
$\bigsqcup_{|I|\leq k} {V^k_I}$, that is {\rm [\footnote{
For \cite{MW:iso} this is replaced by \eqref{eq:compatc}.
}]}
$$
\qquad
\nu_I \circ \phi_{HI} |_{{V^k_H}\cap \phi_{HI}^{-1}({V^k_I})} \;=\; \Hat\phi_{HI} \circ \nu_H |_{{V^k_H}\cap \phi_{HI}^{-1}({V^k_I})} 
\qquad \text{for all} \; H\subsetneq I , |I|\leq k .
$$
\item[b)]
The perturbed sections are transverse, that is $(s_I|_{{V^k_I}} + \nu_I) \pitchfork 0$ for each $|I|\leq k$.
\item[c)]
The perturbations are {\bf strongly admissible} with radius $\eta_k$, that is for all $H\subsetneq I$ and $|I|\le k$ we have
$$
\qquad
\nu_I\bigl( B^I_{\eta_k}(N^{k}_{IH})\bigr) \;\subset\; \Hat\phi_{HI}(E_H) 
\qquad
\text{with}\;\;
N^k_{IH} = V^k_I \cap \phi_{HI}(V^k_H\cap U_{HI}) .
$$
In particular, the perturbations are admissible along the core $N^k_I$, that is we have $\im\rd_x\nu_I \subset \im\Hat\phi_{HI}$ at all $x\in N^k_{IH}$. {\rm [\footnote{
In the setting of \cite{MW:iso}, $\nu_I\bigl( \Hat B^I_{\eta_k}(N^{k}_{IH})\bigr) \subset \Hat\phi_{HI}(E_H)$ also implies admissibility because $\Hat B^I_{\eta_k}(N^{k}_{IH})= \pi_I^{-1}\bigl(B^I_{\eta_k}(\uN^{k}_{IH})\bigr)$ is also open and contains $\pi_I^{-1}(\uN^{k}_{IH})=N^{k}_{IH}$.
}]}
\item[d)]  
The perturbed zero sets are contained in $\pi_\Kk^{-1}\bigl(\pi_\Kk(\Cc)\bigr)$; more precisely
$$
(s_I |_{{V^k_I}}+ \nu_I)^{-1}(0) \;\subset\; {V^k_I} \cap \pi_\Kk^{-1}\bigl(\pi_\Kk(\Cc)\bigr)
\qquad
\forall |I|\leq k,
$$
or equivalently $s_I + \nu_I \neq 0$ on ${V^k_I} \less  \pi_\Kk^{-1}\bigl(\pi_\Kk(\Cc)\bigr)$.
\item[e)]
The perturbations are small, that is $\sup_{x\in {V^k_I}} \| \nu_I (x) \| < \si$
for $|I|\leq k$. 
\end{itemize}

Given a metric Kuranishi atlas $(\Kk,d)$, we say that a perturbation $\nu$  is {\bf adapted} if it is a $(\Vv,\Cc,\|\cdot\|,\de,\si)$-adapted perturbation $\nu$ of $\s_\Kk|_\Vv$ for some choice of nested reduction $\Cc\sqsubset\Vv$, additive norms $\|\cdot\|$, and constants $0<\de<\de_\Vv$ and $0<\si\le\si(\Vv,\Cc,\|\cdot\|,\de)$. 
\end{defn}

Next, we note some simple properties of these notions; in particular the fact that adapted perturbations are automatically admissible, precompact, and transverse.

\begin{lemma}\label{le:admin2}
Let $\nu$ be a $(\Vv,\Cc,\|\cdot\|,\de,\si)$-adapted perturbation of $\s_\Kk|_\Vv$. 
Then $\nu$ is an admissible, precompact, transverse perturbation with $\pi_\Kk( (\s_\Kk|_\Vv+\nu)^{-1}(0))\subset\pi_\Kk(\Cc)$.
Moreover, $\nu$ is $(\Vv,\Cc,\|\cdot\|',\de',\si')$-adapted for any choice of additive norms $\|\cdot\|'\leq \|\cdot\|$ and constants $0<\de'\leq\de$, $\si'\geq\si$ such that $\si'\leq \si(\Vv,\Cc,\|\cdot\|',\de')$.
\end{lemma}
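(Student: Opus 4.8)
\textbf{Proof plan for Lemma~\ref{le:admin2}.}
The plan is to verify each of the three claims in turn, all of which should follow fairly directly from the definitions once one notices that the key properties a)--e) in Definition~\ref{a-e} are either monotone in the parameters or persist under restriction.

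\emph{First claim: adapted implies admissible, precompact, transverse.}
I would extract the relevant instance of the conditions by taking $k=M_\Kk=\max_{I\in\Ii_\Kk}|I|$, so that $|I|\le k$ holds for every $I\in\Ii_\Kk$ and $V_I^{|I|}\subset V_I^k$. Transversality is immediate from c) with this $k$, since $(s_I|_{V_I^k}+\nu_I)\pitchfork 0$ restricts to $(s_I|_{V_I}+\nu_I)\pitchfork 0$ on the smaller set $V_I\sqsubset V_I^{|I|}\sqsubset V_I^k$ (using $V_I\subset V_I^{k}$ from $\Vv\subset V^k$). Precompactness with $\pi_\Kk((\s_\Kk|_\Vv+\nu)^{-1}(0))\subset\pi_\Kk(\Cc)$ follows from d): restricting the inclusion $(s_I|_{V_I^k}+\nu_I)^{-1}(0)\subset V_I^k\cap\pi_\Kk^{-1}(\pi_\Kk(\Cc))$ to $V_I\subset V_I^k$ gives $(s_I|_{V_I}+\nu_I)^{-1}(0)\subset\pi_\Kk^{-1}(\pi_\Kk(\Cc))$, which is the precompactness condition \eqref{eq:zeroVCC} once one rewrites $\pi_\Kk^{-1}(\pi_\Kk(\Cc))\cap V_I$ via \eqref{eq:Ku2} in the form $\bigcup_{H\supset I}\phi_{IH}^{-1}(C_H)\cup\bigcup_{H\subsetneq I}\phi_{HI}(C_H)$ using the tameness identity \eqref{eq:tame3}. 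For admissibility in the sense of \eqref{eq:admiss}, I would use the ``in particular'' part of c): at each $x\in N^k_{IH}=V^k_I\cap\phi_{HI}(V^k_H\cap U_{HI})$ we have $\im\rd_x\nu_I\subset\im\Hat\phi_{HI}$, and since $N^{k}_{IH}\supset V_I\cap\phi_{HI}(V_H\cap U_{HI})$ (again from $\Vv\subset V^k$) this gives exactly $\rd_y\nu_J(\rT_y V_J)\subset\im\Hat\phi_{IJ}$ for all $I\subsetneq J$ and $y\in V_J\cap\phi_{IJ}(V_I\cap U_{IJ})$, which is \eqref{eq:admiss}.

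\emph{Second claim: adaptedness is preserved under weakening the parameters.}
Here I would check that $\nu$, with the same extensions $\nu_I$ over $V^{|I|}_I$, remains $(\Vv,\Cc,\|\cdot\|',\de',\si')$-adapted. The main point is that decreasing $\de$ to $\de'$ shrinks the nested reductions: $V_I^{\prime k}=B^I_{2^{-k}\de'}(V_I)\subset V_I^k$ and $N_{IH}^{\prime k}\subset N_{IH}^{k}$, and likewise $\eta'_k=2^{-k}(1-2^{-1/4})\de'\le\eta_k$, so $B^I_{\eta'_k}(N^{\prime k}_{IH})\subset B^I_{\eta_k}(N^{k}_{IH})$. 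Conditions a), b), d) are then inherited by restriction of the respective identities/inclusions from $V^k$ to $V^{\prime k}$; condition c) (strong admissibility with radius $\eta'_k$) holds because $\nu_I(B^I_{\eta'_k}(N^{\prime k}_{IH}))\subset\nu_I(B^I_{\eta_k}(N^{k}_{IH}))\subset\Hat\phi_{HI}(E_H)$; and condition e) holds because $\sup_{V^{\prime k}_I}\|\nu_I\|'\le\sup_{V^{k}_I}\|\nu_I\|'\le\sup_{V^{k}_I}\|\nu_I\|<\si\le\si'$, using $\|\cdot\|'\le\|\cdot\|$ for the middle inequality. Finally one needs $0<\si'\le\si(\Vv,\Cc,\|\cdot\|',\de')$, which is precisely the hypothesis imposed, and $0<\de'<\de_\Vv$, which follows from $\de'\le\de<\de_\Vv$.

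\emph{Main obstacle.}
I do not expect a serious obstacle — the statement is essentially a bookkeeping lemma. The one point requiring a little care is the interplay between the two directions of monotonicity: shrinking $\de$ shrinks both the domains $V^k_I$ on which the conditions are imposed \emph{and} the admissibility neighbourhoods $B^I_{\eta_k}(N^k_{IH})$, so one must confirm that in every clause the change goes the ``safe'' way (restriction of an identity, or shrinking of a set on the left of a containment whose right side is unchanged). As noted in \eqref{eq:sigmascale} and the surrounding discussion, there is \emph{no} general monotonicity of $\si(\Vv,\Cc,\|\cdot\|,\de)$ in $\de$, which is exactly why the bound $\si'\le\si(\Vv,\Cc,\|\cdot\|',\de')$ must be assumed rather than derived; I would make sure the proof does not accidentally rely on such a comparison. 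Beyond that, every step is a direct consequence of the definitions already in place, together with the tameness identities \eqref{eq:tame3} and the separation/nesting relations \eqref{eq:N}, \eqref{eq:VIk}.
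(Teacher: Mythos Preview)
Your proposal is correct and takes essentially the same approach as the paper: both arguments restrict conditions a)--e) from the enlarged domains $V^k_I$ down to $V_I$ for the first claim and to the smaller ${V'}^k_I$ for the second, checking in each clause that the restriction goes the safe way. One minor labeling slip: when you write ``Transversality is immediate from c)'' you mean b); the content you invoke, namely $(s_I|_{V^k_I}+\nu_I)\pitchfork 0$, is exactly condition b).
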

\begin{proof}
First note that $\nu$ is an admissible reduced section in the sense of Definition~\ref{def:sect} by c) and d), and is transverse by b). 
Restriction of a) implies that it satisfies the zero set condition $s_I+\nu_I \neq 0$ on $V_I \less \pi_\Kk^{-1}(\pi_\Kk(\Cc))$, and hence $\pi_\Kk( (\s_\Kk|_\Vv+\nu)^{-1}(0))\subset\pi_\Kk(\Cc)$, which in particular implies precompactness in the sense of Definition~\ref{def:precomp}.

To see that $\nu$ is also admissible with respect to the data $(\Vv,\Cc,\|\cdot\|',\de')$, first note that this statement only makes sense for $0<\de'<\de_\Vv$ and $0<\si'\le\si(\Vv,\Cc,\|\cdot\|',\de')$, which is ensured by the assumptions.
Next, the reduced subsets ${V'}_I^{|I|}$ defined by $0<\de'\le\de$ are contained in $V_I^{|I|}$, so that the extensions of $\nu_I$ to $V_I^{|I|}$ restrict to sections over the ${V'}_I^{|I|}$ that still satisfy the compatibility, transversality, and zero set conditions a),b),d). They are strongly admissible since we have $\eta'_k\leq \eta_k$ and ${N'}^k_{IH} = {V'}^k_I \cap \phi_{HI}({V'}^k_H\cap U_{HI})\subset N^k_{IH}$. Finally, e) is satisfied since 
$\sup_{x\in {{V'}^k_I}} \| \nu_I (x) \|' \leq \sup_{x\in {V^k_I}} \| \nu_I (x) \| < \si \leq \si'$.
\end{proof}

Using these notions, we now prove a refined version of the existence of admissible, precompact, transverse perturbations in every metric tame Kuranishi atlas.

\begin{prop}\label{prop:ext}
Let $(\Kk,d)$ be metric tame Kuranishi atlas with nested reduction $\Cc \sqsubset \Vv$ and additive norms $\|\cdot\|$.
Then for any $0<\de<\de_\Vv$ and $0<\si\le\si(\Vv,\Cc,\|\cdot\|,\de)$ there exists a $(\Vv,\Cc,\|\cdot\|,\de,\si)$-adapted perturbation $\nu$ of $\s_\Kk|_{\Vv}$.  In particular, $\nu$ is admissible, precompact, and transverse, and its perturbed zero set $|\bZ^\nu|=|(\s_\Kk|_\Vv+\nu)^{-1}(0)|$ is compact with $\pi_\Kk\bigl((\s_\Kk|_\Vv+\nu)^{-1}(0)\bigr)$ contained in $\pi_\Kk(\Cc)$.
\end{prop}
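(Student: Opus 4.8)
\textbf{Proof plan for Proposition~\ref{prop:ext}.}
The plan is to construct the sections $\nu_I:V_I^{|I|}\to E_I$ by an iteration over the cardinality $|I|=1,\ldots,M_\Kk$, maintaining the five adaptedness properties a)--e) at each stage. The key observation that makes this work is that in a reduction the overlaps $\pi_\Kk(\ov{V_I})\cap\pi_\Kk(\ov{V_J})$ are nonempty only for $I\subset J$ or $J\subset I$ (Definition~\ref{def:vicin}~(ii)), which was strengthened to the metric separation \eqref{eq:dedisj} and its consequences \eqref{desep}, \eqref{Nsep}. Thus when we come to define $\nu_I$, the only compatibility constraints come from the already-constructed $\nu_H$ with $H\subsetneq I$, and by \eqref{Nsep} the cores $N^k_{IH}$ for different $H$ of the same size are disjoint (after thickening by $\eta$-balls), so the pushed-forward perturbations $\Hat\phi_{HI}\circ\nu_H\circ\phi_{HI}^{-1}$ do not conflict with each other along the core $N^{|I|}_I=\bigcup_{H\subsetneq I}N^{|I|}_{IH}$. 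This is exactly the structural simplification that reductions buy us over the full atlas, where Remark~\ref{rmk:sect}~(ii) shows compatibility is too onerous to allow transversality.

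First I would set up the base case $k=1$: the basic charts $\bK_i$ have empty core (there is no $H\subsetneq\{i\}$ in $\Ii_\Kk$ other than $\emptyset$, for which $E_\emptyset=\{0\}$), so a) is vacuous, and by the finite-dimensional Sard--Smale/transversality theorem applied to $s_i:V_i^1\to E_i$ one picks a generic small perturbation $\nu_i$ with $\|\nu_i\|<\si$; property d) then holds because the defining constant $\si(\Vv,\Cc,\|\cdot\|,\de)$ bounds $\|s_i\|$ from below on $\ov{V_i^1}\less(\Ti C_i\cup\bigcup_{H} B^i_{\eta}(N^1_{iH}))=\ov{V_i^1}\less\Ti C_i$, so that a perturbation of size $<\si$ cannot create zeros outside $\pi_\Kk^{-1}(\pi_\Kk(\Cc))$. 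For the inductive step from $k-1$ to $k$: given $I$ with $|I|=k$, the functions $\Hat\phi_{HI}\circ\nu_H\circ\phi_{HI}^{-1}$ for $H\subsetneq I$ are already defined and compatible on overlaps of cores by the inductive hypothesis a) together with \eqref{Nsep}, so they patch to a well-defined section $\nu_I^{\rm core}$ on a neighbourhood $\bigcup_{H}B^I_{\eta_k}(N^k_{IH})$ of the core. This section is transverse there by Lemma~\ref{le:transv} (applied componentwise, using the index/tangent bundle condition to transfer transversality across $\phi_{HI}$) and strongly admissible by construction of the $\eta_k$-balls and the inclusion \eqref{eq:useful}. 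One then extends $\nu_I^{\rm core}$ to all of $V_I^{|I|}$ using a cutoff function supported in the $\eta_k$-neighbourhood of the core and interpolating to $0$ outside (this preserves c) by shrinking the admissibility radius appropriately, using \eqref{eq:fantastic}), and finally perturbs the extension generically and by size $<\si$ over the open region $V_I^{|I|}\less(\text{core neighbourhood})$ to achieve transversality b) everywhere while not destroying a), c), or e); property d) again follows from the lower bound $\si(\Vv,\Cc,\|\cdot\|,\de)$ on $\|s_I\|$ over precisely the set $\ov{V_I^{|I|}}\less(\Ti C_I\cup\bigcup_H B^I_{\eta_{|I|-\frac12}}(N^{|I|-\frac14}_{IH}))$ appearing in Definition~\ref{def:admiss}.

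The main obstacle, and the step needing the most care, is the transverse extension across the boundary of the core: one must extend the already-transverse section $\nu_I^{\rm core}$ from the core neighbourhood to $V_I^{|I|}$ in such a way that (i) transversality is retained near the core, (ii) strong admissibility c) is retained with a controlled radius, and (iii) the genericity perturbation used to fix transversality on the rest of $V_I^{|I|}$ can be taken admissible in the normal directions to all the embeddings $\phi_{HI}$ simultaneously — i.e.\ one needs the index condition (via Lemma~\ref{le:change}, especially \eqref{inftame} and \eqref{tbc}) to guarantee that prescribing $\rd\nu_I$ to vanish in the normal direction to $\im\phi_{HI}$ is compatible with achieving surjectivity of $\rd(s_I+\nu_I)$, exactly as sketched in the "Regularity of perturbations" and "Index condition" discussion in \S\ref{ss:top}. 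Concretely this uses that $\mathrm{pr}^\perp_{E_H}\circ\rd s_I$ is already an isomorphism from the normal bundle of $\im\phi_{HI}$ to $E_I/\Hat\phi_{HI}(E_I)$, so only the $\Hat\phi_{HI}(E_I)$-component of $s_I+\nu_I$ needs transversality-perturbing, which can be done inside the admissible class. After the iteration terminates at $k=M_\Kk$, restricting each $\nu_I$ from $V_I^{|I|}$ back to $V_I$ gives the required $(\Vv,\Cc,\|\cdot\|,\de,\si)$-adapted perturbation; admissibility, precompactness, and transversality then follow from Lemma~\ref{le:admin2}, and compactness of $|\bZ^\nu|$ with $\pi_\Kk\bigl((\s_\Kk|_\Vv+\nu)^{-1}(0)\bigr)\subset\pi_\Kk(\Cc)$ follows from Proposition~\ref{prop:zeroS0} together with Theorem~\ref{thm:zeroS0}.
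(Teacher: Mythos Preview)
Your overall strategy matches the paper's exactly: iterate over $|I|$, on each new $J$ with $|J|=k+1$ patch the pushed-forward $\Hat\phi_{HJ}\circ\nu_H\circ\phi_{HJ}^{-1}$ to a section $\mu_J$ on the core $N^k_J$ (compatibility follows from \eqref{Nsep} plus the inductive hypothesis~a) and the strong cocycle condition), extend by cutoff to $\Ti\nu_J$, then add a transversality-producing perturbation $\nu_\pitchfork$. The appeal to Lemma~\ref{le:admin2} and Proposition~\ref{prop:zeroS0} at the end is exactly right.

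However, two points in your sketch are imprecise relative to what the paper actually does. First, ``cutoff and interpolate to $0$'' does not by itself yield strong admissibility~c): you must ensure that on each neighbourhood $B^J_{\eta_{k+1}}(N^{k+1}_{JH})$ the extension $\Ti\nu_J$ takes values in $\Hat\phi_{HJ}(E_H)$, and since these neighbourhoods for different $H\subsetneq J$ can overlap (when one $H$ is contained in another), this is not automatic from cutting off the patched section $\mu_J$. The paper handles this by a nested sub-iteration over $\ell=0,\ldots,k$: it decomposes $\mu_J=\sum_{j\in J}\mu^j_J$ into its $E_j$-components and, for each $j$ separately, builds an extension $\Ti\mu^j_\ell$ that vanishes on $B^J_{r_\ell}(N^{k+\frac12}_{JI})$ whenever $j\notin I$ (condition~(E:ii)). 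This componentwise vanishing is what forces $\Ti\nu_J$ into $\Hat\phi_{HJ}(E_H)$ on the required neighbourhoods, and checking (E:ii) is compatible with (E:i) at each stage uses the inductive strong admissibility of $\nu_L$ for $L\subsetneq J$.

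Second, you mislocate the role of the index condition in the transversality step. The final perturbation $\nu_\pitchfork$ does \emph{not} need to be admissible in any normal direction: it is supported in an open set $W$ disjoint from the neighbourhood $B'=B^J_{\eta_{k+1}}(N^{k+1}_J)$ where strong admissibility~c) is required, so $\nu_J=\Ti\nu_J$ on $B'$ and c) is inherited from the extension step. What makes this work is that $(s_J+\Ti\nu_J)|_B\pitchfork 0$ already holds on the larger neighbourhood $B=B^J_{\eta_{k+\frac12}}(N^{k+\frac34}_J)$: on the core $N^{k+\frac12}_J$ transversality transfers from $\nu_H$ via Lemma~\ref{le:transv}, and on the annulus $B\less N^{k+\frac12}_J$ the strong admissibility \eqref{cvalue} together with the tameness identity $\im\phi_{HJ}=s_J^{-1}(E_H)$ and \eqref{eq:useful} force $s_J+\Ti\nu_J$ to have \emph{no zeros} there at all. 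So the index condition enters through Lemma~\ref{le:transv} and through \eqref{eq:useful}, not through any constraint on $\nu_\pitchfork$.
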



\begin{proof}
We will construct $\nu_I: V^{|I|}_I\to E_I$ by an iteration over $k=0,\ldots,M= \max_{I\in\Ii_\Kk} |I|$, where in step $k$ we will define $\nu_I : V^k_I \to E_I$ for all $|I| = k$ that, together with the $\nu_I|_{V^k_I}$ for $|I|<k$ obtained by restriction from earlier steps, satisfy conditions a)-e) of Definition~\ref{a-e}.
Restriction to $V_I\subset V^{|I|}_I$ then yields a $(\Vv,\Cc,\|\cdot\|,\de,\si)$-adapted perturbation $\nu$ of $\s_\Kk|_\Vv$, which by Lemma~\ref{le:admin2} is automatically an admissible, precompact, transverse perturbation with $\pi_\Kk( (\s_\Kk|_\Vv+\nu)^{-1}(0))\subset\pi_\Kk(\Cc)$. 
Compactness of $|(\s_\Kk|_\Vv+\nu)^{-1}(0)|$ then follows from Proposition~\ref{prop:zeroS0}.
So it remains to perform the iteration.

For $k=0$ the conditions a)-e) are trivially satisfied since there are no index sets $I\in\Ii_\Kk$ with $|I|\leq 0$. Now suppose that $\bigl(\nu_I : V^k_I\to E_I\bigr)_{I\in\Ii_\Kk, |I|\leq k}$  are constructed such that a)-e) hold. In the next step we can then construct $\nu_J$ independently for each $J\in\Ii_\Kk$ with $|J|=k+1$, since for any two such $J,J'$ we have $\pi_\Kk(V_J^{k+1}) \cap \pi_\Kk(V_{J'}^{k+1})=\emptyset$ unless $J=J'$ by \eqref{desep}, and so the constructions for $J\neq J'$ are not related by the commuting diagrams in condition a).

\MS\NI
{\bf Construction for fixed $\mathbf {|J|=k+1}$:} 
We begin by noting that a) requires for all $I\subsetneq J$ 
\begin{equation} \label{some nu}
\nu_J|_{N^{k+1}_{JI}} \;=\; 
\nu_J|_{V^{k+1}_J \cap \phi_{IJ}(V^{k+1}_I\cap U_{IJ})} \;=\; \Hat\phi_{IJ}\circ \nu_I\circ\phi_{IJ}^{-1} |_{N^{k+1}_{JI}} .
\end{equation}
To see that these conditions are compatible, we note that for $H\neq I\subsetneq J$ with $\phi_{HJ}(V^k_H\cap U_{IJ}) \cap \phi_{IJ}(V^k_I\cap U_{IJ})\neq \emptyset $ property 
\eqref{Nsep}
implies $H\subsetneq I$ or $I\subsetneq H$. Assuming w.l.o.g.\ the first, we obtain compatibility from the strong cocycle condition in Definition~\ref{def:cocycle} and property a) for $H\subsetneq I$, 
\begin{align*}
&\Hat\phi_{HJ}\circ \nu_H\circ\phi_{HJ}^{-1} |_{V^k_J \cap \phi_{IJ}(V^k_I\cap U_{IJ})\cap \phi_{HJ}(V^k_H\cap U_{HJ})} \\
&=
\Hat\phi_{IJ}\circ \bigl(\Hat\phi_{HI}\circ \nu_H \bigr) |_{\phi_{HJ}^{-1}(V^k_J) \cap \phi_{HI}^{-1}(V^k_I) \cap V^k_H} \circ \phi_{HI}^{-1} \circ \phi_{IJ}^{-1} \\
&=
\Hat\phi_{IJ}\circ \bigl(\nu_I \circ \phi_{HI}\bigr) \circ \phi_{HI}^{-1} \circ \phi_{IJ}^{-1} 
\;=\;
\Hat\phi_{IJ}\circ \nu_I\circ\phi_{IJ}^{-1} .
\end{align*}
Here we checked compatibility on the domains $N^k_{JI}$, thus defining a map 
[\footnote{ \label{35}
In the setting of \cite{MW:iso}, the compatibility follows from the strong cocycle condition in terms of the regular coverings, $\rho_{HJ} = \rho_{HI}\circ \rho_{IJ}$ on $\TU_{HJ}=\TU_{IJ}\cap \rho_{IJ}^{-1}(\TU_{HI})$.
Indeed, the intersection domain is contained in $\rho_{IJ}^{-1}(U_{IJ})\cap \rho_{HJ}^{-1}(U_{HJ})= \TU_{IJ} \cap \TU_{HJ}$.
The map $\mu_J$ is then defined on each $N^k_{JI}=V^k_J\cap\rho_{IJ}^{-1}(V^k_I)$ by pulling back $\nu_I$ to the $|\Ga_{J\less I}|$ disjoint lifts under the regular covering $\rho_{IJ}: N^k_{JI} \to \rho_{IJ}(V^k_J)\cap V^k_I$.
}]
\begin{equation}\label{eq:nuJ'}
\mu_J \,:\;  N^k_J = 
{\textstyle \bigcup_{I\subsetneq J}} 
N^k_{JI} \;\longrightarrow\; E_J , \qquad 
\mu_J|_{N^k_{JI}} := \Hat\phi_{IJ}\circ \nu_I\circ\phi_{IJ}^{-1} .
\end{equation}
Note moreover that for $x=\phi_{IJ}(y)\in N^k_{JI}$ we have $\|\mu_J(x)\|= \|\nu_I(y)\|$ by the compatible construction of norms on the obstruction spaces. Taking the supremum over $N^k_J$ this implies
$$
\|\mu_J\| \,:=\; \sup_{y\in N^k_J} \|\mu_J(y)\| \;\leq\; \sup_{I\subsetneq J} \sup_{x\in V^k_I} \|\nu_I(x)\| \;<\; \si.
$$  
The construction of $\nu_J$ on $V^{k+1}_J$ then has three more steps.

\begin{itemize} 
\item {\bf Construction of extension:}
We construct an extension of the restriction of $\mu_J$ from \eqref{eq:nuJ'} to the enlarged core $N_J^{k+\frac 12}$. More precisely, we construct a smooth map $\Ti\nu_J : V^k_J \to E_J$ that satisfies
\begin{equation}\label{tinu}
\Ti\nu_J|_{N_J^{k+\frac 12}} \;=\; \mu_J|_{N_J^{k+\frac 12}} , \qquad\quad
\|\Ti\nu_J \| \;\leq \; \|\mu_J\| \;<\; \si ,
\end{equation}
and the strong admissibility condition on a larger domain than required in c),
\begin{equation}\label{value} 
\Ti\nu_J \bigl( B^J_{\eta_{k+\frac 12}}\bigl(N^{k+\frac 12}_{JI}\bigr) \bigr) \;\subset\; \Hat\phi_{IJ}(E_I) 
\qquad \forall\; I\subsetneq J .
\end{equation}
(In case $k=0$, i.e.\ $|J|=1$, the map $\mu_J$ is defined on the empty set, so that we may simply set $\Ti\nu_J:=0$.)

\vspace{.03in}

\item {\bf Zero set condition:} 
We show that \eqref{value} and the control over $\|\Ti\nu_J\|$ imply the strengthened control of d) over the 
zero set of $s_J + \Ti\nu_J$, in particular
$$
\bigl(s_J|_{{V^{k+1}_J}} + \Ti\nu_J\bigr)^{-1}(0) \;\less\;  B^J_{\eta_{k+\frac 12}}\bigl(N^{k+\frac34}_J\bigr)   \;\subset\; \Ti C_J .
$$
(In case $k=0$, i.e.\ $|J|=1$, this condition reads
$s_J|_{{V^1_J}}^{-1}(0) \subset \Ti C_J = U_J\cap \pi_\Kk^{-1}(\pi_\Kk(\Cc))$, 
which is satisfied since $F_J\subset\pi_\Kk(\Cc)$ by construction of $\Cc$.)
\vspace{.03in}

\item {\bf Transversality:}  
We make a final perturbation $\nu_\pitchfork$ to obtain transversality for $s_J + \Ti\nu_J + \nu_\pitchfork$, 
while preserving conditions a),c),d), and then set $\nu_J: = \Ti\nu_J + \nu_\pitchfork$.
Moreover, taking $\|\nu_\pitchfork\|< \si - \|\Ti\nu_J\|$ ensures e).
(In the case $k=0$ this is the first nontrivial step.)
\end{itemize}

\MS\NI
{\bf Construction of extensions:}  
 To construct $\Ti\nu_J$ in case $k\geq 1$  it suffices, in the notation of \eqref{eq:iI}, to extend each component $\mu^j_J$ for fixed $j\in J$.
For that purpose we iteratively construct smooth maps $\Ti\mu_\ell^j: W_\ell \to \Hat\phi_{jJ}(E_j)$ on the open sets 
\begin{equation}\label{eq:W}
W_\ell \,:=\; {\textstyle \bigcup _{I\subsetneq J,|I|\le \ell}}\, B^J_{r_\ell}(N^{k+\frac 12}_{JI}) 
\;=\;  B^J_{r_\ell}\bigl( {\textstyle \bigcup _{I\subsetneq J,|I|\le \ell}}\,N^{k+\frac 12}_{JI} \bigr) \;\subset\; U_J 
\end{equation}
with the radii 
$r_\ell:= \eta_k - \frac {\ell+1} {k+1} ( \eta_k-\eta_{k+\frac 12})$,
that satisfy the extended compatibility, admissibility, and smallness conditions
\vspace{.07in}
\begin{enumerate}
\item[(E:i)]
$\Ti\mu^j_\ell |_{N^{k+\frac 12}_{JI}} = \mu_J^j|_{N^{k+\frac 12}_{JI}}$ for all $I\subsetneq J$ with $|I|\leq \ell$ and $j\in I$;
\vspace{.07in}
\item[(E:ii)]
$\Ti\mu^j_\ell |_{B^J_{r_\ell}(N^{k+\frac 12}_{JI})} = 0$
for all $I\subsetneq J$ with $|I|\leq \ell$ and $j\notin I$;
\vspace{.07in}
\item[(E:iii)] 
$\bigl\|\Ti\mu^j_\ell \bigr\| \leq \|\mu^j_J\|$.
\end{enumerate}
\vspace{.07in}
Note here that the radii form a nested sequence $\eta_k=r_{-1} > r_0>r_1 \ldots > r_k = \eta_{k+\frac 12}$ and that when $\ell=k$ the function $\Ti\mu^j_k$ will satisfy  (E:i),(E:ii) for all $I\subsetneq J$,  and is defined on 
$W_k=B^J_{\eta_{k+\frac 12}}(N^{k+\frac 12}_J)\sqsupset N^{k+\frac 12}_J$.
So, after this iteration, we can define 
\begin{equation}\label{eq:betamu}
\Ti\nu_J:= \beta \bigl( {\textstyle\sum_{j\in J}} \, \Ti\mu^j_k\bigr),
\end{equation}
 where $\beta:U_J \to [0,1]$ is a smooth cutoff function with $\beta|_{N^{k+\frac 12}_J}\equiv 1$ and $\supp\beta\subset 
B^J_{\eta_{k+\frac 12}}(N^{k+\frac 12}_J)$, so that $\Ti\nu_J$ extends trivially to $U_J\less W_k$.
This has the required bound by (E:iii), satisfies \eqref{value} since $\Ti\nu_J^j |_{B^J_{\eta_{k+\frac 12}}(N^{k+\frac 12}_{JI})}\equiv 0$ for all $j\notin I$ by (E:ii). Finally, it has the required values on $N^{k+\frac 12}_J = \bigcup_{I\subsetneq J}N^{k+\frac 12}_{JI}$ since for each $I\subsetneq J$ the conditions (E:i), (E:ii) on $N^{k+\frac 12}_{JI}$ together with the fact $\mu_J(N_{JI}^{k+\frac 12}) \subset \Hat\phi_{IJ}(E_I)$ guarantee 
$$
\Ti \nu_J|_{N_{JI}^{k+\frac12}} \;=\; {\textstyle\sum_{j\in J}} \, \Ti\mu_k^j|_{N_{JI}^{k+\frac12}}
\;=\; {\textstyle\sum_{j\in I}} \, \mu_k^j|_{N_{JI}^{k+\frac12}} \;=\; \mu_J|_{N_{JI}^{k+\frac12}} .
$$ 
So it remains to perform the iteration over $\ell$, in which we now drop $j$ from the notation.
For $\ell=0$ the conditions are vacuous since $W_0=\emptyset$.
Now suppose that the construction is given on $W_\ell$. 
Then we cover $W_{\ell+1}$ by the open sets 
$$
B_L': = W_{\ell+1}\cap B^J_{r_{\ell-1}}(N^{k+\frac 12}_{JL})
\qquad
\text{for}\; L\subsetneq J, \; |L|=\ell+1,
$$ 
whose closures are pairwise disjoint by \eqref{Nsep} with $r_{\ell-1}<\delta$, and an open set 
$C_{\ell+1}\subset U_J$
covering the complement,
$$
C_{\ell+1} \,:=\; W_{\ell+1} \;\less\; {\textstyle \bigcup _{|L| = \ell+1}\, \ov{B^J_{r_{\ell}}(N^{k+\frac 12}_{JL})}} \;\sqsubset\; 
W_\ell \;\less\; {\textstyle \bigcup _{|L| = \ell+1}\, \ov{B^J_{r_{\ell+1}}(N^{k+\frac 12}_{JL})}} \;=:\, C_\ell ,
$$
which has a useful precompact inclusion into $C_\ell$,
as defined above,
by $r_{\ell+1}<r_\ell$.
This decomposition is chosen so that each $B^J_{r_{\ell+1}}(N^{k+\frac 12}_{JL})$ for $|L|=\ell+1$ (on which the conditions (E:i),(E:ii) for $I=L$ are nontrivial) has disjoint closure from $\ov{C_{\ell+1}}$ (a compact subset of the domain of $\Ti\mu_\ell$). 
Now pick a precompactly nested open set $C_{\ell+1} \sqsubset C' \sqsubset C_\ell$, 
in particular with $\ov{C'}\cap \ov{B^J_{r_{\ell+1}}(N^{k+\frac 12}_{JL})} = \emptyset$ for all $|L|=\ell +1$.
[\footnote{
For \cite{MW:iso} one can choose $\uC_{\ell+1} \sqsubset \uC' \sqsubset \uC_\ell \subset \uU_J$ and set $C':=\pi_J^{-1}(\uC')$ to ensure the required intersection properties of $C'$.
}]
Then we will obtain a smooth map $\Ti\mu_{\ell+1}: W_{\ell+1} \to \Hat\phi_{jJ}(E_j)$ by setting $\Ti\mu_{\ell+1}|_{C_{\ell+1}} := \Ti \mu_\ell|_{C_{\ell+1}}$  and separately constructing smooth maps $\Ti\mu_{\ell+1}: B'_L \to \Hat\phi_{jJ}(E_j)$ for each $|L|=\ell+1$ such that $\Ti\mu_{\ell+1}=\Ti \mu_\ell$ on $B'_L \cap C'$.
Indeed, this ensures equality of all derivatives on the intersection of the closures $\ov{B_L'} \cap \ov{C_{\ell+1}}$, since this set is contained in $\ov{B_L'} \cap C'$, which is a subset of $\ov{B_L' \cap C'}$ because $C'$ is open, 
and by construction we will have $\Ti\mu_{\ell+1}=\Ti \mu_\ell$ with all derivatives on $\ov{B_L' \cap C'}$. So it remains to construct the extension $\Ti\mu_{\ell+1}|_{B'_L}$ for a fixed $L\subsetneq J$.
For that purpose note that the subset on which this is prescribed as $\Ti\mu_\ell$, can be simplified by the separation property \eqref{Nsep}, 
\begin{equation} \label{simply}
B'_L\cap C' \;\subset\; 
\Bigl( B^J_{r_{\ell-1}}(N^{k+\frac 12}_{JL}) \less \ov{B^J_{r_{\ell+1}}(N^{k+\frac 12}_{JL})} \; \Bigr) \;\cap\; {\textstyle \bigcup _{I\subsetneq L} }B^J_{r_\ell}(N^{k+\frac 12}_{JI}) \;\subset\; W_\ell.
\end{equation}
To ensure (E:i) and (E:ii) for $|I|\leq \ell+1$ first note that $\Ti\mu_{\ell+1}|_{C_{\ell+1}}$ inherits these properties from $\Ti\mu_\ell$ because $C_{\ell+1}$ is disjoint from $B^J_{r_{\ell+1}}(N^{k+\frac 12}_{JI})$ for all $|I|=\ell+1$.
It remains to fix $L\subset J$ with $|L|=\ell+1$ and construct the map $\Ti\mu_{\ell+1}: B'_L \to \Hat\phi_{jJ}(E_j)$ as extension of $\Ti\mu_\ell|_{B'_L\cap C'}$ so that it satisfies properties (E:i)--(E:iii) for all $|I|\le\ell+1$. 
 
In case $j\notin L$ we have $\Ti\mu_\ell|_{B'_L\cap C'}=0$ by iteration hypothesis (E:ii) for each $I\subsetneq J$. 
So we obtain a smooth extension by $\Ti\mu_{\ell+1}:=0$, which satisfies (E:ii) and (E:iii), whereas (E:i) is not relevant.

In case $j\in L$ the conditions (E:i),(E:ii) only require consideration of $I\subsetneq L$ since otherwise $B'_L \cap B^J_{r_\ell}(N^{k+\frac 12}_{JI})=\emptyset$ by \eqref{Nsep}. So we need to construct a bounded smooth map $\Ti\mu_{\ell+1} : B'_L=W_{\ell+1} \cap B^J_{r_{\ell-1}}(N^{k+\frac 12}_{JL}) \to \Hat\phi_{jJ}(E_j)$ that satisfies 
\begin{itemize}
\item[(i)]
$\Ti\mu_{\ell+1}|_{N^{k+\frac 12}_{JL}} = \mu_J^j|_{N^{k+\frac 12}_{JL}}$;
\vspace{.07in}
\item[(i$'$)]
$\Ti\mu_{\ell+1}|_{N^{k+\frac 12}_{JI}} = \mu_J^j|_{N^{k+\frac 12}_{JI}}$ for all $I\subsetneq L$ with $j\in I$;
\vspace{.07in}
\item[(ii)]
$\Ti\mu_{\ell+1}|_{B^J_{r_{\ell+1}}(N^{k+\frac 12}_{JI})} = 0$ for all $I\subsetneq L$ with $j\notin I$;
\vspace{.07in}
\item[(iii)] 
$\bigl\| \Ti\mu_{\ell+1}\bigr\| \leq \|\mu_J^j\|$;
\vspace{.07in}
\item[(iv)] 
$\Ti\mu_{\ell+1}|_{B'_L\cap C'} =\Ti\mu_{\ell}|_{B'_L\cap C'}$.
\end{itemize}
\vspace{.07in}
Because every open cover of $B'_L$ has a locally finite subcovering, such extensions can be patched together by partitions of unity. Hence it suffices, for the given $j\in L\subsetneq J$, to construct smooth maps $\Ti\mu_z: B^J_{r_z}(z)\to \Hat\Phi_{jJ}(E_j)$ on some balls of positive radius $r_z>0$ around each fixed $z\in B'_L$, that satisfy the above requirements.

\NI $\bullet$ 
For $z\in W_\ell \less \ov{N^{k+\frac 12}_{JL}}$ we find $r_z>0$ such that $B^J_{r_z}(z)\subset W_\ell \less \ov{N^{k+\frac 12}_{JL}}$ lies in the domain of $\Ti\mu_\ell$ and the complement of $N^{k+\frac 12}_{JL}$, so that $\Ti\mu_z:=\Ti\mu_\ell|_{B^J_{r_z}(z)}$ is well defined and satisfies all conditions with $\|\Ti\mu_z\|\leq \|\Ti\mu_\ell\|$.

\NI $\bullet$ 
For $z\in B'_L\less \bigl( W_\ell \cup \ov{N^{k+\frac 12}_{JL}}\bigr)$, we claim that there is $r_z>0$ such that $B^J_{r_z}(z)$ is disjoint from the closed subsets $\bigcup_{I\subset L} \ov{N^{k+\frac 12}_{JI}}$ and $\ov{C'}\subset C_\ell\subset W_\ell$. 
This holds because $\bigcup_{I\subsetneq L} \ov{N^{k+\frac 12}_{JI}}\subset W_\ell$ by \eqref{eq:W}.
 Then $\Ti\mu_z:=0$ satisfies all conditions since its domain is in the complement of the domains on which (i), (i$'$), and (iv) are relevant.

\NI $\bullet$ 
Finally, for $z\in \ov{N^{k+\frac 12}_{JL}}$ recall that $\ov{N^{k+\frac 12}_{JL}} \sqsubset N^k_{JL}$ is a compact subset of the smooth submanifold $N^k_{JL}=V^k_J\cap \phi_{LJ}(V^k_L\cap U_{LJ}) \subset A_J$. So we can choose $r_z>0$ such that $B^J_{r_z}(z)$ lies in a submanifold chart for $N^k_{JL}$. Then we define $\Ti\mu_z$ by extending $\mu_J^j|_{B^J_{r_z}(z)\cap N^k_{JL}}$ to be constant in the normal directions. This guarantees (i) and $\|\Ti\mu_z\|\leq \|\mu_J^j\|$. It remains to prove the following claim.
\smallskip

\NI{\bf Claim:}  {\it For sufficiently small $r_z$, all the remaining conditions (i$\,'$), (ii), (iii), (iv)  hold.}
\MS

First, $N^{k}_{JL}$ is disjoint from $C_{\ell}\sqsupset C'$, so we can ensure that $B^J_{r_z}(z)$ lies in the complement of $C'$, and hence condition (iv) does not apply.
To begin to address (i$'$) and (ii) recall that for every $I\subsetneq L\subset J$ the strong cocycle condition of Lemma~\ref{le:tame0} implies that the open subset 
$N^{k+\frac 12}_{JI}\subset\im\phi_{IJ}  =\phi_{LJ}(U_{LJ}\cap\im\phi_{IL})$ is a submanifold of $\im\phi_{LJ}$, and by assumption $z$ lies in the open subset $N^k_{JL}\subset\im\phi_{LJ}$.
[\footnote{
For \cite{MW:iso} this amounts to the statement that $N^k_{IJ}\subset \TU_{IJ}$ is open and $\TU_{IJ}\subset\TU_{LJ}$ is a submanifold.
}]

In case $j\in I$ and $z\in N^{k+\frac 12}_{JI}\cap \ov{N^{k+\frac 12}_{JL}}$, we can thus choose $r_z$ sufficiently small to ensure that $B^J_{r_z}(z)\cap N^{k+\frac 12}_{JI}$ is contained in the open neighbourhood $N^k_{JL}\subset\im\phi_{LJ}$ of~$z$.
Then $\Ti\mu_z$ satisfies (i$'$) by $\Ti\mu_z=\mu^j_J$ on $B^J_{r_z}(z)\cap N^{k+\frac 12}_{JI}$.

In case $j\notin I$ condition (ii) requires $\Ti\mu_z$ to vanish on $B^J_{r_z}(z)\cap B^J_{r_{\ell+1}}(N^{k+\frac 12}_{JI})$. 
Here we have $r_{\ell+1}\leq r_1 <\eta_k$, so if $z\notin B^J_{\eta_k}(N^{k+\frac 12}_{JI})$, then we can make this intersection empty by choice of $r_z$, so that (ii) is vacuous. It remains to consider the case $z\in B^J_{\eta_k}(N^{k+\frac 12}_{JI})\cap \ov{N^{k+\frac 12}_{JL}}$, where $I\subset L\subset J$ as above. That is, we have $z=\phi_{LJ}(z_L)$ for some $z_L\in \ov{V^{k+\frac 12}_L}$ and $d_J(z,x_J)< \eta_k$ for some $x_J\in N^{k+\frac 12}_{JI}$. Moreover, this yields $x_J=\phi_{IJ}(x_I)$ for some $x_I\in V^{k+\frac 12}_I\cap U_{IJ}$. By tameness we also have $x_I\in U_{IL}$, and compatibility of the metrics then implies $d_L(z_L,x_L)=d_J(z,x_J)< \eta_k$ for $x_L:=\phi_{IL}(x_I)$.
This shows that $x_L$ lies in both $\phi_{IL}(V^{k+\frac 12}_I\cap U_{IJ})$ and $B_{\eta_k}( \ov{V^{k+\frac 12}_L} )$, where the latter is a subset of $V_L^k$ by \eqref{eq:fantastic}, and hence we deduce $x_L\in N^k_{LI}$.
Since $\nu_L( B^L_{\eta_k}(N^{k}_{LI})\bigr) \subset \Hat\phi_{IL}(E_I)$ by the induction hypothesis c), we obtain $\nu^j_L|_{B^L_{\eta_k}(x_L)} = 0$, so that the function $\mu^j_J$ of \eqref{eq:nuJ'} vanishes on 
$$
\phi_{LJ}(B^L_{\eta_k}(x_L)\cap U_{LJ})= B^J_{\eta_k}(x_J) \cap \phi_{LJ}(U_{LJ}) .
$$ 
Since $d_J(z,x_J) < \eta_k$ this set contains $z$, and thus $B^J_{r_z}(z)\cap\phi_{LJ}
(U_{LJ})$ for $r_z>0$ sufficiently small.
With that we have $\mu_J^j|_{B^J_{r_z}(z)\cap N^k_{JL}}=0$ and hence constant extension in normal direction yields $\Ti\mu_z = 0$, so that (ii) is satisfied.
This completes the construction of $\Ti\mu_z$ in this last case, and hence of $\Ti\mu_{\ell+1}$, and thus by iteration finishes the construction of the extension $\Ti\nu_J$.  
[\footnote{
For \cite{MW:iso} this last case is $j\notin I$ and we apply most of the above argument in the intermediate domain to see that for sufficiently small $r_z>0$ we either get $\Hat B^J_{r_z}(z)$ disjoint from $\Hat B^J_{\eta_k}(\uN^{k+\frac 12}_{JI})$ -- so (ii) is trivially satisfied -- or $\Hat B^J_{r_z}(z)\cap\TU_{LJ}\subset \Hat B^J_{\eta_k}(X_J) \cap \TU_{LJ}$ for $X_J=\pi_J^{-1}(x_J)$ the preimage of some $x_J=\uphi_{LJ}(x_L)\in\uN^{k+\frac 12}_{JI}\cap \uphi_{LJ}(\uN^k_{LI})$.
On the other hand, $\mu^j_J$ vanishes on $\rho_{LJ}^{-1}\bigl(\Hat B^L_{\eta_k}(N^{k}_{LI})\bigr)$, which contains $\Hat B^J_{\eta_k}(X_J)\cap \TU_{LJ}$ and thus $\Hat B^J_{r_z}(z)\cap\TU_{LJ}$, so that the construction yields $\Ti\mu_z = 0$.
}]

\MS

%
%
%

\NI {\bf Zero set condition:}
For the extended perturbation constructed above, we have $\bigl\|\Ti \nu_J\bigr\| \leq  \|\mu_J\| \le \sup_{I\subsetneq J} \sup_{x\in V^k_I} \|\nu_I(x)\|< \si$ by induction hypothesis e). We first consider the part of  the perturbed zero set near the core, and then look at the \lq\lq new part".  By \eqref{value}, 
the zero set near the core
$(s_J + \Ti\nu_J)^{-1}(0)\cap  B^J_{\eta_{k+\frac 12}}\bigl(N^{k+\frac34}_{JI}\bigr)$ 
consists of points with $s_J(x) = -\Ti\nu_J(x)\in \Hat\phi_{IJ}(E_I)$,
so must lie within $s_J^{-1}\bigl(\Hat\phi_{IJ}(E_I)\bigr) = \phi_{IJ}(U_{IJ})$.  
Hence \eqref{eq:useful} implies for all $I\subsetneq J$  the inclusion
\begin{equation} \label{eq:zeroset} 
(s_J + \Ti\nu_J)^{-1}(0)\;\cap\;  B^J_{\eta_{k+\frac 12}}\bigl(N^{k+\frac34}_{JI}\bigr)  
\;\subset\; N^{k+\frac12}_{JI} .
\end{equation}
Thus the inductive hypothesis d) together with the compatibility condition  $\Ti\nu_J =\mu_J$ on $N^{k+\frac12}_{JI}\subset \phi_{IJ}(V^k_I)$ from \eqref{tinu}, with $\mu_J$ given by \eqref{eq:nuJ'}, imply that
$s_J + \Ti\nu_J\ne 0$ on $N^{k+\frac12}_{JI}\less \pi_\Kk^{-1}(\pi_\Kk(\Cc))$. Therefore we have
$$
(s_J + \Ti\nu_J)^{-1}(0)\;\cap\;  B^J_{\eta_{k+\frac 12}}\bigl(N^{k+\frac34}_{JI}\bigr)  
\subset \pi_\Kk^{-1}(\pi_\Kk(\Cc)).
$$
Next, by Definition~\ref{def:admiss} we have
$$
\si < \si(\Vv,\Cc,\|\cdot\|,\de) \le \| s_J(x) \|  \qquad\forall x\in \ov{V^{k+1}_J} \;\less\; \Bigl( \Ti C_J \cup {\textstyle \bigcup_{I\subsetneq J}} B^J_{\eta_{k+\frac 12}}\bigl(N^{k+\frac34}_{JI}\bigr) \Bigr) .
$$
Thus if $x$ is in the complement in $\ov{V^{k+1}_J}$ of the neighbourhoods $B^J_{\eta_{k+\frac 12}}\bigl(N^{k+\frac34}_{JI}\bigr)$ which cover the core, then either $x\in \Ti C_J$ or $\|s_J(x)\|\geq \si_{J,\eta_{k+1}} > \|\Ti\nu_J(x)\|$. 
In particular, we obtain the inclusion
\begin{equation}\label{eq:include}
\bigl(s_J|_{\ov{V^{k+1}_J}} + \Ti\nu_J\bigr)^{-1}(0) \;\less\; {\textstyle\bigcup_{I\subsetneq J} } B^J_{\eta_{k+\frac 12}}\bigl(N^{k+\frac34}_{JI}\bigr)   \;\subset\; \Ti C_J.
\end{equation}
From this we can deduce a slightly stronger version of a) at level $k+1$, namely 
$$
(s_J|_{\ov{V^{k+1}_J}}+\Ti\nu_J)^{-1}(0)  \; \subset\; \pi_\Kk^{-1}(\pi_\Kk(\Cc)) \qquad \forall \; |J|\le k+1 .
$$
Indeed, the zero set of $(s+\Ti\nu_J)|_{\ov{V^{k+1}_J}}$ consists of an ``old part'' given by \eqref{eq:zeroset}, which lies in the enlarged core $N^{k+\frac 12}_J$, where by the above arguments we have $s_J + \Ti\nu_J\ne 0$ on $N^{k+\frac12}_{JI}\less \pi_\Kk^{-1}(\pi_\Kk(\Cc))$. The ``new part'' given by \eqref{eq:include} is in fact contained in the open part $\Ti C_J\subset U_J$ of $\pi_\Kk^{-1}(\pi_\Kk(\Cc))$.
[\footnote{In \cite{MW:iso}, the same arguments apply by viewing $\pi_\Kk$ not as functor but as continuous map $\pi_\Kk:\bigsqcup_{I\in\Ii_\Kk} U_I\to |\Kk|$ with the crucial identity \eqref{eq:VCC} for the lift $V_J\cap \pi_\Kk^{-1}(\pi_\Kk(\Cc))$ for any nested reductions $\Cc\subset\Vv\sqsubset \bigsqcup_{I\in\Ii_\Kk} U_I$ that are lifted from reductions of $|\uKk|$.
}]

\MS
\NI {\bf Transversality:}
Since the perturbation $\Ti\nu_J$ was constructed to be strongly admissible and hence admissible, the induction hypothesis b) together with Lemma~\ref{le:transv} and \eqref{tinu} imply that the transversality condition is already satisfied on the enlarged core, $(s_J + \Ti\nu_J)|_{N^{k+\frac12}_J} \pitchfork 0$. 
[\footnote{In \cite{MW:iso}, Lemma~\ref{le:transv} also applies to local inverses of $\rho_{IJ}$, as explained in \cite[Remark~3.3.2]{MW:iso}.
While the previous constructions preserve equivariance of the perturbations and make the extension $\Ti\nu_J$ invariant under $\Ga_{J\less I}$, the following additional perturbation construction strictly works on $V^k_J$ without regard to the $\Ga_J$-action and thus generally will not be equivariant.}
]
In addition, \eqref{eq:zeroset} also implies that the perturbed section $s_J+\Ti\nu_J$ has no zeros on
$B^J_{\eta_{k+\frac 12}}\bigl(N^{k+\frac34}_{JI}\bigr) \less N^{k+\frac 12}_{JI}$,
so that we have transversality
$$
(s_J + \Ti\nu_J)|_{B^J_{\eta_{k+\frac 12}}(N^{k+\frac34}_J)} \; \pitchfork \; 0 
$$
on a neighbourhood $B:= B^J_{\eta_{k+\frac 12}}(N^{k+\frac34}_J) = \bigcup_{I\subsetneq J} B^J_{\eta_{k+\frac 12}}(N^{k+\frac34}_{JI})$ of the core $N:=N_J^{k+1}=\bigcup_{I\subsetneq J} N^{k+1}_{JI}$, on which compatibility a) requires $\nu_J|_N=\Ti\nu_J|_N$.
In fact, $B$ also precompactly contains the neighbourhood $B':= B^J_{\eta_{k+1}}(N^{k+1}_J)$ of $N$, so that strong admissibility c) can be satisfied by requiring $\nu_J|_{B'}=\Ti\nu_J|_{B'}$.

To sum up, the smooth map $\Ti\nu_J : V^{k+1}_J \to E_J$ fully satisfies the compatibility a), strong admissibility c), and strengthened zero set condition d). Moreover, $s_J+\Ti\nu_J$ extends to a smooth map on the compact closure $\ov{V^{k+1}_J}\subset U_J$, where it satisfies transversality $(s_J+\Ti\nu_J)|_B\pitchfork 0$ on the open set $B \subset \ov{V^{k+1}_J}$ and the zero set condition from \eqref{eq:include}, 
$$
(s_J+\Ti\nu_J)^{-1}(0) \cap (\ov{V^{k+1}_J}\less B)\; \subset\; O: = \ov{V^{k+1}_J} \cap \Ti C_J .
$$ 
The latter can be phrased as $\| s_J+\Ti\nu_J \| > 0$ on $( \ov{V^{k+1}_J}\less  B ) \less  O$, which is compact since $O$ is relatively open in $\ov{V^{k+1}_J}$.
Since $z \mapsto \| s_J(z) +\Ti\nu_J(z) \|$ is continuous,  it remains nonvanishing on $W\less O$ for some relatively open neighbourhood $W\subset \ov{V^{k+1}_J}$ of $\ov{V^{k+1}_J}\less B$. This extends the zero set condition to $(s_J+\Ti\nu_J)^{-1}(0) \cap W \subset O$.
We can moreover choose $W$ disjoint from the neighbourhood of the core $B' \sqsubset B$.
Now  we wish to find
a smooth perturbation $\nu_\pitchfork:\ov{V^{k+1}_J} \to E_J$ 
supported in $W$ that satisfies the following:
\begin{enumerate}
\item[(T:i)] it provides transversality $(s_J+\Ti\nu_J+\nu_\pitchfork)|_W \pitchfork 0$;
\item[(T:ii)] the perturbed zero set satisfies the inclusion  $(s_J+\Ti\nu_J+\nu_\pitchfork)^{-1}(0) \cap W \subset O$;
\item[(T:iii)] the perturbation is small:  $\|\nu_\pitchfork\|< \si - \|\Ti\nu_J\|$.
\end{enumerate}
To see that this exists, note that for $\nu_\pitchfork=0$ transversality holds outside the compact subset $\ov{V^{k+1}_J}\less B$ of $W$.  Hence by the Transversality Extension theorem in \cite[Chapter~2.3]{GuillP} we can 
fix a nested open precompact subset $\ov{V^{k+1}_J}\less B \subset P \sqsubset W$ and achieve transversality everywhere on $W$ by adding an arbitrarily small perturbation supported in $P$. This immediately provides (T:i).
Moreover, since $\|s_J +\Ti\nu_J\|$ has a positive maximum on the compact set $P\less O$, we can choose $\nu_\pitchfork$ sufficiently small to satisfy (T:ii) and (T:iii).
Setting 
$$
\nu_J:=\Ti\nu_J + \nu_\pitchfork \,:\; V^{k+1}_J \to E_J
$$
then finishes the construction since the choice of $\nu_\pitchfork$ ensures the zero set inclusion a) and transversality b) on $W$; the previous constructions for $\nu_J|_{V^{k+1}_J\less W}=\Ti\nu_J|_{V^{k+1}_J\less W}$ ensure a), b), and d) on $V^{k+1}_J\less W \supset B'$, and compatibility c) on the core $N \subset B'\subset V^{k+1}_J\less W$; and we achieve the smallness condition $\max_{I\subset J} \|\nu_I\| < \si$ required by e) by the inductive hypothesis together with
the triangle inequality:
$$
\|\nu_J\| =  \|\Ti\nu_J+\nu_\pitchfork\| \;<\;  \|\Ti\nu_J\| +  \si - \|\Ti\nu_J\| \; = \; \si.
$$
This completes the iterative step, and hence completes construction of the required $(\Vv,\Cc,\|\cdot\|,\de,\si)$-adapted section.  The last claim follows from Proposition~\ref{prop:zeroS0}.
\end{proof}

In order to prove uniqueness up to cobordism of the VMC, we moreover need to construct transverse cobordism perturbations with prescribed boundary values as in Definition~\ref{def:csect}.
We will perform this construction by an iteration as in Proposition~\ref{prop:ext}, with adjusted domains $V^k_J$ obtained by replacing $\de$ with $\frac 12 \de$. 
This is necessary since as before the construction of $\nu_J$ will proceed by extending the given perturbations from previous steps, $\mu_J$, and now also the given boundary values $\nu^\al_J$, and then restricting to a precompact subset.
However, the $(\Vv,\Cc,\|\cdot\|,\de,\si)$-adapted boundary values $\nu^\al_J$ on $\p^\al V_J$ only extend to admissible, precompact, transverse perturbations in a collar of $V^{|J|}_J$,
while the above construction of $\nu_J$ on $V^{|J|}_J$ by precompact restriction 
requires $\nu^\al_J$ to be defined on a set that precompactly contains a collar of $V^{|J|}_J$.
To deal with this we construct $\nu_J$ by restriction to $V^{|J|+1}_J\sqsubset V^{|J|}_J$, which by \eqref{eq:VIk} is the analog of $V^{|J|}_J$ when $\de$ is replaced by $\frac 12 \de $.
This means that, firstly, we have to adjust the smallness condition for the iterative construction of perturbations by introducing a variation of the constant $\si(\Vv,\Cc,\|\cdot\|,\de)$ of Definition~\ref{a-e}. 
Secondly, we need a further smallness condition on adapted perturbations if we wish to extend these to a Kuranishi cobordism. Fortunately, the latter construction will only be used on product Kuranishi cobordisms, which leads to the following definitions.

\begin{defn}  \label{a-e rel}
\begin{enumerate}
\item
Let $(\Kk,d)$ be a metric tame Kuranishi cobordism with nested cobordism reduction $\Cc\sqsubset\Vv$, and let $0<\de<\min\{\eps,\de_\Vv\}$, where $\eps$ is the smallest of the 
collar widths of $(\Kk,d)$ and the reductions $\Cc,\Vv$. Then we set
\begin{align*}
& \si' (\Vv,\Cc,\|\cdot\|,\de) \,:=\; \min_{J\in\Ii_\Kk} \inf \Bigl\{ \; \bigl\| s_J(x) \bigr\| \;\Big| \;
x\in \ov{V^{|J|+1}_J} \;\less\; \Bigl( \Ti C_J \cup {\textstyle \bigcup_{I\subsetneq J}} B^J_{\eta_{|J|+\frac 12}}\bigl(N^{|J|+\frac34}_{JI}\bigr) \Bigr) \Bigr\}  , \\
&\si_{\rm rel}(\Vv,\Cc,\|\cdot\|,\de) \,:=\; \\
 & \qquad 
 \qquad \qquad 
 \min\bigl\{ \si(\p^0\Vv,\p^0\Cc, \p^0\|\cdot\|, \de),
 \,\si(\p^1\Vv,\p^1\Cc,  \p^1\|\cdot\|,\de), \,\si'(\Vv,\Cc,\|\cdot\|,\de) \bigr\} .
\end{align*}
\item
Given a metric Kuranishi atlas $(\Kk,d)$, we say that a perturbation $\nu$ is {\bf strongly $(\Vv,\Cc)$-adapted} 
if it is a $(\Vv,\Cc,\|\cdot\|,\de,\si)$-adapted perturbation $\nu$ of $\s_\Kk|_\Vv$ for some choice of  
additive norms $\|\cdot\|$ and 
constants $0<\de<\de_\Vv$ and 
\begin{align*}
&\qquad 0\;<\;\si\;\le\;\si_{\rm rel}([0,1]\times \Vv,[0,1]\times  \Cc,\|\cdot\|,\de) \;=\;\\
& \qquad \qquad \qquad \qquad \qquad  \min\bigl\{
\si(\Vv,\Cc,\|\cdot\|,\de) , \si'([0,1]\times \Vv, [0,1]\times \Cc,\|\cdot\|,\de) \bigr\} ,
\end{align*}
where we use the product metric on $[0,1]\times |\Kk|$.
Further, we call it  {\bf strongly adapted}  if it is  {\bf strongly $(\Vv,\Cc)$-adapted} 
for some choice of nested reduction $\Cc\sqsubset\Vv$.
\end{enumerate}
\end{defn}

\begin{rmk}\rm   (i)
Note that any $(\Vv,\Cc,\|\cdot\|,\de,\si)$-adapted perturbation for fixed $\Vv,\Cc,\|\cdot\|,\de$ and sufficiently small $\si>0$ is in fact strongly adapted.
In fact, recalling the definition of $\si(\Vv,\Cc,\|\cdot\|,\de)$, and the product structure of all sets and maps involved in the definition of $\si'([0,1]\times \Vv,[0,1]\times  \Cc,\|\cdot\|,\de)$, we may rewrite the condition on $\si>0$ in the definition of strong adaptivity as
$$
 \si \,<\;   \bigl\| s_J(x) \bigr\| \qquad\forall\; 
 x\in \ov{V^{k}_J} \;\less\; \Bigl( \Ti C_J \cup {\textstyle \bigcup_{I\subsetneq J}} B^J_{\eta_{k-\frac 12}}\bigl(N^{k-\frac14}_{JI}\bigr) \Bigr)  ,\;
 J\in\Ii_\Kk, \; k\in\{|J|,|J|+1\} .
$$
This strengthened condition on $\si$  is needed in order to have an efficient way of extending a perturbation that is prescribed on the boundary of a cobordism to its interior; cf.\ the shift in index from $V_J^k$ to $V_J^{k+1}$ in the version of conditions (a-e) used in the proof of Proposition~\ref{prop:ext2} below.\MS

\NI (ii)  The notion of strong adaptivity is a crucial ingredient of the uniqueness statements in \S\ref{s:VMC}.
  Indeed, the first two steps  in the proof of Theorems~\ref{thm:VMC1} are based on
Proposition~\ref{prop:ext2} below, and amount to a proof of the following claim:

\NI {\it 
Let  $(\Kk,d)$ be a metric tame Kuranishi atlas with nested cobordism reduction $\Cc\sqsubset\Vv$.
Then any two strongly $(\Vv,\Cc)$-adapted perturbations $\nu^\al$ of $\s_{\Kk}|_{\Vv}$ for $\al=0,1$   are concordant in the sense that
they are the boundary restrictions of an admissible, precompact, transverse,
 cobordism perturbation  $\nu$ of $\s_{[0,1]\times \Kk}|_{[0,1]\times \Vv}$.}
\smallskip

\NI However, because we do not formally discuss the notion of cobordism for perturbations (and in particular prove that it is transitive), we prefer to prove this for their zero sets rather than for the perturbations themselves.
$\hfill\er$
\end{rmk}

\begin{prop}\label{prop:ext2}
Let $(\Kk,d)$ be a metric tame Kuranishi cobordism with nested cobordism reduction $\Cc\sqsubset\Vv$ and additive norms $\|\cdot\|$, and 
let $0<\de<\min\{\eps,\de_\Vv\}$, where $\eps$ is the collar width of $(\Kk,d)$
and the reductions $\Cc,\Vv$. 
Then we have $\si_{\rm rel}(\Vv,\Cc,\|\cdot\|,\de)>0$ and the following holds.
\begin{enumerate}
\item
Given any 
$0<\si\le \si_{\rm rel}(\Vv,\Cc,\|\cdot\|,\de)$, there exists an admissible, precompact, transverse cobordism perturbation $\nu$ of $\s_\Kk|_\Vv$ with $\pi_\Kk\bigl((\s_\Kk|_\Vv+\nu)^{-1}(0)\bigr)\subset \pi_\Kk(\Cc)$, whose restrictions $\nu|_{\p^\al\Vv}$  for $\al=0,1$ are $(\p^\al\Vv,\p^\al\Cc,\p^\al\|\cdot\|,\de,\si)$-adapted perturbations of $\s_{\p^\al\Kk}|_{\p^\al\Vv}$. 
\item
Given perturbations $\nu^\al$ of $\s_{\p^\al\Kk}|_{\p^\al\Vv}$ for $\al=0,1$ that are $(\p^\al\Vv,\p^\al\Cc,{\p^\al\|\cdot\|},\de,\si)$-adapted with $\si\le \si_{\rm rel}(\Vv,\Cc,\|\cdot\|,\de)$, the perturbation $\nu$ of $\s_\Kk|_\Vv$ in (i) can be constructed to have boundary values $\nu|_{\p^\al\Vv}=\nu^\al$ for $\al=0,1$. 
\item
In the case of a product cobordism $[0,1]\times \Kk$
with product metric and product reductions $[0,1]\times \Cc \sqsubset [0,1]\times \Vv$
both (i) and (ii) hold without requiring $\de$ to be bounded in terms of the collar width.
\end{enumerate}
\end{prop}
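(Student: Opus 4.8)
The plan is to reduce Proposition~\ref{prop:ext2} to a single iterative construction that runs in parallel with the proof of Proposition~\ref{prop:ext}, but over the cobordism $\Kk$, with the domains shifted one level (i.e.\ working with $V_J^{k+1}\sqsubset V_J^k$ in place of $V_J^k$, which corresponds to halving $\de$). The shift is precisely what is compensated for by using $\si'$ rather than $\si$ in the definition of $\si_{\rm rel}$: the estimates of type d) and e) in Definition~\ref{a-e} must now hold on the slightly larger domains $\ov{V_J^{|J|+1}}$, and $\si'(\Vv,\Cc,\|\cdot\|,\de)$ is exactly the quantity that measures $\|s_J\|$ away from $\Ti C_J$ and the cores on those domains.

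First I would dispose of positivity: $\si_{\rm rel}(\Vv,\Cc,\|\cdot\|,\de)>0$ follows from Lemma~\ref{le:admin}~(i) applied to each of $\p^0\Kk$, $\p^1\Kk$ (with the restricted metrics, noting $\de_{\p^\al\Vv}\geq \de_\Vv$ by Lemma~\ref{le:admin}~(iii)), together with the observation that $\si'(\Vv,\Cc,\|\cdot\|,\de)>0$ by the same compactness argument as in Lemma~\ref{le:admin}~(i) — the domain of the infimum is a compact subset of $U_J$ disjoint from $s_J^{-1}(0)$, using the reduction property $\io_\Kk(X)\subset\pi_\Kk(\Cc)$ and the fact that $V_J^{|J|+1}\sqsubset V_J^k$ for $k<|J|+1$, so that $\ov{V_J^{|J|+1}}\cap\phi_{IJ}(C_I\cap U_{IJ})\subset N_{JI}^{k}\subset B_{\eta_{|J|+\frac12}}^J(N_{JI}^{|J|+\frac34})$ for suitable $k$. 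Then for part (ii), I would initialize the iteration at $k=0$ not with the empty data but with the prescribed boundary perturbations $\nu^0,\nu^1$: since these are $(\p^\al\Vv,\p^\al\Cc,\p^\al\|\cdot\|,\de,\si)$-adapted, they extend to the boundary collars in product form, and the collar structure of the charts and coordinate changes of $\Kk$ (Definitions~\ref{def:Cchart}, \ref{def:Ccc}) lets me transport them to sections $\io^\al_{U_J}{}_*\nu^\al_J$ defined on $V_J^{|J|+1}\cap\io^\al_{U_J}(A^\al_\eps\times\p^\al U_J)$, which is where the bound $\de<\eps$ is used — it guarantees the collar of $V_J^{|J|+1}$ is genuinely contained in the region where $\nu^\al$ is available. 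The inductive step then proceeds exactly as in Proposition~\ref{prop:ext}: build $\mu_J$ on the core $N_J^k$ from the already-constructed $\nu_I$, $I\subsetneq J$, by \eqref{eq:nuJ'}; extend across $U_J$ by the patching-with-partitions-of-unity argument, now taking care that near the two boundary collars the extension agrees with the prescribed product-form data (this is an extra family of sets in the cover, disjoint from the new cores by the collar axioms); verify the zero-set condition using $\si\le\si'$ on $\ov{V_J^{|J|+1}}$; and make a final transversality perturbation $\nu_\pitchfork$ supported away from both the cores and the boundary collars, using the Transversality Extension theorem of \cite[Ch.~2.3]{GuillP} and noting (as in Lemma~\ref{le:ctransv}) that product form on the collar already gives transversality there, so $\nu_\pitchfork$ need only be supported in the interior. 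For part (i), one simply takes $\nu^\al$ to be any $(\p^\al\Vv,\p^\al\Cc,\p^\al\|\cdot\|,\de,\si)$-adapted perturbations of $\s_{\p^\al\Kk}|_{\p^\al\Vv}$ — these exist by Proposition~\ref{prop:ext} applied to the boundary atlases, with $\si\le\si(\p^\al\Vv,\p^\al\Cc,\p^\al\|\cdot\|,\de)$, which is implied by $\si\le\si_{\rm rel}$ — and then applies part (ii).

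For part (iii), the point is that in a product cobordism $[0,1]\times\Kk$ with product metric the complement of the closed collars is empty in the relevant sense: Lemma~\ref{le:admin}~(v) says the $r$-neighbourhood identity \eqref{eq:Wnbhd} holds for all $r>0$ with no restriction, because there is no ``interior region'' forcing the collaring inequality \eqref{eq:epscoll} to degrade. Hence the requirement $\de<\eps$ (used above only to keep the collars of $V_J^{|J|+1}$ inside the region where boundary data is defined) is vacuous, since the product data is defined over the entire cylinder direction. Concretely I would observe that for $\Kk=[0,1]\times\Kk'$ all charts $U_J=[0,1]\times U_J'$, all reduction sets, and all the sets $N_{JI}^k$, $\Ti C_J$ are products with $[0,1]$, so the infimum defining $\si'$ is attained at a cross-section and the whole argument of (i),(ii) goes through verbatim with $\de$ only constrained by $\de<\de_\Vv$.

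The main obstacle I expect is the bookkeeping at the boundary collars in the inductive extension step: one must simultaneously match the prescribed product-form boundary data $\io^\al_{U_J}{}_*\nu^\al_J$, the core data $\mu_J$ coming from lower-level charts, and the strong admissibility radii $\eta_k$, all while keeping the supports of the various pieces in the patching argument appropriately separated. The separation of the boundary-collar region from the cores $N_{JI}^k$ is not automatic and requires choosing the collar width $\eps$ small relative to $\de$ (or invoking the product structure in case (iii)); verifying that the coordinate-change compatibility \eqref{eq:compatible} is preserved by the boundary-adapted extension — i.e.\ that $\mu_J$ restricted to $N_{JI}^{k}\cap\io^\al_{U_J}(\cdots)$ already equals the product data, which holds because both sides are pushforwards of the adapted boundary perturbation $\nu^\al$ — is the delicate consistency check. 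Once that is in place the transversality and smallness estimates are routine repetitions of Proposition~\ref{prop:ext}, using $\si\le\si_{\rm rel}$ at each appearance.
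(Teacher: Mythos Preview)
Your proposal is correct and follows essentially the same approach as the paper: reduce (i) to (ii) via Proposition~\ref{prop:ext} on the boundaries, then run the iteration of Proposition~\ref{prop:ext} on the shifted domains $V_J^{k+1}$ with the boundary-collar data incorporated into the extension step, use $\si'$ for the zero-set control, and make the transversality perturbation supported away from both cores and collars; for (iii) invoke Lemma~\ref{le:admin}~(ii),(v). One small correction: the boundary-collar regions $N^{k}_{J,\al}$ and the cores $N^{k}_{JI}$ are \emph{not} separated (nor do you want $\eps$ small relative to $\de$ --- the hypothesis is $\de<\eps$); they overlap whenever $I\in\Ii_{\p^\al\Kk}$, and the paper handles this exactly via the consistency check you identify at the end --- on the overlap both prescriptions agree because $\nu^\al$ is adapted, and the strong admissibility of $\nu^\al_J$ is used to verify (E:ii) near the collar in the case $j\notin L$.
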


\begin{proof}[Proof of Proposition~\ref{prop:ext2}]
We have $\si_{\rm rel}(\Vv,\Cc,\|\cdot\|,\de)>0$ becuase Lemma~\ref{le:admin}~(i) implies 
$\si(\p^\al\Vv,\p^\al\Cc,{\p^\al\|\cdot\|},\de)>0$,
and $\si'>0$ by the arguments of Lemma~\ref{le:admin}~(i) applied to the shifted domains.

Next, we reduce (i) for given $0<\si \le \si_{\rm rel}(\Vv,\Cc,\|\cdot\|,\de)$ to (ii). For that purpose recall that 
$\de\le \de_{\p^\al\Vv}$ by Lemma~\ref{le:admin}~(iii) and $\si \le \si(\p^\al\Vv,\p^\al\Cc,\p^\al\|\cdot\|,\de)$ by definition of $\si_{\rm rel}(\Vv,\Cc,\|\cdot\|,\de)$.
Hence Proposition~\ref{prop:ext} provides $(\p^\al\Vv,\p^\al\Cc,\p^\al\|\cdot\|,\de,\si)$-adapted perturbations $\nu^\al$ of $\s_{\p^\al\Kk}|_{\p^\al\Vv}$ for $\al=0,1$. 
Now (ii) provides a cobordism perturbation $\nu$ with the given restrictions $\nu|_{\p^\al\Vv}=\nu^\al$,
which are $(\p^\al\Vv,\p^\al\Cc,\p^\al\|\cdot\|,\de,\si)$-adapted by construction.
So (i) follows from~(ii).

To prove (ii) recall that, by assumption, the given perturbations $\nu^\al$ of $\s_{\p^\al\Kk}|_{\p^\al\Vv}$ for $\al=0,1$ extend to $\nu^\al_I : (\p^\al V_I)^{|I|} \to E_I$ for all $I\in\Ii_{\p^\al\Kk}$ which satisfy conditions a)-e) of Definition~\ref{a-e} with 
the given constant $\si$.
Here by Lemma~\ref{le:admin}~(iv)
the domains of $\nu^\al_I$ are $(\p^\al V_I)^{|I|} = \p^\al V_I^{|I|}$, and these are the boundaries of the reductions $V_I^k$ which have collars 
$$
V_I^k \cap \io^\al_I\bigl(A^\al_{\eps-2^{-k}\de} \times  \p^\al U_I\bigr) 
 = \io^\al_I\bigl( A^\al_{\eps-2^{-k}\de}\times \p^\al V_I^k \bigr)  ,
$$
where the requirement $2^{-k}\de<\eps$ of Lemma~\ref{le:admin} for $k>0$ is ensured by the assumption $\de<\eps$.
In the case of a product cobordism with product reduction this holds for any $\de>0$ with ${\eps-2^{-k}\de}$ replaced by $\eps:=1$.
The same collar requirement holds for $C_I \sqsubset V_I$, and hence for any set such as $N^k_{JI}$ or $\Ti C_I$ constructed from these.
Now $\de<\eps$ also ensures $2^{-k}\eps \le \eps - 2^{-k}\de$ for $k\geq 1$, so that we may denote the $2^{-k}\eps$-collar of $V_I^k$ by
$$
N^k_{I,\al}  \,:=\; \io^\al_I\bigl(A^\al_{2^{-k}\eps} \times \p^\al V_I^k   \bigr) \;\subset\; V^k_I 
$$
and note the precompact inclusion $N^{k'}_{I,\al} \sqsubset N^k_{I,\al}$ for $k'>k$.

We will now construct the required cobordism perturbation $\nu$ by an iteration as in Proposition~\ref{prop:ext} with adjusted domains obtained by replacing $\de$ with $\frac 12 \de$. This is necessary since the given boundary value $\nu^\al_J$ by assumption only extends to a map $\nu^\al_J : V^{|J|}_J \to E_J$, but as before the construction of $\nu_J$ will proceed by restriction to a precompact subset of the domain of an extension $\Ti\nu_J$, where this agrees both with the push forward of previously defined  $(\nu_I)_{I\subsetneq J}$ and with the given boundary perturbations $\nu^\al_J$ in collar neighbourhoods. 
We achieve this by restriction to $V^{|J|+1}_J\sqsubset V^{|J|}_J$.
That is, in the $k$-th step we construct $\nu_J : V^{k+1}_J \to E_J$ for each $|J| = k$ that, together with the $\nu_I|_{V^{k+1}_I}$ for $|I|< k$ obtained by restriction from earlier steps, satisfies the following.

\begin{itemize}
\item[a)]
The perturbation is compatible with coordinate changes and collars, that is
$$
\quad
\nu_J |_{N^{k+1}_{JI}}  \;=\; \Hat\phi_{IJ} \circ \nu_I \circ \phi_{IJ}^{-1} |_{N^{k+1}_{JI}} 
\qquad \text{on}\quad
N^{k+1}_{JI} = V^{k+1}_J \cap \phi_{IJ}(V^{k+1}_I\cap U_{IJ})
$$
for all $I\subsetneq J$, and for each $\al\in\{0,1\}$ with $J\in\Ii_{\p^\al\Kk}$ we have 
$$
\nu_J |_{N^{k+1}_{J,\al}} \;=\; (\io^\al_J)^*\nu^\al_J
\qquad\text{on}\quad N^{k+1}_{J,\al}  = \io^\al_J\bigl(A^\al_{2^{-k-1} \eps}\times \p^\al V_J^{k+1} \bigr),
$$
where we abuse notation by defining $(\io^\al_J)^*\nu^\al_J : \io^\al_J(t,x) \mapsto \nu^\al_J(x)$. $\phantom{\bigg(}$
\item[b)]
The perturbed section is transverse, that is $(s_J|_{{V^{k+1}_J}} + \nu_J) \pitchfork 0$.
\item[c)]
The perturbation is {\bf strongly admissible} with radius $\eta_{k+1}= 2^{-k-1}(1-2^{-\frac 14})$,
$$
\qquad
\nu_J( B^J_{\eta_{k+1}}(N^{k+1}_{JI})\bigr) \;\subset\; \Hat\phi_{IJ}(E_I)
\qquad\forall \;I\subsetneq J .
$$
\item[d)]  
The perturbed zero set is contained in $\pi_\Kk^{-1}\bigl(\pi_\Kk(\Cc)\bigr)$; more precisely
$$
(s_J |_{{V^{k+1}_J}}+ \nu_J)^{-1}(0) \;\subset\; {V^{k+1}_J} \cap \pi_\Kk^{-1}\bigl(\pi_\Kk(\Cc)\bigr) .
$$
\item[e)]
The perturbation is small, that is $\sup_{x\in {V^{k+1}_J}} \| \nu_J (x) \| <  \si$. 
\end{itemize}
The final perturbation $\nu=(\nu_I|_{V_I})_{I\in\Ii_\Kk}$ of $\s_\Kk|_\Vv$ then has product form on collars of width $2^{-M_\Kk}\eps$ and thus is a cobordism perturbation, whose boundary restrictions are the given $\nu^\al$ by construction.
Moreover, $\nu$ will be admissible by c), transverse by b), and precompact by d) with $\pi_\Kk( (\s_\Kk|_\Vv+\nu)^{-1}(0))\subset\pi_\Kk(\Cc)$. Compactness of $|(\s_\Kk|_\Vv+\nu)^{-1}(0)|$ then follows from Lemma~\ref{le:czeroS0}.

For $k=0$, there are no indices $|J|=0$ to be considered.
Now suppose that $\bigl(\nu_I : V^{|I|+1}_I\to E_I\bigr)_{I\in\Ii_\Kk, |I|< k}$  are constructed such that a)-e) hold. Then for the iteration step it suffices as before to construct $\nu_J$ for a fixed $J\in\Ii_\Kk$ with $|J|=k$. In the following three construction steps we then unify the cases of $J\in\Ii_{\p^\al\Kk}$ for none, one, or both indices $\al$ by interpreting the collars $N^k_{J,\al}$ as empty sets unless $J\in\Ii_{\p^\al\Kk}$.

\MS\NI
{\bf Construction of extension for fixed $|J|=k$:}  
For each $k\geq 1$ we will construct an extension of a restriction of 
$$
\quad\mu_J : N_J^k \cup N^k_{J,0} \cup N^k_{J,1}   \;\longrightarrow\; E_J , \qquad
\mu_J|_{N^{k}_{JI}} := \Hat\phi_{IJ}\circ \nu_I\circ\phi_{IJ}^{-1}, \qquad
\mu_J|_{N^k_{J,\al}} :=  (\io^\al_J)^*\nu^\al_J ,
$$
where $N^k_{J,\al} = \io^\al_J\bigl( A^\al_{2^{-k}\eps} \times \p^\al V_J^k \bigr)$ is a collar of $V^k_J$. [
see [$^{\text{\ref{35}}}$]
]
More precisely, we construct a smooth map $\Ti\nu_J : V^k_J \to E_J$ that satisfies
\begin{equation}\label{ctinu}
\Ti\nu_J|_{N_{k+\frac 12}} = \mu_J|_{N_{k+\frac 12}} 
\qquad\text{on}\;\; N_{k+\frac 12} :=  N_J^{k+\frac 12} \cup N^{k+\frac 12}_{J,0} \cup N^{k+\frac 12}_{J,1} ,
\end{equation}
the bound $\|\Ti\nu_J \| \leq \|\mu_J\| < \si$, and the strong admissibility condition 
\begin{equation}\label{cvalue} 
\Ti\nu_J \bigl( B^J_{\eta_{k+\frac 12}}\bigl(N^{k+\frac 12}_{JI}\bigr) \bigr) \;\subset\; \Hat\phi_{IJ}(E_I) 
\qquad \forall\; I\subsetneq J .
\end{equation}

We proceed as in Proposition~\ref{prop:ext} for fixed $j\in J$ by iteratively constructing smooth maps $\Ti\mu^j_\ell: W_\ell \to \Hat\phi_{jJ}(E_j)$ for $\ell=0,\ldots,k-1$ on the adjusted open sets 
\begin{equation}\label{eq:cW}
W_\ell \,:=\; N^{k_\ell}_{J,0} \;\cup\; N^{k_\ell}_{J,1} \;\cup\;
 {\textstyle \bigcup _{I\subsetneq J,|I|\le \ell}}\, B^J_{r_\ell}(N^{k+\frac 12}_{JI}) 
\end{equation}
with $r_\ell:= \eta_k - \frac {\ell+1} {k} ( \eta_k-\eta_{k+\frac 13})$ and $k_\ell:= k + \frac {\ell+1} {3k}$, that satisfy the conditions
\begin{enumerate}
\item[(E:i)]
$\Ti\mu^j_\ell |_{N^{k+\frac 12}_{JI}} = \mu_J^j|_{N^{k+\frac 12}_{JI}}$ for all $I\subsetneq J$ with $|I|\leq \ell$ and $j\in I$;
\vspace{.07in}
\item[(E:ii)]
$\Ti\mu^j_\ell |_{B^J_{r_\ell}(N^{k+\frac 12}_{JI})} = 0$
for all $I\subsetneq J$ with $|I|\leq \ell$ and $j\notin I$;
\vspace{.07in}
\item[(E:iii)] 
$\bigl\|\Ti\mu^j_\ell \bigr\| \leq \|\mu^j_J\|$;
\vspace{.07in}
\item[(E:iv)]
$\Ti\mu^j_\ell = (\io^\al_J)^*\nu^{\al,j}_J$ on  $N^{k_\ell}_{J,\al} = \io^\al_J\bigl( A^\al_{2^{-k_\ell}\eps}\times \p^\al V_J^{k_\ell}  \bigr)$ 
for $\al\in\{0,1\}$ with $J\in\Ii_{\p^\al\Kk}$.
\end{enumerate}
These requirements make sense because  $\eta_{k+\frac 12}< r_\ell < \eta_k$ and $B^J_{\eta_{k}}(N^{k+\frac 12}_J) \subset V^k_J$ by \eqref{eq:fantastic}, so that the domain in (E:ii) is included in $V^k_J$ and is larger than that in \eqref{cvalue}.
After this iteration, we then obtain the extension $\Ti\nu_J:= \beta {\textstyle\sum_{j\in J}} \, \Ti\mu^j_{k-1}$ by multiplication with a smooth cutoff function $\beta:V^k_J \to [0,1]$ with $\beta|_{N_{k+\frac 12}}\equiv 1$ and $\supp\beta\subset N^{k+\frac 13}_{J,0} \cup N^{k+\frac 13}_{J,1} \cup B^J_{\eta_{k+\frac 13}}(N^{k+\frac 12}_J)$, where the latter contains the closure of $N_{k+\frac 12}=N^{k+\frac 12}_{J,0} \cup N^{k+\frac 12}_{J,1} \cup N^{k+\frac 12}_J$ in $V^k_J$, so that $\Ti\nu_J$ extends trivially to $V^k_J\less W_{k-1}$.

For the start of iteration at $\ell=0$,
the domain is $W_0= N^{k_0}_{J,0} \;\cup\; N^{k_0}_{J,1}$ with $k_0 = k + \frac 1{3k}$.
Conditions (E:i) and (E:ii) are vacuous since there are no index sets with $|I|\le 0$, and 
we can satisfy (E:iii) and (E:iv), by setting $\Ti\mu^j_0 (\iota^\al(t,x)) := \nu^{\al,j}_J(x)$.
Next, if the construction is given on $W_\ell$, then we cover $W_{\ell+1}$ by the open sets 
$B_L': = W_{\ell+1}\cap B^J_{r_{\ell-1}}(N^{k+\frac12}_{JL})$
for $L\subsetneq J$, $|L|=\ell+1$ and $C_{\ell+1}\subset W_\ell$
given below, and pick an open subset $C'\subset V^k_J$ such that
$$
C_{\ell+1} \,:=\; W_{\ell+1} \;\less\; {\textstyle \bigcup _{|L| = \ell+1}\, \ov{B^J_{r_{\ell}}(N^{k+\frac12}_{JL})}} \;\sqsubset\; C' \;\sqsubset\; W_\ell \;\less\; {\textstyle \bigcup _{|L| = \ell+1}\, \ov{B^J_{r_{\ell+1}}(N^{k+\frac12}_{JL})}} \;=:\, C_\ell .
$$
As before, this guarantees that $C'$ and $B^J_{r_{\ell+1}}(N^{k+\frac 12}_{JL})$ have disjoint closures for all $|L|=\ell +1$. Then we set $\Ti\mu_{\ell+1}|_{C_{\ell+1}} := \Ti \mu_\ell|_{C_{\ell+1}}$, which inherits properties (E:i)--(E:iv) from $\Ti\mu_\ell$ because $C_{\ell+1}$ is still disjoint from $B^J_{r_{\ell+1}}(N^{k+\frac 12}_{JL})$ for any $|I|=\ell+1$, and we have $N^{k_{\ell+1}}_{J,\al}\subset N^{k_\ell}_{J,\al}$.
So it remains to construct $\Ti\mu_{\ell+1}: B'_L \to \Hat\phi_{jJ}(E_j)$ for a fixed $L\subset J$, $|L|=\ell+1$ such that $\Ti\mu_{\ell+1}=\Ti \mu_\ell$ on $B'_L \cap C'$.

In case $j\notin L$ condition (E:iv) prescribes $\Ti\mu^j_\ell = (\io^\al_J)^*\nu^{\al,j}_J$ on the intersection
$$
B'_L \cap N^{k_{\ell+1}}_{J,\al}
\subset 
\io^\al_J\bigl(A^\al_{2^{-k_{\ell+1}}\eps}  \times B^{J,\al}_{r_{\ell-1}}(\p^\al N^{k+\frac12}_{JL}) \bigr).
$$ 
Because $r_{\ell-1}<\eta_k$, strong admissibility for $\nu^\al_J$ on $B^{J,\al}_{\eta_k}(\p^\al N^k_{JL})$ implies that $(\io^\al_J)^*\nu^{\al,j}_J=0$ on this intersection.
Moreover, $B'_L\cap C'$ again is a subset of $\bigcup _{I\subsetneq L} B^J_{r_\ell}(N^{k+\frac 12}_{JI})$, where we have $\Ti\mu_\ell|_{B'_L\cap C'}=0$ by iteration hypothesis (E:ii) for each $I\subsetneq J$. 
Thus $\Ti\mu_{\ell+1}:=0$ satisfies all extension properties (E:i)--(E:iv) in this case.

In case $j\in L$ we may again patch together extensions by partitions of unity, so that it suffices to construct smooth maps $\Ti\mu_z: B^J_{r_z}(z)\to \Hat\Phi_{jJ}(E_j)$ on balls of positive radius $r_z>0$ around each fixed $z\in B'_L$, that satisfy
\begin{itemize}
\item[(i)]
$\Ti\mu_z = \mu_J^j$ on $B^J_{r_z}(z)\cap N^{k+\frac 12}_{JI}$ for all $I\subset L$ with $j\in I$ (including $I=L$);
\vspace{.07in}
\item[(ii)]
$\Ti\mu_z = 0$ on $B^J_{r_z}(z) \cap B^J_{r_{\ell+1}}(N^{k+\frac 12}_{JI})$ for all $I\subsetneq L$ with $j\notin I$;
\vspace{.07in}
\item[(iii)] 
$\bigl\| \Ti\mu_z\bigr\| \leq \|\mu_J^j\|$;
\vspace{.07in}
\item[(iv)] 
$\Ti\mu_z = (\io^\al_J)^*\nu^{\al,j}_J$ on  $B^J_{r_z}(z) \cap N^{k_{\ell+1}}_{J,\al}$ for $\al\in\{0,1\}$ with $J\in\Ii_{\p^\al\Kk}$;
\vspace{.07in}
\item[(v)] 
$\Ti\mu_z=\Ti\mu_{\ell}$ on $B^J_{r_z}(z)\cap B'_L\cap C'$.
\end{itemize}
\vspace{.07in}

For $z\in V^k_J \less \ov{N^{k_{\ell+1}}_{J,\al}}$, this is accomplished by the same constructions as in Proposition~\ref{prop:ext} by choosing $r_z>0$ such that $B^J_{r_z}(z)\cap N^{k_{\ell+1}}_{J,\al}=\emptyset$.
For $z\in \ov{N^{k_{\ell+1}}_{J,\al}}\subset N^{k_\ell}_{J,\al}$ we choose $r_z>0$ such that $B^J_{r_z}(z)\subset N^{k_{\ell}}_{J,\al}$. Then $\Ti\mu_z := \Ti\mu_\ell|_{B^J_{r_z}(z)}$ satisfies (v) by construction and (i)-(iv) by iteration hypothesis.

\MS

\NI {\bf Zero set condition:}
For the extended perturbation constructed above, we have $\bigl\|\Ti \nu_J\bigr\| \leq \max\{ \max_{I\subsetneq J} \|\nu_I\| , \|\nu^0_J\| , \|\nu^1_J\|  \} < \si $ by induction hypothesis e).
From \eqref{cvalue} and \eqref{eq:useful} we then obtain as in Proposition~\ref{prop:ext}
\begin{equation} \label{eq:czeroset} 
(s_J |_{V^k_J} + \Ti\nu_J)^{-1}(0)\;\cap\;  B^J_{\eta_{k+\frac 12}}\bigl(N^{k+\frac34}_{JI}\bigr)  
\;\subset\; N^{k+\frac12}_{JI} .
\end{equation}
Next, recall that we allowed only $\si>0$ such that
$$
\si \;\leq\; \inf \Bigl\{ \; \bigl\| s_J(x) \bigr\| \;\Big| \;
x\in \ov{V^{|J|+1}_J} \;\less\; \Bigl( \Ti C_J \cup {\textstyle \bigcup_{I\subsetneq J}} B^J_{\eta_{|J|+\frac 12}}\bigl(N^{|J|+\frac34}_{JI}\bigr) \Bigr) \Bigr\}  .
$$
Hence the same arguments as in the proof of Proposition~\ref{prop:ext} provide the inclusion
\begin{equation}\label{eq:cinclude}
\bigl(s_J|_{\ov{V^{k+1}_J}} + \Ti\nu_J\bigr)^{-1}(0) \;\less\; {\textstyle\bigcup_{I\subsetneq J} } B^J_{\eta_{k+\frac 12}}\bigl(N^{k+\frac34}_{JI}\bigr)   \;\subset\; \Ti C_J.
\end{equation}
Together with the induction hypothesis on $\Ti\nu_J =\mu_J=\Hat\phi_{IJ}\circ\nu_I\circ\phi_{IJ}^{-1}$ on $N^{k+\frac12}_{JI}$ this implies the zero set condition $(s_J|_{\ov{V^{k+1}_J}}+\Ti\nu_J)^{-1}(0) \subset\pi_\Kk^{-1}(\pi_\Kk(\Cc))$.

\MS
\NI {\bf Transversality:}
Admissibility together with induction hypothesis b) imply transversality $(s_J + \Ti\nu_J)|_{N^{k+\frac12}_J} \pitchfork 0$ on the enlarged core.
Since the 
perturbations $\nu^\al_J$ are transverse, this together with \eqref{eq:czeroset} implies that
transversality holds on the open set
$$
(s_J + \Ti\nu_J)|_{B} \; \pitchfork \; 0  , \qquad B:= B^J_{\eta_{k+\frac 12}}(N^{k+\frac34}_J) \cup N^{k+\frac 12}_{0,J} \cup N^{k+\frac 12}_{1,J} \;\subset\; V^k_J .
$$
Now $B$ precompactly contains the neighbourhood $B':= B^J_{\eta_{k+1}}(N^{k+1}_J)\cup N^{k+1}_{0,J} \cup N^{k+1}_{1,J} \subset V^k_J$ of the core and collar $N:= N^{k+1}_J \cup N^{k+1}_{0,J} \cup N^{k+1}_{1,J}$,
so that compatibility with the coordinate changes and collars in a) and strong admissibility in d) can be satisfied by requiring $\nu_J|_{B'}=\Ti\nu_J|_{B'}$.
In this abstract setting, we can finish the iterative step word by word as in Proposition~\ref{prop:ext}. This completes the construction of the required perturbation in case (ii).
Finally, we note that (iii) holds by Lemma~\ref{le:admin}~(ii).
This completes the proof.
\end{proof}

%
%
%

\section{From Kuranishi atlases to  the Virtual  Fundamental Class}\label{s:VMC}

In this section we first discuss orientations, and then finally put everything together to
construct the virtual moduli cycle (VMC) and virtual fundamental class (VFC) for a smooth 
oriented, tame Kuranishi atlas (as always with trivial isotropy) of dimension $d$ on a compact metrizable space $X$.
This will prove Theorem~B in the introduction.

\subsection{Orientations} \label{ss:vorient}  \hspace{1mm}\\ \vspace{-3mm}

This section develops the theory of orientations of Kuranishi atlases.
We use the method of determinant line bundles as in e.g.\ \cite[App.A.2]{MS},
but encountered compatibility issues of sign conventions in the literature, e.g.\ all editions of \cite{MS}.
We resolve these by using a different set of conventions most closely related to K-theory and thank Thomas Kragh for helpful discussions.
As shown in the recent work of  \cite{Z3}, these conventions are consistent with some important naturality properties, a fact which may prove useful in the future development of Kuranishi atlases.
 
While the relevant bundles and sections could just be described as tuples of bundles and sections over the domains of the Kuranishi charts, related by lifts of the coordinate changes, we take this opportunity to develop a general framework of vector bundles over Kuranishi atlases, which now no longer are assumed to be additive or tame.

\begin{defn} \label{def:bundle}
A {\bf vector bundle} $\La=\bigl(\La_I,\Ti\phi^\La_{IJ}\bigr)_{I,J\in\Ii_\Kk}$ {\bf over a weak Kuranishi atlas} $\Kk$ is a collection $(\La_I \to U_I)_{I\in \Ii_\Kk}$ of vector bundles together with lifts $\bigl(\Tilde \phi^\La_{IJ}: \La_I|_{U_{IJ}}\to \La_J\bigr)_{I\subsetneq J}$ of the coordinate changes $\phi_{IJ}$, that are linear isomorphisms on each fiber and satisfy the weak cocycle condition 
$\Tilde \phi^\La_{IK} =  \Tilde \phi^\La_{JK}\circ  \Tilde \phi^\La_{IJ}$ on $\phi^{-1}_{IJ}(U_{JK})\cap U_{IK}$ for all triples $I\subset J\subset K$.  

A {\bf section} of a bundle $\La$ over $\Kk$ is a collection of smooth sections $\si=\bigl( \si_I: U_I\to \La_I \bigr)_{I\in\Ii_\Kk}$ that are compatible with the bundle maps $\Ti\phi^\La_{IJ}$.
In particular, for a vector bundle $\La$ with section $\si$ there are commutative diagrams for each $I\subset J$,
\[
\xymatrix{
  \La_I|_{U_{IJ}}  \ar@{->}[d] \ar@{->}[r]^{\;\;\Tilde\phi^\La_{IJ}}   &  \La_J \ar@{->}[d]   \\
U_{IJ}\ar@{->}[r]^{\phi_{IJ}}  & U_J
}
\qquad\qquad\qquad
\xymatrix{
  \La_I|_{U_{IJ}}  \ar@{->}[r]^{\;\;\Tilde\phi^\La_{IJ}}    &  \La_J  \\
U_{IJ}  \ar@{->}[u]^{\si_I}   \ar@{->}[r]^{\phi_{IJ}}  & U_J  \ar@{->}[u]_{\si_J} .
}
\]
\end{defn}

The following notion of a product bundle will be the first example of a bundle over a Kuranishi cobordism. 

\begin{defn} \label{def:prodbun}
If $\La=\bigl(\La_I,\Ti\phi^\La_{IJ}\bigr)_{I,J\in\Ii_\Kk}$ is a bundle over $\Kk$ and $A\subset [0,1]$ is an interval, then the {\bf product bundle} $
A\times \La$ over $A\times \Kk$ is the tuple $\bigl(A\times \La_I, \id_A\times \Ti\phi^\La_{IJ}\bigr)_{I,J\in\Ii_\Kk}$. 
Here and in the following we denote by $A\times\La_I\to A\times U_I$ the pullback bundle of $\La_I\to U_I$ under the projection $A\times U_I\to U_I$.
\end{defn}

\begin{defn} \label{def:cbundle}
A {\bf vector bundle over a weak Kuranishi cobordism} $\Kk$ 
is a collection $\La=\bigl(\La_I,\Ti\phi^\La_{IJ}\bigr)_{I,J\in\Ii_\Kk}$ of vector bundles and bundle maps as in Definition~\ref{def:bundle}, together with a choice of isomorphism from its restriction to a
collar of the boundary to a product bundle.
More precisely, this requires for $\al=0,1$ the choice of a {\bf restricted vector bundle} $\La|_{\p^\al\Kk}= \bigl( \La^\al_I \to \partial^\al U_I, \Ti\phi^{\La,\al}_{IJ}\bigr)_{I,J \in \Ii_{\p^\al\Kk}}$ over $\p^\al\Kk$, and, for some $\eps>0$ less than the collar width of $\Kk$, a choice of lifts of the embeddings $\io^\al_I$ for $I\in\Ii_{\p^\al\Kk}$ to bundle isomorphisms 
$\ti\io^{\La,\al}_I : A^\al_\eps\times \La^\al_I \to \La_I|_{\im\io^\al_I}$ such that,  with $A: = A^\al_\eps$, the following diagrams commute
\[
\xymatrix{A\times  \La_I^\al \ar@{->}[d]   \ar@{->}[r]^{\ti\io^{\La,\al}_I}    &   \La_I|_{\im\io^\al_I} \ar@{->}[d]   \\
A\times \partial^\al U_I  \ar@{->}[r]^{\io^\al_I}   & \im\io^\al_I \subset U_I
}
\qquad\qquad
\xymatrix{ 
 A \times 
  \La_I^\al|_{A\times \p^\al U_{IJ}} 
  \ar@{->}[r]^{\ti\io^{\La,\al}_I} \ar@{->}[d]_{\id_A\times \Ti\phi^{\La,\al}_{IJ}}    & 
 \La_I |_{A\times \io^\al_I(\p^\al U_{IJ} )}
  \ar@{->}[d]^{\Ti\phi^\La_{IJ}}  \\
A\times   \La_J^\al \ar@{->}[r]^{\ti\io^\al_{J}}  &  \La_J|_{\im\io^\al_J}  
}
\]

A {\bf section} of a vector bundle $\La$ over a Kuranishi cobordism as above is a compatible collection $\bigl(\si_I:U_I\to \La_I\bigr)_{I\in\Ii_\Kk}$ of sections as in Definition~\ref{def:bundle} that in addition have product form in the collar. 
That is we require that for each $\al=0,1$ there is a {\bf restricted section} $\si|_{\p^\al\Kk}= ( \si^\al_I :\partial_\al U_I \to \La^\al_I)_{I\in\Ii_{\p^\al\Kk}}$ of $\La|_{\p^\al\Kk}$ such that for $\eps>0$ sufficiently small we have 
$(\ti\io^{\La,\al}_I)^*\si_I  = \id_{A^\al_\eps}\times \si^\al_I$.
 \end{defn}

In the above definition,  we implicitly work with an isomorphism $(\ti\io^{\La,\al}_I)_{I\in\Ii_{\p^\al\Kk}}$, that satisfies all but the product structure requirements of the following notion of isomorphisms on Kuranishi cobordisms.

\begin{defn} \label{def:buniso}
An {\bf isomorphism} $\Psi: \La\to \La'$ between vector bundles over $\Kk$ is a collection
$(\Psi_I: \La_I\to \La'_I)_{I\in \Ii_\Kk}$ of bundle isomorphisms covering the identity on $U_I$, that intertwine the transition maps, i.e.\ 
$\Ti\phi^{\La'}_{IJ}\circ\Psi_I|_{U_{IJ}} = \Psi_J \circ \Ti \phi^\La_{IJ}|_{U_{IJ}}$ for all $I\subset J$.

If $\Kk$ is a Kuranishi cobordism then we additionally require $\Psi$ to have product form in the collar. That is we require that for each $\al=0,1$ there is a restricted isomorphism $\Psi|_{\p^\al\Kk}= ( \Psi^\al_I :\La^\al_I \to \La'_I\,\!\!^\al)_{I\in\Ii_{\p^\al\Kk}}$ from $\La|_{\p^\al\Kk}$ to $\La'|_{\p^\al\Kk}$ such that for $\eps>0$ sufficiently small we have 
$\ti\io'_I\,\!\!^{\La,\al} \circ \bigl( \id_A\times \Psi^\al_I \bigr) = \Psi_I \circ \ti\io^{\La,\al}_I$ on $A^\al_\eps\times \partial^\al U_I$.
\end{defn}

\begin{remark}\rm
In the newly available language, Definition~\ref{def:cbundle} of a bundle on a Kuranishi cobordism requires isomorphisms (without product structure on the collar) for $\al=0,1$ from the product bundle 
$A^\al_\eps\times  \La|_{\p^\al\Kk}$ to the $\eps$-collar restriction 
$(\iota^\al_\eps)^*\La := \bigl((\iota^\al_I)^*\La_I , (\iota^\al_J)^* \circ \Ti\phi^\La_{IJ} \circ (\iota^\al_I)_* \bigr)_{I,J\in\Ii_{\p^\al\Kk}}$,  
given by the collection of pullback bundles and isomorphisms 
under the embeddings $\iota^\al_I : A^\al_\eps\times  \partial^\al U_I  \to U_I$.
$\hfill\er$
\end{remark}

Note that, although the compatibility conditions are the same, the canonical section 
$\s_\Kk = ( s_I : U_I\to E_I)_{I\in\Ii_\Kk}$ of a Kuranishi atlas does not form a section of a 
vector bundle since the obstruction spaces $E_I$ are in general not of the same dimension, 
hence no bundle isomorphisms $\Ti\phi^\La_{IJ}$ as above exist.
Nevertheless, we will see that, 
there is a natural bundle associated with the section $\s_\Kk$, namely its determinant line bundle, 
and that this line bundle is isomorphic to a bundle constructed by combining the determinant lines of the obstruction spaces $E_I$ and the domains $U_I$.
 
Here and in the following we will exclusively work with finite dimensional vector spaces.
First recall that the determinant line of a vector space $V$ is its maximal exterior power $\lm V := \wedge^{\dim V}\,V$, with $\wedge^0\,\{0\} :=\R$.
More generally, the {\bf determinant line of a linear map} $D:V\to W$ is defined to be 
$$
\det(D):= \lm\ker D \otimes \bigl( \lm \bigl( \qu{W}{\im D} \bigr) \bigr)^*.
$$ 
In order to construct isomorphisms between determinant lines, we will need to fix various conventions, in particular pertaining to the ordering of factors in their domains and targets.
We begin by noting that every isomorphism $F: Y \to Z$ between finite dimensional vector spaces induces an isomorphism
\begin{equation}\label{eq:laphi}
\La_F :\; \lm Y   \;\overset{\cong}{\longrightarrow}\; \lm Z , \qquad
y_1\wedge\ldots \wedge y_k \mapsto F(y_1)\wedge\ldots \wedge F(y_k) .
\end{equation}
For example, if $I\subsetneq J$ and  $x\in U_{IJ}$, it follows from the index condition in Definition~\ref{def:change} that the map 
for $x\in U_{IJ}$
\begin{equation}\label{eq:bunIJ}
\La_{IJ}(x): = \La_{\rd_x\phi_{IJ}} \otimes 
\bigl(\La_{[\Hat\phi_{IJ}]^{-1}}\bigr)^*
\, :\; \det(\rd_x s_I) \to \det(\rd_{\phi_{IJ}(x)} s_J)
\end{equation}
is an isomorphism, induced by the isomorphisms $\rd\phi_{IJ}:\ker\rd s_I\to\ker\rd s_J$ and
$[\Hat\phi_{IJ}] : \qu{E_I}{\im\rd s_I}\to\qu{E_J}{\im\rd s_J}$.
With this, we can define the determinant bundle $\det(\s_\Kk)$ of a Kuranishi atlas. A second, isomorphic, determinant line bundle $\det(\Kk)$ with fibers $\lm \rT_x U_I \otimes \bigl( \lm E_I \bigr)^*$ will be constructed in Proposition~\ref{prop:orient}.

\begin{defn} \label{def:det} 
The {\bf determinant line bundle} of a weak Kuranishi atlas (or cobordism) $\Kk$ is the vector 
bundle $\det(\s_\Kk)$ given by the line bundles 
$$
\det(\rd s_I):=\bigcup_{x\in U_I} \det(\rd_x s_I) \;\to\; U_I \qquad 
\text{for}\; I\in\Ii_\Kk, 
$$
and the isomorphisms $\La_{IJ}(x)$ in \eqref{eq:bunIJ} for $I\subsetneq J$ and $x\in U_{IJ}$.
\end{defn}

To show that  $\det(\s_\Kk)$ is well defined, in particular that $x\mapsto \La_{IJ}(x)$ is smooth, 
we introduce some further natural\footnote{
Here a ``natural" isomorphism is one that is functorial, i.e.\ it commutes with 
the action on both sides induced by a vector space isomorphism.}
isomorphisms and fix various ordering conventions.

\begin{itemlist}
\item
For any subspace $V'\subset V$ the {\bf splitting isomorphism}
\begin{equation}\label{eq:VW}
\lm V\cong \lm V'\otimes \lm\bigl( \qu{V}{V'}\bigr)
\end{equation}
is given by completing a basis $v_1,\ldots,v_k$ of $V'$ to a basis $v_1,\ldots,v_n$ of $V$ and
mapping $v_1\wedge \ldots \wedge v_n \mapsto (v_1\wedge \ldots \wedge v_k) \otimes ([v_{k+1}]\wedge \ldots\wedge [v_n])$.
\item
For each isomorphism $F:Y\overset{\cong}{\to} Z$ the {\bf contraction isomorphism} 
\begin{equation} \label{eq:quotable}
\mathfrak{c}_F \,:\; \lm Y  \otimes  \bigl( \lm Z \bigr)^* \;\overset{\cong}{\longrightarrow}\; \R , 
\end{equation}
is given by the map
$\bigl(y_1\wedge\ldots \wedge y_k\bigr) \otimes \eta \mapsto \eta\bigl(F(y_1)\wedge \ldots 
\wedge F(y_k)\bigr)$.
\item
For any space $V$ we use the {\bf duality isomorphism}
\begin{equation}\label{eq:dual} 
\lm V^* \;\overset{\cong}{\longrightarrow}\; (\lm V)^*, \qquad
 v_1^*\wedge\dots\wedge v_n^* 
\;\longmapsto\;
(v_1\wedge\dots\wedge v_n)^* ,
\end{equation}
which corresponds to the natural pairing
$$
 \lm V \otimes \lm V^*   \;\overset{\cong}{\longrightarrow}\;  \R , \qquad
 \bigl(v_1\wedge\dots\wedge v_n\bigr) \otimes \bigl(\eta_1\wedge\dots\wedge \eta_n\bigr)
 \;\mapsto\;
 \prod_{i=1}^n \eta_i(v_i) 
$$
via the general identification (which in the case of line bundles $A,B$ maps $\eta\neq0$ to a nonzero homomorphism, i.e.\ an isomorphism)
\begin{equation}\label{eq:homid}
\Hom(A\otimes B,\R) \;\overset{\cong}{\longrightarrow}\; \Hom(B, A^*)
,\qquad
H \;\longmapsto\; \bigl( \; b \mapsto H(\cdot \otimes b) \; \bigr) .
\end{equation}
\end{itemlist}

\MS\NI
Next, we combine the above isomorphisms to obtain a more elaborate 
contraction isomorphism.

\begin{lemma} \label{lem:get} 
Every linear map $F:V\to W$ together with an isomorphism $\phi:K\to \ker F$ induces an isomorphism
\begin{align}\label{Cfrak}
\mathfrak{C}^{\phi}_F \,:\; \lm V \otimes \bigl(\lm W \bigr)^* 
&\;\overset{\cong}{\longrightarrow}\;  \lm K \otimes \bigl(\lm \bigl( \qu{W}{F(V)}\bigr) \bigr)^*  
\end{align}
given by
\begin{align}
(v_1\wedge\dots v_n)\otimes(w_1\wedge\dots w_m)^* &\;\longmapsto\;
\bigl(\phi^{-1}(v_1)\wedge\dots \phi^{-1}(v_k)\bigr)\otimes \bigl( [w_1]\wedge\dots [w_{m-n+k}] \bigr)^* ,
\notag
\end{align}
where $v_1,\ldots,v_n$ is a basis for $V$ with ${\rm span}(v_1,\ldots,v_k)=\ker F$, and $w_1,\dots, w_m$ is a basis for $W$ whose last $n-k$ vectors are $w_{m-n+i}=F(v_i)$ for $i=k+1,\ldots,n$.

In particular, for every linear map $D:V\to W$ we may pick $\phi$ as the inclusion $K=\ker D\hookrightarrow V$ to obtain an isomorphism
$$
\mathfrak{C}_{D} \,:\;  \lm V \otimes \bigl(\lm W \bigr)^* \;\overset{\cong}{\longrightarrow}\;  \det(D) .
$$ 
\end{lemma}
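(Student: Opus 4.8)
\textbf{Proof proposal for Lemma~\ref{lem:get}.}

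The plan is to build $\mathfrak{C}^\phi_F$ explicitly from the elementary isomorphisms already introduced, and then verify both well-definedness (independence of the bases chosen) and functoriality. First I would set up the standard algebra: given $F:V\to W$ and $\phi:K\overset{\cong}{\to}\ker F$, write $k=\dim K=\dim\ker F$, $n=\dim V$, $m=\dim W$. The induced isomorphism $\ov F : \qu{V}{\ker F}\overset{\cong}{\to} F(V)$ and the splitting isomorphisms \eqref{eq:VW} applied to $\ker F\subset V$ and to $F(V)\subset W$ give
\[
\lm V \;\cong\; \lm(\ker F)\otimes \lm\bigl(\qu{V}{\ker F}\bigr),
\qquad
\lm W \;\cong\; \lm F(V)\otimes \lm\bigl(\qu{W}{F(V)}\bigr).
\]
Dualizing the second and using the duality isomorphism \eqref{eq:dual}, one gets $(\lm W)^*\cong (\lm F(V))^*\otimes (\lm(\qu{W}{F(V)}))^*$. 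Tensoring these together, the map $\mathfrak{C}^\phi_F$ is then the composite that uses $\La_\phi^{-1}:\lm(\ker F)\to\lm K$, the contraction $\mathfrak c_{\ov F}:\lm(\qu{V}{\ker F})\otimes(\lm F(V))^*\to\R$ from \eqref{eq:quotable}, and the identity on the remaining factor $\lm K\otimes(\lm(\qu{W}{F(V)}))^*$. Chasing a basis as in the statement through this composite recovers the stated formula; this is the routine part.

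The key step is well-definedness: the formula names a basis $v_1,\dots,v_n$ of $V$ adapted to $\ker F$ and a basis $w_1,\dots,w_m$ of $W$ whose last $n-k$ vectors are prescribed to be $F(v_i)$, and one must check the output is independent of all remaining freedom. This I would reduce to the corresponding invariance of the three building-block isomorphisms (splitting, contraction, duality), each of which is functorial by construction; the only genuinely new point is bookkeeping the sign/ordering, i.e.\ that placing the prescribed vectors $F(v_{k+1}),\dots,F(v_n)$ \emph{last} in the $w$-basis and the kernel vectors $v_1,\dots,v_k$ \emph{first} in the $v$-basis matches the ordering conventions baked into \eqref{eq:VW} and \eqref{eq:quotable}. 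Since any two adapted pairs of bases differ by block-triangular changes of basis acting separately on $\ker F$, on a complement of $\ker F$, on $F(V)$, and on a complement of $F(V)$, the determinants multiply consistently on both sides and cancel. I expect the sign/ordering verification to be the main obstacle — not because it is deep, but because it is exactly the place where incompatible conventions in the literature (as flagged in the text before Definition~\ref{def:det}) cause trouble, so the argument must be spelled out carefully rather than waved through.

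The final assertion, that for $D:V\to W$ the choice $\phi=(\ker D\hookrightarrow V)$ yields $\mathfrak C_D:\lm V\otimes(\lm W)^*\overset{\cong}{\to}\det(D)$, is then immediate: by definition $\det(D)=\lm\ker D\otimes(\lm(\qu{W}{\im D}))^*$, and with $\phi$ the inclusion we have $\La_\phi^{-1}=\id$, so $\mathfrak C^\phi_F$ specializes to exactly a map with this target. Finally, to close the loop for its intended use, I would note that functoriality of $\mathfrak C_D$ in $D$ (i.e.\ compatibility with pre- and post-composition by isomorphisms) follows from functoriality of each building block; this is what makes $x\mapsto \La_{IJ}(x)$ in \eqref{eq:bunIJ} smooth and hence shows $\det(\s_\Kk)$ in Definition~\ref{def:det} is a well-defined vector bundle, though that verification properly belongs to the discussion following the lemma rather than to the lemma's proof itself.
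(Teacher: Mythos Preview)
Your proposal is correct and takes essentially the same approach as the paper: both construct $\mathfrak{C}^\phi_F$ as the composite of the splitting isomorphisms \eqref{eq:VW} for $\ker F\subset V$ and for (a version of) $F(V)\subset W$, the isomorphism $\La_{\phi^{-1}}$, and the contraction $\mathfrak{c}_{[F]}$ from \eqref{eq:quotable}, and then verify the explicit formula on an adapted basis. The only cosmetic difference is that the paper handles the $W$-side by splitting $W^*$ via the annihilator $F(V)^\perp\subset W^*$ and then applying duality \eqref{eq:dual} factorwise (obtaining $S_W$ with the cokernel factor first, hence requiring an explicit reordering isomorphism $R$), whereas you split $W$ first and then dualize the tensor product; this is exactly the ``bookkeeping the sign/ordering'' you flag. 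Your separate well-definedness discussion via block-triangular base changes is not needed once the map is exhibited as a composite of natural isomorphisms, but it does no harm.
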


\begin{proof}
We will construct $\mathfrak{C}^{\phi}_F$ by composition of several isomorphisms.
As a first step let $F(V)^\perp\subset W^*$ be the annihilator of $F(V)$ in $W^*$, then the splitting isomorphism \eqref{eq:VW} identifies $\lm W^*$ with $\lm ( F(V)^\perp )\otimes \lm \bigl(\qu{W^*}{F(V)^\perp}\bigr)$.
Next, we apply \eqref{eq:laphi} to the isomorphisms 
$F(V)^\perp \overset{\cong}{\to} \bigl(\qu{W}{F(V)}\bigr)^*$
and 
$\qu{W^*}{F(V)^\perp}\overset{\cong}{\to} F(V)^*$,
and apply the duality isomorphism \eqref{eq:dual} in all factors to obtain the isomorphism 
$$
S_W \,:\;
\bigl(\lm W\bigr)^* \;\overset{\cong}{\longrightarrow}\;  \bigl(\lm \bigl(\qu{W}{F(V)}\bigr)\bigr)^* \otimes \bigl( \lm F(V) \bigr)^*
$$
given by 
$(w_1\wedge \ldots  \wedge w_m)^* \mapsto ([w_1]\wedge \ldots\wedge [w_\ell])^* \otimes (w_{\ell+1}\wedge \ldots \wedge w_m)^*$
for any basis $w_1,\ldots,w_m$ of $W$ whose last elements $w_{\ell+i}$ for $i=1,\ldots,m-\ell=n-k$ span $F(V)$.
On the other hand, we apply the splitting isomorphism \eqref{eq:VW} for $\ker F\subset V$ and \eqref{eq:laphi} for $\phi^{-1}: \ker F\to K$ to obtain an isomorphism
$$
S_V \,:\;
\lm V \;\overset{\cong}{\longrightarrow}\;   \lm K \otimes \lm \bigl(\qu{V}{\phi(K)}\bigr)  
$$
given by 
$v_1\wedge \ldots \wedge v_n \mapsto (\phi^{-1}(v_1)\wedge \ldots\wedge \phi^{-1}(v_k)) \otimes ([v_{k+1}]\wedge \ldots \wedge [v_n])$
for any basis $v_1,\ldots,v_n$ of $V$ such that $v_1,\ldots,v_k$ spans $\ker F$.
Finally, note that $F$ descends to an isomorphism $[F] : \qu{V}{\phi(K)} \overset{\cong}{\to} F(V)$, so we wish to apply the contraction isomorphism 
$$
\mathfrak{c}_{[F]} : \lm \bigl(\qu{V}{\phi(K)}\bigr) \otimes \bigl( \lm F(V) \bigr)^* \to \R
$$
from \eqref{eq:quotable}.
Since these factors do not appear adjacent after applying $S_V\otimes S_W$, we compose $S_W$ with an additional reordering isomorphism -- noting that we do not introduce signs in switching factors here
$$
R \,:\; A \otimes B  \overset{\cong}{\longrightarrow}\;
B \otimes A , \qquad
 a \otimes b  \; \longmapsto\; b \otimes a .
$$
Finally, using the natural identification $\lm K \otimes \R\otimes  \bigl(\lm \bigl(\qu{W}{F(V)}\bigr)\bigr)^* \cong \lm K \otimes  \bigl(\lm \bigl(\qu{W}{F(V)}\bigr)\bigr)^*$ we obtain an isomorphism 
$$
\bigl( \id_{ \lm K }\otimes \mathfrak{c}_{[F]} \otimes \id_{ (\lm (\qu{W}{F(V)}))^*} \bigr) \circ \bigl( S_V \otimes (R\circ S_W) \bigr) .
$$ 
To see that it coincides with $\mathfrak{C}_F^\phi$ as described in the statement, note that -- using the bases as above -- it maps
$(v_1\wedge \ldots \wedge v_n) \otimes(w_1\wedge \ldots \wedge w_m)^*$ to $(\phi^{-1}(v_1)\wedge \ldots\wedge \phi^{-1}(v_k)) \otimes ([w_1]\wedge \ldots\wedge [w_\ell])^*$ multiplied with the factor
$(w_{\ell+1}\wedge \ldots \wedge w_m)^*\bigl(F(v_{k+1})\wedge \ldots \wedge F(v_n)\bigr)$, and that the latter equals $1$ if we choose $w_{\ell+i}=F(v_i)$ for $i=1,\ldots, n-k$. Note here that the existence of an isomorphism $F$ implies $m-\ell = n-k$, so that $m-n=\ell+k$, and hence $w_{m-n+(k+i)}=w_{\ell+i}$.
\end{proof}

\begin{prop}\label{prop:det0}  
For any weak Kuranishi atlas, $\det(\s_\Kk)$ is a well defined line bundle over $\Kk$.
Further, if $\Kk$ is a weak Kuranishi cobordism, then $\det(\s_\Kk)$ can be given product form on the collar of $\Kk$ with restrictions $\det(\s_\Kk)|_{\p^\al\Kk} = \det(\s_{\p^\al\Kk})$ for $\al= 0,1$.
The required bundle isomorphisms from the product $\det(\s_{\p^\al\Kk})\times A^\al_\eps$ to the collar restriction  $(\io^\al_\eps)^*\det(\s_\Kk)$ are given in \eqref{orient map}.
\end{prop}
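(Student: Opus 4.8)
The goal is to verify that the collection of line bundles $\bigl(\det(\rd s_I)\to U_I\bigr)_{I\in\Ii_\Kk}$ together with the fibrewise isomorphisms $\La_{IJ}(x)$ of \eqref{eq:bunIJ} really does satisfy Definition~\ref{def:bundle} (and, in the cobordism case, Definition~\ref{def:cbundle}). There are three things to check: (1) each $\det(\rd s_I)$ is a smooth line bundle; (2) the maps $x\mapsto\La_{IJ}(x)$ are smooth bundle isomorphisms lifting $\phi_{IJ}$; (3) the weak cocycle condition $\La_{IK}=\La_{JK}\circ\La_{IJ}$ holds on $\phi_{IJ}^{-1}(U_{JK})\cap U_{IK}$. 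Then in the cobordism case we must additionally produce the product structure on the collar. I would organize the argument around the universal isomorphism $\mathfrak C_D:\lm V\otimes(\lm W)^*\xrightarrow{\cong}\det(D)$ of Lemma~\ref{lem:get}, which turns questions about the (possibly jumping) kernel and cokernel of $\rd s_I$ into questions about the constant-rank bundles $\rT U_I$ and $U_I\times E_I$.

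\textbf{Step 1: local triviality and smoothness of $\det(\rd s_I)$.}
Locally on $U_I$ one can write $\det(\rd_x s_I)$ as the fibre of a genuine smooth line bundle via a standard stabilization trick: choose, near a point $x_0$, a finite-dimensional subspace $C\subset E_I$ with $\im\rd_{x_0}s_I+C=E_I$; then $\im\rd_x s_I+C=E_I$ for all $x$ near $x_0$, the map $\rd s_I\oplus(\text{incl of }C):\rT U_I\oplus C\to E_I$ is surjective near $x_0$, its kernel $K$ is a smooth vector bundle, and $\det(\rd_x s_I)\cong\lm K_x\otimes(\lm C)^*$ canonically and smoothly (this is exactly the kind of argument used to trivialize obstruction bundles in the proof of Proposition~\ref{prop:A1}). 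This gives $\det(\rd s_I)$ the structure of a smooth line bundle over $U_I$, and in the cobordism case the product structure of $s_I$ on the collar immediately gives $\det(\rd s_I)$ the product form there, with restriction $\det(\rd s_I^\al)$ over $\p^\al U_I$.

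\textbf{Step 2: smoothness of $\La_{IJ}$ via the index condition.}
By Definition~\ref{def:change}, $\rd_x\phi_{IJ}$ restricts to an isomorphism $\ker\rd_x s_I\xrightarrow{\cong}\ker\rd_{\phi_{IJ}(x)}s_J$ and $\Hat\phi_{IJ}$ descends to an isomorphism $[\Hat\phi_{IJ}]:E_I/\im\rd_x s_I\xrightarrow{\cong}E_J/\im\rd_{\phi_{IJ}(x)}s_J$, so $\La_{IJ}(x)$ of \eqref{eq:bunIJ} is a well-defined fibrewise isomorphism. For smoothness I would factor $\La_{IJ}$ through the constant-rank picture: using $\mathfrak C_{\rd s_I}$ and $\mathfrak C_{\rd s_J}$ of Lemma~\ref{lem:get}, together with the identification $E_J=\rd s_J(\im\rd\phi_{IJ})\oplus\Hat\phi_{IJ}(E_I)$ coming from the tangent bundle condition \eqref{eq-tbc} of Lemma~\ref{le:change}, one expresses $\La_{IJ}(x)$ as a composite of the exterior-power maps $\La_{\rd_x\phi_{IJ}}$ on $\rT U$ and $\La_{\Hat\phi_{IJ}}$ on $E_I\hookrightarrow E_J$ and the splitting/contraction isomorphisms \eqref{eq:VW}, \eqref{eq:quotable}, \eqref{eq:dual}; all of these vary smoothly because $\rd\phi_{IJ}$ is smooth and $\Hat\phi_{IJ}$ is constant. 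The identity \eqref{inftame}, $\im\rd_u\phi_{IJ}=(\rd_v s_J)^{-1}(\Hat\phi_{IJ}(E_I))$, is what guarantees the two splittings are compatible so the composite is what we want.

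\textbf{Step 3: the cocycle condition and the collar isomorphisms.}
For $I\subset J\subset K$ the identities $\La_{\rd\phi_{JK}}\circ\La_{\rd\phi_{IJ}}=\La_{\rd(\phi_{JK}\circ\phi_{IJ})}=\La_{\rd\phi_{IK}}$ on kernels follow from functoriality of $\La_{(\cdot)}$ together with the chain rule and the weak cocycle condition $\phi_{JK}\circ\phi_{IJ}=\phi_{IK}$ on $\phi_{IJ}^{-1}(U_{JK})\cap U_{IK}$; similarly $[\Hat\phi_{JK}]\circ[\Hat\phi_{IJ}]=[\Hat\phi_{IK}]$ on cokernels since $\Hat\phi_{JK}\circ\Hat\phi_{IJ}=\Hat\phi_{IK}$ always holds for the linear parts. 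Tensoring gives $\La_{JK}(\phi_{IJ}(x))\circ\La_{IJ}(x)=\La_{IK}(x)$, which is the required weak cocycle condition — note this is genuinely only a \emph{weak} statement matching the weak Kuranishi atlas hypothesis, which is why $\det(\s_\Kk)$ is a bundle over a weak atlas. For the cobordism statement, the collar form of $s_I$ and of the coordinate changes (Definitions~\ref{def:Cchart}, \ref{def:Ccc}) forces $\La_{IJ}$ to have product form near the boundary, and the boundary-restricted data assemble into exactly $\det(\s_{\p^\al\Kk})$; the explicit bundle isomorphism $\det(\s_{\p^\al\Kk})\times A^\al_\eps\to(\io^\al_\eps)^*\det(\s_\Kk)$ is the one induced fibrewise by $\id$ on $\ker$ and $E$ under the product identifications, i.e.\ the map denoted \eqref{orient map} in the next part of the paper.

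\textbf{Main obstacle.}
The only substantive difficulty is Step~2 — showing $x\mapsto\La_{IJ}(x)$ is smooth across the locus where $\dim\ker\rd_x s_I$ jumps. Abstractly $\La_{IJ}(x)$ is built from the jumping subspaces $\ker\rd s_I$ and $\im\rd s_I$, which are not smooth subbundles; the content is that the \emph{determinant line} is smooth and the comparison map respects it. This is handled precisely by routing everything through $\mathfrak C_{\rd s_I}$ of Lemma~\ref{lem:get}, which trades $\det(\rd s_I)$ for the smooth bundle $\lm\rT U_I\otimes(\lm E_I)^*$ (after the local stabilization of Step~1), on which $\La_{IJ}$ becomes the manifestly smooth map $\La_{\rd\phi_{IJ}}\otimes(\La_{\Hat\phi_{IJ}})^*$. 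So the real work is bookkeeping: checking that the diagram relating $\mathfrak C_{\rd s_I}$, $\mathfrak C_{\rd s_J}$, $\La_{IJ}$ and $\La_{\rd\phi_{IJ}}\otimes(\La_{\Hat\phi_{IJ}})^*$ commutes, which amounts to the compatibility of the splitting isomorphisms \eqref{eq:VW} for $\ker\rd s_I\subset\rT U_I$ and $\im\rd s_I\subset E_I$ under $\phi_{IJ}$, $\Hat\phi_{IJ}$ — and this is exactly what the index/tangent-bundle condition \eqref{eq-tbc}–\eqref{inftame} provides. I expect no sign subtleties here since the ordering conventions only become delicate in the next proposition (the isomorphism $\det(\s_\Kk)\cong\det(\Kk)$), not in the mere well-definedness of $\det(\s_\Kk)$.
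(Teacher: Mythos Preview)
Your overall architecture is right and closely parallels the paper's, but Step~2 contains a genuine gap. You claim that under the isomorphisms $\mathfrak C_{\rd s_I}$ and $\mathfrak C_{\rd s_J}$ the transition map $\La_{IJ}$ becomes ``the manifestly smooth map $\La_{\rd\phi_{IJ}}\otimes(\La_{\Hat\phi_{IJ}})^*$'' on $\lm\rT U_I\otimes(\lm E_I)^*$. But $\rd_x\phi_{IJ}:\rT_x U_I\to\rT_{\phi_{IJ}(x)}U_J$ and $\Hat\phi_{IJ}:E_I\to E_J$ are injections between spaces of \emph{different} dimension when $I\subsetneq J$, so $\La_{\rd\phi_{IJ}}$ and $\La_{\Hat\phi_{IJ}}$ do not exist as maps between top exterior powers. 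Relatedly, your claimed decomposition $E_J=\rd s_J(\im\rd\phi_{IJ})\oplus\Hat\phi_{IJ}(E_I)$ is false: since $s_J\circ\phi_{IJ}=\Hat\phi_{IJ}\circ s_I$, the first summand lies inside the second. What you need is a complement to $\im\rd\phi_{IJ}$ in $\rT U_J$ (a normal bundle $N_{IJ}$), so that $\rd s_J(N_{IJ})$ complements $\Hat\phi_{IJ}(E_I)$; then the transition map on the constant-rank side is built from $\rd s_J|_{N_{IJ}}$ as well as $\rd\phi_{IJ}$ and $\Hat\phi_{IJ}$, and one must verify independence of the choice of $N_{IJ}$. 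This is exactly what the paper does in the next proposition (the map $\mathfrak C_{IJ}$ of \eqref{CIJ}), so your route would work but is more involved than you indicate.

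The paper's own proof of this proposition takes a slightly different and more direct route: it uses the stabilized trivializations $\Hat T_{I,x}:\lm\ker(\rd_x s_I\oplus R_I)\to\det(\rd_x s_I)$ with $R_I:\R^N\to E_I$ (your Step~1), and then for $\La_{IJ}$ chooses the compatible stabilization $R_J:=\Hat\phi_{IJ}\circ R_I$. The point is that $\rd_x\phi_{IJ}\oplus\id_{\R^N}$ is an \emph{isomorphism} between the equal-dimensional stabilized kernels $\ker(\rd_x s_I\oplus R_I)\cong\ker(\rd_{\phi_{IJ}(x)}s_J\oplus R_J)$ (surjectivity uses \eqref{eq-tbc}), and one checks directly that $\Hat T_{J}\circ\La_{\rd\phi_{IJ}\oplus\id_{\R^N}}=\La_{IJ}\circ\Hat T_I$. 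This avoids any choice of normal bundle. Finally, your remark that ``no sign subtleties'' arise here is not quite right: the paper emphasizes that the ordering convention in $\mathfrak C_F$ already affects the smooth structure on $\det(\rd s_I)$, since it is applied to a family of maps whose kernel dimension jumps.
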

\begin{proof}
To see that $\det(\s_\Kk)$ is a line bundle over $\Kk$, we first prove that each topological bundle $\det(\rd s_I)$ is a 
smooth line bundle, since it has compatible local trivializations $\det(\rd s_I)\cong\lm\ker(\rd s_I \oplus R_I)$ 
induced from constant linear injections $R_I:\R^{N}\to E_I$ 
which locally cover the cokernel, see  e.g.\ \cite[Appendix~A.2]{MS}. 
There are various natural ways to define these maps; the crucial choice is the sign in equation \eqref{Cfrak}.\footnote{
See \cite{Z3} for a discussion of the different  conventions.
Changing the sign in  \eqref{Cfrak} for example by the factor $(-1)^{n-k}$ affects the local trivializations (and hence the topology of the determinant bundle) because \eqref{Cfrak}  is applied below to the family of operators $F_x, x\in U_I,$ the dimension of whose kernels  varies with $x$.} 

At each point $x\in U_I$ we will use the contraction map $\mathfrak{C}^{\phi_x}_{F_x}$ of Lemma~\ref{lem:get} for the linear map $F_x$ and isomorphism to its kernel $\phi_x$, where
\begin{align*}
F_x  \,:\; \ker(\rd_x s_I \oplus R_I) &\; \to \; \im R_I \;\subset\; E_I , 
\qquad\qquad\qquad\qquad (v,r) \;\mapsto\; \rd_x s_I(v) ,
\\
\phi_x \,:\;\;\,  K := \ker \rd_x s_I &\;\to\; \ker( \rd_x s_I \oplus R_I)  \; \subset \; \rT_x U_I\oplus \R^N , 
\qquad
k\mapsto (k,0) .
\end{align*}
Note here that $\ker(\rd_x s_I \oplus R_I)=\bigl\{(v,r)\in \rT_x U_I\oplus \R^N \,\big|\, \rd_x s_I (v) = - R_I(r) \bigr\}$, so that $F_x$ indeed maps to $\im R_I$ with $F_x(v,r)=-R_I(r)$, and its image is $\im F_x = \im \rd_x s_I \cap \im R_I$.

If  we restrict $x$ to an open set $O\subset U_I$ on which $\rd_x s_I\oplus R_I$ is surjective, then the inclusion $\im R_I \hookrightarrow E_I$ induces an isomorphism
$$
\io_x \,: \; \qu{\im R_I}{\im \rd_x s_I \cap \im R_I} \; \overset{\cong}{\longrightarrow} \; \qu{E_I}{\im \rd_x s_I } .
$$
Indeed, $\io_x$ is surjective since $E_I = \im \rd_x s_I + \im R_I$ and injective by construction.
Hence \eqref{eq:laphi} together with dualization defines an isomorphism $\La_{\io_x}^* : \bigl( \lm \qu{E_I}{\im \rd_x s_I }\bigr)^* \to \bigl( \qu{\im R_I}{\im \rd_x s_I \cap \im R_I} \bigr)^*$, which we invert and compose with the contraction isomorphism of Lemma~\ref{lem:get} to obtain isomorphisms
\begin{align}\notag
T_{I,x} \, := \; \bigl(\id_{\lm\ker \rd_x s_I}\otimes (\La_{\io_x}^*)^{-1}\bigr) \circ {\mathfrak C}^{\phi_x}_{F_x} \;:\;\;\; & 
 \lm \ker(\rd_x s_I \oplus R_I) \otimes \bigl( \lm \im R_I \bigr)^*  \\
&\;\overset{\cong}{\longrightarrow}\;
\lm\ker \rd_x s_I \otimes \bigl(\lm \bigl(\qu{E_I}{\im \rd_x s_I }\bigr)\bigr)^*. \label{eq:TIx} 
\end{align}
Precomposing this with the isomorphism $\R\cong \lm \R^N\stackrel{\La_{R_I}}\cong \lm \im R_I$ from
\eqref{eq:laphi}, we obtain a trivialization of $\det(\rd s_I)|_O$ given by isomorphisms
\begin{align}\notag
\Hat T_{I,x} \,:\; 
\lm \ker(\rd_x s_I \oplus R_I) 
&\;\overset{\cong}{\longrightarrow}\; 
\qquad\qquad \det(\rd_x s_I)  \\ \label{eq:HatTx}
\ov v_1\wedge \ldots \wedge \ov v_n
&\;\longmapsto\; 
 (v_1\wedge\dots v_k)\otimes  \bigl( [R_I(e_1)]\wedge\dots [R_I(e_{N-n+k})] \bigr)^*,
\end{align}
where $\ov v_i=(v_i,r_i)$ is a basis of $\ker(\rd_x s_I \oplus R_I)$ such that $v_1,\ldots,v_k$ span $\ker \rd_x s_I$ (and hence $r_1=\ldots=r_k=0$), and $e_{1},\ldots, e_{N}$ is a positively ordered normalized basis of $\R^N$ (that is $e_1\wedge\ldots e_N = 1 \in \R \cong \lm\R^N$) such that $R_I(e_{N-n+i}) = \rd_x s_I(v_i)$ for $i = k+1,\ldots, n$. In particular, the last $n-k$ vectors span $\im \rd_x s_I \cap \im R_I \subset E_I$, and thus the first $N-n+k$ vectors $[R_I(e_1)],\dots, [R_I(e_{N-n+k})]$ span the cokernel $\qu{E_I}{\im\rd_x s_I}\cong\qu{\im R_I}{\im\rd_x s_I\cap \im R_I}$.

Next, we show that these trivializations do not depend on the choice of injection $R_I:\R^N\to E_I$.
Indeed, given another injection $R_I':\R^{N'}\to E_I$ that also maps onto the cokernel of $\rd s_I$,
we can choose a third injection $R_I'':\R^{N''}\to E_I$ that is surjective, and compare it to both of $R_I, R_I'$.  
Hence it suffices to consider the following two cases:

\begin{itemlist}
\item $N=N'$ and $R_I = R_I'\circ \io$ for a bijection $\io: \R^{N} \overset{\cong}{\to} \R^{N'}$;
\item $N<N'$ and $R_I=R_I'\circ\pr$ for the canonical projection $\pr: \R^{N'}\to \R^N\times\{0\} \cong\R^N$.
\end{itemlist}

In the second case denote by $\io: \R^{N}\to \R^N\times\{0\}\subset\R^{N'}$ the canonical injection, then in both cases we have $R_I = R_I'\circ \io$, and thus $\id \times \io$ induces an injection $\ker(\rd s_I\oplus R_I)\to
\ker(\rd s_I\oplus R_I')$ so that there is a well defined quotient bundle
$\qu{\ker(\rd s_I\oplus R_I')}{\ker(\rd s_I\oplus R_I)} \to U_I$.

In case $N<N'$ we claim that an appropriately scaled choice of local trivialization for this quotient over an open set $O\subset U_I$, on which both trivializations of $\det(\rd s_I)|_O$ are defined, induces a bundle isomorphism
$\Psi: \lm \ker(\rd s_I\oplus R_I)|_O\to \lm \ker(\rd s_I\oplus R_I')|_O$ 
that is compatible with the trivializations $\Hat T_I$ and $\Hat T_I': \lm \ker(\rd s_I\oplus R'_I)|_O \to \det(\rd s_I)|_O$ constructed as in \eqref{eq:HatTx}, that is $\Hat T_I = \Hat T_I' \circ \Psi$.

To define $\Psi$, let $n:=\dim \ker(\rd s_I\oplus R_I)$ and fix a trivialization of the quotient, that is a family of smooth sections $\bigl(\ov v^\Psi_{i}=(v^\Psi_{i},r^\Psi_{i})\bigr)_{i=n+1, \ldots,n'}$ of $\ker(\rd s_I\oplus R_I')|_O$ with 
$n':= n+N'-N$, that induces a basis for the quotient space at each point $x\in O$.
Here we may want to rescale $\ov v^\Psi_{n+1}$ by a nonzero real, as discussed below.
Then for fixed $x\in O$, any choice of basis $(\ov v_i)_{i=1,\ldots,n}$ of $\ker(\rd_x s_I\oplus R_I)$ induces a basis $(\id \times \io)(\ov v_1),\dots, (\id \times \io)(\ov v_n), \ov v^\Psi_{n+1}, \ldots, \ov v^\Psi_{n'}$ of $ \ker(\rd_x s_I\oplus R_I')$, and we define $\Psi$ by 
$$
\Psi_x \,: \; \ov v_1\wedge\dots\wedge \ov v_n \;\mapsto\; (\id \times \io)(\ov v_1)\wedge\dots\wedge (\id \times \io)(\ov v_n)\wedge \ov v^\Psi_{n+1}(x)\wedge \ldots \wedge \ov v^\Psi_{n'}(x) ,
$$
which varies smoothly with $x\in O$. It remains to show that, for appropriate choice of the sections $\ov v^\Psi_i$, we have $\Hat T_{I,x} = \Hat T_{I,x}' \circ \Psi_x$ for any fixed $x\in O$.
For that purpose we express the trivializations $\Hat T_{I,x}$ and $\Hat T'_{I,x}$ as in \eqref{eq:HatTx}. This construction begins by choosing a basis $(\ov v_i)_{i=1,\dots,n}$ of $\ker (\rd_x s_I \oplus R_I)$, where the first $k$ elements $\ov v_i=(v_i,0)$ span $\ker \rd_x s_I\times\{0\}$.
A compatible choice of basis $(\ov v'_i)_{i=1,\dots,n'}$ for $\ker (\rd_x s_I \oplus R'_I)$ is given by $\ov v'_i := (\id \times \io)(\ov v_i)$ for $i=1,\dots,n$, and $\ov v'_i:= \ov v^\Psi_i$ for $i=n+1,\ldots,n'$.
Note here that $\ov v'_i = \ov v_i$ for $i=1,\ldots,k$.
Next, one chooses a positively ordered normalized basis $e_{1},\ldots, e_{N}$ of $\R^N$ such that $R_I(e_{N-n+i}) = \rd_x s_I(v_i)$ for $i = k+1,\ldots, n$. 
Then the first $N-n+k$ vectors $[R_I(e_1)],\dots, [R_I(e_{N-n+k})]$ coincide with $[R'_I(\io(e_1))],\dots, [R'_I(\io(e_{N'-n'+k}))]$ and span the cokernel $\qu{E_I}{\im\rd_x s_I}$, and the last $n-k$ vectors span $\im \rd_x s_I \cap \im R_I \subset E_I$. So we obtain a corresponding basis $e'_1,\ldots, e'_{N'}$ of $\R^{N'}$ by taking $e'_i = \io(e_i)$ for $i=1,\ldots,N$ and $e'_{N+i} = (R'_I)^{-1}\bigl( \rd_x s_I(v^\Psi_{n+i}(x))$ for $i=1,\ldots, N'-N = n'-n$. 
To obtain the correct definition of $\Hat T'_{I,x}$, we then rescale $v^\Psi_{n'}$ by the reciprocal of 
\begin{align*}
\la(x) &\,:=\; \io(e_1)\wedge \ldots \wedge \io(e_N) \wedge (R'_I)^{-1}\bigl( \rd_x s_I(v^\Psi_{n+1}(x))\bigr) \wedge\ldots\wedge (R'_I)^{-1}\bigl( \rd_x s_I(v^\Psi_{n'}(x))\bigr) \\
& \;\in\; \lm \R^{N'} \;\cong \; \R ,
\end{align*}
such that $e'_1,\ldots, e'_{N'-1}, \la(x)^{-1} e'_{N'}$ becomes positively ordered and normalized.
Note here that $\la:O\to\R$ is a smooth nonvanishing function of $x$, depending only on the sections $v^\Psi_{n+1}(x), \ldots, v^\Psi_{n'}(x)$ since $\io(e_i)=(e_i,0)$ are a positively ordered normalized basis of $\R^N\times\{0\}\subset\R^{N'}$ for all $x\in O$.
Thus $v^\Psi_{n+1}(x), \ldots, \la(x)^{-1} v^\Psi_{n'}(x)$ defines a smooth trivialization of the quotient bundle $\qu{\ker(\rd s_I\oplus R_I')}{\ker(\rd s_I\oplus R_I)} \to O$, for which the induced map $\Psi$ now provides the claimed compatibility. Indeed, we have by construction
\begin{align}
\bigl( \Hat T'_{I,x}\circ\Psi_x \bigr) \bigl( \ov v_1\wedge\dots\wedge \ov v_n\bigr) &\;=\;  
(v_1\wedge\dots\wedge v_k)\otimes  \bigl([R_I'(e_1')]\wedge\dots\wedge [R_I'(e_{N'-n'+k}')] \bigr)^*  \notag \\ \notag
&\;=\; (v_1\wedge\dots\wedge v_k)\otimes 
\bigl([R_I(e_1)]\wedge\dots\wedge [R_I(e_{N-n+k})]\bigr)^* \\ \label{tpsit} 
&\;=\; \Hat T_{I,x}\bigl(\ov v_1\wedge\dots\wedge \ov v_n\bigr).
\end{align}

In case $N=N'$ we define an isomorphism $\Psi$ as above, which however does not depend on any choice of vectors $\ov v^\Psi_i$. 
Then in the above calculation of $\Hat T_{I,x}$ and $\Hat T'_{I,x}$, the factor $\la = \io(e_1) \wedge \ldots \wedge \io(e_{N})$ is constant (equal to the determinant of $\io = (R_I')^{-1}\circ R_I$), and hence $\la^{-1}\Psi$ intertwines the trivializations $\Hat T_I$ and $\Hat T_I'$. This completes the proof that the local trivializations of $\det(\rd s_I)$ do not depend on the choice of $R_I$.
In particular, $\det(\rd s_I)$ is a smooth line bundle over $U_I$ for each $I\in\Ii_\Kk$.

To complete the proof that $\det(\s_\Kk)$ is a vector bundle we must check that the 
lifts $\La_{IJ}$ given in \eqref{eq:bunIJ} of the coordinate changes $\Hat\Phi_{IJ}$ 
are smooth bundle isomorphisms.
Since the $\La_{IJ}(x)$ are constructed to be fiberwise isomorphisms, and the weak cocycle condition for the coordinate changes transfers directly to these bundle maps,
the nontrivial step is to check that $\La_{IJ}(x)$ varies smoothly with $x\in U_{IJ}$. 
For that purpose note that  
any trivialization $\Hat T_I$ near a given point $x_0\in U_{IJ}$ using a choice of $R_I$ as above, induces a trivialization $\Hat T_J$ of $\det(\rd s_J)$ near $\phi_{IJ}(x_0)\in U_J$ 
using the injection $R_J:=\Hat\phi_{IJ}\circ R_I$, since by the index condition $\Hat\phi_{IJ}$ identifies the cokernels. 
We will now show that these local trivializations transform $\La_{IJ}(x)$ into the isomorphisms $\La_{\rd_x \phi_{IJ} \oplus \id_{\R^{N}} }$ of \eqref{eq:laphi} induced by the smooth family of isomorphisms
$$
\rd_x\phi_{IJ} \oplus \id_{\R^{N}} \,:\;  \ker(\rd_x s_I \oplus R_I) \;\overset{\cong}{\longrightarrow}\; \ker\bigl(\rd_{\phi_{IJ}(x)} s_J \oplus (\Hat\phi_{IJ}\circ R_I)\bigr) .
$$
Namely, we prove the following.
\smallskip

\NI {\bf Claim:} {\it  The embeddings $\rd_x\phi_{IJ} \oplus \id_{\R^{N}}$ are surjective, and}
\begin{equation}\label{eq:etrans}
\Hat T_{J,\phi_{IJ}(x)}\circ \La_{\rd_x\phi_{IJ} \oplus \id_{\R^{N}}} \;=\; \La_{IJ}(x) \circ \Hat T_{I,x} .
\end{equation}
\MS

Notice that this will complete the proof.  Since the maps $\rd_x\phi_{IJ}$ vary smoothly with $x\in U_{IJ}$ near $x_0$, so do 
the  $\La_{\rd_x\phi_{IJ} \oplus \id_{\R^{N}} }$.   It follows that $\La_{IJ}$ is also smooth near $x_0$ 
 with respect to the 
smooth structure defined by the trivializations $\Hat T_{I,x}$ and $\Hat T_{J,\phi_{IJ}(x)}$.
\MS

\NI {\it Proof of Claim:}
The embeddings are surjective since, for $(v,z)\in \rT U_J \times \R^{N}$ with $\rd s_J(v)=-\Hat\phi_{IJ}(R_I(z))$, the tangent bundle condition $\im\rd s_J \cap \im\Hat\phi_{IJ}=\rd s_J (\im\rd\phi_{IJ})$ from Lemma~\ref{le:change}, the partial index condition $\ker\rd s_J \subset \im\rd\phi_{IJ}$, and injectivity of $\Hat\phi_{IJ}$ imply $v\in \im\rd s_I$ with $\rd s_I(v)=-R_I(z)$.
To prove \eqref{eq:etrans} we simply compare the explicit maps given in \eqref{eq:HatTx}.
So let $\ov v_i=(v_i,r_i)$ be a basis of $\ker(\rd_x s_I \oplus R_I)$ such that $v_1,\ldots,v_k$ span $\ker \rd_x s_I$.
Then, correspondingly, 
$\ov v'_i=\bigl(\rd_x\phi_{IJ} \oplus \id_{\R^{N}}\bigr)(\ov v_i)$ ia a basis of $\ker(\rd_{\phi_{IJ}(x)} s_J \oplus R_J)$ such that $v'_i=\rd_x\phi_{IJ}(v_i)$ for $i=1,\ldots,k$ span $\ker \rd_{\phi_{IJ}(x)} s_J$.
Next, let $e_{1},\ldots, e_{N}$ be a positively ordered normalized basis of $\R^N$ such that $R_I(e_{N-n+i}) = \rd_x s_I(v_i)$ for $i = k+1,\ldots, n$. Then, correspondingly, we have 
$$
R_J(e_{N-n+i}) = \Hat\phi_{IJ}\bigl(R_I(e_{N-n+i})\bigr) =   \Hat\phi_{IJ}\bigl(\rd_x s_I(v_i)\bigr)
 =  \rd_{\phi_{IJ}(x)}s_J \bigl( \rd_x\phi_{IJ}(v_i)\bigr)  =  \rd
 s_J ( v'_i) .
$$
Using these bases in \eqref{eq:HatTx} we can now verify \eqref{eq:etrans},
\begin{align*}
& \bigl(\La_{IJ}(x) \circ \Hat T_{I,x}\bigr)\bigl(\ov v_1\wedge \ldots \wedge \ov v_n\bigr) \\
&\;=\;
\La_{IJ}(x) \bigl(  (v_1\wedge\ldots \wedge v_k)\otimes  \bigl( [R_I(e_1)]\wedge\ldots\wedge [R_I(e_{N-n+k})] \bigr)^* \bigr) \\
&\;=\;
\bigl( \rd_x\phi_{IJ}(v_1)\wedge\ldots\wedge  \rd_x\phi_{IJ}(v_k)\bigr) \otimes  \bigl( [\Hat\phi_{IJ}(R_I(e_1))]\wedge\ldots \wedge [\Hat\phi_{IJ}(R_I(e_{N-n+k}))] \bigr)^* \bigr) \\
&\;=\;
( v'_1\wedge\ldots\wedge  v'_k) \otimes  \bigl( [R_J(e_1)]\wedge\ldots\wedge [R_J(e_{N-n+k})] \bigr)^* \bigr) \\
&\;=\;
\Hat T_{J,\phi_{IJ}(x)}\bigr)\bigl(\ov v'_1\wedge \ldots \wedge \ov v'_n\bigr) 
\;=\;
\bigl( \Hat T_{\phi_{IJ}(x)}\circ \bigl(\rd\phi_{IJ}(x) \oplus \id_{\R^{N}}\bigr) \bigr)\bigl(\ov v_1\wedge \ldots \wedge \ov v_n\bigr).
\end{align*}
This proves the Claim, and hence finishes the construction of $\det(\s_\Kk)$ for a weak Kuranishi atlas $\Kk$.

In the case of a weak Kuranishi cobordism $\Kk$, we moreover have to
construct bundle isomorphisms from collar restrictions to the product bundles $A^\al_\eps\times \det(\s_{\p^\al\Kk})$ to prove that $\det(\s_\Kk)$ is a line bundle in the sense of Definition~\ref{def:bundle} with the claimed restrictions.
That is, we have to construct bundle isomorphisms $\ti\io^{\La,\al}_I :A^\al_\eps\times  \det(\rd s^\al_I) \to \det(\rd s_I)|_{\im\io^\al_I}$ for $\al=0,1$, $I\in\Ii_{\p^\al\Kk}$, and $\eps>0$ less than the collar width of $\Kk$, and check the identities $\La_{IJ} \circ \ti\io^{\La,\al}_I = 
\ti\io^{\La,\al}_{J} \circ \bigl(
\id_{A^\al_\eps}\times \La^\al_{IJ}\bigr)$.

For that purpose recall that $(s_I\circ\io^\al_I )(t,x) = s^\al_I(x)$ for $(t,x)\in A^\al_\eps\times \partial^\al U_I $, so that we have a trivial identification $\id_{E_I} : \im \rd_x s^\al_I \to  \im\rd_{\io^\al_I(t,x)} s_I$ of the images and an isomorphism 
$\rd_{(t,x)}\io^\al_I : \R\times \ker \rd_x s^\al_I  \to  \ker\rd_{\io^\al_I(t,x)} s_I$.
The latter gives rise to an isomorphism given by wedging with the canonical positively oriented unit vector $1\in\R =\rT_t A^\al_\eps$,
\begin{equation} \label{wedge 1}
 \wedge_1 \,:\; \lm \ker \rd_x s^\al_I \;\to\; \lm \bigl( \R\times \ker \rd_x s^\al_I \bigr) , \qquad
\eta \;\mapsto\; 1\wedge \eta.
\end{equation}
Here and throughout we identify vectors $\eta_i\in\ker \rd s^\al_I$ with 
$(0,\eta_i)\in \R\times \ker \rd s^\al_I $ and also abbreviate $1:= (1,0)\in \R\times \ker \rd s^\al_I$.
This map now composes with the induced isomorphism $\La_{\rd_{(t,x)}\io^\al_I}$ from \eqref{eq:laphi} and can be combined with the identity on the cokernel factor to obtain fiberwise isomorphisms
\begin{equation}\label{orient map}
\ti\io^{\La,\al}_I (t,x) :=  \bigl( \La_{\rd_{(t,x)}\io^\al_I} \circ \wedge_1 \bigr) 
\otimes \bigl(\La_{\id_{E_I}}\bigr)^*
\;:\; \det(A^\al_\eps\times  \rd_x s^\al_I) \;\to\;  \det(\rd_{\io^\al_I(t,x)} s_I) .
\end{equation}
These isomorphisms vary smoothly with $(t,x)\in  A^\al_\eps\times \p^\al U_I$ since the compatible local trivializations $\lm\ker(\rd s^\al_I \oplus R_I) \to \det(\rd s^\al_I)$ and $\lm\ker(\rd s_I \oplus R_I) \to \det(\rd s_I)$ transform $\ti\io^{\La,\al}_I(t,x)$ to $\La_{\rd_{(t,x)}\io^\al_I\oplus\id_{\R^N}}\circ \wedge_1$.
Moreover, $(t,x)\to \ti\io^{\La,\al}_I(t,x)$ lifts $\io^\al_I$ and thus defines the required bundle isomorphism
$\ti\io^{\La,\al}_I :  A^\al_\eps\times \det(\rd s^\al_I) \to  \det(\rd s_I)|_{\im\io^\al_I}$ for each $I\in\Ii_{\p^\al\Kk}$.
Finally, the isomorphisms \eqref{orient map} intertwine $\La_{IJ} = \La_{\rd\phi_{IJ}} \otimes 
\bigl(\La_{[\Hat\phi_{IJ}]^{-1}}\bigr)^*$ and $\La^\al_{IJ} = \La_{\rd\phi^\al_{IJ}} \otimes \bigl(\La_{[\Hat\phi_{IJ}]^{-1}}\bigr)^*$ by the product form of the coordinate changes $\phi_{IJ}\circ\io^\al_I = \io^\al_ J\circ (\id_{A^\al_\eps}\times \phi^\al_{IJ})$, 
and because $\rd_{\io^\al_I(t,x)}\phi_{IJ}$ maps $\rd_{(t,x)} \io_I^\al (1)$ to $\rd_{(t,\phi^\al_{IJ}(x))} \io_J^\al (1)$,
both of which are wedged on by \eqref{orient map} from the left hand side.
(For an example of a detailed calculation see the end of the proof of Proposition~\ref{prop:orient1}.)
This finishes the proof.
\end{proof}

We next use the determinant bundle $\det(\s_\Kk)$ to define the notion of an orientation of a Kuranishi atlas.

\begin{defn}\label{def:orient} 
A  weak Kuranishi atlas or Kuranishi cobordism $\Kk$ is {\bf orientable} if there exists a nonvanishing section $\si$ of the bundle $\det(\s_\Kk)$ (i.e.\ with $\si_I^{-1}(0)=\emptyset$ for all $I\in\Ii_\Kk$).  An {\bf orientation} of $\Kk$ is a choice of nonvanishing section $\si$ of $\det(\s_\Kk)$. 
An {\bf oriented Kuranishi atlas or cobordism} is a pair $(\Kk,\si)$ consisting of a  Kuranishi atlas or cobordism and an orientation $\si$ of $\Kk$.

For an oriented Kuranishi cobordism $(\Kk,\si)$, the {\bf induced orientation of the boundary} $\p^\al\Kk$ for $\al=0$ resp.\ $\al=1$ is the orientation of $\p^\al\Kk$,
$$
\p^\al\si \,:=\; \Bigl( \bigl( (\ti\io^{\La,\al}_I)^{-1} \circ\si_I \circ \io^\al_I \bigr)\big|_{\{\al\} \times \partial^\al U_I } \Bigr)_{I\in\Ii_{\p^\al\Kk}}
$$
given by the isomorphism $(\ti\io^{\La,\al}_I)_{I\in\Ii_{\p^\al\Kk}}$ in \eqref{orient map} between a collar neighbourhood of the boundary in $\Kk$ and the product Kuranishi atlas $A^\al_\eps\times \p^\al\Kk$, followed by restriction to the boundary $\p^\al \Kk=\p^\al\bigl(
A^\al_\eps\times \p^\al \Kk\bigr)$, where we identify $\{\al\}\times \partial^\al U_I  \cong \partial^\al U_I$.

With that, we say that two oriented weak Kuranishi atlases $(\Kk^0,\si^0)$ and $(\Kk^1,\si^1)$ are {\bf oriented cobordant} if there exists a weak Kuranishi cobordism $\Kk$ from $\Kk^0$ to $\Kk^1$ and a section $\si$ of $\det(\s_\Kk)$ such that  $\partial^\al\si=\si^\al$ for $\al=0,1$.
\end{defn}

\begin{rmk}\label{rmk:orientb}\rm  
Here we have defined the induced orientation on the boundary $\p^\al \Kk$ of a cobordism so that it is completed to an orientation of the collar by adding the positive unit vector $1$ along $A^\al_\eps\subset \R$  rather than the more usual outward normal vector.
$\hfill\er$
\end{rmk}

\begin{lemma}\label{le:cK}
Let $(\Kk,\si)$ be an oriented weak Kuranishi atlas or cobordism. 
\begin{enumerate}\item
The orientation  $\si$ induces a canonical orientation $\si|_{\Kk'}:=(\si_I|_{U'_I})_{I\in\Ii_{\Kk'}}$ on each shrinking $\Kk'$ of $\Kk$ with domains $\bigl(U'_I\subset U_I\bigr)_{I\in\Ii_{\Kk'}}$.
\item
In the case of a Kuranishi cobordism $\Kk$, the restrictions to boundary and shrinking commute, that is
$(\si|_{\Kk'})|_{\p^\al\Kk'} = (\si|_{\p^\al\Kk})|_{\p^\al\Kk'}$.
\item 
In the case of a weak Kuranishi atlas $\Kk$, the orientation $\si$ on $\Kk$ induces an 
orientation  
$\si^{[0,1]}$ 
on $[0,1]\times \Kk$, 
which induces the given orientation $\p^\al\si^{[0,1]}=\si$ of the boundaries $\p^\al([0,1]\times \Kk) = \Kk$ for $\al=0,1$.
\end{enumerate}
\end{lemma}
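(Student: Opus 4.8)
\textbf{Proof proposal for Lemma~\ref{le:cK}.}
The plan is to verify each of the three statements essentially by unwinding definitions, using that the determinant line bundle $\det(\s_\Kk)$ is built locally out of the charts and coordinate changes, and that all constructions (shrinking, products, boundary restriction) only involve restricting or pulling back these local data. First I would observe that for statement (i), a shrinking $\Kk'$ of $\Kk$ has index set $\Ii_{\Kk'}=\Ii_\Kk$ (Definition~\ref{def:shr}) and domains $U'_I\subset U_I$ with coordinate changes obtained by restriction (Lemma~\ref{le:restrchange}). Since the fibers $\det(\rd_x s_I)$ depend only on $x$ and the bundle maps $\La_{IJ}(x)$ in \eqref{eq:bunIJ} are defined pointwise, the determinant line bundle $\det(\s_{\Kk'})$ is literally the restriction of $\det(\s_\Kk)$ to $\bigsqcup_I U'_I$ with restricted transition maps. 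Hence $\si|_{\Kk'}:=(\si_I|_{U'_I})_{I\in\Ii_{\Kk'}}$ is a collection of sections compatible with the $\La_{IJ}$ (the compatibility equations hold on the smaller domains since they held on the larger ones), and it is nonvanishing since each $\si_I$ was. This shows $\si|_{\Kk'}$ is an orientation of $\Kk'$, proving (i); canonicity is immediate since no choices are involved.

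For statement (ii), in the case of a Kuranishi cobordism one must compare two sections of $\det(\s_{\p^\al\Kk'})$: on the one hand $(\si|_{\Kk'})|_{\p^\al\Kk'}$, obtained by first restricting $\si$ to the shrinking and then applying Definition~\ref{def:orient} of the induced boundary orientation; on the other $(\si|_{\p^\al\Kk})|_{\p^\al\Kk'}$, obtained by first restricting to the boundary and then to the shrunk boundary atlas. Both are computed using the isomorphisms $\ti\io^{\La,\al}_I$ of \eqref{orient map}, whose construction is purely pointwise in $(t,x)\in A^\al_\eps\times \p^\al U_I$ and hence commutes with passing to a subset $\p^\al U'_I\subset \p^\al U_I$. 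I would therefore spell out that both expressions equal $\bigl((\ti\io^{\La,\al}_I)^{-1}\circ\si_I\circ\io^\al_I\bigr)\big|_{\{\al\}\times\p^\al U'_I}$, using only that restriction to $\p^\al U'_I$ commutes with restriction to $\{\al\}\times\p^\al U_I$ and with composition by $\ti\io^{\La,\al}_I$. This is a short diagram-chase with no real content beyond bookkeeping.

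For statement (iii), the product Kuranishi concordance $[0,1]\times\Kk$ (Example~\ref{ex:triv}) has charts $[0,1]\times\bK_I$ and product coordinate changes $\id_{[0,1]}\times\Hat\Phi_{IJ}$. By Proposition~\ref{prop:det0}, $\det(\s_{[0,1]\times\Kk})$ is a well-defined line bundle over $[0,1]\times\Kk$ with product form on the collars and restrictions $\det(\s_{\p^\al([0,1]\times\Kk)})=\det(\s_\Kk)$. I would define $\si^{[0,1]}:=\bigl(\si^{[0,1]}_I\bigr)_{I\in\Ii_\Kk}$ by pulling back $\si_I$ under the projection $[0,1]\times U_I\to U_I$, composed with the canonical fiberwise isomorphism $\det(\rd_x s_I)\to\det\bigl(\rd_{(t,x)}(s_I\circ\pr_{U_I})\bigr)$ given by wedging on the positive unit vector $1\in\R=\rT_t[0,1]$ in the kernel factor and the identity on the cokernel factor — exactly the map $\wedge_1$ of \eqref{wedge 1} combined with the appropriate $\La$'s, which is precisely $\ti\io^{\La,\al}_I(t,\cdot)$ in the collar. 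One then checks that $\si^{[0,1]}$ is compatible with the product coordinate changes $\id\times\Hat\Phi_{IJ}$ (this follows because $\si$ is compatible with the $\Hat\Phi_{IJ}$ and the $\wedge_1$ construction is natural, so it intertwines $\La_{IJ}$ with $\id_{[0,1]}\times\La^{\p}_{IJ}$), and that it is nonvanishing. Finally $\p^\al\si^{[0,1]}=\si$ holds by construction, since applying $(\ti\io^{\La,\al}_I)^{-1}$ and restricting to $\{\al\}\times U_I$ exactly undoes the $\wedge_1$ twist; this is the same computation referenced at the end of the proof of Proposition~\ref{prop:orient1}.

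The main obstacle I anticipate is purely notational rather than conceptual: keeping straight the several canonical isomorphisms $\La_F$, $\wedge_1$, the contraction maps, and the collar trivializations $\ti\io^{\La,\al}_I$, and checking that wedging on the positive unit vector $1$ genuinely commutes with all the coordinate-change bundle maps $\La_{IJ}=\La_{\rd\phi_{IJ}}\otimes(\La_{[\Hat\phi_{IJ}]^{-1}})^*$ — i.e.\ that $\rd\phi_{IJ}$ (or $\rd(\id\times\phi_{IJ})$) sends the distinguished vector $1$ in one kernel to the distinguished vector $1$ in the other. This is exactly the point verified in the last paragraph of the proof of Proposition~\ref{prop:det0}, so I would simply invoke that computation rather than redo it. No genuinely new estimate or construction is needed; everything reduces to the pointwise linear algebra already set up for the determinant line bundle together with the observation that shrinkings, products, and boundary restrictions act on $\det(\s_\Kk)$ by restriction and pullback.
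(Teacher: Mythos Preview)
Your proposal is correct and follows essentially the same approach as the paper's proof: both treat (i) and (ii) as immediate bookkeeping via pointwise restriction of $\det(\s_\Kk)$ and its sections, and for (iii) both construct $\si^{[0,1]}$ by pushing $\si$ forward under the fiberwise isomorphism $\wedge_1\otimes(\La_{\id_{E_I}})^*$ from the product bundle $[0,1]\times\det(\rd s_I)$ to $\det(\rd s'_I)$, then use that this same isomorphism defines the collar structure so that the boundary restrictions trivially return $\si$. Your invocation of the last paragraph of Proposition~\ref{prop:det0} for the compatibility of $\wedge_1$ with the transition maps is exactly what the paper does as well.
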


\begin{proof}  
By definition, $\det(\s_{\Kk'})$ is the line bundle over $\Kk'$ consisting of the bundles $\det(\rd s'_I)=\det(\rd s_I)|_{U'_I}$ and the transition maps $\La'_{IJ}=\La_{IJ}|_{U'_{IJ}}$.
The restricted sections $\si_I|_{U'_I}$ of $\det(\rd s'_I)$ are hence compatible with the transition maps $\La'_{IJ}$ and have product form near the boundary in the case of a cobordism. Since they are nonvanishing, they define an orientation of $\Kk'$.
Commutation of restrictions holds since both $(\si|_{\Kk'})|_{\p^\al\Kk'}$ and $(\si|_{\p^\al\Kk})|_{\p^\al\Kk'}$ are given by $\si_I|_{\p^\al U'_I}$ with $\p^\al U'_I = \p^\al U_I \cap U'_I$.

For  part~(iii)  we consider an oriented, additive, weak Kuranishi atlas $(\Kk,\si)$ and begin by constructing an induced orientation of the product cobordism $[0,1]\times  \Kk$. For that purpose we use the bundle isomorphisms 
$$
\ti \io_I:=\wedge_1 \otimes (\La_{\id_{E_I}})^* \,:\; [0,1]\times  \det(\rd s_I) \;\to\; \det(\rd s'_I)
$$
with $s'_I(t,x)=s_I(x)$, covering $\io_I=\id_{[0,1]\times U_I}$.
These coincide with the maps defined in \eqref{orient map} for the interval $[0,1]$ instead of $A^\al_\eps$, so the proof of Proposition~\ref{prop:det0}
 shows that they provide an isomorphism $\ti \io$ from the product bundle $[0,1]\times \det(\s_\Kk)$ to the determinant bundle of the product $\det(\s_{[0,1]\times  \Kk})$.
 Now an orientation $\si$ of $\Kk$ determines an orientation 
$\si^{[0,1]}:=\ti\io_*\si$ of the product $[0,1]\times  \Kk$
given by $(\ti\io_*\si)_I(t,x)=\ti\io_I(t,x)\bigl(\si_I(x)\bigr)$.
Further, using $\ti\io^{\La,\al}:=\ti\io$ to define the collar structure on $\det(\s_{[0,1]\times  \Kk})$, the restrictions to both boundaries are $\p^\al( \ti\io_*\si) =\si$ since $\ti\io_I^{-1}\circ (\ti\io_*\si)_I
\circ \io^\al_I|_{U_I\times\{\al\}} = \si_I$.
\end{proof}

The arguments of Proposition~\ref{prop:det0} equally apply for any reduction $\Vv$ of a
 Kuranishi atlas $\Kk$ and an admissible perturbation $\nu$ to define a line bundle $\det(\s_\Kk|_\Vv + \nu)$ over $\Vv$ 
(i.e.\ a collection of vector bundles over $V_I$ for $I\in\Ii_\Kk$ together with lifts of the coordinate changes $\phi_{IJ}|_{\phi_{IJ}^{-1}(V_J)\cap V_I}$ that satisfy the weak cocycle condition),
or, equivalently, over the Kuranishi atlas $\Kk^\Vv$.
Instead of setting up a direct comparison between the bundles $\det(\s_\Kk|_\Vv + \nu)$  for different $\nu$,
we will work with a ``more universal" determinant bundle $\det(\Kk)$ over $\Kk$.
This will allow us to obtain compatible orientations of the determinant bundles over the perturbed zero set 
$\det(\s_\Kk|_\Vv + \nu)|_{(\s_\Kk|_\Vv+\nu)^{-1}(0)}$ for different transverse perturbations $\nu$.
We will construct the bundle $\det(\Kk)$ from the determinant bundles of the zero sections in each chart. 
However, since the zero section $0_\Kk$ does not satisfy the index condition, 
we need to construct different transition maps for $\det(\Kk)$, which will now depend on the section $\s_\Kk$.
For this purpose, we
again use contraction isomorphisms from Lemma~\ref{lem:get}.  
On the one hand, this provides families of isomorphisms
\begin{equation}\label{Cds}
\mathfrak{C}_{\rd_x s_I} \,:\; \lm \rT_x U_I \otimes \bigl( \lm E_I \bigr)^* \;\overset{\cong}{\longrightarrow}\;  \det(\rd_x s_I)
\qquad\text{for} \; x\in U_I .
\end{equation}
In fact, as we will see in the proof of Proposition~\ref{prop:orient} below, these maps are essentially the special cases of $T_{I,x}$ in \eqref{eq:TIx} in which $R_I$ is surjective.
On the other hand, recall that the tangent bundle condition \eqref{tbc} implies that $\rd s_J$ restricts to an isomorphism $\qu{\rT_y U_J}{\rd_x\phi_{IJ}(\rT_x U_I)}\overset{\cong}{\to} \qu{E_J}{\Hat\phi_{IJ}(E_I)}$ for $y=\phi_{IJ}(x)$. 
Therefore, if we choose a smooth normal bundle $N_{IJ} =\bigcup_{x\in U_{IJ}} N_{IJ,x}
\subset \phi_{IJ}^* \rT U_J
$ to 
the submanifold 
$\im \phi_{IJ}
\subset U_J$, then 
the subspaces 
$\rd_y s_J(N_{IJ,x})$ 
(where we always denote $y:=\phi_{IJ}(x)$ and vary $x\in U_{IJ}$)
form a smooth family of subspaces of $E_J$ that are complements  
to
$\Hat\phi_{IJ}(E_I)$.
Hence letting 
$\pr_{N_{IJ}}(x) : E_J \to \rd_y s_J(N_{IJ,x}) \subset E_J$
be the 
smooth family of 
projections with kernel  
$\Hat\phi_{IJ}(E_I)$,
we obtain 
a smooth family of 
linear maps
$$
F_x \,:= \;
\pr_{N_{IJ}}(x) 
\circ \rd_y s_J \,:\; \rT_y U_J \;\longrightarrow\; 
E_J
\qquad\text{for}\; x\in U_{IJ}, y=\phi_{IJ}(x)
$$
with images $\im F_x=\rd_y s_J(N_{IJ,x})$,
and isomorphisms to their kernel
$$
\phi_x  \,:= \;  \rd_x\phi_{IJ} \,:\; \rT_x U_I \;\overset{\cong}{\longrightarrow}\;  \ker F_x =  \rd_x\phi_{IJ}(\rT_x U_I) \;\subset\; \rT_y U_J .
$$
By Lemma~\ref{lem:get} these induce isomorphisms 
$$
\mathfrak{C}^{\phi_x}_{F_x} \,:\; \lm \rT_{\phi_{IJ}(x)} U_J \otimes \bigl(\lm E_J \bigr)^* 
 \;\overset{\cong}{\longrightarrow}\;  \lm \rT_x U_I \otimes  \Bigl(\lm \Bigl(\qq{E_J}
{\im F_x}
 \Bigr) \Bigr)^* .
$$
We may combine this with the dual of the  isomorphism $\lm \bigl(\qu{E_J}
{\rd_y s_J(N_{IJ,x})}
\bigr) \cong \lm E_I$ induced 
via \eqref{eq:laphi} by $\pr^\perp_{
N_{IJ}}(x) \circ\Hat\phi_{IJ} : E_I \to \qu{E_J}
{\rd_y s_J(N_{IJ,x})}
$ to obtain isomorphisms
\begin{align} \label{CIJ}
\mathfrak{C}_{IJ}(x) \,: \;
 \lm \rT_{\phi_{IJ}(x)} U_J \otimes \bigl(\lm E_J \bigr)^*  
\;\overset{\cong}{\longrightarrow}\;  \lm \rT_x U_I \otimes \bigl(\lm E_I \bigr)^*  
\end{align}
for $x\in U_{IJ}$, given by $\mathfrak{C}_{IJ}(x) :=  \bigl( \id_{ \lm \rT_x U_I} \otimes 
\La_{(\pr^\perp_{
N_{IJ}}(x)\circ\Hat\phi_{IJ})^{-1}}^*  \bigr) \circ \mathfrak{C}^{\phi_x}_{F_x}$.

\begin{prop}\label{prop:orient} 
\begin{enumerate}
\item 
Let $\Kk$ be a weak Kuranishi atlas. 
Then there is a well defined line bundle $\det(\Kk)$ over $\Kk$ given by the line bundles $\La^\Kk_I := \lm \rT U_I\otimes \bigl(\lm E_I\bigr)^* \to U_I$ for $I\in\Ii_\Kk$ and the transition maps $\mathfrak{C}_{IJ}^{-1}: \La^\Kk_I |_{U_{IJ}} \to \La^\Kk_J |_{\im\phi_{IJ}}$ from \eqref{CIJ} for $I\subsetneq J$. 
In particular, the latter isomorphisms are independent of the choice of 
normal bundle $N_{IJ}$.

Furthermore, the contractions $\mathfrak{C}_{\rd s_I}: \La^\Kk_I \to \det(\rd s_I)$ from \eqref{Cds} define an isomorphism $\Psi^{\s_\Kk}:=\bigl(\mathfrak{C}_{\rd s_I}\bigr)_{I\in\Ii_\Kk}$ from $\det(\Kk)$ to $\det(\s_\Kk)$.
\item 
If $\Kk$ is a weak Kuranishi cobordism, then the determinant bundle $\det(\Kk)$ defined as in (i)  
can be given a product structure on the collar 
such that its 
boundary restrictions are
$\det(\Kk)|{_{\p^\al\Kk}} = \det(\p^\al\Kk)$ for $\al= 0,1$. 

Further, the isomorphism $\Psi^{\s_\Kk}: \det(\Kk) \to \det(\s_\Kk)$ defined as in (i) has product structure on the collar with restrictions $\Psi^{\s_\Kk}|_{\p^\al \Kk}=\Psi^{s_{\p^\al\Kk}}$ for $\al=0,1$.
\end{enumerate}
\end{prop}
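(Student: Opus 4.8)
\textbf{Proof proposal for Proposition~\ref{prop:orient}.}

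The plan is to verify three things for part (i): that each $\mathfrak C_{IJ}(x)$ is independent of the chosen normal bundle $N_{IJ}$, that the maps $\mathfrak C_{IJ}^{-1}$ vary smoothly in $x$ and satisfy the weak cocycle condition, and that the contractions $\mathfrak C_{\rd s_I}$ intertwine the transition maps of $\det(\Kk)$ and $\det(\s_\Kk)$; then part (ii) will follow by checking that all these constructions have product form on the collar. First I would establish \emph{independence of $N_{IJ}$}. Given two normal bundles $N_{IJ}, N'_{IJ}$ to $\im\phi_{IJ}\subset U_J$, the associated projections $\pr_{N_{IJ}}(x), \pr_{N'_{IJ}}(x)$ onto the respective complements of $\Hat\phi_{IJ}(E_I)$ differ by a map supported in $\Hat\phi_{IJ}(E_I)$; correspondingly the subspaces $\rd_y s_J(N_{IJ,x})$ and $\rd_y s_J(N'_{IJ,x})$ are two complements to $\Hat\phi_{IJ}(E_I)$ in $E_J$, and the identity on $E_J$ descends to the canonical isomorphism $\qu{E_J}{\rd_y s_J(N_{IJ,x})}\cong\qu{E_J}{\rd_y s_J(N'_{IJ,x})}$ through which the two versions of $\mathfrak C^{\phi_x}_{F_x}$ and of $\La_{(\pr^\perp\circ\Hat\phi_{IJ})^{-1}}$ are intertwined. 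The cleanest way to see this is to choose coordinated bases as in Lemma~\ref{lem:get}: pick a basis of $\rT_x U_I$, push it forward by $\rd_x\phi_{IJ}$, complete it by lifts of a basis of $\qu{\rT_y U_J}{\im\rd_x\phi_{IJ}}$, and a basis of $E_J$ whose last vectors are the $\rd_y s_J$-images of those lifts; the formula in Lemma~\ref{lem:get} shows the output of $\mathfrak C_{IJ}(x)$ depends only on the class of these last vectors modulo $\Hat\phi_{IJ}(E_I)$, which is independent of $N_{IJ}$. I would phrase this as a short lemma so it can be reused.

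Next, \emph{smoothness} of $x\mapsto \mathfrak C_{IJ}(x)$: all ingredients -- $\rd_x\phi_{IJ}$, $\rd_{\phi_{IJ}(x)} s_J$, the projection $\pr_{N_{IJ}}(x)$ associated to a smooth normal bundle, and the induced maps on exterior powers via \eqref{eq:laphi} -- depend smoothly on $x$, and Lemma~\ref{lem:get}'s contraction is a smooth operation on the (locally constant-rank) families involved, exactly as in the smoothness argument for $\La_{IJ}$ in Proposition~\ref{prop:det0}. For the \emph{weak cocycle condition} $\mathfrak C_{IK}^{-1}=\mathfrak C_{JK}^{-1}\circ\mathfrak C_{IJ}^{-1}$ on $\phi_{IJ}^{-1}(U_{JK})\cap U_{IK}$, I would instead prove the equivalent statement after applying the isomorphism $\Psi^{\s_\Kk}$ -- i.e.\ show that the $\mathfrak C_{\rd s_I}$ intertwine $\mathfrak C_{IJ}^{-1}$ with the index-condition transition map $\La_{IJ}$ of $\det(\s_\Kk)$ from \eqref{eq:bunIJ}. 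This is the key identity
\begin{equation}\label{eq:keyorient}
\La_{IJ}(x)\circ \mathfrak C_{\rd_x s_I} \;=\; \mathfrak C_{\rd_{\phi_{IJ}(x)} s_J}\circ \mathfrak C_{IJ}(x)^{-1}
\qquad \forall\, x\in U_{IJ}.
\end{equation}
Once \eqref{eq:keyorient} holds fiberwise, the cocycle condition for $\mathfrak C_{IJ}^{-1}$ follows from the cocycle condition for $\La_{IJ}$ (which is part of Definition~\ref{def:det}, already established in Proposition~\ref{prop:det0}) together with $\mathfrak C_{\rd s_I}$ being a fixed fiberwise isomorphism, and simultaneously this shows $\Psi^{\s_\Kk}$ is a well defined bundle isomorphism $\det(\Kk)\to\det(\s_\Kk)$. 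To prove \eqref{eq:keyorient} I would again pick the coordinated bases described above and simply compute both sides against a basis element $(v_1\wedge\dots\wedge v_m)\otimes(e_1\wedge\dots\wedge e_q)^*$ of $\lm\rT_x U_I\otimes(\lm E_I)^*$, using the explicit formulas for $\La_{\rd\phi_{IJ}}$, for $\La_{[\Hat\phi_{IJ}]^{-1}}^*$, for the splitting/contraction/duality isomorphisms, and for Lemma~\ref{lem:get}'s map; the point is that with the coordinated choice (last $E_J$-basis vectors $=\rd_y s_J$ of the pushed normal directions, which are exactly the $\rd_y s_J(N_{IJ,x})$ factor) all the reordering signs on the two sides match. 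This bookkeeping is the main technical obstacle: there are four or five layers of natural isomorphisms with nontrivial ordering conventions, and the whole point of this section (cf.\ the remarks about sign incompatibilities in \cite{MS} and \cite{Z3}) is that signs must be tracked with care. I would carry it out by first recording a self-contained sign lemma for $\mathfrak C_{\rd s_I}$ -- namely that $\mathfrak C_{\rd_x s_I}$ equals the trivialization $\Hat T_{I,x}$ of \eqref{eq:HatTx} in the special case $R_I=\id_{E_I}$ (surjective), which is immediate from unwinding Lemma~\ref{lem:get}'s definition -- so that \eqref{eq:keyorient} becomes a consequence of the already-proven compatibility \eqref{eq:etrans} of $\Hat T$ with $\La_{IJ}$, applied after identifying $\mathfrak C_{IJ}(x)^{-1}$ with the change of $R$-data $R_J=\Hat\phi_{IJ}\circ R_I$ modulo the normal directions. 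That reduction, if it goes through cleanly, avoids redoing the sign computation from scratch.

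Finally, for part (ii): $\det(\Kk)$ over a Kuranishi cobordism is built from the same fiberwise data, so I need collar isomorphisms $A^\al_\eps\times\La^{\p^\al\Kk}_I\to\La^\Kk_I|_{\im\io^\al_I}$ restricting to identifications $\det(\Kk)|_{\p^\al\Kk}=\det(\p^\al\Kk)$, and compatibility with the transition maps $\mathfrak C_{IJ}^{-1}$. Since $s_I\circ\io^\al_I(t,x)=s^\al_I(x)$ and $\rd_{(t,x)}\io^\al_I:\R\times\rT_x\p^\al U_I\overset{\cong}{\to}\rT_{\io^\al_I(t,x)} U_I$, the natural choice is $\bigl(\La_{\rd_{(t,x)}\io^\al_I}\circ\wedge_1\bigr)\otimes(\La_{\id_{E_I}})^*$ exactly as in \eqref{orient map} and in Lemma~\ref{le:cK}(iii); I would verify it intertwines $\mathfrak C^{-1}_{IJ}$ and $(\mathfrak C^{\al}_{IJ})^{-1}$ by the product form $\phi_{IJ}\circ\io^\al_I=\io^\al_J\circ(\id_{A^\al_\eps}\times\phi^\al_{IJ})$ of the coordinate changes (so a normal bundle $N^\al_{IJ}$ on the boundary pulls back to a normal bundle along the collar, and all the projections/contractions are pulled back), and that under these collar identifications $\Psi^{\s_\Kk}$ goes to $\Psi^{s_{\p^\al\Kk}}$ because the same wedging-with-$1$ identification already appears in the collar structure of $\det(\s_\Kk)$ constructed in Proposition~\ref{prop:det0}. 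The one computation worth spelling out here is the commuting-square check relating $\wedge_1$, $\rd\io^\al_I$, and $\mathfrak C_{IJ}$, which is a shorter version of the calculation indicated at the end of the proof of Proposition~\ref{prop:det0}; everything else is formal.
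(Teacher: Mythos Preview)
Your proposal is correct and hits the same key identity the paper proves---your \eqref{eq:keyorient} is exactly the paper's diagram \eqref{cclaim}---and your plan to reduce it to \eqref{eq:etrans} via the observation that $\mathfrak C_{\rd_x s_I}$ coincides (up to $\La_{G_x}$) with the trivialization $T_{I,x}$ for surjective $R_I$ is precisely the paper's diagram \eqref{ccord}. The one organizational difference worth noting: the paper reverses your order by first \emph{defining} preliminary transition maps $\Ti\phi_{IJ}:=\mathfrak C_{\rd s_J}^{-1}\circ\La_{IJ}\circ\mathfrak C_{\rd s_I}$, which makes the cocycle condition and the isomorphism $\Psi^{\s_\Kk}$ automatic, and only then proves $\Ti\phi_{IJ}=\mathfrak C_{IJ}^{-1}$; this buys independence of $N_{IJ}$ for free as a corollary, so your separate direct argument for it becomes unnecessary. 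Your sketch of the reduction (``identifying $\mathfrak C_{IJ}(x)^{-1}$ with the change of $R$-data $R_J=\Hat\phi_{IJ}\circ R_I$ modulo the normal directions'') is where the actual work lies---in the paper this is the expanded diagram following \eqref{cclaim}, involving an explicit extension $R_J$ of $R'_J=\Hat\phi_{IJ}\circ R_I$ by the normal vectors $\rd_y s_J(v^\Psi_i)$ and the isomorphism $\Psi_y$ between the resulting trivializations---so be prepared to spell that out carefully rather than treat it as formal.
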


\begin{proof}
To begin, note that each $\La^\Kk_I = \lm \rT U_I \otimes \bigl(\lm E_I\bigr)^*$ is a smooth line bundle over $U_I$, since it inherits local trivializations 
from the tangent bundle $\rT U_I\to U_I$.

Next, we will show that the isomorphisms $\mathfrak{C}_{\rd_x s_I}$ from \eqref{Cds} are smooth in this trivialization, where $\det(\rd s_I)$ is trivialized via the maps $\Hat T_{I,x}$ as in Proposition~\ref{prop:det0} using an isomorphism $R_I: \R^{N}\to E_I$. For that purpose we introduce the isomorphisms
$$
G_x: \rT_x U_I\to \ker(\rd_x s_I \oplus  R_I),\qquad   v\mapsto \bigl(v, -R_I^{-1}(\rd_x s_I(v)\bigr),
$$  
and claim that the associated maps on determinant lines fit
into a commutative diagram
with $\mathfrak{C}_{\rd_x s_I}$ and the version of the trivialization $T_{I,x}$ from \eqref{eq:TIx}
\begin{equation}\label{ccord}
\xymatrix{  
\lm  \rT_x U_I \otimes \bigl( \lm E_I \bigr)^*  
\ar@{->}[d]_{\La_{G_x}\otimes \id}
 \ar@{->}[r]^{ \qquad\quad\mathfrak{C}_{\rd_x s_I}}  
 &
\det(\rd_x s_I)  \ar@{->}[d]^{\id} \\
\lm  \ker (\rd_x s_I \oplus R_I) \otimes \bigl(\lm E_I \bigr)^* \ar@{->}[r]^{ \qquad\qquad\quad\;\; T_{I,x}} 
& \det (\rd_x s_I).
}
\end{equation}
Here the trivialization $\Hat T_{I,x}$ of $\det(\rd_x s_I)$ is given by precomposing $T_{I,x}$ with the $x$-independent isomorphism $\La_{R_I^{-1}}^* : \bigl( \lm \R^N \bigr)^* \to \bigl(\lm E_I\bigr)^*$ in the second factor.
Since $G_x$ varies smoothly with $x$ in any trivialization of $\rT U$, this will prove that $\mathfrak C_{\rd s_I}$ is smooth with respect to the given trivializations.

To prove \eqref{ccord} we use the explicit formulas from Lemma~\ref{lem:get} and \eqref{eq:HatTx} at a fixed $x\in U_I$. 
So let $v_1,\ldots,v_n$ be a basis for $\rT_x U_I$ with ${\rm span}(v_1,\ldots,v_k)=\ker \rd_x s_I$, and let $w_1,\dots, w_N$ be a basis for $E_I$ with $w_{N-n+i}=\rd_x s_I(v_i)$ for $i=k+1,\ldots,n$.
Then $\ov v_i:=G_x(v_i)=\bigl(v_i,-R_I^{-1}(\rd_x s_I(v_i)\bigr) $ is a corresponding basis of $\ker(\rd_x s_I \oplus R_I)$. 
In this setting we can verify \eqref{ccord},
\begin{align*}
\mathfrak{C}_{\rd_x s_I} \bigl( (v_1\wedge\dots v_n)\otimes(w_1\wedge\dots w_N)^* \bigr)
&\;=\;
\bigl(v_1\wedge\dots v_k\bigr)\otimes \bigl( [w_1]\wedge\dots [w_{N-n+k}] \bigr)^* \\
&\;=\; T_{I,x}\bigl( (\ov v_1\wedge \ldots  \ov v_n)\otimes(w_1\wedge\dots w_N)^* \bigr) \\
&\;=\; T_{I,x}\bigl( \La_{G_x}(v_1\wedge \ldots  v_n)\otimes(w_1\wedge\dots w_N)^* \bigr).
\end{align*}
This proves the smoothness of the isomorphisms $\mathfrak C_{\rd s_I}$
so that we can
define preliminary transition maps 
\begin{equation}\label{tiphi}
\Ti\phi_{IJ}:= \mathfrak{C}_{\rd s_J}^{-1} \circ \La_{IJ} \circ \mathfrak{C}_{\rd s_I}
\,:\; \La^\Kk_I|_{U_{IJ}}\to \La^\Kk_J \qquad\text{for}\; I\subsetneq J \in \Ii_\Kk
\end{equation}
by the transition maps \eqref{eq:bunIJ} of $\det(\s_\Kk)$ and the isomorphisms \eqref{Cds}.
These define a line bundle $\La^\Kk:=\bigl(\La^\Kk_I, \Ti\phi^\La_{IJ} \bigr)_{I,J\in\Ii_\Kk}$ since the weak cocycle condition follows directly from that for the $\La_{IJ}$. Moreover, this automatically makes the family of bundle isomorphisms $\Psi^\Kk:=\bigl(\mathfrak{C}_{\rd s_I}\bigr)_{I\in\Ii_\Kk}$ an isomorphism from $\La^\Kk$ to $\det(\s_\Kk)$.

It remains to show that $\La^\Kk=\det(\Kk)$ and $\Psi^\Kk=\Psi^{\s_\Kk}$, i.e.\ we claim equality of transition maps $\Ti\phi^\La_{IJ}=\mathfrak{C}_{IJ}^{-1}$. This also shows that $\mathfrak{C}_{IJ}^{-1}$ and thus $\det(\Kk)$ is independent of the choice of 
normal bundle $N_{IJ}$ in \eqref{CIJ}.
So to finish the proof of (i), it suffices to establish the following commuting diagram at a fixed $x\in U_{IJ}$ with $y=\phi_{IJ}(x)$,
\begin{equation}\label{cclaim}
\xymatrix{
\lm  \rT_x U_I \otimes \bigl( \lm E_I \bigr)^* 
 \ar@{->}[r]^{ \qquad\quad\mathfrak{C}_{\rd_x s_I}}  
 &
\det(\rd_x s_I)  \ar@{->}[d]^{\La_{IJ}
(x)} \\
 \lm \rT_y U_J \otimes \bigl( \lm E_J \bigr)^*
 \ar@{->}[r]^{\qquad\quad\mathfrak{C}_{\rd_y s_J}}  
 \ar@{->}[u]^{\mathfrak{C}_{IJ}
 (x)} 
&
\det(\rd_y s_J) .
}
\end{equation}
Using \eqref{ccord}, for surjective maps $R_I:\R^N\to E_I$ and $R_J:\R^{N'}\to E_J$, and the compatibility of the trivialization $\Hat T_{J,y}$ with $R'_J:=\Hat T'_{J,y}$ arising from $\Hat\phi_{IJ}:\R^N\to E_J$, we can expand this diagram to 
 \[
 \xymatrix{
\lm  \rT_x U_I \otimes \bigl( \lm E_I \bigr)^* 
 \ar@{->}[r]^{\La_{G_x}\otimes \id \qquad}   
 & \;\; \lm  \ker (\rd_x s_I\oplus R_I) \otimes  \bigl( \lm E_I \bigr)^*  \ar@{->}[r]^{ \qquad\qquad\qquad T_{I,x} } 
 \ar@{->}[d]_{ \La_{\rd_x \phi_{IJ} \oplus \id_{\R^N}} \otimes ( \La_{R'_J\,\!^{-1}}^* \circ \La_{R_I}^*) }&
\det(\rd_x s_I)  \ar@{->}[d]^{\La_{IJ}
(x)} \\
 & \;\; \lm  \ker (\rd_y s_J\oplus R'_J) \otimes  \bigl( \lm \im R'_J \bigr)^*  \ar@{->}[d]_{\Psi_y\otimes  ( \La_{R_J^{-1}}^* \circ \La_{R'_J}^*) } \ar@{->}[r]^{ \qquad\qquad\qquad T'_{J,y}} 
&
\det(\rd_y s_J) \\
 \lm \rT_y U_J \otimes \bigl( \lm E_J \bigr)^* \ar@{->}[uu]^{\mathfrak{C}_{IJ}(x)} 
 \ar@{->}[r]^{\La_{G_y}\otimes \id \qquad }  
 & \;\; \lm  \ker (\rd_y s_J\oplus R_J) \otimes  \bigl( \lm E_J \bigr)^*  \ar@{->}[r]^{ \qquad\qquad\qquad T_{J,y}}
&
\det(\rd_y s_J)   \ar@{->}[u]_{\id} .
}
\]
Here the upper right square commutes by \eqref{eq:etrans}. To make the lower right square precise, and in particular to choose suitable $R_J$, we note that $E_J=\rd_y s_J + \im R'_J$ 
and $\rd_y s_J (\rT_y\im\phi_{IJ}) \subset \Hat\phi_{IJ}(E_I) =  \im R'_J$, so that given any normalized basis $e_1,\ldots,e_N\in \R^N$ we can complete the corresponding vectors $R'_J(e_i)$ to a basis for
$E_J$ by adding the vectors $\rd_y s_J(v^\Psi_{N+1}), \ldots, \rd_y s_J(v^\Psi_{N'})$,
where
$v^\Psi_{N+1}, \ldots, v^\Psi_{N'}\in N_{IJ,x}$ is a basis of the normal space $N_{IJ,x}\subset \rT_y U_J$ to $\rT_y\im\phi_{IJ}$ that was used to define $\mathfrak{C}_{IJ}(x)$.
Thus $R_J'$ extends  to a smooth family of bijections
\begin{align*}
R_J: = R_{J,x} 
 \,:\quad \R^N\times \R^{N'-N}  &\; \overset{\cong}{\longrightarrow}\;  \Hat\phi_{IJ}(E_I) \oplus 
 \rd s_y(N_{IJ,x})
 \;=\; E_J , \\
 (\ul r ; r_{N+1},\ldots,r_{N'}) &\;\longmapsto \;  \Hat\phi_{IJ}\bigl( R_I(\ul r) \bigr) + {\textstyle \sum_{i=N+1}^{N'}} r_i \cdot \rd_y s_J(v^\Psi_i) .
\end{align*}
We may choose the vectors $v^\Psi_{i}$ so that 
the $e_{i}:= R_J^{-1}\bigl(\rd_y s_J(v^\Psi_{i})\bigr)$ for $i=N+1,\ldots N'$ extend $e_1,\ldots e_N \in \R^N\cong \R^N \times\{0\}$ to a normalized basis of $\R^{N'}$.  Further, the vectors
 $\ov v^\Psi_{i}:= \bigl( v^\Psi_i , - e_i \bigr)$ span the complement of the embedding 
 $$
 \io: \ker (\rd_y s_J\oplus R'_J) \hookrightarrow  \ker (\rd_y s_J\oplus R_J),\qquad (v,\ul r) \mapsto (v,\ul r,0).
 $$ 
Hence \eqref{tpsit} (with $\la(x)=1$) shows that the isomorphism 
\begin{align*}
\Psi_y \,:\;  \lm \ker (\rd_y s_J\oplus R'_J) &\; \overset{\cong}{\longrightarrow}\;  \lm \ker (\rd_y s_J\oplus R_J) ,\\ 
\ov v_1 \wedge\ldots \wedge \ov v_n  &\;\longmapsto \;  
 \io(\ov v_1)\wedge\ldots \io(\ov v_n) \wedge \ov v^\Psi_{N+1} \wedge \ldots \ov v^\Psi_{N'}
\end{align*}
intertwines the trivializations $T'_{J,y}$ and $T_{J,y}$, that is the lower right square in the above diagram commutes. 

Now to prove that the entire diagram commutes it remains to identify $\mathfrak{C}_{IJ}(x)$ with the map given by composition of the other isomorphisms,  which is
 the tensor product of $\La_{R_I^{-1}}^*\circ \La_{R_J}^*$ (composed via $\lm \R^N\cong \R\cong\lm \R^{N'}$) on the obstruction spaces with the inverse of  
\begin{align*}
\lm \rT_x U_I &\;\overset{\cong}{\longrightarrow}\;  \lm \rT_y U_J   \\
v_1 \wedge\ldots \wedge v_n 
  &\;\longmapsto \;  
 \rd_x\phi_{IJ}(v_1) \wedge\ldots \rd_x\phi_{IJ} (v_{n}) \wedge v^\Psi_{N+1} \wedge \ldots v^\Psi_{N'} .
\end{align*}
Here we used the fact that $\io \circ \bigl(\rd_x\phi_{IJ}\oplus\id_{\R^N}\bigr) \circ G_x = G_y \circ \rd_x\phi_{IJ}$ and $\ov v^\Psi_{i}= G_y( v^\Psi_i )$.
Note moreover that we chose the vectors $v^\Psi_{i}\in \rT_y U_J$ to span the complement of $\im\rd_x\phi_{IJ}$, and hence $\rd_x\phi_{IJ}(v_1),\ldots,\rd_x\phi_{IJ} (v_{n}), v^\Psi_{N+1},\ldots ,v^\Psi_{N'}$ forms a basis of $ \rT_y U_J$. 
Moreover, note that $\bigl(w_i=R_J(e_i)\bigr)_{1\le i\le N'}$ 
is a basis for $E_J$ whose last $N'-N$ vectors are $w_{i}=\rd_y s_J(v^\Psi_i)\in 
\rd_y s_J(N_{IJ,x})$
for $i=N+1,\ldots,N'$. In these bases the explicit formulas \eqref{CIJ} and \eqref{Cfrak} give
\begin{align*}
\mathfrak{C}_{IJ}(x) \;:\; &  \bigl( \rd_x\phi_{IJ}(v_1) \wedge\ldots \rd_x\phi_{IJ} (v_{n}) \wedge v^\Psi_{N+1} \wedge \ldots v^\Psi_{N'}\bigr) \otimes \bigl(R_J(e_1)\wedge\dots R_J(e_{N'})\bigr)^*  \\
&\;\mapsto\;
(v_1\wedge\dots v_n)\otimes  \La_{(\pr^\perp_{
N_{IJ}}\circ\Hat\phi_{IJ})^{-1}}^*  \bigl( [\Hat\phi_{IJ}(R_I(e_1))]\wedge\dots [\Hat\phi_{IJ}(R_I(e_N))] \bigr)^* \\
&\;=\;
(v_1\wedge\dots v_n)\otimes  \bigl( R_I(e_1) \wedge\dots R_I(e_N) \bigr)^* .
\end{align*}
Here in the second factor we have $\La_{R_I^{-1}}\bigl( R_I(e_1) \wedge\dots R_I(e_N) \bigr) =1\in \lm\R^N$
and $\La_{R_J^{-1}}\bigl( R_J(e_1) \wedge\dots R_J(e_{N'}) \bigr)= 1 \in \lm\R^{N'}$, so this proves that  \eqref{cclaim} commutes.

For part (ii) the same arguments apply to define a bundle $\det(\Kk)$ and isomorphism $\Psi^{\s_\Kk}$, for which it remains to establish the product structure on a collar. However, we may use the isomorphisms $\mathfrak{C}_{\rd s_I}:\La^\Kk_I \to \det(\rd s_I)$ and $\mathfrak{C}_{\rd s_I|_{\p^\al U_I}}:\La^{\p^\al \Kk}_I \to \det(\rd s_I|_{\p^\al U_I})$ to pull back the isomorphisms 
$\ti\io^{\La,\al}_I : \det(\rd s_I)|_{\im\io^\al_I} \to  A^\al_\eps\times \det(\rd s_I|_{\p^\al U_I}) $ from Proposition~\ref{prop:det0} to isomorphisms
$$
\bigl( \id_{A^\al_\eps}\times  \mathfrak{C}_{\rd s_I|_{\p^\al U_I}}  \bigr) ^{-1}  \circ \ti\io^{\La,\al}_I  \circ \mathfrak{C}_{\rd s_I} \,:\;
\La^\Kk_I |_{\im\io^\al_I} \;\overset{\cong}{\longrightarrow}\; A^\al_\eps \times \La^{\p^\al\Kk}_I .
$$
This provides the product structure for $\det(\Kk)$. 
Moreover this construction was made such that $\Psi^{\s_\Kk}$ has product form in the same collar, and the restrictions are, as claimed, given by pullback of the restrictions of $\det(\s_\Kk)$. This completes the proof.
\end{proof}

\begin{prop}\label{prop:orient1} 
\begin{enumerate}
\item
Let $(\Kk,\si)$ be an oriented, tame Kuranishi atlas with reduction $\Vv$, and  let $\nu$ be an admissible, precompact, transverse perturbation of~$\s_\Kk|_\Vv$.  Then the zero set $|\bZ^\nu|$ 
inherits the structure of an oriented closed manifold.
\item
Let $(\Kk,\si)$ be an oriented, tame Kuranishi cobordism with reduction $\Vv$ and admissible, precompact, transverse perturbation $\nu$. Then the corresponding zero set $|\bZ^{\nu}|$ 
inherits the structure of
an oriented cobordism from $|\bZ^{\nu^0}|$ to $|\bZ^{\nu^1}|$ for $\nu^\al:=\nu|_{\partial^\al \Vv}$, with boundary orientations induced as in (i) by  $\si^\al:=\partial^\al \si$.
\end{enumerate}
\end{prop}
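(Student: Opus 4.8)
The strategy is to build the orientation on the perturbed zero set $|\bZ^\nu|$ by transferring the section $\si$ of $\det(\s_\Kk)$ along the canonical isomorphisms, and to check that the transition maps in the resulting determinant bundle over the zero set reduce to the genuine orientation-transfer maps of a manifold. Concretely, I would proceed in the following steps.

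\emph{Step 1: A determinant bundle over the zero set.} Restrict all the earlier constructions to the reduction $\Vv$ and the perturbed section $\s_\Kk|_\Vv+\nu$. As noted in the text just before Proposition~\ref{prop:orient}, the same argument as in Proposition~\ref{prop:det0} gives a line bundle $\det(\s_\Kk|_\Vv+\nu)$ over $\Vv$ whose fiber over $x\in V_I$ is $\det\bigl(\rd_x(s_I+\nu_I)\bigr)$, with transition maps $\La_{IJ}^\nu(x)=\La_{\rd_x\phi_{IJ}}\otimes(\La_{[\Hat\phi_{IJ}]^{-1}})^*$ (here using that, by admissibility and Lemma~\ref{le:transv}, the index condition persists for $s_I+\nu_I$ at points of the enlarged core, and more generally $\Hat\phi_{IJ}$ still identifies the relevant cokernels by \eqref{eq:dnutrans}). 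Restricting to the submanifolds $(s_I|_{V_I}+\nu_I)^{-1}(0)$, which by Lemma~\ref{le:stransv} and Lemma~\ref{le:czeroS0} glue (via local diffeomorphisms) to the closed manifold resp.\ compact manifold-with-boundary $|\bZ^\nu|$ of Proposition~\ref{prop:zeroS0} resp.\ Lemma~\ref{le:czeroS0}, this line bundle becomes $\det(\rd(s_I+\nu_I))|_{\text{zero set}}$. But on the zero set $(s_I+\nu_I)^{-1}(0)$ transversality gives $\coker\rd(s_I+\nu_I)=0$, so $\det\bigl(\rd_x(s_I+\nu_I)\bigr)=\lm\ker\rd_x(s_I+\nu_I)=\lm\rT_x\bigl((s_I+\nu_I)^{-1}(0)\bigr)$ canonically. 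Thus the restricted bundle is canonically identified with the determinant (orientation) line bundle of the manifold $|\bZ^\nu|$, and its transition maps $\La_{IJ}^\nu$ become precisely $\La_{\rd\phi_{IJ}}$, the maps a manifold atlas uses to compare orientations. Hence a nonvanishing section of $\det(\s_\Kk|_\Vv+\nu)$ restricted to the zero set \emph{is} an orientation of $|\bZ^\nu|$.

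\emph{Step 2: Produce the section.} It remains to obtain a nonvanishing section of $\det(\s_\Kk|_\Vv+\nu)$ from the orientation $\si$ of $\det(\s_\Kk)$. The natural route is via $\det(\Kk)$: by Proposition~\ref{prop:orient}, $\det(\Kk)$ is a line bundle over $\Kk$ (with fibers $\lm\rT U_I\otimes(\lm E_I)^*$ independent of the section), and $\Psi^{\s_\Kk}=(\mathfrak{C}_{\rd s_I})$ is an isomorphism $\det(\Kk)\xrightarrow{\cong}\det(\s_\Kk)$. The same contraction construction with $\rd s_I$ replaced by $\rd(s_I+\nu_I)$ gives an isomorphism $\Psi^{\s_\Kk+\nu}\colon\det(\Kk)|_\Vv\xrightarrow{\cong}\det(\s_\Kk|_\Vv+\nu)$ — here I must check that Proposition~\ref{prop:orient}'s proof goes through verbatim with $s_I+\nu_I$ in place of $s_I$, which it does because admissibility of $\nu$ ensures that the tangent bundle condition \eqref{tbc} for $s_J+\nu_J$ holds at core points via \eqref{eq:dnutrans}, and the maps $\mathfrak{C}_{IJ}$ only use that structure. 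Then define $\si^\nu:=\Psi^{\s_\Kk+\nu}\circ (\Psi^{\s_\Kk})^{-1}\bigl(\si|_\Vv\bigr)$, a nonvanishing section of $\det(\s_\Kk|_\Vv+\nu)$, where $\si|_\Vv$ is the restriction of the orientation from Lemma~\ref{le:cK}(i). Restricting $\si^\nu$ to the zero set and invoking Step~1 gives the orientation of $|\bZ^\nu|$. One should also record that this orientation is independent of the auxiliary choices (normal bundles $N_{IJ}$, trivializing injections $R_I$) since all the isomorphisms involved were shown to be choice-independent in Propositions~\ref{prop:det0} and~\ref{prop:orient}.

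\emph{Step 3: The cobordism case and boundary orientations.} For part~(ii), run Steps 1--2 for the Kuranishi cobordism $\Kk$, obtaining $|\bZ^\nu|$ as an oriented compact manifold-with-boundary. By Lemma~\ref{le:czeroS0} its boundary is diffeomorphic to $|\bZ^{\nu^0}|\sqcup|\bZ^{\nu^1}|$ via the functors $|j^\al|$ coming from the collar embeddings $\io^\al_I$. The task is to identify the orientation that $\si^\nu$ induces on $\partial^\al|\bZ^\nu|$ (in the sense of Definition~\ref{def:orient}/Remark~\ref{rmk:orientb}, i.e.\ completing by the positive unit vector $1$ along $A^\al_\eps$) with the orientation induced by $\si^\al=\partial^\al\si$ on $|\bZ^{\nu^\al}|$. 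This is a local computation on the collar: by construction $\si$ and $\si^\nu$ have product form on the collar (via the maps $\ti\io^{\La,\al}_I$ of \eqref{orient map}, which wedge on the unit vector $1$), and $\Psi^{\s_\Kk+\nu}$ was arranged in Proposition~\ref{prop:orient}(ii) to have product form with restriction $\Psi^{s_{\p^\al\Kk}+\nu^\al}$; chasing these product structures shows $\partial^\al(\si^\nu)=\si^{\nu^\al}$, whence the boundary orientations match.

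\textbf{Main obstacle.} The routine part is the bookkeeping of isomorphisms; the real work is verifying that the proof of Proposition~\ref{prop:orient} (existence of $\det(\Kk)$ and $\Psi^{\s_\Kk}$, independence of choices) survives replacing $s_I$ by $s_I+\nu_I$ when $\nu$ is only \emph{admissible} rather than satisfying the index condition globally — the contraction maps $\mathfrak{C}_{IJ}$ rely on $\rd(s_J+\nu_J)$ inducing an isomorphism $\rT_yU_J/\rd\phi_{IJ}(\rT U_I)\to E_J/\Hat\phi_{IJ}(E_I)$, which by \eqref{eq:dnutrans} holds at core points but I need it (and the normal-bundle-independence argument) on the neighborhood of the zero set where the orientation lives, so I must be careful that the zero set is contained in the region where admissibility is in force, i.e.\ inside the cores $N^k_{JI}$, which is exactly what the precompactness condition and Definition~\ref{a-e}(c),(d) supply. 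The secondary subtlety is the sign/collar bookkeeping in Step~3, where the convention of Remark~\ref{rmk:orientb} (adding $1$ rather than the outward normal) must be tracked consistently through \eqref{orient map}; this is a finite-dimensional linear-algebra check of the type already carried out at the end of Proposition~\ref{prop:orient1}'s preceding results, so no conceptual difficulty, only care.
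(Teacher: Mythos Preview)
Your approach is correct and essentially identical to the paper's: both transfer $\si$ to the orientation bundle of the zero set via the composite $\mathfrak{C}_{\rd(s_I+\nu_I)}\circ\mathfrak{C}_{\rd s_I}^{-1}$ (which is exactly your $\Psi^{\s_\Kk+\nu}\circ(\Psi^{\s_\Kk})^{-1}$ on fibers), and both check compatibility with transition maps by the admissibility-based observation that $\mathfrak{C}_{IJ}$ is unchanged under the perturbation.

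One clarification that would streamline your write-up: your ``main obstacle'' is smaller than you fear. By Definition~\ref{def:sect}, admissibility \eqref{eq:admiss} holds on \emph{all} of $V_J\cap\phi_{IJ}(V_I\cap U_{IJ})$ --- this is the entire overlap region where the transition maps of the reduced category live, not just a neighbourhood of the zero set, so you need not invoke the extra conditions (c),(d) of Definition~\ref{a-e}. Moreover, rather than saying ``the proof of Proposition~\ref{prop:orient} goes through verbatim with $s_I+\nu_I$'', the paper makes the sharper observation that $\mathfrak{C}_{IJ}$ is \emph{literally the same map} whether computed via $\rd s_J$ or $\rd(s_J+\nu_J)$: since $\im\rd\nu_J\subset\Hat\phi_{IJ}(E_I)=\ker\pr_{N_{IJ}}$ on the overlap, one has $F_z=\pr_{N_{IJ}}(z)\circ\rd_y s_J=\pr_{N_{IJ}}(z)\circ\rd_y(s_J+\nu_J)$. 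This directly gives the commuting square you need without re-running the argument. For part~(ii) the paper carries out the sign/collar check you anticipate, verifying the identity (your $\partial^\al(\si^\nu)=\si^{\nu^\al}$) via an explicit computation with the maps $\ti\io^{\La,\al}_I$ and the contraction isomorphisms; the care with Remark~\ref{rmk:orientb}'s convention versus the outward-normal convention is exactly where the ``cobordism from $|\bZ^{\nu^0}|$ to $|\bZ^{\nu^1}|$'' sign enters.
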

\begin{proof} 
We first show that the local zero sets $Z_I: = (s_I|_{V_I} + \nu)^{-1}(0)\subset V_I$  have a natural orientation.
By Lemma~\ref{le:stransv} they are submanifolds, and by transversality we have $\im (\rd_z s_I + \rd_z\nu_I)=E_I$ for each $z\in Z_I$, and thus $\lm \qu{E_I}{\im (\rd_z s_I + \rd_z\nu_I)}= \lm\{0\} = \R$, so that we have a natural isomorphism between the orientation bundle of $Z_I$ and the restriction of the determinant line bundle
\begin{align*}
\lm \rT Z_I &\;=\; {\textstyle \bigcup_{z\in Z_I}} \lm \ker (\rd_z s_I + \rd_z\nu_I) \\
&\; \cong\;  {\textstyle \bigcup_{z\in Z_I}} \lm \ker (\rd_z s_I + \rd_z\nu_I) \otimes \R 
\;=\; \det(s_I|_{V_I}+\nu_I)|_{Z_I} .
\end{align*}
Combining this with Proposition~\ref{prop:orient} and Lemma~\ref{lem:get} we obtain isomorphisms
$$
\mathfrak{C}^\nu_I(z) := \mathfrak{C}_{\rd (s_I +\nu_I)} \circ \mathfrak{C}_{\rd s_I}^{-1} \,:\;
\det(\rd s_I)|_{z} \;\longrightarrow\; \lm \rT_z Z_I  \qquad \text{for} \; z\in Z_I .
$$
To see that these are smooth,
recall that smoothness of $\mathfrak{C}_{\rd s_I}$ was proven in Proposition~\ref{prop:orient}.
The same arguments apply to $\mathfrak{C}_{\rd (s_I +\nu_I)}$.
Further, for $I\subsetneq J$ and $z\in Z_I\cap U_{IJ}$ these
isomorphisms 
are intertwined by the transition maps 
$$
\La_{IJ}(z)=\La_{\rd_z\phi_{IJ}} \otimes \bigl( \La_{\Hat\phi_{IJ}^{-1}}\bigr)^* : \det(\rd_z s_I) \to \det(\rd_{\phi_{IJ}(z)} s_J)
$$
and $\La_{\rd_z\phi_{IJ}} : \lm \rT_z Z_I \to \lm \rT_{\phi_{IJ}(z)} Z_J$.
To see this, one combines the commuting diagram \eqref{cclaim} with the analogous 
diagram over $Z_I\cap U_{IJ}$
\[
\xymatrix{
\lm  \rT U_I \otimes \bigl( \lm E_I \bigr)^* 
 \ar@{->}[rr]^{ \mathfrak{C}_{\rd (s_I+\nu_I)}\qquad}  
 & &
\det(\rd (s_I+\nu_I)) = \lm \rT Z_I \otimes \R
 \ar@{->}[d]^{\La_{IJ} = \La_{\rd\phi_{IJ}} \otimes \id_\R } \\
 \lm \rT U_J \otimes \bigl( \lm E_J \bigr)^*
 \ar@{->}[rr]^{\mathfrak{C}_{\rd (s_J+\nu_J)}\qquad}  
 \ar@{->}[u]^{\mathfrak{C}_{IJ}} 
& &
\det(\rd (s_J+\nu_J)) = \lm \rT Z_J \otimes \R.
}
\]
The latter diagram commutes by the arguments in Proposition~\ref{prop:orient} applied to $s_\bullet+\nu_\bullet$ because $\rd s_J$ and $\rd (s_J+\nu_J)$ induce the same map 
$\mathfrak{C}_{IJ}(z)$   
for $z\in Z_I\cap U_{IJ}$.
 Indeed, the admissibility of $\nu$ implies that  
$\im\rd_{\phi_{IJ}(z)} \nu_J\subset \Hat\phi_{IJ}(E_I)$ so that $F_z = \pr_{N_{IJ}}(z) \circ \rd_{\phi_{IJ}(z)} s_J =\pr_{N_{IJ}}(z) \circ \rd_{\phi_{IJ}(z)} (s_J+\nu_J)$ in the construction of $\mathfrak{C}_{IJ}(z)$.
Now the orientation $\bigl( \si_I : U_I \to \det(\rd s_I) \bigr)_{I\in\Ii_\Kk}$ of $\Kk$ induces nonvanishing sections $\si^\nu_I := \mathfrak{C}^\nu_I \circ \si_I : Z_I \to \lm \rT Z_I$ which, by the above discussion and the compatibility $\La_{IJ}\circ \si_I = \si_J$ are related by $\La_{\rd\phi_{IJ}} \circ\si^\nu_I = \si^\nu_J$, i.e.\ the orientations $\si^\nu_I$ in the charts of $|\bZ^\nu|$ are compatible with the transition maps $\phi_{IJ}|_{Z_I\cap U_{IJ}}$.
Hence this determines an orientation of $|\bZ^\nu|$. This proves~(i).

For a Kuranishi cobordism, the above constructions provide an orientation $|\si^\nu| : |\bZ^\nu| \to \lm \rT |\bZ^\nu|$ on the manifold with boundary $|\bZ^\nu|$. Moreover, Lemma~\ref{le:czeroS0} provides diffeomorphisms to the boundary components for $\al=0,1$
$$
|j^\al| \,:\;  |\bZ^{\nu^\al}| \;=\;   \Bigl| \underset{{I\in\Ii_{\Kk^\al}}}{\textstyle\bigcup} 
 (s_I+\nu^\al_I)^{-1}(0)  \Bigr| 
 \;\overset{\cong}{\longrightarrow}\; 
 \partial^\al |\bZ^{\nu}|  \;\subset \; \partial |\bZ^{\nu}|   ,
$$
which in the charts are given by $j^\al_I := \iota^\al_I (\al,\cdot) :  Z^\al_I  \to  Z_I$.
The latter lift to isomorphisms of determinant line bundles
$$
{\ti j}^\al_I := \La_{\rd\io^\al_I}\circ \wedge_1 \,:\; 
\lm \rT Z^\al_I \;\overset{\cong}{\longrightarrow}\; \lm \rT Z_I |_{\im\io^\al_I} ,
$$
given by the same expression as the restriction to $Z^\al_I\times \{\al\}$ of the map \eqref{orient map} on 
$ A^\al_\eps\times \det(\rd s^\al_I)$ in the case of trivial cokernel.
These are the expressions in the charts of an isomorphism of determinant line bundles
$$
|\ti j^\al|  := \La_{\rd |\io^\al|}\circ \wedge_1 \,:\;  \lm \rT |\bZ^{\nu^\al}|  \;\overset{\cong}{\longrightarrow}\; \lm \rT |\bZ^\nu| \bigr|_{|j^\al|( |\bZ^{\nu^\al}|) } ,
$$
which consists of the isomorphism induced by the collar neighbourhood embedding $|\io^\al| : A^\al_\eps \times  |\bZ^{\nu^\al}| \to  |\bZ^\nu|$ given by $\io^\al_\eps|_{A^\al_\eps\times Z^\al_I}$ in the charts together with 
the canonical isomorphism between the determinant line bundle of the boundary and the boundary restriction of the determinant line bundle of the collar neighbourhood,
$$
\wedge_1 \,:\; \lm \rT |\bZ^{\nu^\al}| \;\to\;  \lm \rT  \bigl(A^\al_\eps\times  |\bZ^{\nu^\al}|  \bigr) \big|_{\{\al\}\times |\bZ^{\nu^\al}| }
\;=\;  \R\times \bigl( \lm \rT  |\bZ^{\nu^\al}|  \bigr) 
, \quad
\eta \mapsto  
1\wedge\eta .
$$
Here, as before, we identify vectors $\eta_i \in \rT |\bZ^{\nu^\al}|$ with $(0,\eta_i) \in \R\times \rT |\bZ^{\nu^\al}| $ and abbreviate $1:=(1,0)  \in \R\times \rT |\bZ^{\nu^\al}|$.
The latter corresponds to the exterior normal $\rd |\io^1| (1,0) \in \rT |\bZ^\nu| \big|_{\p^1 |\bZ^\nu|}$ and the interior normal $\rd |\io^0| (1,0) \in \rT |\bZ^\nu| \big|_{\p^0 |\bZ^\nu|}$. Hence the boundary orientations\footnote
{
Here, contrary to the special choice for Kuranishi cobordisms discussed in Remark~\ref{rmk:orientb}, we use a more standard orientation convention for the manifold with boundary $|\bZ^\nu|$.
Namely, a positively ordered basis $\eta_1,\dots,\eta_k$ for the tangent space to the boundary is extended to a positively ordered basis $\eta_{out},\eta_1,\dots,\eta_k$ for the whole manifold by adjoining an outward unit vector $\eta_{out}$ as its first element.
 }
$\p^\al |\si^\nu| :|\bZ^{\nu^\al}| \to \lm \rT |\bZ^{\nu^\al}|$ induced from $|\si^\nu|$ on the two components for $\al=0,1$ differ by a sign,
$$
\p^0 |\si^\nu| := - \, |\ti j^0|^{-1} \circ  |\si^\nu| \circ |j^0| ,\qquad
\p^1 |\si^\nu| :=  |\ti j^1|^{-1} \circ  |\si^\nu| \circ |j^1| .
$$
On the other hand, the restricted orientations $\si^\al:=\partial^\al \si$ of $\Kk^\al$ also induce orientations $|\si^{\nu_\al}|$ of the boundary components $|\bZ^{\nu^\al}|$ by the construction in (i). 
Now to prove the claim that $|\bZ^\nu|$ is an oriented cobordism from $\bigl(|\bZ^{\nu^0}|,|\si^{\nu_0}|\bigr)$ to $\bigl(|\bZ^{\nu^1}|,|\si^{\nu_1}|\bigr)$, it remains to check that $\p^0 |\si^\nu| = - |\si^{\nu_0}|$ and $\p^1 |\si^\nu| = |\si^{\nu_1}|$. This is equivalent to $|\si^{\nu^\al}| = |\ti j^\al|^{-1} \circ  |\si^\nu| \circ |j^\al|$ for both $\al=0,1$. So, recalling the construction of $\p^\al\si_I=(\ti\io^{\La,\al}_I)^{-1} \circ \si_I \circ \io^\al_I\big|_{\{\al\} \times Z^\al_I }$ and $\si^\nu_I=\mathfrak{C}\nu_I \circ \si_I \big|_{Z_I}$ in local charts, we must show the following identity over
$\{\al\}\times Z^\al_I \cong Z^\al_I$ for all $I\in\Ii_{\p^\al\Kk}$
\begin{equation} \label{oclaim}
\mathfrak{C}_{\rd (s^\al_I +\nu^\al_I)} \circ \mathfrak{C}_{\rd s^\al_I}^{-1} \circ 
\bigl((\ti\io^{\La,\al}_I)^{-1} \circ \si_I \circ \io^\al_I\bigr)
\; =\;  
(\ti j^\al_I)^{-1} \circ  \bigl( \mathfrak{C}_{\rd (s_I +\nu_I)} \circ \mathfrak{C}_{\rd s_I}^{-1} \circ \si_I \bigr) \circ j^\al_I  .
\end{equation}
We will check this at a fixed point $z\in Z^\al_I$ in two steps. We first show that the contraction isomorphisms $\mathfrak{C}_{\rd s^\al_I}(z)$ and $\mathfrak{C}_{\rd s_I}(\io^\al_I(z,\al))$ intertwine the collar isomorphism 
$$
\ti\io^{\La,\al}_I = \bigl( \La_{\rd \io^\al_I} \circ \wedge_1 \bigr) \otimes \bigl(\La_{\id_{E_I}}\bigr)^*, \qquad \det(\rd s^\al_I) \to \det(\rd s_I)
$$ 
with the analogous collar isomorphism
$$
\Ti I^\al_I  := \bigl( \La_{\rd \io^\al_I} \circ \wedge_1 \bigr) \otimes \bigl(\La_{\id_{E_I}}\bigr)^*
\,:\; \lm \rT \partial^\al U_I \otimes \bigl( \lm E_I \bigr)^* 
\;\to\;
\lm \rT U_I \otimes \bigl( \lm E_I \bigr)^* .
$$
Indeed, we can use the product form of $s_I$ in terms of $s^\al_I$ to check the corresponding identity of maps 
$\lm \rT \partial^\al U_I \otimes \bigl( \lm E_I \bigr)^* \big|_{Z^\al_I}\to \det(\rd s_I)\big|_{\io^\al_I(Z^\al_I)}$ at a fixed vector of the form $\bigl( \eta_{\ker} \wedge \eta_{\perp} \bigr) \otimes  \bigl
( \zeta_{\coker} \wedge \zeta_{\im} \bigr)$ 
with $\eta_{\ker} \in \lm\ker\rd s^\al_I$ and $\zeta_{\im} \in \bigl(\lm \im\rd s^\al_I \bigr)^*$: 
\begin{align*}
&
\Bigl( \ti\io^{\La,\al}_I \circ \mathfrak{C}_{\rd s^\al_I} \Bigr)
\Bigl( \bigl( \eta_{\ker} \wedge \eta_{\perp} \bigr) \otimes  \bigl(
\zeta_{\coker} \wedge \zeta_{\im}
\bigr) \Bigr)  \\
&\;=\;
\ti\io^{\La,\al}_I\Bigl(
\mathfrak{c}_{\rd s^\al_I}\bigl( \eta_{\perp} ,  \zeta_{\im}   \bigr) \cdot 
\eta_{\ker} \otimes \zeta_{\coker} \Bigr)\\
&\;=\;
\mathfrak{c}_{\rd s^\al_I}\bigl( \eta_{\perp} ,  \zeta_{\im}   \bigr) \cdot 
\La_{\rd \io^\al_I}(1\wedge \eta_{\ker}) \otimes \zeta_{\coker} \\
&\;=\; 
\mathfrak{c}_{\rd s_I}\bigl( \La_{\rd \io^\al_I} (\eta_{\perp}) , \zeta_{\im}\bigr) \cdot 
 \La_{\rd \io^\al_I} \bigl( 1\wedge \eta_{\ker}  \bigr) \otimes  \zeta_{\coker} \\
&\;=\; 
\mathfrak{C}_{\rd s_I} \Bigl( \bigl( \La_{\rd \io^\al_I}(1\wedge \eta_{\ker} \bigr) 
\wedge  \La_{\rd \io^\al_I}(\eta_{\perp}) \bigr) \otimes  \bigl( \zeta_{\coker} \wedge \zeta_{\im}
 \bigr) \Bigr) \\
&\;=\; 
\mathfrak{C}_{\rd s_I} \Bigl( \La_{\rd \io^\al_I} 
\bigl( 1\wedge \eta_{\ker} \wedge \eta_{\perp} \bigr) \otimes 
 \bigl( \zeta_{\coker} \wedge \zeta_{\im}
  \bigr) \Bigr) \\
&\;=\;  
\Bigl( \mathfrak{C}_{\rd s_I} \circ \Ti I^\al_I   \Bigr) \Bigl( \bigl( \eta_{\ker} \wedge \eta_{\perp} \bigr) \otimes  \bigl(
\zeta_{\coker} \wedge \zeta_{\im} \bigr) \Bigr). 
\end{align*}
Secondly we check that the contraction isomorphisms for the surjective maps 
$\rd_{\io^\al_I(z,\al)}(s_I +\nu_I)$  and $\rd_z (s^\al_I +\nu^\al_I)$ intertwine $\Ti I^\al_I (z)$ with the boundary isomorphism ${\ti j}^\al_I = \La_{\rd\io^\al_I}\circ \wedge_1$ from $\lm \rT_z Z^\al_I$ to $\lm \rT_{\io^\al_I(z,\al)}Z_I $.
For that purpose we also use the product form of $\nu_I$ in terms of $\nu^\al_I$ to check the corresponding identity of maps $\lm \rT U_I \otimes \bigl( \lm E_I \bigr)^* \big|_{\io^\al_I(Z^\al_I)}  \to \lm \rT Z^\al_I$ at a fixed vector of the form $\bigl( \La_{\rd\io^\al_\eps}(1\wedge \eta_{\ker}) \wedge  \La_{\rd\io^\al_\eps}
(\eta_{\perp}) \bigr) \otimes  \zeta$ with  $\eta_{\ker} \in \lm\ker\rd (s^\al_I + \nu^\al_I)$: 
\begin{align*}
&  \Bigl( (\ti j^\al_I)^{-1} \circ  \mathfrak{C}_{\rd (s_I +\nu_I)} \Bigr) \Bigl( \bigl( \La_{\rd\io^\al_\eps}(1\wedge \eta_{\ker}) \wedge  \La_{\rd\io^\al_\eps}(\eta_{\perp}) \bigr) \otimes  \zeta \Bigr) 
  \\
&\;=\; 
\mathfrak{c}_{\rd (s_I +\nu_I)} \bigl(\La_{\rd\io^\al_\eps}( \eta_{\perp}) , \zeta \bigr)
 \cdot (\ti j^\al_I)^{-1} \bigl( \La_{\rd\io^\al_\eps}(1\wedge \eta_{\ker})\bigr) 
\\
&\;=\;  \mathfrak{c}_{\rd (s^\al_I +\nu^\al_I)} \bigl( \eta_{\perp} ,\zeta \bigr) \cdot
\eta_{\ker} 
\\
&\;=\;  \mathfrak{C}_{\rd (s^\al_I +\nu^\al_I)} \Bigl(  \bigl(\eta_{\ker} \wedge \eta_{\perp} \bigr) \otimes  \zeta   \Bigr)
\\
& \;=\; 
 \Bigl( \mathfrak{C}_{\rd (s^\al_I +\nu^\al_I)} \circ  \bigl(\Ti I^\al_I \bigr)^{-1} \Bigr) \Bigl( \bigl( \La_{\rd\io^\al_\eps}(1\wedge \eta_{\ker}) \wedge  \La_{\rd\io^\al_\eps}(\eta_{\perp}) \bigr) \otimes  \zeta \Bigr) .
\end{align*}
This proves \eqref{oclaim} and hence finishes the proof.
\end{proof}

\subsection{Construction of Virtual Moduli Cycle and Fundamental Class}\label{ss:VFC}   \hspace{1mm}\\ \vspace{-3mm}

We are finally in a position to prove Theorem B in the introduction.
We begin with its first part, which defines the virtual moduli cycle (VMC) as a cobordism class of closed oriented manifolds. 
After a discussion of \v{C}ech homology, we then construct the virtual fundamental class (VFC) as \v{C}ech homology class.

\begin{rmk} \rm
As in Remark~\ref{rmk:iso}, the following constructions of VMC/VFC will also be applied -- with very minor adjustments -- in the context of nontrivial isotropy in \cite{MW:iso}. In order to provide a verifiably rigorous proof for the corresponding result \cite[Theorems~3.3.4, 3.3.5]{MW:iso}, we indicate the necessary adjustment in a series of footnotes [$^{\rm NN}$] in the present section. These should only be read after becoming familiar with the construction of the pruned domain category $\Bb_\Kk|_\Vv^{\less\Ga}$ in \cite[Lemma~3.2.3]{MW:iso} and notion of admissible perturbation in \cite[Definition~3.2.4]{MW:iso}. The adjustments will be rather few after the setting is given as in Remark~\ref{rmk:iso}.
$\hfill\er$
\end{rmk}

\begin{thm}\label{thm:VMC1}
Let $X$ be a compact metrizable space.
\begin{enumerate}
\item
Let $\Kk$ be an oriented, additive weak Kuranishi atlas of dimension $D$ on $X$ (see Definitions~\ref{def:CKS}, \ref{def:Ku2}).
Then there exists a preshrunk tame shrinking $\Kk_{\rm sh}$ of $\Kk$ (see Definition~\ref{def:shr}), an admissible metric on $|\Kk_{\rm sh}|$ (see Definition~\ref{def:metric}), a reduction $\Vv$ of $\Kk_{\rm sh}$ (see Definition~\ref{def:vicin}), and a strongly adapted, admissible, precompact, transverse perturbation $\nu$ of $\s_{\Kk_{\rm sh}}|_\Vv$ (see Definitions~\ref{def:sect}, \ref{def:sect2}, \ref{def:precomp}). 
\item
For any choice of data as in (i), the perturbed zero set $|\bZ^\nu|$ 
is an oriented compact manifold (without boundary) of dimension $D$.
\item
Let $\Kk^0,\Kk^1$ be two oriented, additive weak Kuranishi atlases 
(on possibly different compact metrizable spaces) that are oriented, additively cobordant
(see Definition~\ref{def:Kcobord}). Then, for any choices of strongly adapted perturbations $\nu^\al$ in the sense of Definition~\ref{a-e rel} 
for $\al=0,1$, the perturbed zero sets are cobordant (as oriented closed manifolds), $|\bZ^{\nu^0}| \sim |\bZ^{\nu^1}|$.
\end{enumerate}
\end{thm}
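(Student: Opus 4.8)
The plan for part~(iii) is to reduce the cobordism claim about perturbed zero sets to an application of Proposition~\ref{prop:ext2} together with Proposition~\ref{prop:orient1}~(ii) and the cobordism calculus already set up in Lemma~\ref{le:czeroS0}. The key technical point that makes this work is the notion of \emph{strongly adapted} perturbation from Definition~\ref{a-e rel}: its precise smallness condition was designed exactly so that a strongly $(\Vv,\Cc)$-adapted perturbation of a Kuranishi atlas extends to an admissible, precompact, transverse cobordism perturbation of the associated product Kuranishi cobordism $[0,1]\times\Kk$, with the given perturbation as one boundary restriction.

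First I would set up the data. By hypothesis there is an oriented, additive weak Kuranishi cobordism $\Kk$ from $\Kk^0$ to $\Kk^1$. Applying Theorem~\ref{thm:K}~(i),(iv) I obtain a preshrunk tame shrinking $\Kk_{\rm sh}$ of $\Kk$ together with an admissible collared metric $d$ on $|\Kk_{\rm sh}|$ whose boundary restrictions are (shrinkings inducing) the data underlying the given $\nu^0,\nu^1$; here I use that the boundary perturbations $\nu^\al$ already come with shrinkings, reductions, metrics, norms and constants, and I may first replace $\nu^0,\nu^1$ by their restrictions to a common further shrinking so that they are defined over $\p^\al\Kk_{\rm sh}$. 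Next I apply Theorem~\ref{thm:red}~(ii),(iv)(a) to produce a cobordism reduction $\Vv$ of $\Kk_{\rm sh}$ with prescribed boundary restrictions $\p^\al\Vv$ equal to the reductions used for $\nu^\al$, and a nested cobordism reduction $\Cc\sqsubset\Vv$ with $\p^\al\Cc$ the nested reductions controlling the zero sets of $\nu^\al$. I choose additive norms $\|\cdot\|$ on $\Kk_{\rm sh}$ restricting to those used for $\nu^\al$ (possible by Definition~\ref{def:norm}, since additive norms are determined by choices on the basic obstruction spaces, and the basic charts of $\Kk_{\rm sh}$ restrict to those of $\p^\al\Kk_{\rm sh}$). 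Finally I fix $\de>0$ smaller than $\de_\Vv$, the collar widths of $\Cc,\Vv,(\Kk_{\rm sh},d)$, and the $\de$-constants used for $\nu^0,\nu^1$, and then pick $\si>0$ smaller than $\si_{\rm rel}(\Vv,\Cc,\|\cdot\|,\de)$ and than the constants of $\nu^0,\nu^1$; by Proposition~\ref{prop:ext2} we have $\si_{\rm rel}(\Vv,\Cc,\|\cdot\|,\de)>0$. By Lemma~\ref{le:admin2} the perturbations $\nu^\al$ are then $(\p^\al\Vv,\p^\al\Cc,\p^\al\|\cdot\|,\de,\si)$-adapted, which is the input form required by Proposition~\ref{prop:ext2}~(ii).

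With this data in place, Proposition~\ref{prop:ext2}~(ii) produces an admissible, precompact, transverse cobordism perturbation $\nu$ of $\s_{\Kk_{\rm sh}}|_\Vv$ with $\pi_{\Kk_{\rm sh}}\bigl((\s_{\Kk_{\rm sh}}|_\Vv+\nu)^{-1}(0)\bigr)\subset\pi_{\Kk_{\rm sh}}(\Cc)$ and with boundary restrictions $\nu|_{\p^\al\Vv}=\nu^\al$ for $\al=0,1$. Lemma~\ref{le:czeroS0} then shows that $|\bZ^\nu|$ is a compact manifold whose boundary is diffeomorphic to $|\bZ^{\nu^0}|\sqcup|\bZ^{\nu^1}|$. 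To upgrade this to an \emph{oriented} cobordism I invoke Lemma~\ref{le:cK}~(i) to transport the orientation of $\Kk$ to a shrinking (and note that orientations restrict compatibly to the boundary by Lemma~\ref{le:cK}~(ii)), and then apply Proposition~\ref{prop:orient1}~(ii): the oriented tame Kuranishi cobordism $(\Kk_{\rm sh},\si)$ with reduction $\Vv$ and admissible precompact transverse perturbation $\nu$ has $|\bZ^\nu|$ an oriented cobordism from $|\bZ^{\nu^0}|$ to $|\bZ^{\nu^1}|$, with the boundary orientations induced by $\p^\al\si$, which by Lemma~\ref{le:cK}~(ii) are exactly the orientations used to build $|\bZ^{\nu^\al}|$ in part~(i). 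Hence $|\bZ^{\nu^0}|\sim|\bZ^{\nu^1}|$ as oriented closed $D$-manifolds.

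The main obstacle I anticipate is bookkeeping rather than conceptual: I must verify that the many choices entering the definition of ``strongly adapted'' for the two boundary perturbations $\nu^0,\nu^1$ can be realized \emph{simultaneously} as the boundary restrictions of a single coherent choice of shrinking, metric, reduction, nested reduction, additive norms, and constants $\de,\si$ for the cobordism. Concretely, one must check that the extension theorems quoted (Theorem~\ref{thm:K}~(iv), Theorem~\ref{thm:red}~(ii),(iv)) genuinely allow prescribing \emph{all} of this boundary data at once and in compatible order, and that the various positivity constants ($\de_{\p^\al\Vv}\ge\de_\Vv$ by Lemma~\ref{le:admin}~(iii); $\si_{\rm rel}\le\min\{\si(\p^0\Vv,\dots),\si(\p^1\Vv,\dots)\}$ by Definition~\ref{a-e rel}) are arranged so that the common $\de,\si$ chosen for the cobordism are legal on both boundaries. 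Since Definition~\ref{a-e rel} and Proposition~\ref{prop:ext2} were set up precisely with this matching in mind — and since a preliminary mutual restriction of $\nu^0,\nu^1,\Kk^0,\Kk^1$ to common shrinkings removes any incompatibility of index sets — I expect this to go through, but it is where the argument must be written carefully; in particular one should remark, as the authors do, that one avoids needing a standalone ``cobordism relation for perturbations'' by phrasing everything at the level of zero sets and using transitivity of oriented cobordism of closed manifolds.
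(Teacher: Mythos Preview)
Your approach is a one-shot application of Proposition~\ref{prop:ext2}~(ii) on the given cobordism, matching all boundary data simultaneously. The paper instead argues in four steps: Steps~1--3 work entirely with \emph{product} cobordisms $[0,1]\times\Kk$ to prove independence from perturbation, constants and norms, reductions, and metric, respectively; only Step~4 uses the given cobordism $\Kk$, and there it applies Proposition~\ref{prop:ext2}~(i) (not (ii)) to produce a cobordism perturbation whose boundary restrictions are fresh strongly adapted perturbations $\nu|_{\p^\al\Vv}$, not the given $\nu^\al$. These intermediaries are then connected to $\nu^\al$ on each boundary atlas via Steps~1--3 and transitivity of oriented cobordism.

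Your direct route runs into two genuine obstacles that the paper's indirection is designed to avoid. First, for a non-product cobordism there is no result in this paper producing a cobordism reduction $\Vv$ with \emph{prescribed} boundaries $\p^\al\Vv=\Vv^\al$: Theorem~\ref{thm:red}~(ii) gives mere existence, (iv)(a) prescribes only the inner reduction $\Cc$ once $\Vv$ is fixed, and the statement prescribing $\Vv$ in (i) applies only to $[0,1]\times\Kk$. Second, and more seriously, your handling of $\si$ is in the wrong direction. Lemma~\ref{le:admin2} lets you replace $\si$ only by a \emph{larger} $\si'$ (subject to $\si'\le\si(\Vv,\Cc,\|\cdot\|',\de')$), since condition~e) is $\|\nu_I\|<\si$. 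So picking $\si$ smaller than $\si^0,\si^1$ does \emph{not} make $\nu^\al$ into $(\p^\al\Vv,\p^\al\Cc,\p^\al\|\cdot\|,\de,\si)$-adapted perturbations; you would need $\si\ge\max(\si^0,\si^1)$, which then must also satisfy $\si\le\si_{\rm rel}(\Vv,\Cc,\|\cdot\|,\de)$, and there is no a priori reason the cobordism constant $\si'(\Vv,\Cc,\|\cdot\|,\de)$ dominates $\si^0,\si^1$. The paper's Step~2 (and similarly Step~4) circumvents exactly this by building a new cobordism perturbation with small $\si$ via part~(i), and then observing that its boundary restrictions, being small in the stronger norm, are automatically $(\Vv^\al,\Cc^\al,\|\cdot\|^\al,\de^\al,\si^\al)$-adapted as well, so Step~1 applies. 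The multi-step argument is therefore not mere bookkeeping but reflects the one-sided transfer in Lemma~\ref{le:admin2}.
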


\begin{proof}  
Part (i) of Theorem~\ref{thm:K} provides a preshrunk tame shrinking $\Kk_{\rm sh}$ of $\Kk$ which by part~(ii) of that theorem can be equipped with an admissible metric. 
The orientation of $\Kk$ then induces an orientation of $\Kk_{\rm sh}$ by Lemma~\ref{le:cK}. 
Moreover, Theorem~\ref{thm:red}~(i) provides a reduction $\Vv$ of $\Kk_{\rm sh}$, and by 
Theorem~\ref{thm:red}~(iii)~(a) with $\Ww=|\Kk|$ 
we find another reduction $\Cc$ with precompact inclusion $\Cc\sqsubset \Vv$,
i.e.\ a nested reduction. 
Then we may apply Proposition~\ref{prop:ext} with $\si=\si_{\rm rel}([0,1]\times \Vv,[0,1]\times  \Cc,\|\cdot\|,\de)$ to obtain a strongly adapted, admissible, transverse perturbation $\nu$ with $\pi_\Kk\bigl((\s_\Kk|_\Vv+\nu)^{-1}(0)\bigr) \subset \pi_{\Kk}(\Cc)$. This proves (i).

Part (ii) holds in this setting since Proposition~\ref{prop:zeroS0} shows that $|\bZ^\nu|$ is a smooth closed $D$-dimensional manifold, which is oriented by Proposition~\ref{prop:orient1}~(i). 

To prove (iii) we will use transitivity of the cobordism relation for oriented closed manifolds to prove increasing independence of choices in the following Steps 1--4.

\MS\NI
{\bf Step 1:} 
{\it
For a fixed oriented, metric, tame Kuranishi atlas $(\Kk,d)$, nested reductions $\Cc\sqsubset\Vv$, 
additive norms $\|\cdot\|$, 
and $0<\de<\de_\Vv$, $0<\si\leq \si_{\rm rel}([0,1]\times\Vv,[0,1]\times\Cc,\|\cdot\|,\de)$, the cobordism class of $|\bZ^\nu|$ is independent of the choice of $(\Vv,\Cc,\|\cdot\|,\de,\si)$-adapted perturbation~$\nu$.
}

\MS
To prove this we fix $(\Kk,d)$, $\Cc\sqsubset\Vv$, 
$\|\cdot\|$,
$\de$, and $\si$, consider two $(\Vv,\Cc,\|\cdot\|,\de,\si)$-adapted perturbations $\nu^0,\nu^1$, and need to find an oriented cobordism $|\bZ^{\nu^0}|  \sim |\bZ^{\nu^1}|$.
For that purpose we apply Proposition~\ref{prop:ext2}~(ii) to the Kuranishi cobordism $[0,1]\times \Kk$ with product metric and nested product reductions $[0,1]\times\Cc\sqsubset [0,1]\times\Vv$ to obtain an admissible, precompact, transverse cobordism perturbation $\nu^{01}$ of $\s_{[0,1]\times\Kk}|_{[0,1]\times\Vv}$ with boundary restrictions $\nu^{01}|_{\{\al\}\times \Vv}=\nu^\al$ for $\al=0,1$.
Here we use the fact that $\de_{[0,1]\times\Vv}=\de_\Vv>\de$ by Lemma~\ref{le:admin}~(ii). 
Moreover, by Lemma~\ref{le:cK}~(iii) the orientation of $\Kk$ induces an orientation of $[0,1]\times\Kk$, whose restriction to the boundaries $\p^\al([0,1]\times \Kk) =\Kk$ equals the given orientation on $\Kk$. 
Finally, Lemma~\ref{le:czeroS0} together with Proposition~\ref{prop:orient1}~(ii) imply that $|\bZ^{\nu^{01}}|$ is the required oriented cobordism from $|\bZ^{\nu^{01}|_{\{0\}\times \Vv}}|=|\bZ^{\nu^0}|$ to $|\bZ^{\nu^{01}|_{\{1\}\times\Vv}}|=|\bZ^{\nu^1}|$.

\MS\NI
{\bf Step 2:} {\it 
For a fixed oriented, metric, tame Kuranishi atlas $(\Kk,d)$ and nested reductions $\Cc\sqsubset\Vv$, the cobordism class of $|\bZ^\nu|$ is independent of the choice of strongly $(\Vv,\Cc)$-adapted perturbation~$\nu$. }

\MS
To prove this we fix $(\Kk,d)$ and $\Cc\sqsubset\Vv$ and consider two strongly $(\Vv,\Cc)$-adapted perturbations~$\nu^\al$ for $\al = 0,1$.
Thus $\nu^\al$ is $(\Vv,\Cc,\|\cdot\|^\al,\de^\al,\si^\al)$-adapted for some choices of additive norms $\|\cdot\|^\al$ and constants $0<\de^\al<\de_\Vv$ and  $0<\si^\al\leq \si_{\rm rel}{([0,1]\times\Vv,[0,1]\times\Cc,\|\cdot\|^\al,\de^\al)}$. 
Then we need to find an oriented cobordism $|\bZ^{\nu^0}| \sim |\bZ^{\nu^1}|$.
To do this, first note that we evidently have $\de:=\max(\de^0,\de^1)<\de_\Vv=\de_{[0,1]\times\Vv}$  
by Lemma~\ref{le:admin}~(ii) with respect to the product metric on $[0,1]\times |\Kk|$.
Moreover, since all norms on the finite dimensional obstruction spaces are equivalent, we can (e.g.\ by scaling) find additive norms $\|\cdot\|$ on $\Kk$ such that $\|\cdot\|^\al\leq\|\cdot\|$ for $\al=0,1$.
Now choose $\si \leq \min\{ \si^0,\si^1, \si_{\rm rel}([0,1]\times\Vv,[0,1]\times\Cc,\|\cdot\|,\de) \}$. Then Proposition~\ref{prop:ext2}~(i) provides an admissible, precompact, transverse cobordism perturbation $\nu^{01}$ of $\s_{[0,1]\times\Kk}|_{[0,1]\times\Vv}$, whose restrictions $\nu^{01}|_{\{\al\}\times \Vv}$ for $\al=0,1$ are $(\Vv,\Cc,\|\cdot\|,\de,\si)$-adapted perturbations of $\s_\Kk|_\Vv$.
Since $\de^\al\leq \de$, $\|\nu^{01}|_{\{\al\}\times \Vv}\|^\al \leq \|\nu^{01}|_{\{\al\}\times \Vv}\|<\si$ and $\si \leq \si^\al \leq \si_{\rm rel}([0,1]\times\Vv,[0,1]\times\Cc,\|\cdot\|,\de^\al)$, they are also $(\Vv,\Cc,\|\cdot\|,\de^\al,\si^\al)$-adapted; see Lemma~\ref{le:admin2}.
Then, as in Step~1, the perturbed zero set $|\bZ^{\nu^{01}}|$ is an oriented cobordism from $|\bZ^{\nu^{01}|_{\{0\}\times \Vv}}|$ to $|\bZ^{\nu^{01}|_{\{1\}\times \Vv}}|$.
Moreover, for fixed $\al\in\{0,1\}$ both the restriction $\nu^{01}|_{\{\al\}\times \Vv}$ and the given perturbation $\nu^\al$ are $(\Vv,\Cc,\|\cdot\|,\de^\al,\si^\al)$-adapted, so that Step 1 provides cobordisms $|\bZ^{\nu^0}|  \sim |\bZ^{\nu^{01}|_{\{0\}\times \Vv}}|$ and $|\bZ^{\nu^{01}|_{\{1\}\times \Vv}}| \sim |\bZ^{\nu^1}|$. By transitivity of the cobordism relation this proves $|\bZ^{\nu^0}| \sim |\bZ^{\nu^1}|$ as claimed.

\MS\NI
{\bf Step 3:} {\it
For a fixed oriented, tame Kuranishi atlas $\Kk$, the oriented cobordism class of $|\bZ^\nu|$ is independent of the choice of admissible metric and strongly adapted perturbation~$\nu$.
}

\MS
To prove this we fix $\Kk$ and consider two strongly $(\Vv^\al,\Cc^\al)$-adapted perturbations $\nu^\al$ with respect to nested reductions $\Cc^\al\sqsubset\Vv^\al$, additive norms $\|\cdot\|^\al$, 
and admissible metrics $d^\al$ for $\al=0,1$.
To find an oriented cobordism $|\bZ^{\nu^0}| \sim |\bZ^{\nu^1}|$ we begin by using 
\cite[Proposition~4.2.3]{MW:top} to find an admissible metric $d$ on $|[0,1]\times  \Kk|$ with $d|_{\{\al\}\times |\Kk|} =d^\al$.
As before, we also pick additive norms $\|\cdot\|$ on $\Kk$ such that $\|\cdot\|^\al\leq\|\cdot\|$ for $\al=0,1$.
Next, we use Theorem~\ref{thm:red}~(iv) with $\Ww=|[0,1]\times \Kk|$ to find a nested cobordism reduction $\Cc\sqsubset\Vv$ of $[0,1]\times \Kk$ with $\p^\al\Cc=\Cc^\al$ and $\p^\al\Vv=\Vv^\al$.
If we now pick any $0<\de<\de_\Vv$ smaller than the collar width of $d$, $\Vv$, and $\Cc$, then we automatically have $\de<\de_{\Vv^\al}$ by Lemma~\ref{le:admin}~(iii).
Then, for any $0<\si\leq\si_{\rm rel}(\Vv,\Cc,\|\cdot\|,\de)$, Proposition~\ref{prop:ext2}~(i) provides an admissible, precompact, transverse cobordism perturbation $\nu^{01}$ of $\s_{[0,1]\times  \Kk}|_\Vv$ whose restrictions $\nu^{01}|_{\p^\al\Vv}$ for $\al=0,1$ are $(\Vv^\al,\Cc^\al,\|\cdot\|,\de,\si)$-adapted perturbations of $\s_{\Kk}|_{\Vv^\al}$.
As in Step 1, the perturbed zero set $|\bZ^{\nu^{01}}|$ is an oriented cobordism from $|\bZ^{\nu^{01}|_{\p^0\Vv}}|$ to $|\bZ^{\nu^{01}|_{\p^1\Vv}}|$. 
Moreover, we can pick $\si\leq\si_{\rm rel}([0,1]\times  \Vv^\al,[0,1]\times  \Cc^\al,\|\cdot\|^\al,\de)$ 
for $\al=0,1$. 
Then we have $\| \nu^{01}|_{\p^\al\Vv} \|^\al \leq \| \nu^{01}|_{\p^\al\Vv} \|<\si$ so that each $\nu^{01}|_{\p^\al\Vv}$ is strongly $(\Vv^\al,\Cc^\al)$-adapted with respect to the metric  $d^\al$;
see Lemma~\ref{le:admin2}.
Now Step 2 applies for $\al=0$ as well as $\al=1$ to provide cobordisms $|\bZ^{\nu^0}|  \sim |\bZ^{\nu^{01}}|_{\p^0\Vv}|$ and $|\bZ^{\nu^{01}}|_{\p^1\Vv}| \sim |\bZ^{\nu^1}|$, which proves the claim by transitivity.

\MS\NI
{\bf Step 4:} 
{\it
Let $\Kk$ be an oriented, additive, weak Kuranishi cobordism, and for $\al=0,1$ let $\nu^\al$ be strongly adapted perturbations of some preshrunk tame shrinking $\Kk_{\rm sh}^\al$ of $\p^\al\Kk$ with respect to some choice of admissible metric on $|\p^\al\Kk|$. Then there is an oriented cobordism of compact manifolds $|\bZ^{\nu^0}|  \sim |\bZ^{\nu^1}|$.
}

\MS
This is proven along the lines of parts (i) and (ii) by first using Theorem~\ref{thm:K} to find a preshrunk tame shrinking $\Kk_{\rm sh}$ of $\Kk$ with $\p^\al\Kk_{\rm sh}=\Kk^\al_{\rm sh}$, and an admissible metric $d$ on $|\Kk_{\rm sh}|$. If we equip $\Kk_{\rm sh}$ with the orientation induced by $\Kk$, then by Lemma~\ref{le:cK} the induced boundary orientation on $\p^\al\Kk_{\rm sh}=\Kk^\al_{\rm sh}$ agrees with that induced by shrinking from $\Kk^\al$.
Next, 
Theorem~\ref{thm:red}~(ii)
provides a reduction $\Vv$ of $\Kk_{\rm sh}$, and by
Theorem~\ref{thm:red}~(iv) with $\Ww=|\Kk_{\rm sh}|$
we find a nested cobordism reduction $\Cc\sqsubset \Vv$. 
Now we may choose additive norms $\|\cdot\|$ on $\Kk_{\rm sh}$ and
 apply Proposition~\ref{prop:ext2}~(i) with 
$$
\si\;=\; \min\bigl\{  \si_{\rm rel}(\Vv,\Cc,\|\cdot\|,\de) ,\;  \min_{ \al=0,1} \si_{\rm rel}([0,1]\times  \p^\al\Vv, [0,1]\times  \p^\al\Cc,\p^\al\|\cdot\|,\de) \bigr\}
$$
to find an admissible, precompact, transverse cobordism perturbation $\nu$ of $\s_{\Kk_{\rm sh}}|_\Vv$, whose restrictions $\nu|_{\p^\al\Vv}$ for $\al=0,1$ are $(\p^\al\Vv,\p^\al\Cc,\p^\al\|\cdot\|,\de,\si)$-adapted perturbations of $\s_{\Kk^\al_{\rm sh}}|_{\p^\al\Vv}$. In particular, these are strongly adapted by the choice of $\si$.
Also, as in the previous steps, $|\bZ^{\nu}|$ is an oriented cobordism from $|\bZ^{\nu|_{\p^0\Vv}}|$ to $|\bZ^{\nu|_{\p^1\Vv}}|$. 
Finally, Step 2 applies to the fixed oriented, tame Kuranishi atlases $\Kk^\al_{\rm sh}$ for fixed $\al\in\{0,1\}$ to provide cobordisms $|\bZ^{\nu^0}|  \sim |\bZ^{\nu}|_{\p^0\Vv}|$ and $|\bZ^{\nu}|_{\p^1\Vv}| \sim |\bZ^{\nu^1}|$. By transitivity, this finishes the proof of Theorem~\ref{thm:VMC1}.
\end{proof}

One possible definition of the virtual fundamental class (VFC) is as the cobordism class of the zero set $|\bZ^\nu|$
constructed in the previous theorem. If we think of this as an abstract manifold and hence as representing an element in the $D$-dimensional oriented cobordism ring, it contains rather little information.
Such a notion is barely sufficient for the basic constructions of e.g.\ Floer differentials $\p_F$ from counts of moduli spaces with $D=0$, and proofs of algebraic relations such as $\p_F\circ\p_F=0$ by cobordisms with $D=1$.
If $X = \oMm_{g,k}(A,J)$ is the Gromov--Witten moduli space of $J$-holomorphic curves of genus $g$, homology class $A$, and with $k\geq 1$ marked points, one can construct the domains $U_I$ of the Kuranishi charts for $X$ to have elements that are tuples, one component of which is a  $k$-pointed stable map to $(M,\om)$, so that there are evaluation maps $\ev_I: U_I \to M^k$; see~\cite{MW:iso,Mcn} for example.
Further, the coordinate changes can be made compatible with these evaluation maps, and Kuranishi cobordisms can be constructed so that the evaluation maps extend over them. Hence, after shrinking to a tame Kuranishi atlas
(or cobordism) $\Kk_{sh}$, there is a continuous evaluation map
$$
\ev: |\Kk_{sh}| \to M^k
$$
both for the fixed tame shrinking used to define $|\bZ^\nu|$ and for any shrinking of a weak Kuranishi cobordism compatible with evaluation maps.  Therefore, for any admissible, precompact, transverse perturbation $\nu$, the map $\ev:|\bZ^\nu|\to M^k$ can be considered as a $D$-dimensional cycle (the virtual moduli cycle VMC) in the singular homology of $M^k$, or even as a cycle in the oriented bordism theory of $M^k$. Thus in this case a possible definition of the VFC is as the corresponding singular homology (or bordism) class in $M^k$.
One could also take into account the forgetful map 
to the Deligne--Mumford space $\oMm_{0,k}$ 
formed by the domains of the stable maps, 
to obtain
a class in the homology (or bordism) groups of $\oMm_{0,k}\times M^k$.

However, we will take a different route, interpreting the VFC more intrinsically as an 
element in the rational \v{C}ech homology $\check{H}_D(X;\Q)$ of
the compact metrizable space $X$.
As a first step, we associate to every oriented, metric, tame Kuranishi atlas a $D$-dimensional homology class in any open neighbourhood $\Ww\subset |\Kk|$ of $\io_\Kk(X)$. For that purpose recall from \eqref{eq:Zinject} that for any precompact, transverse perturbation $\nu$ of $\s_\Kk|_\Vv$, the inclusion 
$(\s_\Kk|_\Vv+\nu)^{-1}(0)\subset\Vv = \Obj_{\bB_\Kk|_\Vv}$ induces a continuous injection $i^\nu: |\bZ^\nu| \to |\Kk|$, which we now compose with the continuous bijection $|\Kk| \to (|\Kk|,d)$ from Lemma~\ref{le:metric} to obtain a continuous injection
\begin{equation}\label{ionu}
i^\nu \,:\;  |\bZ^\nu| \;\longrightarrow\; \bigl(|\Kk|,d\bigr) .
\end{equation}
Since $ |\bZ^\nu|$ is compact and the restriction of the metric topology to the image $i^\nu(|\bZ^\nu|)\subset(|\Kk|,d)$ is Hausdorff, this map is in fact a homeomorphism to its image; see Remark~\ref{rmk:hom}, and compare with Proposition~\ref{prop:zeroS0} which notes that $i^\nu:|\bZ^\nu|\to |\Kk|$ is a homeomorphism to its image.
If moreover $|\bZ^\nu|$ is oriented, then it has a fundamental class $\bigl[|\bZ^\nu|\bigr]\in H_D(|\bZ^\nu|)$.
Now we obtain a homology class by pushforward into any appropriate subset of $(|\Kk|,d)$,
$$
[i^\nu] :=  (i^\nu)_* \bigl[|\bZ^\nu|\bigr] \in H_D(\Ww) \qquad\text{for} \quad  i^\nu(|\bZ^\nu|) \subset \Ww \subset  \bigl(|\Kk|,d\bigr) .
$$
Analogously, any precompact, transverse perturbation $\nu^{01}$ of a metric, tame Kuranishi cobordism 
$\bigl(\Kk^{01}, d\bigr)$ gives rise to a topological embedding
\begin{equation}\label{cionu}
i^{\nu^{01}} \,:\;  |\bZ^{\nu^{01}}| \;\longrightarrow\; \bigl( |\Kk^{01}| , d \bigr) .
\end{equation}
Now by Lemma~\ref{le:czeroS0} the boundary 
$\partial |\bZ^{\nu^{01} }|$ 
of the cobordism $ |\bZ^{\nu^{01} }|$ has two disjoint (but not necessarily connected) components
$$
\p |\bZ^{\nu^{01} }| \;=\;  \p^0 |\bZ^{\nu^{01}}| \;\cup\;  \p^1|\bZ^{\nu^{01} }|, \qquad
\p^\al |\bZ^{\nu^{01}}| \,:=\;  \p^\al |\Kk^{01} |  \cap |\bZ^{\nu^{01} }| .
$$
In fact, we also showed there that the embeddings
\begin{equation}\label{eq:Jal}
J^\al:=\io^\al_{|\Kk^{01}|}(\al,\cdot)\;:\;\;  |\p^\al\Kk^{01} |  \;\longrightarrow \; \p^\al|\Kk^{01} | \subset |\Kk^{01} |
\end{equation}
(where we slightly abuse notation by identifying $ |\p^\al\Kk^{01} | $ with $\{\al\}\times  |\p^\al\Kk^{01} | $)
restrict to diffeomorphisms
$$
 |j^\al|  \,:\; |\bZ^{\nu^\al}| \;\longrightarrow\;\p^\al|\bZ^{\nu }|
\qquad\text{with}\quad
i^{\nu^{01} } \circ |j^\al| =  J^\al
 \circ i^{\nu^\al} ,
$$
where $\nu^\al:= \nu^{01} |_{\p^\al\Kk^{01} }$ are the restricted perturbations of the Kuranishi atlases $\p^\al\Kk^{01} $.
Moreover, Proposition~\ref{prop:orient1}~(ii) asserts that the boundary orientations on $\p^\al |\bZ^{\nu^{01} }|$ 
(which are induced by the orientation of the 
cobordism
$|\bZ^{\nu^{01} }|$ arising from the orientation of $\Kk^{01} $) are related to 
the orientations of $|\bZ^{\nu^\al}|$ (which are induced by the orientation of $\p^\al\Kk^{01} $ obtained by restriction from the orientation of $\Kk^{01} $) by
$$
|j^0| \,:\; |\bZ^{\nu^0}|^-  \;\overset{\cong}{\longrightarrow}\;  \p^0 |\bZ^{\nu^{01} }| \qquad
\text{and}\qquad
|j^1| \,:\;  |\bZ^{\nu^1}| \;\overset{\cong}{\longrightarrow}\; \p^1 |\bZ^{\nu^{01} }|.
$$
In terms of the fundamental classes this yields the identity
\begin{align*}
|j^1|_*\bigl[ |\bZ^{\nu^1}|\bigr]  - |j^0|_*\bigl[ |\bZ^{\nu^0}|\bigr] &\;=\; 
\bigl[\p^1 |\bZ^{\nu^{01} }|\bigr]  + \bigl[\p^0 |\bZ^{\nu^{01} }|\bigr] \\
&\;=\; \bigl[\p |\bZ^{\nu^{01} }|\bigr]  \;=\; \delta \bigl[|\bZ^{\nu^{01} }|\bigr] \;\in\; H_D( \p |\bZ^{\nu^{01} }|)
\end{align*}
for the boundary map $\delta: H_{D+1}(\bigl[ |\bZ^{\nu^{01} }|\bigr] , \p \bigl[ |\bZ^{\nu^{01} }|\bigr] ) \to H_D( \p |\bZ^{\nu^{01} }|)$ that is part of the long exact sequence for $\p |\bZ^{\nu^{01} }|\subset  |\bZ^{\nu^{01}}|$.
Inclusion to $|\bZ^{\nu^{01} }|$ now provides, by exactness of this sequence,
$|j^1|_*\bigl[ |\bZ^{\nu^1}|\bigr]  - |j^0|_*\bigl[ |\bZ^{\nu^0}|\bigr] = 0  \in H_D( |\bZ^{\nu^{01} }|)$.
Finally, we can push this forward by  $i^{\nu^{01} }$ to $H_D(|\Kk^{01} |)$ and use the identity
$i^{\nu^{01} } \circ |j^\al| = J^\al \circ i^{\nu^\al}$ to obtain
\begin{align*}
0 &\;=\; 
(i^{\nu^{01}})_*|j^1|_*\bigl[ |\bZ^{\nu^1}|\bigr]  - (i^{\nu^{01} })_*|j^0|_*\bigl[ |\bZ^{\nu^0}|\bigr] \\
&\;=\;
|J^1|_*(i^{\nu^{1}})_*\bigl[ |\bZ^{\nu^1}|\bigr]  - |J^0|_*(i^{\nu^{0}})_*\bigl[ |\bZ^{\nu^0}|\bigr] 
\;=\;
|J^1|_*\bigl[i^{\nu^{1}}\bigr]  - |J^0|_*\bigl[i^{\nu^{0}}\bigr] .
\end{align*}
The same holds in $H_D(\Ww^{01} )$ for any subset $\Ww^{01} \subset\bigl(|\Kk^{01} |,d \bigr)$ that contains $\io^{\nu^{01} }(|\bZ^{\nu^{01} }|)$, that is
\begin{equation} \label{homologous}
J^0_* [i^{\nu^0}] \;=\; J^1_* [i^{\nu^1}] \;\in\; H_D(\Ww^{01}) \qquad\text{when} \quad i^{\nu^{01} }(|\bZ^{\nu^{01} }|) \subset \Ww^{01}  \subset |\Kk^{01} | .
\end{equation}
This will be crucial for proving independence of the VFC from choices.

In the case of a product cobordism $\Kk^{01}=[0,1]\times \Kk$ with product metric and perturbation $\nu^{01}$, we can identify $|\Kk^{01}|\cong [0,1]\times  |\Kk|$ so that \eqref{cionu} also induces a cycle  
\begin{equation} \label{pionu}
\pr_{ |\Kk|}\circ i^{\nu^{01}} \, :\;  |\bZ^{\nu^{01}}|\; \longrightarrow\; (|\Kk|,d) ,
\end{equation}
whose boundary restrictions are $i^{\nu^\al} \circ |j^\al|^{-1}$, so that the above argument directly gives
\begin{equation} \label{pomologous}
[ i^{\nu^0} ] = [i^{\nu^1} ] \in H_D(\Ww) \qquad\text{when} \quad i^{\nu^{01}}(|\bZ^{\nu^{01}}|) \subset [0,1]\times  \Ww.
\end{equation}
Now we can associate a well defined virtual fundamental class to any choice of open neighbourhood  $\Ww$ of $X$ in the virtual neighbourhood $|\Kk|$ induced by a fixed tame Kuranishi atlas $\Kk$.

\begin{lemma}\label{le:VMC1}
Let $(\Kk,d)$ be an oriented, metric, tame Kuranishi atlas and let $\Ww\subset |\Kk|$ be an open subset with respect to the metric topology such that $\io_\Kk(X)\subset \Ww$. Then there exists a strongly adapted perturbation $\nu$ such that $\pi_\Kk((\s_\Kk|_\Vv+\nu)^{-1}(0))\subset \Ww$. More precisely, there is a strongly $(\Vv,\Cc)$-adapted perturbation $\nu$ for some nested reduction 
$\Cc\sqsubset\Vv$ such that $\pi_\Kk(\Cc)\subset\Ww$.
For any such perturbation, the inclusion of the perturbed zero set $i^\nu : |\bZ^\nu|\hookrightarrow \Ww \subset (|\Kk|,d)$ defines a singular homology class
$$
A^{(\Kk,d)}_{\Ww} \,:=\; \bigl[i^\nu : |\bZ^\nu| \to \Ww \bigr] \;\in\; H_D(\Ww)
$$
that is independent of the choice of reductions and perturbation.
\end{lemma}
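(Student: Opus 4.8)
The plan is to first establish existence of a suitable perturbation, then prove independence of the resulting homology class in two stages: independence of the perturbation for a fixed reduction, and independence of the reduction.

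For existence, I would start from the given open neighbourhood $\Ww\subset (|\Kk|,d)$ of $\io_\Kk(X)$. Applying Theorem~\ref{thm:red}~(i) gives a reduction $\Vv$ of $\Kk$, and then Theorem~\ref{thm:red}~(iii)~(a) (with the metric-open set $\Ww$) produces a nested reduction $\Cc_\Ww\sqsubset\Vv$ with $\pi_\Kk(\Cc_\Ww)\subset\Ww$. Fixing additive norms $\|\cdot\|$ on $\Kk$ (Definition~\ref{def:norm}), a constant $0<\de<\de_\Vv$, and $0<\si\le\si_{\rm rel}([0,1]\times\Vv,[0,1]\times\Cc_\Ww,\|\cdot\|,\de)$, Proposition~\ref{prop:ext} yields a $(\Vv,\Cc_\Ww,\|\cdot\|,\de,\si)$-adapted perturbation $\nu$, which by Lemma~\ref{le:admin2} is admissible, precompact, transverse, and satisfies $\pi_\Kk((\s_\Kk|_\Vv+\nu)^{-1}(0))\subset\pi_\Kk(\Cc_\Ww)\subset\Ww$; by construction (choosing $\si$ small as in Definition~\ref{a-e rel}) it is strongly $(\Vv,\Cc_\Ww)$-adapted. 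For any such perturbation, Proposition~\ref{prop:zeroS0} shows $|\bZ^\nu|$ is a closed manifold, Proposition~\ref{prop:orient1}~(i) orients it using the orientation of $\Kk$, and the discussion around \eqref{ionu} shows $i^\nu:|\bZ^\nu|\to(|\Kk|,d)$ is a topological embedding with image in $\Ww$, hence $A^{(\Kk,d)}_\Ww := (i^\nu)_*[|\bZ^\nu|]\in H_D(\Ww)$ is defined.

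For well-definedness, suppose $\nu^0$ and $\nu^1$ are two strongly adapted perturbations (w.r.t.\ nested reductions $\Cc^\al\sqsubset\Vv^\al$) both with image in $\Ww$. I would interpolate using the product Kuranishi concordance $[0,1]\times\Kk$ with the product metric $d_\R+d$ and the orientation induced by Lemma~\ref{le:cK}~(iii). The key point is to build a nested cobordism reduction $\Cc\sqsubset\Vv$ of $[0,1]\times\Kk$ restricting to $\Cc^\al\sqsubset\Vv^\al$ on the boundary and with $\pi_{[0,1]\times\Kk}(\Cc)\subset[0,1]\times\Ww$: this is exactly Theorem~\ref{thm:red}~(iv) applied with the collared metric-open set $[0,1]\times\Ww$ (which is collared and contains $\io_{[0,1]\times\Kk}([0,1]\times X)$). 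Then, after choosing common additive norms $\|\cdot\|\ge\|\cdot\|^\al$ and a sufficiently small $\si$ so that $\si\le\si_{\rm rel}$ for $[0,1]\times\Vv,[0,1]\times\Cc$ and for the two boundary data, Proposition~\ref{prop:ext2}~(i),(iii) produces an admissible, precompact, transverse cobordism perturbation $\nu^{01}$ of $\s_{[0,1]\times\Kk}|_{[0,1]\times\Vv}$ with $\pi(\,(\s+\nu^{01})^{-1}(0)\,)\subset\pi_{[0,1]\times\Kk}(\Cc)\subset[0,1]\times\Ww$ whose boundary restrictions are $(\Vv^\al,\Cc^\al,\|\cdot\|,\de,\si)$-adapted. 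Lemma~\ref{le:czeroS0} and Proposition~\ref{prop:orient1}~(ii) make $|\bZ^{\nu^{01}}|$ an oriented cobordism between the zero sets of the boundary perturbations, and the cycle \eqref{pionu} $\pr_{|\Kk|}\circ i^{\nu^{01}}:|\bZ^{\nu^{01}}|\to(|\Kk|,d)$ has image in $\Ww$, giving by \eqref{pomologous} the identity $[i^{\nu^{01}|_{\p^0}}]=[i^{\nu^{01}|_{\p^1}}]$ in $H_D(\Ww)$. Finally, since for each fixed $\al$ both $\nu^\al$ and $\nu^{01}|_{\p^\al}$ are strongly adapted perturbations of the \emph{same} oriented tame Kuranishi atlas $\Kk$ w.r.t.\ the same metric (possibly different reductions), I would apply a version of Steps~1--3 of the proof of Theorem~\ref{thm:VMC1}, refined to keep all zero sets inside $\Ww$: in each concordance there, the nested cobordism reductions should be chosen via Theorem~\ref{thm:red}~(iii),(iv) with ambient set $[0,1]\times\Ww$, so that the intermediate cobordisms all map into $\Ww$, yielding $[i^{\nu^0}]=[i^{\nu^{01}|_{\p^0}}]$ and $[i^{\nu^{01}|_{\p^1}}]=[i^{\nu^1}]$ in $H_D(\Ww)$. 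Transitivity of equality then gives $[i^{\nu^0}]=[i^{\nu^1}]$, proving independence.

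The main obstacle I anticipate is bookkeeping the containment condition ``$\pi_\Kk(\text{perturbed zero set})\subset\Ww$'' consistently through every concordance: the constructions in Theorem~\ref{thm:VMC1} only guarantee the zero sets lie in $\pi_\Kk(\Cc)$ for \emph{some} nested reduction $\Cc$, so at each stage one must re-invoke Theorem~\ref{thm:red}~(iii)(a)/(iv) with the specific ambient open set (namely $[0,1]\times\Ww$, which must be verified to be collared and metric-open) to force $\pi_\Kk(\Cc)\subset\Ww$, and then check the relevant $\si_{\rm rel}$ bounds remain compatible under the common choice of additive norms. Everything else is a direct citation of the existence, compactness, orientation, and cobordism machinery already established; no genuinely new argument is required beyond threading $\Ww$ through the proof of Theorem~\ref{thm:VMC1}.
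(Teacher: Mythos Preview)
Your proposal is correct and follows essentially the same approach as the paper: both establish existence via Theorem~\ref{thm:red}~(iii)(a) and Proposition~\ref{prop:ext}, then prove independence by threading the containment $\pi_\Kk(\Cc)\subset\Ww$ through Steps~1--3 of Theorem~\ref{thm:VMC1}, with the only nontrivial modification in Step~3 being the use of Theorem~\ref{thm:red}~(iv)(a) with ambient set $[0,1]\times\Ww$. One minor point: when you connect $\nu^\al$ to $\nu^{01}|_{\p^\al\Vv}$ at the end, these share the \emph{same} nested reduction $\Cc^\al\sqsubset\Vv^\al$ (since $\p^\al\Cc=\Cc^\al$ and $\p^\al\Vv=\Vv^\al$ by construction), so only the analogues of Steps~1--2 are needed there, not Step~3 again.
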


\begin{proof}
To see that the required perturbations exist, 
we use Theorem~\ref{thm:red}~(iii)~(a) to choose a nested reduction $\Cc \sqsubset \Vv$ of $\Kk$ such that $\pi_\Kk(\Cc)\subset\Ww$.
Now Proposition~\ref{prop:ext} guarantees the existence of a strongly $(\Vv,\Cc)$-adapted perturbation $\nu$. By Proposition~\ref{prop:zeroS0} and Proposition~\ref{prop:orient1}~(i) its perturbed zero set is an oriented manifold $|\bZ^\nu|$. Moreover, the image of $i^{\nu} : |\bZ^{\nu}| \to (|\Kk|,d)$ is $\pi_\Kk\bigl((\s_\Kk|_\Vv+\nu)^{-1}(0)\bigr) \subset\pi_\Kk(\Cc) \subset \Ww$, so that by the discussion above $i^{\nu} : |\bZ^{\nu}| \to \Ww$ defines a cycle $\bigl[i^\nu \bigr] \in H_D(\Ww)$.

To prove independence of the choices, we need to show that $\bigl[i^{\nu^0}\bigr]=\bigl[i^{\nu^1}\bigr]$ for any two strongly $(\Vv^\al,\Cc^\al)$-adapted perturbations $\nu^\al$ of $\s_\Kk|_{\Vv^\al}$ with $\pi_\Kk(\Cc^\al)\subset \Ww$. 
We do this by modifying Steps 1--3
 in the proof of Theorem~\ref{thm:VMC1} so that 
at each step the cycle $i^{\nu^{01}}:|\bZ^{\nu^{01}}| \to |\Kk^{01}|=|[0,1]\times  \Kk|$ given by \eqref{cionu} takes values in $[0,1]\times \Ww\subset |[0,1]\times  \Kk|$.
Note that here we use the product metric on $[0,1]\times |\Kk|$ so that $[0,1]\times  \Ww$ is open.
Then in each step the composite map $\pr_{|\Kk|} \circ i^{\nu^{01}} : |\bZ^{\nu^{01}}| \to |\Kk|$ takes values in $\Ww$, so that \eqref{pomologous} applies to give 
$\bigl[i^{\nu^{01}|_{\p^0\Vv}}\bigr]=\bigl[i^{\nu^{01}|_{\p^1\Vv}}\bigr]\in H_D(\Ww)$.
By transitivity of equality in $H_D(\Ww)$, Steps 1--3 
then prove $\bigl[i^{\nu^0}\bigr]=\bigl[i^{\nu^1}\bigr]$.

In Steps 1 and 2, the required inclusion is automatic since the perturbations are constructed so that 
$|(\s_{[0,1]\times \Kk}|_{[0,1]\times \Vv}+\nu^{01})^{-1}(0)| \subset \pi_{[0,1]\times \Kk}([0,1]\times  \Cc)\subset [0,1]\times \Ww$, where the second inclusion follows from $\pi_\Kk(\Cc)\subset\Ww$.
We apply Step~3 with a a fixed metric $d^0=d^1=d$ and nested reductions $\Cc^\al\sqsubset\Vv^\al$ for $\al=0,1$ with $\pi_\Kk(\Cc^\al)\subset\Ww$.
Then we equip $|\Kk^{01}| \cong [0,1]\times  |\Kk|$ with the product metric and use Theorem~\ref{thm:red}~(iv)~(a) to choose a nested cobordism reduction $\Cc\sqsubset \Vv$ such that $\p^\al \Cc=\Cc^\al$, $\p^\al \Vv=\Vv^\al$, and $\pi_{[0,1]\times \Kk}(\Cc)\subset [0,1]\times  \Ww$.
Using the nested reduction $\Cc\sqsubset\Vv$ in choosing the cobordism perturbation $\nu^{01}$ then ensures that $i^{\nu^{01}}: |\bZ^{\nu^{01}}|\to |[0,1]\times \Kk|= [0,1]\times |\Kk|$  takes values in $\pi_{[0,1]\times  \Kk}(\Cc)\subset [0,1]\times \Ww$, as required to finish the proof.
\end{proof}

To construct the VFC as a homology class in $\io_\Kk(X)$ for tame Kuranishi atlases, and later in $X$,
we use rational \v{C}ech homology, rather than integral \v{C}ech or singular homology, because it has the following continuity property.

\begin{remark} \label{rmk:Cech}
Let $X$ be a compact subset of a metric space $Y$, and let
$(\Ww_k\subset Y)_{k\in\N}$ be a sequence of open subsets that is nested, $\Ww_k\subset\Ww_{k-1}$, such that $X=\bigcap_{k\in\N} \Ww_k$.
Then the system of maps $\check{H}_n(X;\Q)  \to \check{H}_n(\Ww_{k+1};\Q)\to \check{H}_n(\Ww_k;\Q)$ induces an isomorphism
$$
\check{H}_n(X;\Q) \;\overset{\cong}{\longrightarrow}\; \underset{\leftarrow }\lim\, \check{H}_n(\Ww_k;\Q)  .
$$

{\rm
To see that singular homology does not have this property, let $X\subset \R^2$ be the union of  the line segment $\{0\}\times [-1,1]$, the graph $\{(x,\sin \tfrac \pi x) \,|\, 0<x\le 1\}$, and an embedded curve joining $(0,1)$ to $(1,0)$ that is otherwise disjoint from the line segment and graph.
Then $H_1^{sing}(X;\Q) = 0$ since it is the abelianization of the trivial fundamental group.  However, $X$ has arbitrarily small neighbourhoods $U\subset\R^2$ with $H_1^{sing}(U;\Q) =\Q$.

Note that we cannot work with integral \v{C}ech homology since it does not even satisfy the exactness axiom (long exact sequence for a pair), because of problems with the inverse limit operation; see the discussion of \v{C}ech cohomology in Hatcher~\cite{Hat}, and \cite[Proposition~3F.5]{Hat} for properties of inverse limits.
However, rational \v{C}ech homology does satisfy the exactness axiom, and because it is dual to
\v{C}ech cohomology has the above stated continuity property by Spanier~\cite[Ch.6~Exercises~D]{Span}.

Further, rational \v{C}ech homology equals rational singular homology for finite simplicial complexes.
Hence the fundamental class of a closed oriented $n$-manifold $M$ can be considered as
an element $[M]\in \check{H}_n(M;\Q)$ in rational \v{C}ech homology and therefore pushes forward under a continuous map $f:M\to X$ to a well defined element $f_*([M])\in \check{H}_n(X;\Q)$.
Note finally that if one wants an integral theory with this continuity property, the correct theory to use is the Steenrod homology theory developed in Milnor~\cite{Mi}. $\hfill\er$
}
\end{remark}

We can now finish the proof of Theorem B in the introduction by constructing the virtual fundamental class in rational \v{C}ech homology, using the above continuity property.

\begin{thm}\label{thm:VMC2}
Let $\Kk$ be an oriented, additive weak Kuranishi atlas of dimension $D$ on a compact, metrizable space $X$.
\begin{enumerate}
\item
Let $\Kk_{\rm sh}$ be a preshrunk tame shrinking of $\Kk$ and $d$ an admissible metric on $|\Kk_{\rm sh}|$. Then there exists a nested sequence of open sets $\Ww_{k+1}\subset \Ww_k\subset \bigl(|\Kk_{\rm sh}|, d\bigr)$ such that $\bigcap_{k\in\N}\Ww_k = \io_{\Kk_{\rm sh}}(X)$.
Moreover, for any such sequence there is a sequence $\nu_k$ of strongly adapted perturbations of $\s_{\Kk_{\rm sh}}$ with respect to nested reductions $\Cc_k\sqsubset\Vv_k$ such that $\pi_\Kk(\Cc_k)\subset\Ww_k$ for all $k$. Then the embeddings
$$
i^{\nu_k} \,:\; |\bZ^{\nu_k}| \;\hookrightarrow \;   \Ww_k \;\subset\; \bigl(|\Kk_{\rm sh}|, d\bigr)
$$
induce a \v{C}ech homology class by inverse limit under the inclusions $\io_{\Kk_{\rm sh}}(X)\subset\Ww_{k+1}\subset\Ww_k$, 
$$
\underset{\leftarrow}\lim\, \bigl[ \,
i^{\nu_k} \, \bigr] \;\in\; \check{H}_D\bigl(i_{\Kk_{\rm sh}}(X);\Q \bigr),
$$
for the subspace $\io_{\Kk_{\rm sh}}(X) =|\s_{\Kk_{\rm sh}}|^{-1}(0)$ of the metric space $\bigl(|\Kk_{\rm sh}|,d\bigr)$.
\vspace{0.05in}
\item
The bijection $|\psi_{\Kk_{\rm sh}}| = \io_{\Kk_{\rm sh}}^{-1}: \io_{\Kk_{\rm sh}}(X) \to X$ from Lemma~\ref{le:realization}~(iv) is a homeomorphism with respect to the metric topology on $\io_{\Kk_{\rm sh}}(X)$ so that we can define the {\bf virtual fundamental class (VFC)} of $X$ as the pushforward
$$
[X]^{\rm vir}_\Kk \,:=\; |\psi_{\Kk_{\rm sh}}|_* \bigl( \, \underset{\leftarrow}\lim\, [\, i^{\nu_k} \,] \, \bigr)
\;\in\;
\check{H}_D(X;\Q) .
$$
It is independent of the choice of shrinkings, metric, nested open sets, reductions, and perturbations in (i),
and in fact depends on the weak Kuranishi atlas $\Kk$ on $X$ only up to oriented, additive concordance. 
\item
If $\Kk$ is an oriented, additive weak Kuranishi cobordism of dimension $D+1$ on a compact, metrizable collared cobordism $(Y,\io_Y^0,\io_Y^1)$, then the induced VFC's of the boundary components $\p^0 Y,\p^1 Y$ are homologous in $Y$, 
$$
(\io_Y^0)_*\bigl([\p^0 Y]^{\rm vir}_{\p^0\Kk}\bigr)
\;=\;
(\io_Y^1)_*\bigl([\p^1 Y]^{\rm vir}_{\p^1\Kk}\bigr)
\quad \in \check{H}_D(Y;\Q) .
$$

\end{enumerate}
\end{thm}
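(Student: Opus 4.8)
\textbf{Proof plan for Theorem~\ref{thm:VMC2}.}

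The plan is to follow the pattern established in Theorem~\ref{thm:VMC1} and Lemma~\ref{le:VMC1}, now iterating over a shrinking sequence of neighbourhoods and passing to the inverse limit in \v{C}ech homology. For part (i), the existence of a nested sequence $\Ww_{k+1}\subset\Ww_k$ of metric-open subsets of $(|\Kk_{\rm sh}|,d)$ with $\bigcap_k \Ww_k=\io_{\Kk_{\rm sh}}(X)$ follows by taking $\Ww_k:=B_{1/k}\bigl(\io_{\Kk_{\rm sh}}(X)\bigr)$, using that $\io_{\Kk_{\rm sh}}(X)$ is compact (being the homeomorphic image of the compact space $X$ under $\io_{\Kk_{\rm sh}}$, by Lemma~\ref{le:realization}) and closed in the metric space. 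For each $k$, Theorem~\ref{thm:red}~(iii)(a) supplies a nested reduction $\Cc_k\sqsubset\Vv_k$ with $\pi_\Kk(\Cc_k)\subset\Ww_k$, and Proposition~\ref{prop:ext} supplies a strongly adapted perturbation $\nu_k$; Propositions~\ref{prop:zeroS0} and \ref{prop:orient1}~(i) make $|\bZ^{\nu_k}|$ an oriented closed $D$-manifold, hence it carries a fundamental class that pushes forward via the embedding $i^{\nu_k}:|\bZ^{\nu_k}|\hookrightarrow\Ww_k$ to give $[\,i^{\nu_k}\,]\in H_D(\Ww_k;\Q)$ (which by Remark~\ref{rmk:Cech} equals the \v{C}ech homology of the finite complex $|\bZ^{\nu_k}|$, pushed into $\Ww_k$). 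The key compatibility is that under the inclusion $\Ww_{k+1}\hookrightarrow\Ww_k$ these classes map to one another: this is exactly the content of \eqref{pomologous} applied to a product-cobordism perturbation $\nu^{01}$ on $[0,1]\times\Kk_{\rm sh}$ interpolating $\nu_{k+1}$ and $\nu_k$, constructed via Proposition~\ref{prop:ext2} with a nested cobordism reduction from Theorem~\ref{thm:red}~(iv)(a) whose image lies in $[0,1]\times\Ww_k$. Therefore $\bigl([\,i^{\nu_k}\,]\bigr)_k$ is an element of $\varprojlim H_D(\Ww_k;\Q)$, which by the continuity property of rational \v{C}ech homology (Remark~\ref{rmk:Cech}) is canonically isomorphic to $\check H_D(\io_{\Kk_{\rm sh}}(X);\Q)$; this defines the class.

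For part (ii), the homeomorphism statement for $|\psi_{\Kk_{\rm sh}}|$ follows from Lemma~\ref{le:realization}~(iv) (which gives it as a homeomorphism in the quotient topology) combined with Lemma~\ref{le:metric}~(i) and Remark~\ref{rmk:hom}: the identity $|\Kk_{\rm sh}|\to(|\Kk_{\rm sh}|,d)$ is continuous, and $\io_{\Kk_{\rm sh}}(X)$ is compact, so the subspace topologies from the quotient and metric topologies agree on it. Thus the pushforward $[X]^{\rm vir}_\Kk:=|\psi_{\Kk_{\rm sh}}|_*\bigl(\varprojlim[\,i^{\nu_k}\,]\bigr)$ is well defined in $\check H_D(X;\Q)$. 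Independence of the auxiliary choices within a fixed $\Kk_{\rm sh}$ and $d$ is Lemma~\ref{le:VMC1} applied levelwise; independence of the sequence $(\Ww_k)$ follows because any two such cofinal systems are interleaved, so induce the same inverse limit. For independence of $(\Kk_{\rm sh},d)$ and, more generally, invariance under oriented additive concordance, I would invoke part (iii): two preshrunk tame shrinkings of concordant atlases are related by an oriented additive Kuranishi concordance (by Theorem~\ref{thm:K}~(iii) together with Lemma~\ref{le:cK}), and part (iii) forces their VFCs to agree.

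For part (iii), let $\Kk$ be an oriented additive weak Kuranishi cobordism on $(Y,\io_Y^0,\io_Y^1)$. By Theorem~\ref{thm:K}~(iv) pass to a preshrunk tame shrinking $\Kk_{\rm sh}$ with an admissible collared metric $d$ whose boundary restrictions are admissible metrics on $|\p^\al\Kk_{\rm sh}|$, equipped with the induced orientations (Lemma~\ref{le:cK}). Choose a nested sequence $\Ww^{01}_k$ of collared metric-open subsets of $|\Kk_{\rm sh}|$ shrinking to $\io_{\Kk_{\rm sh}}(Y)$, whose boundary traces $\Ww^\al_k:=(\io^\al_{|\Kk_{\rm sh}|})^{-1}(\Ww^{01}_k)\cap|\p^\al\Kk_{\rm sh}|$ shrink to $\io(\p^\al Y)$. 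Using Theorem~\ref{thm:red}~(iv)(a) and Proposition~\ref{prop:ext2}~(i), construct for each $k$ an admissible precompact transverse cobordism perturbation $\nu^{01}_k$ of $\s_{\Kk_{\rm sh}}|_{\Vv_k}$ with $\pi_{\Kk_{\rm sh}}(\Cc_k)\subset\Ww^{01}_k$, whose boundary restrictions $\nu^\al_k:=\nu^{01}_k|_{\p^\al\Vv_k}$ are strongly adapted perturbations of $\s_{\p^\al\Kk_{\rm sh}}|_{\p^\al\Vv_k}$. Then the embedded cobordism $i^{\nu^{01}_k}:|\bZ^{\nu^{01}_k}|\hookrightarrow\Ww^{01}_k$ witnesses, via the boundary-orientation computation in Proposition~\ref{prop:orient1}~(ii) and the algebra leading to \eqref{homologous}, the equality $J^0_*[\,i^{\nu^0_k}\,]=J^1_*[\,i^{\nu^1_k}\,]$ in $H_D(\Ww^{01}_k;\Q)$, where $J^\al$ is as in \eqref{eq:Jal}. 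Taking the inverse limit over $k$ and using the continuity property of $\check H_D$ on the nested systems $\Ww^{01}_k\searrow\io(Y)$ and $\Ww^\al_k\searrow\io(\p^\al Y)$, this descends to $\check H_D(Y;\Q)$; composing with the homeomorphisms $|\psi|$ and the collar inclusions $\io^\al_Y$ identifies the two sides with $(\io^0_Y)_*[\p^0 Y]^{\rm vir}_{\p^0\Kk}$ and $(\io^1_Y)_*[\p^1 Y]^{\rm vir}_{\p^1\Kk}$, which completes the proof.

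\textbf{Main obstacle.} The routine-looking but genuinely delicate point is the levelwise compatibility of the perturbations with the \emph{fixed} shrinking sequence $(\Ww_k)$: one must arrange, simultaneously, that $\pi_\Kk(\Cc_k)\subset\Ww_k$, that each interpolating cobordism perturbation on $[0,1]\times\Kk_{\rm sh}$ stays inside $[0,1]\times\Ww_k$ (so that \eqref{pomologous} applies at the \emph{right} neighbourhood level rather than a coarser one), and in part (iii) that all of this respects the collar structure so that boundary restrictions land in the prescribed $\Ww^\al_k$. This is where the careful bookkeeping of adapted-versus-strongly-adapted perturbations, nested reductions, and the constants $\de,\si$ from Section~\ref{ss:const} is essential; none of the individual inputs is new, but threading them through the inverse-limit construction so that the maps in the inverse system are honestly induced by inclusions is the heart of the argument. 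A secondary but conceptually important point is the clean use of Remark~\ref{rmk:Cech}: one must check that the inverse system $H_D(\Ww_k;\Q)$ really does compute $\check H_D$ of the intersection, which relies on rational coefficients and on $\io_{\Kk_{\rm sh}}(X)$ being a compact subset of a metric space with the $\Ww_k$ cofinal among its neighbourhoods.
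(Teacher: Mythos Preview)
Your plan is essentially correct and follows the paper's approach closely. Two points worth noting where the paper is slightly more efficient than your sketch:

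For the inverse-system compatibility in part (i), you propose interpolating $\nu_k$ and $\nu_{k+1}$ by a cobordism perturbation landing in $[0,1]\times\Ww_k$. This works (via the machinery behind Lemma~\ref{le:VMC1}), but the paper's argument is simpler: since Lemma~\ref{le:VMC1} already shows that $A^{(\Kk_{\rm sh},d)}_{\Ww_k}=[i^{\nu_k}]$ is independent of the choice of strongly adapted perturbation, and since $\nu_{k+1}$ (being adapted to $\Cc_{k+1}\sqsubset\Vv_{k+1}$ with $\pi_\Kk(\Cc_{k+1})\subset\Ww_{k+1}\subset\Ww_k$) is itself an allowable perturbation for $\Ww_k$, one may simply take $\nu_k:=\nu_{k+1}$. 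Then $i^{\nu_k}=\Ii_{k+1}\circ i^{\nu_{k+1}}$ literally, and the compatibility is immediate with no interpolation needed.

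For part (ii), you correctly route metric and shrinking independence through part (iii) applied to a concordance, but you omit the final step: part (iii) yields $(\io_Y^0)_*[X]^{\rm vir}_{\Kk^0}=(\io_Y^1)_*[X]^{\rm vir}_{\Kk^1}$ in $\check H_D([0,1]\times X;\Q)$, not in $\check H_D(X;\Q)$. One must then observe that the inclusions $I^0,I^1:X\to[0,1]\times X$ are homotopic homotopy equivalences, so $I^0_*=I^1_*$ is an isomorphism; this is the paper's Step~5. The paper also treats metric independence (its Step~3) separately before the general cobordism Step~4, because two metrics on the same $\Kk_{\rm sh}$ require building a collared metric on $[0,1]\times|\Kk_{\rm sh}|$ restricting to both --- but your approach of subsuming this into part (iii) is fine once that metric exists (via \cite[Proposition~4.2.3]{MW:top}).
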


\begin{proof}
The existence of shrinkings and metric is guaranteed by Theorem~\ref{thm:VMC1}~(i).
We then obtain nested open sets converging to $\io_{\Kk_{\rm sh}}(X)$ by e.g.\ taking the $\frac 1k$-neighbourhoods $\Ww_k = B_{\frac 1 k}(\io_{\Kk_{\rm sh}}(X))$.
Given any such nested open sets $(\Ww_k)_{k\in\N}$, the existence of strongly adapted perturbations $\nu_k$ with respect to some nested reductions $\Cc_k\sqsubset\Vv_k$ with $\pi_{\Kk_{\rm sh}}(\Cc_k)\subset\Ww_k$ is proven in Lemma~\ref{le:VMC1}.
The latter also shows that the embeddings $i^{\nu_k} : |\bZ^{\nu_k}|\to \Ww_k$ define homology classes $A^{(\Kk_{\rm sh},d)}_{\Ww_k} = [i^{\nu_k} ]\in H_D(\Ww_k;\Q)$, 
which are independent of the choice of reductions $\Cc_k\sqsubset \Vv_k$ and strongly adapted perturbations $\nu_k$.
In particular, the pushforward $H_D(\Ww_{k+1};\Q)\to H_D(\Ww_k;\Q)$ by the inclusion $\Ii_{k+1}:\Ww_{k+1}\to\Ww_k$ maps $A^{(\Kk_{\rm sh},d)}_{\Ww_{k+1}}=[i^{\nu_{k+1}} ]$ to $A^{(\Kk_{\rm sh},d)}_{\Ww_k}$ since any strongly adapted perturbation $\nu_{k+1}$ with respect to a nested reduction $\Cc_{k+1}\sqsubset\Vv_{k+1}$ with $\Cc_{k+1}\subset\pi_{\Kk_{\rm sh}}^{-1}(\Ww_{k+1})$ can also be used as strongly adapted perturbation $\nu_k:=\nu_{k+1}$.
Then we obtain $i^{\nu_k} = \Ii_{k+1} \circ i^{\nu_{k+1}}$, and hence
$A^{(\Kk_{\rm sh},d)}_{\Ww_k}= [i^{\nu_k}] = (\Ii_{k+1})_* [i^{\nu_{k+1}}]$. This shows that the homology classes $A^{(\Kk_{\rm sh},d)}_{\Ww_k}$ form an inverse system and thus have a well defined inverse limit, completing the proof of (i),
$$
A^{(\Kk_{\rm sh},d)}_{(\Ww_k)_{k\in\N}}
\,:=\; \underset{\leftarrow}\lim\, \bigl[ \,i^{\nu_k} \, \bigr]
\;\in\; \check{H}_D\bigl(\io_{\Kk_{\rm sh}}(X);\Q \bigr).
$$
This defines $A^{(\Kk_{\rm sh},d)}_{(\Ww_k)_{k\in\N}}$ as a \v{C}ech homology class in the topological space $\bigl( \io_{\Kk_{\rm sh}}(X), d\bigr)$.

Towards proving (ii), recall first that $|\psi_{\Kk_{\rm sh}}|:\io_{\Kk_{\rm sh}}(X)\to X$ is a homeomorphism with respect to the relative topology induced from the inclusion $\io_{\Kk_{\rm sh}}(X)\subset|\Kk_{\rm sh}|$ by Lemma~\ref{le:realization}~(iv).
That the latter is equivalent to the metric topology on $\io_{\Kk_{\rm sh}}(X)$ follows as in Remark~\ref{rmk:hom} from the continuity of the identity map $|\Kk_{\rm sh}| \to \bigl(|\Kk_{\rm sh}|,d\bigr)$ (see Lemma~\ref{le:metric}), which restricts to a continuous bijection from the compact set $\io_{\Kk_{\rm sh}}(X)\subset|\Kk_{\rm sh}|$ to the Hausdorff space $\bigl( \io_{\Kk_{\rm sh}}(X), d\bigr)$, and thus is a homeomorphism.
To establish the independence of choices, we then argue as in the proof of Theorem~\ref{thm:VMC1}~(iii), with Lemma~\ref{le:VMC1} playing the role of Step~1.

\MS\NI
{\bf Step 2:} {\it
Let $(\Kk,d)$ be an oriented, metric, tame Kuranishi atlas, and let $(\Ww^\al_k)_{k\in\N}$ for $\al=0,1$ be two nested sequences of open sets $\Ww^\al_{k+1}\subset \Ww^\al_k\subset \bigl(|\Kk|,d\bigr)$ whose intersection is  $\bigcap_{k\in\N}\Ww^\al_k = \io_{\Kk}(X)$.
Then we have $A^{(\Kk,d)}_{(\Ww^0_k)_{k\in\N}} =A^{(\Kk,d)}_{(\Ww^1_k)_{k\in\N}}$, and hence
$$
A^{(\Kk,d)} \,:=\; A^{(\Kk,d)}_{(\Ww_k)_{k\in\N}}  \;\in\; \check{H}_D\bigl(\io_\Kk(X);\Q \bigr) ,
$$
given by any choice of nested open sets $(\Ww_k)_{k\in\N}$ converging to $\io_\Kk(X)$, is a well defined \v{C}ech homology class.
}

\MS
To see this note that the intersection $\Ww_k:=\Ww^0_k\cap \Ww^1_k$ is another nested sequence of open sets with $\bigcap_{k\in\N}\Ww_k = \io_{\Kk}(X)$.
We may choose a sequence of 
strongly 
adapted perturbations $\nu_k$ with respect to nested reductions $\Cc_k\sqsubset\Vv_k$ with $\pi_\Kk(\Cc_k)\subset\Ww_k$ to define $A^{(\Kk,d)}_{\Ww_k}=[i^{\nu_k}]$. The perturbations $\nu_k$ then also fit the requirements for the larger open sets $\Ww_k^\al$ and hence the inclusions $\Ii^\al_k : \Ww_k\to\Ww^\al_k$ push $A^{(\Kk,d)}_{\Ww_k}=[i^{\nu_k}]\in H_D(\Ww_k;\Q)$ forward to $A^{(\Kk,d)}_{\Ww^\al_k}=[\Ii^\al_k\circ i^{\nu_k}]\in H_D(\Ww^\al_k;\Q)$. Hence, by the definition of the inverse limit, we have equality
$$
A^{(\Kk,d)}_{(\Ww^0_k)_{k\in\N}} \;=\; A^{(\Kk,d)}_{(\Ww_k)_{k\in\N}} \;=\; A^{(\Kk,d)}_{(\Ww^1_k)_{k\in\N}}
 \;\in\; \check{H}_D\bigl(\io_{\Kk}(X);\Q \bigr) .
$$

\MS\NI
{\bf Step 3:} {\it
Let $\Kk$ be an oriented, metrizable, tame Kuranishi atlas with two admissible metrics $d^0,d^1$.
Then we have $A^{(\Kk,d^0)}= A^{(\Kk_,d^1)}$, and hence
$$
[X]^{\rm vir}_\Kk \,:=\; |\psi_\Kk|_* A^{(\Kk_{\rm sh},d)} \;\in\; \check{H}_D\bigl(X;\Q \bigr) ,
$$
given by any choice of metric, is a well defined \v{C}ech homology class.
}

\MS
As in Step 3 of Theorem~\ref{thm:VMC1}, we find an admissible collared metric $d$ on $|[0,1]\times \Kk|$ with $d|_{\{\al\}\times |\Kk|}=d^\al$.
Next, we proceed exactly as in the following Step 4 in the special case 
$\Kk^{01}_{\rm sh}=[0,1]\times  \Kk$ and $\Kk^0_{\rm sh}=\Kk^1_{\rm sh}=\Kk$ to find strongly adapted perturbations $\nu^\al_k$ of $(|\Kk|,d^\al)$ that define the \v{C}ech homology classes $A^{(\Kk,d^\al)} =\underset{\leftarrow}\lim\, \bigl[ \ i^{\nu^\al_k} \, \bigr]  \in \check H_D(\io_\Kk(X);\Q)$ and satisfy the identity  
$$
J^0_*\,\Bigl(  \underset{\leftarrow}\lim\, \bigl[ i^{\nu^0_k} \bigr] \Bigr)\;=\; J^1_* \,\Bigl(\underset{\leftarrow}\lim\, \bigl[ i^{\nu^1_k} \bigr] \Bigr)
\; \in\; \check H_D( \io_{\Kk^{01}_{\rm sh}}([0,1]\times   X); \Q )
$$
with the topological embeddings $J^\al: ( |\Kk| , d^\al ) \to ( |\Kk^{01}_{\rm sh}|, d)$ of \eqref{eq:Jal}.
To proceed we claim that the pushforwards by $J^\al$ restrict to the same isomorphism
\begin{equation}\label{J01}
\bigl(J^0\big|_{\io_\Kk(X)}\bigr)_* = \bigl(J^1\big|_{\io_\Kk(X)}\bigr)_*
 \;: \; \check H_D( \io_\Kk(X);\Q ) \;\longrightarrow\; \check H_D( \io_{[0,1]\times \Kk}([0,1]\times  X) ;\Q)
\end{equation}
on the compact set $\io_\Kk(X)$,
on which the two metric topologies induced by $d^0,d^1$ are the same,
since they both agree with the relative topology from $\io_\Kk(X)\subset|\Kk|$.
Indeed, the restrictions $J^\al\big|_{\io_\Kk(X)}$ for $\al = 0,1$ are homotopic  via the
continuous family of maps $J^t :\io_\Kk(X)\to \io_{\Kk^{01}_{sh}}([0,1]\times X)$ that arises from the continuous family of maps\footnote
{
There is no particular reason why the metric topology on $|[0,1]\times \Kk|$ should be a product topology in the canonical identification with $[0,1]\times  |\Kk|$.
In fact, the metrics $d^0$ and $d^1$ may well
induce different topologies on $|\Kk|$.
We avoid these issues by homotoping maps to $\io_{\Kk^{01}_{sh}}([0,1]\times  X)\cong [0,1]\times  X $, which always has the product topology by the remarks just before Step~2.
}
$I^t :X\to [0,1]\times  X$, $x\mapsto (t,x)$
by composition with the embeddings $\io_\Kk$ and $\io_{\Kk^{01}_{sh}}$, i.e.\
\[
J^t \,:
\xymatrix{
\io_\Kk(X) \ar@{->}[r]^{\quad |\psi_\Kk|} & X \ar@{->}[r]^{I^t\quad\;\;} & [0,1]\times X\ar@{->}[r]^{\io_{\Kk^{01}_{sh}}\quad\;\;} &\io_{\Kk^{01}_{sh}}
([0,1]\times X).
}
\]
These maps are continuous because $\io_\Kk=|\psi_\Kk|^{-1}$ and similarly $\io_{\Kk^{01}_{sh}}$ are homeomorphisms to their image with respect to the metric topology by the argument at the beginning of the proof of (ii).
Moreover, each $J^t$ is a homotopy equivalence because, up to homeomorphisms, it 
is equal to the homotopy equivalence $I^t$.
This proves \eqref{J01}, which we can then use to deduce the claimed identity
$$
A^{(\Kk,d^0)} \;=\;
\underset{\leftarrow}\lim\, \bigl[ \,
i^{\nu^0_k} \, \bigr]
 \;=\;
\underset{\leftarrow}\lim\, \bigl[ \,
i^{\nu^1_k} \, \bigr]
\;=\;A^{(\Kk,d^1)}
\quad\in \check H_D( \io_\Kk(X) ;\Q).
$$

\MS\NI
{\bf Step 4:} {\it
Let $\Kk$ be an oriented, additive, weak Kuranishi cobordism, and let $\Kk_{\rm sh}^\al$ be preshrunk tame shrinkings of $\p^\al\Kk$ for $\al=0,1$. Then we have
$$
(\io^0_{Y})_*\bigl([\p^0 Y]^{\rm vir}_{\Kk_{\rm sh}^0}\bigr) 
\;=\; 
(\io^1_{Y})_*\bigl([\p^1 Y]^{\rm vir}_{\Kk_{\rm sh}^1}\bigr)
\quad\in \check{H}_D(Y;\Q).
$$
}

As in Step 4 of Theorem~\ref{thm:VMC1}, we find a preshrunk tame shrinking $\Kk^{01}_{\rm sh}$ of $\Kk$ with $\p^\al\Kk^{01}_{\rm sh}=\Kk^\al_{\rm sh}$, and an admissible collared metric $d$ on $|\Kk^{01}_{\rm sh}|$.
We denote its restrictions to $|\Kk^\al_{\rm sh}|$ by $d^\al:= d|_{|\p^\al\Kk^{01}_{\rm sh}|}$.
Next, we proceed as in Lemma~\ref{le:VMC1} by choosing a nested cobordism reduction $\Cc \sqsubset \Vv$ of $\Kk^{01}_{\rm sh}$ and constructing nested cobordism reductions $\Cc_k \sqsubset \Vv$ by
$$
\Cc_k \,:=\; \Cc \cap \pi_{\Kk^{01}_{\rm sh}}^{-1}\bigl(\Ww_k) \;\sqsubset\; \Vv\qquad\text{with}\quad \Ww_k: =
B_{\frac 1k}(\io_{\Kk^{01}_{\rm sh}}(Y) \bigr) \;\subset\;|\Kk^{01}_{\rm sh}|,
$$
in addition discarding components $C_k\cap V_I$ that have empty intersection with $s_I^{-1}(0)$.
Indeed, each $\Ww_k$ and hence $\Cc_k$ is collared by \cite[Example~4.2.2~(iii)]{MW:top}, 
with boundaries given by the $\frac 1k$-neighbourhoods 
$\p^\al \Ww_k = B_{\frac 1k}^{d^\al}(\io_{\Kk^\al_{\rm sh}}(\p^\al Y) \bigr)\subset|\Kk^\al_{\rm sh}|$ with respect to the metrics $d^\al$ on $|\Kk^\al_{\rm sh}|$.
With that, Proposition~\ref{prop:ext2}~(i) guarantees the existence of admissible, precompact, transverse cobordism perturbations $\nu_k$ with $|(\s_{\Kk^{01}_{sh}}|_\Vv + \nu_k)^{-1}(0)| \subset \Ww_k$, and with boundary restrictions $\nu^\al_k:= \nu_k|_{\p^\al\Vv}$ that are strongly adapted perturbations of $(\Kk^\al_{\rm sh},d^\al)$ for $\al=0,1$.
Note here that these boundary restrictions satisfy the requirements of part (i) since $\bigcap_{k\in\N} \p^\al \Ww_k  = \io_{\Kk^\al_{\rm sh}}(\p^\al Y)$, thus they define the \v{C}ech homology classes
$$
A^{(\Kk^\al_{\rm sh},d^\al)} \;=\;
\underset{\leftarrow}\lim\, \bigl[ \,
i^{\nu^\al_k} \, \bigr] \;\in\; \check{H}_D\bigl(\io_{\Kk^\al_{\rm sh}}(\p^\al Y);\Q \bigr) .
$$
On the other hand, the homology classes $J^\al_* \bigl[ i^{\nu^\al_k}\bigr]$ also form two inverse systems in $H_D(|\Kk^{01}_{\rm sh}|;\Q)$, 
and as in \eqref{homologous} the chains $i^{\nu_k}: |\bZ^{\nu_k}| \to \Ww_k$ induce identities in the singular homology of $\Ww_k$,
$$
J^0 _* \bigl[ i^{\nu_k^0} \bigr] \;=\; J^1_* \bigl[ i^{\nu^1_k} \bigr] \; \in\; H_D(\Ww_k; \Q) ,  
$$
with the topological embeddings $J^\al: ( |\Kk^\al_{\rm sh}| , d^\al ) \to ( |\Kk^{01}_{\rm sh}|, d)$
of equation~\eqref{eq:Jal}.
Thus taking the inverse limit -- which commutes with pushforward -- we obtain
$$
J^0_*\,\Bigl(  \underset{\leftarrow}\lim\, \bigl[ i^{\nu^0_k} \bigr] \Bigr)\;=\; J^1_* \,\Bigl(\underset{\leftarrow}\lim\, \bigl[ i^{\nu^1_k}  \bigr] \Bigr)
\; \in\; \check H_D( \io_{\Kk^{01}_{\rm sh}}(Y) ;\Q ) .
$$
So further pushforward with the inverse $|\psi_{\Kk^{01}_{\rm sh}}|$ of the homeomorphism $\io_{\Kk^{01}_{\rm sh}}$ implies
$$
\bigl(|\psi_{\Kk^{01}_{\rm sh}}| \circ J^0\bigr)_*\,\Bigl(  \underset{\leftarrow}\lim\, \bigl[ i^{\nu^0_k} \bigr] \Bigr)\;=\; \bigl(|\psi_{\Kk^{01}_{\rm sh}}| \circ J^1\bigr)_* \,\Bigl(\underset{\leftarrow}\lim\, \bigl[ i^{\nu^1_k}  \bigr] \Bigr)
\; \in\; \check H_D(Y ; \Q) .
$$
To see that this proves Step~4, we use the fact that $|\psi_{\Kk^{01}_{\rm sh}}|$ is related to the analogous $|\psi_{\Kk^\al_{\rm sh}}| : \io_{\Kk^\al_{\rm sh}}(\p^\al Y) \to \p^\al Y$ by 
$$
|\psi_{\Kk^{01}_{\rm sh}}| \circ J^\al \big|_{\io_{\Kk^\al_{\rm sh}}(\p^\al Y)}
\;=\;
\io^\al_Y \circ |\psi_{\Kk^\al_{\rm sh}}| ,
$$
and hence we have for $\al=0,1$
$$
\bigl(|\psi_{\Kk^{01}_{\rm sh}}| \circ J^\al\bigr)_*\,\Bigl(  \underset{\leftarrow}\lim\, \bigl[ i^{\nu^\al_k} \bigr] \Bigr)
\;=\; 
(\io^\al_Y)_* \Bigl(  |\psi_{\Kk^\al_{\rm sh}}|_* \Bigl(  \underset{\leftarrow}\lim\, \bigl[ i^{\nu^\al_k} \bigr]\Bigr) \Bigr)
\;=\; 
(\io^\al_Y)_* \bigl[ \p^\al Y\bigr]^{\rm vir}_{\Kk^\al_{\rm sh}}.
$$

\MS\NI
{\bf Step 5:} {\it
Let $\Kk$ be an oriented, additive, weak Kuranishi concordance, and let $\Kk_{\rm sh}^\al$ be preshrunk tame shrinkings of $\p^\al\Kk$ for $\al=0,1$. Then we have
$$
[X]^{\rm vir}_{\Kk_{\rm sh}^0}=[X]^{\rm vir}_{\Kk_{\rm sh}^1}
\quad\in \check{H}_D(X;\Q).
$$
}

\MS
By Step~4 for the special case $Y=[0,1]\times X$ with the natural boundary embeddings
$\io^\al_Y=: I^\al :X\to \{\al\}\times X$ we obtain
$$
I^0_* \, \bigl([X]^{\rm vir}_{\Kk^0_{\rm sh}}\bigr)
\;=\; I^1_* \, \bigl([X]^{\rm vir}_{\Kk^1_{\rm sh}} \bigr)
\; \in\; \check{H}_D([0,1]\times  X ;\Q ) .
$$
Further, $I^0_* = I^1_* : \check{H}_D( X ;\Q) \to \check{H}_D( [0,1]\times  X ;\Q)$ are the same isomorphisms, because the two maps $I^0, I^1$ are both homotopy equivalences and homotopic to each other.
Hence we obtain the identity 
$[X]^{\rm vir}_{\Kk^0_{\rm sh}}  = [X]^{\rm vir}_{\Kk^1_{\rm sh}}$ in $\check{H}_D( X ; \Q)$, which
proves Step~5.

\MS
Finally, Step 5 implies uniqueness of the virtual fundamental cycle $[X]^{\rm vir}_\Kk\in \check{H}_D( X;\Q) $ 
for an oriented, additive weak Kuranishi atlas $\Kk$, since for any two choices of preshrunk tame shrinkings $\Kk^\al_{\rm sh}$ of $\Kk$ we can apply Step~5 to $[0,1]\times  \Kk$ to obtain $[X]^{\rm vir}_{\Kk_{\rm sh}^0}=[X]^{\rm vir}_{\Kk_{\rm sh}^1}$.
Moreover, given concordant oriented, additive, weak Kuranishi atlases $\Kk^0,\Kk^1$ there exists by assumption an oriented, additive, weak Kuranishi concordance  $\Kk$ with $\p^\al\Kk=\Kk^\al$.
If we pick any preshrunk tame shrinkings  $\Kk_{\rm sh}^\al$ of $\Kk^\al$ to define $[X]^{\rm vir}_{\Kk^\al}=[X]^{\rm vir}_{\Kk_{\rm sh}^\al}$, then Step~5 implies the claimed uniqueness under 
concordance claimed in (ii),
$$
[X]^{\rm vir}_{\Kk^0}\;=\;[X]^{\rm vir}_{\Kk_{\rm sh}^0}\;=\;[X]^{\rm vir}_{\Kk_{\rm sh}^1}\;=\;[X]^{\rm vir}_{\Kk^1}.
$$
Finally, (iii) follows from Step 4, which in the above notation simply becomes
$$
(\io^0_{Y})_*\bigl([\p^0 Y]^{\rm vir}_{\p^0\Kk}\bigr) 
\;=\; 
(\io^1_{Y})_*\bigl([\p^1 Y]^{\rm vir}_{\p^1\Kk}\bigr)
\quad\in \check{H}_D(Y;\Q).
$$
This completes the proof of Theorem~\ref{thm:VMC2}.
\end{proof}

\bibliographystyle{alpha}

\end{document}